\newcommand\blankpage{%
    \null
    \thispagestyle{empty}%
    \addtocounter{page}{-1}%
    \newpage}
\theoremstyle{plain}
\newtheorem{theorem}{Theorem}[chapter]
\newtheorem{corollary}[theorem]{Corollary}
\newtheorem{proposition}[theorem]{Proposition}
\newtheorem{lemma}[theorem]{Lemma}
\newtheorem*{remark*}{Remark*}
\theoremstyle{definition}
\newtheorem{definition}[theorem]{Definition}
\newtheorem*{notation}{Notation}
\newenvironment{example}
{\pushQED{\qed}\examplex}
{\popQED\endexamplex}
\newtheorem{remark}[theorem]{Remark}
\newtheoremstyle{claim}% name
{0pt}% Space above
{-2pt}% Space below
{\itshape}% Body font
{}% Indent amount 1
{\bfseries\slshape}% Theorem head font
{:}% Punctuation after theorem head
{.5em }% Space after theorem head 2
{ }% Theorem head spec (can be left empty, meaning ‘normal’)
\theoremstyle{claim}
\newtheoremstyle{note}% name
{0pt}% Space above
{0pt}% Space below
{}% Body font
{\parindent}% Indent amount 1
{\bfseries\slshape}% Theorem head font
{:}% Punctuation after theorem head
{.5em }% Space after theorem head 2
{ }% Theorem head spec (can be left empty, meaning ‘normal’)
\theoremstyle{note}
\newcommand{\NN}{\mathbb{N}}
\newcommand{\ZZ}{\mathbb{Z}}
\newcommand{\RR}{\mathbb{R}}
\newcommand{\CC}{\mathbb{C}}
\newcommand{\KK}{\mathbb{K}} %campo K
\renewcommand{\AA}{\mathbb{A}} %spazio affine
\newcommand{\PP}{\mathbb{P}} %spazio proiettivo
\newcommand{\m}{\mathfrak{m}} %ideale massimale
\newcommand{\p}{\mathfrak{p}} %ideale primo
\newcommand{\n}{\mathfrak{n}} %nilradicale
\newcommand{\F}{\mathscr{F}} %fascio F
\newcommand{\G}{\mathscr{G}} %fascio G
\renewcommand{\O}{\mathcal{O}} %fascio strutturale
\newcommand{\C}{\mathcal{C}} %complex
\renewcommand{\H}{\mathrm{H}} %Cohomology
\newcommand{\vC}{\check{\mathcal{C}}} %Cech complex
\newcommand{\vH}{\check{\mathrm{H}}} %Cech Cohomology
\newcommand{\R}{\mathcal{R}} %relazioni
\newcommand{\U}{\mathscr{U}} % covering
\newcommand{\midd}{\;\middle\vert\;} % middle vertical bar
\DeclareMathOperator{\Cl}{Cl}
\DeclareMathOperator{\Div}{Div}
\DeclareMathOperator{\Prin}{Prin}
\DeclareMathOperator{\CaDiv}{CaDiv}
\DeclareMathOperator{\CaPrin}{CaPrin}
\DeclareMathOperator{\CaCl}{CaCl}
\DeclareMathOperator{\Spec}{Spec}
\DeclareMathOperator{\htnew}{ht}
\DeclareMathOperator{\im}{im}
\DeclareMathOperator{\supp}{supp}
\DeclareMathOperator{\rk}{rk}
\DeclareMathOperator{\Pic}{Pic}
\DeclareMathOperator{\Hom}{Hom}
\DeclareMathOperator{\lk}{lk}
\DeclareMathOperator{\red}{red}
\DeclareMathOperator{\tf}{tf}
\DeclareMathOperator{\nil}{nil}
\DeclareMathOperator{\N}{\mathcal{N}}
\DeclareMathOperator{\loc}{loc}
\DeclareMathOperator{\nerve}{nerve}
\DeclareMathOperator{\comb}{comb}
\DeclareMathOperator{\Top}{Top}
\DeclareMathOperator{\ccc}{CCC}
\definecolor{grey1}{rgb}{0.3, 0.3, 0.3}
\definecolor{grey2}{rgb}{0.5, 0.5, 0.5}
\definecolor{grey3}{rgb}{0.7, 0.7, 0.7}
    \let\oldchapter\chapter
    \RenewDocumentCommand{\chapter}{s o m}{%
        \clearpage
        \IfBooleanTF{#1}
        {\oldchapter*{#3}}% \chapter*[..]{...}
        {\IfValueTF{#2}
            {\oldchapter[#2]{#3}}% \chapter[..]{...}
            {\oldchapter{#3}}% \chapter{...}
            \label{chapter-\thechapter}% \label this chapter
        }%
    }
\def\extract@chapter#1.#2\@nil{#1}%
\newcommand{\chapref}[1]{{%
        \edef\x{\noexpand\edef\noexpand\chapnum{\noexpand\extract@chapter\getrefnumber{#1}\noexpand\@nil}}\x%
        \ref{chapter-\chapnum}%
    }}
    \newcommand{\bigcupdot}{\mathop{\ensurestackMath{\stackinset{c}{}{c}{+.25ex}{\cdot}{\bigcup}}}}
    \newcommand{\cupdot}{\mathop{\ensurestackMath{\stackinset{c}{}{c}{+.25ex}{\cdot}{\cup}}}}
    \definecolor{uos-red-full}{cmyk}{0.2, 1, 0.65, 0.25}
    \definecolor{uos-yellow-full}{cmyk}{0, 0.3, 1, 0}
    \newcommand{\uoslogo}[2]{\resizebox{#1}{!}{
            % TikZPicture exported by Inkscape TikZ export plugin
            \begin{tikzpicture}

            %%%%%%%%%%%%%%
            %    Logo    %
            %%%%%%%%%%%%%%
            
            \path[fill=#2] (45.6990,36.6582) .. controls
            (45.6990,36.6582) and (35.2160,42.1395) .. (35.2160,42.1395) --
            (35.2210,52.7102) -- (45.6990,52.7102) -- (45.6990,36.6582) --
            cycle(34.7270,53.7305) -- (29.6540,56.3809) -- (39.3020,56.3809) ..
            controls (39.3020,56.3809) and (44.2270,53.7305) .. (44.2270,53.7305) --
            (34.7270,53.7305) -- cycle(34.1960,42.6688) .. controls (34.1960,42.6688)
            and (32.3660,43.6277) .. (32.3660,43.6277) .. controls (32.3660,43.6277)
            and (32.3740,48.6105) .. (32.3740,48.6105) .. controls (32.3740,50.4004)
            and (30.9040,51.8574) .. (29.0990,51.8574) .. controls (27.2920,51.8574)
            and (25.8230,50.4004) .. (25.8230,48.6105) .. controls (25.8230,48.6105)
            and (25.8230,43.6633) .. (25.8230,43.6633) .. controls (25.8230,43.6633)
            and (24.0000,42.7156) .. (24.0000,42.7156) -- (24.0000,52.8645) .. controls
            (24.0000,52.8645) and (29.0980,55.5188) .. (29.0980,55.5188) .. controls
            (29.0980,55.5188) and (34.1960,52.8555) .. (34.1960,52.8555) --
            (34.1960,42.6688) -- cycle(23.4550,53.7305) -- (13.8977,53.7305) .. controls
            (13.8977,53.7305) and (18.8059,56.3809) .. (18.8059,56.3809) --
            (28.5420,56.3809) -- (23.4550,53.7305) -- cycle(22.9790,42.1730) ..
            controls (22.9790,42.1730) and (12.4973,36.6602) .. (12.4973,36.6602) --
            (12.4973,52.7102) -- (22.9790,52.7102) -- (22.9790,42.1730) --
            cycle(39.5590,57.4023) -- (18.5480,57.4023) -- (11.4758,53.5891) --
            (11.4758,34.9695) -- (26.8440,43.0422) -- (26.8440,48.6105) .. controls
            (26.8440,49.8379) and (27.8560,50.8352) .. (29.0990,50.8352) .. controls
            (30.3400,50.8352) and (31.3520,49.8379) .. (31.3520,48.6105) .. controls
            (31.3520,48.6086) and (31.3430,43.0105) .. (31.3430,43.0105) --
            (46.7200,34.9719) -- (46.7200,53.5480) -- (39.5590,57.4023) --
            cycle(29.1360,54.0652) .. controls (29.4320,54.0652) and (29.6720,53.8312)
            .. (29.6720,53.5398) .. controls (29.6720,53.2512) and (29.4320,53.0145) ..
            (29.1360,53.0145) .. controls (28.8410,53.0145) and (28.6000,53.2512) ..
            (28.6000,53.5398) .. controls (28.6000,53.8312) and (28.8410,54.0652) ..
            (29.1360,54.0652) -- cycle(27.6810,53.5398) .. controls (27.6810,52.7434)
            and (28.3340,52.0945) .. (29.1360,52.0945) .. controls (29.9390,52.0945)
            and (30.5910,52.7434) .. (30.5910,53.5398) .. controls (30.5910,54.3367)
            and (29.9390,54.9855) .. (29.1360,54.9855) .. controls (28.3340,54.9855)
            and (27.6810,54.3367) .. (27.6810,53.5398) -- cycle(35.2160,39.1730) ..
            controls (35.2160,39.1730) and (45.6970,33.6926) .. (45.6970,33.6926) --
            (45.6970,20.4734) .. controls (45.6970,20.4734) and (29.0990,11.7832) ..
            (29.0990,11.7832) .. controls (29.0990,11.7832) and (12.4992,20.4734) ..
            (12.4992,20.4734) -- (12.4992,33.6934) .. controls (12.4992,33.6934) and
            (22.9770,39.1730) .. (22.9770,39.1730) .. controls (22.9770,39.1730) and
            (22.9770,26.0566) .. (22.9770,26.0566) -- (29.0990,22.8137) --
            (35.2160,26.0574) .. controls (35.2160,26.0574) and (35.2160,39.1730) ..
            (35.2160,39.1730) -- cycle(34.1960,26.6719) .. controls (34.1960,26.6719)
            and (29.0980,23.9688) .. (29.0980,23.9688) .. controls (29.0980,23.9688)
            and (23.9980,26.6719) .. (23.9980,26.6719) .. controls (23.9980,26.6719)
            and (23.9980,40.8602) .. (23.9980,40.8602) -- (11.4777,34.3113) --
            (11.4777,19.8547) -- (29.0990,10.6309) -- (46.7170,19.8547) --
            (46.7170,34.3113) -- (34.1960,40.8602);

            \end{tikzpicture}
        }}
\newglossaryentry{M-bullet}{name={\ensuremath{M^\bullet}},
    description={$M\smallsetminus\{\infty\}$}
}
\newglossaryentry{infty-binoid}{name={\ensuremath{\infty}},
    description={degenerate trivial binoid}
}
\newglossaryentry{zero-binoid}{name={\ensuremath{\{0, \infty\}}},
    description={smallest non trivial binoid, also called zero binoid}
}
\newglossaryentry{difference-binoid}{name={\ensuremath{\Gamma}},
    description={difference binoid of a cancellative binoid $M$, $\Gamma=(-M^\bullet+M)$}
}
\newglossaryentry{difference-group}{name={\ensuremath{\Gamma^\bullet}},
    description={difference group of a cancellative binoid, $\Gamma^\bullet=\Gamma\smallsetminus\{\infty\}$}
}
\newglossaryentry{supp}{name={\ensuremath{\supp}},
    description={support of an element of a semifree binoid. If $\{a_i\}_{i\in I}$ is the semibasis of $M$ and $f=\sum_{i\in I} n_ia_i$ then $\supp f=\{a_i\mid n_i\neq 0\}$}
}
\newglossaryentry{red}{name={\ensuremath{\red}},
    description={reduction operation in a semifree binoid. If $\{a_i\}_{i\in I}$ is the semibasis of $M$ and $f=\sum_{i\in I} n_ia_i$ then $\red f=\sum_{n_i\neq 0} a_i$}
}
\newglossaryentry{nilM}{name={\ensuremath{\nil(M)}},
    description={ideal of nilpotent elements in a binoid $M$}
}
\newglossaryentry{Mred}{name={\ensuremath{M_{\red}}},
    description={reduction of $M$, $M_{\red}=\faktor{M}{\nil(M)}$}
}
\newglossaryentry{Mtf}{name={\ensuremath{M_{\mathrm{tf}}}},
    description={torsion-freeification of $M$}
}
\newglossaryentry{genset}{name={\ensuremath{\mathcal{G}}},
    description={generating set of the binoid $M=(\mathcal{G}\mid \mathcal{R})$}
}
\newglossaryentry{relset}{name={\ensuremath{\mathcal{R}}},
    description={set of relations of the binoid $M=(\mathcal{G}\mid \mathcal{R})$}
}
\newglossaryentry{primeideal}{name={\ensuremath{\p}},
    description={prime ideal in $M$, i.e.\ a non-empty $M$-subset such that $M\smallsetminus\p$ is a monoid}
}
\newglossaryentry{M-plus}{name={\ensuremath{M_+}},
    description={unique maximal ideal of the binoid $M$}
}
\newglossaryentry{height-prime}{name={\ensuremath{\htnew(\p)}},
    description={height of the prime ideal $\p$ of the binoid $M$}
}
\newglossaryentry{Mstar}{name={\ensuremath{M^*}},
    description={group of invertible elements of $M$}
}
\newglossaryentry{localization-element}{name={\ensuremath{M_f}},
    description={localization of the binoid $M$ w.r.t.\ an element $f$, i.e.\ $M_f=-S+M$ with $S=\{nf\mid n\in\NN\}$}
}
\newglossaryentry{localization-prime}{name={\ensuremath{M_{\p}}},
    description={localization of a binoid at a prime ideal, i.e.\ $M_\p=-S+M$ with $S=(M\smallsetminus\p)$}
}
\newglossaryentry{localization-Mset-element}{name={\ensuremath{S_f}},
    description={localization of the $M$-set $S$ w.r.t.\ an element $f\in M$, i.e.\ $S_f=S+M_f$}
}
\newglossaryentry{localization-Mset-prime}{name={\ensuremath{S_{\p}}},
    description={localization of the $M$-set $S$ at a prime ideal of $M$, i.e.\ $S_\p=S+M_\p$}
}
\newglossaryentry{simplicial-complex}{name={\ensuremath{\triangle}},
    description={simplicial complex, a finite subset-closed collection of finite sets}
}
\newglossaryentry{simplicial-complex-restriction}{name={\ensuremath{\triangle_W}},
    description={restriction of a simplicial complex on vertex set $V$ to a subset of vertices $W\subseteq V$}
}
\newglossaryentry{link}{name={\ensuremath{\lk_\triangle(F)}},
    description={link of the face $F$ in $\triangle$, $\lk_\triangle(F)=\{G\in\triangle\mid G\cap F=\varnothing, G\cap F\in\triangle\}$}
}
\newglossaryentry{crosscut-complex}{name={\ensuremath{\ccc(\{G_i\}_{i\in I}, \triangle)}},
    description={the crosscut complex of the set of faces $\{G_i\}\subseteq \triangle$ in $\triangle$}
}
\newglossaryentry{triangle-j}{name={\ensuremath{\triangle_j}},
    description={set of faces of dimension $j$ in the simplicial complex $\triangle$}
}
\newglossaryentry{simplicial-binoid}{name={\ensuremath{M_\triangle}},
    description={simplicial binoid associated to the simplicial complex $\triangle$ on vertex set $V$, $M_\triangle=\left(\{x_i\}_{i\in V}\mid x_{i_1}+\dots x_{i_k}=\infty, \forall \{i_1, \dots, i_k\}\notin \triangle\right)$}
}
\newglossaryentry{simplicial-binoid-empty-set}{name={\ensuremath{M_{\{\varnothing\}}}},
    description={simplicial binoid of the degenerate simplicial complex, $M_{\{\varnothing\}}:=\{0, \infty\}$}
}
\newglossaryentry{binoinded-space}{name={\ensuremath{(X, \O_X)}},
    description={binoided space, a topological space equipped with a sheaf of binoids}
}
\newglossaryentry{structure-sheaf}{name={\ensuremath{\O_X}},
    description={structure sheaf of the binoided space $(X, \O_X)$}
}
\newglossaryentry{spectrum}{name={\ensuremath{\Spec M}},
    description={spectrum of the binoid $M$, i.e.\ the set of its prime ideals}
}
\newglossaryentry{punctured-spectrum}{name={\ensuremath{\Spec^\bullet M}},
    description={punctured spectrum of $M$, i.e.\ $\Spec M\smallsetminus\{M_+\}$}
}
\newglossaryentry{punctured-spectrum-KM}{name={\ensuremath{\Spec^\bullet \KK[M]}},
    description={punctured spectrum of $\KK[M]$, i.e.\ $\Spec \KK[M]\smallsetminus\{\KK[M_+]\}$}
}
\newglossaryentry{fundamental-open-subset}{name={\ensuremath{D(f)}},
    description={open subset of $\Spec M$ defined as $D(f)=\{\p\in M\mid f\notin \p\}$}
}
\newglossaryentry{structure-sheaf-binoid}{name={\ensuremath{\O_M}},
    description={another name for $\O_{\Spec M}$}
}
\newglossaryentry{structure-sheaf-spec-binoid}{name={\ensuremath{\O_{\Spec M}}},
    description={structure sheaf of the affine scheme $(\Spec M, \O_{\Spec M})$}
}
\newglossaryentry{D-of-face}{name={\ensuremath{D(F)}},
    description={open subset of $\Spec M_\triangle$ defined by the face $F$, $D(F)=\cap_{i\in F}D(x_i)$}
}
\newglossaryentry{xF}{name={\ensuremath{x_F}},
    description={set of variables of $M_\triangle$ associated to face $F\in\triangle$. $x_F := \{x_i\mid i\in F\}$}
}
\newglossaryentry{TopX}{name={\ensuremath{\Top_{X}}},
    description={category of open subsets of a topological space $X$, where the maps are either the inclusions or nothing}
}
\newglossaryentry{Bin}{name={\ensuremath{\mathrm{Bin}}},
    description={category of binoids, where the maps are binoid morphisms}
}
\newglossaryentry{Ab}{name={\ensuremath{\mathrm{Ab}}},
    description={category of abelian groups, where the maps are group morphisms}
}
\newglossaryentry{KSpecM}{name={\ensuremath{\KK-\Spec M}},
    description={$\KK$-spectrum of the binoid $M$, $\KK-\Spec M=\Hom_{\mathrm{Bin}}(M, \KK)$}
}
\newglossaryentry{affinespace}{name={\ensuremath{\AA^n}},
    description={affine space. The same symbol is used both in combinatorial and algebraic context}
}
\newglossaryentry{sheafificationMset}{name={\ensuremath{\widetilde{S}}},
    description={sheafification of the $M$-set $S$}
}
\newglossaryentry{PicX}{name={\ensuremath{\Pic(X)}},
    description={Picard group, group of line bundles, or invertible $\O_X$-sheaves, on the scheme $X$}
}
\newglossaryentry{PiclocM}{name={\ensuremath{\Pic^{\loc}(M)}},
    description={local Picard group of $M$, $\Pic^{\loc}(M)=\Pic(\Spec^\bullet M)$}
}
\newglossaryentry{PiclocKM}{name={\ensuremath{\Pic^{\loc}(\KK[M])}},
    description={local Picard group of $\KK[M]$, $\Pic^{\loc}(\KK[M])=\Pic(\Spec^\bullet \KK[M])$}
}
\newglossaryentry{sheaf-units-X}{name={\ensuremath{\O_X^*}},
    description={sheaf of units of the binoid scheme $(X, \O_X)$}
}
\newglossaryentry{sheaf-units-M}{name={\ensuremath{\O_M^*}},
    description={sheaf of units of the binoid scheme $(\Spec M, \O_{\Spec M})$}
}
\newglossaryentry{cohomologyXF}{name={\ensuremath{\H^p(X, \F)}},
    description={sheaf cohomology of the sheaf $\F$ on the topological space $X$ in degree $p$}
}
\newglossaryentry{cech-chain-cpx}{name={\ensuremath{\vC^\bullet(\{U_i\}, \F)}},
    description={chain complex to compute \v{C}ech cohomology of the (pre)sheaf $\F$ w.r.t.\ the covering $\{U_i\}$}
}
\newglossaryentry{cechcohomologyUF}{name={\ensuremath{\vH^p(\{U_i\}, \F)}},
    description={\v{C}ech cohomology of degree $p$ of the (pre)sheaf $\F$ w.r.t.\ the covering $\{U_i\}$}
}
\newglossaryentry{cech-picard-U}{name={\ensuremath{\vC(\{U_i\}, \O_X^*)}},
    description={\v{C}ech-Picard complex to compute the cohomology of $\O^*_X$ w.r.t.\ the covering $\{U_i\}$}
}
\newglossaryentry{simplicial-chain-cpx}{name={\ensuremath{\C^\bullet(\triangle, G)}},
    description={chain complex to compute simplicial cohomology of $\triangle$ with coefficients in the abelian group $G$}
}
\newglossaryentry{simplicial-cohomology}{name={\ensuremath{\H^p(\triangle, G)}},
    description={simplicial cohomology of degree $p$ of $\triangle$ with coefficients in the abelian group $G$}
}
\newglossaryentry{reduced-simplicial-cohomology}{name={\ensuremath{\widetilde{\H}^p(\triangle, G)}},
    description={reduced simplicial cohomology of degree $p$ of $\triangle$ with coefficients in the abelian group $G$}
}
\newglossaryentry{X-height-k}{name={\ensuremath{X^{(k)}}},
    description={set of points in $X$ of height $k$}
}
\newglossaryentry{cartierdivisor}{name={\ensuremath{\{U_i, \gamma_i\}_{i\in I}}},
    description={representation of a Cartier divisor of $X$ on the covering $\{U_i\}_{i\in I}$ of $X$}
}
\newglossaryentry{cartierdivisorsU}{name={\ensuremath{\CaDiv(U)}},
    description={group of Cartier divisors of $U$}
}
\newglossaryentry{principalcartierdivisorsU}{name={\ensuremath{\CaPrin(U)}},
    description={subgroup of principal Cartier divisors of $U$}
}
\newglossaryentry{cartierdivisorsclassU}{name={\ensuremath{\CaCl(U)}},
    description={class group of Cartier divisors of $U$}
}
\newglossaryentry{weildivisorsM}{name={\ensuremath{\Div(M)}},
    description={group of Weil divisors of $M$}
}
\newglossaryentry{principalweildivisorsM}{name={\ensuremath{\Prin(M)}},
    description={subgroup of principal Weil divisors of $M$}
}
\newglossaryentry{weildivisorsclassM}{name={\ensuremath{\Cl(M)}},
    description={class group of Weil divisors of $M$}
}
\newglossaryentry{weildivisorsV}{name={\ensuremath{\Div(V)}},
    description={group of Weil divisors of $V$}
}
\newglossaryentry{principalweildivisorsV}{name={\ensuremath{\Prin(V)}},
    description={subgroup of principal Weil divisors of $V$}
}
\newglossaryentry{weildivisorsclassV}{name={\ensuremath{\Cl(V)}},
    description={class group of Weil divisors of $V$}
}
\newglossaryentry{nerve-U}{name={\ensuremath{\nerve(\{U_i\}_{i\in I})}},
    description={nerve of the finite collection of open subsets $\{U_i\}$ of a topological space $X$}
}
\newglossaryentry{extension-zero-F}{name={\ensuremath{j_!\F}},
    description={extension of a sheaf by zero outside of $U$ open subset of $X$ with inclusion map $U\stackrel{j}{\hookrightarrow}X$}
}
\newglossaryentry{binoid-algebra}{name={\ensuremath{\KK[M]}},
    description={$\KK$-algebra of the binoid $M$}
}
\newglossaryentry{SRring}{name={\ensuremath{\KK[\triangle]}},
    description={$\KK$-Stanley-Reisner algebra of the simplicial complex $\triangle$}
}
\newglossaryentry{cadivX}{name={\ensuremath{\mathcal{C}\mathrm{aDiv}(X)}},
    description={sheaf of Cartier divisors of the space $X$}
}
\newglossaryentry{wdivX}{name={\ensuremath{\mathcal{W}\mathrm{Div}(X)}},
    description={sheaf of Weil divisors of the space $X$}
}
\newglossaryentry{closure-face}{name={\ensuremath{\mathcal{P}(F)}},
    description={subset-closure of a face $F\in\triangle$}
}
\newglossaryentry{functor-KK}{name={\ensuremath{\KK[\hspace{1em}]}},
    description={functor $\mathrm{Bin}\longrightarrow \mathrm{Rings}$ that associates to a binoid its binoid $\KK$-algebra. It induces functors between $M$-sets and $\KK[M]$-modules, between topologies and between sheaves}
}
\def\lq{\scalebox{-1}[1]{''}}
\begin{document}

\frontmatter
\pagestyle{empty}

\begin{titlepage} 
    
    \vspace{5mm}
    \begin{center}
        {{\LARGE \bf{\textsc{Universit\"at Osnabr\"uck}}}} \\
        %\rule[0.1cm]{15.8cm}{0.1mm}
        %\rule[0.5cm]{15.8cm}{0.6mm}
        \vspace{3mm}
        {\small Dissertation \\
            zur Erlangung des Doktorgrades (Dr. rer. nat.) \\
            des Fachbereichs Mathematik/Informatik \\
            der Universit\"at Osnabr\"uck
        }
    \end{center}
    \vspace{25mm}
    \begin{center}
        {\huge{\bf Local Picard Group of Binoids\\and Their Algebras}}
    \end{center}
    \vspace{40mm}
    \par
    \noindent
    \begin{minipage}[t]{0.49\textwidth}
        {\large{\bf Candidate:\\
                Davide Alberelli}}
    \end{minipage}
    \hfill
    \begin{minipage}[t]{0.49\textwidth}\raggedleft
        {\large{\bf Advisor:\\
                Prof.\ Dr.\ rer.\ nat.\ Holger Brenner\\
            }}
        \end{minipage}
        \vspace{10mm}
        
        \noindent
        \begin{minipage}[t]{0.49\textwidth}
            {\large Osnabrück, 2016}
        \end{minipage}
        \hfill
        \begin{minipage}[t]{0.49\textwidth}\raggedleft
        \end{minipage}
        \vspace{15mm}
        
        \begin{center}
            \uoslogo{.2\textwidth}{uos-red-full}
        \end{center}
        
        \afterpage{\blankpage}
        
    \end{titlepage}

    \null\vspace*{\stretch{1}}
    \begin{flushright}
        
        \lq And what do you use this bullsh*t for?''\\
        \textit{Prof.\ Dr.\ Jürgen Herzog}

    \end{flushright}
    \afterpage{\blankpage}
    \vspace*{\stretch{3}}\null

    \pagestyle{fancy}
    % with this we ensure that the chapter and section
    % headings are in lowercase.
    \fancyhf{} % delete current header and footer
    \renewcommand{\headrulewidth}{0pt} % and the line
    \fancypagestyle{plain}{%
        \fancyhf{}
        \renewcommand{\headrulewidth}{0pt} % and the line
    }
    
    \setcounter{page}{0}
    \tableofcontents
    \clearpage
    
    \newgeometry{textwidth=345.0pt,textheight=598.0pt, top=40mm, includeheadfoot}
    
    \pagestyle{fancy}
    % with this we ensure that the chapter and section
    % headings are in lowercase.
    \fancyhf{} % delete current header and footer
    \fancyhead[R]{\bfseries\thepage}
    \fancyhead[L]{\bfseries INTRODUCTION}
    %\fancyhead[L]{\bfseries\leftmark}
    \renewcommand{\headrulewidth}{0.5pt}
    \renewcommand{\footrulewidth}{0pt}
    \addtolength{\headheight}{3.5pt} % space for the rule
    \fancyheadoffset{0pt}
    \fancypagestyle{plain}{%
        \fancyhead{} % get rid of headers on plain pages
        \renewcommand{\headrulewidth}{0pt} % and the line
    }

    \chapter*{Introduction}
    \addcontentsline{toc}{chapter}{Introduction}
    %This work is part of the requirements fulfilled by the author in order to obtain the title of "Doctor rerum Naturalium" at Universit\"at Osnabr\"uck.
    %It has been done under the supervision of Prof.\ Dr.\ rer.\ Nat.\ Holger Brenner as part of the Graduiertenkolleg "Kombinatorische Strukturen in der Geometrie", October 2013-October 2016.
    
    The interplay between commutative algebra, combinatorics and algebraic geometry has been widely exploited in the last decades, providing very deep and interesting results, as well as unlocking completely new fields of mathematics, such as toric geometry and combinatorial optimization, that found applications in areas that were classically considered very far from abstract mathematics, for example in statistics and physics.
    
    \subsection*{Motivations and history}
    
    A key role in combinatorial commutative algebra and combinatorial algebraic geometry is played by monoids and their algebras, that represent the interface between combinatorics and algebra in the playground of toric geometry.
    
    Although toric geometry is very powerful and gives many explicit results, the class of toric varieties is rather small; indeed affine toric geometry tells us mainly about ideals that are generated by binomials of the form $X^\alpha-X^\beta$.
    But many other ideals have interesting combinatorial properties, for example Stanley-Reisner ideals are related to simplicial complexes, and (strongly) stable ideals are related to posets of monomials.
    
    In this direction, many efforts have been spent on trying to adapt and extend the methods of toric geometry to other classes of varieties and ideals; for example the toric face rings and the concepts of log geometry, although from different directions and with completely different methods, go in this direction of expanding the field of applications of toric methods.
    
    The introduction of binoids (also known as pointed monoids) allows us to unify the description of toric geometry and monomial ideals, both reduced and not.
    We refer to the PhD thesis of Simone B\"ottger \cite{Boettger} for basic definitions and properties of binoids.
    
    Binoids are of interest in many active areas of mathematics, and they go often under various names. For example, Jaret Flores in his PhD thesis \cite{flores2015homological} developed homological algebra for pointed monoids, including in his work the paper \cite{flores2014picard} written with Charles Weibel.
    This links binoids with $\mathbb{F}_1$, the field with one element, and algebras over it.
    Geometry and algebra over $\mathbb{F}_1$ has been developed a lot recently; a good review of the land of $\mathbb{F}_1$ geometry is the article by Javier López Peña and Oliver Lorscheid \cite{pena2009mapping}.
    The last author introduced the concept of blueprints in \cite{lorscheid2012geometry}, in order to unify the description of binoids and their algebras and produce a common scheme theory.
    
    We are interested in a geometric invariant of the binoids and their binoid algebras, namely their Picard group, the group of line bundles defined on their spectrum.
    Although the study of the Picard group goes a while back (Steven L. Kleiman in \cite{kleiman2005picard} even says \lq The Picard scheme has roots in the 1600s''), we are interested in a more direct application to algebras and rings, for example the results on factoriality of Dedekind domains, that date back to the 19th century.
    
    More recently, the Picard group has been studied in relation to positively graded and seminormal rings; it is worth quoting the paper from M.\ Pavaman Murthy in 1969~\cite{murthy1969vector} where it is shown that for a positively graded normal ring the Picard group is trivial, and the papers from Carlo Traverso in 1970~\cite{traverso1970seminormality}, Richard G.\ Swan in 1980~\cite{swan1980seminormality} and David F.\ Anderson~\cite{anderson1981seminormal} where the seminormal case is covered, and it is shown that in this case $\Pic(A)=\Pic(A[X_1, \dots, X_n])$.
    
    These results are about rings and their affine spectra, but we will mainly look at what we call the punctured spectrum of a binoid algebra. 
    Every binoid is local, and its spectrum (the collection of its prime ideals), that is finite and can be equipped with a very natural partial order, has a unique maximal point, that we call $M_+$. Although its algebra is not always local, it contains the special ideal $\KK[M_+]$, usually referred to as the irrelevant ideal, that is often a singularity; in order to study it, we remove this point, and we call what is left the punctured spectrum of the binoid algebra, i.e.\ $\Spec^\bullet\KK[M]=\Spec\KK[M]\smallsetminus\KK[M_+]$, where $\KK[M_+]$ is the irrelevant ideal.
    
    Recall that the Picard group can be defined for any scheme $X$ as the group of line bundles on $X$ with operation the tensor product over $\O_X$, and it is denoted by $\Pic(X)$.
    We call the Picard group of the punctured spectrum of a binoid (resp.\ a binoid algebra) its local Picard group, and we denote it by $\Pic^{\loc}(M)$ (resp.\ $\Pic^{\loc}(\KK[M])$).
    One of the main goal of this project was to address the question on whether there are relations between the Picard group of the punctured spectrum of the binoid ($\Pic^{\loc}(M)$) and the Picard group of the punctured spectrum of its algebra ($\Pic^{\loc}(\KK[M])$).
    In some cases, we are able to give closed formulas for the computation of these groups, in terms of the combinatorics of some common objects underlying the binoid and its algebra, through the study of the cohomology of the sheaf of units $\O^*$. Clearly the field does not play any role for a binoid, but we see that it does when we consider $\KK[M]$.
    
    The idea of studying a singularity by removing it and looking at the remaining space, dates a while back in time and it proves to be a powerful one; for example, this is what we do when we study singular points on curves and we approximate them with tangent lines; or we could cite the famous result by Mumford in~\cite{mumford1961topology}, where he proves that a point $P$ in a normal complex projective surface $V$ is smooth if and only if $\pi_1(V\smallsetminus\{P\})=0$.
    Also, it happens often that affine varieties have trivial invariants, while by removing a point we get many of them, that give us informations on the geometric and algebraic structures of the variety we started with.
    Moreover, if $R$ is a $\mathbb{N}$-graded ring, its punctured spectrum $\Spec^\bullet R$ and its projectivization $\mathrm{Proj}\, R$ have much in common, and we can give a map from the first to the second, that we can't from the complex spectrum itself, and most of binoid algebras here considered are of this type.
    
    Since $\O^*$ is not a quasi-coherent sheaf, the well known vanishing theorems of these sheaves do not apply.
    However, in some cases there are some vanishing results for the positive cohomology of this sheaf, that depend on the algebraic property of the underlying ring that we are considering.

    In more details, it is know that the Picard group of a normal affine toric variety and the cohomology of the sheaf of units in higher degrees are trivial, so we can produce an acyclic covering of the punctured spectrum of a normal toric ring, that allows us to compute cohomology of $\O^*$ via \v{C}ech cohomology.
    We will prove that the same can be done for Stanley-Reisner rings and, moreover, we will be able to write the groups of cohomology of $\O^*$ purely in terms of some simplicial cohomology of the underlying simplicial complex.

    \subsection*{Main results}
    In this thesis we introduce some combinatorial tools referring to commutative and finitely generated binoids, in order to use them to produce some results related to their algebras.
    
    We begin by introducing schemes of binoids, invertible $\O_M$-sets and cohomology of sheaves of abelian groups defined on schemes of binoids. An interesting result in this setting is Theorem~\ref{theorem:vanishing-combinatorial-cohomology-affine}, that shows that the cohomology of a sheaf of abelian groups on an affine scheme of binoids vanishes in degree at least one, thus proving that any affine open covering of a scheme of binoids is acyclic for any sheaf of abelian groups.
    We use this result to define the so-called punctured combinatorial \v{C}ech-Picard complex in Definition~\ref{definition:cech-picard-complex}, whose first cohomology computes $\Pic^{\loc}(M)$, the local Picard group of a binoid.
    We then talk about the divisor class group of a (sufficiently nice) binoid and in Proposition~\ref{proposition:CaCl-Cl} we show isomorphisms between Weil divisor class groups and the class group of suitably defined Cartier divisors defined on the set of points of height at most one. We then extend this result in Proposition~\ref{proposition:PicV-ClV}, showing that $\Pic(V)\cong\Cl(V)$ for a subset $V$ of $\Spec M$ that is regular enough.
    
    We look then at a particular type of binoids that arise from simplicial complexes, namely simplicial binoids, whose spectrum presents very nice combinatorial properties. In Proposition~\ref{proposition:intersection-open-subsets} we show that the intersection pattern of the open subsets $D(x_i)$ is given by the faces of the simplicial complex, thus leading us to prove, in Theorem~\ref{thm:simplicial-cohomology} that the cohomology of a constant sheaf can be computed entirely in terms of simplicial cohomology.
    In Theorem~\ref{equation:localization-simpicial-binoid-wedge} we show that the localization of a simplicial binoid at a face is isomorphic to the smash product of the simplicial binoid of the link of that face and a free group on that face. This opens the door to Theorem~\ref{theorem:semifree-decomposition-of-sheaf}, where we show that we can rewrite the sheaf $\O^*_{M_\triangle}$ as a direct sum of smaller sheaves, indexed by the vertices. These sheaves are actually defined as extensions by zeros of the constant sheaf $\ZZ$ on $D(x_i)$, that is proved to be homeomorphic to the spectrum of the link of the corresponding vertex. This brings us to the main theorem of the chapter, Theorem~\ref{theorem:cohomology-simplicial-complex}, where we prove that we can compute sheaf cohomology (and thus the local Picard group) by meaning of reduced simplicial cohomology, via the formula
    \[
    {\H^j\left(\Spec^\bullet M_\triangle, \O^*_{M_\triangle}\right)}\cong\bigoplus_{i\in V}\widetilde{\H}^{j-1}\left(\lk_\triangle(i), \ZZ\right).
    \]
    We then move to the computations of $\Pic(V)$ for different open subsets $V$ of $\Spec^\bullet M_\triangle$, where we are able to rewrite the groups involved in the \v{C}ech complex in terms of crosscut complexes of faces in the simplicial complex.
    
    We then try to use the tools developed so far in order to understand binoid algebras and cohomology of their sheaf of units. In order to do so, we introduce a new topology in Definition~\ref{definition:combinatorial-topology}, namely the combinatorial topology on $\Spec\KK[M]$, that presents some interesting properties. For example, in Proposition~\ref{proposition:split-sheaves-comb-top} we show that if $M$ is a reduced, torsion-free and cancellative binoid in this topology we can decompose the sheaf of units of its algebra in a direct sum
    \[
    (\O_{\KK[M]}^*)^{\comb}\cong (\KK^*)^{\comb}\oplus (i_*\O^*_M)^{\comb},
    \]
    where $\KK^*$ is the constant sheaf, as usual.
    Since the functor between binoids and their algebra induces a continuous map between $\Spec M$ and $\Spec\KK[M]$ (with the Zariski topology) we are able to look at pushforward of sheaves along this map. Indeed, in Theorem~\ref{theorem:push-forward-exact} we are able to prove that this pushforward is exact, thanks to the combinatorial topology. This leads us to prove in Proposition~\ref{proposition:cohomology-pushforward-affine} that the Zariski cohomology of any pushforwarded sheaves vanishes on the affine spectrum $\Spec\KK[M]$.
    We talk about the non-reduced case and in Theorem~\ref{theorem:1+N-acyclic} we show that cohomology in higher degree of the sheaf of unipotent units vanishes for affine noetherian schemes, i.e.\ $\H^i(\Spec R, 1+\N)=0$ for all $i\geq 1$.
    
    We then move to the case of the Stanley-Reisner rings, for which we are able to prove a vanishing result for the affine case, in Theorem~\ref{Thm:vanishingspecstanleyreisner},
    \[
    \H^j(\KK[\triangle], \O^*)=0,
    \]
    for all $j\geq 1$. We then prove in Theorem~\ref{theorem:SR-splits-comb-top} that for a Stanley-Reisner ring the sheaf of units splits in a constant part and in the pushforward of the combinatorial units in the combinatorial topology
    \[
    \O^*_{\KK[\triangle]}=\KK^*\oplus i_*\O^*_{M_\triangle}.
    \]
    This leads us to prove Theorem~\ref{theorem:cohomology-stanley-reisner}, that states that we can compute the Zariski cohomology of the sheaf of units entirely in terms of simplicial cohomology, both usual and reduced as
    \[
    \H^j(\Spec^\bullet(\KK[\triangle]), \O^*_{\KK[\triangle]})=\H^j(\triangle, \KK^*)\oplus \bigoplus_{v\in V}\widetilde{\H}^{j-1}(\lk_\triangle(v), \ZZ),
    \]
    for $j\geq0$.
    This ultimately leads to a generalization to any monomial ideal, and we prove in Theorem~\ref{theorem:split-non-reduced-monomial} that we can compute the cohomology of the sheaf of units on the punctured spectrum of $\KK[M]$, $X$, with the Zariski topology, as simplicial cohomology, reduced simplicial cohomology and \v{C}ech cohomology on a nice covering,
    \[
    \H^j(X, \O^*_X)=\H^j(\triangle, \KK^*)\oplus \bigoplus_{v\in V}\widetilde{\H}^{j-1}(\lk_\triangle(v), \ZZ) \oplus \vH^j(\{D(X_i)\}, 1+\N)
    \]
    for any $j\geq 0$, thus yielding us some purely combinatorial non-vanishing results for $\Pic^{\loc}\KK[M]$ in Corollary~\ref{corollary:non-vanishing}.

    \newpage
    \subsection*{Outline of the work} We describe here and summarize what is in every chapter and, right after its outline, we report some of the main results that it contains, as an easy reference for the reader.
    
    \vspace{2ex}
    
    {\large\textbf{The first chapter}} of the thesis is devoted to introduce the notion of scheme of binoids, in a fashion similar to the classical description for rings.
    We introduce prime ideals, affine schemes, Zariski topology and structure sheaves.
    Then we talk about more general binoid schemes, namely schemes of finite type, and we introduce sheaves of abelian groups on them.
    We exploit the combinatorics of schemes of binoids in order to prove some vanishing results of the cohomology of these sheaves, that lead us to prove that any affine covering of a scheme of binoids can be used to compute sheaf cohomology through \v{C}ech cohomology on it.
    Since a binoid has only one maximal ideal, we introduce the notion of punctured spectrum of the binoid and we discuss its punctured \v{C}ech-Picard complex, i.e.\ the \v{C}ech complex of the sheaf of units $\O^*_M$ with respect to the covering $\{D(x_i)\}$ of $\Spec^\bullet M$.
    We conclude the chapter with a discussion about Cartier divisors, Weil divisors, and some results of isomorphism, leading us to a definition of divisor class group of a general binoid as the Picard group of a specific open subset of its spectrum.
    
    \paragraph{Results.} In Proposition~\ref{proposition:stalk-binoid-scheme} we prove that the stalk of $\O_M$ at a prime ideal $\p$ can be computed directly as the image of the sheaf on the open subset defined by the variables outside $\p$,
    \[
    \O_{M, \p}=M_\p=\O_M(D(x_{k+1}+ \dots+ x_n)),
    \]
    where $x_{k+1}, \dots, x_n\notin \p$.
    
    In Theorem~\ref{theorem:vanishing-combinatorial-cohomology-affine} we then prove for binoids a result already known for monoids, namely that
    \[
    \H^j(\Spec M, \F)=0
    \]
    for any binoid $M$, any sheaf of abelian groups $\F$ and any $j\geq 1$.
    
    In Proposition~\ref{proposition:injection-PicU-PicU'} we prove that for any two open subsets of $\Spec M$ that contain the same prime ideals of height one, there is an injection
    \[
    \begin{tikzcd}[baseline=(current  bounding  box.center)]
    \Pic(U)\rar& \Pic(U').
    \end{tikzcd}
    \]
    In Proposition~\ref{proposition:normal-dimension-2} we prove that if $M$ is normal of dimension at least 2 and regular in codimension 1 then
    \[
    \Pic^{\loc}(M\wedge \NN^\infty)=0.
    \]
    In Proposition~\ref{proposition:PicV-ClV} we extend this result and show that Picard group and divisor class group might agree on larger open subsets, i.e.\ if $M_\p$ is regular for all $\p\in V$ open subset of $\Spec M$ then
    \[
    \Pic(V)\cong \Cl(V)
    \]
    and we get, as Corollary~\ref{corollary:isolated-singularity}, that if $M$ is an isolated singularity of dimension at least 2, then $\Cl(M)\cong\Pic^{\loc}(M)$.

    \vspace{6ex}
    
    {\large\textbf{The second chapter}} of the thesis introduces the notion of simplicial binoid, whose algebra is the Stanley-Reisner algebra of a simplicial complex.
    We talk thoroughly about the deep combinatorics involved in the spectra of simplicial binoids, relating prime ideals and faces of the simplicial complex.
    We can also describe the sheaf of units in term of the combinatorics of the simplicial complex, and we will present a decomposition of the sheaf $\O^*_{M_\triangle}$ into smaller sheaves $\O^*_{v}$ indexed by the vertices in the simplicial complex.
    We then show how this smaller sheaves can be seen as the extension by zeros of the constant sheaf $\ZZ$ along the embedding $D(v)\hookrightarrow \Spec^\bullet M_\triangle$, and how the open set $D(v)$ is homeomorphic to $\Spec M_{\lk_\triangle(v)}$, the spectrum of the link of $v$ in the simplicial complex $\triangle$.
    Then we sum up the results and state an explicit formula for the computations of the cohomology of $\O^*_v$ in terms of reduced simplicial cohomology of $\lk_\triangle(v)$ with coefficients in $\ZZ$.
    Lastly, we carry on explicit computations on many examples, and we conclude with the description of the Picard group for some special subsets of $\Spec M_\triangle$, including the one to compute the divisor class group defined at the end of the first chapter.
    
    \paragraph{Results.} 
    In Corollary~\ref{corollary:cech-covering-nerve} we prove that the nerve of the covering $\{D(x_i)\}$ of $\Spec^\bullet M_\triangle$ is the simplicial complex itself. This leads us to Theorem~\ref{thm:simplicial-cohomology}, where we prove, through a direct study of the involved chain complexes, that the \v{C}ech cohomology of the constant sheaf $\ZZ$ on the covering $\{D(x_i)\}$ of $\Spec^\bullet M_\triangle$ is simplicial cohomology
    \[
    \H^i(\triangle, \ZZ)=\vH^i(\{D(x_i)\}, \ZZ)
    \]
    and then in Corollary~\ref{corollary:sheaf-cohomology} we notice that the latter actually computes sheaf cohomology, so we are successfully computing sheaf cohomology as simplicial cohomology
    \[
    \H^i(\Spec^\bullet M_\triangle, \ZZ)=\H^i(\triangle, \ZZ).
    \]
    In the next Corollary~\ref{corollary:cech-covering-nerve-simplicial-cohomology} we extend this to any open subset, obtaining that, for a specific cover $\mathscr{V}$ of $\mathcal{ U}\subseteq\Spec M$ open subset, we can compute sheaf cohomology as simplicial cohomology
    \[
    \H^i(U, \ZZ)\cong{\vH^i({\mathscr{V}}, \ZZ)}\cong{\H^i(\nerve(\mathscr{V}), \ZZ)}.
    \]
    
    In Theorem~\ref{theorem:localization-simplicial-binoid-multiple} we relate the localization of a simplicial binoid $M_\triangle$ at a face $F$ with the link of that face in $\triangle$, thus proving that
    \[
    (M_\triangle)_{x_F}\cong M_{\lk_\triangle(F)}\wedge(\ZZ^F)^\infty.
    \]
    In Theorem~\ref{theorem:semifree-decomposition-of-sheaf} we prove that, for any semifree binoids (like simplicial binoids), we can split the sheaf of units into smaller sheaves, indexed by the elements of the semibasis
    \[
    \begin{tikzcd}[baseline=(current  bounding  box.center), cramped, row sep = 0ex,
    /tikz/column 1/.append style={anchor=base east},
    /tikz/column 2/.append style={anchor=base west}]
    \displaystyle\bigoplus_{i=1}^n\O^*_{x_i} \rar & \O^*_{M}.
    \end{tikzcd}
    \]
    By putting this things together we are able, several pages later, to prove the main result of this chapter, Theorem~\ref{theorem:cohomology-simplicial-complex}, namely that the cohomology of the sheaf of units of a binoid can be computed entirely in terms of (reduced) simplicial cohomology
    \[
    \H^j\left(\Spec^\bullet M_\triangle, \O^*_{M_\triangle}\right)\cong\bigoplus_{i\in V}\widetilde{\H}^{j-1}\left(\lk_\triangle(i), \ZZ\right),
    \]
    for $j\geq0$.
    
    \vspace{6ex}
    
    {\large\textbf{The third chapter}} of the thesis contains results about interactions and interplays between the combinatorial side of binoids and the algebraic side of their rings.
    We present a functor between $M$-sets and $\KK[M]$-modules, that give rise to an injection between the spectra.
    This allows us to describe a new topology on $\Spec\KK[M]$, that we call combinatorial topology, that is coarser than the Zariski topology.
    Thanks to the injection of $\Spec M$ into $\Spec\KK[M]$ we can talk about pushforward of sheaves, and when the second space is equipped with the combinatorial topology we have that the sheaf $\O^*_{\KK[M]}$ splits into a part which depends on the field and a combinatorial part that is independent from it.
    Since we are interested in $\O^*_{\KK[M]}$ and its cohomology with respect to the Zariski topology, we relate the cohomology of this sheaf in the Zariski and in the combinatorial topology and we show that in some cases (namely, when the combinatorial open subsets define and acyclic covering) the second is fine enough to compute sheaf cohomology in the Zariski topology via \v{C}ech cohomology on a combinatorial covering.
    We cover here the cases of the affine space and the affine normal toric varieties, and we provide a counterexample of a non normal toric variety in which the combinatorial topology is not enough.
    We conclude the chapter with some explicit results about the sheaf $\O^*_{\KK[M]}$ in the non reduced case, and we write down some vanishing condition on the reduction that allow us to use the combinatorial topology for the computations also in the non reduced case.
    
    \paragraph{Results.} In this Chapter the main result is the introduction of the combinatorial topology, followed by the proof in Proposition~\ref{proposition:split-sheaves-comb-top} that for a reduced, torsion-free and cancellative binoid $M$ with binoid algebra $\KK[M]$ there is a splitting of sheaves in the combinatorial topology
    \[
    (\O_{\KK[M]}^*)^{\comb}\cong (\KK^*)^{\comb}\oplus (i_*\O^*_M)^{\comb}.
    \]
    The next important result is Proposition~\ref{proposition:hzar-hcomb}, that states that if $U=D(\mathfrak{A})$ is a combinatorial open subset of $\Spec \KK[M]$ with an acyclic covering of combinatorial open subsets, then
    \[
    \H^j_{\mathrm{Zar}}(U, \F)=\H^j_{\comb}(U, \F),
    \]
    for all $j\geq 0$.
    
    There is a continuous injection $i$ of $\Spec M$ in $\Spec \KK[M]$ and in Theorem~\ref{theorem:push-forward-exact} we prove that the pushforward of a sheaf along this embedding is exact. This allows us to prove in Proposition~\ref{proposition:cohomology-pushforward-affine} that
    \[
    \H^j(\Spec \KK[M], i_*\F)=0
    \] for all $j\geq 1$ and all sheaves of abelian groups $\F$ on $\Spec M$.
    
    The last part of the Chapter is devoted to the non-reduced case, where we are able to prove in Theorem~\ref{theorem:1+N-acyclic} that cohomology of the sheaf of unipotent units vanishes in higher degrees
    \[
    \H^i(X, 1+\N)=0,
    \]for all $i\geq 1$.
    
    \vspace{6ex}
    
    {\large\textbf{The fourth chapter}}, last of this work, deals mainly with explicit results about Stanley-Reisner rings.
    We study the spectra of these rings, explicitly relating them with combinatorial properties of the simplicial complexes.
    We follow the steps of the second chapter, on simplicial binoids, in order to write down the punctured \v{C}ech-Picard complex of a Stanley-Reisner ring.
    We prove that cohomology of positive degree of the sheaf of units vanishes for an affine spectrum of the Stanley-Reisner rings, then we show that it vanishes on affine open subsets $D(X_{i_0}\cdots X_{i_k})$ and this proves that $\{D(X_i)\}$ is an acyclic covering of $\Spec^\bullet \KK[M]$ for this sheaf, like it was for the simplicial binoid.
    We are able to split the sheaf of units on the combinatorial topology as a direct sum of smaller sheaves, and to prove that computing cohomology in the combinatorial topology is the same as computing it in the Zariski topology.
    This allows us to give again explicit formulas for the computation of the cohomology of $\O^*_{\KK[M_\triangle]}$, that relates sheaf cohomology of $\O^*_{\KK[M_\triangle]}$ with (reduced) simplicial cohomology. Since we are dealing with $\KK$-algebras, this time also the field will play a role, but again we are able to tame it and reduce it to simplicial cohomology.
    The last part of the Chapter is devoted to some results about the non reduced monomial case, and we give some examples that show how harder it can become, but still our methods will provide us some tools to determine when this cohomology is non zero, allowing us to prove some non vanishing results.
    
    \paragraph{Results.} In Proposition~\ref{proposition:covering-algebraic-spectrum} and Corollary~\ref{corollary:nerve-covering-stanley-reisner} we prove that the covering $\{D(X_i)\}$ of the punctured spectrum of the Stanley-Reisner ring has the same intersection pattern as the covering $\{D(x_i)\}$ of the punctured spectrum of the simplicial binoid, thus proving that the nerve of the covering $\{D(X_i)\}$ of $\Spec^\bullet\KK[\triangle]$ is again the simplicial complex itself.
    
    In Theorems~\ref{Thm:vanishingspecstanleyreisner} and \ref{Thm:vanishingspecstanleyreisnerA*}, we prove that
    \[
    \H^j(\KK[\triangle], \O^*)=\H^j(\KK[\triangle][y_1^{\pm1}, \dots, y_m^{\pm1}], \O^*)=0,
    \] for all $j\geq 1$, thus yielding the very special case $\Pic(\KK[M])=0$.
    
    In Lemma~\ref{theorem:localization-stanley-reisner-combinatorial-multiple} we prove that, for a face $F$ in $\triangle$,
    \[
    \KK[\triangle]_{X_F}\cong \KK[\triangle'][\ZZ^F]
    \]
    where $\triangle'=\lk_\triangle(F)$.
    
    This leads us to Theorem~\ref{theorem:SR-splits-comb-top}, where we prove that in the combinatorial topology the sheaf of units of a Stanley-Reisner ring splits as
    \[
    \O^*_{\KK[\triangle]}=\KK^*\oplus i_*\O^*_{M_\triangle}.
    \]
    
    In Theorem~\ref{theorem:cohomology-stanley-reisner} we give again a description of the cohomology of $\O^*_{M_\triangle}$ in the Zariski topology entirely in terms of (reduced) simplicial cohomology
    \[
    \H^j(\Spec^\bullet(\KK[\triangle]), \O^*_{\KK[\triangle]})=\H^j(\triangle, \KK^*)\oplus \bigoplus_{v\in V}\widetilde{\H}^{j-1}(\lk_\triangle(v), \ZZ),
    \]
    for any $j\geq 0$.
    
    In the last Section of this Chapter we deal with some results in the monomial non-reduced case, where we are able to split again the cohomology of the sheaf of units in a part that is purely combinatorial. Although we cannot describe combinatorially the unipotent units, we are able to address the problem of computing their via \v{C}ech cohomology on a particular covering. The last Corollary~\ref{corollary:non-vanishing} gives some non-vanishing results that are again entirely combinatoric for the cohomology of the sheaf of units in this situation.
    
    \newpage
    \subsection*{Notations and general assumptions}
    In what follows, we try to stick to the following notations.
    \begin{itemize}
        \item When we write small caps $x$'s, we usually denote variables in the binoid, while big caps $X$'s are reserved for variables in the binoid algebras.
        \item The same goes with prime ideals, small Fraktur letters (like $\p$) are for prime ideals in the binoids, while big cap Fraktur letters (like $\mathfrak{P}$) are for (prime) ideals in the ring; if there is no confusion, we might just use $I$ for ideals.
        \item We tend to oppose \lq combinatorial'' and \lq algebraic''; with the first we refer to properties of the binoids or of the underlying combinatorial objects (simplicial complexes, polytopes, monoids, etc.), while with the second we refer to properties of rings and algebras.
        \item When we refer to a scheme $X$, either combinatorial or algebraic, we are referring to a subscheme of some affine scheme (of finite type), e.g.\ $X=\Spec R\smallsetminus Y$, for some $Y$ subscheme, with the usual restriction of sheaves.
        \item The previous assumption will save us to repeat always the hypothesis that we would assume anyway, because we are always considering separated schemes of finite type.
        \item Unless otherwise specified, moreover, we are always considering finitely generated, commutative binoids and finitely generated algebras.
        \item The binoids that we are considering, unless otherwise specified, are also always torsion-free and cancellative. We do not assume that they are integral, reduced or normal. Indeed, most of the binoids considered do not satisfy any of these hypothesis
    \end{itemize}
    
    \newpage

    \section*{Acknowledgements}
    
    I want first to thank my supervisor Holger Brenner, who helped me in choosing and then dealing with the topic of this thesis. His constant support and understanding have been fundamental for the success of the project. I am really grateful to him; it is partly his fault, that I spent three great years in Osnabrück, dealing with the passion and joy and despair that come with research. He was there in the dark times, when nothing seemed to work, understanding, supporting and suggesting me alternative approaches and new tools to try. He was there in the light times, rejoicing with me and sharpening the results that we found. He could always find some spare time to talk about mathematics and personal problems, and this is something for which I could not thank him enough.
    
    Secondly, I want to thank the other people with whom I crossed my path here at the mathematics department in Osnabrück; talking with them really inspired me a lot during these three years, helping me to grow, both culturally and professionally.
    A particular thought goes to the postdoctoral researchers Lukas Katth\"an, Sara Saeedi Madani and Sean Tilson, whose experience and mutual collaboration, as well as open-mindedness and kindness, helped me find the strength and inspiration necessary to ask dumb questions and reach this point of my career.
    
    A special thank goes to Gilles Bonnet, with whom I shared many experiences and good times; his amazing ability to connect friends from around the world turned to be really helpful during our time here in Osnabrück, creating a positive environment in which life abroad was very easy. I want to thank Richard Sieg, my precious officemate, that was always there when I wanted to talk and with whom I gladly shared the craziness of mixing combinatorics and debugging. I want to also thank Alejo Lòpez Ávila, with whom I shared the office at the very beginning, and together we managed to get a grasp on German bureaucracy.
    
    I want to thank all the people involved in the Graduiertenkolleg; although we did not all spent much time together, it was surely fun and interesting to meet all of them and intersect our different cultures; I am really thankful for having been part of this DFG project.
    
    A special thank goes to {Simone B\"ottger}, who did her PhD here before me and developed the solid basement of the theory of binoids, \mbox{Bayarjargal} Batsukh, who built some walls on this basement, and the professors {Martina Juhnke-Kubitzke}, {Tim R\"omer}, {Markus Spitzweck}, {Oliver Röndigs} and {Matthias Reitzner} for their knowledge and kindness.
    
    My gratitude also goes to Alessio Caminata and Elisa Fascio, with whom I shared many Italian dinners here in Osnabrück; it was a surprise to meet such nice persons with which I could feel connected so easily.
    
    Of the many people that I met during conferences and schools around the world, I want to thank in particular Elisa Palezzato, Michela Ceria and Paolo Lella, whose work and life decisions inspired me a lot; a last thank goes to professors Mario Valenzano and Margherita Roggero, in Torino, who helped me doing the first steps of my career during and after the master degree.
    
    There are also many people who are not related to the mathematics department that I would like to thank.
    I want to begin with the people in Osnabrück, in particular Ann-Christin, Marlene, Santiago, Maren and Arushi; wonderful persons with whom I had the pleasure and honour to share many adventures in and around Osnabrück.
    A special thank goes to Serena, Miriam and  Özge Nilay; although we went along separated ways already some time ago, I always look with pleasure and happiness at our time together here.

    My life-lasting friendships hold a special place in my mind; Miriam, Paola, Matteo, Stefano, Sandro, Valentina, Valentina, Alice, Michele, Daniele, Alessandro, Fiorella. Although we now live in different parts of the world, I always feel connected to you.
    
    The last two special thanks go to my parents and brother, who always supported me and my choices, and my girlfriend Giulia, who bears everyday the weight of being next to me.

    \begin{flushright} Davide Alberelli\end{flushright}
    
    \restoregeometry

    \newgeometry{textwidth=14.5truecm,textheight=21.5truecm,top=40mm,includeheadfoot}
    
    \clearpage
    
    %Modifica stile pagina con fancyhdr, copiata dalla triennale
    \pagestyle{fancy}
    % with this we ensure that the chapter and section
    % headings are in lower-case.
    \renewcommand{\chaptermark}[1]{%
        \markboth{#1}{}}
    \renewcommand{\chaptermark}[1]{%
        \markright{\thesection\ #1}}
    \fancyhf{} % delete current header and footer
    \fancyhead[R]{\bfseries\thepage}
    \fancyhead[L]{\bfseries\rightmark}
    %\fancyhead[L]{\bfseries\leftmark}
    \renewcommand{\headrulewidth}{0.5pt}
    \renewcommand{\footrulewidth}{0pt}
    \addtolength{\headheight}{3.5pt} % space for the rule
    \fancyheadoffset{0pt}
    
    \fancypagestyle{plain}{%
        \fancyhead{} % get rid of headers on plain pages
        \renewcommand{\headrulewidth}{0pt} % and the line
    }
    
    \mainmatter
    
\chapter{Schemes of binoids}

In our attempt to study some classical objects in Algebraic Geometry in the context of binoids, we will need to define combinatorial counterparts to these objects. As classically done for rings, we start by defining the spectrum of a binoid, then we define the Zariski topology, what a sheaf of binoids is and then we investigate other sheaves. We will also prove some vanishing Theorems for the cohomology of sheaves of abelian groups.

Unless otherwise stated, we always assume that our binoids are finitely generated. This condition is not necessary in many points, but these are the objects that we are interested in, because they yield finitely generated binoid algebras.

\section{Definitions}
Our basic objects will be schemes of binoids. We start from a binoid and we try to develop a kind of algebraic geometry revolving around binoids instead of rings.

\subsection{\texorpdfstring{$\Spec M$}{spectrum of M}}
The first object that we want to define is the spectrum of a binoid. For an extended treatment we refer the reader to \cite[Chapter 4]{Boettger}.

\begin{definition}\label{definition:prime-ideal}
    An ideal $\gls{primeideal}\subseteq M$ is \emph{prime} if $M\smallsetminus \p$ is a monoid. Equivalently, if for any $a, b\in M$ such that $a+b\in\p$ we have that $a\in\p$ or $b\in\p$.
\end{definition}

\begin{definition}
    Let $M$ be a binoid. The set of prime ideals of $M$ is the spectrum of $M$, denoted by $\gls{spectrum}$.
\end{definition}

The spectrum of $M$ is naturally a poset ordered by inclusion, as the next Proposition shows.

\begin{definition}\label{definition:maximal-ideal-binoid}
    $\gls{M-plus}$ is the maximal ideal of $M$, $M_+=M\smallsetminus \gls{Mstar}$, where $M^*$ is the group of units of $M$. This is a prime ideal, because if $a$ and $b$ are both units with respective inverses $c$ and $d$, then $a+b$ has inverse $c+d$.
\end{definition}
\begin{proposition}[{\cite[Proposition 2.2.3]{Boettger}}]\label{proposition:spectrum-is-join-semilattice}
    The union of a set of prime ideals is again prime, so $\Spec M$ is a join-semilattice with largest element $M_+$.
\end{proposition}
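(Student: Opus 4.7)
The plan is to prove the two claims in sequence, starting with the union statement, which is the substantive content, and deriving the lattice structure as a direct consequence.

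First I would verify that if $\{\p_i\}_{i\in I}$ is any family of prime ideals of $M$, then $\p := \bigcup_{i\in I}\p_i$ is again an ideal: it contains $\infty$ (as each $\p_i$ does), and if $a\in\p$ and $m\in M$, then $a\in\p_i$ for some $i$, hence $a+m\in\p_i\subseteq\p$. Then I would use the element-wise characterization of primality from Definition~\ref{definition:prime-ideal}: given $a,b\in M$ with $a+b\in\p$, pick an index $i$ with $a+b\in\p_i$; since $\p_i$ is prime, either $a\in\p_i\subseteq\p$ or $b\in\p_i\subseteq\p$. This establishes that $\p$ is prime.

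Next I would deduce the semilattice statement. Inclusion makes $\Spec M$ into a poset, and by the previous paragraph, for any finite (indeed arbitrary) subfamily of prime ideals, their set-theoretic union is a prime ideal that visibly contains each $\p_i$ and is contained in any prime ideal containing all the $\p_i$. Thus the union serves as the join, so $\Spec M$ is a join-semilattice.

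Finally, I would show $M_+$ is the largest element. By Definition~\ref{definition:maximal-ideal-binoid}, $M_+$ is itself a prime ideal, so it belongs to $\Spec M$. For any prime $\p\in\Spec M$, the complement $M\smallsetminus\p$ is a monoid, hence contains $0$, so $0\notin\p$. Consequently $\p$ cannot contain a unit $u\in M^*$: otherwise $0=u+(-u)\in\p$ because $\p$ is an ideal, a contradiction. Thus $\p\subseteq M\smallsetminus M^*=M_+$.

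The argument is essentially a direct verification; no step is really an obstacle. The only point requiring a moment of care is the maximality of $M_+$, where one must remember that primeness forces $0\notin\p$, from which the exclusion of units follows.
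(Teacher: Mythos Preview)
Your argument is correct. The paper does not supply its own proof of this statement; it is quoted with a citation to B\"ottger's thesis and left unproved, so there is no approach in the paper to compare against. Your verification that a union of primes is prime, that this union realises the join, and that no prime can contain a unit (hence every prime lies in $M_+$) is exactly the standard direct argument one would expect. One tiny remark: to conclude that the union $\p$ is a \emph{proper} ideal (so that $M\smallsetminus\p$ really is a monoid), you implicitly need $0\notin\p$, which follows since $0\notin\p_i$ for each $i$; you note this fact later for the maximality of $M_+$, but it is worth invoking it already in the first paragraph.
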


\begin{remark}\label{remark:generators-prime-ideals}
    It is elementary to prove that every prime ideal is generated by a subset of the generators of the binoid. This fact allows us to write the next criterion for prime ideals of semifree binoids.
\end{remark}

\begin{definition}\label{definition:semifree}
    Let $M$ be a nonzero commutative binoid. We say that $M$ is \textbf{semifree} with \textbf{semibasis} $(a_i)_{i\in I}$ if $M$ is generated by $\{a_i\mid i\in I\}$ and every element $f\in M$ can be written uniquely as $f=\sum_{i\in I} n_ia_i$ with $n_i=0$ for almost all $i\in I$. The set of $a_i$ such that $n_i\neq 0$ is called the \emph{support} of $f$, $\gls{supp}(f)=\{a_i\mid n_i\neq 0\}$.
\end{definition}

\begin{proposition}\label{proposition:criterion-prime-ideals}
    Let $M=(\gls{genset}\mid \gls{relset})$ be a positive semifree binoid. A subset of the generators $G\subseteq \mathcal{G}$ generates a prime ideal if and only if for every relation $r_L=r_R\in\ \mathcal{R}$, $G\cap\supp r_L\neq \varnothing$ if and only if $G\cap\supp r_R\neq \varnothing$.
\end{proposition}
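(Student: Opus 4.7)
The plan is to use the semifree structure to pin down the ideal $(G)$ explicitly and then reduce primality to a closure condition that can be read off directly from the relations.

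First I would use the semifree hypothesis together with Remark~\ref{remark:generators-prime-ideals} to show that, for any $f\in M\smallsetminus\{\infty\}$, one has $f\in(G)$ if and only if $\supp(f)\cap G\neq\varnothing$: if $f=g+h$ with $g\in G$ and $h\in M$, uniqueness of the semibasis expansion forces $g\in\supp(f)$, and the converse is immediate by writing $f$ through its semibasis expansion and pulling out any summand in $G$. Consequently $M\smallsetminus(G)=\{f\in M^\bullet:\supp(f)\cap G=\varnothing\}$, and $(G)$ is prime exactly when this set is closed under the binoid operation.

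For $f,f'$ in the complement, the free-binoid sum satisfies $\supp(f+f')=\supp(f)\cup\supp(f')$, which is automatically disjoint from $G$, so closure can fail only when $f+f'=\infty$ in $M$. In a positive semifree binoid every non-trivial relation $r_L=r_R\in\mathcal{R}$ must identify both sides with $\infty$ (otherwise two distinct formal sums would share the same unique semifree expansion, a contradiction); hence the $\infty$-class in the free binoid consists exactly of elements that coefficient-wise majorize some relation side. The problem thus reduces to controlling when a free-binoid element whose support is disjoint from $G$ can majorize some relation side.

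For the backward direction, assuming the biconditional and supposing toward contradiction that $f+f'=\infty$ with $f,f'\in M\smallsetminus(G)$, then $f+f'$ majorizes some relation side $r$, forcing $\supp(r)\subseteq\supp(f)\cup\supp(f')$ to be disjoint from $G$; a minimality argument on the relation together with the biconditional applied to the paired side then yields the contradiction. For the forward direction I would argue contrapositively: given a relation violating the biconditional, say with $\supp(r_L)\cap G=\varnothing$ but $\supp(r_R)\cap G\neq\varnothing$, I would split $r_L$ into two non-$\infty$ summands $f,f'\in M$ with supports contained in $\supp(r_L)\subseteq\mathcal{G}\smallsetminus G$, so that $f,f'\in M\smallsetminus(G)$ while $f+f'=r_L=r_R=\infty$ in $M$, contradicting primality. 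The main obstacle is producing this splitting: guaranteeing both summands are genuinely non-$\infty$ requires using positivity of $M$ together with the choice of a minimal relation, so that no sub-relation has already collapsed one of the pieces to $\infty$.
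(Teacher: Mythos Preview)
The paper states this proposition without proof, so there is no argument of the paper to compare against; I can only evaluate your proposal on its own.

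Your reductions are correct: for $f\neq\infty$ in a semifree binoid one has $f\in(G)\Leftrightarrow\supp(f)\cap G\neq\varnothing$, primality becomes closure of the complement, and closure can fail only through sums that hit $\infty$. Your observation that in a genuinely semifree binoid every nontrivial relation forces both sides to equal $\infty$ in $M$ is also correct.

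The gap is in the forward direction, and it is not just the splitting obstacle you anticipate. Your contrapositive takes a violating relation with $\supp(r_L)\cap G=\varnothing$ and $\supp(r_R)\cap G\neq\varnothing$, but the symmetric case forces you to split a side that is \emph{formally} $\infty$, which is impossible. Worse, read literally with $\supp(\infty)=\varnothing$, the proposition is false in that situation: for $M=(x_1,x_2\mid x_1+x_2=\infty)$ and $G=\{x_1\}$, the ideal $\langle x_1\rangle$ is prime (its complement $\{nx_2:n\ge 0\}$ is a monoid), yet for the unique relation one has $G\cap\supp(r_L)=\{x_1\}\neq\varnothing$ while $G\cap\supp(r_R)=\varnothing$, so the biconditional fails. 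No argument can close this.

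This is consistent with the paper's own usage: Example~\ref{example:spectrum-of-x+y=z+w} applies the criterion to $(x,y,z,w\mid x+y=z+w)$, which is \emph{not} semifree (the element $x+y=z+w$ has two semibasis expressions) and whose relation has neither side equal to $\infty$. The intended scope seems to be positive binoids presented by relations between non-$\infty$ formal expressions, or equivalently the criterion should read ``$r_L$ lies in the ideal of the free binoid generated by $G$ iff $r_R$ does'', with $\infty$ always lying in that ideal. Under that reading your plan works, and the forward-direction splitting worry dissolves: writing $r_L=a_{j_1}+\dots+a_{j_k}$ with each $a_{j_\ell}\notin G$ and peeling off one generator at a time via primality forces some $a_{j_\ell}\in(G)$, a contradiction, without ever needing the intermediate sums to be non-$\infty$.
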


\begin{remark}\label{algorithm:criterion-prime-ideals}
    This criterion gives rise to a naive algorithm to compute the spectrum of a positive semifree binoid. Namely, we check the criterion for all the subsets of $\mathcal{G}$, starting from the singletons. If we find that $\p$ and $\mathfrak{q}$ are two different prime ideals, we can avoid checking the criterion for $\p\cup \mathfrak{q}$, thanks to Proposition~\ref{proposition:spectrum-is-join-semilattice}.
\end{remark}
An application of this criterion will be presented in Example~\ref{example:spectrum-of-x+y=z+w} below.

\begin{example}[The affine space]\label{example:combinatorial-affine-space}We begin our series of Examples with a very fundamental one, namely the combinatorial equivalent of the $n$-dimensional affine space. The binoid we consider is ${(\NN^{n})}^\infty=(x_1, x_2, \dots, x_n\mid \varnothing)$. Since there are no relations involved, we can easily see that any subset of $\{x_1, \dots, x_n\}$ generates a prime ideal. In particular then the spectrum of this binoid is isomorphic to the power set of $[n]$. When $n=3$ we can easily draw it as a $\subseteq$-poset like
    
    \vspace{1ex}
    \raisebox{\height}{\hspace{\mathindent}$\Spec {(\NN^3)}^\infty=$\begin{tikzpicture}[baseline={(current bounding box.center)}]
        \node (xyz) at (0,0){$\langle x_1, x_2, x_3\rangle$};
        \node (xy) at (-2,-1){$\langle x_1, x_2\rangle$};
        \node (xz) at (0,-1){$\langle x_1, x_3\rangle$};
        \node (yz) at (2,-1){$\langle x_2, x_3\rangle$};
        \node (x) at (-2,-2){$\langle x_1\rangle$};
        \node (y) at (0,-2){$\langle x_2\rangle$};
        \node (z) at (2,-2){$\langle x_3\rangle$};
        \node (infty) at (0,-3){$\langle \infty\rangle$};
        
        \draw[-](xyz) to (xy);
        \draw[-](xyz) to (xz);
        \draw[-](xyz) to (yz);
        \draw[-](xy) to (x);
        \draw[-](xy) to (y);
        \draw[-](xz) to (x);
        \draw[-](xz) to (z);
        \draw[-](yz) to (y);
        \draw[-](yz) to (z);
        \draw[-](x) to (infty);
        \draw[-](y) to (infty);
        \draw[-](z) to (infty);
        \end{tikzpicture}}
\end{example}

\begin{example}\label{example:spectrum-of-x+y=z+w}
    We study the spectrum of the binoid $M=(x, y, z, w\mid x+y=z+w)$. By meaning of the Algorithm in Remark~\ref{algorithm:criterion-prime-ideals} we can easily build the spectrum of this binoid and represent it as a $\subseteq$-poset.
    
    Let us start from a subset of the generators, for example the singleton $\{x\}$. There exists a relation such that $x$ is in the left hand side, but not in the right hand side. So $\{x\}$ does not generate a prime ideal.
    Let us consider $\{x, z\}$. The unique relation in this binoid has elements from this set on both sides of the equal sign, so this set defines a prime ideal.
    
    With a single relation it is easy to see the prime ideals, because is suffices to select all the possible combinations of at least one element from each side.
    
    In this case, the spectrum of $M$ is
    
    \vspace{1ex}
    \raisebox{\height}{\hspace{\mathindent}$\Spec M=$\begin{tikzpicture}[baseline={(current bounding box.center)}]
        \node (xyzw) at (0,0){$\langle x, y, z, w\rangle$};
        \node (xyz) at (-3,-1){$\langle x, y, z\rangle$};
        \node (xyw) at (-1,-1){$\langle x, y, w\rangle$};
        \node (xzw) at (1,-1){$\langle x, z, w\rangle$};
        \node (yzw) at (3,-1){$\langle y, z, w\rangle$};
        \node (xz) at (-3,-2){$\langle x, z\rangle$};
        \node (xw) at (-1,-2){$\langle x, w\rangle$};
        \node (yz) at (1,-2){$\langle y, z\rangle$};
        \node (yw) at (3,-2){$\langle y, w\rangle$};
        \node (infty) at (0,-3){$\langle \infty\rangle$};
        
        \draw[-](xyzw) to [bend right=5](xyz);
        \draw[-](xyzw) to [bend right=5](xyw);
        \draw[-](xyzw) to [bend left=5](xzw);
        \draw[-](xyzw) to [bend left=5](yzw);
        \draw[-](xyz)--(xz);
        \draw[-](xyz) to [bend left = 5](yz);
        \draw[-](xyw)--(xw);
        \draw[-](xyw) to [bend left = 5](yw);
        \draw[-](xzw) to [bend right = 5](xw);
        \draw[-](xzw) to [bend left = 5](xz);
        \draw[-](yzw) to [bend right = 5](yz);
        \draw[-](yzw) to [bend left = 5](yw);
        \draw[-](xz) to [bend right = 5](infty);
        \draw[-](xw) to [bend right = 5](infty);
        \draw[-](yz) to [bend left = 5](infty);
        \draw[-](yw) to [bend left = 5](infty);
        \end{tikzpicture}}
\end{example}

\begin{example}\label{example:spectrum-non-integral-binoid}
    We study now the spectrum of a reduced non-integral binoid, the simplicial binoid $M_\triangle = (x_1, x_2, x_3\mid x_1+x_2+x_3=\infty)$ that we will study in more details in Chapter~\ref{chapter:simplcial-binoids}.
    
    This spectrum is very similar to the one of Example~\ref{example:combinatorial-affine-space} except for the point $\{\infty\}$, because this binoid is not integral.
    
    \vspace{1ex}
    \raisebox{\height}{\hspace{\mathindent}$\Spec M_\triangle=$\begin{tikzpicture}[baseline={(current bounding box.center)}]
        \node (xyz) at (0,0){$\langle x_1, x_2, x_3\rangle$};
        \node (xy) at (-2,-1){$\langle x_1, x_2\rangle$};
        \node (xz) at (0,-1){$\langle x_1, x_3\rangle$};
        \node (yz) at (2,-1){$\langle x_2, x_3\rangle$};
        \node (x) at (-2,-2){$\langle x_1\rangle$};
        \node (y) at (0,-2){$\langle x_2\rangle$};
        \node (z) at (2,-2){$\langle x_3\rangle$};
        
        \draw[-](xyz) to (xy);
        \draw[-](xyz) to (xz);
        \draw[-](xyz) to (yz);
        \draw[-](xy) to (x);
        \draw[-](xy) to (y);
        \draw[-](xz) to (x);
        \draw[-](xz) to (z);
        \draw[-](yz) to (y);
        \draw[-](yz) to (z);
        \end{tikzpicture}}
\end{example}

\begin{definition}\label{definition:reduced}
    Let $M$ be a binoid. An element $f\in M$ is called \emph{nilpotent} if $nf=\infty$ for some $n\geq 1$. The set of all nilpotent elements will be denoted by $\gls{nilM}$ and it is easy to show that this is an ideal. We say that $M$ is \emph{reduced} if $\gls{nilM}=\{\infty\}$.
\end{definition}

The following result shows us that nilpotents and torsion elements do no play a role when we consider the spectrum of the binoid
\begin{proposition}[{\cite[Corollary 2.2.11, Lemma 1.7.11]{Boettger}}]\label{proposition:specM-specMred-tf}
    For any binoid $M$,
    \[
    \Spec M \cong \Spec \gls{Mred}\cong \Spec \gls{Mtf}.
    \]
\end{proposition}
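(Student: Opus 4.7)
The plan is to view both isomorphisms as instances of a single general correspondence: whenever $\pi\colon M \to N$ is a surjective binoid morphism whose defining congruence is contained in every prime ideal of $M$ (i.e.\ every prime of $M$ is saturated under the congruence classes of $\pi$), the assignments $\p \mapsto \pi(\p)$ and $\mathfrak{q} \mapsto \pi^{-1}(\mathfrak{q})$ are mutually inverse, order-preserving bijections between $\Spec M$ and $\Spec N$. Once this general statement is in place, the proposition reduces to verifying the saturation condition in the two specific cases $N = M_{\red}$ and $N = M_{\tf}$.

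For $\Spec M \cong \Spec M_{\red}$, I would show that $\nil(M) \subseteq \p$ for every $\p \in \Spec M$. If $f \in \nil(M)$, then $nf = \infty$ for some $n \geq 1$, and since $\infty \in \p$, induction on $n$ using the prime condition applied to $nf = f + (n-1)f \in \p$ forces $f \in \p$. Hence every prime of $M$ is a union of $\nil(M)$-cosets, and the general correspondence applies to the quotient $M \twoheadrightarrow M/\nil(M) = M_{\red}$.

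For $\Spec M \cong \Spec M_{\tf}$, I would check that every prime $\p$ is saturated with respect to the torsion congruence, which identifies $a \sim b$ when $na = nb$ for some $n \geq 1$. If $a \in \p$ and $a \sim b$, then $na \in \p$, so $nb \in \p$, so by the same inductive argument as above $b \in \p$; by symmetry the situation is reversed, so $\p$ is $\sim$-saturated. The general correspondence then yields the bijection with $\Spec M_{\tf}$.

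The routine checks in setting up the general correspondence — that $\pi(\p)$ is an ideal (using the saturation) and that it is prime (by lifting a sum in $N$ back to $M$ and applying primality of $\p$), and that $\pi^{-1}(\pi(\p)) = \p$ (again using saturation) — are straightforward. The only point that really requires attention, and thus the main conceptual step, is the translation between the two quotient constructions (defined by congruences on the binoid) and the ideal-theoretic spectrum: once one observes that in both cases the congruence is finer than every prime ideal's equivalence relation of membership, everything else is formal.
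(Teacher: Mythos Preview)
Your argument is correct. The paper does not supply its own proof of this proposition; it simply cites the result from B\"ottger's thesis \cite[Corollary 2.2.11, Lemma 1.7.11]{Boettger}. Your unified approach---setting up once and for all the bijection $\Spec M \leftrightarrow \Spec N$ for any surjection $\pi\colon M\twoheadrightarrow N$ whose congruence saturates every prime, and then verifying the saturation hypothesis separately for the Rees congruence of $\nil(M)$ and for the torsion congruence---is a clean way to organise the argument. Two small remarks: first, in the reduction case the congruence is the Rees congruence of the ideal $\nil(M)$, so saturation of $\p$ amounts exactly to the containment $\nil(M)\subseteq\p$ you prove; second, the order-preserving bijection you obtain is automatically a homeomorphism, since by Proposition~\ref{remark:boettger:closedsubset-closedsuperset} the Zariski topology on the spectrum of a binoid is the Alexandrov topology of the inclusion order (open $=$ subset-closed).
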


It is possible to endow the spectrum of a binoid with the usual Zariski topology.
\begin{definition}\label{definition:zariski-topology-combinatorial-spectrum}
    Let $A\subseteq M$. The \emph{Zariski closed set defined by $A$ in $\Spec M$} is
    \[
    V(A):=\{\p\in\Spec M\mid A\subseteq \p\}.
    \]
    The complement of the closure of $A$ is called \emph{Zariski open set defined by $A$ in $\Spec M$} and it is denoted, as usual, by
    \[
    D(A):=\Spec M\smallsetminus V(A).
    \]
    If $A=\{f\}$, we denote these subsets by $V(f)$ and $D(f)$ respectively.
\end{definition}
\begin{remark}
    It is easy to see that the sets $D(A)$ define a topology on $\Spec M$, because
    \begin{itemize}
        \item $\bigcup_{i\in I}D(A_i)=D\left(\bigcup_{i\in I} A_i\right)$
        \item $D(f)\cap D(g) = D(f+g)$
        \item $D(0) = \Spec M$
        \item $D(\infty) = \varnothing$
    \end{itemize}
\end{remark}
\begin{remark}When $A=\{f\}\subseteq M$ then $\gls{fundamental-open-subset}=\{\p\in\Spec M\mid f\notin \p\}$ and $\{D(f)\mid f\in M\}$ is a basis for this topology.
\end{remark}
\begin{definition}
    This topology is called the \emph{Zarisk topology} on the spectrum of $M$. The open subsets
    \[
    \{D(f)\}_{f\in M}
    \] are called \emph{fundamental open subsets}.
\end{definition}

\begin{remark}
    $D(f)=\varnothing$ if and only if $f\in\nil(M)$, and $D(f)=\Spec M$ if and only if $f\in M^*$.
\end{remark}

\begin{proposition}[{\cite[Remark 4.1.6]{Boettger}}]\label{remark:boettger:closedsubset-closedsuperset}
    $S\subseteq \Spec M$ is closed (respectively open) if and only if it is superset closed (respectively subset closed).
\end{proposition}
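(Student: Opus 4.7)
The plan is to prove the open/subset-closed equivalence directly; the closed/superset-closed version then follows immediately by taking complements in $\Spec M$.

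For the easy direction (open implies subset-closed), I would argue via the basis. Each basic open $D(f) = \{\p \in \Spec M \mid f \notin \p\}$ is tautologically subset-closed: if $f \notin \p$ and $\mathfrak{q} \subseteq \p$, then $f \notin \mathfrak{q}$, so $\mathfrak{q} \in D(f)$. Since arbitrary unions of subset-closed sets are subset-closed, every open set is subset-closed.

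For the converse (subset-closed implies open), the key ingredient is finiteness of $\Spec M$, which follows from the standing assumption that $M$ is finitely generated together with Remark~\ref{remark:generators-prime-ideals}: every prime is generated by a subset of the finite generating set, so there are only finitely many primes. Given a subset-closed $S$ and a point $\mathfrak{q} \in S$, I would construct a basic open $D(f_\mathfrak{q})$ with $\mathfrak{q} \in D(f_\mathfrak{q}) \subseteq S$. For each $\p \in \Spec M \smallsetminus S$, subset-closure of $S$ forces $\p \not\subseteq \mathfrak{q}$ (otherwise $\mathfrak{q} \in S$ would pull $\p$ back into $S$), so we may pick some $g_\p \in \p \smallsetminus \mathfrak{q}$. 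Setting $f_\mathfrak{q} := \sum_{\p \notin S} g_\p$ (a finite sum, thanks to finiteness of the spectrum), ideal absorption yields $f_\mathfrak{q} \in \p$ for every $\p \notin S$, so $D(f_\mathfrak{q}) \subseteq S$; conversely, $M \smallsetminus \mathfrak{q}$ is a monoid closed under addition, hence $f_\mathfrak{q} \notin \mathfrak{q}$, i.e.\ $\mathfrak{q} \in D(f_\mathfrak{q})$. Writing $S = \bigcup_{\mathfrak{q} \in S} D(f_\mathfrak{q})$ exhibits $S$ as open.

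There is no genuine obstacle beyond selecting the right element; what makes the proof go through is the interplay of two features special to the binoid setting, namely the finiteness of $\Spec M$ for a finitely generated $M$, and the monoid structure of the complement of a prime, which ensures that a sum of finitely many elements outside $\mathfrak{q}$ stays outside $\mathfrak{q}$. An alternative, more algebraic approach via $I = \bigcap_{\p \in S} \p$ and $V(I) = S$ would require showing that $\mathfrak{q} \supseteq \bigcap_{\p \in S} \p$ forces $\mathfrak{q} \supseteq \p$ for some $\p \in S$, which does not follow from an abstract prime-avoidance-type result in the binoid setting, so I would avoid that route.
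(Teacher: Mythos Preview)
The paper does not actually supply a proof of this proposition; it merely cites \cite[Remark 4.1.6]{Boettger} and moves on. So there is no in-paper argument to compare against, and your proposal stands on its own.

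Your argument is correct. The forward direction is immediate from the definition of $D(f)$, and in the converse you exploit the two features that matter: finiteness of $\Spec M$ (via Remark~\ref{remark:generators-prime-ideals}) and the fact that $M\smallsetminus\mathfrak{q}$ is a submonoid, so a finite sum of elements outside $\mathfrak{q}$ remains outside $\mathfrak{q}$. The construction of $f_\mathfrak{q}$ is exactly the content of Proposition~\ref{proposition:minimal-open-set-prime-ideal} later in the paper, so you are in good company.

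A slightly shorter route, which you might prefer, works on the closed side directly and avoids constructing any element at all: if $S$ is superset-closed, then for every $\p\in S$ one has $V(\p)=\{\mathfrak{q}\mid \p\subseteq\mathfrak{q}\}\subseteq S$, hence $S=\bigcup_{\p\in S}V(\p)$; since $\Spec M$ is finite this is a finite union of closed sets and therefore closed. The open case follows by complementation. This bypasses the explicit $f_\mathfrak{q}$, but your version has the advantage of exhibiting the minimal basic open neighbourhood of each point, which is used repeatedly later in the chapter.
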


\begin{example}\label{example:xyzw}
    Let $M=(x, y, z, w\mid x+y=z+w)$. From Example~\ref{example:spectrum-of-x+y=z+w} we know that its spectrum is
    
    \vspace{1ex}
    \hspace{\mathindent}$\Spec M=$\begin{tikzpicture}[baseline={(current bounding box.center)}]
    \node (xyzw) at (0,0){$\langle x, y, z, w\rangle$};
    \node (xyz) at (-3,-1){$\langle x, y, z\rangle$};
    \node (xyw) at (-1,-1){$\langle x, y, w\rangle$};
    \node (xzw) at (1,-1){$\langle x, z, w\rangle$};
    \node (yzw) at (3,-1){$\langle y, z, w\rangle$};
    \node (xz) at (-3,-2){$\langle x, z\rangle$};
    \node (xw) at (-1,-2){$\langle x, w\rangle$};
    \node (yz) at (1,-2){$\langle y, z\rangle$};
    \node (yw) at (3,-2){$\langle y, w\rangle$};
    \node (infty) at (0,-3){$\langle \infty\rangle$};
    
    \draw[-](xyzw) to [bend right=5](xyz);
    \draw[-](xyzw) to [bend right=5](xyw);
    \draw[-](xyzw) to [bend left=5](xzw);
    \draw[-](xyzw) to [bend left=5](yzw);
    \draw[-](xyz)--(xz);
    \draw[-](xyz) to [bend left = 5](yz);
    \draw[-](xyw)--(xw);
    \draw[-](xyw) to [bend left = 5](yw);
    \draw[-](xzw) to [bend right = 5](xw);
    \draw[-](xzw) to [bend left = 5](xz);
    \draw[-](yzw) to [bend right = 5](yz);
    \draw[-](yzw) to [bend left = 5](yw);
    \draw[-](xz) to [bend right = 5](infty);
    \draw[-](xw) to [bend right = 5](infty);
    \draw[-](yz) to [bend left = 5](infty);
    \draw[-](yw) to [bend left = 5](infty);
    \end{tikzpicture}
    \vspace{1ex}
    
    Let $f=x+w$, then $D(f)=D(x)\cap D(w)$, that is the subset of prime ideals that contain neither $x$ nor $w$, as illustrated in the following picture
    
    \vspace{1ex}
    \hspace{\mathindent}
    \begin{tikzpicture}[baseline={(current bounding box.center)}]
    \draw[black, rounded corners=4mm, thick](2,-1.6)--(.4,-1.6)--(-1,-3.4)--(-1,-4.2)--(1,-4.2)--(.8, -3.4)--cycle;
    \draw[black, rounded corners=4mm, thick, pattern=north east lines, pattern color=black, opacity=0.2](1.8,-1.5)--(.3,-1.5)--(-1.1,-3.3)--(2, -3.5)--(2.2, -2.6)--(4, -2.6)--(4, -.6)--(2, -.6)--cycle;
    \draw[black, rounded corners=4mm, thick, dashed](1.8,-1.5)--(.3,-1.5)--(-1.1,-3.3)--(4, -3.6)--(4, -.6)--(2, -.6)--cycle;
    \draw[black, rounded corners=4mm, thick, pattern=north west lines, pattern color=black, opacity=0.2](2.2,-1.4)--(-2.2,-1.4)--(-2.2, -.6)--(-4, -.6)--(-4,-2.2)--(-1.1,-3.4)--(1.2, -3.6)--cycle;
    \draw[black, rounded corners=4mm, thick, dotted](2.2,-1.4)--(-2.2,-1.4)--(-2.2, -.6)--(-4, -.6)--(-4,-3.4)--(-1.1,-3.4)--(1.2, -3.6)--cycle;
    \draw[white, rounded corners=4mm, thick, fill](-0.3,-1.6)--(-1.5,-1.6)--(-1.5, -2.4)--(-.6, -2.4)--cycle;
    \draw[black, rounded corners=4mm, thick, dotted](-0.3,-1.6)--(-1.5,-1.6)--(-1.5, -2.4)--(-.6, -2.4)--cycle;
    
    \node[opacity = .6] (xyzw) at (0,0){$\langle x, y, z, w\rangle$};
    \node (xyz) at (-3,-1){$\langle x, y, z\rangle$};
    \node[opacity = .6] (xyw) at (-1,-1){$\langle x, y, w\rangle$};
    \node[opacity = .6] (xzw) at (1,-1){$\langle x, z, w\rangle$};
    \node (yzw) at (3,-1){$\langle y, z, w\rangle$};
    \node (xz) at (-3,-2){$\langle x, z\rangle$};
    \node[opacity = .6] (xw) at (-1,-2){$\langle x, w\rangle$};
    \node (yz) at (1,-2){$\langle y, z\rangle$};
    \node (yw) at (3,-2){$\langle y, w\rangle$};
    \node (infty) at (0,-3){$\langle \infty\rangle$};
    \node (Dxw) at (0, -3.8) {\small $D(x+w)$};
    \node (Dw) at (-3.2, -3.1) {\small $D(w)$};
    \node (Dx) at (3.2, -3.1) {\small $D(x)$};
    
    \end{tikzpicture}
    \vspace{1ex}
    
    Its complement is $V(x+w)=V(x)\cup V(w)$, i.e.\ the set of prime ideals that contain either one or the other, since $x+w\in\p$ if and only if $x\in\p$ or $w\in\p$, because $\p$ is prime.
    
    \vspace{1ex}
    \hspace{\mathindent}
    \begin{tikzpicture}[baseline={(current bounding box.center)}]
    \draw[black, rounded corners=4mm, thick, dashed](1.8,-1.6)--(2.2, -2.6)--(4, -2.6)--(4, -.6)--(.5, .5)--(-.5, .5)--(-3.6, 0.2)--(-4, -0.6)--(-4, -2.6)--(0, -2.6)--(.4,-1.6)--cycle;
    
    \draw[black, rounded corners=4mm, thick, pattern=north east lines, pattern color=black, opacity=0.2](1.8,-1.6)--(2.2, -2.6)--(4, -2.6)--(4, -.6)--(.5, .5)--(-.5, .5)--(-1.8, -.6)--(-4, -.6)--(-4, -2.6)--(0, -2.6)--(.4,-1.6)--cycle;
    %    \draw[black, rounded corners=4mm, thick, pattern=north east lines, pattern color=black, opacity=0.2](1.8,-1.5)--(.3,-1.5)--(-1.1,-3.3)--(2, -3.5)--(2.2, -2.6)--(4, -2.6)--(4, -.6)--(2, -.6)--cycle;
    %    \draw[black, rounded corners=4mm, thick, dashed](1.8,-1.5)--(.3,-1.5)--(-1.1,-3.3)--(4, -3.6)--(4, -.6)--(2, -.6)--cycle;
    %    \draw[black, rounded corners=4mm, thick, pattern=north west lines, pattern color=black, opacity=0.2](2.2,-1.4)--(-2.2,-1.4)--(-2.2, -.6)--(-4, -.6)--(-4,-2.2)--(-1.1,-3.4)--(1.2, -3.6)--cycle;
    %    \draw[black, rounded corners=4mm, thick, dotted](2.2,-1.4)--(-2.2,-1.4)--(-2.2, -.6)--(-4, -.6)--(-4,-3.4)--(-1.1,-3.4)--(1.2, -3.6)--cycle;
    %    \draw[white, rounded corners=4mm, thick, fill](-0.3,-1.6)--(-1.5,-1.6)--(-1.5, -2.4)--(-.6, -2.4)--cycle;
    %    \draw[black, rounded corners=4mm, thick, dotted](-0.3,-1.6)--(-1.5,-1.6)--(-1.5, -2.4)--(-.6, -2.4)--cycle;
    
    \node (xyzw) at (0,0){$\langle x, y, z, w\rangle$};
    \node (xyz) at (-3,-1){$\langle x, y, z\rangle$};
    \node (xyw) at (-1,-1){$\langle x, y, w\rangle$};
    \node (xzw) at (1,-1){$\langle x, z, w\rangle$};
    \node (yzw) at (3,-1){$\langle y, z, w\rangle$};
    \node (xz) at (-3,-2){$\langle x, z\rangle$};
    \node (xw) at (-1,-2){$\langle x, w\rangle$};
    \node[opacity = .6] (yz) at (1,-2){$\langle y, z\rangle$};
    \node (yw) at (3,-2){$\langle y, w\rangle$};
    \node[opacity = .6] (infty) at (0,-3){$\langle \infty\rangle$};
    \node (Vxy) at (-2.5, -.2){\small $V(x+y)$};
    \end{tikzpicture}
    \vspace{1ex}
    
    From these pictures, we can easily see that open subsets are subset-closed and closed subsets are superset-closed.
\end{example}

\begin{proposition}\label{proposition:minimal-open-set-prime-ideal}
    Let $M$ be a binoid. For any prime ideal $\p\in\Spec M$ there exists a unique minimal open set that contains it.\footnote{Remember that we assumed at the beginning of the Chapter that $M$ is finitely generated.}
\end{proposition}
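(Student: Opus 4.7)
The plan is to exhibit an explicit candidate for the minimal open set by reading off the generators outside $\p$, and then prove its minimality by exploiting the fact that open subsets of $\Spec M$ are subset-closed (Proposition~\ref{remark:boettger:closedsubset-closedsuperset}).

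Writing $M=(x_1,\dots,x_n\mid \R)$, I would first invoke Remark~\ref{remark:generators-prime-ideals} to express $\p$ as the ideal generated by a subset $\{x_{i_1},\dots,x_{i_k}\}$ of the generating set, and denote by $\{y_1,\dots,y_m\}$ the remaining generators. The natural candidate for the smallest open containing $\p$ is then
\[
U_\p \;:=\; D(y_1)\cap\dots\cap D(y_m) \;=\; D(y_1+\dots+y_m),
\]
where the equality uses the identity $D(f)\cap D(g)=D(f+g)$. The membership $\p\in U_\p$ is immediate, since by construction none of the $y_l$ lies in $\p$.

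For minimality, I would take an arbitrary open $U\ni\p$ together with any $\mathfrak{q}\in U_\p$, and aim to show $\mathfrak{q}\in U$. The key observation is that $\mathfrak{q}$ is itself, by Remark~\ref{remark:generators-prime-ideals}, generated by some subset of the generators of $M$; but since $y_l\notin\mathfrak{q}$ for every $l$, these generators must all come from $\{x_{i_1},\dots,x_{i_k}\}$, forcing $\mathfrak{q}\subseteq\p$. Applying Proposition~\ref{remark:boettger:closedsubset-closedsuperset} and the subset-closed nature of the open $U$, from $\p\in U$ and $\mathfrak{q}\subseteq\p$ I conclude $\mathfrak{q}\in U$, hence $U_\p\subseteq U$. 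Uniqueness is automatic, since any two minimal open sets containing $\p$ would contain each other.

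The only point requiring care is the availability of the generator-based description of prime ideals, which in turn relies on the standing finite-generation hypothesis recalled at the start of the section. Once that is in hand, the argument becomes a finite, purely combinatorial statement about subsets of $\{x_1,\dots,x_n\}$, so I do not foresee any serious obstacle.
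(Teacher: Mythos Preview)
Your proof is correct and follows essentially the same route as the paper's: both construct the candidate $D\!\left(\sum_{x_j\notin\p} x_j\right)$, observe that every $\mathfrak{q}$ in it satisfies $\mathfrak{q}\subseteq\p$, and conclude minimality from the subset-closedness of open sets (Proposition~\ref{remark:boettger:closedsubset-closedsuperset}). The one place to be slightly more careful is your phrase ``by construction none of the $y_l$ lies in $\p$'': this requires choosing the generating subset of $\p$ to be precisely $\{x_i:x_i\in\p\}$, which the paper makes explicit (``$\p$ does not contain any other generator of $M_+$'') and you leave implicit.
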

\begin{proof}
    Let $x_1, \dots, x_k, x_{k+1}, \dots, x_n$ be the generators of $M_+$. Without loss of generality, we can assume that $\p=\langle x_1, \dots, x_k\rangle$ and that $\p$ does not contain any other generator of $M_+$. Let
    \[
    V=D(x_{k+1}+ \dots+ x_n)=\{\mathfrak{q}\in \Spec M\mid \{x_{k+1}, \dots, x_n\}\nsubseteq \mathfrak{q}\}.
    \]
    Clearly $\p\in V$. Moreover, $V$ is subset closed, because it is open, and $\p$ is the unique maximal element of $V$. In order to prove this last statement, assume that there exists another maximal element in $V$, $\mathfrak{q}\neq \p$. Then there exists a $x_j\in\mathfrak{q}\smallsetminus \p$ and $j$ has to be bigger than $k$, so $\mathfrak{q}\notin V$. Since $\p$ is the unique maximal element of $V$, the latter is the unique minimal open subset that contains $\p$. In order to see this, let $U$ be any open subset that contains $\p$. Then $U$ is again subset closed, so $V\subseteq U$.
\end{proof}

\begin{remark}
    The open subset $V$ in the Proposition above is homeomorphic to $\Spec M_\p$.
\end{remark}

\begin{remark}\label{remark:no-open-covers-M+}
    There is no closed proper subset of $\Spec M$ that contains $\{\infty\}$. Conversely, there is no open proper subset that contains $M_+$.
\end{remark}

\subsection{Binoid schemes}
Now that we have understood the basic Zariski topology for the spectrum of a binoid, we are going to introduce the concepts of scheme of binoids, structural sheaf, affine scheme and other objects of algebraic geometry in this combinatorial setting.

\begin{definition}\label{definition:sheaf-of-binoids}
    Let $X$ be a topological space. A \emph{presheaf of binoids} on $X$ is a controvariant functor from the topology of $X$ to the category of binoids
    \begin{equation*}
    \begin{tikzcd}[baseline=(current  bounding  box.center), cramped, row sep = 0ex,
    /tikz/column 1/.append style={anchor=base east},
    /tikz/column 2/.append style={anchor=base west}]
    \F:\gls{TopX}\rar & \gls{Bin}\\
    U\rar[mapsto]& \F(U)
    \end{tikzcd}
    \end{equation*}
    
    A \emph{sheaf of binoids on $X$} is a presheaf of binoids on $X$ that respects locality and gluing axioms.\footnote{See for example \cite[Chapter II.1]{hartshorne1977algebraic} or \cite[Definition III.1.3]{perrin2007algebraic}. Locality and glueing axioms are together also called Serre conditions, for example in \cite[Definition 4.A.2]{patil2010introduction}.}
\end{definition}

\begin{definition}\label{definition:binoided-space}
    A \emph{binoided space} is a pair $\gls{binoinded-space}$ where $X$ is a topological space and $\gls{structure-sheaf}$ is a sheaf of binoids on $X$, called \emph{structure sheaf} of the space.
\end{definition}

Like with rings, we have very special binoided spaces, namely the schemes.

\begin{definition}\label{definition:affine-scheme-binoid}
    Let $M$ be a binoid. The \emph{affine binoid scheme defined by $M$} is the binoided space $(\Spec M, \gls{structure-sheaf-spec-binoid})$, where $\O_{\Spec M}$ is the unique sheaf associated to the presheaf defined on the basis $\{D(f)\}$ as
    \[
    \O_{\Spec M}(D(f))=\Gamma(D(f), \O_{\Spec M})=\gls{localization-element},
    \]
    called the \emph{structure sheaf} of $\Spec M$.
    
    Since there is no confusion, for ease of notation we usually denote it by \gls{structure-sheaf-binoid} and call it simply the \emph{structure sheaf} of the binoid $M$.
\end{definition}

\begin{remark}\label{remark:structure-presheaf}
    Like for rings, we can explicitly describe the presheaf $\O_{\Spec M}$ as
    \[
    \begin{tikzcd}[cramped, cramped, row sep = 0ex,
    /tikz/column 1/.append style={anchor=base east},
    /tikz/column 2/.append style={anchor=base west}]
    D(f_1, \dots, f_r)\rar[mapsto]& \displaystyle\Gamma\left(\bigcup D(f_i), \O_M\right)\\
    &=\left\{(s_1, \dots, s_r)\in M_{f_1}\times\dots\times M_{f_r}\midd s_i=s_j \text{ in } M_{f_i+f_j}\right\},
    \end{tikzcd}
    \]
    so the image is a subbinoid of $M_{f_1}\times\dots\times M_{f_r}$.
\end{remark}

\begin{definition}\label{definition:scheme-binoid}
    The binoided space $(X, \O_X)$ is a \emph{binoid scheme} or \emph{scheme of binoids} if there exist an open cover $\{U_i\}_{i\in I}$ of $X$ and a collection of binoids $\{M_i\}_{i\in I}$ such that
    \[
    U_i \cong \Spec M_i\qquad \text{and} \qquad \O_X\restriction_{U_i} \cong \O_{M_i}.
    \]
\end{definition}

\begin{proposition}\label{proposition:structure-sheaf-on-empty-set}
    For any binoid $M$, evaluating the structure sheaf of $M$ on the empty set yields the trivial binoid, $\O_M(\varnothing)=\gls{infty-binoid}$.
\end{proposition}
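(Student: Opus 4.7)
The plan is to exploit the observation, already noted in the excerpt, that $\varnothing = D(\infty)$. Combined with the defining property of the structure sheaf on the basis of fundamental open subsets, namely $\O_M(D(f)) = M_f$, this reduces the statement to computing the localization $M_\infty$ and recognising it as the degenerate trivial binoid.

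The computation of $M_\infty$ is straightforward from the definition in the glossary. We have $M_\infty = -S + M$ where $S = \{n\infty \mid n \in \NN\}$. Since $n\infty = \infty$ for every $n \geq 1$, the multiplicative system $S$ collapses to $\{0, \infty\}$. For any two classes $-s_1 + m_1$ and $-s_2 + m_2$ in $M_\infty$, taking $t = \infty \in S$ as a witness yields
\[
t + s_2 + m_1 = \infty = t + s_1 + m_2,
\]
so the two classes are forced to coincide in the localization. Hence $M_\infty$ has a single element; equivalently $0 = \infty$ in it, which is precisely the defining property of the degenerate binoid $\infty$ from the glossary.

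As a conceptual cross-check, one can also argue categorically: the empty set admits the empty open cover, and the sheaf axiom applied to the empty diagram forces $\O_M(\varnothing)$ to be the terminal object in the category of binoids. Since a binoid morphism must preserve both $0$ and $\infty$, the one-point binoid in which these coincide is terminal, and this is exactly the degenerate binoid $\infty$.

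The only delicate point is confirming that the explicit value $M_f$ on basic opens continues to give the correct answer on the empty set after sheafification. Because $\varnothing$ is already a basic open, being $D(\infty)$, and the explicit formula for the presheaf on finite intersections displayed in the excerpt remains well defined in this degenerate case, no extra work is required; the sheafification leaves the value at $\varnothing$ unchanged. This is really the only subtlety to address, and I expect the entire argument to amount to little more than spelling out this identification.
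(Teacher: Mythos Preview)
Your proof is correct and follows exactly the same route as the paper: identify $\varnothing = D(\infty)$, invoke $\O_M(D(f)) = M_f$, and observe that $M_\infty$ is the trivial binoid. The paper's proof is a single line that asserts $M_\infty = \infty$ without justification, whereas you have spelled out the localization computation and added the categorical cross-check; these are welcome elaborations but not a different argument.
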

\begin{proof}
    We have that $\varnothing = D(\infty)$, so $\O_M(\varnothing)= M_\infty$, that is the trivial binoid \gls{infty-binoid}.
\end{proof}

\begin{proposition}\label{proposition:open-subset-scheme}
    Let $U$ be an open subset of $\Spec M$. Then
    \[
    (U, \O_M\restriction_U)
    \]
    is a scheme of binoids.
\end{proposition}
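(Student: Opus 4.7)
The plan is to exhibit an open affine cover of $U$ using the basis $\{D(f)\}_{f\in M}$ of the Zariski topology, and check that the restricted structure sheaf agrees with the structure sheaf of the corresponding localization on each piece.

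First, since $\{D(f)\mid f\in M\}$ is a basis for the Zariski topology on $\Spec M$ (by the remark following Definition~\ref{definition:zariski-topology-combinatorial-spectrum}), the open subset $U$ is a union $U=\bigcup_{i\in I} D(f_i)$ for some collection $\{f_i\}\subseteq M$. So it suffices to prove that each $D(f)\subseteq \Spec M$, equipped with the restriction of $\O_M$, is isomorphic as a binoided space to the affine binoid scheme $(\Spec M_f, \O_{M_f})$.

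For the topological part, I would use the standard correspondence between prime ideals of a localization and prime ideals of the original binoid not meeting the multiplicative set $\{nf\mid n\in\NN\}$, which for a binoid amounts to the bijection
\[
\Spec M_f \;\longleftrightarrow\; D(f)=\{\p\in\Spec M\mid f\notin \p\},
\]
sending $\mathfrak{q}\subseteq M_f$ to its contraction in $M$. This is a homeomorphism because the fundamental opens on both sides match: a basic open $D(g)$ in $\Spec M_f$ (with $g\in M_f$, which up to units looks like $h-nf$ for $h\in M$) corresponds to $D(f+h)\subseteq D(f)$ in $\Spec M$.

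For the sheaf part, I need to check that under this homeomorphism, $\O_M\restriction_{D(f)}$ and $\O_{M_f}$ agree on the basis of fundamental opens $D(f+h)\subseteq D(f)$. This is the identity
\[
\O_M(D(f+h))=M_{f+h}=(M_f)_h=\O_{M_f}(D(h)),
\]
which holds because localization is transitive: inverting $f$ and then $h$ gives the same binoid as inverting $f+h$ directly. Since both sheaves are determined by their values on a basis (Definition~\ref{definition:affine-scheme-binoid}), and these values agree functorially in the restriction maps (which on both sides are the canonical localization morphisms), the isomorphism of presheaves on the basis extends to an isomorphism of sheaves.

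The main point to be careful about is not any deep obstacle, but rather ensuring that the localization identity $(M_f)_h \cong M_{f+h}$ is compatible with the restriction maps of both sheaves simultaneously, so that we get an actual isomorphism of binoided spaces rather than just pointwise isomorphisms; this amounts to a routine check of commutativity of the localization diagrams, which is inherited from the universal property of localization for binoids.
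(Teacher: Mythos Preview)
Your proof is correct and follows essentially the same approach as the paper: reduce to the case $U=D(f)$ and verify via the transitivity-of-localization identity $M_{f+h}=(M_f)_h$ that $\O_M\restriction_{D(f)}$ agrees with $\O_{M_f}$ on a basis. You are a bit more explicit than the paper about the homeomorphism $\Spec M_f\cong D(f)$, which the paper leaves implicit, but the core argument is identical.
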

\begin{proof}
    We have to prove that there exists an affine open cover that respects Definition~\ref{definition:scheme-binoid}. Mimicking \cite[Section I.2.1]{eisenbud2006geometry}, it is enough to prove it for $U=D(f)$. Consider the cover of $D(f)$ given by the $D(g)$'s such that $D(g)\subseteq D(f)$.
    We have to prove that
    \[
    \O_M\restriction_{D(f)}(D(g))=\O_{M_f}(D(g)).
    \]
    Since $D(g)\subseteq D(f)$, we have that
    \[
    \O_M\restriction_{D(f)}(D(g))=\O_M(D(f)\cap D(g))=\O_M(D(g))=M_g.
    \]
    At the same time, we have that
    \[
    \O_M(D(f)\cap D(g)) = \O_M(D(f+g))=M_{f+g}=\left(M_f\right)_g=\O_{M_f}(D(g)),
    \]
    that proves our statement.
\end{proof}

A particular scheme that we are interested in, is the punctured spectrum of a binoid
\begin{definition}\label{definition:punctured-spectrum-scheme}
    Let $M$ be a binoid. Its \emph{punctured spectrum} is the scheme
    \[
    (\gls{punctured-spectrum}, \O_M\restriction_{\Spec^\bullet M}).
    \]
\end{definition}

\begin{remark}
    This is a scheme thanks to Proposition~\ref{proposition:open-subset-scheme}, considering $U=\Spec^\bullet M$.
\end{remark}

\begin{proposition}\label{proposition:covering-punctured-spectrum-binoid}
    Let $M$ be a binoid. Let $\{x_i\}_{i\in I}$ be the generators of the maximal ideal $M_+$. Then $\Spec^\bullet M=\cup_{i\in I} D(x_i)$.\footnote{This result, with the same proof, is true also for binoids that are not finitely generated.}
\end{proposition}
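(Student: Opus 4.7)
The plan is to prove the two inclusions separately, and both turn out to be short once we unwind the definitions. The containment $\bigcup_{i\in I} D(x_i)\subseteq \Spec^\bullet M$ is immediate: if $\mathfrak{p}\in D(x_i)$ for some $i$, then $x_i\notin\mathfrak{p}$, and since $x_i\in M_+$ this forces $\mathfrak{p}\neq M_+$, i.e.\ $\mathfrak{p}\in\Spec^\bullet M$.

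For the reverse inclusion, I would start with $\mathfrak{p}\in\Spec^\bullet M$, so $\mathfrak{p}$ is a proper ideal strictly contained in $M_+$. Then I can pick some element $f\in M_+\setminus\mathfrak{p}$. The key observation is that since $M_+$ is the ideal generated (as an $M$-set) by the $x_i$, any $f\in M_+$ can be written as $f=x_{i_0}+m$ for some index $i_0\in I$ and some $m\in M$. Now I exploit that $\mathfrak{p}$ is an ideal: if $x_{i_0}$ were in $\mathfrak{p}$, then $x_{i_0}+m=f$ would be in $\mathfrak{p}$, contradicting the choice of $f$. Hence $x_{i_0}\notin\mathfrak{p}$, so $\mathfrak{p}\in D(x_{i_0})\subseteq\bigcup_{i\in I}D(x_i)$.

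There is no real obstacle here; the only subtle point to make explicit is why an element $f\in M_+$ must have the form $x_{i_0}+m$ with $x_{i_0}$ a generator of $M_+$, which is just the definition of $\{x_i\}_{i\in I}$ being a generating set of the ideal $M_+$. Note also that the primality of $\mathfrak{p}$ is not strictly needed in the argument above (only the ideal property), which matches the footnote that the statement holds without the finitely generated hypothesis. Together these two inclusions give the claimed equality $\Spec^\bullet M=\bigcup_{i\in I}D(x_i)$.
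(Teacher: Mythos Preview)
Your proof is correct and follows essentially the same approach as the paper: both establish the two inclusions separately, with the key step being that if $\mathfrak{p}\subsetneq M_+$ then some generator $x_k$ lies outside $\mathfrak{p}$. The paper phrases this step contrapositively (if all $x_i\in\mathfrak{p}$ then $M_+\subseteq\mathfrak{p}$), whereas you unfold it by writing an element $f\in M_+\setminus\mathfrak{p}$ as $x_{i_0}+m$; one small aside is that your remark linking the non-use of primality to the footnote is a slight mismatch, since the footnote concerns dropping the finitely-generated hypothesis rather than primality.
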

\begin{proof}
    We begin by proving that $M_+$ is not in this union. If it was, then there is at least a $D(x_i)$ such that $M_+$ belongs to it. This implies that $x_i\notin M_+$, a contradiction.
    
    On the other side, let $\p\in\Spec^\bullet M_+$. Since $\p\subsetneq M_+$, there exists $x_k$ such that $x_k\notin \p$, i.e.\ $\p\in D(x_k)$, and  $\p\in\cup_{i\in I} D(x_i)$.
\end{proof}

\begin{example}
    Let $M$ be a binoid and let $M_+=\langle x_1, \dots, x_n\rangle$. It is not true in general that $\langle x_1, \dots, \widehat{x_i}, \dots, x_n\rangle$ belongs to $\Spec M$. Two counterexamples to this are the two binoids  \\
    \textbullet\ $M=(x, y\mid 2x=3y)$, where $M_+=\langle x, y\rangle$ but $\langle x\rangle$ and $\langle y\rangle$ are not in $\Spec M$,\\
    \textbullet\ $M=(x, y, z\mid x+y=2z)$, where $M_+=\langle x, y, z\rangle$ but $\langle x, y\rangle$ is not in $\Spec M$.
    This last example shows us also that the cover in the statement of the Proposition is not minimal, since
    \[
    \Spec^\bullet M = \{\langle \infty\rangle, \langle x, z\rangle, \langle y, z\rangle \}
    \]
    can be covered by just two open subsets instead of three, namely
    \[
    D(x)=\{\langle\infty\rangle, \langle y, z\rangle\}\quad \text{and}\quad D(y)=\{\langle\infty\rangle, \langle x, z\rangle\}.\qedhere
    \]
\end{example}

\begin{proposition}\label{proposition:stalk-binoid-scheme}
    Let $(\Spec M, \O_M)$ be the affine scheme of binoids defined by $M$, let $M_+$ be generated by $x_1, \dots, x_k, x_{k+1}, \dots, x_n$ and let $\p=\langle x_1, \dots, x_k\rangle$ be a prime ideal such that $\p$ does not contain any other generator of $M_+$. The stalk of $\O_M$ at the point $\p$ is
    \[
    \O_{M, \p}=\gls{localization-prime}=\O_M(D(x_{k+1}+ \dots+ x_n)).
    \]
\end{proposition}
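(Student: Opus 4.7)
The stalk $\O_{M,\p}$ is by definition the filtered colimit $\varinjlim_{U\ni\p}\O_M(U)$ over open subsets containing $\p$. The first step in my plan is to invoke Proposition~\ref{proposition:minimal-open-set-prime-ideal}: it tells me that $V:=D(x_{k+1}+\dots+x_n)$ is the unique minimum of this directed system, so the colimit collapses to $\O_M(V)$, which is $M_y$ (for $y:=x_{k+1}+\dots+x_n$) by the definition of the structure sheaf on a fundamental open. The remaining task is to identify $M_y$ with $M_\p$.

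Both sides are localizations of $M$, so I would produce comparison maps via the universal property of localization. Since $\p$ is prime and contains none of $x_{k+1},\dots,x_n$, iterating the prime property shows $y\notin\p$; thus $y$ becomes a unit in $M_\p$ and the universal property of $M_y$ gives a canonical map $M_y\to M_\p$. The substantive step is to produce the inverse, i.e.\ to show that every $f\in M\smallsetminus\p$ is already a unit in $M_y$.

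For this step, I would exploit the standard identification of $\Spec M_y$ with the open subset $D(y)\subseteq \Spec M$ of primes of $M$ not containing $y$, which preserves inclusions. By Proposition~\ref{proposition:minimal-open-set-prime-ideal} again, $\p$ is the unique maximal element of $D(y)$, so $\p M_y$ is the unique maximal ideal of $M_y$. Moreover $\p$ is already saturated with respect to $\{ny\}_{n\in\NN}$ — if $f+ny\in\p$, primality together with $ny\notin\p$ forces $f\in\p$ — so the contraction of $\p M_y$ back to $M$ equals $\p$. Therefore every $f\in M\smallsetminus\p$ lands in $M_y\smallsetminus \p M_y = (M_y)^*$ and is a unit in $M_y$, and the universal property of $M_\p$ then delivers the inverse map $M_\p\to M_y$.

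The main obstacle I anticipate is exactly the step of showing every element outside $\p$ becomes invertible after inverting only $y$. The approach via $\Spec M_y$ sidesteps a hands-on alternative that would write each $f\in M\smallsetminus\p$ in the form $m_1x_1+\dots+m_nx_n+u$ with $u\in M^*$ (using that $x_1,\dots,x_n$ generate $M_+$ as an ideal), argue that the $m_i$ for $i\le k$ can be taken to vanish, and exhibit an inverse of $f$ in $M_y$ explicitly; that explicit route is delicate in the presence of relations, whereas the viewpoint via $\Spec M_y$ reduces everything to the fact that in a local binoid the complement of the maximal ideal consists of units.
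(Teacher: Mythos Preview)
Your proof is correct and its first step is exactly the paper's entire argument: invoke Proposition~\ref{proposition:minimal-open-set-prime-ideal} to collapse the stalk colimit onto the value at the unique minimal open set $D(x_{k+1}+\dots+x_n)$. The paper stops there, treating the identification $M_{x_{k+1}+\dots+x_n}=M_\p$ as evident, whereas you supply a careful justification via universal properties and the fact that $\p M_y$ is the maximal ideal of $M_y$; this extra work is sound and fills in what the paper leaves implicit.
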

\begin{proof}
    This is clear because, thanks to Proposition~\ref{proposition:minimal-open-set-prime-ideal}, we know that $D(x_{k+1}+ \dots+ x_n)$ is the unique minimal open subset of $X$ that contains $\p$.
\end{proof}

Assuming that $p$ is generated by the first $k$ generators does not reduce the generality of the result, but also for this result we need that the binoid is finitely generated.

\begin{remark}
    The previous proposition extends to any scheme of binoids, provided some finiteness conditions, because $(X, \O_X)$ is locally isomorphic to a scheme of binoids, and the minimal open subset that contains a point $\p$ is then the spectrum of the localization at this point.
\end{remark}

\begin{proposition}
    An open subset $W\subseteq \Spec M$ is affine if and only if there exists $f\in M$ such that $W=D(f)$.
\end{proposition}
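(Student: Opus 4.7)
The statement has two directions. The ``if'' direction is essentially already done in the proof of Proposition~\ref{proposition:open-subset-scheme}: on the cover of $D(f)$ by the basic opens $\{D(g)\mid D(g)\subseteq D(f)\}$, the calculation $\O_M\restriction_{D(f)}(D(g))=M_g=\O_{M_f}(D(g))$ exhibits $(D(f),\O_M\restriction_{D(f)})\cong(\Spec M_f,\O_{M_f})$ as binoided spaces, so $D(f)$ is affine.

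For the ``only if'' direction, I would start from an isomorphism of binoided spaces $(W,\O_M\restriction_W)\cong(\Spec N,\O_N)$ for some binoid $N$ and locate the correct candidate for $f$. The first key step is noting that any homeomorphism preserves the specialization order, and on $\Spec M$ one has $\overline{\{\mathfrak{q}\}}=V(\mathfrak{q})$, so the specialization order is exactly $\subseteq$ on prime ideals. Because $\Spec N$ has the unique closed point $N_+$ (Definition~\ref{definition:maximal-ideal-binoid}), transporting it across the homeomorphism gives $W$ a unique $\subseteq$-maximal prime ideal, which I call $\p$.

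Once $\p$ is isolated, I would invoke Proposition~\ref{proposition:minimal-open-set-prime-ideal}: writing the generators of $M_+$ so that $\p=\langle x_1,\dots,x_k\rangle$ contains no other $x_j$, set $f:=x_{k+1}+\cdots+x_n$ (with the convention $f=0$ if $\p=M_+$). Then $D(f)$ is the unique minimal open subset of $\Spec M$ containing $\p$, so minimality gives $D(f)\subseteq W$. For the reverse inclusion, take any $\mathfrak{q}\in W$; by maximality of $\p$ we have $\mathfrak{q}\subseteq\p$, so none of the $x_j$ with $j>k$ lie in $\mathfrak{q}$, and primality of $\mathfrak{q}$ forces $f\notin\mathfrak{q}$, i.e.\ $\mathfrak{q}\in D(f)$.

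The main obstacle is really the first step of the hard direction: translating the algebraic fact that $\Spec N$ has a distinguished closed point $N_+$ into a statement about $W$ as a subposet of $\Spec M$. Once that combinatorial-topological bridge is established, the remainder combines Proposition~\ref{proposition:minimal-open-set-prime-ideal} with the subset-closedness of open subsets recorded in Proposition~\ref{remark:boettger:closedsubset-closedsuperset}, and no further calculation is needed.
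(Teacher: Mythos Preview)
Your proposal is correct and follows essentially the same route as the paper: identify the image of $N_+$ as the unique $\subseteq$-maximal point $\p$ of $W$, take $f=x_{k+1}+\cdots+x_n$ as in Proposition~\ref{proposition:minimal-open-set-prime-ideal}, and conclude that $W$ and $D(f)$ coincide since both are subset-closed with the same unique maximal element. The paper's proof is terser (it simply asserts that $N_+$ sits in $W$ as a maximal point and that two subset-closed sets with the same top must agree), whereas you make explicit the specialization-order argument and the two inclusions; but the underlying idea is identical.
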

\begin{proof}
    If $W$ is affine, there exists a binoid $N$ such that $W\cong \Spec N$, in particular there exists a maximal ideal $N_+$ in $W$. Again like in Proposition~\ref{proposition:minimal-open-set-prime-ideal}, without loss of generality we can assume that $N_+=\langle x_1, \dots, x_k\rangle$ and that it does not contain any other generator of $M_+$. Then clearly $D(x_{k+1}+ \dots+ x_n)$ and $W$ are both subset-closed in $\Spec M$ with the same unique maximal point $N_+$, so $W=D(x_{k+1}+ \dots+ x_n)$.
\end{proof}

\begin{remark}
    Obviously all affine subsets of $\Spec M$ will define open affine subsets of $\gls{KSpecM} $, but the contrary is not true. For example let $M=(x, y\mid x+y=\infty)$. Then $\Spec^\bullet M=\{\langle x\rangle, \langle y\rangle\}$ is not affine as a scheme of binoids, because it does not have a unique closed point. But $\KK-\Spec^\bullet M=\KK-\Spec M\smallsetminus \langle x, y\rangle$ is defined by $D(X+Y)\subseteq \Spec^\bullet\KK[M]$. The point is that the element $X+Y$ is not combinatorial, since it involves explicitly the operation $+$.    
\end{remark}

\subsection{Closed affine subschemes}
We introduce now affine schemes and affine subschemes of the combinatorial affine space.
\begin{definition}\label{definition:combinatorial-affine-space}
    The \emph{combinatorial affine space} of dimension $n$ is the binoid scheme
    \[
    \gls{affinespace}=\left(\Spec{\left(\NN^n\right)}^\infty, \O_{{\left(\NN^n\right)}^\infty}\right).
    \]
\end{definition}

\begin{remark}
    As we have seen in Example~\ref{example:combinatorial-affine-space}, its underlying topological space is isomorphic to the power set of $[n]$, with the topology induced by the inclusions.
\end{remark}

\begin{definition}
    A \emph{closed subscheme} of an affine binoid scheme $(\Spec M, \O_M)$ is an affine binoid scheme $(\Spec N, \O_N)$ such that $N=\faktor{M}{I}$, for some $I$ ideal of $M$.
\end{definition}
\begin{remark}
    $\Spec N$ can be identified with $V(I)$ in $\Spec M$, because it contains exactly the prime ideals that contain $I$.
\end{remark}

Unlike what happens with rings, an integral binoid can not always be defined as a quotient of $(\NN^r)^\infty$ by a prime ideal, in which case it does not define a closed subscheme of the affine space. Vice versa, there can be ideals that define integral binoids, and the following example shows both behaviours.

\begin{example}
    The integral binoid $M=(x, y\mid 2x=3y)$ does not define a closed subset of $\Spec\left(\NN^2\right)^\infty$. In order to show this, it is enough to notice that $\Spec M=\{\langle\infty\rangle, \langle x, y\rangle\}$ is not superset-closed in $\Spec\left(\NN^2\right)^\infty=\{\langle\infty\rangle, \langle x\rangle, \langle y\rangle, \langle x, y\rangle\}$.
    
    On the other hand, $M'=\faktor{(x, y\mid\varnothing)}{\langle x\rangle}$ is an integral binoid, because it is isomorphic to $\NN^\infty$, but it is defined by an ideal relation.
\end{example}

There are some positive results relating closeness of $\Spec N\smallsetminus \{\infty\}$ to the combinatorics of the relations involved in $N$, but they are in a rather convoluted form and not so interesting for the scope of this work.

\section{Sheaves}

The arguments covered in this Section can be introduced and studied in more general terms. We will concentrate on a rather specific situation, that allow us to ignore most technicalities and provide results that are useful for the main Chapters of this work.

The interested reader will find more general studies in \cite{PirashviliCohomology} and \cite{flores2014picard}.

Let $M$ be a finitely generated binoid. From now on, we concentrate on schemes of the type $(U, \O_M\restriction _U)$, where $U$ is an open subset of the affine scheme $\Spec M$. When $U$ is a proper subset, we refer to it with the name quasi-affine scheme.

\subsection{Sheaves of \texorpdfstring{$M$}{M}-sets}

\begin{definition}
    An $M$-set is a punctured set $(S, p)$ together with an action of $M$, that satisfies the usual properties. If there is no ambiguity about the special point, we just call it $S$.\footnote{For more properties of $M$-sets, refer to \cite[Section 1.10]{Boettger}.}
\end{definition}

\begin{remark}
    For any $f\in M$, $\gls{localization-Mset-element}$ is an $M_f$-set.
\end{remark}

\begin{definition}\label{definition:sheaf-of-M-sets}
    Let $(X, \O_X)$ be a binoid scheme. A \emph{sheaf of $\O_X$-sets on $X$}, or \emph{$\O_X$-sheaf} is a sheaf $\F$ on $X$ such that $\F(U)$ is a $\O_X(U)$-set for any open subset $U$ of $X$.
\end{definition}

\begin{definition}
    Let $S$ be an $M$-set. The \emph{sheafification of $S$} is the $\O_{\Spec M}$-sheaf $\gls{sheafificationMset}$ associated to the presheaf defined on fundamental open subsets as
    \[
    \gls{sheafificationMset}(D(f))=\Gamma(D(f), \widetilde{S})=S_f.
    \]
\end{definition}

\begin{remark}
    Like for binoids, that we saw in Remark~\ref{remark:structure-presheaf}, we can explicitly describe the presheaf $\widetilde{S}$ as
    \[
    \begin{tikzcd}[cramped, cramped, row sep = 0ex,
    /tikz/column 1/.append style={anchor=base east},
    /tikz/column 2/.append style={anchor=base west}]
    D(f_1, \dots, f_r)\rar[mapsto]& \displaystyle\Gamma\left(\bigcup D(f_i), \widetilde{S}\right)\subseteq S_{f_1}\times\dots\times S_{f_r},
    \end{tikzcd}
    \]    
    where $M_{f_1}\times \dots \times M_{f_r}$ acts on $S_{f_1}\times\dots\times S_{f_r}$ and we have again the compatibility conditions on the intersections.\\
    Similarly, we can look at the stalk $\widetilde{S}_\p$ at a point $\p\in\Spec M$ and it is easy to see that $\widetilde{S}_\p=S_\p=S+M_\p$, so $M_\p$ acts naturally on $\widetilde{S}_\p$.
\end{remark}

\begin{remark}
    If the $M$-set is an ideal $I$, then $\widetilde{I}$ is a sheaf of ideals on $\Spec M$.
\end{remark}

\begin{proposition} \label{proposition:isomorphism-M+-M}
    The sheafification of the maximal ideal of $M$ and $\O_M$ are isomorphic as sheaves on the punctured spectrum, i.e.
    \[
    \widetilde{M_+}\restriction_{\Spec^\bullet M}\cong \O_M\restriction_{\Spec^\bullet M}.
    \]
\end{proposition}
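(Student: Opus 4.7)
The natural inclusion $M_+ \hookrightarrow M$ of $M$-sets induces a morphism of sheaves $\widetilde{M_+} \to \O_M$, which restricts to a morphism on $\Spec^\bullet M$. My plan is to show this restricted morphism is an isomorphism by checking it on a suitable basis of open subsets.

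A basis for the topology on $\Spec^\bullet M$ is given by fundamental open subsets $D(f)$ with $f \in M_+$: indeed, $D(f) \subseteq \Spec^\bullet M$ if and only if $M_+ \notin D(f)$, i.e.\ $f \in M_+$. For such $f$, I would check that the inclusion $(M_+)_f \hookrightarrow M_f$ is surjective. Given any element $m - nf \in M_f$ with $m \in M$ and $n \in \NN$, since $M_+$ is an ideal and $f \in M_+$, we have $m + f \in M_+$, and therefore
\[
m - nf = (m + f) - (n+1)f \in (M_+)_f,
\]
so the inclusion is a bijection, hence an isomorphism of $M_f$-sets.

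The next step is to verify this collection of isomorphisms is compatible with the restriction maps along the inclusions $D(f+g) \subseteq D(f)$ in the basis, which is immediate since on both sides the restriction is simply the further localization along $g$. Therefore the morphism of presheaves defined on the basis is an isomorphism, and by the universal property of sheafification it extends to an isomorphism of the associated sheaves on $\Spec^\bullet M$.

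I do not expect any serious obstacle. As a sanity check at the stalk level, for $\p \in \Spec^\bullet M$ we have $\p \subsetneq M_+$, so there is a generator $x_i \in M_+ \setminus \p$; since $x_i$ becomes a unit in $M_\p$, the same argument ($m = (m + x_i) - x_i$ with $m + x_i \in M_+$) shows $(M_+)_\p = M_\p$, confirming that the morphism is an isomorphism on stalks, in agreement with the computation above and with Proposition~\ref{proposition:stalk-binoid-scheme}.
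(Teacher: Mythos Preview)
Your proposal is correct and follows essentially the same approach as the paper: both start from the inclusion $M_+\hookrightarrow M$, pass to sheaves, and check surjectivity on the basis $\{D(f)\mid f\in M_+\}$. The only cosmetic difference is that the paper shows surjectivity of $(M_+)_f\hookrightarrow M_f$ by observing $0=f-f\in (M_+)_f$ (so the $M_f$-subset $(M_+)_f$ contains $0$ and hence equals $M_f$), whereas you rewrite a general element $m-nf=(m+f)-(n+1)f$; these are the same idea.
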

\begin{proof}
    From $M_+\hookrightarrow M$ we get an injective morphism of sheaves
    \begin{equation*}
    \begin{tikzcd}[baseline=(current  bounding  box.center), cramped, row sep = 0ex,
    /tikz/column 1/.append style={anchor=base east},
    /tikz/column 2/.append style={anchor=base west}]
    \widetilde{M_+}\rar[hook] & \O_M
    \end{tikzcd}
    \end{equation*}
    The above morphism is also surjective on $\Spec^\bullet M$, because for any $f\in M_+$ (thus for any proper fundamental open subset) we have
    \begin{equation*}
    \begin{tikzcd}[baseline=(current  bounding  box.center), cramped, row sep = 0ex,
    /tikz/column 1/.append style={anchor=base east},
    /tikz/column 2/.append style={anchor=base west}]
    \widetilde{M_+}(D(f))={(M_+)}_f\rar[hook]&M_f=\O_M(D(f)).
    \end{tikzcd}
    \end{equation*}
    The induced inclusion is locally surjective, because we reach $0$ in $M_f$, since $0=f-f\in {(M_+)}_f$.
    From this, we get the wanted isomorphism.
\end{proof}

\begin{remark}
    It is not true in general that $\widetilde{M_+}\cong \O_M$ on the whole spectrum, since their global sections are different.
\end{remark}

\begin{definition}\label{definition:invertible-sheaf}
    Let $(X, \O_X)$ be a binoid scheme and $\F$ a sheaf of $\O_X$-sets on $X$. We say that $\F$ is \emph{locally free of rank $n$} if there exist an $n\in \NN$ and a cover $\{U_i\}_{i\in I}$ of $X$ such that for every $i$
    \[
    \F\restriction_{U_i}\cong \left(\O_X\restriction_{U_i}\right)^{\cupdot n}.
    \]
    If $n=1$ we say that the sheaf is \emph{invertible}.\footnote{Refer to \cite[Definition 1.9.2]{Boettger} for the definition of the pointed union of $M$-sets}
\end{definition}

\begin{remark}
    Locally free sheaves on $X$ are also referred to as vector bundles on $X$, and if $n=1$ they are called line bundles. We will use both terms indistinctly in this work.
\end{remark}

\begin{definition}
    The category of locally free sheaves of rank $n$ on $X$ is denoted by $\mathbf{Loc}_n(X)$. The set of their isomorphism classes of rank $n$ on $X$ is denoted by $\mathrm{Loc}_n(X)$.\\
    The smash product and pointed union of $M$-sets induce the corresponding operations on the sheaves
    \begin{equation*}
    \begin{tikzcd}[baseline=(current  bounding  box.center), cramped, row sep = 0ex,
    /tikz/column 1/.append style={anchor=base east},
    /tikz/column 2/.append style={anchor=base west}]
    \wedge_{\O_X}:\mathbf{Loc}_m(X)\times \mathbf{Loc}_n(X)\rar & \mathbf{Loc}_{mn}(X)\\
    \bigcupdot:\mathbf{Loc}_m(X)\times \mathbf{Loc}_n(X)\rar & \mathbf{Loc}_{m+n}(X)
    \end{tikzcd}
    \end{equation*}
\end{definition}

\begin{definition}
    $\mathrm{Loc}_1(X)$, equipped with the operation $\wedge_{\O_X}$ is a group, called the \emph{Picard group} of $X$ and it is denoted by $\gls{PicX}$.
\end{definition}

\begin{example}\label{example:vector-bundles-of-x+y=2z}
    Let $M=(x, y, z\mid x+y=2z)$. We are going to show a non-trivial line bundle on $U=\Spec^\bullet M$. The space that we are considering is
    \[
    U=\{\langle\infty\rangle, \langle x, z\rangle, \langle y, z\rangle\}
    \]
    its topology does not have many open subsets, namely $D(x), D(y), D(x)\cup D(y)=U$ and $D(x)\cap D(y)=\{\langle\infty\rangle\}$. So, any line bundle (or locally free sheaf of rank $1$) is trivialized by the covering $\{D(x), D(y)\}$. Let $\mathcal{L}$ be such a line bundle. We know that $\mathcal{L}\restriction_{D(x)}\cong M_x$ as $M_x$-set and $\mathcal{L}\restriction_{D(y)}\cong M_y$ as $M_y$-set.
    \\
    We know that we have isomorphisms $M_x=\Gamma(D(x), \O_M)\cong \Gamma(D(x), \mathcal{L})$ and $M_y=\Gamma(D(y), \O_M)\cong \Gamma(D(y), \mathcal{L})$.
    \\
    On $D(x)$, we invert $x$ and the relation defining $M_x$ becomes $y=2z-x=z+(z-x)$. Similarly on $D(y)$, we have that $x=2z-y=z+(z-y)$. Let $\mathcal{L}$ be $\widetilde{\langle x, z\rangle}$, i.e.\ the sheafification of this ideal, considered on $U$. Then
    \begin{align*}
    \Gamma(D(x), \mathcal{L})&= \langle x, z\rangle_x = \langle 0\rangle_{M_x}\\
    \Gamma(D(y), \mathcal{L})&= \langle x, z\rangle_y = \langle z\rangle_{M_y}
    \end{align*}
    So $\mathcal{L}_x$ is exactly $M_x$, because it contains an invertible element (namely $x$) and $\mathcal{L}_y$ is isomorphic to $M_y$ via the $M_y$-set isomorphism
    \begin{equation*}
    \begin{tikzcd}[baseline=(current  bounding  box.center), cramped, row sep = 0ex,
    /tikz/column 1/.append style={anchor=base east},
    /tikz/column 2/.append style={anchor=base west}]
    \varphi_y:M_y\rar["\sim"] & \Gamma(D(y), \mathcal{L})\\
    0 \rar[mapsto] & z
    \end{tikzcd}
    \end{equation*}
    
    So $\mathcal{L}=\widetilde{\langle x, z\rangle}$ is a line bundle on $U$. In order to prove that it is not trivial, we have to show that $\langle x, z\rangle$ is not a principal ideal un $U$, thus it will not be principal in $M$.\footnote{The converse is not true, for example $M_+$ is principal on $U$ but usually not in $M$.} Assume that there exists $f\in M$ such that $\langle x, z\rangle=\langle f\rangle$. This means that there exist $g, h\in M$ such that $f+g=x$ and $f+h=z$. Since $M$ is positive and cancellative, the only possibility is $f=0$, $g=x$ and $h=z$ and the equality of the ideals does not hold. So $\mathcal{L}$ is non trivial.
    
    What is $\mathcal{L}\wedge_{\O_U}\mathcal{L}$? Since we have a nice global description as the sheafification of an ideal, we have that
    \[
    \mathcal{L}\wedge_{\O_U}\mathcal{L}=\widetilde{\langle x, z\rangle}\wedge_{\O_U}\widetilde{\langle x, z\rangle} \cong \widetilde{\langle x, z\rangle \wedge_M \langle x, z\rangle}.
    \]
    What is this smash product?
    \[
    \langle x, z\rangle \wedge_M \langle x, z\rangle = \langle x\wedge_M x, x\wedge_M z, x\wedge_M z, z\wedge_M z \rangle.
    \]
    From the map  $I\longrightarrow M$ and the properties of the smash product, we get a map
    \[
    \begin{tikzcd}[baseline=(current  bounding  box.center), cramped, row sep = 0ex,
    /tikz/column 1/.append style={anchor=base east},
    /tikz/column 2/.append style={anchor=base west}]
    I\wedge_M I\rar &M\\
    f\wedge_M g\rar[mapsto]&f+g            
    \end{tikzcd}
    \]
    whose image is the ideal $\langle 2x, x+z, 2z\rangle\cong \langle 2x, x+z, x+y\rangle\cong \langle x\rangle+M_+\cong M_+$. This map is not globally injective because, for example, $x\wedge z$ and $z\wedge x$ are mapped to the same element $x+z$. But it is injective on the punctured spectrum, because $I_x\cong M_x$ on $D(x)$ and $I_y\cong M_y$ on $D(y)$, and it is an easy computation to show injectivity in these cases.
    So we proved that these objects are congruent on the punctured spectrum, i.e.
    \[
    \Gamma\left (U, \widetilde{\langle x, z\rangle \wedge_M \langle x, z\rangle}\right)\cong \Gamma\left(U, \widetilde{M_+}\right)
    \]
    Now, since $\widetilde{M_+}\restriction_{U}\cong \O_M\restriction_{U}=\O_U$, thanks to Proposition~\ref{proposition:isomorphism-M+-M}, we get that
    \[
    \mathcal{L}\wedge_{\O_U}\mathcal{L} \cong \widetilde{\langle x, z\rangle \wedge_M \langle x, z\rangle}\cong \O_U,
    \]
    so the order of $\mathcal{L}$ as an element of $\Pic(U)$ is $2$. In general, there is an inclusion between ideals of height one modulo principal divisors into the Picard group of a particular open subset, that we explore in Section~\ref{section:isomorphisms-class-group}.
\end{example}

\subsection{Sheaves of abelian groups}

Another important type of sheaves that we want to consider is sheaves of abelian groups on a binoid scheme. We introduce them briefly here, but we do not define cohomology. Instead, we refer the reader to \cite[Chapter III]{hartshorne1977algebraic} for a compact but extensive treating of the subject.

\begin{definition}\label{definition:sheaf-of-groups}
    Let $X$ be a topological space. A \emph{presheaf of groups} on $X$ is a contravariant functor from the topology of $X$ to the category of abelian groups
    \begin{equation*}
    \begin{tikzcd}[baseline=(current  bounding  box.center), cramped, row sep = 0ex,
    /tikz/column 1/.append style={anchor=base east},
    /tikz/column 2/.append style={anchor=base west}]
    \F:\Top_{X}\rar & \gls{Ab}\\
    U\rar[mapsto]& \F(U)
    \end{tikzcd}
    \end{equation*}
    
    A \emph{sheaf of groups on $X$} is a presheaf of groups on $X$ that respects locality and gluing axioms.
\end{definition}

\begin{definition}\label{definition:constant-sheaf}
    The sheafification of a constant presheaf is called a \emph{constant sheaf}, and we will often denote it by the same symbol as the group.
\end{definition}

\begin{example} Important examples of constant sheaves are $\ZZ$, $(\KK, +)$, $(\KK^*, \cdot)$ for a field $\KK$ and the difference group of a binoid $M$, that we denote by $\gls{difference-group}$.
\end{example}

\begin{definition}\label{definition:gamma}
    Let $M$ be an integral, cancellative binoid. In this case, the subset $\gls{M-bullet}$ is a monoid. The \emph{difference binoid of $M$} is $\gls{difference-binoid}=(-M^\bullet+M)$. The \emph{difference group of $M$} is the group $\gls{difference-group}$.
\end{definition}

\begin{remark}
    When constructing $\Gamma$, what we are doing is inverting all the elements in $M^\bullet$.
\end{remark}

\begin{remark}
    On $\Spec^\bullet M$, if $M$ is integral then the constant presheaf is already a sheaf. If $M$ is not integral, then the sheafification is not trivial.
\end{remark}

\begin{remark}
    If $M$ is an integral and cancellative binoid, then the structure sheaf of Remark~\ref{remark:structure-presheaf} can be defined as
    \[
    D(f_1, \dots, f_r)\longmapsto \bigcap_{i=1}^r M_{f_i}\subseteq \Gamma.
    \]
\end{remark}

\begin{example}
    Let $M=(x, y\mid x+y=\infty)$. $\Spec^\bullet M$ can be covered by the two disjoint open subsets $D(x)$ and $D(y)$. Let $G$ be an abelian group, and consider the constant presheaf\footnote{We denote $G^-$ the constant presheaf, so that $(G^-)^+=G$ is its sheafification.}
    \[
    \begin{tikzcd}[baseline=(current  bounding  box.center), cramped, row sep = 0ex,
    /tikz/column 1/.append style={anchor=base east},
    /tikz/column 2/.append style={anchor=base west}]
    G^-:\Top_{X}\rar & \mathrm{Ab}\\
    U\rar[mapsto]& G
    \end{tikzcd}
    \]
    Clearly $\Gamma(D(x)\cup D(y), G^-)=G$ because it is a constant presheaf. But if we consider its sheafification $G$ we have that $\Gamma(D(x)\cup D(y), G)=\Gamma(D(x), G)\oplus \Gamma(D(y), G)= G\oplus G$ because $D(x)\cap D(y)=\varnothing$.
\end{example}

\begin{remark}
    Let $U$ be a subset of $\Spec M$ and let $G$ be a constant sheaf, so the sheafification of a constant presheaf. Then $G(U)=G^k$ where $k$ is the number of connected components of $U$. 
    In the previous example, $U=\Spec^\bullet M$ and $k=2$ because $V(\langle x\rangle)\cap U$ and $V(\langle y\rangle)\cap U$ are the two components (disjoint closed subsets) from which $U$ is made.
    \\
    Clearly if $M$ is integral then every non-empty open subset is again integral, hence connected, and so the constant presheaf is already a sheaf.
\end{remark}

The sheaf we are most interested to study is the sheaf of units of a binoid scheme, that is non constant.
\begin{definition}
    Let $(X, \O_X)$ be a binoid scheme. Its \emph{sheaf of units} is the sheaf of abelian groups on $X$
    \begin{equation*}
    \begin{tikzcd}[baseline=(current  bounding  box.center), cramped, row sep = 0ex,
    /tikz/column 1/.append style={anchor=base east},
    /tikz/column 2/.append style={anchor=base west}]
    \gls{sheaf-units-X}:\Top_{X}\rar & \mathrm{Ab}\\
    U\rar[mapsto]& \left(\O_X(U)\right)^*
    \end{tikzcd}
    \end{equation*}
    Where, given a binoid $M$, $M^*$ denotes the group of its units.
    If $X=\Spec M$, we denote $\O^*_X$ with $\gls{sheaf-units-M}$.
\end{definition}

There are some important results on the cohomology of sheaves of abelian groups on schemes of binoids. The next Theorem follows almost entirely the proof of \cite[Proposition 2.2.i]{PirashviliCohomology}, where it is proved for schemes of monoids. We will see later in Proposition~\ref{proposition:constant-sheaf-trivial-cohomology} that not all the properties that hold for monoid schemes hold for binoid schemes, because even in an affine situation the binoid scheme might not have a unique minimum element, that is always present in the monoid case.

\begin{theorem}\label{theorem:vanishing-combinatorial-cohomology-affine}
    Let $M$ be a binoid and $\F$ be a sheaf of abelian groups on $\Spec M$. Then
    \[
    \H^i(\Spec M, \F)=0
    \]
    for any $i\geq 1$.
\end{theorem}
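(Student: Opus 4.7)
The plan is to prove that the global-sections functor $\Gamma(\Spec M, -)$ is itself exact on sheaves of abelian groups, which immediately forces all of its right derived functors to vanish in positive degrees.

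First I would isolate the key topological observation: the only open subset of $\Spec M$ containing the maximal ideal $M_+$ is $\Spec M$ itself. This is a direct consequence of Proposition~\ref{remark:boettger:closedsubset-closedsuperset}, which says that open subsets of $\Spec M$ are subset-closed. Indeed, any open $U \ni M_+$ must then contain every prime $\p \subseteq M_+$, and since $M_+$ is the unique maximum of $\Spec M$, this forces $U = \Spec M$.

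Next I would use this to identify the global-sections functor with the stalk functor at $M_+$. By definition,
\[
\F_{M_+} = \varinjlim_{U \ni M_+} \F(U),
\]
and since the indexing system contains only the single open set $\Spec M$, the filtered colimit is constant and equal to $\F(\Spec M)$. Thus $\Gamma(\Spec M, -)$ coincides with the stalk functor $(-)_{M_+}$ on sheaves of abelian groups. Stalks are filtered colimits in $\mathrm{Ab}$, and these are exact, so $\Gamma(\Spec M, -)$ is exact. Since $\H^i(\Spec M, -)$ is by definition the $i$-th right derived functor of $\Gamma(\Spec M, -)$, and the right derived functors of an exact functor vanish in positive degrees (applying $\Gamma$ to any injective resolution keeps it exact), we conclude $\H^i(\Spec M, \F) = 0$ for every $i \geq 1$.

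There is essentially no obstacle here; the entire argument hinges on the elementary remark that $M_+$ admits a unique open neighborhood. It is worth noting in passing that this argument does not actually use the finite generation hypothesis on $M$, since the maximal ideal $M_+$ exists for every binoid, so the theorem holds in greater generality than the standing assumption of the chapter. This is also exactly the step that will \emph{fail} on the punctured spectrum $\Spec^\bullet M$, where $M_+$ has been removed and the remaining primes can genuinely have several open neighborhoods, which is precisely why the subsequent chapters can extract nontrivial cohomological invariants from $\Spec^\bullet M$.
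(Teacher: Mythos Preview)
Your proof is correct and follows essentially the same approach as the paper: both identify $\Gamma(\Spec M,-)$ with the stalk functor at $M_+$ via the observation that the only open neighborhood of $M_+$ is $\Spec M$ itself, and then conclude by exactness of stalks that the derived functors vanish. Your additional remarks about finite generation being unnecessary and the contrast with $\Spec^\bullet M$ are apt but not in the paper's proof.
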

\begin{proof}
    Recall from Remark~\ref{remark:no-open-covers-M+} that, since $X=\Spec M$ is affine, there is only one open subset that covers the maximal ideal $M_+$, that is $X$ itself. So for any sheaf $\F$ we have that $\F(X)=\F_{M_+}$, i.e.\ the evaluation of the global sections functor equals the stalk at ${M_+}$. Since $\F\longmapsto\F_{M_+}$ is an exact functor from the the category of sheaves to the category of groups, we see that the global sections functor is exact. Since $i$-th cohomology is the $i$-th derived functor of the global sections, that is an exact functor, we get the claim.
\end{proof}

\begin{theorem}[Leray's Theorem, {\cite[Exercise III.4.11]{hartshorne1977algebraic}}]\label{theorem:leray}
    Let $X$ be a topological space, $\F$ a sheaf of abelian groups on $X$, $\U=\{U_i\}_{i\in I}$ an open cover of $X$. Assume that for any finite intersection $V=U_{i_0}\cap\dots\cap U_{i_p}$ of sets in the cover we have $\H^k(V, \F\restriction_V)=0$ for all $k\geq 1$. Then for all $p\geq 0$ the natural maps
    \[
    \glslink{cechcohomologyUF}{\vH^p(\U, \F)}\longrightarrow \gls{cohomologyXF}
    \]
    from \v{C}ech cohomology to sheaf cohomology are isomorphisms.
\end{theorem}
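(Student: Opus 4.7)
The plan is to prove the theorem by a double complex / spectral sequence argument, which is the standard route for comparison theorems between \v{C}ech and derived-functor cohomology.

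First, I would fix an injective resolution $0 \to \F \to \mathcal{I}^0 \to \mathcal{I}^1 \to \cdots$ of $\F$ in the category of sheaves of abelian groups on $X$ and form the \v{C}ech double complex $K^{p,q} = \vC^p(\U, \mathcal{I}^q)$, with horizontal differential the \v{C}ech coboundary $\delta$ and vertical differential induced from the resolution. Let $T^n = \bigoplus_{p+q=n} K^{p,q}$ be the associated total complex. Two spectral sequences converge to $\H^\bullet(T^\bullet)$, and the goal is to show that one degenerates to $\vH^\bullet(\U, \F)$ while the other degenerates to $\H^\bullet(X, \F)$.

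For the first spectral sequence I would take vertical cohomology first: on any finite intersection $V = U_{i_0} \cap \cdots \cap U_{i_p}$, the complex $\mathcal{I}^\bullet(V)$ computes $\H^q(V, \F\restriction_V)$, which by the acyclicity hypothesis vanishes for $q \geq 1$ and equals $\F(V)$ for $q = 0$. Thus the $E_1$ page is concentrated in the row $q=0$ and equals $\vC^p(\U, \F)$, so horizontal cohomology on the next page produces $E_2^{p,0} = \vH^p(\U, \F)$ and the spectral sequence degenerates. For the second spectral sequence I would take horizontal cohomology first, using the key fact that each injective sheaf $\mathcal{I}^q$ has vanishing positive \v{C}ech cohomology for every open cover; this forces $E_1^{p,q}=0$ for $p\geq 1$ and $E_1^{0,q}=\mathcal{I}^q(X)$, and the $q$-differential then yields $E_2^{0,q} = \H^q(X, \F)$. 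Comparing the two degenerations gives the isomorphism, and a small bookkeeping check identifies it with the natural map in the statement.

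The main obstacle will be justifying the \v{C}ech acyclicity of injective sheaves, which is not formal from injectivity alone. I would handle this by sheafifying the \v{C}ech complex: for any sheaf $\G$ there is a resolution $0 \to \G \to \mathscr{C}^0(\U, \G) \to \mathscr{C}^1(\U, \G) \to \cdots$ of sheaves on $X$, where $\mathscr{C}^p(\U, \G) = \prod j_*(\G\restriction_{U_{i_0\cdots i_p}})$; applying $\Hom(-, \mathcal{I})$ (or equivalently using that $\mathcal{I}$ turns this resolution into an exact sequence of global sections) gives the needed vanishing. An alternative, arguably cleaner, route is to replace the injective resolution by a flasque (Godement) resolution, for which the analogous vanishing is immediate since restrictions of flasque sheaves remain flasque. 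Once this step is settled, the rest of the argument is a routine spectral sequence chase.
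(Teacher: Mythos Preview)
Your spectral sequence argument is correct and is the standard proof of Leray's theorem; however, the paper does not prove this statement at all. It is stated with a citation to \cite[Exercise III.4.11]{hartshorne1977algebraic} and used as a black box, so there is no proof in the paper to compare against. Your write-up would serve as a complete solution to that exercise.
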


\begin{definition}
    A cover $U$ that respects the hypothesis in the theorem is called \emph{acyclic for $\F$ on $X$}.
\end{definition}

\begin{corollary}[{\cite[Proposition 2.2.ii]{PirashviliCohomology}}]
    Let $(X, \O_X)$ be a separated binoid scheme, $\U$ an affine cover of $X$ and $\F$ a sheaf of abelian groups on $X$. Then $\H^*(X, \F)=\vH^*(\U, \F)$.
\end{corollary}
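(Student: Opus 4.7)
The plan is to invoke Leray's Theorem (Theorem~\ref{theorem:leray}) directly, so the whole task reduces to verifying its acyclicity hypothesis on the intersections of the cover $\U = \{U_i\}_{i \in I}$. Given that each $U_i$ is affine, i.e.\ $U_i \cong \Spec M_i$ for some binoid $M_i$, and that we already have the vanishing statement $\H^k(\Spec N, \G) = 0$ for $k \geq 1$ and any sheaf of abelian groups $\G$ on $\Spec N$ (Theorem~\ref{theorem:vanishing-combinatorial-cohomology-affine}), it will suffice to show that every finite intersection
\[
V = U_{i_0} \cap \dots \cap U_{i_p}
\]
is again affine as a binoid scheme, so that Theorem~\ref{theorem:vanishing-combinatorial-cohomology-affine} applies to $V$ with the sheaf $\F\restriction_V$ to give $\H^k(V, \F\restriction_V) = 0$ for all $k \geq 1$.

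First, I would reduce to the case $p = 1$ by induction: if binary intersections of affines are affine, then so is any finite intersection. For the binary case, this is exactly the content of the \emph{separated} hypothesis, used here in complete analogy with the classical scheme-theoretic situation (see \cite[Chapter II.4]{hartshorne1977algebraic}): for a separated scheme, the intersection of two affine opens is affine because it is isomorphic to the preimage of the diagonal, which is a closed subscheme of the affine $U_i \times U_j$. In the binoid setting, the same reasoning produces $V = U_{i_0} \cap U_{i_1} \cong \Spec N$ for a suitable binoid $N$ (concretely, one can describe $N$ by localizing at elements of $M_{i_0}$ and $M_{i_1}$ along the overlap, using the description of affine subsets via $D(f)$ from Proposition~\ref{proposition:open-subset-scheme}).

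Once each finite intersection $V$ is known to be affine, Theorem~\ref{theorem:vanishing-combinatorial-cohomology-affine} gives $\H^k(V, \F\restriction_V) = 0$ for all $k \geq 1$, so the cover $\U$ is acyclic for $\F$ in the sense of the definition preceding the corollary. Then Leray's Theorem (Theorem~\ref{theorem:leray}) immediately yields the desired isomorphism $\vH^p(\U, \F) \cong \H^p(X, \F)$ for all $p \geq 0$.

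The main obstacle I anticipate is giving a clean formulation of ``separated'' in the combinatorial category and verifying that intersections of affines in a separated binoid scheme are affine. In the ring-theoretic setting, this rests on the fact that closed subschemes of affines are affine; in the binoid setting one needs the analogous fact, which follows from the description of closed subschemes via quotients by ideals. Since the paper's conventions (stated at the end of the introduction) assume all schemes in sight are separated and of finite type, this verification is essentially formal, and the result then follows at once from Leray.
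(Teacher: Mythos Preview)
Your proposal is correct and follows essentially the same approach as the paper's proof: use separatedness to conclude that finite intersections of affines are affine, apply Theorem~\ref{theorem:vanishing-combinatorial-cohomology-affine} to get acyclicity on each intersection, and then invoke Leray's Theorem~\ref{theorem:leray}. The paper's proof is simply a two-sentence compression of exactly this argument.
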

\begin{proof}
    The intersection of affine binoid subschemes is again affine, thanks to separateness, and cohomology vanishes on any intersection, thanks to Theorem~\ref{theorem:vanishing-combinatorial-cohomology-affine}. So any affine cover is acyclic, we can apply Leray's Theorem~\ref{theorem:leray} and obtain the wanted result.
\end{proof}

\begin{remark}
    Since, in this work, we are considering open subschemes of an affine spectrum of a finitely generated binoid, we are always considering separated schemes of finite type.
\end{remark}

\begin{proposition}\cite[Lemma 2.4]{PirashviliCohomology}\label{proposition:constant-sheaf-trivial-cohomology}
    Let $M$ be an integral binoid. For any open subscheme $U$ of $\Spec M$ and any constant sheaf of abelian groups $\G$ on $U$ we have that
    \[
    \H^i(U, \G)=0
    \]
    for all $i\geq 1$.
\end{proposition}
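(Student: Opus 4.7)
The plan is to exploit the fact that an integral binoid has a ``generic point'' $\{\infty\}$ which lies inside every non-empty open subset of $\Spec M$, and then reduce to a \v{C}ech computation whose nerve is a full simplex.

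\textbf{Step 1 (topological setup).} First I would observe that if $M$ is integral then $\{\infty\}$ is a prime ideal of $M$, and it is contained in every other prime. Since by Proposition~\ref{remark:boettger:closedsubset-closedsuperset} every open subset of $\Spec M$ is subset-closed, $\{\infty\}$ belongs to every non-empty open subset of $\Spec M$, in particular to every non-empty open subset of $U$. Consequently, any two non-empty opens inside $U$ meet (at the point $\{\infty\}$), so every non-empty open subset of $U$ is connected. This already shows that the constant presheaf with value $G$ agrees with its sheafification $\G$ on non-empty opens, so $\G(V)=G$ for every non-empty open $V\subseteq U$.

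\textbf{Step 2 (reduce to \v{C}ech).} Next I would choose a finite cover $\mathscr{U}=\{U_i\}_{i=1}^n$ of $U$ by principal open subsets $D(f_i)\subseteq U$. Each $U_i$ is affine, and because finite intersections $U_{i_0}\cap\dots\cap U_{i_p}=D(f_{i_0}+\dots+f_{i_p})$ are again affine (so we are in the separated setting already noted in the Remark after Leray's theorem), Theorem~\ref{theorem:vanishing-combinatorial-cohomology-affine} says that the restriction of any sheaf of abelian groups to any such intersection has vanishing cohomology in positive degree. Thus $\mathscr{U}$ is acyclic for $\G$, and Leray's Theorem~\ref{theorem:leray} gives $\H^i(U,\G)\cong \vH^i(\mathscr{U},\G)$ for all $i\ge 0$.

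\textbf{Step 3 (compute the \v{C}ech complex).} Here is where Step~1 pays off: since each principal open $D(f)\subseteq U$ is non-empty (no $f_i$ is nilpotent, as $M$ is integral), every finite intersection $U_{i_0}\cap\dots\cap U_{i_p}$ contains $\{\infty\}$ and is therefore non-empty. Hence $\G(U_{i_0}\cap\dots\cap U_{i_p})=G$ throughout, and the \v{C}ech complex $\vC^\bullet(\mathscr{U},\G)$ is identified with the simplicial cochain complex of the \emph{full} simplex on $\{1,\dots,n\}$ with coefficients in $G$. This simplex is contractible, so its simplicial cohomology vanishes in positive degree; combined with Step~2, this yields $\H^i(U,\G)=0$ for all $i\ge 1$.

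\textbf{Anticipated obstacle.} The only subtle point is to make sure that the chosen affine cover is legitimate and that the intersection calculation is really producing the full simplex. Both hinge on the integrality of $M$, which rules out $\varnothing$ among the intersections; the rest is essentially formal, and no further input beyond Theorem~\ref{theorem:vanishing-combinatorial-cohomology-affine} and Theorem~\ref{theorem:leray} is needed.
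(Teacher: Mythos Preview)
Your argument is correct, but it is not the route taken in the paper (which defers to \cite{PirashviliCohomology}). As the Remark immediately following the Proposition indicates, the intended proof simply observes that the constant sheaf $\G$ is \emph{flasque} on $U$: since $M$ is integral, every non-empty open $V\subseteq U$ contains $\{\infty\}$, so $\G(V)=G$ and every restriction map $\G(V)\to\G(W)$ is either the identity on $G$ or the zero map onto $\G(\varnothing)=0$, hence surjective. Flasqueness then kills the higher cohomology in one stroke, with no need for a choice of cover, Leray's theorem, or the contractibility of the simplex. Your approach, by contrast, routes the same topological fact (all non-empty intersections meet at $\{\infty\}$) through the \v{C}ech machinery of Theorems~\ref{theorem:vanishing-combinatorial-cohomology-affine} and~\ref{theorem:leray}; it has the virtue of rehearsing exactly the simplicial-nerve computations that recur later in Chapter~\ref{chapter:simplcial-binoids}, but it is less economical here. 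One small clean-up: your parenthetical ``no $f_i$ is nilpotent, as $M$ is integral'' does not by itself guarantee $D(f_i)\neq\varnothing$---you should explicitly discard any $f_i=\infty$ from the cover (equivalently, take only non-empty $D(f_i)$), after which every finite intersection indeed contains $\{\infty\}$ and your nerve is the full simplex.
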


\begin{remark}
    The proof of the previous Proposition relies on the fact that a constant sheaf of abelian groups on an open subset of the spectrum of an integral binoid is flasque, hence it has trivial cohomology in degree higher than 0. This is not true if the binoid is not integral, essentially because there is not a unique minimal element in open subsets of its spectrum.
    This implies that the cohomology of a constant sheaf of abelian groups on the spectrum of a non integral binoid is not always zero. Indeed, it is a meaningful object, and this will be exploited in Corollary~\ref{corollary:cech-covering-nerve-simplicial-cohomology}.
\end{remark}

The following Proposition motivates all our subsequent studies on the Picard group of a binoid.
\begin{proposition}[{\cite[Proposition 3.1]{PirashviliCohomology}}]\label{proposition:cohomology-vector-bundles}
    There is a natural bijection
    \[
    \mathrm{Loc}_n(X)\sim  \H^1(X, \mathrm{GL}(n, \O_X))
    \]
    and an isomorphism of groups
    \[
    \Pic(X)\cong \H^1(X, \O^*_X).
    \]
\end{proposition}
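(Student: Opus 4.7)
The plan is to follow the classical pattern: locally free sheaves of rank $n$ are classified by $\mathrm{GL}_n$-cocycles via transition functions, and in the case $n=1$ the smash product becomes the group law. Since we have acyclicity of affine covers from Theorem~\ref{theorem:vanishing-combinatorial-cohomology-affine} together with Leray's Theorem~\ref{theorem:leray}, we are free to work with \v Cech cohomology on a sufficiently fine affine cover and identify it with sheaf cohomology in the end.

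First I would set up the cocycle-to-sheaf correspondence. Given a locally free sheaf $\F$ of rank $n$ on $X$, pick an affine trivializing cover $\U=\{U_i\}$ together with $\O_X|_{U_i}$-isomorphisms $\varphi_i:\F|_{U_i}\xrightarrow{\sim}(\O_X|_{U_i})^{\cupdot n}$. On $U_{ij}=U_i\cap U_j$ the composition $g_{ij}=\varphi_i\circ\varphi_j^{-1}$ is an automorphism of $(\O_X|_{U_{ij}})^{\cupdot n}$, i.e.\ an element of $\mathrm{GL}(n,\O_X(U_{ij}))$, and on $U_{ijk}$ one checks $g_{ij}\cdot g_{jk}=g_{ik}$, so $(g_{ij})$ is a \v Cech $1$-cocycle with coefficients in the sheaf of groups $\mathrm{GL}(n,\O_X)$. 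Replacing each $\varphi_i$ by $h_i\circ\varphi_i$ for $h_i\in\mathrm{GL}(n,\O_X(U_i))$ changes the cocycle by the coboundary of $(h_i)$, so the cohomology class $[g_{ij}]\in\vH^1(\U,\mathrm{GL}(n,\O_X))$ depends only on the isomorphism class of $\F$. Passing to a finer cover yields the same class via the usual refinement argument, giving a well-defined map
\[
\mathrm{Loc}_n(X)\longrightarrow \H^1(X,\mathrm{GL}(n,\O_X)).
\]

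Next I would construct the inverse by gluing. Given a cocycle $(g_{ij})$, define $\F$ as the sheaf whose sections over an open $V\subseteq X$ are tuples $(s_i)\in\prod_i(\O_X(V\cap U_i))^{\cupdot n}$ satisfying $s_i=g_{ij}(s_j)$ on $V\cap U_{ij}$. The pointed-set/$\O_X$-set gluing axiom (same Serre conditions as for sheaves of binoids in Definition~\ref{definition:sheaf-of-binoids}) makes this into an $\O_X$-sheaf, and by construction $\F|_{U_i}\cong(\O_X|_{U_i})^{\cupdot n}$, so $\F\in\mathbf{Loc}_n(X)$. Cohomologous cocycles produce isomorphic sheaves, and the two constructions are mutually inverse on isomorphism classes. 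Using Theorem~\ref{theorem:vanishing-combinatorial-cohomology-affine} and Leray (Theorem~\ref{theorem:leray}), $\vH^1(\U,\mathrm{GL}(n,\O_X))=\H^1(X,\mathrm{GL}(n,\O_X))$ since affine intersections are acyclic, giving the bijection.

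For the second statement, specialize to $n=1$: $\mathrm{GL}(1,\O_X)=\O_X^*$ is a sheaf of abelian groups, so $\H^1(X,\O_X^*)$ carries its natural group structure. What remains is to identify the smash-product operation on $\mathrm{Loc}_1(X)=\Pic(X)$ with the group operation on $\H^1$. If $\mathcal{L}$ and $\mathcal{L}'$ correspond to cocycles $(g_{ij})$ and $(g'_{ij})$ on a common trivializing affine cover, then choosing the induced trivializations on $\mathcal{L}\wedge_{\O_X}\mathcal{L}'$ via $\varphi_i\wedge\varphi'_i$ yields the cocycle $(g_{ij}+g'_{ij})$ (additive notation in $\O_X^*$), because the transition on a smash product of rank-one sheaves is the sum of the individual transitions; the trivial bundle $\O_X$ corresponds to the trivial cocycle, and the inverse bundle (dual) corresponds to $(-g_{ij})$. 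This gives the isomorphism of groups $\Pic(X)\cong \H^1(X,\O_X^*)$.

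The main obstacle I expect is purely bookkeeping: verifying that the standard gluing construction goes through cleanly for sheaves of pointed $\O_X$-sets (with $\cupdot$ and $\wedge_{\O_X}$ in place of direct sum and tensor product) and that the smash product on rank-one bundles really induces addition of cocycles in $\O_X^*$. Everything else, including the passage from \v Cech to sheaf cohomology, is handled by the acyclicity of affine covers from Theorem~\ref{theorem:vanishing-combinatorial-cohomology-affine} combined with Leray's theorem, so no genuinely new ingredient beyond the material already established in this section is needed.
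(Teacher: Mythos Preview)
The paper does not give its own proof of this proposition; it is stated with a citation to \cite[Proposition 3.1]{PirashviliCohomology} and used as a black box. Your argument is the standard transition-function/gluing proof, which is exactly what one finds in the cited reference (adapted to pointed $\O_X$-sets), so in that sense your proposal is correct and aligned with the intended source.

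One point worth tightening: for $n\geq 2$ the sheaf $\mathrm{GL}(n,\O_X)$ is a sheaf of \emph{non-abelian} groups, so $\H^1(X,\mathrm{GL}(n,\O_X))$ is only a pointed set, and Theorem~\ref{theorem:vanishing-combinatorial-cohomology-affine} and Leray's Theorem~\ref{theorem:leray} as stated in the paper apply to sheaves of abelian groups. You invoke them to pass from \v Cech to sheaf cohomology, but in the non-abelian setting $\H^1$ is essentially \emph{defined} as the direct limit of \v Cech sets over refinements, and what you actually need is that on an affine piece every $\mathrm{GL}_n$-torsor is trivial (so any affine cover already computes the limit). This follows from the same ``global sections equals stalk at $M_+$'' argument behind Theorem~\ref{theorem:vanishing-combinatorial-cohomology-affine}, but you should say so rather than appeal to the abelian Leray theorem. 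For $n=1$ your use of those results is unproblematic, and your identification of $\wedge_{\O_X}$ with addition of cocycles is the right content for the group-isomorphism claim.
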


\begin{definition}\label{definition:local-picard-group}
    Let $M$ be a binoid. Its \emph{local Picard group} is the Picard group of its punctured spectrum, $\gls{PiclocM}=\Pic(\Spec^\bullet M)$.
\end{definition}

\begin{remark}
    If $M$ is torsion-free, cancellative and reduced, the sheaf $\O^*_M$ can be embedded in a flasque sheaf
    \[
    \begin{tikzcd}[cramped]
    \O^*\rar[hook]&\displaystyle\bigoplus_{\begin{subarray}{c}
        \p \text{ minimal}\\
        \text{prime ideal of $M$}\end{subarray}} M^*_{\p}.
    \end{tikzcd}
    \]
\end{remark}

If $M$ is integral, the sheaf above is just $\Gamma$.

\begin{example}
    Let us consider the binoid $M=(x, y, z, w\mid x+y=z+w)$ and compute its local Picard group. Its punctured spectrum can be covered by the four open subsets $\{D(x), D(y), D(z), D(w)\}$ and, thanks to the intersection pattern, we have the following \v{C}ech complex
    \begin{equation*}
    \begin{tikzcd}[baseline=(current  bounding  box.center), cramped, row sep = 0ex,
    /tikz/column 1/.append style={anchor=base east},
    /tikz/column 2/.append style={anchor=base west}]
    \vC^0\rar["\partial^0"] & \vC^1\rar["\partial^1"]&\vC^2\rar["\partial^2"] & \vC^3\rar["\partial^2"] & 0
    \end{tikzcd}
    \end{equation*}
    
    We begin with the localizations at the variables and their unit groups, that form $\vC^0$
    \begin{align*}
    M_x&\cong (x, -x, y, z, w\mid y=z+w-x) & M_x^*&\cong \ZZ\\
    M_y&\cong (x, y, -y, z, w\mid y=z+w-x) & M_y^*&\cong \ZZ\\
    M_z&\cong (x, y, z, -z, w\mid y=z+w-x) & M_z^*&\cong \ZZ\\
    M_w&\cong (x, y, z, w, -w\mid y=z+w-x) & M_w^*&\cong \ZZ
    \end{align*}
    In the intersections, we have either $\ZZ^3$ or $\ZZ^2$, depending on the variables that we are considering. For example, when we invert both $x$ and $y$, also $z$ and $w$ get inverted, but thanks to the relation we can write one in term of the others and we get a $\ZZ^3$. These are the groups that contribute to $\vC^1$
    \begin{align*}
    M_{x+y}&\cong (x, -x, y, -y, z, w\mid 0=z+w-x-y) & M_{x+y}^*&\cong \ZZ^3\\
    M_{x+z}&\cong (x, -x, y, z, -z, w\mid y-z=w-x) & M_{x+z}^*&\cong \ZZ^2\\
    M_{x+w}&\cong (x, -x, y, z, w, -w\mid y-w=z-x) & M_{x+w}^*&\cong \ZZ^2\\
    M_{y+z}&\cong (x, y, -y, z, -z, w\mid x-z=w-y) & M_{y+z}^*&\cong \ZZ^2\\
    M_{y+w}&\cong (x, y, -y, z, w, -w\mid x-w=z-y) & M_{y+w}^*&\cong \ZZ^2\\
    M_{z+w}&\cong (x, y, z, -z, w, -w\mid x+y-z-w=0) & M_{z+w}^*&\cong \ZZ^3
    \end{align*}
    
    When we intersect further, we get a $\ZZ^3$ everywhere, because we can rewrite one variable in term of the others, and these are the groups in $\vC^2$ and the group in $\vC^3$
    \begin{align*}
    M_{x+y+z}^*&\cong \ZZ^3 & M_{x+y+w}^*&\cong \ZZ^3\\
    M_{x+z+w}^*&\cong \ZZ^3 & M_{y+z+w}^*&\cong \ZZ^3\\
    M_{x+y+z+w}^*&\cong \ZZ^3
    \end{align*}
    
    It is easy to see that
    \begin{equation*}
    \begin{tikzcd}[baseline=(current  bounding  box.center), cramped, row sep = 0ex, column sep=1em,
    /tikz/column 1/.append style={anchor=base east},
    /tikz/column 2/.append style={anchor=base west}]
    \ZZ\oplus \ZZ\oplus \ZZ\oplus \ZZ\rar["\partial^0"] & \ZZ^3\oplus \ZZ^2\oplus \ZZ^2\oplus \ZZ^2\oplus \ZZ^2\oplus \ZZ^3\\
    (\alpha_1, \alpha_2, \alpha_3, \alpha_4)\rar[mapsto]& \left((-\alpha_1, \alpha_2, 0), (-\alpha_1, \alpha_3), (-\alpha_1, \alpha_4), (-\alpha_2, \alpha_3), (-\alpha_2, \alpha_4), (-\alpha_3, \alpha_4, 0)\right)
    \end{tikzcd}
    \end{equation*}
    has trivial kernel, so $\H^0(\O^*_M)=0$. We are interested in $\H^1$, so we start by observing that we have some equalities of the open subsets
    \begin{align*}
    D(x+y)&=D(x+y+z)=D(x+y+w)=D(x+y+z+w)\\
    &=D(x+z+w)=D(y+z+w)=D(z+w).
    \end{align*}    
    
    The \v{C}ech complex now looks like
    \begin{equation*}
    \begin{tikzcd}[baseline=(current  bounding  box.center), cramped, row sep = 0ex,
    /tikz/column 1/.append style={anchor=base east},
    /tikz/column 2/.append style={anchor=base west}]
    \ZZ^4\rar["\partial^0"] & \ZZ^{14}\rar["\partial^1"]&\ZZ^{12}\rar["\partial^2"] & \ZZ^{3}\rar["\partial^2"] & 0
    \end{tikzcd}
    \end{equation*}
    and since $-4+14-12+3=1$ we know that the rank of $\H^1$ will be 1.
    It is not hard to examine the relations between elements in the kernel of $\partial^1$ and in the image of $\partial^0$ to conclude that this group has to be free, so, in particular,
    \[
    \Pic^{\loc}(M)=\ZZ.
    \]
    In a later Section we will compute the divisor class group of this binoid, and we will see that these two groups coincide. A generator of this group is represented by the ideal $\langle x, z\rangle$, that is invertible because on each affine combinatorial open subset it is generated by only one element.
\end{example}

\subsection{\texorpdfstring{\v{C}}{C}ech-Picard complex}

In this Section we are going to study the \v{C}ech complex for the sheaf $\O^*_X$ on the covering of $\Spec^\bullet M$ given by $\{D(x_i)\}$.

\begin{definition}\label{definition:cech-picard-complex}
    Let $(X, \O_X)$ be a binoid scheme. Let $\U=\{U_i\}_{i\in [n]}$ be a finite affine covering of $X$.\footnote{It always exists affine and finite because we are assuming that $X$ is quasiaffine.} The \emph{\v{C}ech-Picard complex of $X$} is the \v{C}ech co-chain complex of $\O_X^*$ with respect to $\U$
    \begin{equation*}
    \begin{tikzcd}[cramped, row sep = 0pt]
    \glslink{cech-picard-U}{\C(\U, \O_X^*)}: \C^0(\U, \O_X^*)\rar["\partial^0"] &\C^1(\U, \O_X^*)\rar["\partial^1"] &\dots \rar["\partial^{p-1}"] & \C^p(\U, \O_X^*)\rar["\partial^p"]&\dots
    \end{tikzcd}
    \end{equation*}
    where the groups are
    \[
    \C^p(\U, \O^*_X)=\bigoplus_{1\leq i_0< i_1<\dots<i_p\leq n} \O^*_X\left(U_{i_0}\cap U_{i_1}\cap\dots\cap U_{i_p}\right)
    \]
    and the coboundary maps are defined, as usually like in \cite[Section III.4]{hartshorne1977algebraic},
    \[
    \left(\partial^{p-1}(\sigma)\right)_{i_0, \dots, i_p}=\sum_{k=0}^{p}(-1)^k\sigma_{i_0, \dots, \widehat{i_k}, \dots, i_p}\restriction_{U_{i_0, \dots, i_p}}.
    \]
\end{definition}

\begin{example}\label{example:pic-x+y=2z}
    Let $M=(x, y, z\mid x+y=2z)$ as in Example~\ref{example:vector-bundles-of-x+y=2z} above. Let again $X=\Spec^\bullet M$. We know that there exists at least an invertible sheaf in $\Pic(X)$, and it has order 2.
    
    Thanks to the Theorems and Propositions so far in this Chapter, we know that we can obtain $\Pic(X)$ as the first cohomology group of the sheaf $\O_X^*$ and, in turn, we can compute this on any affine open covering of $X$. Let $\U$ be the covering $\{D(x), D(y)\}$.
    
    To build the \v{C}ech complex of $\O^*_X$ on $\U$ we need first to evaluate the sheaf on the open subsets $D(x)$, $D(y)$ and $D(x)\cap D(y)=D(x+y)$.
    \begin{align*}
    \O_X^*(D(x))&=\left(M_x\right)^*\\
    \O_X^*(D(y))&=\left(M_y\right)^*\\
    \O_X(D(x)\cap D(y))&=\left(M_{x+y}\right)^*
    \end{align*}
    
    In details, $M_x = (x, -x, y, z \mid x+(-x)=0, y=2z+(-x))$ and so $M_x^*=(x, -x\mid x+(-x)=0)\cong \ZZ$, where this integer represents the coefficient of $x$. Similarly for $y$.
    
    On the intersection, when we invert both $x$ and $y$, also $z$ gets inverted. So we have
    \begin{align*}
    M_{x+y}&=(x, -x, y, -y, z\mid x+(-x)=0, y+(-y)=0, z+(z-y-x)=0)\\
    &\cong (x, -x, z, -z, y\mid x+(-x)=0, z+(-z)=0, y=2z-x)\\
    &\cong (x, -x, z, -z\mid x+(-x)=0, z+(-z)=0)\cong\ZZ^2
    \end{align*}
    
    The maps of the \v{C}ech complex come from the localizations $M_x\stackrel{\iota_y}{\longrightarrow} M_{x+y}$ and $M_y\stackrel{\iota_x}{\longrightarrow} M_{x+y}$ when restricted to the units. In particular, when restricting a unit defined on $D(x)$, namely $f=mx$ to the intersection of $D(x)$ and $D(y)$, this stays the same. Vice versa, a unit defined on $M_y$, say $\beta y$ goes to $2\beta z-\beta x$ in $M_{x+y}$, so the complex looks like
    \begin{equation*}
    \begin{tikzcd}[baseline=(current  bounding  box.center), cramped, row sep = 0,
    /tikz/column 1/.append style={anchor=base east},
    /tikz/column 2/.append style={anchor=base west}]
    M_x^* \oplus M_y^*\rar["\partial^0"]& M_{x+y}^*\rar["\partial^1"] &0\\
    \hspace{.4em}\rotatebox[origin=c]{-90}{$\cong$}\hspace{1.8em}\rotatebox[origin=c]{-90}{$\cong$} \hspace{.4em}& \hspace{1.2em}\rotatebox[origin=c]{-90}{$\cong$}\\
    \hspace{.4em}\ZZ\hspace{.2em}\oplus\hspace{.3em}\ZZ\hspace{.3em} \rar & \ZZ\oplus\ZZ\rar &0\\
    (\hspace{.2em}\alpha\hspace{.6em}, \hspace{.8em}\beta\hspace{.2em})\rar[mapsto]&(\alpha-\beta,2\beta)
    \end{tikzcd}
    \end{equation*}
    To compute the first cohomology, we need to compute the quotient $\faktor{\ker(\partial^1)}{\im(\partial^0)}\cong \faktor{M_{x+y}^*}{\im(\partial^0)}$, so we need to understand the image of $\partial^0$. The image is generated by $(1, 2)$ as a subgroup of $\ZZ^2$, so the quotient is $\faktor{\ZZ}{\ZZ}\oplus\faktor{\ZZ}{2\ZZ}\cong\ZZ_2$.
    
    So we have proved that $\Pic(X)\cong\ZZ_2$, and we already found a representative of the only non-trivial class in this group, in Example~\ref{example:vector-bundles-of-x+y=2z}.
\end{example}

\begin{remark}
    Since $\Spec^\bullet M$ can be covered by $\{D(x_i)\}$ and we know that $\O^*_M$ is acyclic on these affine open subsets, we can compute the local Picard group of $M$ as the first cohomology group of the \v{C}ech-Picard complex on $\{D(x_i)\}$,
    \[
    \Pic^{\loc} M=\vH^1(\{D(x_i)\}, \O^*).
    \]
\end{remark}

\begin{lemma}\label{lemma:M*=M*red}
    Let $M$ be a binoid. Then $M^*\cong M^*_{\red}$.
\end{lemma}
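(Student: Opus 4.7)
The plan is to construct the natural map $M^* \to M^*_{\red}$ induced by the quotient $\pi:M \to M_{\red}=M/\nil(M)$ and then verify injectivity and surjectivity directly from the structure of the ideal quotient. Since $\pi$ is a binoid morphism and any binoid morphism sends units to units, we obtain a well-defined group homomorphism $\pi^*:M^* \to M^*_{\red}$.

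The key preliminary observation I would make is that no non-trivial unit of $M$ can be nilpotent. Indeed, if $a\in M^*$ with inverse $-a$, and $na=\infty$ for some $n\geq 1$, then
\[
0 = n\cdot 0 = n\bigl(a+(-a)\bigr) = na + n(-a) = \infty + n(-a) = \infty,
\]
which is impossible unless $M$ is the degenerate trivial binoid (in which case the statement is vacuous). Hence $M^* \cap \nil(M) = \varnothing$.

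Next I would analyze the fibers of $\pi$: recall that in the quotient $M/I$ by an ideal, the fiber of any class $[a]\neq \infty$ is the singleton $\{a\}$, while $\pi^{-1}(\infty)=I$. For injectivity of $\pi^*$: if $a,b\in M^*$ satisfy $\pi(a)=\pi(b)$, then neither image is $\infty$ (by the previous paragraph), so the fiber description forces $a=b$. For surjectivity: take $[c]\in M^*_{\red}$ with inverse $[d]$, so $[c]+[d]=[0]$ in $M_{\red}$. Since $[c]\neq \infty$ and $[d]\neq \infty$, we have $c,d\notin \nil(M)$; moreover $[c+d]=[0]$, and because $0\notin\nil(M)$ (again $M$ non-trivial), the fiber $\pi^{-1}([0])=\{0\}$, giving $c+d=0$ in $M$. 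Therefore $c\in M^*$ lifts $[c]$.

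There is really no serious obstacle: the whole argument rests on the two trivial facts that (i) units are never nilpotent and (ii) the fibers of the reduction map are singletons away from $\nil(M)$. The only point requiring mild care is handling the degenerate case $M=\{\infty\}$ separately (where $M^* = \varnothing = M^*_{\red}$ trivially), and this can be dispatched in one line at the start of the proof.
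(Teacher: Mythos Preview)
Your proof is correct and takes essentially the same approach as the paper: both restrict the reduction map to units, note that $\nil(M)\cap M^*=\varnothing$, and use that $\varphi_{\red}$ is a bijection outside $\nil(M)$ (your singleton-fiber observation). You are simply more explicit about the surjectivity step, which the paper's proof leaves largely implicit.
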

\begin{proof}
    Let $\varphi_{\red}:M\longrightarrow M_{\red}$ be the reduction morphism. We will prove that it is an isomorphism when restricted to the group of units. We have that $\ker(\varphi_{\red})=\nil(M)$, $\varphi_{\red}$ is a bijection outside this ideal and $\nil(M)\cap M^*=\varnothing$. So we only have to prove that $\varphi_{\red}(M^*)\subseteq M^*_{\red}$, but this is true for any binoid homomorphism.
\end{proof}

This Lemma proves that, unlike for rings, the nilpotent elements do not enter at all when computing the units of a binoid. We will use this fact in later discussions in Chapter~\ref{chapter:injections}.

\begin{example}
    The above is not true for $M_{\tf}$. Let $n\in \NN$, $n\geq 2$ and let $M=(x\mid nx=0)$. Then $M^*\cong \ZZ_n$, but $M_{\tf}\cong\{0, \infty\}$, so $M^*_{\tf}\cong 0$.
\end{example}

\begin{proposition}
    Let $(X, \O_X)$ be a binoid scheme. Then $\O^*_X\cong \O^*_{X_{\red}}$.
\end{proposition}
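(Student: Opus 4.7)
The plan is to reduce the statement to a local, affine assertion and then invoke Lemma~\ref{lemma:M*=M*red} on each standard open. Since $X$ is a binoid scheme, it admits an open cover by affine binoid schemes $U_i \cong \Spec M_i$, and the reduction $X_{\red}$ is built by gluing the affine pieces $\Spec (M_i)_{\red}$. By Proposition~\ref{proposition:specM-specMred-tf}, the reduction map $\Spec (M_i)_{\red} \to \Spec M_i$ is a homeomorphism, so the underlying topological spaces of $X$ and $X_{\red}$ agree. It therefore suffices to produce a natural isomorphism of sheaves of abelian groups on $X$ between $\O^*_X$ and $\O^*_{X_{\red}}$, and this can be verified on the basis of fundamental open subsets of each affine chart.

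First I would restrict attention to an affine chart $U = \Spec M$ and the standard basis $\{D(f)\}_{f \in M}$. By Definition~\ref{definition:affine-scheme-binoid}, $\O_X(D(f)) = M_f$ and $\O_{X_{\red}}(D(\bar f)) = (M_{\red})_{\bar f}$, where $\bar f$ is the image of $f$ in $M_{\red}$. A short check shows that localization commutes with reduction, i.e.\ there is a canonical isomorphism $(M_f)_{\red} \cong (M_{\red})_{\bar f}$: both are obtained from $M$ by inverting $f$ and killing nilpotents, and the two operations commute since $\nil$ is an ideal stable under localization. Combining this with Lemma~\ref{lemma:M*=M*red} applied to the binoid $M_f$ yields
\[
\O_X^*(D(f)) = (M_f)^* \;\cong\; (M_f)_{\red}^* \;\cong\; (M_{\red})_{\bar f}^* = \O_{X_{\red}}^*(D(\bar f)),
\]
and this isomorphism is induced by the reduction morphism $M_f \to (M_f)_{\red}$, which is the component of the reduction morphism of sheaves.

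Next I would check naturality: for an inclusion $D(g) \subseteq D(f)$, the restriction maps on both sides are induced by the respective localization morphisms, which commute with the reduction morphism because reduction is a functor on binoids. Hence the collection of isomorphisms assembles into a morphism of presheaves on the basis, and therefore into an isomorphism of sheaves on $U$. Because these isomorphisms are canonical, they agree on overlaps of affine charts and glue to a global isomorphism $\O^*_X \cong \O^*_{X_{\red}}$ on $X$.

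The only genuinely non-trivial step is the interchange $(M_f)_{\red} \cong (M_{\red})_{\bar f}$, which is the main obstacle; once this is granted, everything else is a formal consequence of Lemma~\ref{lemma:M*=M*red} and the functoriality of the sheafification process from Definition~\ref{definition:affine-scheme-binoid} together with the gluing description of a binoid scheme in Definition~\ref{definition:scheme-binoid}.
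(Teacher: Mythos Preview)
Your proof is correct and follows essentially the same approach as the paper: both rest on Lemma~\ref{lemma:M*=M*red}, with the paper simply stating ``This follows from the previous Lemma~\ref{lemma:M*=M*red}.'' You have carefully spelled out the local-to-global reduction, the compatibility $(M_f)_{\red} \cong (M_{\red})_{\bar f}$, and the gluing argument that the paper leaves implicit.
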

\begin{proof}
    This follows from the previous Lemma~\ref{lemma:M*=M*red}.
\end{proof}

\begin{remark}
    Since $\O^*_X\cong \O^*_{X_{\red}}$, also their cohomologies will be the same. So we can concentrate on studying it in the reduced case, in order to study line bundles and higher cohomology of the sheaf of units.
\end{remark}

\begin{example}
    Let $M=(x, y, z, w\mid x+y=2z, 3w=\infty)$. Its punctured spectrum is
    \[
    X=\{\langle w\rangle, \langle x, z, w\rangle, \langle y, z, w\rangle\}
    \]
    and its reduction is $M_{\red}=(x, y, z, w\mid x+y=2z, w=\infty)\cong (x, y, z\mid x+y=2z)$, whose local Picard group we already studied in Example~\ref{example:pic-x+y=2z}. Since $D(w)=\varnothing$, we can easily see that the groups involved in the computation of \v{C}ech cohomology of $\O^*_X=\O^*_{X_{\red}}$ are exactly the same for $X$ and its reduction.
\end{example}

\section{The divisor class group}
In this Section we will first review some results that can be found in \cite{flores2014picard}, while rewriting them in our language, and then generalise the concept of Divisor Class Group to arbitrary quasiaffine binoid schemes.

\begin{definition}
    Let $\p$ be a prime ideal of a binoid $M$. The \emph{height of $\p$}, denoted $\htnew(\p)$, is the maximum $n\in\NN$ such that there exists a strictly increasing chain of prime ideals $\p_0\subsetneq \p_1\subsetneq \dots\subsetneq \p_n=\p$. The set of points of height $k$ in $\Spec M$ is denoted by $\glslink{X-height-k}{{\Spec M}^{(k)}}$.
\end{definition} 

\begin{remark}
    In the previous definition, $\p_0=\langle\infty\rangle$ if and only if the binoid is integral.
\end{remark}

\begin{definition}
    A binoid $M$ is called \emph{regular} if $M\cong G^\infty\wedge \left(\NN^s\right)^\infty$ for some abelian group $G$ and $s\in\NN$.
\end{definition}
\begin{definition}
    An integral binoid $M$ is called \emph{regular in codimension $s$} if, for any $\p\in\Spec M$ of height at most $s$, the localization $M_{\p}$ is a regular binoid.
\end{definition}

\begin{remark}
    If $M$ is torsion free and regular in codimension 1, then $M_\p\cong\left(\ZZ^r\right)^\infty\wedge\NN^\infty$ for some $r\in\NN$ and all the $\p$'s, prime ideals of height 1.
\end{remark}

\begin{remark}
    For a binoid, normal, torsion free and cancellative implies regular in codimension 1.
\end{remark}

\begin{example}
    Let us again consider $M=(x, y, z\mid x+y=2z)$. Its prime ideals of height one are just $\p_1=\langle x, z\rangle$ and $\p_2=\langle y, z\rangle$. Let us study the regularity of $M_{\p_i}$. 
    By definition $M_{\p_1}=-(M\smallsetminus \p_1)+M$ but, from the evaluation of $\O_M$, we have also that $M_{\p_1}=\O_M(D(y))=M_{y}$, thanks to Proposition~\ref{proposition:stalk-binoid-scheme}.
    $M_y=(x, y, -y, z\mid x=2z-y)\cong\ZZ^\infty\wedge\NN^\infty$. The same holds for $\p_2$, so $M$ is regular in codimension $1$.
\end{example}

\begin{proposition}\label{proposition:injectionM-Gamma}
    Let $M$ be an integral, cancellative and torsion-free binoid. Then its difference binoid is $\Gamma =M_{\langle\infty\rangle} \cong (\ZZ^r)^\infty$ for some $r$ and the map $M \longrightarrow \Gamma$ is injective.
\end{proposition}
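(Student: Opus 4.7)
The plan is to unpack the definitions and then verify each of the three claims (identification with the localization, free rank, injectivity) in turn.

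First, I would observe that since $M$ is integral, the only element that can sit in a prime ideal for the set $M^\bullet$ to fail to be multiplicatively closed is $\infty$ itself; in particular $\langle\infty\rangle = \{\infty\}$ is a prime ideal, and $M \smallsetminus \langle\infty\rangle = M^\bullet$. Unwinding the definitions of localization at a prime and of the difference binoid therefore gives
\[
M_{\langle\infty\rangle} \;=\; -(M\smallsetminus\langle\infty\rangle) + M \;=\; -M^\bullet + M \;=\; \Gamma,
\]
which is the first identification.

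Second, I would exhibit $\Gamma$ as $(\ZZ^r)^\infty$. Since $M$ is integral, $M^\bullet$ is a submonoid of $M$, and it inherits cancellativity and torsion-freeness from $M$. Under the standing finite-generation assumption on $M$, the monoid $M^\bullet$ is finitely generated as well. The non-infinity part $\Gamma^\bullet = -M^\bullet + M^\bullet$ is then the Grothendieck group of a finitely generated, cancellative, torsion-free commutative monoid, so it is a finitely generated torsion-free abelian group, hence $\Gamma^\bullet \cong \ZZ^r$ for some $r \in \NN$. Re-adjoining the absorbing element then yields $\Gamma \cong (\ZZ^r)^\infty$.

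Third, for injectivity of $M \longrightarrow \Gamma$, I would argue directly from the universal property of localization. Two elements $m_1, m_2 \in M$ become equal in $\Gamma = -M^\bullet + M$ precisely when there exists $s \in M^\bullet$ such that $s + m_1 = s + m_2$ in $M$; cancellativity of $M$ then forces $m_1 = m_2$. The case where $m_1$ or $m_2$ equals $\infty$ is immediate, since $\infty$ is the only preimage of $\infty \in \Gamma$ in an integral binoid.

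No single step looks like a real obstacle here; the only point that requires care is verifying that one is genuinely allowed to pass from the cancellative, torsion-free monoid $M^\bullet$ to a free abelian group $\ZZ^r$, and this is exactly the standard structure theorem for Grothendieck groups of such monoids, so it can be cited rather than reproved.
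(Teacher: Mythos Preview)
Your proof is correct. The paper actually states this proposition without proof, treating it as a standard fact about integral cancellative torsion-free binoids, so there is nothing to compare against; your argument supplies exactly the routine verification one would expect—unwinding the definition of $M_{\langle\infty\rangle}$, invoking the structure theorem for finitely generated torsion-free abelian groups on $\Gamma^\bullet$, and using cancellativity for injectivity.
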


\subsection{\texorpdfstring{$\Pic(U)$}{PicU}}
Assume that $M$ is an integral, cancellative, torsion-free, positive and regular in codimension $1$ binoid of dimension at least 1 and $X=\Spec^\bullet M$.
We study the set of line bundles defined on the open subset of the prime ideals of height at most $1$.

\begin{definition}
    Let $U$ be an open subset of $\Spec M$. Then $\Pic(U)$ is the group of invertible $\O_M$-sheaves defined on $U$. A sheaf in $\Pic(U)$ is called \emph{$U$-invertible}.
\end{definition}
\begin{remark}
    Recall from Proposition~\ref{proposition:cohomology-vector-bundles} that $\Pic(U)=\H^1(U, \O_M^*\restriction_U)$.
\end{remark}

\begin{proposition}\label{proposition:W-open}
    Let $W$ be the subset of $\Spec M$ that consists of the prime ideals of height at most $1$. Then $W$ is an open subset.
\end{proposition}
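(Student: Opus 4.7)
The plan is to apply Remark~\ref{remark:boettger:closedsubset-closedsuperset}, which says that a subset of $\Spec M$ is open if and only if it is subset-closed (that is, downward-closed under the inclusion order on primes). So it suffices to check that if $\p \in W$ and $\mathfrak{q} \in \Spec M$ satisfies $\mathfrak{q} \subseteq \p$, then $\mathfrak{q} \in W$.

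Equivalently, I would show the complement $W^c = \{\p \in \Spec M \mid \htnew(\p) \geq 2\}$ is superset-closed. This is essentially by definition of height: if $\htnew(\p) \geq 2$ then there exists a strictly ascending chain $\p_0 \subsetneq \p_1 \subsetneq \p$, and if $\p \subseteq \mathfrak{q}$ then prepending (or appending, since we're going up) this chain yields $\p_0 \subsetneq \p_1 \subsetneq \p \subseteq \mathfrak{q}$, so $\htnew(\mathfrak{q}) \geq 2$ as well. Hence $W^c$ is superset-closed, so it is closed, so $W$ is open.

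Since the argument is purely order-theoretic and uses only the definition of height and the elementary characterization of open subsets of $\Spec M$, there is no real obstacle. The only mild subtlety is handling the edge cases cleanly: if $\p$ has height $0$ and $\mathfrak{q} \subseteq \p$ is prime, then $\mathfrak{q} = \p$ (otherwise $\mathfrak{q} \subsetneq \p$ would give a length-$1$ chain contradicting $\htnew(\p) = 0$), and if $\p$ has height $1$ and $\mathfrak{q} \subsetneq \p$ then $\mathfrak{q}$ must be minimal. In both cases $\mathfrak{q} \in W$, confirming subset-closedness directly.
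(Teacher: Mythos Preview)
Your argument is correct and arguably cleaner than the paper's. The paper proves openness by writing $W$ as the union, over all $\p \in W$, of the minimal open neighbourhoods $D\bigl(\sum_{x_j \notin \p} x_j\bigr)$ furnished by Proposition~\ref{proposition:minimal-open-set-prime-ideal}; implicitly this still needs the observation that such a neighbourhood (which consists exactly of the primes contained in $\p$) stays inside $W$, i.e.\ the same subset-closedness you prove directly. Your route via Proposition~\ref{remark:boettger:closedsubset-closedsuperset} is purely order-theoretic and does not invoke the existence of minimal open neighbourhoods, so it does not rely on the finite-generation hypothesis needed for Proposition~\ref{proposition:minimal-open-set-prime-ideal}. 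The paper's approach, on the other hand, fits more naturally with the later arguments (e.g.\ Proposition~\ref{proposition:CaCl-Cl}) that explicitly use those minimal neighbourhoods as a covering of $W$.
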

\begin{proof}
    $W$ is an open subset because it is the union of the minimal open subsets that contain these prime ideals  defined in Proposition~\ref{proposition:minimal-open-set-prime-ideal}.
\end{proof}

\begin{proposition}\label{proposition:M-sets-line-bundles-on-W}
    Every $\mathcal{L}\in\Pic(W)$ defines an $M$-set. Vice versa, every $M$-set $S$ such that $S_{\p}\cong M_{\p}$ for every $\p$ of height $1$ defines a line bundle on $W$.
\end{proposition}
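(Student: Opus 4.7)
The statement has two directions and I would treat them separately.

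For the forward direction, given $\mathcal{L}\in\Pic(W)$, I would simply take the global sections $S:=\Gamma(W,\mathcal{L})$. This is a set with a marked point (the zero section) and carries a natural action of the binoid $\Gamma(W,\O_M)$ by the sheaf-of-$\O_W$-sets structure (Definition~\ref{definition:sheaf-of-M-sets}). Composing with the restriction morphism $M=\Gamma(\Spec M,\O_M)\to\Gamma(W,\O_M)$ turns $S$ into an $M$-set. This part is essentially formal.

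For the reverse direction, given $S$ with $S_{\p}\cong M_{\p}$ for every $\p\in W$ with $\htnew(\p)=1$, I would consider the sheafification $\widetilde{S}$ (defined on $\Spec M$) and restrict it to $W$. The key observation is the cover of $W$ produced by Proposition~\ref{proposition:minimal-open-set-prime-ideal}: for each height-one prime $\p$, the minimal open subset containing $\p$ is $\Spec M_{\p}\subseteq W$, and these sets (together with $\{\langle\infty\rangle\}$, which is contained in each of them anyway) form an affine open cover of $W$. On such an affine piece, $\widetilde{S}\restriction_{\Spec M_{\p}}$ is precisely the sheafification of the $M_{\p}$-set $S_{\p}$, and the hypothesis $S_{\p}\cong M_{\p}$ upgrades to an isomorphism of sheaves
\[
\widetilde{S}\restriction_{\Spec M_{\p}}\;\cong\;\widetilde{M_{\p}}\;=\;\O_{M}\restriction_{\Spec M_{\p}}.
\]
Hence $\widetilde{S}\restriction_{W}$ is locally free of rank one in the sense of Definition~\ref{definition:invertible-sheaf}, i.e.\ an element of $\Pic(W)$. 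Note that the height-zero point $\langle\infty\rangle$ imposes no extra condition, because $M_{\langle\infty\rangle}=\Gamma$ and the stalk $S_{\langle\infty\rangle}$ is automatically isomorphic to $\Gamma$ after localising the height-one isomorphism further by Proposition~\ref{proposition:injectionM-Gamma}.

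The main obstacle I anticipate is the bookkeeping around the sheafification $\widetilde{S}$: one must verify that sheafification commutes with localisation at a prime in this binoid setting, so that $\widetilde{S}\restriction_{\Spec M_{\p}}$ really is $\widetilde{S_{\p}}$, and that an isomorphism of $M_{\p}$-sets induces an isomorphism of the associated sheaves on $\Spec M_{\p}$. Both facts are part of the general machinery built in Definition~\ref{definition:sheaf-of-M-sets} and the Remarks following it, so the argument reduces to assembling them correctly rather than to a substantive new computation. Hypotheses of integrality, cancellativity, torsion-freeness and regularity in codimension~1 are used only to guarantee that the height-one stalks $M_{\p}$ are of the simple form $(\ZZ^{r})^{\infty}\wedge\NN^{\infty}$, making the local isomorphism $S_{\p}\cong M_{\p}$ a meaningful condition.
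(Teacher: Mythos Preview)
Your proposal is correct and follows essentially the same approach as the paper: take global sections $\Gamma(W,\mathcal{L})$ for the forward direction, and for the converse sheafify $S$ and use Proposition~\ref{proposition:minimal-open-set-prime-ideal} to identify $\widetilde{S}$ on the minimal open set containing each height-one $\p$ with $S_\p\cong M_\p$. The paper's proof is in fact considerably terser than yours and omits the bookkeeping you mention about sheafification commuting with localisation and the treatment of the height-zero point, so you have fleshed out the argument rather than departed from it.
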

\begin{proof}
    Let $\mathcal{L}$ be a line bundle defined on $W$. Then its sections $\Gamma(W, \mathcal{L})$ can be considered as an $M$-set. On the other side, Let $S$ be an $M$-set with this property. Then $M_\p\cong \gls{localization-Mset-prime}=\widetilde{S}(U)$ where $U$ is the unique minimal subset of $\Spec M$ than contains $\p$ and $\widetilde{S}$ is the sheafification of $S$.
\end{proof}

\begin{remark}
    The $M$-set defined by $\mathcal{L}$ is possibly not locally free as a $\O_M$-set on the whole $X$.
\end{remark}

\subsection{Cartier Divisors}
We introduce the notion of Cartier Divisor of a scheme of binoids and we prove an isomorphism with its Picard group. Assume that $M$ is an integral, cancellative binoid and $X=\Spec^\bullet M$.

Recall from Proposition~\ref{proposition:injectionM-Gamma} that there is an injection $M\hookrightarrow \Gamma$. Moreover, $\Gamma^\bullet=\Gamma\smallsetminus\{\infty\}$ defines a constant sheaf on $\Spec M$, that contains the sheaf of units $\O^*_M$ as a subsheaf.
\begin{definition}
    Let $U$ be an open subset of $\Spec M$. A \emph{Cartier Divisor} on $U$ is a global section of the sheaf of groups $\faktor{\Gamma^\bullet}{M^*}$. The group of this global sections is denoted by $\gls{cartierdivisorsU}$.
\end{definition}

\begin{definition}\label{definition:principal-cartier-divisor}
    A \emph{Principal Cartier Divisor} is a divisor that can be globally represented on $U$ by a $g\in\Gamma^\bullet$. They form a subgroup with the usual sum operation, denoted by $\gls{principalcartierdivisorsU}$.
\end{definition}

\begin{definition}
    The \emph{Cartier divisors Class Group} of $U$ is
    \[
    \gls{cartierdivisorsclassU}=\faktor{\CaDiv(U)}{\CaPrin(U)}
    \]
\end{definition}

The next Proposition follows the same proof as in \cite[Proposition 6.1]{flores2014picard}, adapted to our setting.

\begin{proposition}\label{proposition:isomorphism-CaCl-Pic}
    For any open subset $U$ of $X$ there is an isomorphism $\CaCl(U)\longrightarrow\Pic(U)$.
\end{proposition}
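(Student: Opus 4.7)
The plan is to derive the isomorphism from a short exact sequence of sheaves on $U$ together with the vanishing of cohomology of constant sheaves on spectra of integral binoids established earlier in the Chapter.

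First, I would set up the short exact sequence of sheaves of abelian groups on $U$
\[
\begin{tikzcd}
1 \rar & \O_M^*\restriction_U \rar & \Gamma^\bullet\restriction_U \rar & (\Gamma^\bullet/\O_M^*)\restriction_U \rar & 1.
\end{tikzcd}
\]
The injection on the left is the one recalled immediately before the statement, ultimately coming from Proposition~\ref{proposition:injectionM-Gamma}. Exactness is checked stalkwise at each $\p\in U$: the stalk of $\O_M^*$ is $M_\p^*$, the stalk of the constant sheaf $\Gamma^\bullet$ is $\Gamma^\bullet$ itself, and the quotient stalk is the tautological $\Gamma^\bullet/M_\p^*$.

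Next, I would take the associated long exact sequence in cohomology on $U$. Because $M$ is integral (the standing hypothesis of this Section), Proposition~\ref{proposition:constant-sheaf-trivial-cohomology} yields $\H^i(U,\Gamma^\bullet)=0$ for all $i\geq 1$, so the long exact sequence truncates to
\[
\begin{tikzcd}
\Gamma(U,\Gamma^\bullet)\rar["\delta"] & \Gamma(U,\Gamma^\bullet/\O_M^*) \rar & \H^1(U,\O_M^*) \rar & 0.
\end{tikzcd}
\]
It then remains to identify the three remaining objects: by definition $\Gamma(U,\Gamma^\bullet/\O_M^*)=\CaDiv(U)$; by Definition~\ref{definition:principal-cartier-divisor} the image of $\delta$ is exactly $\CaPrin(U)$, since a Cartier divisor is principal precisely when it admits a global lift to $\Gamma^\bullet$; and by Proposition~\ref{proposition:cohomology-vector-bundles} we have $\H^1(U,\O_M^*)\cong\Pic(U)$. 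Passing to the cokernel of $\delta$ gives the desired isomorphism $\CaCl(U)=\CaDiv(U)/\CaPrin(U)\cong\Pic(U)$.

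The main technical point is the identification of $\operatorname{im}\delta$ with $\CaPrin(U)$. This is a matter of unwinding definitions, but it is where the sheaf-theoretic construction (the connecting map sends $g\in\Gamma(U,\Gamma^\bullet)$ to its class modulo units) has to be matched with the definition of principal Cartier divisors given in the paper. The other essential ingredient is the cohomology vanishing of Proposition~\ref{proposition:constant-sheaf-trivial-cohomology}, which is exactly what makes the integrality hypothesis on $M$ indispensable; without it the long exact sequence would not truncate and only an injection $\CaCl(U)\hookrightarrow\Pic(U)$ would survive.
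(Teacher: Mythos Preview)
Your proposal is correct and follows essentially the same approach as the paper: set up the short exact sequence $1\to\O_M^*\to\Gamma^\bullet\to\Gamma^\bullet/\O_M^*\to 1$, invoke Proposition~\ref{proposition:constant-sheaf-trivial-cohomology} to kill $\H^1(U,\Gamma^\bullet)$, and read off the isomorphism from the resulting long exact sequence together with Proposition~\ref{proposition:cohomology-vector-bundles}. The only cosmetic difference is that the paper names the map $\Gamma^\bullet(U)\to\CaDiv(U)$ as $\mathrm{div}$ and reserves $\delta$ for the connecting homomorphism $\CaDiv(U)\to\Pic(U)$, whereas you use $\delta$ for the former.
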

\begin{proof}
    Consider the short exact sequence of abelian groups
    \begin{equation*}
    \begin{tikzcd}[baseline=(current  bounding  box.center), cramped, row sep = 0ex,
    /tikz/column 1/.append style={anchor=base east},
    /tikz/column 2/.append style={anchor=base west}]
    0 \rar & M^* \rar &\Gamma^*\rar &\faktor{\Gamma^*}{M^*}\rar & 0
    \end{tikzcd}
    \end{equation*}    
    Since $M$ is integral and $\Gamma^*$ is a constant sheaf on $U$, from Proposition~\ref{proposition:constant-sheaf-trivial-cohomology} we get that $\H^1(U, \Gamma^*)=0$. From Proposition~\ref{proposition:cohomology-vector-bundles} we know that $\H^1(U, \O^*_M)=\Pic(U)$, so when we take cohomology we get the exact sequence
    \begin{equation*}
    \begin{tikzcd}[baseline=(current  bounding  box.center), cramped, row sep = 0ex,
    /tikz/column 1/.append style={anchor=base east},
    /tikz/column 2/.append style={anchor=base west}]
    0 \rar & \O^*_M(U) \rar &\Gamma^*(U)\rar["\mathrm{div}"] & \CaDiv(U)\rar["\delta"] &\Pic(U)\rar & 0.\\
    & 				&    \stackrel{\rotatebox{-60}{=}}{\hspace{2em}\Gamma^*}
    \end{tikzcd}
    \end{equation*}
    Since the image of $\mathrm{div}$ is exactly the subgroup of principal Cartier divisors defined in Definition~\ref{definition:principal-cartier-divisor}, we get the statement.
\end{proof}

\begin{remark}
    On each affine open subset $D(f)$ a Cartier divisor on $D(f)$ is given by a $\gamma_f\in\Gamma^\bullet$ up to a unit in $M_f^*$, since $\H^1(D(f), \O^*)=0$ and the shorth exact sequence above. So on $U=\bigcup D(f)$ we have the usual representation where a Cartier divisor $\glslink{cartierdivisor}{\{D(f), \gamma_f\}}$ is given by a collection of open sets $D(f)$ and sections $\gamma_f\in\Gamma^*$, with  $\gamma_f-\gamma_g\in M_{f+g}^*$.\\
    The sequence that we use in the Proposition is parallel to the sequence that occurs in algebraic geometry
    \begin{equation*}
    \begin{tikzcd}[baseline=(current  bounding  box.center), cramped, row sep = 0ex,
    /tikz/column 1/.append style={anchor=base east},
    /tikz/column 2/.append style={anchor=base west}]
    0 \rar & \O^* \rar & \mathcal{K}^*\rar &\faktor{\mathcal{K}^*}{\O^*}\rar & 0
    \end{tikzcd}
    \end{equation*}
    what we will use later in Lemma~\ref{Lemma:aciclicityforaffinespace}.
\end{remark}

There is an explicit description of $\delta$ in the Proof of the Proposition above. Given a Cartier divisor $D=\{(D(f), \gamma_f)\}$, we define a $U$-invertible subsheaf $\mathcal{L}(D)$ of the constant sheaf $\Gamma^*$ by letting its restriction to $D(f)$ to be generated by $-\gamma_f$. This construction is well defined, and it is an isomorphism from $\CaDiv(U)$ to the $U$-invertible subsheaves of $\Gamma^*$. The map $\delta$ in the Proposition above sends exactly $D$ to the corresponding isomorphism class of $\mathcal{L}(D)$.
In particular, if $W$ is an open subset of $U$ and $U$ is covered by $U_i$, we have that $W\cap U_i\neq \varnothing$ because the binoid is integral, and we can describe the evaluation of $\mathcal{L}(D)$ as
\[
\Gamma(W, \mathcal{L}(D))=\{q\in\Gamma\mid q\in (\gamma_i) \text{ on } U_i\cap W\}=\bigcap_{i}\gamma_i+M_{f_i}
\]
In particular then
\[
\Gamma(U, \mathcal{L}(D))=\bigcap_{i}\gamma_i+M_{f_i}\subseteq \Gamma.
\]
The operation is then the one induced by the embedding in $\Gamma$, i.e.\ if $(U_i, \gamma_i)$ and $(U_i',\gamma_i')$ are two Cartier divisors, that correspond to invertible sheaves $\mathcal{L}$ and $\mathcal{M}$ respectively, then $(U_i, \gamma_i)+(U_i',\gamma_i')$ corresponds to $\mathcal{L}+\mathcal{M}$ where, for any open subset $V\subseteq U$,
\[
(\mathcal{L}+\mathcal{M})(V)=\{s+t\mid s\in\mathcal{L}(V), t\in\mathcal{M}(V)\}.
\]
\begin{remark}\label{remark:CaDiv-ideals}
    Every Cartier divisor can be realized as an ideal in the following way. Let $(U_i, \gamma_i)$ be a Cartier divisor. We can assume that $U_i=D(f_i)$, as above, so $\gamma_i=g_i-h_i$ for some $g_i\in M$ and $h_i\in\Gamma$. Let $f=\sum_i h_i$. Clearly $(U_i, \gamma_i)\sim (U_i, \gamma_i+f)$ because $\gamma_i-(\gamma_i+f)=-f\in\Gamma$ and $\gamma_i+f\in M$. The ideal that realizes it is then generated by the collection of the $(\gamma_i+f)$'s, $\langle \{\gamma_i+f\}\rangle\subseteq M$.
\end{remark}

An example of this behaviour can be seen in the case $M=(x, y, z\mid x+y=nz)$, that will be fully explored in Example~\ref{example:x+y=nz}.

\subsection{Weil divisors}

We introduce now Weil divisors of $M$ and their class group, and we will later see that this group is isomorphic to $\Pic(W)$ and to the class group of Cartier divisors on $W$. Assume again that $M$ is an integral, cancellative, torsion-free, positive and regular in codimension $1$ binoid of dimension at least 1 and $X=\Spec^\bullet M$.
\footnote{A binoid that satisfies these assumptions is locally factorial in the sense of \cite{flores2014picard}.}

\begin{definition}
    Let $M$ be a binoid that satisfies all the hypothesis above. The \emph{group of Weil divisors of $M$} is the free abelian group on $X^{(1)}$
    \[
    \gls{weildivisorsM}=\bigoplus_{\p\in X^{(1)}}\ZZ \p.
    \]
\end{definition}

Since $M$ is regular in codimension $1$, for any prime ideal $\p$ of height $1$ there is a natural injection $M_{\p}^\bullet\cong \ZZ^{r-1}\wedge \NN\longrightarrow \ZZ^{r-1}\wedge \ZZ\cong \Gamma^\bullet$. Compose it now with the projection on the last factor $(\ZZ^{r-1})\wedge \ZZ\longrightarrow\ZZ$ and call this $v_{\p}$.

\begin{definition}
    Let $q\in\Gamma^\bullet$. We call $v_{\p}(q)$ \emph{the valuation of $q$ at $\p$}.
\end{definition}

\begin{remark}
    This is a indeed like a valuation in the ring case, and the following Proposition shows it.
\end{remark}

\begin{proposition} \begin{enumerate}
        \item $v_{\p}$ is a morphism of monoids.
        \item $v_{\p}(q)\in\NN$ if and only if $q\in M_{\p}^\bullet$.
        \item $v_{\p}(q)=0$ if and only if $q$ is a unit in $M_{\p}$.
    \end{enumerate} 
\end{proposition}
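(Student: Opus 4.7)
The plan is to exploit the very explicit description of $M_{\p}$ coming from the regularity in codimension $1$ hypothesis. Since $M$ is integral, cancellative, torsion-free and regular in codimension $1$, for any $\p\in X^{(1)}$ we have an isomorphism $M_{\p}\cong (\ZZ^{r-1})^\infty\wedge\NN^\infty$ for some $r$, hence $M_{\p}^\bullet\cong\ZZ^{r-1}\times\NN$ as a monoid, and the inclusion $M_{\p}^\bullet\hookrightarrow\Gamma^\bullet$ becomes the obvious inclusion $\ZZ^{r-1}\times\NN\hookrightarrow\ZZ^{r-1}\times\ZZ$. Then $v_{\p}$ is by construction the composition of this inclusion with the projection $\pi:\ZZ^{r-1}\times\ZZ\longrightarrow\ZZ$ onto the last coordinate. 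Once this identification is fixed, all three claims reduce to elementary statements about $\NN\subseteq\ZZ$.

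For claim (1), the inclusion $\ZZ^{r-1}\times\NN\hookrightarrow\ZZ^{r-1}\times\ZZ$ is a morphism of monoids and the projection $\pi$ is even a morphism of groups, so their composition $v_{\p}$ is a morphism of monoids. One also notes that $v_{\p}(0)=0$, as required.

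For claim (2), write $q=(a,n)\in\ZZ^{r-1}\times\ZZ\cong\Gamma^\bullet$. Then $v_{\p}(q)=n$. If $q\in M_{\p}^\bullet$ then under the identification $q$ lies in $\ZZ^{r-1}\times\NN$, forcing $n\in\NN$. Conversely, if $v_{\p}(q)=n\in\NN$, then $q=(a,n)\in\ZZ^{r-1}\times\NN\cong M_{\p}^\bullet$, so $q\in M_{\p}^\bullet$.

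For claim (3), $q$ is a unit in $M_{\p}$ if and only if both $q$ and $-q$ belong to $M_{\p}^\bullet$. By claim (2) this is equivalent to $v_{\p}(q)\geq 0$ and $v_{\p}(-q)=-v_{\p}(q)\geq 0$ (using that $v_{\p}$ is a group morphism on $\Gamma^\bullet$), which holds if and only if $v_{\p}(q)=0$. The main (and only) subtlety is making sure the chosen identification $M_{\p}\cong(\ZZ^{r-1})^\infty\wedge\NN^\infty$ matches the implicit choice used to define $v_{\p}$, i.e.\ that the last factor really is the $\NN$-factor; this is forced by the construction as given right before the definition of $v_{\p}$, so no real obstacle arises.
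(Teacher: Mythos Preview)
Your proof is correct and follows essentially the same approach as the paper: both arguments use the explicit identification $M_\p^\bullet\cong\ZZ^{r-1}\times\NN\subseteq\ZZ^{r-1}\times\ZZ\cong\Gamma^\bullet$ with $v_\p$ the projection onto the last factor, reducing everything to elementary facts about $\NN\subseteq\ZZ$. Your treatment of part (3) via ``$q$ is a unit iff $q,-q\in M_\p^\bullet$'' is the same idea as the paper's, just phrased more symmetrically.
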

\begin{proof}
    1. This is true because $v_{\p}(q_1+q_2)=v_{\p}(q_1)+v_{\p}(q_2)$.\\
    2. This follows directly from the definition.\\
    3. If $q$ is a unit in $M_\p$ then $0=v_{\p}(0)=v_{\p}(q)+v_{\p}(-q)$ and this last values are both in $\NN$, so they are both $0$. On the other side, if $v_{\p}(q)=0$ then we can see it in $M_\p^\bullet\cong\ZZ^r\oplus \NN$ as $(\alpha_1, \dots, \alpha_r, 0)$, that has inverse $(-\alpha_1, \dots, -\alpha_r, 0)$.
\end{proof}

\begin{remark}
    Since $M$ is normal, it is known that
    \[
    M=\cap_{\p\in X^{(1)}}M_\p.
    \]
    Since $M$ is toric (see \cite{flores2014picard} and \cite[Construction 4.2]{cortinas2015toric}), then these prime ideals of height 1 correspond also to the facets of the cone defined by $M$.
\end{remark}

\begin{proposition}\label{proposition:morphism-divisors}
    Let $X^{(1)}=\{\p_1, \dots, \p_l\}$ be the set of all prime ideals of height $1$ in a toric normal positive binoid $M$. There is an injective morphism
    \begin{equation*}
    \begin{tikzcd}[baseline=(current  bounding  box.center), cramped, row sep = 0ex,
    /tikz/column 1/.append style={anchor=base east},
    /tikz/column 2/.append style={anchor=base west}]
    \Gamma^\bullet\rar["\varphi"] & \Div(M)\\
    q\rar[mapsto]&(v_{\p_1}(q), \dots, v_{\p_l}(q))
    \end{tikzcd}
    \end{equation*}
\end{proposition}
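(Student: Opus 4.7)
The plan has two simple pieces: checking that $\varphi$ is a group homomorphism, and proving injectivity. For the first piece, I would just invoke the previous proposition: each component $v_{\p_i}: \Gamma^\bullet \to \ZZ$ is a monoid morphism between abelian groups and hence automatically a group morphism (the additive inverse of $q$ is sent to the additive inverse of $v_{\p_i}(q)$ because $v_{\p_i}(0) = 0$). Taking the product map into $\Div(M) = \bigoplus_{\p \in X^{(1)}} \ZZ\p$ yields a group homomorphism.

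The core of the proof is injectivity. Suppose $q \in \Gamma^\bullet$ satisfies $\varphi(q) = 0$, i.e.\ $v_{\p_i}(q) = 0$ for every $i = 1, \dots, l$. By item (3) of the preceding proposition, this forces $q$ to be a unit in $M_{\p_i}$ for every $i$, so in particular both $q$ and $-q$ lie in $M_{\p_i}$ for every $\p_i \in X^{(1)}$. Here I would invoke the intersection formula recalled in the remark preceding the proposition, namely that since $M$ is normal (and toric) we have
\[
M = \bigcap_{\p \in X^{(1)}} M_{\p} \subseteq \Gamma.
\]
Intersecting over all $\p_i$ therefore gives $q \in M$ and $-q \in M$, so $q \in M^*$. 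But $M$ is positive, meaning $M^* = \{0\}$, and hence $q = 0$.

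I expect the only potentially delicate step to be justifying the intersection $M = \bigcap_{\p \in X^{(1)}} M_\p$ inside $\Gamma$; this is a standard consequence of normality for toric binoids (the facets of the defining cone correspond to height-one primes), and it is explicitly recorded in the remark just above the statement, so I would simply cite that. The rest is essentially a translation of the classical argument for Weil divisors on a normal algebraic variety into the binoid setting, and should go through without further obstacle.
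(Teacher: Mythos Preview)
Your proposal is correct and follows essentially the same argument as the paper: use $v_{\p_i}(q)=0$ together with item (3) of the preceding proposition to conclude $q$ is a unit in every $M_{\p_i}$, apply the normality intersection $M=\bigcap_{\p\in X^{(1)}} M_\p$ to get $q\in M^*$, and then use positivity to force $q=0$. The paper's version is terser (it omits the explicit homomorphism check and the citation of item (3)), but the substance is identical.
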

\begin{proof}
    Let $q\in \Gamma^\bullet$ such that $\varphi(q)=0$. Then $q$ is a unit in any stalk $M_\p$. In particular, then, $-q$ belongs to all these stalks. Since $M$ is normal, we have the equality above
    \[
    M=\bigcap_{\p\in X^{(1)}}M_\p
    \]
    so $q$ is invertible in $M$. But $M$ is positive, so $q=0$.
\end{proof}

\begin{remark}\label{remark:prin-not-positive}
    If $M$ is not positive, one can prove that this map has in its kernel exactly the elements of $M^*$, so we can then prove the injectivity between $\faktor{\Gamma^\bullet}{M^*}$ and $\Div(M)$. So we assume that $M$ is positive without losing too much information, see also \cite[Lemma 4.2]{flores2014picard}.
\end{remark}

This map in general is not surjective, and we define the divisor class group as follows.
\begin{definition}
    Let $M$ be a normal toric binoid. The group of principal divisors is the image of $\varphi$, a subgroup of $\Div(M)$, $\gls{principalweildivisorsM}=\im(\varphi)$. The \emph{Divisor Class Group} of $M$ is defined as the quotient
    \[
    \gls{weildivisorsclassM}=\faktor{\Div(M)}{\Prin(M)}.
    \]
\end{definition}

\subsection{Isomorphisms}\label{section:isomorphisms-class-group}
Assume again all the hypothesis on $M$, including normality. We are going to prove isomorphisms between three groups that we just described, namely $\Pic(W), \CaCl(W)$ and $\Cl(M)$. Recall that we have already proved in Proposition~\ref{proposition:isomorphism-CaCl-Pic} that $\CaCl(U)$ and $\Pic(U)$ are isomorphic for any open subset $U$ of $X$, and that $W$ is indeed open thanks to Proposition~\ref{proposition:W-open}. What is left, is to prove that $\Cl(M)$ is isomorphic to any of these two. Again, we follow \cite{flores2014picard} in order to prove it.

\begin{proposition}\label{proposition:CaCl-Cl}
    There is an isomorphism of groups $\CaCl(W)\cong\Cl(M)$.
\end{proposition}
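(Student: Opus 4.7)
The plan is to construct an explicit isomorphism $\CaDiv(W)\to\Div(M)$ using the valuations $v_{\p}$ and then check that principal Cartier divisors correspond exactly to principal Weil divisors, so the isomorphism descends to the class groups.

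First, I would analyse the topology of $W$. Since $W=\{\langle\infty\rangle\}\cup X^{(1)}$, the minimal open subset of $\Spec M$ containing a height-one prime $\p$ (Proposition~\ref{proposition:minimal-open-set-prime-ideal}) is $U_{\p}=\{\langle\infty\rangle,\p\}$, and $\O_M(U_{\p})=M_{\p}$. The family $\{U_{\p}\}_{\p\in X^{(1)}}$ is therefore an affine cover of $W$, with pairwise intersections all equal to the generic point $\{\langle\infty\rangle\}$, where the structure sheaf evaluates to $\Gamma$. Hence a Cartier divisor on $W$ is given (up to the usual equivalence) by a tuple $(\gamma_{\p})_{\p}$ with $\gamma_{\p}\in\Gamma^{*}$, two such tuples being equivalent iff they differ elementwise by a local unit $u_{\p}\in M_{\p}^{*}$; that is, $\CaDiv(W)\cong\bigoplus_{\p\in X^{(1)}}\Gamma^{*}/M_{\p}^{*}$, and $\CaPrin(W)$ is the image of $\Gamma^{*}$ embedded diagonally.

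Second, I would identify each summand $\Gamma^{*}/M_{\p}^{*}$ with $\ZZ$ using $v_{\p}$. This is where regularity in codimension one is essential: by hypothesis $M_{\p}^{\bullet}\cong\ZZ^{r-1}\wedge\NN$, so $M_{\p}^{*}\cong\ZZ^{r-1}$ sits inside $\Gamma^{*}\cong\ZZ^{r}$ as the kernel of $v_{\p}$, and $v_{\p}$ induces an isomorphism $\Gamma^{*}/M_{\p}^{*}\xrightarrow{\sim}\ZZ$. Assembling these summand-wise isomorphisms gives a group isomorphism
\[
\psi:\CaDiv(W)\xrightarrow{\sim}\bigoplus_{\p\in X^{(1)}}\ZZ\p=\Div(M),\qquad (\gamma_{\p})_{\p}\longmapsto\sum_{\p}v_{\p}(\gamma_{\p})\,\p.
\]
Well-definedness under the equivalence relation is immediate because $v_{\p}$ vanishes on $M_{\p}^{*}$.

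Third, I would verify that $\psi$ matches the two notions of principality. A principal Cartier divisor in $\CaPrin(W)$ is represented by a single $g\in\Gamma^{\bullet}$ placed on every $U_{\p}$, and $\psi$ sends it to $\sum_{\p}v_{\p}(g)\,\p=\varphi(g)\in\Prin(M)$, using the map $\varphi$ from Proposition~\ref{proposition:morphism-divisors}. Conversely every element of $\Prin(M)$ is of this form. Thus $\psi(\CaPrin(W))=\Prin(M)$, and passing to quotients yields the desired isomorphism
\[
\CaCl(W)=\faktor{\CaDiv(W)}{\CaPrin(W)}\;\xrightarrow{\;\sim\;}\;\faktor{\Div(M)}{\Prin(M)}=\Cl(M).
\]

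The only delicate step is the second one: the identification $\Gamma^{*}/M_{\p}^{*}\cong\ZZ$ via $v_{\p}$ genuinely uses that $M$ is regular in codimension one (and torsion-free, cancellative, positive), without which $M_{\p}^{*}$ would not fit as a corank-one subgroup of $\Gamma^{*}$ split off by a single $\ZZ$-valued valuation. The rest is a bookkeeping of the very transparent topology of $W$ and the definitions of the two principal subgroups.
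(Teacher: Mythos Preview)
Your proof is correct and follows essentially the same approach as the paper's: both cover $W$ by the minimal opens $U_\p=\{\langle\infty\rangle,\p\}$, use the valuations $v_\p$ to pass from local data in $\Gamma^*/M_\p^*$ to integers, and check that principal divisors correspond under this identification. Your presentation is slightly more structural---you first compute $\CaDiv(W)\cong\bigoplus_\p\Gamma^*/M_\p^*$ (noting that the gluing condition is vacuous because $\Gamma^*/M_{\langle\infty\rangle}^*=0$) and then identify each summand with $\ZZ$ via $v_\p$---whereas the paper constructs the map $\CaDiv(W)\to\Div(M)$ directly and checks injectivity and surjectivity separately, but the content is the same.
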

\begin{proof}
    We start by proving that $\CaDiv(W)$ is a subgroup of $\Div(M)$. We can cover $W$ with the minimal open subsets that cover the prime ideals of height 1 defined in Proposition~\ref{proposition:minimal-open-set-prime-ideal}, so every Cartier divisor $D=\{D(f), \gamma_f\}$ defined on $W$ determines a Weil divisor, namely $\sum_{\p\in X^{(1)}}v_\p(\gamma_f)\p$.
    This correspondence is well defined, because $\gamma_f-\gamma_g$ is a unit defined on $D(f)\cap D(g)$, so the valuations have the same values. A principal Cartier divisor is mapped to a principal Weil divisor, because it is globally defined by an element of $\Gamma$, so we proved that there is a well-defined map $\CaDiv(W)\longrightarrow\Div(W)$ that sends principal divisors to principal divisors.
    This map is trivially injective. Let $D=\sum_{\p\in X^{(1)}}n_\p\p$ be a Weil divisor on $W$. For each $\p\in X^{(1)}$ let $D(f_\p)$ be the minimal open subset that covers it, namely $D(f_\p)=\{\p, \langle\infty\rangle\}$. Since $M$ is regular in codimension 1, we know that $M_\p\cong (\ZZ^l)^\infty\wedge \NN^\infty$, so let $t_\p$ be a generator of the principal maximal ideal $(M_\p)_+$ (namely, $t_\p$ generates $\NN^\infty$). The Cartier divisor associated to $D$ is then $\{(D(f_\p), n_\p t_\p)\}$. So, this map is also surjective. It is easy to see that it is also surjective on the principal divisors, namely every principal Weil divisor comes from a principal Cartier divisor, so when we look at the class groups we have the wanted isomorphism $\CaCl(W)\cong\Cl(M)$.
\end{proof}

So we just proved that for an integral, cancellative, torsion-free, positive, normal and regular binoid, there are isomorphisms $\Pic(W)\cong\CaCl(W)\cong\Cl(M)$.

\begin{remark}
    The Proposition above can be generalized to any regular open subset, and we will discuss this in Proposition~\ref{proposition:PicV-ClV}.
\end{remark}

\begin{example}\label{example:x+y=nz}
    Consider $M=(x, y, z\mid x+y=nz)$, for some $n\in \NN$. This is a slightly more general situation that what we already discussed in Example~\ref{example:pic-x+y=2z}. $X^{(1)}=\{\langle x, z\rangle, \langle y, z\rangle\}$, so $\Div(M)=\ZZ \langle x, z\rangle\oplus\ZZ\langle y, z\rangle$. In the localizations, the relations we have are
    \begin{align*}
    M_{\langle x, z\rangle}&=(x, y, z\mid x=nz-y)\\
    M_{\langle y, z\rangle}&=(x, y, z\mid y=nz-x)
    \end{align*}
    Let us focus on the first ideal, and have a look at the valuation $v_{\langle x, z\rangle}$
    \begin{equation*}
    \begin{tikzcd}[baseline=(current  bounding  box.center), cramped, row sep = 0ex,
    /tikz/column 1/.append style={anchor=base east},
    /tikz/column 3/.append style={anchor=base west}]
    M_{\langle x, z\rangle} \rar["\sim"] & \ZZ^\infty\wedge \NN^\infty\rar & \NN^\infty\\
    x = nz-y\rar[mapsto]& (-1, n)\rar[mapsto]& n\\
    y\rar[mapsto]& (1, 0)\rar[mapsto]& 0\\
    z\rar[mapsto]& (0, 1)\rar[mapsto]& 1
    \end{tikzcd}
    \end{equation*}
    And a similar thing goes for $v_{\langle y, z\rangle}$, where the roles of $x$ and $y$ are inverted.
    
    So, now consider $M$ as a subset of $\Gamma\cong (\ZZ^2)^\infty$, where the generators of this last group are exactly the inverted $x$ and $z$. The image of $\varphi=(v_{\langle x, z\rangle}, v_{\langle y, z\rangle})$ will be generated, as a subgroup, by the images of the three generators $x, y$ and $z$. So
    \begin{align*}
    \Prin(M)&=\langle\varphi(x), \varphi(y), \varphi(z) \rangle\\
    &=\langle(v_{\langle x, z\rangle}(x), v_{\langle y, z\rangle}(x)), (v_{\langle x, z\rangle}(y), v_{\langle y, z\rangle}(y)), (v_{\langle x, z\rangle}(z), v_{\langle y, z\rangle}(z))\rangle\\
    &=\langle(n, 0), (0, n), (1, 1)\rangle
    \end{align*}
    and the divisor class group will then be
    \begin{align*}
    \Cl(M)=\faktor{\Div(X)}{\Prin(X)}\cong\faktor{\ZZ^2}{\langle(n, 0), (0, n), (1, 1)\rangle}\cong\ZZ_n.
    \end{align*}
    Thanks to the Theorems above, we also know that this is the local Picard group of our binoid. Moreover, recall from Remark~\ref{remark:CaDiv-ideals} that the elements of this group all have a description as ideals, namely
    \[
    \CaCl(M)=\{\langle x, iz\rangle\mid i=0, \dots, n-1\}
    \]
    and the operation is just their sum.
\end{example}

\begin{example}
    Let our binoid be $M=(x, y, z, w\mid x+y=z+w)$. We have seen in Example~\ref{example:xyzw} that $X^{(1)}=\{\langle x, z\rangle, \langle x, w\rangle, \langle y, z\rangle, \langle y, w\rangle\}$.
    
    Then, for example, $M_{\langle x,z\rangle}\cong M_{y+w}$ and the relation here becomes $x=z+w-y$, so $M_{\langle x,z\rangle}\cong (\ZZ^{2})^\infty\wedge \NN^\infty$. By symmetry, we have the same for all the other ideals of height $1$, so we have that this binoid is regular in codimension $1$. Like in the previous example, it is easy to see the map, that simply goes to the coefficients (the $\NN$, again, refers to the coefficient of $z$, that is not invertible).
    \begin{equation*}
    \begin{tikzcd}[baseline=(current  bounding  box.center), cramped, row sep = 0ex,
    /tikz/column 1/.append style={anchor=base east},
    /tikz/column 3/.append style={anchor=base west}]
    M_{\langle x, z\rangle}^\bullet \rar["\sim"] & (\ZZ^2)^\infty\wedge \NN^\infty\rar & \NN^\infty\\
    x = z+w-y\rar[mapsto]& (-1, 1, 1)\rar[mapsto]& 1\\
    y\rar[mapsto]& (1, 0, 0)\rar[mapsto]& 0\\
    z\rar[mapsto]& (0, 0, 1)\rar[mapsto]& 1\\
    w\rar[mapsto]& (0, 1, 0)\rar[mapsto]& 0
    \end{tikzcd}
    \end{equation*}
    
    By symmetry arguments, again, we can see that the subgroup of principal divisors is generated by the $4$ vectors in $\ZZ^4$
    \[
    (1, 0, 1, 0), (1, 0, 0, 1), (0, 1, 1, 0), (0, 1, 0, 1).
    \]
    If $\{e_1, e_2, e_3, e_4\}$ are the generators of $\ZZ^4$, when we quotient by the principal divisors we get the relations
    \[
    e_1\sim -e_3\qquad e_1\sim -e_3 \qquad e_1\sim -e_4\qquad e_2\sim -e_4
    \]
    So the divisor class group of $M$ is
    \[
    \Cl(M)\cong\faktor{\ZZ^4}{\langle (1, 0, 1, 0), (1, 0, 0, 1), (0, 1, 1, 0), (0, 1, 0, 1) \rangle}\cong\ZZ.\qedhere
    \]
\end{example}

\subsection{Relations between \texorpdfstring{$\Pic(V)$}{PicV} and \texorpdfstring{$\Cl(V)$}{ClV}}

The first generalization that we want to do is to an arbitrary open subset $V$ of $\Spec M$, and the next Definition goes exactly in this direction.
Later on, we will generalize the results of Proposition~\ref{proposition:CaCl-Cl} to a more general open subset of the Spectrum of $M$.
Let $M$ be a integral, cancellative, regular in codimension 1 binoid and let again $W=X^{(1)}\cup X^{(0)}$ be the open subset of prime ideals of height at most 1.

\begin{definition}
    Let $V$ be an open subset of $\Spec M$. The group of Weil divisors of $V$ is
    \[
    \gls{weildivisorsV}=\bigoplus_{\begin{subarray}{c}
        \p\in V\\
        \htnew \p=1
        \end{subarray}
    }\ZZ\p.
    \]
    Thanks to Remark~\ref{remark:prin-not-positive} the map $\varphi$ is still injective when restricted to these components, so we call its image $\gls{principalweildivisorsV}$. The \emph{Divisor Class Group of $V$} is
    \[
    \gls{weildivisorsclassV}=\faktor{\Div(V)}{\Prin(V)}.
    \]    
\end{definition}

\begin{remark}
    If $W\subseteq V$ then $\Cl(V)=\Cl(W)=\Cl(M)$.
\end{remark}

The following results are preliminary to a counterexample that follows, where we show that, even in very nice situations, we might have $\Pic(U)\neq \Cl(U)$, where $U$ will be the punctured spectrum of a binoid.

\begin{lemma}
    Let $G$ be a group, then
    \[
    \Cl(M\wedge G^\infty)=\Cl(M).
    \]
\end{lemma}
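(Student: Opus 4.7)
The plan is to show that passing from $M$ to $M\wedge G^\infty$ only enlarges the group of units by the group $G$, leaving all the combinatorial data that define $\Cl(M)$ (prime ideals of height one, the valuations $v_\p$, and the relevant part of the difference group) essentially unchanged.

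First I would analyse $\Spec(M\wedge G^\infty)$. Any element of $G$ is a unit in $G^\infty$, so $(0,g)$ is a unit in $M\wedge G^\infty$ for every $g\in G$. From this it follows that every ideal $I\subseteq M\wedge G^\infty$ is of the form $I'\wedge G^\infty$ for a unique ideal $I'\subseteq M$ (if $(m,g)\in I$, adding the unit $(0,-g)$ shows $(m,0)\in I$, hence $(m,h)\in I$ for all $h\in G$). The complement is a submonoid iff $M\smallsetminus I'$ is, so the assignment $\p\mapsto \p\wedge G^\infty$ is a height-preserving bijection $\Spec M\xrightarrow{\sim}\Spec(M\wedge G^\infty)$. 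In particular this induces a canonical isomorphism $\Div(M)\cong \Div(M\wedge G^\infty)$.

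Next I would compare the valuations. The difference group of a smash product is the direct sum of the difference groups, so $\Gamma^\bullet_{M\wedge G^\infty}\cong \Gamma^\bullet_M\oplus G$. For a height-one prime $\p$ of $M$, localisation commutes with the smash product, giving
\[
(M\wedge G^\infty)_{\p\wedge G^\infty}\;\cong\; M_\p\wedge G^\infty\;\cong\;(\ZZ^{r-1})^\infty\wedge\NN^\infty\wedge G^\infty\;\cong\;(\ZZ^{r-1}\oplus G)^\infty\wedge\NN^\infty,
\]
where we used that $M$ is regular in codimension $1$. The valuation $v_{\p\wedge G^\infty}$ is the projection onto the $\NN$-factor, which coincides under the above isomorphism with the composition of the projection $\Gamma^\bullet_M\oplus G\twoheadrightarrow \Gamma^\bullet_M$ followed by $v_\p$. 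In other words, the $G$-summand lies in the kernel of every height-one valuation.

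Therefore the morphism $\varphi$ of Proposition~\ref{proposition:morphism-divisors} for $M\wedge G^\infty$ factors through $\varphi$ for $M$ under the identifications $\Gamma^\bullet_{M\wedge G^\infty}\cong \Gamma^\bullet_M\oplus G$ and $\Div(M\wedge G^\infty)\cong\Div(M)$, and the two images coincide, so $\Prin(M\wedge G^\infty)\cong \Prin(M)$. Taking quotients yields $\Cl(M\wedge G^\infty)\cong \Cl(M)$. The only delicate point is checking that the localisation of $M\wedge G^\infty$ at $\p\wedge G^\infty$ really is $M_\p\wedge G^\infty$ with the $\NN$-generator unchanged; this is a direct calculation from the definition of smash product and localisation.
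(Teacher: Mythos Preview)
Your proof is correct and follows essentially the same approach as the paper: first identify $\Div(M\wedge G^\infty)\cong\Div(M)$ via the bijection of spectra, then show that the $G$-summand of the difference group lies in the kernel of every height-one valuation so that $\Prin$ is unchanged. The only difference is packaging: the paper invokes the known isomorphism $\Spec(M\wedge G^\infty)\cong\Spec M\times\Spec G^\infty$ (with $\Spec G^\infty=\{\langle\infty\rangle\}$) for the first step and appeals to Remark~\ref{remark:prin-not-positive} (units lie in $\ker\varphi$) for the second, whereas you unpack both of these directly.
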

\begin{proof}
    From \cite[Corollary 2.2.12]{Boettger} we know that $\Spec M\wedge G^\infty\cong\Spec M\times\Spec G^\infty$, but $\Spec G^\infty=\{\langle \infty\rangle\}$, so $\Div(M\wedge G^\infty)\cong\Div(M)$.
    
    Moreover, thanks to Remark~\ref{remark:prin-not-positive}, we know that $G$ is entirely contained in the kernel of $\varphi$, so its image in $\Div(M\wedge G^\infty)$ is trivial and it does not contribute to $\Prin(M\wedge G^\infty)$, so
    \[
    \Cl(M\wedge G^\infty)=\Cl(M).\qedhere
    \]
\end{proof}

\begin{lemma}\label{lemma:regular-ClM0}
    If $M$ is regular, then $\Cl M=0$.
\end{lemma}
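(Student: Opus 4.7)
The plan is to reduce immediately to the purely free case and then check that in that case the map $\varphi$ is surjective. By the definition of regularity, $M \cong G^\infty \wedge (\NN^s)^\infty$ for some abelian group $G$ and $s \in \NN$. The preceding lemma tells us that $\Cl(M) = \Cl((\NN^s)^\infty)$, so we may assume $M = (\NN^s)^\infty$ outright. (If $s = 0$ then $M = \{0, \infty\}$, there are no prime ideals of height $1$, and $\Div(M) = 0$ trivially, so we may also assume $s \geq 1$.)

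Next I would identify the data involved in $\Cl(M)$ explicitly. Write $x_1, \ldots, x_s$ for the semibasis. The prime ideals of height $1$ are exactly the singletons $\p_i = \langle x_i \rangle$ (by Proposition~\ref{proposition:criterion-prime-ideals}, since there are no relations), so
\[
\Div(M) = \bigoplus_{i=1}^s \ZZ\,\p_i \cong \ZZ^s.
\]
The difference group is $\Gamma^\bullet \cong \ZZ^s$, with basis inherited from the $x_i$. The localization at $\p_i$ is obtained by inverting every $x_j$ with $j \neq i$, i.e.\ $M_{\p_i} \cong (\ZZ^{s-1})^\infty \wedge \NN^\infty$, and the valuation $v_{\p_i} : \Gamma^\bullet \to \ZZ$ is simply the $i$-th coordinate projection in these coordinates.

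Putting this together, the map
\[
\varphi : \Gamma^\bullet \longrightarrow \Div(M),\qquad q \longmapsto \bigl(v_{\p_1}(q), \ldots, v_{\p_s}(q)\bigr)
\]
is the identity $\ZZ^s \to \ZZ^s$ under the natural identifications, in particular surjective. Hence $\Prin(M) = \Div(M)$ and $\Cl(M) = 0$, which is the claim. The only real content is the identification of the valuations with the coordinate projections on the free group $\Gamma^\bullet$, and since $M$ has no relations this is immediate; there is no genuine obstacle.
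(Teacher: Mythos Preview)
Your proof is correct and follows essentially the same approach as the paper: identify the height-$1$ primes as the $\langle x_i\rangle$, compute $\Gamma^\bullet$, and observe that the map $\varphi$ to $\Div(M)\cong\ZZ^s$ is surjective. The only cosmetic difference is that you invoke the preceding lemma to strip off $G$ first and then get $\varphi$ as the identity on $\ZZ^s$, whereas the paper keeps $G$ in $\Gamma^\bullet\cong G\times\ZZ^r$ and notes directly that the projection to $\ZZ^r$ is surjective.
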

\begin{proof}
    Since $M$ is regular, we know that there is an isomorphism $M=G^\infty\wedge(\NN^r)^\infty$, with $G$ a group. In particular, the prime of height one are of the form $\langle e_i\rangle$ with $e_i=(0, \dots, 1, \dots, 0)$ generator of $(\NN^r)^\infty$ for $i=1, \dots, r$. Then the map $\varphi$
    \begin{equation*}
    \begin{tikzcd}[baseline=(current  bounding  box.center), cramped, row sep = 0ex,
    /tikz/column 1/.append style={anchor=base east},
    /tikz/column 2/.append style={anchor=base west}]
    G\times \ZZ^r\cong \Gamma^\bullet \rar["\varphi"] & \Div(M)\cong\ZZ^r
    \end{tikzcd}
    \end{equation*}
    is surjective, so its cokernel is trivial.
\end{proof}

\begin{lemma}[{\cite[Corollary 5.5]{flores2014picard}}]\label{lemma:Cl-M-wedge-N}
    For every binoid scheme
    \[
    \Cl(X\times \AA^1)\cong \Cl(X).
    \]
    In particular, if $X=\Spec M$ then
    \[
    \Cl(M\wedge \NN^\infty)\cong \Cl(M).
    \]
\end{lemma}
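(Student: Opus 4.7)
The plan is to focus on the concrete statement $\Cl(M \wedge \NN^\infty) \cong \Cl(M)$ in the ``In particular'' clause, since $\Cl$ has been defined only at the binoid level, and to verify it by a direct computation showing that adjoining a new free variable $t$ contributes exactly one new height one prime to $\Div$ and exactly one new generator to $\Prin$, which cancel in the quotient.

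First I would identify the height one primes of $N := M \wedge \NN^\infty$. Using \cite[Corollary 2.2.12]{Boettger} we have $\Spec N \cong \Spec M \times \Spec \NN^\infty$, and since $\Spec \NN^\infty = \{\langle \infty \rangle, \langle t \rangle\}$ contributes heights $0$ and $1$ respectively, the height one primes of $N$ split as the primes $\p \wedge \NN^\infty$ with $\p$ running over the height one primes of $M$, together with a single new prime $\mathfrak{t} := M \wedge \langle t \rangle$ corresponding to $(\langle \infty \rangle, \langle t \rangle)$. Hence $\Div(N) \cong \Div(M) \oplus \ZZ\mathfrak{t}$. Similarly, since $N^\bullet \cong M^\bullet \times \NN$ as monoids, the difference group splits as $\Gamma^\bullet(N) \cong \Gamma^\bullet(M) \oplus \ZZ$, with the new $\ZZ$-summand generated by $t$.

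Next I would compute the valuation map $\varphi_N$ on this decomposition. The localization $N_{\p \wedge \NN^\infty}$ identifies with $M_\p \wedge \NN^\infty$, in which $t$ is already a unit, so $v_{\p \wedge \NN^\infty}(q + n t) = v_\p(q)$. On the other hand, $N_\mathfrak{t}$ identifies with $\Gamma(M) \wedge \NN^\infty$ in which every nonzero element of $\Gamma^\bullet(M)$ is a unit, so $v_\mathfrak{t}(q + n t) = n$. Under the two decompositions above this says exactly that $\varphi_N = \varphi_M \oplus \mathrm{id}_\ZZ$, so $\Prin(N) = \Prin(M) \oplus \ZZ\mathfrak{t}$, and passing to the quotient yields $\Cl(N) \cong \Cl(M)$.

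The main obstacle I expect is the bookkeeping through the smash-product identifications: namely, justifying $N_{\p \wedge \NN^\infty} \cong M_\p \wedge \NN^\infty$ (one has to check that inverting the complement of $\p \wedge \NN^\infty$ in $N$ corresponds, up to the smashing, to inverting $M \smallsetminus \p$ while leaving the $\NN^\infty$-factor untouched), and verifying that the new prime $\mathfrak{t}$ is regular of height one so that $v_\mathfrak{t}$ is well-defined in the sense of the preceding definitions, which follows from $N_\mathfrak{t} \cong \Gamma(M) \wedge \NN^\infty$ being regular. Once these identifications are secured, the cancellation of the new $\ZZ$-factor in both $\Div$ and $\Prin$ is immediate and delivers the isomorphism.
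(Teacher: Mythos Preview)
The paper does not supply its own proof of this lemma; it simply quotes the result from \cite[Corollary 5.5]{flores2014picard}. So there is nothing to compare against, and I can only assess your argument on its own merits.

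Your outline is correct and is the natural direct computation. The identification of the height-one primes of $N=M\wedge\NN^\infty$ via $\Spec N\cong\Spec M\times\Spec\NN^\infty$, the splitting $\Div(N)\cong\Div(M)\oplus\ZZ\mathfrak t$, the splitting $\Gamma^\bullet(N)\cong\Gamma^\bullet(M)\oplus\ZZ$, and the block-diagonal form $\varphi_N=\varphi_M\oplus\mathrm{id}_\ZZ$ are all right, and they immediately give $\Cl(N)\cong\Cl(M)$.

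One slip to fix: you write that $N_{\p\wedge\NN^\infty}$ identifies with $M_\p\wedge\NN^\infty$ ``in which $t$ is already a unit''. That localization is actually $M_\p\wedge\ZZ^\infty$, not $M_\p\wedge\NN^\infty$; the complement of the prime contains $(0,t)$ since $0\notin\p$, so $t$ gets inverted. Your conclusion that $t$ is a unit there (hence $v_{\p}(q+nt)=v_\p(q)$) is correct, but the formula for the localization as written contradicts it. Similarly, for $N_\mathfrak t\cong\Gamma(M)\wedge\NN^\infty$ you should note explicitly that this uses $\Gamma\cong(\ZZ^r)^\infty$ (Proposition~\ref{proposition:injectionM-Gamma}) to see regularity, as you anticipate in your last paragraph. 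With these two points tidied up, the proof goes through.
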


\begin{proposition}\label{proposition:injection-PicU-PicU'}
    Let $M$ be a normal toric binoid. Let $U'\subseteq U$ be any two open subsets of $\Spec M$ such that $U^{(1)}=U'^{(1)}$. Then there is an injection
    \[
    \begin{tikzcd}[baseline=(current  bounding  box.center)]
    \Pic(U)\rar& \Pic(U').
    \end{tikzcd}
    \]
\end{proposition}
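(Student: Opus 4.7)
The plan is to prove injectivity of the restriction map $\Pic(U)\to \Pic(U')$ directly, by a stalk-wise argument using the constant sheaf $\Gamma$ associated to the difference group of $M$. Let $\mathcal{L}\in\Pic(U)$ restrict to the trivial bundle on $U'$. Because $M$ is integral, any invertible $\O_U$-sheaf $\mathcal{L}$ embeds as an invertible subsheaf of $\Gamma$: pick any affine cover $\{D(f_i)\}$ of $U$ trivializing $\mathcal{L}$ and record the trivializations $\mathcal{L}|_{D(f_i)}=\gamma_i+M_{f_i}\subseteq \Gamma$, which glue to a subsheaf of $\Gamma$. Triviality of $\mathcal{L}|_{U'}$ then means that there exists a single $\gamma\in\Gamma$ with $\mathcal{L}|_{U'}=\gamma+\O_{U'}$, i.e.\ $\mathcal{L}_\p=\gamma+M_\p$ for every $\p\in U'$. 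The goal is to upgrade this equality to every $\p\in U$, from which $\mathcal{L}=\gamma+\O_U$ follows because two invertible subsheaves of the constant sheaf $\Gamma$ are determined by their stalks.

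For $\p\in U^{(1)}=U'^{(1)}$ the equality $\mathcal{L}_\p=\gamma+M_\p$ holds by hypothesis, so only primes of height $\geq 2$ need attention. Fix such a $\p\in U$ and pick a local trivialization $\mathcal{L}_\p=\eta+M_\p$ coming from the cover. Since $U$ is open in $\Spec M$ it is subset-closed (Proposition~\ref{remark:boettger:closedsubset-closedsuperset}), so every height-one prime $\mathfrak{q}\subset \p$ of $M$ lies in $U^{(1)}=U'^{(1)}\subseteq U'$, and these are precisely the height-one primes of the localization $M_\p$. At each such $\mathfrak{q}$ we have simultaneously $\mathcal{L}_\mathfrak{q}=(\mathcal{L}_\p)_\mathfrak{q}=\eta+M_\mathfrak{q}$ and $\mathcal{L}_\mathfrak{q}=\gamma+M_\mathfrak{q}$, which forces $\gamma-\eta\in M_\mathfrak{q}^*$, hence $v_\mathfrak{q}(\gamma-\eta)=0$.

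The key input now is normality: $M_\p$ is again a normal toric binoid (localization preserves normality), and the combinatorial analogue of Krull's theorem gives $M_\p=\bigcap_{\mathfrak{q}\in\Spec M_\p^{(1)}}(M_\p)_\mathfrak{q}$, so an element $\alpha\in\Gamma$ is a unit in $M_\p$ iff $v_\mathfrak{q}(\alpha)=0$ for all height-one $\mathfrak{q}$ of $M_\p$. Applying this to $\gamma-\eta$ yields $\gamma-\eta\in M_\p^*$ and therefore $\mathcal{L}_\p=\eta+M_\p=\gamma+M_\p$, extending the equality to $\p$. Doing this for every height-$\geq 2$ prime of $U$ gives $\mathcal{L}=\gamma+\O_U$, so $\mathcal{L}$ is trivial on $U$ and the restriction is injective.

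The main obstacle is the normality step: one must justify carefully that in a normal toric binoid $N$ a unit is characterized by having zero valuation at every height-one prime, i.e.\ that $N=\bigcap_{\mathfrak{q}\in\Spec N^{(1)}} N_\mathfrak{q}$ holds inside $\Gamma$. A secondary bookkeeping point is checking that height-one primes of the local binoid $M_\p$ correspond bijectively (and compatibly with the valuations $v_\mathfrak{q}$ used in the definition of $\Div$) to height-one primes of $M$ contained in $\p$; these are standard facts from \cite{Boettger}, but they are what make the whole argument go through.
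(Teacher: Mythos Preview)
Your argument is correct and takes a genuinely different route from the paper's. The paper argues cohomologically: it fixes an affine cover $\{D(f_i)\}$ of $U$, uses normality in the form $\Gamma(V,\O_M)=\bigcap_{\htnew\p=1,\ \p\in V}M_\p$ to show that $\Gamma(D(f_i),\O_M)=\Gamma(U'\cap D(f_i),\O_M)$ for every finite intersection, concludes that the \v{C}ech complexes of $\O^*$ on the two covers are literally identical, and then invokes the general injection $\vH^1(\text{any cover})\hookrightarrow \H^1$ on the $U'$ side. Your approach is instead divisor-theoretic and stalk-wise: you realise $\mathcal{L}$ inside $\Gamma$ via $\CaCl(U)\cong\Pic(U)$, read the trivialisation on $U'$ as a single shift $\gamma+\O_{U'}$, and then propagate $\mathcal{L}_\p=\gamma+M_\p$ to each $\p\in U$ by applying the normality intersection formula to $M_\p$. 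Both proofs use normality through the same identity $N=\bigcap_{\mathfrak q\in (\Spec N)^{(1)}}N_{\mathfrak q}$; the paper applies it to sections over the affine pieces $D(f_i)$, you apply it to the local binoids $M_\p$. The paper's version is slightly slicker and incidentally shows that the full \v{C}ech complexes agree, not just $\H^1$; yours is closer to the classical ``extend a rational section across codimension~$\geq 2$'' argument and makes the role of the height-one condition more transparent. One small omission: you only treat primes of height~$\geq 2$, but the unique height-zero prime $\langle\infty\rangle$ already lies in $U'$ (any nonempty open is subset-closed), so it is covered by your hypothesis on $U'$ and no extra work is needed.
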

\begin{proof}
    Let $\U=\{D(f_i)\}$ be an affine covering of $U$.
    Then we can compute $\Pic(U)$ via \v{C}ech cohomology on these affine open subsets.
    Since $M$ is normal
    \[
    \Gamma(U, \O_M)=\displaystyle\bigcap_{\begin{subarray}{c}
        \htnew\p=1\\
        \p\in U
        \end{subarray}}M_\p.
    \]
    Since $U'$ and $U$ have the same points of height one, we have that $\Gamma(U', \O_M)=\Gamma(U, \O_M)$ and the same is true for intersections of $U'$ with the given covering of $U$, so
    \[
    \Gamma(U'\cap D(f_i), \O_M)=\Gamma(U\cap D(f_i), \O_M)= \Gamma(D(f_i), \O_M)=M_{f_i}.
    \]
    In particular, then, the \v{C}ech complexes to compute cohomology on $U$ and on $U'$ are the same.
    
    Recall that, in general, there is an injection from the first \v{C}ech cohomology on any covering to the first sheaf cohomology, because if we have a non-trivial cohomology class in $\vH^1$ and a covering that realizes it, then this non trivial class will also be non trivial in $\varinjlim\vH^1=\H^1$.
    
    So, whenever we have a non trivial cohomology class in $\Pic(U)$, it is computed by the \v{C}ech complex on $\U$. This \v{C}ech complex is the same as the one on the covering $\U\cap U'$ of $U'$, so it will be non trivial there, and so it will be non trivial in $\Pic(U')$.
\end{proof}

\begin{proposition}\label{proposition:normal-dimension-2}
    Let $M$ be a normal binoid of dimension 2. Then
    \[
    \Pic^{\loc}(M\wedge \NN^\infty)=0.
    \]
\end{proposition}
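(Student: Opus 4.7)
Let $N = M \wedge \NN^\infty$ with $t$ the distinguished generator coming from the $\NN^\infty$ factor, and let $x_1,\dots,x_n$ generate $M_+$. My plan is to compute $\Pic^{\loc}(N) = \H^1(\Spec^\bullet N, \O_N^*)$ by a Mayer–Vietoris argument for the open cover $U = U_1 \cup U_2$ of $U := \Spec^\bullet N$, where $U_1 = D(t)$ and $U_2 = \bigcup_{i=1}^n D(x_i)$. This really is a cover because the only prime of $N$ avoided by all $x_i$ and $t$ is $N_+$ itself. The affine piece $U_1 = \Spec N_t$ has trivial higher $\O_N^*$-cohomology by Theorem~\ref{theorem:vanishing-combinatorial-cohomology-affine}, so the whole problem reduces to understanding the restriction $\H^1(U_2,\O_N^*) \to \H^1(U_1\cap U_2,\O_N^*)$.

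For $U_2$ I would use the affine (hence acyclic) \v{C}ech cover $\{D(x_i)\}$. The crucial combinatorial point is that inverting an $x_i$ does not invert $t$, and $(\NN^\infty)^* = \{0\}$, so each intersection localization $N_{x_{i_0}+\cdots+x_{i_k}} = M_{x_{i_0}+\cdots+x_{i_k}} \wedge \NN^\infty$ has unit group exactly $M_{x_{i_0}+\cdots+x_{i_k}}^*$. Hence the \v{C}ech complex for $\O_N^*$ on $U_2$ is \emph{term by term} the \v{C}ech complex for $\O_M^*$ on the affine cover $\{D(x_i)\}$ of $\Spec^\bullet M$, giving $\H^1(U_2,\O_N^*) \cong \Pic^{\loc}(M)$. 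Because $\dim M = 2$, $\Spec^\bullet M$ coincides with the open subset $W$ of primes of height at most one, so Propositions~\ref{proposition:isomorphism-CaCl-Pic} and \ref{proposition:CaCl-Cl} combine to give
\[
\Pic^{\loc}(M) \cong \CaCl(\Spec^\bullet M) = \CaCl(W) \cong \Cl(M).
\]

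On $U_1 \cap U_2 = \bigcup_i D(x_i + t)$ I would repeat the analysis with the same affine cover, but now $t$ is inverted and each intersection localization $M_{x_{i_0}+\cdots+x_{i_k}} \wedge \ZZ^\infty$ has unit group $M_{x_{i_0}+\cdots+x_{i_k}}^* \oplus \ZZ$. The \v{C}ech complex splits naturally as the direct sum of the previous one and the \v{C}ech complex for the constant sheaf $\ZZ$ on $\Spec^\bullet M$; the latter vanishes in positive degree by Proposition~\ref{proposition:constant-sheaf-trivial-cohomology} since $M$ is integral, so $\H^1(U_1\cap U_2,\O_N^*) \cong \Cl(M)$ as well. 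The key observation — and the one really delicate step — is that under these identifications the restriction map $\H^1(U_2,\O_N^*) \to \H^1(U_1\cap U_2,\O_N^*)$ is the inclusion of the first \v{C}ech complex as a direct summand, and so becomes the identity on $\Cl(M)$ and is in particular injective.

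Plugging everything into the Mayer–Vietoris long exact sequence
\[
\H^0(U_1)\oplus\H^0(U_2) \longrightarrow \H^0(U_1\cap U_2) \longrightarrow \H^1(U,\O_N^*) \longrightarrow 0\oplus\Cl(M) \xrightarrow{\mathrm{id}} \Cl(M),
\]
the final arrow is injective, so $\H^1(U,\O_N^*)$ is the cokernel of the initial restriction. Since $M$ is positive, a short \v{C}ech $\H^0$ computation gives $\H^0(U_1) = \ZZ$ and $\H^0(U_1\cap U_2) = \ZZ$, both generated by $t$, while $\H^0(U_2)=0$; the restriction sends $t$ to $t$ and is therefore surjective, forcing $\H^1(U,\O_N^*) = 0$. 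The main obstacle is not a single hard step but rather the bookkeeping required to recognise that $\Cl(M)$ appears on both sides of the Mayer–Vietoris segment and is annihilated by an identity restriction — the content of the proposition is precisely that although $\Cl(M\wedge\NN^\infty) \cong \Cl(M)$ can be nonzero by Lemma~\ref{lemma:Cl-M-wedge-N}, no nontrivial class survives globally on the punctured spectrum.
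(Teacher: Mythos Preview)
Your proof is correct and follows a genuinely different route from the paper's.

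The paper argues by taking a line bundle $\mathcal{L}$ on $U=\Spec^\bullet N$, restricting to the open set $V=D(M_+^{\mathrm{ext}})$ (your $U_2$), and identifying $\Pic(V)\cong\Cl(M)$ via regularity of $N$ on $V$; it then pulls $\mathcal{L}$ back to the \emph{closed} fibre $(V(t))^\bullet\cong\Spec^\bullet M$ and uses affineness of $V(t)\cong\Spec M$ to force the divisor class to vanish, finishing with the injection $\Pic(U)\hookrightarrow\Pic(V)$ of Proposition~\ref{proposition:injection-PicU-PicU'}. Your Mayer--Vietoris argument replaces both the closed-fibre step and the appeal to Proposition~\ref{proposition:injection-PicU-PicU'} by a direct \v{C}ech comparison: because $(N_{x_{i_0}+\cdots+x_{i_k}})^*=M_{x_{i_0}+\cdots+x_{i_k}}^*$ while $(N_{x_{i_0}+\cdots+x_{i_k}+t})^*=M_{x_{i_0}+\cdots+x_{i_k}}^*\oplus\ZZ$, the restriction $\H^1(U_2)\to\H^1(U_1\cap U_2)$ is visibly a split injection, and the $\H^0$ bookkeeping finishes the job.

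One remark worth making: your detour through $\Cl(M)$ via Propositions~\ref{proposition:isomorphism-CaCl-Pic} and~\ref{proposition:CaCl-Cl} is decorative. The restriction map is the identity on $\Pic^{\loc}(M)$ regardless of what that group is, so the only substantive hypothesis your argument uses is that $M$ is \emph{integral} (for Proposition~\ref{proposition:constant-sheaf-trivial-cohomology} and for connectedness in the $\H^0$ step). Neither normality nor $\dim M=2$ is actually needed, so your approach in fact proves a stronger statement than the paper's.
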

\begin{proof}
    Let $t$ be the generator of $\NN^\infty$. From \cite[Corollary 2.2.12]{Boettger} we know that
    \[
    \Spec (M\wedge \NN^\infty)\cong\Spec M\times\Spec \NN^\infty\cong \Spec M \times \{\langle \infty\rangle, \langle t\rangle\}\cong D(t) \uplus V(t).
    \]
    On one hand, $V(t)\cong \Spec M$ and, on the other hand, $D(t)\cong \Spec(M\wedge \NN^\infty)_t\cong \Spec(M\wedge \ZZ^\infty)\cong \Spec M$, so topologically $\Spec M\wedge \NN^\infty\cong \Spec M\uplus \Spec M$.
    
    Let $M_+^{\mathrm{ext}}$ be the extension of $M_+$ to $M\wedge \NN^\infty$. Then $V(M_+^{\mathrm{ext}})\cong \Spec \NN^\infty$.

    Let $U'=\Spec^\bullet M$, $U=\Spec^\bullet (M\wedge \NN^\infty)$ and $V=D(M_+^{\mathrm{ext}})\cong U'\times \AA^1$, i.e.\ $V$ is homeomorphic to two copies of $U'$, as we can see in the picture below.
    
    Let $\mathcal{L}$ be a line bundle defined on $U$. Then $\mathcal{L}\restriction_V\in\Pic(V)$. $M$ is regular at any point in $U'$, so $M\wedge\NN^\infty$ is regular at any point in $V$ and, thanks to Lemma~\ref{lemma:Cl-M-wedge-N}, we get $\Pic(V)\cong\Cl(V)\cong\Cl(U'\times \AA^1)\cong\Cl(U')\cong \Cl(M)$, so $\mathcal{L}\restriction_V\in\Cl(M)$. Let $D\in\Cl(M)$ be the divisor such that $D=\mathcal{L}\restriction_V$.
    
    Let $(V(t))^\bullet=V(t)\smallsetminus (M\wedge\NN^\infty)_+\cong U'$. Again from Lemma~\ref{lemma:Cl-M-wedge-N} we get that $\Cl((V(t))^\bullet)\cong\Cl(U')\cong \Cl(U'\times \AA^1)= \Cl(V)$.
    
    We have a diagram of inclusions
    \[
    \begin{tikzcd}[baseline=(current  bounding  box.center)]
    & V(t) \drar[hook] &\\
    (V(t))^\bullet \drar[hook] \urar[hook]& & U\\
    & V \urar[hook]&
    \end{tikzcd}
    \]
    
    Clearly, since $(V(t))^\bullet\subseteq V$ and $\mathcal{L}\restriction_V=D$, we have that $\mathcal{L}\restriction_{(V(t))^\bullet}=D$. On the other side, $V(t)\cong\Spec M$ is affine, so every line bundle on it is trivial. In particular, $\mathcal{L}\restriction_{V(t)}=0$. Since, lastly, $V(t)^\bullet\subseteq V(t)$, we have that $D=0$.
    
    Now, $W=U^{(1)}\cup U^{(0)}\subseteq V$, so in particular $V^{(1)}=U^{(1)}$ and we can apply Proposition~\ref{proposition:injection-PicU-PicU'} above to obtain an injection $\Pic(U)\hookrightarrow\Pic(V)$, but we just proved that the latter is trivial, so our claim follows.
\end{proof}

\begin{remark}
    The following picture of $\Spec(M\wedge \NN^\infty)$ might help in understanding what is going on in the proof of the Proposition above. For clarity, we represent $\Spec M$ with a plane, although it is just a finite number of points, and $V(M_+^{\mathrm{ext}})$ is represented by a line, although it is just the two points that lie on the planes.
    
    \vspace{1ex}
    
    \hspace{\mathindent}\begin{tikzpicture}[scale=1]
    \fill [white, rounded corners] (-4,2) -- (4,2) -- (6, 4) --(-2,4) -- cycle;
    \draw [grey1, thick, rounded corners] (-4,2) -- (4,2) -- (6, 4) --(-2,4) -- cycle;
    
    \node (Vt) at (6, 2.5){\small$V(t)$};

    \fill [white, rounded corners] (-4,-2) -- (4,-2) -- (6, 0) --(-2,0) -- cycle;
    \draw [grey1, thick, rounded corners] (-4,-2) -- (4,-2) -- (6, 0) --(-2,0) -- cycle;
    
    \node (Dt) at (6, -1.5){\small$D(t)$};
    
    \draw [black, very thick] (1,-1) -- (1,2);
    \draw [black, very thick] (1,-3) -- (1,-2);
    \draw [black, very thick] (1, 3) -- (1,4.5);
    
    \node[fill=white, draw=black, rounded corners] (VM+ext) at (-2, .5){\small$V(M_+^{\mathrm{ext}})$};
    \draw [black, dashed, thick, ->] (VM+ext) to [out=60,in=180] (1-0.1, 1);

    \fill[black](1, -1) circle[radius=5pt];
    \node (M+ext) at (4, -.5){\small$M_+^{\mathrm{ext}}$};
    \draw [black, dashed, thick, ->] (M+ext) to [out=-120,in=-45] (1+0.2, -1-0.2);
    
    \draw[black,thick] (1-.3,3-.3)--(1+.3,3+.3);
    \draw[black,thick] (1-.3,3+.3)--(1+.3,3-.3);
    \node (MN+) at (-1, 3.5){\small$(M\wedge \NN^\infty)_+$};                
    \draw [black, dashed, thick, ->] (MN+) to [out=-120,in=210] (1-0.3, 3);
    \end{tikzpicture}
    
    \vspace{1ex}
\end{remark}

\begin{example}
    Let $M=(x, y, z\mid x+y=nz)$. We know that it is regular in codimension 1 and we showed in Example~\ref{example:x+y=nz} that $\Pic(W)=\Cl(M\wedge \NN^\infty)=\Cl(M)=\ZZ_n$. On the other side, we just proved that $\Pic^{\loc}(M\wedge \NN^\infty)=0$. Morally, the problem is that $M$ is not regular at $M_+$, that has height 2, so $M\wedge \NN^\infty$ is not regular at $M_+^{\mathrm{ext}}\in D(t)\subseteq U$. A more geometric way to interpret this result comes from the fact that $M\wedge \NN^\infty$ is not an isolated singularity.
\end{example}

\begin{remark}
    This is not in contradiction with \cite[Corollary 5.5]{flores2014picard}, that states that for every binoid scheme $\Pic(X)\cong\Pic(X\times \AA^1)$, because\footnote{It is worth noting that this is true for any binoid scheme, without other hypothesis, unlike in the ring case.}
    \[
    \Spec^\bullet M \times \AA^1\ncong (\Spec M\times\AA^1)^\bullet=\Spec^\bullet(M\wedge \NN^1).
    \]
\end{remark}

\begin{remark}
    Proposition~\ref{proposition:injection-PicU-PicU'} gives us a way to extend the proof of Proposition~\ref{proposition:normal-dimension-2} to any binoid scheme, provided that there exists an open subset that contains all its prime ideals of height 1 that has trivial Picard group.
\end{remark}

The next Proposition shows that, under some regularity conditions, Picard group and Divisor Class group agree on subsets bigger than $W$.
\begin{proposition}\label{proposition:PicV-ClV}
    Let $V$ be an open subset of $\Spec M$ such that $M_\p$ is regular for all $\p\in V$. Then $\Pic(V)\cong\Cl(V)$.
\end{proposition}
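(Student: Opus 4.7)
The plan is to decompose the desired isomorphism as $\Pic(V) \cong \CaCl(V) \cong \Cl(V)$. The first isomorphism is given directly by Proposition~\ref{proposition:isomorphism-CaCl-Pic}, which was established for any open subset of $X$ without regularity hypotheses. Hence the real content is to generalise Proposition~\ref{proposition:CaCl-Cl} and establish $\CaCl(V) \cong \Cl(V)$ under the stronger hypothesis that $M_\p$ is regular for every $\p\in V$, not just for $\p$ of height at most one.

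First I would define a map $\CaDiv(V)\longrightarrow \Div(V)$ exactly as in the proof of Proposition~\ref{proposition:CaCl-Cl}: a Cartier divisor $\{(D(f_i),\gamma_i)\}$ is sent to the Weil divisor $\sum_{\p\in V^{(1)}} v_\p(\gamma_i)\p$, where $i$ is chosen so that $\p\in D(f_i)$. The value does not depend on $i$ because $\gamma_i-\gamma_j\in M_{f_i+f_j}^*$ has trivial valuation at every height-one prime, and the map clearly sends principal Cartier divisors to principal Weil divisors because both come from the same global section of $\Gamma^\bullet$.

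For surjectivity I would cover $V$ by the minimal open subsets $U_\q$ containing each $\q\in V$ provided by Proposition~\ref{proposition:minimal-open-set-prime-ideal}; by Proposition~\ref{proposition:stalk-binoid-scheme} these are homeomorphic to $\Spec M_\q$. Since $M_\q$ is regular by hypothesis, Lemma~\ref{lemma:regular-ClM0} yields $\Cl(M_\q)=0$, so the restriction $D\restriction_{U_\q}$ is a principal Weil divisor, represented by some $\gamma_\q\in \Gamma^\bullet$. On an intersection $U_\q\cap U_{\q'}$ the two sections $\gamma_\q$ and $\gamma_{\q'}$ must differ by an element whose valuation vanishes at every height-one prime meeting the intersection, hence by an element of $\Gamma(U_\q\cap U_{\q'},\O_M)^*$; this yields a well-defined Cartier divisor $\{(U_\q,\gamma_\q)\}$ mapping to the original $D$.

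The main obstacle I expect is the verification that the local representatives $\gamma_\q$ at points of codimension $\geq 2$ glue compatibly with those at height-one points: this is precisely where the global assumption that $M_\p$ is regular for \emph{every} $\p\in V$ (rather than only in codimension one, as in Proposition~\ref{proposition:CaCl-Cl}) is essential, since without it there is no guarantee that $D$ is locally principal near a higher-codimension point. Once this gluing step is in place, the bookkeeping on principal divisors runs exactly as in Proposition~\ref{proposition:CaCl-Cl}, and composing with Proposition~\ref{proposition:isomorphism-CaCl-Pic} produces the desired isomorphism $\Pic(V)\cong\Cl(V)$.
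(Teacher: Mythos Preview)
Your approach is correct but genuinely different from the paper's. The paper does not attempt to generalise the Cartier--Weil correspondence of Proposition~\ref{proposition:CaCl-Cl} to $V$; instead it first invokes Proposition~\ref{proposition:injection-PicU-PicU'} to get an \emph{injection} $\Pic(V)\hookrightarrow\Pic(V\cap W)=\Cl(V)$, and then proves surjectivity by an inductive extension of line bundles: it lists the maximal primes $\p_1,\dots,\p_r$ of $V$, sets $V_0=V\cap W$ and $V_i=V_{i-1}\cup D(f_i)$, and uses $\Cl(M_{\p_i})=0$ (Lemma~\ref{lemma:regular-ClM0}) to extend a bundle from $V_{i-1}$ to $V_i$ one step at a time. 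You instead use the same lemma to produce local principal representatives $\gamma_\q$ of a given Weil divisor on each $U_\q\cong\Spec M_\q$ and glue them directly into a Cartier divisor. Both strategies pivot on regularity via $\Cl(M_\q)=0$, but the paper deploys it to trivialise bundles for extension, whereas you deploy it to make Weil divisors locally principal --- closer in spirit to the classical ``Cartier $=$ Weil on locally factorial schemes'' argument. Your gluing worry does resolve cleanly: for $\q''\in U_\q\cap U_{\q'}$, regularity of $M_{\q''}$ says an element of $\Gamma^\bullet$ is a unit there exactly when its valuation vanishes at every height-one prime below $\q''$, and all such primes lie in the open set $U_\q\cap U_{\q'}$. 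Note that the same stalkwise argument also gives injectivity of $\CaCl(V)\to\Cl(V)$, which you do not address explicitly.
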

\begin{proof}
    Thanks to Propositions~\ref{proposition:isomorphism-CaCl-Pic}, \ref{proposition:CaCl-Cl} and~\ref{proposition:injection-PicU-PicU'}, we know that there is an injective map
    \[
    \begin{tikzcd}[baseline=(current  bounding  box.center)]
    \Pic(V)\rar& \Cl(V)=\Pic(V\cap W).
    \end{tikzcd}
    \]
    To prove that it is surjective, we extend a line bundle on the right to a line bundle on the left.
    Let $\p_1, \dots, \p_r$ be the maximal prime ideals belonging to $V$ and let $D(f_i)$ be the minimal open subset that covers $\p_i$, so clearly $V\supseteq W\cup D(f_i)$.
    For any of these primes, let $N_i=M_{\p_i}$. By hypothesis, $N_i$ is regular, so by Lemma~\ref{lemma:regular-ClM0} we know that $\Cl(N_i)=0$. 
    Let $V_0=V\cap W$ and $V_i=V_{i-1}\cup D(f_i)$.
    If all the maximal prime ideals have height 1, then $V=V\cap W=V_0$ and then there is nothing to prove. So, assume that there exists at least one maximal prime ideal with height bigger than $1$, $\htnew(\p_i)\geq 2$, and assume that the Proposition is true for $V_{i-1}$.
    Let $\mathcal{L}'$ be a line bundle defined on $\Spec N_i\cap V_{i-1}$. Then we know that $\mathcal{L}'\cong 0$, and $\mathcal{L}'$ extends trivially to $\Spec N_i$.
    
    Let $\mathcal{L}_{i-1}$ be a line bundle defined on $V_{i-1}$, then $\mathcal{L}_{i-1}$ extends to a line bundle $\mathcal{L}_i$ on $V_i$, since $\mathcal{L}_{i-1}\restriction_{V_i}$ is trivial. So,by induction, every line bundle defined on $V_0=V\cap W$ extends to a line bundle on $W_r=V$. So the map above is surjective, and $\Pic(V)\cong\Cl(V)$.
\end{proof}

\begin{remark}
    The main point in the previous Proposition is whether we can extend a line bundle defined on $V\cap W$ to the whole $V$ or not, and in the regularity assumptions of the hypothesis, indeed we can.
\end{remark}

\begin{corollary}\label{corollary:isolated-singularity}
    Let $M$ be an isolated singularity of dimension at least 2. Then
    \[
    \Cl(M)\cong\Pic^{\loc}(M)
    \]
\end{corollary}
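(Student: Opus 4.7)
The plan is to apply Proposition~\ref{proposition:PicV-ClV} with $V=\Spec^\bullet M$, together with the identification of $\Cl(V)$ with $\Cl(M)$ that is available once $V$ contains every prime of height at most one.

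First I would unpack what ``isolated singularity'' means in this context: the localization $M_\p$ is regular for every $\p\in\Spec M$ with $\p\neq M_+$. Setting $V=\Spec^\bullet M=\Spec M\smallsetminus\{M_+\}$, this is exactly the regularity hypothesis required by Proposition~\ref{proposition:PicV-ClV}, so that Proposition gives the isomorphism
\[
\Pic^{\loc}(M)=\Pic(V)\cong\Cl(V).
\]

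Next I would use the dimension hypothesis $\dim M\geq 2$ to argue that $W\subseteq V$: every prime of height at most $1$ is a proper subideal of $M_+$ (because $M_+$ itself has height $\geq 2$), hence belongs to $V$. By the Remark immediately following the definition of $\Div(V)$ (``If $W\subseteq V$ then $\Cl(V)=\Cl(W)=\Cl(M)$''), this yields
\[
\Cl(V)=\Cl(W)=\Cl(M),
\]
and chaining the two isomorphisms gives $\Pic^{\loc}(M)\cong\Cl(M)$ as claimed.

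There is no real obstacle here; the Corollary is essentially a direct specialization of Proposition~\ref{proposition:PicV-ClV}. The only thing to double-check is that the dimension hypothesis is actually needed to guarantee $W\subseteq V$ (if $\dim M=1$ then $M_+$ itself has height one and would be excised by passing to $V$, which would change the divisor class group); the hypothesis $\dim M\geq 2$ is precisely what prevents this degenerate case.
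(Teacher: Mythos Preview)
Your proof is correct and follows exactly the paper's approach: apply Proposition~\ref{proposition:PicV-ClV} with $V=\Spec^\bullet M$. You are in fact more careful than the paper, which leaves implicit the identification $\Cl(V)=\Cl(M)$ via $W\subseteq V$ and the role of the hypothesis $\dim M\geq 2$.
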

\begin{proof}
    Since it is an isolated singularity, its spectrum is regular outside the maximal point $M_+$, so we can apply the Proposition above with $V=\Spec^\bullet M$ to obtain our claim.
\end{proof}

Recall from Proposition~\ref{proposition:isomorphism-CaCl-Pic} that $\CaCl(V)\cong\Pic(V)$ for any open subset $V\subseteq \Spec M$.
If we don't have all the hypothesis that we assumed so far, Cartier divisors and Weil divisors might not agree in general, so we define the divisor class group of a general binoid as the Picard group of $W$.

\begin{definition}\label{definition:divisor-class-group-general}
    Let $M$ be any binoid and let $W$ be the set of its prime ideals of height at most one. We define its \emph{Divisor Class Group} $\Cl(M)$ to be $\Pic(W)$.
\end{definition}

\begin{example}
    Let $M_\triangle=(x,y,z\mid x+y+z=\infty)$. Its punctured spectrum $X=\Spec^\bullet M_\triangle$ is made of only prime ideals of height zero or one, so its Divisor Class Group is exactly its local Picard group, $\Cl(M)=\Pic^{\loc}(M)$. This is a special case of a more general property that holds for graphs, that we will exploit in Section~\ref{section:special-cases-simplicialcomplexes}.
\end{example}

\afterpage{\null\newpage}

\chapter{Simplicial Binoids}\label{chapter:simplcial-binoids}

In this chapter we will concentrate on the case of binoids arising from simplicial complexes, namely simplicial binoids, through the study of the relations between the subsets of the combinatorial spectrum and the simplicial complex from whom the binoid comes.
Then we will look at the sheaf of groups $\O^*_{M_\triangle}$ restricted to the quasi-affine case and, by exploiting some properties of the  \v{C}ech-Picard complex introduced in Definition~\ref{definition:cech-picard-complex}, we will provide explicit formulas for the computation of $\H^i(\Spec^\bullet M_\triangle, \O^*_{M_\triangle})$.
The last part of the Chapter is devoted to the computation of the divisor class group (as defined in Definition~\ref{definition:divisor-class-group-general}) of a simplicial binoid.

\section{The Spectrum of a Simplicial Binoid}

Recall that a \emph{simplicial complex} is a subset $\gls{simplicial-complex}$ of the power set of the finite \emph{vertex set} $V$ that is closed under taking subsets, i.e.\ $G\in\triangle$ and $F\subseteq G$ implies $F\in\triangle$. Its elements are called \emph{faces} and the maximal faces (under inclusion) are called \emph{facets}. The \emph{dimension} of a face is the number of vertices in it minus 1 and the dimension of $\triangle$ is the maximal dimension of its faces. A \emph{simplicial subcomplex} $\triangle'$ of $\triangle$ is a subset of $\triangle$ that is again a simplicial complex. If $W\subseteq V$ is a subset of the vertices the \emph{restriction of $\triangle$ to $W$} is $\gls{simplicial-complex-restriction}=\{F\in\triangle\mid F\subseteq W\}$. When we say that $\triangle$ is a simplicial complex on $V$, we assume, unless otherwise specified, that the singletons are faces, so $\{v\}\in\triangle$, for every $v\in V$.\footnote{Refer to \cite{BrunsHerzog} for a starting point about simplicial complexes and their role in algebra and geometry.}

\begin{definition}
    Let $\triangle$ be a simplicial complex on $V=[n]$. Its \emph{simplicial binoid} is the binoid with presentation
    \[
    \gls{simplicial-binoid}=\left(x_1, \dots, x_n\midd x_{i_1}+\dots +x_{i_j}=\infty \text{ where } \{i_1, \dots, i_j\}\in \mathcal{P}([n])\smallsetminus \triangle\right).
    \]
\end{definition}

\begin{definition}\label{definition:simplicial-binoid-emptyset}
    A special situation arises when $\triangle=\varnothing$, in which case we obtain the zero binoid    
    $\gls{simplicial-binoid-empty-set}:=\gls{zero-binoid}$.
\end{definition}

Simone Böttger, in \cite[Corollary 6.5.13]{Boettger}, proved that there exists an order-reversing correspondence between faces of the simplicial complex and prime ideals of the binoid. In particular, the minimal prime ideals correspond to the (complements of) facets.

\begin{remark}[{\cite[Corollary 6.5.13]{Boettger}}]\label{remark:boettger:correspondence-faces-prime-ideals}
    Let $\triangle$ be a simplicial complex on the vertex set $V$. There exists an inclusion-reversing semibinoid isomorphism\footnote{A semibinoid is a binoid in which there might not be a neutral element.}
    \begin{equation}
    \begin{tikzcd}[baseline=(current  bounding  box.center), cramped, row sep = 0ex,
    /tikz/column 1/.append style={anchor=base east},
    /tikz/column 2/.append style={anchor=base west}]
    (\triangle, \cap, \varnothing) \arrow[r, leftrightarrow, "\sim"] & (\Spec M_\triangle, \cup, M_{\triangle,+})\\
    F         \arrow[r, mapsto] & \langle x_i\mid i\in V\smallsetminus F\rangle
    \end{tikzcd}
    \end{equation}
\end{remark}

\begin{example}\label{example:spectrum-simplicial-complex}
    Let $V=[4]$ and let
    
    \begin{minipage}[c]{0.5\textwidth}
        \[
        \triangle = \left\{\begin{aligned}
        &\varnothing,\{1\},\{2\},\{3\},\{4\},\\
        &\{1, 2\}, \{1, 3\}, \{2, 3\}, \{3, 4\}\\
        &\{1, 2, 3\}
        \end{aligned}\right\}
        \]\vfill
    \end{minipage}\begin{minipage}[c]{0.5\textwidth}
    \hspace{6em}\begin{tikzpicture}
    \tikzstyle{point}=[circle,thick,draw=black,fill=black,inner sep=0pt,minimum width=4pt,minimum height=4pt]
    \node (v1)[point, label={[label distance=0cm]-135:$1$}] at (0,0) {};
    \node (v2)[point, label={[label distance=0cm]90:$2$}] at (0.5,0.87) {};
    \node (v3)[point, label={[label distance=0cm]-45:$3$}] at (1,0) {};
    \node (v4)[point, label={[label distance=0cm]-45:$4$}] at (1.5,0.87) {};
    
    \draw (v1.center) -- (v2.center);
    \draw (v1.center) -- (v3.center);
    \draw (v2.center) -- (v3.center);
    \draw (v3.center) -- (v4.center);
    
    \draw[color=black!20, style=fill, outer sep = 20pt] (0.1,0.06) -- (0.5,0.77) -- (0.9,0.06) -- cycle;
    \end{tikzpicture}
\end{minipage}

Its associated simplicial binoid is 
\[
M_\triangle=\left(x_1, x_2, x_3 ,x_4\mid x_1+x_4=\infty, x_2+x_4=\infty\right)
\]
whose spectrum is
\[
\Spec M_\triangle=\left\{\begin{aligned}
&\langle x_4\rangle, \langle x_1, x_4\rangle, \langle x_2, x_4\rangle, \langle x_3, x_4\rangle, \langle x_1, x_2\rangle,\\
&\langle x_1, x_2, x_3\rangle, \langle x_1, x_2, x_4\rangle, \langle x_1, x_3, x_4\rangle, \langle x_2, x_3, x_4\rangle\\
&\langle x_1, x_2, x_3, x_4\rangle
\end{aligned}\right\}
\]
that we can represent as a $\subseteq$-poset as follows

\vspace{1ex}

\hspace{\mathindent}$\Spec M_\triangle=$\begin{tikzpicture}[baseline={(current bounding box.center)}]
\node (1234) at (0,0){$\langle x_1,x_2,x_3, x_4\rangle$};
\node (123) at (-3,-1){$\langle x_1, x_2, x_3\rangle$};
\node (124) at (-1,-1){$\langle x_1, x_2, x_4\rangle$};
\node (134) at (1,-1){$\langle x_1, x_3, x_4\rangle$};
\node (234) at (3,-1){$\langle x_2, x_3, x_4\rangle$};
\node (12) at (-3,-2){$\langle x_1, x_2\rangle$};
\node (14) at (-1,-2){$\langle x_1, x_4\rangle$};
\node (24) at (1,-2){$\langle x_2, x_4\rangle$};
\node (34) at (3,-2){$\langle x_3, x_4\rangle$};
\node (4) at (1,-3){$\langle x_4\rangle$};

\draw[-](1234)--(123);
\draw[-](1234)--(124);
\draw[-](1234)--(134);
\draw[-](1234)--(234);
\draw[-](123)--(12);
\draw[-](124)--(12);
\draw[-](124)--(14);
\draw[-](124)--(24);
\draw[-](134)--(14);
\draw[-](134)--(34);
\draw[-](234)--(24);
\draw[-](234)--(34);
\draw[-](14)--(4);
\draw[-](24)--(4);
\draw[-](34)--(4);
\end{tikzpicture}
\vspace{1ex}

and see that it is the same as the $\supseteq$-poset defined by $\triangle$

\vspace{1ex}

\hspace{\mathindent}\hspace{2.38em}$\triangle_\supseteq=$\begin{tikzpicture}[baseline={(current bounding box.center)}]
\node (1234) at (0,0){$\varnothing$};
\node (123) at (-3,-1){$\{4\}$};
\node (124) at (-1,-1){$\{3\}$};
\node (134) at (1,-1){$\{2\}$};
\node (234) at (3,-1){$\{1\}$};
\node (12) at (-3,-2){$\{3, 4\}$};
\node (14) at (-1,-2){$\{2, 3\}$};
\node (24) at (1,-2){$\{1, 3\}$};
\node (34) at (3,-2){$\{1, 2\}$};
\node (4) at (1,-3){$\{1, 2, 3\}$};

\draw[-](1234)--(123);
\draw[-](1234)--(124);
\draw[-](1234)--(134);
\draw[-](1234)--(234);
\draw[-](123)--(12);
\draw[-](124)--(12);
\draw[-](124)--(14);
\draw[-](124)--(24);
\draw[-](134)--(14);
\draw[-](134)--(34);
\draw[-](234)--(24);
\draw[-](234)--(34);
\draw[-](14)--(4);
\draw[-](24)--(4);
\draw[-](34)--(4);
\end{tikzpicture}

\vspace{1ex}
\end{example}

\subsection{Closed and Open Subsets of \texorpdfstring{$\Spec M_\triangle$}{spectrum of a simplicial binoid}}

We want to study the subsets of $\Spec M$ in the simplicial case. Recall from Proposition~\ref{remark:boettger:closedsubset-closedsuperset} that a subset of the spectrum of any binoid is closed if and only if it is superset-closed. This Proposition and the correspondence in Remark~\ref{remark:boettger:correspondence-faces-prime-ideals} induce a correspondence between simplicial subcomplexes of $\triangle$ and closed subsets of $\Spec M_\triangle$ that goes as follows
\begin{equation*}
\begin{tikzcd}[baseline=(current  bounding  box.center), row sep = -1em,
/tikz/column 1/.append style={anchor=base east},
/tikz/column 2/.append style={anchor=base west},
/tikz/column 3/.append style={anchor=base west},
cramped]
\begin{array}{c}
\text{Closed subsets}\\
\text{of }\Spec M_\triangle
\end{array}\rar[leftrightarrow] &\begin{array}{c}
\text{Radical ideals of }M_\triangle
\end{array}\rar[leftrightarrow]  &
\begin{array}{c}
\text{Subcomplexes of }\triangle
\end{array}\\
V\rar[leftrightarrow] & \begin{array}{c}
I = \p_1\cap\dots\cap\p_r\\
\p_i \text{ minimal prime in } V
\end{array}\rar[leftrightarrow]  &\{F_1, \dots, F_r\}
\end{tikzcd}
\end{equation*}
Where $\{F_1, \dots, F_r\}$ are the facets of the corresponding simplicial subcomplex.
In particular, $V(x_i)$ corresponds to the subsimplicial complex $\triangle'=\triangle_{[n]\smallsetminus \{i\}}$, restriction of $\triangle$ to $[n]\smallsetminus \{i\}$.

\begin{example}\label{example:spectrum-simplicial-binoid-2}
    Going back to the previous Example~\ref{example:spectrum-simplicial-complex}, we can easily see, for example, that $V(x_1)$ corresponds to $\triangle_{\{2, 3, 4\}}$
    
    \vspace{1ex}
    
    \begin{minipage}[c]{.6\textwidth}
        \hspace{\mathindent}\begin{tikzpicture}[scale=0.9,baseline={(current bounding box.center)}]
        \node (1234) at (0,0){$\langle x_1,x_2,x_3, x_4\rangle$};
        \node (123) at (-3,-1){$\langle x_1, x_2, x_3\rangle$};
        \node (124) at (-1,-1){$\langle x_1, x_2, x_4\rangle$};
        \node (134) at (1,-1){$\langle x_1, x_3, x_4\rangle$};
        \node[opacity=.6] (234) at (3,-1){$\langle x_2, x_3, x_4\rangle$};
        \node (12) at (-3,-2){$\langle x_1, x_2\rangle$};
        \node (14) at (-1,-2){$\langle x_1, x_4\rangle$};
        \node[opacity=.6] (24) at (1,-2){$\langle x_2, x_4\rangle$};
        \node[opacity=.6] (34) at (3,-2){$\langle x_3, x_4\rangle$};
        \node[opacity=.6] (4) at (1,-3){$\langle x_4\rangle$};
        \node (Vx1) at (-2, -3){$V(x_1)$};

        \draw[-](1234)--(123);
        \draw[-](1234)--(124);
        \draw[-](1234)--(134);
        %        \draw[-](1234)--(234);
        \draw[-](123)--(12);
        \draw[-](124)--(12);
        \draw[-](124)--(14);
        %        \draw[-](124)--(24);
        \draw[-](134)--(14);
        %        \draw[-](134)--(34);
        %        \draw[-](234)--(24);
        %        \draw[-](234)--(34);
        %        \draw[-](14)--(4);
        %        \draw[-](24)--(4);
        %        \draw[-](34)--(4);
        
        \draw[black, rounded corners=4mm, thick, fill, opacity=0.2](-3.5, -2.6)--(-4.2, -1.2)--(-4.2, -.4)--(-1.5, .5)--(0,.8)--(1.5,.6)--(2,-.2)--(2,-1.4)--(.2,-1.4)--(0,-2.6)--cycle;
        \draw[black, rounded corners=4mm, thick, dashed](-1.5, -3.5)--(-2.5,-3.5)--(-3.5, -3)--(-4.2, -1.2)--(-4.2, -.4)--(-1.5, .5)--(0,.8)--(1.5,.6)--(2,-.2)--(2,-1.4)--(.2,-1.4)--(0,-2.6)--cycle;
        \end{tikzpicture}
    \end{minipage}
    \begin{minipage}[c]{.4\textwidth}
        \begin{tikzpicture}[scale=0.9,baseline={(current bounding box.center)}]
        \node (1234) at (0,0){$\varnothing$};
        \node (123) at (-2,-1){$\{4\}$};
        \node (124) at (-0.66,-1){$\{3\}$};
        \node (134) at (0.66,-1){$\{2\}$};
        \node[opacity=.6] (234) at (2,-1){$\{1\}$};
        \node (12) at (-2,-2){$\{3, 4\}$};
        \node (14) at (-0.66,-2){$\{2, 3\}$};
        \node[opacity=.6] (24) at (0.66,-2){$\{1, 3\}$};
        \node[opacity=.6] (34) at (2,-2){$\{1, 2\}$};
        \node[opacity=.6] (4) at (1,-3){$\{1, 2, 3\}$};
        \node (Vx1) at (-1.3, -3){$\triangle_{\{2, 3, 4\}}$};
        
        \draw[-](1234)--(123);
        \draw[-](1234)--(124);
        \draw[-](1234)--(134);
        %        \draw[-](1234)--(234);
        \draw[-](123)--(12);
        \draw[-](124)--(12);
        \draw[-](124)--(14);
        %        \draw[-](124)--(24);
        \draw[-](134)--(14);
        %        \draw[-](134)--(34);
        %        \draw[-](234)--(24);
        %        \draw[-](234)--(34);
        %        \draw[-](14)--(4);
        %        \draw[-](24)--(4);
        %        \draw[-](34)--(4);
        
        \draw[black, rounded corners=4mm, thick, fill, opacity=0.2](-2.5, -2.6)--(-3, -1.2)--(-2.6, -.4)--(-1, .5)--(0,.8)--(1,.6)--(1.2,-.2)--(1.2,-1.4)--(.2,-1.4)--(0,-2.6)--cycle;
        \draw[black, rounded corners=4mm, thick, dashed](-0.8, -3.5)--(-1.8,-3.5)--(-2.5, -3)--(-3, -1.2)--(-2.6, -.4)--(-1, .5)--(0,.8)--(1,.6)--(1.2,-.2)--(1.2,-1.4)--(.2,-1.4)--(0,-2.6)--cycle;
        \end{tikzpicture}
    \end{minipage}
    \vspace{1ex}
\end{example}

Since our goal is to discuss sheaves and, in particular, to build the \v{C}ech complex of the sheaf of units on the punctured spectrum, we are interested in study open subsets of $\Spec M_\triangle$.

Thanks to the correspondence between closed subsets and simplicial subcomplexes, we know that an open subset corresponds to the complement of a simplicial subcomplex, but we would like to have something more. We begin by considering the fundamental affine open subsets, that we define in Definition~\ref{definition:zariski-topology-combinatorial-spectrum} as
\[
D(f):=\left\{\p\in\Spec M\mid f\notin \p\right\}.
\]
In the simplicial case that we are considering now, this might be exploited better as follows.
We start by recalling that a simplicial binoid is semifree and reduced. See Definition~\ref{definition:semifree} and~\ref{definition:reduced} for the definitions of semifree and reduced respectively.

\begin{theorem}[{\cite[Theorem 6.5.8]{Boettger}}]Let $\triangle$ be a simplicial complex on $V$. The binoid $M_\triangle$ is finitely generated by $\#V=n$ elements, semifree and reduced. Conversely, every commutative binoid $M$ satisfying these properties is a simplicial binoid. More precisely, $M$ is isomorphic to $M_\triangle$, with $\triangle=\{F\subseteq W\mid\sum_{w\in F}w\neq\infty\}$ for a minimal generating set $W$.
\end{theorem}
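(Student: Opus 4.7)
The plan is to prove the two directions separately, treating the forward implication as a verification from the presentation and concentrating the real work on the converse.

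For the forward direction, I would start from the presentation of $M_\triangle$ and verify the three properties. Finite generation by $n$ elements is immediate from the presentation. For semifreeness, the observation is that every relation in the presentation has the form ``monomial $=\infty$'', so no two distinct formal sums $\sum n_i x_i$ with non-$\infty$ value get identified; hence each element of $M_\triangle$ is either $\infty$ or has a unique expression $\sum n_i x_i$ whose support (the set of $x_i$ with $n_i > 0$) is forced to lie in $\triangle$. This directly gives $\{x_1,\ldots,x_n\}$ as a semibasis. For reducedness, I would argue that if $f = \sum n_i x_i \neq \infty$ then $\supp(f) \in \triangle$, so for every $k\geq 1$ the element $kf = \sum k n_i x_i$ also has support in $\triangle$ (the same support), hence is not $\infty$. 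Thus $\nil(M_\triangle) = \{\infty\}$.

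For the converse, let $M$ be commutative, finitely generated, semifree and reduced, and let $W = \{w_1,\ldots,w_n\}$ be a minimal generating set, which by semifreeness is the semibasis. Define $\triangle = \{F \subseteq W \mid \sum_{w \in F} w \neq \infty\}$. I would first check that $\triangle$ is subset-closed: if $G \subseteq F$ and $\sum_{w \in G} w = \infty$, then adding $\sum_{w \in F \smallsetminus G} w$ still yields $\infty$ in $M$, contradicting $F \in \triangle$. Next, I would define a binoid morphism $\varphi : M_\triangle \to M$ by $x_i \mapsto w_i$. This is well-defined because every defining relation $\sum_{i \in F} x_i = \infty$ of $M_\triangle$ corresponds to $F \notin \triangle$, which by definition of $\triangle$ means $\sum_{w \in F} w = \infty$ in $M$. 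Surjectivity is clear since $W$ generates $M$.

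The crux is injectivity, and this is the step I expect to be the main obstacle since it is the only place the reducedness hypothesis is essential. Using semifreeness of both $M_\triangle$ and $M$, the map $\varphi$ sends the unique representation $\sum n_i x_i$ in $M_\triangle$ to $\sum n_i w_i$ in $M$. To conclude injectivity, I need to show that if $\sum n_i w_i = \infty$ in $M$ then $\sum n_i x_i = \infty$ in $M_\triangle$, equivalently $F := \{w_i : n_i > 0\} \notin \triangle$. For this I set $k = \max_i n_i$ and write
\[
k \sum_{w \in F} w \;=\; \sum n_i w_i \,+\, \sum (k - n_i) w_i \;=\; \infty.
\]
Hence $\sum_{w \in F} w$ is nilpotent in $M$, and since $M$ is reduced, $\sum_{w \in F} w = \infty$, i.e.\ $F \notin \triangle$, as required. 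The preimage statement is then immediate from the uniqueness of semifree representations, and $\varphi$ is an isomorphism.
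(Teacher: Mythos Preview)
Your proof is correct. Note, however, that the paper does not actually supply a proof of this statement: it is quoted verbatim as \cite[Theorem 6.5.8]{Boettger} and used as a black box, so there is no ``paper's own proof'' to compare against.

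That said, your argument is the natural one and almost certainly what B\"ottger does. A couple of minor remarks. First, in the forward direction your semifreeness argument would benefit from one extra sentence: the congruence on $(\NN^n)^\infty$ generated by relations of the form $a=\infty$ identifies $x$ and $y$ precisely when $x=y$ or both lie in the ideal generated by those $a$'s, so elements outside that ideal retain their unique $\NN^n$-representation. You say this in words, but making the congruence explicit removes any doubt. Second, your identification of a minimal generating set with the semibasis is correct but deserves the one-line justification you omit: each semibasis element $a_j$ has $a_j$ itself as its unique representation, so it must appear in any generating set, forcing the semibasis to be contained in (hence equal to, by minimality) any minimal generating set. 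Third, implicit throughout is that ``unique representation'' in the definition of semifree applies only to elements of $M^\bullet$, since $\infty$ visibly has many representations in any nontrivial $M_\triangle$; this is the standard reading and is consistent with how the paper uses the term.

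The injectivity step via $k\sum_{w\in F} w = \infty$ and reducedness is exactly the right idea and is the only place reducedness enters, as you note.
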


\begin{definition}\label{definition:reduction-of-element}
    Let $f\in \left(M_\triangle\right)^\bullet$ be a non-infinity element in a simplicial binoid.
    
    Its \textbf{reduction $\gls{red}(f)$} is \[
    \displaystyle\red(f):=\sum_{x_i\in\supp(f)}x_i.
    \]
\end{definition}

\begin{remark}\label{remark:Df=Dintersectionsupport} It is the same to look at the affine fundamental open set defined by $f$ or by $\red(f)$, since we have
    \[
    D(f)=D(\red(f))=\bigcap_{x_i\in\supp(f)}(D(x_i)),
    \]
    that follows from three facts:\\
    \hspace{\mathindent} 1. $M_\triangle$ is semifree,\\
    \hspace{\mathindent} 2. $\supp(f)=\supp(\red(f))$,\\
    \hspace{\mathindent} 3. $f\notin \p=\langle x_{i_1}, \dots, x_{i_k}\rangle$ if and only if $x_{i_j}\notin\supp(f)$ for every $i_j$
\end{remark}

Thanks to this remark, we can not only restrict to consider open affine subsets defined by reduced elements but, moreover, to consider intersections of the fundamental open subsets defined by the variables $x_i$'s.

\begin{example}\label{example:spectrum-simplicial-binoid-3}
    We go back to Example~\ref{example:spectrum-simplicial-binoid-2} and we try to understand $D(x_1+x_3)$. This is, indeed, the intersection of $D(x_1)$ and $D(x_3)$
    
    \vspace{1ex}
    
    \hspace{\mathindent}
    \begin{tikzpicture}[baseline={(current bounding box.center)}]
    \node[opacity=0.6] (1234) at (0,0){$\langle x_1,x_2,x_3, x_4\rangle$};
    \node[opacity=0.6] (123) at (-3,-1){$\langle x_1, x_2, x_3\rangle$};
    \node (124) at (-1,-1){$\langle x_1, x_2, x_4\rangle$};
    \node[opacity=0.6] (134) at (1,-1){$\langle x_1, x_3, x_4\rangle$};
    \node (234) at (3,-1){$\langle x_2, x_3, x_4\rangle$};
    \node[opacity=0.6] (12) at (-3,-2){$\langle x_1, x_2\rangle$};
    \node (14) at (-1,-2){$\langle x_1, x_4\rangle$};
    \node (24) at (1,-2){$\langle x_2, x_4\rangle$};
    \node (34) at (3,-2){$\langle x_3, x_4\rangle$};
    \node (4) at (1,-3){$\langle x_4\rangle$};
    \node (Dx1) at (3, -3.05){$D(x_1)$};
    \node (Dx3) at (-1, -2.95){$D(x_3)$};
    
    %        \draw[-](1234)--(123);
    %        \draw[-](1234)--(124);
    %        \draw[-](1234)--(134);
    %                \draw[-](1234)--(234);
    %        \draw[-](123)--(13);
    %        \draw[-](124)--(14);
    %                \draw[-](124)--(24);
    %        \draw[-](134)--(13);
    %        \draw[-](134)--(14);
    %                \draw[-](134)--(34);
    %                \draw[-](234)--(24);
    %                \draw[-](234)--(34);
    %                \draw[-](14)--(4);
    %                \draw[-](24)--(4);
    %                \draw[-](34)--(4);
    
    \draw[black, rounded corners=4mm, thick, pattern=north east lines, pattern color=black, opacity=0.2](4, -.6)--(2,-.6)--(2,-1.6)--(.2,-1.6)--(.2,-3.4)--(2, -3.4)--(2.3, -2.6)--(4, -2.4)--cycle;
    \draw[black, rounded corners=4mm, thick](4, -.6)--(2,-.6)--(2,-1.6)--(.2,-1.6)--(.2,-3.4)--(2, -3.4)--(4, -3.4)--cycle;
    
    \draw[black, rounded corners=4mm, thick, pattern=north west lines, pattern color=black, opacity=0.2](0, -.6)--(-2,-.6)--(-2,-2.4)--(-.5, -2.6)--(.2,-3.3)--(1.8, -3.3)--(1.8, -1.7)--(.2,-1.7)--cycle;
    \draw[black, rounded corners=4mm, thick, dashed](0, -.6)--(-2,-.6)--(-2,-3.3)--(1.8, -3.3)--(1.8, -1.7)--(.2,-1.7)--cycle;
    \end{tikzpicture}
    
    \vspace{3ex}
    
    So $D(x_1+x_3)$ is $\{\langle x_2, x_4\rangle, \langle x_4\rangle\}$. Its complement corresponds to the simplicial complex $\triangle'=\triangle\smallsetminus\{\{1, 3\}, \{1, 2, 3\}\}$.
\end{example}

\subsection{Covering \texorpdfstring{$\Spec^\bullet{M_\triangle}$}{spectrum of a simplicial binoid}}

As we have seen in Proposition~\ref{proposition:covering-punctured-spectrum-binoid} we can cover the punctured spectrum of any binoid with the  affine fundamental open subsets $\{D(x_i)\}$. While in general it might be the case that we don't need all the variables, for simplicial binoids we do.\\
With $\langle x_1, \dots, \widehat{x_i}, \dots, x_n\rangle$ we denote the prime ideal generated by all the variables except $x_i$, in a way similar to the standard notation for sets.

\begin{lemma}
    Let $\triangle$ be a simplicial complex on $V=[n]$. Then $\langle x_1, \dots, \widehat{x_i}, \dots, x_n\rangle\in\Spec^\bullet M_\triangle$ for every $i\in V$.
\end{lemma}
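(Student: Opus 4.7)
The plan is to verify directly, using Definition~\ref{definition:prime-ideal}, that the complement $M_\triangle\smallsetminus \mathfrak{p}$ (with $\mathfrak{p}=\langle x_1,\ldots,\widehat{x_i},\ldots,x_n\rangle$) is a monoid, and separately to observe that $\mathfrak{p}\neq M_{\triangle,+}$ so that $\mathfrak{p}$ does live in the punctured spectrum.

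First I would exploit the fact, recalled just before the lemma, that $M_\triangle$ is semifree with semibasis $\{x_1,\ldots,x_n\}$: every non-infinity element $f$ admits a unique expression $f=\sum_{j=1}^n n_j x_j$ with $n_j\in\NN$. Since $\mathfrak{p}$ is generated by $\{x_j\mid j\neq i\}$, an element $f\neq\infty$ lies in $\mathfrak{p}$ if and only if $n_j>0$ for some $j\neq i$, i.e.\ if and only if $\supp(f)\nsubseteq\{x_i\}$. Consequently
\[
M_\triangle\smallsetminus\mathfrak{p}=\{nx_i\mid n\in\NN\}.
\]
The assumption that singletons are faces of $\triangle$ (stated at the start of the section) ensures that $\{i\}\in\triangle$, hence there is no defining relation of the form $nx_i=\infty$, and by the semifreeness the elements $nx_i$ are pairwise distinct and different from $\infty$.

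Next I would check the monoid axioms for $M_\triangle\smallsetminus\mathfrak{p}$: it contains $0=0\cdot x_i$, it is closed under addition since $nx_i+mx_i=(n+m)x_i$ still has support in $\{x_i\}$ and is not $\infty$ by the preceding remark, and it avoids $\infty$ for the same reason. By Definition~\ref{definition:prime-ideal} this proves $\mathfrak{p}\in\Spec M_\triangle$. Finally, $x_i\notin\mathfrak{p}$ shows $\mathfrak{p}\subsetneq M_{\triangle,+}$, so $\mathfrak{p}\in\Spec^\bullet M_\triangle$, completing the lemma.

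There is essentially no obstacle here; the only subtlety is to ensure that the relations of $M_\triangle$ never collapse some $nx_i$ to $\infty$, which is precisely guaranteed by the standing assumption $\{i\}\in\triangle$. Alternatively one could appeal to Remark~\ref{remark:boettger:correspondence-faces-prime-ideals}, noting that $\mathfrak{p}=\langle x_j\mid j\in V\smallsetminus\{i\}\rangle$ is exactly the prime corresponding under that bijection to the face $\{i\}\in\triangle$, but the direct semifree argument above is self-contained and makes the inclusion $\mathfrak{p}\in\Spec^\bullet M_\triangle$ transparent.
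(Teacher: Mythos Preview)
Your proof is correct. The paper takes the shorter route you mention at the very end: it simply invokes the correspondence of Remark~\ref{remark:boettger:correspondence-faces-prime-ideals} and notes that $\{i\}\in\triangle$, so that $\langle x_j\mid j\in V\smallsetminus\{i\}\rangle$ is the prime ideal corresponding to this face. Your main argument instead unwinds the definition of a prime ideal via semifreeness, explicitly identifying $M_\triangle\smallsetminus\mathfrak{p}=\{nx_i\mid n\in\NN\}$ and checking that it is a submonoid. This is more work but is entirely self-contained and does not lean on the cited bijection; the paper's version is a one-liner but presupposes that the reader accepts Remark~\ref{remark:boettger:correspondence-faces-prime-ideals} as a black box. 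Either way the crucial input is the standing hypothesis that every singleton $\{i\}$ is a face, which you identify correctly.
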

\begin{proof}
    Thanks to the correspondence stated in Remark~\ref{remark:boettger:correspondence-faces-prime-ideals}, it is enough to notice that $\{i\}$ is a face of $\triangle$ for every $i\in V$.
\end{proof}

\begin{proposition}\label{proposition:minimal-covering}
    $\{D(x_i)\}$ is a covering by affine subsets of $\Spec^\bullet M_\triangle$ that is minimal among all the possible affine coverings.
\end{proposition}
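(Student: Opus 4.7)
The plan is to verify three things in sequence: that $\{D(x_i)\}_{i\in V}$ covers $\Spec^\bullet M_\triangle$, that each member is affine, and that no member can be removed (and moreover that no affine cover with strictly fewer open subsets exists).

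For the first point, I would invoke Proposition~\ref{proposition:covering-punctured-spectrum-binoid}: since $M_\triangle$ is semifree and positive, the generators of $(M_\triangle)_+$ are precisely the variables $x_1,\dots,x_n$, so $\Spec^\bullet M_\triangle = \bigcup_{i\in V} D(x_i)$. For the second point, each $D(x_i)$ is a fundamental open subset and hence affine (isomorphic to $\Spec (M_\triangle)_{x_i}$), using the characterization from the Proposition stating that an open subset of $\Spec M$ is affine iff it is of the form $D(f)$.

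For the minimality statement, the key observation is that the prime $\p_i := \langle x_1,\dots,\widehat{x_i},\dots,x_n\rangle$ belongs to $D(x_i)$ but to no other $D(x_j)$. The previous Lemma guarantees $\p_i \in \Spec^\bullet M_\triangle$, and by construction $x_j \in \p_i$ for all $j\neq i$ while $x_i \notin \p_i$. So already this shows that no $D(x_i)$ can be dropped from the given cover. To get the stronger statement that no smaller affine cover exists at all, I would argue that $D(x_i)$ is the unique affine open subset of $\Spec^\bullet M_\triangle$ containing $\p_i$. Indeed, any affine open is of the form $D(f)$ for some $f\in M_\triangle$, and since such a $D(f)$ must avoid the maximal ideal, we have $f\in (M_\triangle)_+$, so $\supp(f)\neq\varnothing$. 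By Remark~\ref{remark:Df=Dintersectionsupport}, $D(f) = \bigcap_{x_j\in\supp(f)} D(x_j)$, so $\p_i\in D(f)$ forces $\supp(f)\subseteq\{x_i\}$, hence $\supp(f)=\{x_i\}$ and $D(f)=D(x_i)$. Thus any affine cover of $\Spec^\bullet M_\triangle$ must include $D(x_i)$ for every $i\in V$.

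I do not anticipate any serious obstacle: the content is essentially a combination of the previous Lemma with the characterization of affine open subsets of $\Spec M$ and the support description of fundamental opens in the semifree case. The only subtle point to be careful with is the passage from ``cannot delete any $D(x_i)$'' to ``no affine cover of smaller cardinality exists,'' which is exactly what the uniqueness of the affine open containing $\p_i$ delivers.
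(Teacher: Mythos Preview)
Your proposal is correct and follows essentially the same approach as the paper: both arguments hinge on the fact that the prime $\p_i=\langle x_1,\dots,\widehat{x_i},\dots,x_n\rangle$ lies in $\Spec^\bullet M_\triangle$ (by the previous Lemma) and that, via Remark~\ref{remark:Df=Dintersectionsupport}, any affine open $D(f)$ containing $\p_i$ must satisfy $\supp(f)=\{x_i\}$, hence $D(f)=D(x_i)$. Your write-up is in fact slightly more careful than the paper's in making explicit the passage from ``no $D(x_i)$ can be removed'' to ``no affine cover of smaller cardinality exists.''
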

\begin{proof}
    We already know that this is a covering. To prove that it is minimal, it is enough to observe three things.\\
    First, thanks to the lemma above, we need to cover $\langle x_1, \dots, \widehat{x_i}, \dots, x_n\rangle$.\\
    Second, $D(x_i)$ is the only affine open subset here that covers $\langle x_1, \ldots, \widehat{x_i}, \ldots, x_n\rangle$.\\
    Third, thanks to Remark~\ref{remark:Df=Dintersectionsupport}, any other affine open subset that covers $\langle x_1, \ldots, \widehat{x_i}, \ldots, x_n\rangle$ needs to come from an element $f$ that has the same support of $x_i$, i.e.\ $f=mx_i$, for some $m\in\NN$, and $D(f)=D(x_i)$.
\end{proof}

\begin{proposition}\label{proposition:intersection-open-subsets}
    Let $\{D(x_i)\}$ be the covering of $\Spec^\bullet M_\triangle$ as above. Then $D(x_{i_1})\cap\dots\cap D(x_{i_j})\neq \varnothing$ if and only if $\{i_1, \dots, i_j\}$ is a face of $\triangle$.
\end{proposition}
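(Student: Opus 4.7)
The plan is to use the face-prime correspondence of Remark~\ref{remark:boettger:correspondence-faces-prime-ideals} to exhibit an explicit prime ideal witnessing non-emptiness, and conversely to read off a face from any prime in the intersection.

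First, I would translate the question about the intersection into a question about a single fundamental open subset. By Remark~\ref{remark:Df=Dintersectionsupport},
\[
D(x_{i_1})\cap\dots\cap D(x_{i_j}) = D(x_{i_1}+\dots+x_{i_j}),
\]
so the intersection is empty precisely when the element $f=x_{i_1}+\dots+x_{i_j}$ lies in every prime ideal of $M_\triangle$, i.e.\ when $f\in\nil(M_\triangle)$. Since a simplicial binoid is reduced, $\nil(M_\triangle)=\{\infty\}$, so the intersection is non-empty iff $f\neq\infty$.

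For the forward implication, suppose $F=\{i_1,\dots,i_j\}$ is a face of $\triangle$. Then $F\in\triangle$ corresponds, by Remark~\ref{remark:boettger:correspondence-faces-prime-ideals}, to the prime ideal
\[
\p_F=\langle x_k\mid k\in V\smallsetminus F\rangle\in\Spec M_\triangle.
\]
Since $M_\triangle$ is semifree on $\{x_1,\dots,x_n\}$, none of the generators $x_{i_1},\dots,x_{i_j}$ belong to $\p_F$, so $\p_F\in D(x_{i_1})\cap\dots\cap D(x_{i_j})$ and the intersection is non-empty.

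For the converse, suppose some prime $\p\in\Spec M_\triangle$ lies in $D(x_{i_1})\cap\dots\cap D(x_{i_j})$. By the same correspondence, $\p=\p_G$ for some face $G\in\triangle$, where $\p_G=\langle x_k\mid k\notin G\rangle$. The condition $x_{i_k}\notin\p_G$ forces $i_k\in G$ for every $k$, so $\{i_1,\dots,i_j\}\subseteq G$. Since $\triangle$ is closed under taking subsets, $\{i_1,\dots,i_j\}$ is itself a face of $\triangle$.

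There is no real obstacle here; the whole argument rests on two facts already established: the inclusion-reversing bijection between faces of $\triangle$ and prime ideals of $M_\triangle$, and the fact that empty fundamental opens correspond to nilpotent elements. The mild subtlety is to remember that $\{x_i\}$ being a semibasis guarantees $x_{i_k}\notin\p_F$ whenever $i_k\in F$, which is exactly what allows the face $F$ itself to serve as the witness in the non-trivial direction.
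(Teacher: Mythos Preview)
Your proof is correct and follows essentially the same route as the paper: both use the face--prime correspondence of Remark~\ref{remark:boettger:correspondence-faces-prime-ideals} to identify $D(x_{i_1})\cap\dots\cap D(x_{i_j})$ with the set of faces containing $\{i_1,\dots,i_j\}$, then invoke subset-closure of $\triangle$ for the converse. Your opening paragraph via nilpotents and reducedness is an extra (and correct) observation---indeed it already yields a self-contained proof once one notes that $x_{i_1}+\dots+x_{i_j}=\infty$ in $M_\triangle$ iff $\{i_1,\dots,i_j\}\notin\triangle$---but it is not used in the rest of your argument and the paper omits it.
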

\begin{proof}
    Thanks to the correspondence between prime ideals and faces, we can describe $D(x_{i_1})\cap\dots\cap D(x_{i_j})$ in term of faces, in the following way. $D(x_i)=\{\p\in\Spec M_\triangle\mid x_i\notin\p\}=\{F\in\triangle\mid i\in F\}$. Then $D(x_{i_1})\cap\dots\cap D(x_{i_j}) = \{F\in\triangle\mid \{i_1, \dots, i_j\}\subseteq F\}$.\\
    If $\{i_1, \dots, i_j\}$ is a face of $\triangle$ then this set is not empty. On the other hand, if this set is non empty, then there exists a face $G\in\triangle$ such that $\{i_1, \dots, i_j\}\subseteq G$. Since a subset of a face is a face, $\{i_1, \dots, i_j\}$ is itself a face.
\end{proof}
\begin{definition}Thanks to the previous Proposition, we can introduce the following notation. Let $F=\{i_1, \dots, i_j\}\in\triangle$, then 
    \[
    \gls{D-of-face}:=D(x_{i_1}+ \dots+ x_{i_j})=\bigcap_{i\in F}D(x_i).
    \]
\end{definition}

\begin{definition}\label{definition:nerve-of-covering}
    Let $\{U_i\}_{i\in I}$ be a finite collection of open subsets of a topological space $X$. The \emph{nerve of $\{U_i\}$} is the simplicial complex, written $\gls{nerve-U}$, defined on vertex set $I$ as follows
    \begin{itemize}
        \item $\varnothing\in\nerve(\{U_i\})$,
        \item for any $J\subseteq I$, $J\in \nerve(\{U_i\})$ if and only if $\displaystyle\bigcap_{j\in J}U_j\neq \varnothing$.
    \end{itemize}
\end{definition}

\begin{corollary}\label{corollary:cech-covering-nerve}
    The nerve of the covering of $\Spec^\bullet M_\triangle$ given by $\{D(x_i)\}$ is the simplicial complex itself.
\end{corollary}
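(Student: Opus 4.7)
The plan is to observe that this corollary is essentially an immediate translation of Proposition~\ref{proposition:intersection-open-subsets} into the language of nerves. First I would note that the index set of the covering $\{D(x_i)\}_{i\in V}$ is exactly the vertex set $V$ of $\triangle$, so the nerve $\nerve(\{D(x_i)\})$ and the simplicial complex $\triangle$ live on the same vertex set. This aligns the two combinatorial objects before any comparison of faces.

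Next I would unfold the definition: by Definition~\ref{definition:nerve-of-covering}, a subset $J = \{i_1,\dots,i_j\} \subseteq V$ is a face of $\nerve(\{D(x_i)\})$ precisely when $\bigcap_{k=1}^j D(x_{i_k}) \neq \varnothing$. I would also include $J = \varnothing$, which is a face on both sides by convention.

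Then I would invoke Proposition~\ref{proposition:intersection-open-subsets} directly: this intersection is non-empty if and only if $\{i_1,\dots,i_j\}$ is a face of $\triangle$. Combining the two characterisations gives that $J$ is a face of the nerve iff $J \in \triangle$, so the two simplicial complexes have the same faces on the same vertex set, hence coincide.

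There is essentially no obstacle here; this corollary is a one-line consequence of Proposition~\ref{proposition:intersection-open-subsets}, whose content has already done all the work. The only thing worth being careful about is making sure to mention that singletons $\{i\} \in \triangle$ (ensuring every vertex of $V$ really appears as a vertex of the nerve, not as an isolated index with empty star), which is guaranteed by our standing assumption that $\{v\} \in \triangle$ for every $v \in V$.
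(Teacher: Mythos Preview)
Your proposal is correct and is exactly the approach the paper intends: the corollary is stated immediately after Proposition~\ref{proposition:intersection-open-subsets} and Definition~\ref{definition:nerve-of-covering} without any further proof, precisely because it is the direct translation you describe. Your added remark about singletons being faces (so that the vertex sets genuinely coincide) is a nice bit of care that the paper leaves implicit.
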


The above Definition can be restated using directly $I$ as vertex set, since $V$ and $I$ have the same cardinality, but in this form is more clear that we are defining a new simplicial complex which, unless we are in the case of the Corollary, will be different from what we started with.

The following Lemma generalizes Proposition~\ref{proposition:minimal-covering}.

\begin{lemma}\label{lemma:minimal-covering}
    Let $U\subseteq\Spec^\bullet(M_\triangle)$ be an open subset. Then $U$ can be minimally covered by $\mathscr{V}=\{D(F_1), \dots, D(F_j)\}$ for some $F_1, \dots, F_j\in\triangle$.
\end{lemma}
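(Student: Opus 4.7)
The plan is to translate the problem through the inclusion-reversing correspondence of Remark~\ref{remark:boettger:correspondence-faces-prime-ideals} into an elementary statement about the finite poset $\triangle$. Combining that correspondence with Proposition~\ref{proposition:intersection-open-subsets}, the basic open set $D(F)$ corresponds to the upper set $\{G\in\triangle:G\supseteq F\}$, and the hypothesis that $U$ is open (equivalently, subset-closed in $\Spec M_\triangle$ by Proposition~\ref{remark:boettger:closedsubset-closedsuperset}) translates into the corresponding family of faces
\[
\mathcal{U}=\{F\in\triangle\smallsetminus\{\varnothing\}\mid \langle x_i\mid i\in V\smallsetminus F\rangle\in U\}
\]
being superset-closed inside $\triangle$.

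Next I would take $F_1,\dots,F_j$ to be the minimal faces of $\mathcal{U}$ with respect to $\subseteq$; this collection is finite because $\triangle$ itself is finite. Since $\mathcal{U}$ is superset-closed, every face in $\mathcal{U}$ contains at least one $F_i$, so under the correspondence every prime in $U$ lies in some $D(F_i)$, and hence $\mathscr{V}=\{D(F_1),\dots,D(F_j)\}$ covers $U$.

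To check minimality, I would observe that the prime $\p_{F_i}$ corresponding to $F_i$ satisfies $\p_{F_i}\in D(F_i)$ (because $F_i\supseteq F_i$), whereas $\p_{F_i}\notin D(F_k)$ for $k\neq i$: the latter containment would require $F_i\supseteq F_k$, and because both $F_i$ and $F_k$ are minimal in $\mathcal{U}$ this forces $F_i=F_k$. Therefore omitting any single $D(F_i)$ from $\mathscr{V}$ leaves the prime $\p_{F_i}$ uncovered, so $\mathscr{V}$ is a minimal cover.

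There is no substantial obstacle: the content of the lemma is just the order-theoretic fact that in a finite poset every superset-closed family is the union of the upper sets of its minimal elements, transported through the face–prime dictionary of Remark~\ref{remark:boettger:correspondence-faces-prime-ideals}. The only point that has to be checked with care is that the $F_i$ belong to $\triangle$ (so that $D(F_i)\neq\varnothing$), but this is automatic because $\mathcal{U}\subseteq\triangle$ by construction.
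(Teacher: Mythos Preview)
Your proof is correct and follows essentially the same approach as the paper: under the inclusion-reversing correspondence of Remark~\ref{remark:boettger:correspondence-faces-prime-ideals}, your minimal faces $F_1,\dots,F_j$ of $\mathcal{U}$ are precisely the paper's maximal prime ideals $\p_1,\dots,\p_k$ of $U$, and your minimality argument is exactly the one the paper borrows from Proposition~\ref{proposition:minimal-covering}. Your write-up is simply more explicit about the order-theoretic translation, whereas the paper's proof is a terse two lines.
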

\begin{proof}
    Let $\p_1, \dots, \p_k$ be the maximal prime ideals in $U$. Then $F_1, \dots, F_j$ are the faces corresponding to these prime ideals via the map in Remark~\ref{remark:boettger:correspondence-faces-prime-ideals}. Minimality can be proved as in Proposition~\ref{proposition:minimal-covering}.
\end{proof}

\subsection{Constant sheaves}
In this Section we explore the relation between \v{C}ech cohomology on the covering $\{D(x_i)\}$ of a constant sheaf of abelian groups on $\Spec^\bullet M_\triangle$ and simplicial cohomology of $\triangle$ with coefficients in that group.

\begin{remark}
    Let $G$ be a constant sheaf of abelian groups on a subset $U$ of $\Spec M$. We already know that
    \[
    \Gamma(U, G)=\left\{\begin{aligned}
    &G^{\#\{\text{connected components of }U\}},    & \text{ if } U\neq\varnothing,\\
    &0,                                             & \text{ otherwise}.
    \end{aligned}
    \right.
    \]
    Moreover, if $\{D(x_i)\}$ is the usual covering of the punctured spectrum given by the combinatorial open subsets, we can use it to compute cohomology via \v{C}ech cohomology, and we can explicitly write the groups in the \v{C}ech complex as
    \[
    \vC\left(D(X_{i_0})\cap \dots\cap D(x_{i_k}), G\right)=\left\{\begin{aligned}
    &G,    & \text{ if } D(X_{i_0})\cap \dots\cap D(x_{i_k})\neq\varnothing,\\
    &0,                                             & \text{ otherwise},
    \end{aligned}
    \right.
    \]
    because this intersection is either empty or connected.
\end{remark}

In the proof of the next Proposition, we will compare some \v{C}ech complexes with the complex used by Miller and Sturmfels in \cite[Section 1.3]{miller2005combinatorial}. There, they study reduced simplicial cohomology with coefficients in a field $\KK$ and explicitly write the maps between the groups. They study reduced simplicial cohomology because they talk about Betti numbers and Hochster formula. Here, instead, we are interested in the usual simplicial cohomology , so we just cut their simplicial complex and keep only the parts with non negative degree.

\begin{theorem}\label{thm:simplicial-cohomology}
    Let $\triangle$ be a simplicial complex on $V=[n]$. Let $\{D(x_i), 1\leq i\leq n\}$ be the usual acyclic covering of $\Spec^\bullet M_\triangle$ and let $\ZZ$ be the constant sheaf on this space. Then \v{C}ech cohomology and simplicial cohomology are described by the same chain complexes
    \[
    \glslink{simplicial-chain-cpx}{\C^\bullet(\triangle, \ZZ)}=\glslink{cech-chain-cpx}{\vC^\bullet(\{D(x_i)\}, \ZZ)}.
    \]
    In particular, the cohomology groups are the same
    \[
    \glslink{simplicial-cohomology}{\H^i(\triangle, \ZZ)}=\vH^i(\{D(x_i)\}, \ZZ)
    \]for all $i\geq 0$.
\end{theorem}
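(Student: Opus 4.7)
The plan is to exhibit an isomorphism of cochain complexes degree by degree and then verify that the differentials agree. Both complexes are built out of direct sums indexed by the faces of $\triangle$, so the proof is essentially a matter of unwinding the definitions, modulo one key connectedness check.

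First I would write out the two complexes side by side. In degree $p$, the simplicial cochain group is
\[
\C^p(\triangle,\ZZ)=\bigoplus_{\substack{\{i_0<\dots<i_p\}\in\triangle}}\ZZ,
\]
while the \v{C}ech group is
\[
\vC^p(\{D(x_i)\},\ZZ)=\bigoplus_{i_0<\dots<i_p}\Gamma\!\left(D(x_{i_0})\cap\dots\cap D(x_{i_p}),\ZZ\right).
\]
By Proposition~\ref{proposition:intersection-open-subsets}, the intersection $D(x_{i_0})\cap\dots\cap D(x_{i_p})$ equals $D(F)$ for $F=\{i_0,\dots,i_p\}$ and is empty precisely when $F\notin\triangle$. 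For $F\notin\triangle$ the corresponding summand of the \v{C}ech complex vanishes, matching the simplicial side. For $F\in\triangle$ I need $\Gamma(D(F),\ZZ)=\ZZ$, which reduces to showing that $D(F)$ is connected.

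The connectedness is the one substantive point. I would argue that $D(F)$ contains a unique maximal element under inclusion, namely the prime ideal corresponding to $F$ via Remark~\ref{remark:boettger:correspondence-faces-prime-ideals}, i.e.\ $\langle x_j\mid j\notin F\rangle$; this is the image in $D(F)$ of the maximal ideal of the localization $(M_\triangle)_{x_F}$. Since every $\mathfrak{q}\in D(F)$ satisfies $\mathfrak{q}\subseteq\langle x_j\mid j\notin F\rangle$, any decomposition of $D(F)$ into two disjoint open (hence subset-closed, by Proposition~\ref{remark:boettger:closedsubset-closedsuperset}) subsets would force both pieces to contain this maximal element, contradicting disjointness. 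Hence $D(F)$ is connected and $\Gamma(D(F),\ZZ)=\ZZ$.

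Finally I would match the differentials. Each summand $\ZZ$ on either side is indexed by a face $F=\{i_0<\dots<i_p\}$, and the differential sends the generator of the $F$-summand to $\sum_{k=0}^{p+1}(-1)^k$ times the generator of the $(F\cup\{j\})$-summand for each vertex $j$ extending $F$ to a face of dimension $p+1$, with the sign determined by the position at which $j$ is inserted in the ordered tuple. This is true by definition on both the simplicial and the \v{C}ech side, since the \v{C}ech restriction maps between the constant sheaves on $D(F)$ and $D(F\cup\{j\})$ are just the identity $\ZZ\to\ZZ$ (both targets and sources being connected non-empty opens with value $\ZZ$). Consequently $\C^\bullet(\triangle,\ZZ)=\vC^\bullet(\{D(x_i)\},\ZZ)$ as cochain complexes, and the equality of cohomology groups follows immediately. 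The main obstacle is really just the connectedness argument, and after that everything is a straightforward identification.
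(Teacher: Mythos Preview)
Your approach is essentially the same as the paper's: identify the degree-$p$ groups on both sides as $\bigoplus_{F\in\triangle_p}\ZZ$ using the intersection pattern of the $D(x_i)$'s (Proposition~\ref{proposition:intersection-open-subsets}) and connectedness of $D(F)$, then check that the differentials coincide. The paper is terser---it asserts connectedness of the $D(F)$'s in the remark preceding the theorem and appeals to Miller--Sturmfels for the form of the simplicial coboundary---whereas you spell out both points explicitly.

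One small slip in your connectedness argument: from ``open, hence subset-closed'' you cannot conclude that \emph{both} pieces of a clopen partition contain the maximal element; subset-closedness propagates downward, not upward. The correct conclusion from subset-closedness is that whichever piece contains the maximal prime $\langle x_j\mid j\notin F\rangle$ already contains all of $D(F)$, forcing the other piece to be empty. (Equivalently, each piece is also closed in $D(F)$, hence superset-closed, and then your ``both contain the maximum'' argument works.) This is a wording issue, not a gap; the underlying idea is correct.
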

\begin{proof}
    We understand what we have on the right. Thanks to the previous Lemmata, $\vC^j(\{D(x_i)\}, \ZZ)=\ZZ^{\triangle_j}$, i.e. we have a $\ZZ$ for any face in $\triangle$ of dimension $j$. The maps are the usual maps of the \v{C}ech complex.
    
    On the left hand side, we can follow a reasoning similar to \cite[Section 1.3]{miller2005combinatorial}.
    
    The first thing to note is that the only difference between the complex for simplicial cohomology stated here and the one stated by them is the degree $-1$, since they are considering reduced simplicial cohomology.
    
    We can now easily see that the complex $\C^\bullet(\triangle, \ZZ)$, dual to the homology complex $\C_\bullet(\triangle, \ZZ)$, has the same groups as $\vC^\bullet(\{D(x_i)\}, \ZZ)$: in every degree $j\geq0$ this group is $\ZZ^{\triangle_j}$.
    
    As for the maps, it is again easy to see that the map for vector spaces that Miller and Sturmfels describe in their book, can be written instead for $\ZZ$ and, when we restrict their complex to the non negative degrees, it is exactly the map of the \v{C}ech complex described above.
\end{proof}

\begin{corollary}
    The same holds for any abelian group $G$, since $\Hom_\ZZ(\ZZ, G)\cong G\cong \ZZ\otimes_\ZZ G$ and so $\Hom(\C^\bullet, G)\cong \C^\bullet\otimes _\ZZ G$. In particular, we will use this fact in Chapter~\ref{chapter:SR-rings}, where the constant sheaf $\KK^*$ will contribute to the cohomology of the units of a ring.
\end{corollary}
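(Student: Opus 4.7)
The plan is to bootstrap the statement from the case $G = \ZZ$ already proved in Theorem~\ref{thm:simplicial-cohomology}. The key observation is that both chain complexes $\C^\bullet(\triangle, G)$ and $\vC^\bullet(\{D(x_i)\}, G)$ depend functorially and linearly on the coefficient group, and in fact they are obtained from their $\ZZ$-counterparts by a simple extension of scalars.

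First I would note that in each degree $j \geq 0$, the group $\vC^j(\{D(x_i)\}, G)$ is again a direct sum of copies of $G$ indexed by the faces $F \in \triangle_j$: for each such $F$, the intersection $D(F) = \bigcap_{i\in F} D(x_i)$ is nonempty (Proposition~\ref{proposition:intersection-open-subsets}) and connected, so $G(D(F)) = G$; for $F \notin \triangle_j$, the intersection is empty and contributes $0$. Thus as an abelian group
\[
\vC^j(\{D(x_i)\}, G) \;=\; G^{\triangle_j} \;=\; \C^j(\triangle, G),
\]
which matches degreewise with the simplicial cochain complex by construction.

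Next I would check that the coboundary maps agree. Both coboundaries are the usual alternating-sum operators on faces of $\triangle$, defined over $\ZZ$ and then extended $G$-linearly. Since Theorem~\ref{thm:simplicial-cohomology} established that these two alternating-sum operators coincide for coefficients in $\ZZ$, applying the functor $\Hom_\ZZ(-, G) \cong (-) \otimes_\ZZ G$ (the isomorphism holding because $\C^\bullet(\triangle, \ZZ)$ is termwise free of finite rank) preserves the equality of the maps. Hence we obtain an equality of cochain complexes
\[
\C^\bullet(\triangle, G) \;=\; \vC^\bullet(\{D(x_i)\}, G),
\]
and taking cohomology gives $\H^i(\triangle, G) = \vH^i(\{D(x_i)\}, G)$ for all $i \geq 0$.

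There is essentially no obstacle here; the only thing to be careful about is the compatibility of signs and orderings of faces in the two boundary operators, but this has already been handled in the $\ZZ$-case by comparing with the explicit description in Miller--Sturmfels, and the identification is preserved verbatim under the change-of-coefficients functor.
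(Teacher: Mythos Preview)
Your proposal is correct and follows essentially the same approach as the paper: the paper's justification is entirely contained in the statement of the corollary itself, namely the observation that $\Hom_\ZZ(\ZZ, G)\cong G\cong \ZZ\otimes_\ZZ G$ allows one to pass from the $\ZZ$-case to arbitrary coefficients by extension of scalars. You simply spell out in more detail what the paper leaves as a one-line remark, including the degreewise identification $G^{\triangle_j}$ and the compatibility of the coboundary maps.
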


\begin{corollary}\label{corollary:sheaf-cohomology}
    Since $\{D(x_i)\}$ is an acyclic covering of $\Spec^\bullet M_\triangle$ for every sheaf of abelian groups, the cohomology in the theorem above is also equal to the sheaf cohomology $\H^i(\Spec^\bullet M_\triangle, \ZZ)$.
\end{corollary}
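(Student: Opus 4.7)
The plan is to verify that the covering $\{D(x_i)\}$ of $\Spec^\bullet M_\triangle$ satisfies the hypotheses of Leray's Theorem~\ref{theorem:leray} applied to the constant sheaf $\ZZ$, and then chain together this identification of \v{C}ech and sheaf cohomology with the already-established identification of \v{C}ech and simplicial cohomology from Theorem~\ref{thm:simplicial-cohomology}.

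First I would note that each $D(x_i)$ is an affine open subscheme of $\Spec M_\triangle$ (isomorphic to $\Spec(M_\triangle)_{x_i}$), and that any finite intersection
\[
D(x_{i_0})\cap D(x_{i_1})\cap\dots\cap D(x_{i_p})=D(x_{i_0}+x_{i_1}+\dots+x_{i_p})
\]
is again either empty or an affine open subscheme, by the basis property of fundamental open subsets. Hence every finite intersection from the cover is affine.

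Next I would invoke Theorem~\ref{theorem:vanishing-combinatorial-cohomology-affine}, which gives $\H^k(\Spec N,\F)=0$ for any binoid $N$, any sheaf of abelian groups $\F$ on $\Spec N$, and any $k\geq 1$. Applied to each nonempty intersection (which is an affine binoid scheme) and to the restriction of the constant sheaf $\ZZ$, this shows that the covering $\{D(x_i)\}$ is acyclic for $\ZZ$ in the sense of Leray's Theorem. Applying Theorem~\ref{theorem:leray} then yields the canonical isomorphism
\[
\vH^i(\{D(x_i)\},\ZZ)\;\cong\;\H^i(\Spec^\bullet M_\triangle,\ZZ)
\]
for all $i\geq 0$. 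Combining this with the equality $\vH^i(\{D(x_i)\},\ZZ)=\H^i(\triangle,\ZZ)$ from Theorem~\ref{thm:simplicial-cohomology} gives the claimed identification of sheaf cohomology with simplicial cohomology.

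There is no real obstacle here; this is essentially a bookkeeping corollary that combines three previously established facts (affineness of intersections, vanishing of higher cohomology on affine binoid schemes, and Leray's Theorem). The only minor subtlety worth flagging is that the restriction of $\ZZ$ to an affine intersection is still a constant sheaf of abelian groups, so Theorem~\ref{theorem:vanishing-combinatorial-cohomology-affine} applies without modification.
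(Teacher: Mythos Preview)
Your proposal is correct and follows exactly the approach the paper intends: the corollary has no separate proof in the paper because the reasoning is already contained in its statement, and you have simply spelled out the implicit chain (affineness of the $D(x_i)$ and their intersections, Theorem~\ref{theorem:vanishing-combinatorial-cohomology-affine} for acyclicity, Leray's Theorem~\ref{theorem:leray}, then Theorem~\ref{thm:simplicial-cohomology}).
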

The previous Corollary relates sheaf cohomology, \v{C}ech cohomology and simplicial cohomology in the case of a simplicial binoid. The next one, extends these results to any open subset of the spectrum.

\begin{corollary}\label{corollary:cech-covering-nerve-simplicial-cohomology}
    Let $U\subseteq\Spec^\bullet(M_\triangle)$ be an open subset minimally covered by the covering $\mathscr{V}=\{D(F_1), \dots, D(F_j)\}$ for some $F_1, \dots, F_j\in\triangle$. We have
    \[
    \H^i(U, \ZZ)\cong{\vH^i({\mathscr{V}}, \ZZ)}\cong{\H^i(\nerve(\mathscr{V}), \ZZ)}.
    \]
\end{corollary}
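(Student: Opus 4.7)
The plan is to establish the two isomorphisms separately, both via the same kind of reasoning used earlier in Theorem~\ref{thm:simplicial-cohomology} and Corollary~\ref{corollary:sheaf-cohomology}, but now for an arbitrary minimal affine cover rather than the specific cover $\{D(x_i)\}$.

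For the first isomorphism $\H^i(U, \ZZ) \cong \vH^i(\mathscr{V}, \ZZ)$, I would apply Leray's Theorem~\ref{theorem:leray} to the cover $\mathscr{V}$. The hypothesis to verify is that $\mathscr{V}$ is acyclic for $\ZZ$ on $U$. The key observation is that every nonempty finite intersection has the form
\[
D(F_{i_0}) \cap \cdots \cap D(F_{i_k}) = D(F_{i_0} \cup \cdots \cup F_{i_k}),
\]
which, when nonempty (i.e.\ when $F_{i_0} \cup \cdots \cup F_{i_k}$ is a face of $\triangle$), is again a fundamental affine open subset $D(F)$. Since $D(F) \cong \Spec (M_\triangle)_{x_F}$ is an affine binoid scheme, Theorem~\ref{theorem:vanishing-combinatorial-cohomology-affine} gives $\H^{\geq 1}(D(F), \ZZ) = 0$, which is precisely the acyclicity hypothesis of Leray.

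For the second isomorphism $\vH^i(\mathscr{V}, \ZZ) \cong \H^i(\nerve(\mathscr{V}), \ZZ)$, I would identify the two cochain complexes term by term. Each nonempty intersection is some $D(F) = \Spec (M_\triangle)_{x_F}$, which is connected: by Remark~\ref{remark:no-open-covers-M+} the unique maximal ideal of any binoid sits in no proper open subset, so $\Spec N$ cannot be split into two disjoint nonempty opens. Consequently $\Gamma(D(F), \ZZ) = \ZZ$ for every nonempty intersection. By Definition~\ref{definition:nerve-of-covering}, the collections $\{i_0 < \cdots < i_k\}$ with nonempty intersection are exactly the $k$-faces of $\nerve(\mathscr{V})$, so
\[
\vC^k(\mathscr{V}, \ZZ) \;=\; \bigoplus_{J \in \nerve(\mathscr{V})_k} \ZZ \;=\; \C^k(\nerve(\mathscr{V}), \ZZ).
\]
The \v{C}ech differential is the alternating sum of restriction maps, but the restriction of the constant sheaf $\ZZ$ between two connected nonempty opens is the identity; hence these alternating sums coincide with the simplicial coboundary maps. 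Thus the two cochain complexes are literally equal, and passing to cohomology gives the isomorphism.

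Given the infrastructure already built in the chapter, I do not expect any real obstacle here. The computation is essentially bookkeeping: the content is the identification of nonempty finite intersections of the $D(F_i)$'s with affine fundamental opens (so that the vanishing of Theorem~\ref{theorem:vanishing-combinatorial-cohomology-affine} applies and the relevant values of the constant sheaf are all $\ZZ$), together with the tautology that the combinatorics of these intersections is recorded by the nerve.
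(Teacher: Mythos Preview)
Your argument is correct. For the first isomorphism you proceed exactly as the paper does (Leray together with the affine vanishing of Theorem~\ref{theorem:vanishing-combinatorial-cohomology-affine}). For the second isomorphism you take a different, more elementary route: you identify $\vC^\bullet(\mathscr{V},\ZZ)$ term by term with the simplicial cochain complex of $\nerve(\mathscr{V})$, essentially re-running the computation of Theorem~\ref{thm:simplicial-cohomology} for the cover $\mathscr{V}$ in place of $\{D(x_i)\}$. The paper instead applies Theorem~\ref{thm:simplicial-cohomology} and Corollary~\ref{corollary:sheaf-cohomology} to the simplicial complex $\nerve(\mathscr{V})$ itself, obtaining $\H^i(\nerve(\mathscr{V}),\ZZ)\cong\H^i(\Spec^\bullet M_{\nerve(\mathscr{V})},\ZZ)$, and then asserts a homeomorphism $\Spec^\bullet M_{\nerve(\mathscr{V})}\cong U$. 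Your direct chain-level comparison sidesteps this last step entirely; this is an advantage, since the homeomorphism claim is not literally true in general (for instance, with $\triangle$ the full simplex on $[4]$ and $U=D(\{1,2\})\cup D(\{3,4\})$, the set $U$ has seven points while $\Spec^\bullet M_{\nerve(\mathscr{V})}$ has three), even though the cohomological conclusion remains valid.
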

\begin{proof}
    The first isomorphism is easy because $D(F)$ is affine and $\ZZ$ is a sheaf of abelian groups, hence acyclic on the covering $\mathscr{V}$. Moreover, thanks to the previous Theorem, we know that $\H^i(\nerve(\mathscr{V}), \ZZ) = \H^i(\Spec^\bullet M_{\nerve(\mathscr{V})}, \ZZ)$. It is enough to show that $\Spec^\bullet M_{\nerve(\mathscr{V})}\cong U$ as topological spaces. This is easily done thanks to the correspondences above between prime ideals and faces of the simplicial complex.
\end{proof}

\begin{example}
    Let us consider again \\    
    \begin{minipage}[c]{0.5\textwidth}
        \[
        \triangle = \left\{\begin{aligned}
        &\varnothing,\{1\},\{2\},\{3\},\{4\},\\
        &\{1, 2\}, \{1, 3\}, \{2, 3\}, \{3, 4\}\\
        &\{1, 2, 3\}
        \end{aligned}\right\}
        \]\vfill
    \end{minipage}\begin{minipage}[c]{0.5\textwidth}
    \hspace{6em}\begin{tikzpicture}
    \tikzstyle{point}=[circle,thick,draw=black,fill=black,inner sep=0pt,minimum width=4pt,minimum height=4pt]
    \node (v1)[point, label={[label distance=0cm]-135:$1$}] at (0,0) {};
    \node (v2)[point, label={[label distance=0cm]90:$2$}] at (0.5,0.87) {};
    \node (v3)[point, label={[label distance=0cm]-45:$3$}] at (1,0) {};
    \node (v4)[point, label={[label distance=0cm]-45:$4$}] at (1.5,0.87) {};
    
    \draw (v1.center) -- (v2.center);
    \draw (v1.center) -- (v3.center);
    \draw (v2.center) -- (v3.center);
    \draw (v3.center) -- (v4.center);
    
    \draw[color=black!20, style=fill, outer sep = 20pt] (0.1,0.06) -- (0.5,0.77) -- (0.9,0.06) -- cycle;
    \end{tikzpicture}
\end{minipage}

The pointed spectrum of the associated binoid is
\[
\Spec^\bullet M_\triangle=\left\{\begin{aligned}
&\langle x_4\rangle, \langle x_1, x_4\rangle, \langle x_2, x_4\rangle, \langle x_3, x_4\rangle, \langle x_1, x_2\rangle,\\
&\langle x_1, x_2, x_3\rangle, \langle x_1, x_2, x_4\rangle, \langle x_1, x_3, x_4\rangle, \langle x_2, x_3, x_4\rangle
\end{aligned}\right\}
\]
as we have seen in Example~\ref{example:spectrum-simplicial-complex}. Consider the open subset \[U=\{\langle x_4\rangle, \langle x_1, x_2\rangle, \langle x_1, x_4\rangle, \langle x_2, x_4\rangle, \langle x_3, x_4\rangle,\langle x_2, x_3, x_4\rangle\}\] as shown in picture

\vspace{1ex}

\hspace{\mathindent}
\begin{tikzpicture}[baseline={(current bounding box.center)}]
\node[opacity=0.6] (1234) at (0,0){$\langle x_1,x_2,x_3, x_4\rangle$};
\node[opacity=0.6] (123) at (-3,-1){$\langle x_1, x_2, x_3\rangle$};
\node[opacity=0.6] (124) at (-1,-1){$\langle x_1, x_2, x_4\rangle$};
\node[opacity=0.6] (134) at (1,-1){$\langle x_1, x_3, x_4\rangle$};
\node (234) at (3,-1){$\langle x_2, x_3, x_4\rangle$};
\node (12) at (-3,-2){$\langle x_1, x_2\rangle$};
\node (14) at (-1,-2){$\langle x_1, x_4\rangle$};
\node (24) at (1,-2){$\langle x_2, x_4\rangle$};
\node (34) at (3,-2){$\langle x_3, x_4\rangle$};
\node (4) at (1,-3){$\langle x_4\rangle$};
\node (Dx1) at (3, -3.05){$U$};

%        \draw[-](1234)--(123);
%        \draw[-](1234)--(124);
%        \draw[-](1234)--(134);
%                \draw[-](1234)--(234);
%        \draw[-](123)--(13);
%        \draw[-](124)--(14);
%                \draw[-](124)--(24);
%        \draw[-](134)--(13);
%        \draw[-](134)--(14);
%                \draw[-](134)--(34);
%                \draw[-](234)--(24);
%                \draw[-](234)--(34);
%                \draw[-](14)--(4);
%                \draw[-](24)--(4);
%                \draw[-](34)--(4);

\draw[black, rounded corners=4mm, thick, pattern=north east lines, pattern color=black, opacity=0.2](4, -.6)--(2,-.6)--(2,-1.6)--(-4,-1.6)--(-4,-2.4)--(-.5, -2.6)--(.2,-3.4)--(2, -3.4)--(2.3, -2.6)--(4, -2.4)--cycle;
\draw[black, rounded corners=4mm, thick](4, -.6)--(2,-.6)--(2,-1.6)--(-4,-1.6)--(-4,-2.4)--(-.5, -2.6)--(.2,-3.4)--(4, -3.4)--cycle;
\end{tikzpicture}

\vspace{3ex}

This can be minimally covered by the affine covering
\[\mathscr{V}=\{D(x_3+ x_4), D(x_2+ x_3), D(x_1)\}=\{D(\{3, 4\}), D(\{2, 3\}), D(\{1\})\}\]
\hspace{\mathindent}
\begin{tikzpicture}[baseline={(current bounding box.center)}]
\node[opacity=0.6] (1234) at (0,0){$\langle x_1,x_2,x_3, x_4\rangle$};
\node[opacity=0.6] (123) at (-3,-1){$\langle x_1, x_2, x_3\rangle$};
\node[opacity=0.6] (124) at (-1,-1){$\langle x_1, x_2, x_4\rangle$};
\node[opacity=0.6] (134) at (1,-1){$\langle x_1, x_3, x_4\rangle$};
\node (234) at (3,-1){$\langle x_2, x_3, x_4\rangle$};
\node (12) at (-3,-2){$\langle x_1, x_2\rangle$};
\node (14) at (-1,-2){$\langle x_1, x_4\rangle$};
\node (24) at (1,-2){$\langle x_2, x_4\rangle$};
\node (34) at (3,-2){$\langle x_3, x_4\rangle$};
\node (4) at (1,-3){$\langle x_4\rangle$};
\node (Dx1) at (3, -3.05){$D(x_1)$};
\node (Dx23) at (-1, -2.95){$D(x_2, x_3)$};
\node (Dx34) at (-3, -2.95){$D(x_3, x_4)$};

%        \draw[-](1234)--(123);
%        \draw[-](1234)--(124);
%        \draw[-](1234)--(134);
%                \draw[-](1234)--(234);
%        \draw[-](123)--(13);
%        \draw[-](124)--(14);
%                \draw[-](124)--(24);
%        \draw[-](134)--(13);
%        \draw[-](134)--(14);
%                \draw[-](134)--(34);
%                \draw[-](234)--(24);
%                \draw[-](234)--(34);
%                \draw[-](14)--(4);
%                \draw[-](24)--(4);
%                \draw[-](34)--(4);

\draw[black, rounded corners=4mm, thick, pattern=north east lines, pattern color=black, opacity=0.2](4, -.6)--(2,-.6)--(2,-1.6)--(.2,-1.6)--(.2,-3.4)--(2, -3.4)--(2.3, -2.6)--(4, -2.4)--cycle;
\draw[black, rounded corners=4mm, thick](4, -.6)--(2,-.6)--(2,-1.6)--(.2,-1.6)--(.2,-3.4)--(2, -3.4)--(4, -3.4)--cycle;

\draw[black, rounded corners=4mm, thick, pattern=north west lines, pattern color=black, opacity=0.2](-.2,-1.6)--(-1.8,-1.6)--(-1.9,-2.5)--(0, -2.5)--(.2, -3.3)--(1.8, -3.3)--(1.8, -2.6)--(.2,-2.5)--cycle;
\draw[black, rounded corners=4mm, thick, dashed](-.2,-1.6)--(-1.8,-1.6)--(-2,-3.3)--(1.8, -3.3)--(1.8, -2.6)--(.2,-2.5)--cycle;

\draw[black, rounded corners=4mm, pattern color=black, opacity=0.1, fill](-3.7,-1.6)--(-3.8,-2.5)--(-2.2,-2.5)--(-2.3, -1.6)--cycle;;
\draw[black, rounded corners=4mm, thick, dotted](-3.7,-1.6)--(-4,-3.3)--(-2.1,-3.3)--(-2.3, -1.6)--cycle;
\end{tikzpicture}

\vspace{3ex}

As we have seen in Theorem~\ref{theorem:vanishing-combinatorial-cohomology-affine}, we can use this covering to compute the cohomology of the any sheaf of abelian groups on $U$.

Let $U_1, U_2$ and $U_3$ be $D(x_3+ x_4), D(x_2+ x_3)$ and $D(x_1)$ respectively. Then
\begin{align*}
U_1\cap U_2 &= U_1\cap U_3 = \varnothing\\
U_2\cap U_3 &= \{\langle x_4\rangle\}
\end{align*}
So the nerve of this covering is the simplicial complex on $V=[3]$
\[\nerve(\mathscr{V}) = \{\varnothing, \{1\}, \{2\}, \{3\}, \{2,3\}\}\]

Consider the constant sheaf $\ZZ$. We know already that $\H^i(U, \ZZ)\cong\vH^i({\mathscr{V}}, \ZZ)$, and we can compute the latter by meaning of the \v{C}ech complex
\begin{equation*}
\begin{tikzcd}[baseline=(current  bounding  box.center), row sep = 0, 
/tikz/column 1/.append style={anchor=base east},
%/tikz/column 2/.append style={anchor=base west},
/tikz/column 3/.append style={anchor=base west}
]
\vC^\bullet: \vC^0_1\oplus \vC^0_2\oplus \vC^0_3\arrow[r, "\partial^0"] & \vC^1_{2, 3}\arrow[r, "\partial^1"] & 0\\
\ZZ\oplus \ZZ\oplus \ZZ \rar & \ZZ\rar & 0\\
(\ \alpha\ ,\ \beta\ ,\ \gamma\ )\arrow[r, mapsto]& (\gamma-\beta)
\end{tikzcd}
\end{equation*}
where $\vC^{j}_{i_0, \dots, i_j}=\Gamma(U_{i_0}\cap \dots\cap U_{i_j}, \ZZ)$, that is either $0$ or $\ZZ$. This is exactly the cochain complex needed to compute simplicial cohomology of $\nerve(\mathscr{V})$.
\end{example}

\subsection{The Link Complex and its Spectrum}
We want to study $M_{\lk_\triangle(F)}$ and relate it to $M_\triangle$, for a any face $F\in\triangle$.

\begin{definition}\label{definition:link-complex}
    Given a simplicial complex $\triangle$ and a face $F\in\triangle$, the link of $F$ in $\triangle$ is
    \[
    \gls{link}=\lk_\triangle(F)=\{G\in\triangle\mid F\cup G\in\triangle, F\cap G=\varnothing\}
    \]
    or, equivalently,
    \[
    \gls{link}=\lk_\triangle(F)=\{G\smallsetminus F \mid F\subseteq G\in\triangle\}.
    \]
\end{definition}

For a subset $I$ of $[n]$, denote by $x_I$ the set of variables $\{x_i\mid i\in I\}$. We can write $M_\triangle$ as
\[
M_\triangle = \left\{ x_{[n]} \midd \sum_{g\in G} x_g=\infty, \text{ for all } G\notin \triangle \right\}.
\]
For a face $F\in\triangle$, we denote by $\gls{xF}$ the set of variables $\{x_i\mid i\in F\}$ and by $(M_\triangle)_{x_F}=\left(M_\triangle\right)_{\sum_{i\in F} x_i}$ the binoid localized at the variables corresponding to the elements of $F$.

\begin{lemma}\label{lemma:generators-relations-link}
    Let $M_\triangle=(x_{[n]}\mid \R)$, where $\R$ is the set of minimal relations of $M_\triangle$, i.e.\
    \[
    \R=\left\{\sum_{g\in G} x_g = \infty, \forall G\text{ minimal non-face of }\triangle\right\}.
    \]
    Then $M_{\lk_\triangle(i)}=(x_J\mid \R')$, %$x_{j_1}, \dots, x_{j_k}\mid \R')$, 
    where $J$ is the set of elements that share a face with $i$, i.e.\ 
    \[J=\left\{j\in[n]\midd\{j, i\}\in\triangle\right\}\] and
    \[
    \R'=\left\{\sum_{k\in K} x_k=\infty\midd \sum_{k\in K\cup \{i\}} x_k=\infty \in\R\right\}\bigcup\left\{\sum_{k\in K} x_k=\infty\in\R\midd i\notin K\right\}
    \]
\end{lemma}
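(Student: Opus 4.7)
The plan is to verify the presentation by matching the generators and relations of $M_{\lk_\triangle(i)}$ against those of $(x_J \mid \R')$, using the defining presentation of a simplicial binoid. First I would identify the vertex set: the singleton $\{j\}$ is a face of $\lk_\triangle(i)$ iff $\{i,j\}\in\triangle$, so $V(\lk_\triangle(i))=J$, matching the generating set $x_J$.

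Next I would describe the non-faces of $\lk_\triangle(i)$. A subset $K\subseteq J$ is a non-face of $\lk_\triangle(i)$ iff $K\cup\{i\}\notin\triangle$, so $K\cup\{i\}$ contains some minimal non-face $N$ of $\triangle$. I split into two cases according to whether $i\in N$. In the first case, $i\in N$, and writing $N=K'\cup\{i\}$ with $K'\subseteq K$, the relation $\sum_{k\in N}x_k=\infty$ in $\R$ strips to $\sum_{k\in K'}x_k=\infty$ in $\R'$ of the first type. In the second case, $i\notin N$, so $N\subseteq K$, and $\sum_{k\in N}x_k=\infty$ is directly a relation of the second type in $\R'$. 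In either case $\sum_{k\in K}x_k=\infty$ is forced in $M_{\lk_\triangle(i)}$ by absorption by $\infty$, so $\R'$ enforces every non-face relation of $\lk_\triangle(i)$.

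For the reverse inclusion I would verify that every relation in $\R'$ is a genuine relation of $M_{\lk_\triangle(i)}$. A first-type relation comes from some $\sum_{k\in K\cup\{i\}}x_k=\infty\in\R$, so $K\cup\{i\}$ is a non-face of $\triangle$ and hence $K$ is a non-face of $\lk_\triangle(i)$; a second-type relation comes from $\sum_{k\in K}x_k=\infty\in\R$ with $i\notin K$, and since $\triangle$ is subset-closed, $K\cup\{i\}\supseteq K$ is still a non-face, so again $K$ is a non-face of $\lk_\triangle(i)$. Putting the two directions together yields the required isomorphism of presentations.

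The main obstacle will be a bookkeeping subtlety in the first case: if $\{k,i\}$ is itself a minimal non-face of $\triangle$, then $\R'$ contains the relation $x_k=\infty$, but $k\notin J$ since $\{k,i\}\notin\triangle$. This is consistent only if the presentation $(x_J\mid \R')$ is read as allowing $x_k$ to be formally adjoined and then immediately killed by the relation, which is precisely what the absence of $k$ from $V(\lk_\triangle(i))$ records. A careful argument is therefore required to show that adjoining such variables and relations does not alter the resulting binoid, so that the presentation $(x_J\mid\R')$ is indeed isomorphic to $M_{\lk_\triangle(i)}$.
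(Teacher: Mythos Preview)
Your approach matches the paper's: identify $J$ as the vertex set of $\lk_\triangle(i)$ and split the minimal non-faces of $\triangle$ according to whether they contain $i$. Your argument is in fact more complete than the paper's, which only verifies that each relation in $\R'$ holds in $M_{\lk_\triangle(i)}$ (and that the first-type ones are minimal) but never explicitly checks the converse direction that every non-face relation of the link is forced by some member of $\R'$. The bookkeeping obstacle you flag---that $\R'$ as literally written may contain relations $x_k=\infty$ with $k\notin J$ (and likewise second-type relations with $K\not\subseteq J$)---is genuine and is glossed over in the paper; your proposed reading, adjoining such variables and immediately killing them, is the correct way to make the statement precise.
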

\begin{proof}
    For the generators, this follows almost straightforward from the definition of link complex above, since $F\in\lk_\triangle(i)$ if and only if $F\cup\{i\}\in\triangle$. So the generators of $M_{\lk_\triangle(i)}$ are indexed by the vertices $j\in[n]$ with exactly this property.
    
    For the relations, the first step is to prove that the relations in $\R$ that do not involve $x_i$ will still be valid as relations in $\R'$. Let $x_{j_1}+\dots+x_{j_k}=\infty\in\R$ be a relation such that $i\notin\{j_1, \dots, j_k\}$. So $\{j_1, \dots, j_k\}$ is a (minimal) non-face, and it will also not be a face in the link, so the relation will still be valid.\\
    The second step is to see what happens to relations of the type $x_{j_1}+\dots+x_i+\dots+x_{j_k}=\infty\in\R$. All the maximal proper subsets of $\{x_{j_1},\dots,x_i, \ldots, x_{j_k}\}$ are faces of $\triangle$ and all but one (namely $\{x_{j_1},\dots,\widehat{x_i}, \ldots, x_{j_k}\}$) give rise to faces in $\lk_\triangle(i)$. The only one that gives rise to a non-face is exactly the one we are interested in, and so we set $x_{j_1}+\dots+\widehat{x_i}+\dots+ x_{j_k}=\infty\in\R'$. This relation is also minimal, because each of the proper subsets of $\{x_{j_1},\dots,\widehat{x_i}, \ldots, x_{j_k}\}$ is contained in another $(k-1)$-subset of $\{x_{j_1},\dots,x_i, \ldots, x_{j_k}\}$, that defines a face in the link complex, and so all its subsets are faces themselves.
\end{proof}

\begin{notation}
    Let $F\in\triangle$ and let $A$ be an abelian group. We denote as usual by $A^F$ the set morphisms from $F$ to $A$.
    It is easy to prove that $A^F\cong A^{|F|}$.
\end{notation}

\begin{proposition}\label{proposition:map_Zface-localization}
    Let $F$ be a face of $\triangle$. There exists an injective binoid morphism
    \[
    \begin{tikzcd}[baseline=(current  bounding  box.center), cramped, row sep = 0ex,
    /tikz/column 1/.append style={anchor=base east},
    /tikz/column 2/.append style={anchor=base west}]
    {(\ZZ^F)}^\infty \rar["\psi"] & {(M_\triangle)}_{x_F}\\
    \displaystyle\sum_{v\in F} n_v v \rar[mapsto] & \displaystyle \sum_{v\in F} n_v x_v
    \end{tikzcd}
    \]
\end{proposition}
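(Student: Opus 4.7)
The plan is to first check that $\psi$ is a well-defined binoid morphism, and then use the semifree structure of $M_\triangle$ together with the definition of the localization to obtain injectivity.

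\textbf{Well-definedness.} Since $F\in\triangle$, the element $f=\sum_{v\in F}x_v$ is not $\infty$ in $M_\triangle$, so the localization $(M_\triangle)_{x_F}=(M_\triangle)_f$ is a nontrivial binoid in which $f$ is invertible. For every $v\in F$ the subset $F\smallsetminus\{v\}$ is again a face, hence $\sum_{w\in F\smallsetminus\{v\}}x_w\neq\infty$ in $M_\triangle$, and therefore
\[
-x_v=-f+\sum_{w\in F\smallsetminus\{v\}}x_w
\]
makes sense in $(M_\triangle)_{x_F}$; in particular each $x_v$ with $v\in F$ is a unit there. Hence the assignment $v\mapsto x_v$ extends uniquely to a group morphism $\ZZ^F\to((M_\triangle)_{x_F})^*$, and then to the binoid morphism $\psi:(\ZZ^F)^\infty\to(M_\triangle)_{x_F}$ by sending $\infty\mapsto\infty$.

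\textbf{Injectivity.} Suppose $\psi\bigl(\sum n_v v\bigr)=\psi\bigl(\sum m_v v\bigr)$ in $(M_\triangle)_{x_F}$, with both sides different from $\infty$ (the $\infty$ case is automatic). Choose $K\in\NN$ large enough that $n_v+K\ge 0$ and $m_v+K\ge 0$ for all $v\in F$. Then, writing everything in the form $-kf+g$ with $g\in M_\triangle$ as in the definition $M_f=-S+M$, the hypothesis becomes
\[
-Kf+\sum_{v\in F}(n_v+K)x_v \;=\; -Kf+\sum_{v\in F}(m_v+K)x_v
\]
in $(M_\triangle)_{x_F}$. By the definition of the localization, this means there exists $\ell\in\NN$ such that
\[
\sum_{v\in F}(n_v+K+\ell)x_v \;=\; \sum_{v\in F}(m_v+K+\ell)x_v
\]
holds in $M_\triangle$.

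\textbf{Conclusion via semifreeness.} Both sides are supported on the face $F$, hence neither is $\infty$ (no minimal relation of $M_\triangle$ applies, since only non-faces give rise to relations). Because $M_\triangle$ is semifree on the $x_i$'s, every non-infinity element admits a unique expression as a finite $\NN$-linear combination of the generators, so comparing coefficients forces $n_v+K+\ell=m_v+K+\ell$ for every $v\in F$, i.e.\ $n_v=m_v$. Thus $\psi$ is injective.

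\textbf{Main obstacle.} The only delicate point is to remember that equality in the localization $(M_\triangle)_{x_F}$ is not equality in $M_\triangle$ but equality up to cancelling further copies of $f$; this is precisely why the shift by $K+\ell$ above appears, and it is the step that makes the semifree uniqueness applicable. Everything else is bookkeeping.
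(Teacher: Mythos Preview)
Your proof is correct and follows essentially the same approach as the paper. The paper argues well-definedness from the fact that each $x_v$ becomes a unit in the localization, and treats injectivity very briefly by noting that distinct generators map to distinct generators and that no non-$\infty$ element can map to $\infty$ since $F$ is a face. Your argument spells out the one step the paper leaves implicit, namely how equality in the localization $(M_\triangle)_{x_F}$ translates back to an equality in $M_\triangle$ after a suitable shift, so that semifreeness can be invoked; this is a welcome clarification rather than a different method.
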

\begin{proof}
    Since $x_v$ is a unit on the right for any $v\in F$, this is a well-defined morphism. Moreover, apart from $\infty$ this maps is clearly injective, because different vertices on the left are map to different vertices on the right.
    About $\infty$, since $F$ is a face in $\triangle$ all its subsets are faces and so all the sums $\displaystyle\sum_{v\in F} n_v v $ are different from $\infty$.
\end{proof}

We now recall some definitions about morphisms involving simplicial complexes from \cite{Boettger}.

\begin{definition}[{\cite[Definition 6.2.2]{Boettger}}]
    Let $\triangle$ and $\triangle'$ be two simplicial complexes on vertex sets $V$ and $V'$ respectively, and let $\lambda:V\longrightarrow V'$ be a set map. We say that $\lambda$ is 
    \begin{description}
        \item[\normalfont{\emph{simplicial}}] if every image of a face is a face;
        \item[\normalfont{\emph{$\alpha$-simplicial}}] if it satisfies one of the two equivalent conditions
        \begin{enumerate}
            \item every image of a non-face is a non-face,
            \item every preimage of a face is a face;
        \end{enumerate}
        \item[\normalfont{\emph{$\beta$-simplicial}}] if every preimage of a non-face is a non-face.
    \end{description}
\end{definition}

\begin{notation}
    Since the simplicial properties of a set map depend on the simplicial complexes involved, we follow the abuse of notation in \cite{Boettger} and denote the map by
    \[
    \begin{tikzcd}[baseline=(current  bounding  box.center), cramped, row sep = 0ex,
    /tikz/column 1/.append style={anchor=base east},
    /tikz/column 2/.append style={anchor=base west}]
    \lambda:(V, \triangle) \rar & (V', \triangle')\\
    v \rar[mapsto] & \lambda(v)
    \end{tikzcd}
    \]
    thus including the simplicial complexes in its definition.
\end{notation}

\begin{lemma}\label{lemma:beta-simplicial}
    The map 
    \[
    \begin{tikzcd}[baseline=(current  bounding  box.center), cramped, row sep = 0ex,
    /tikz/column 1/.append style={anchor=base east},
    /tikz/column 2/.append style={anchor=base west}]
    \iota:(V\smallsetminus F, \lk_\triangle(F)) \rar & (V, \triangle)\\
    v\rar[mapsto]& v
    \end{tikzcd}
    \]
    is simplicial and $\beta$-simplicial.
\end{lemma}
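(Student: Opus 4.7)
My plan is to verify the two properties directly from the definitions, using only the fact that $\triangle$ is closed under taking subsets.

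First, I would show that $\iota$ is simplicial. Let $G \in \lk_\triangle(F)$, so by Definition~\ref{definition:link-complex} we have $F \cap G = \varnothing$ and $F \cup G \in \triangle$. Since $\triangle$ is subset-closed and $G \subseteq F \cup G$, we get $G \in \triangle$. Because $\iota$ is the identity on vertices, $\iota(G) = G \in \triangle$, proving that images of faces are faces.

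For $\beta$-simpliciality, I would take a non-face $H \subseteq V$, i.e.\ $H \notin \triangle$, and show that $\iota^{-1}(H) = H \cap (V \smallsetminus F) = H \smallsetminus F$ is not a face of $\lk_\triangle(F)$. By the definition of the link, $H \smallsetminus F$ is a face of $\lk_\triangle(F)$ if and only if $(H \smallsetminus F) \cap F = \varnothing$ (automatic) and $(H \smallsetminus F) \cup F = H \cup F \in \triangle$. But $H \subseteq H \cup F$, so if $H \cup F$ were in $\triangle$, subset-closure would force $H \in \triangle$, a contradiction. Hence $H \cup F \notin \triangle$, and therefore $H \smallsetminus F \notin \lk_\triangle(F)$, as required.

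There is no serious obstacle here; the whole argument is a short unwinding of the definition of the link together with the subset-closedness of simplicial complexes. The only mild subtlety worth noting is that $\iota$ is not in general $\alpha$-simplicial (preimages of faces need not be faces of the link, since $G \in \triangle$ with $G \cap F = \varnothing$ does not guarantee $G \cup F \in \triangle$), which is why the statement restricts itself to the two properties above.
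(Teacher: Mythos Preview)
Your proof is correct and follows essentially the same approach as the paper: both parts unwind the definition of the link and invoke subset-closure of $\triangle$, with the key identity $(H\smallsetminus F)\cup F = H\cup F$ (which the paper writes out via a distributive-law computation). Your closing remark on the failure of $\alpha$-simpliciality also matches the paper's subsequent Remark.
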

\begin{proof}
    In order to prove that the map is simplicial, we have to show that the image of a face is a face.
    Let $G$ be a face of $\lk_\triangle(F)$. Then $G\cup F\in\triangle$, as well as all its subsets. In particular, $\lambda(G)=G\subseteq G\cup F$, so $G\in\triangle$ and the map is simplicial.\\
    In order to show that it is $\beta$-simplicial, we have to prove that the preimage of a non-face is a non-face. Let $H$ be a non-face of $\triangle$. Then $\lambda^{-1}(H)=H\cap (V\smallsetminus F)$. Assume that the latter is a face of $\lk_\triangle(F)$. This is the case if and only if $(H\cap(V\smallsetminus F))\cup F\in\triangle$ and $(H\cap(V\smallsetminus F))\cap F=\varnothing$. In particular,
    \[
    (H\cap(V\smallsetminus F))\cup F=(H\cup F)\cap((V\smallsetminus F)\cup F)=(H\cup F)\cap V=H\cup F
    \]
    In particular, $H\subseteq H\cup F\in\triangle$, $H\in\triangle$, so $\lambda^{-1}(H)$ has to be a non-face of $\lk_\triangle(F)$.
\end{proof}

\begin{remark}\label{remark:not-alpha-simplicial}
    The map above is not $\alpha$-simplicial. Let $\triangle$ be the complex on $V=[5]$ with facets $\{1, 2, 3\}$ and $\{3, 4, 5\}$, i.e.\ the \lq butterfly''
    
    \vspace{1ex}
    
    \hspace{2em}\begin{tikzpicture}
    \tikzstyle{point}=[circle,thick,draw=black,fill=black,inner sep=0pt,minimum width=4pt,minimum height=4pt]
    \node (v1)[point, label={[label distance=0cm]-135:$1$}] at (0,0) {};
    \node (v2)[point, label={[label distance=0cm]135:$2$}] at (0,1) {};
    \node (v3)[point, label={[label distance=0cm]90:$3$}] at (1, .5) {};
    \node (v4)[point, label={[label distance=0cm]45:$4$}] at (2,1) {};
    \node (v5)[point, label={[label distance=0cm]-45:$5$}] at (2,0) {};
    
    \draw (v1.center) -- (v2.center);
    \draw (v1.center) -- (v3.center);
    \draw (v2.center) -- (v3.center);
    \draw (v3.center) -- (v4.center);
    \draw (v3.center) -- (v5.center);
    \draw (v4.center) -- (v5.center);
    
    \draw[color=black!20, style=fill, outer sep = 20pt] (0.1,0.1) -- (0.1,0.9) -- (0.9, 0.5) -- cycle;
    \draw[color=black!20, style=fill, outer sep = 20pt] (1.9,0.1) -- (1.9,0.9) -- (1.1, 0.5) -- cycle;
    \end{tikzpicture}
    
    \vspace{1ex}

    Consider $F=\{1\}$, so $\lk_\triangle(F)=\{\varnothing, \{2\}, \{3\}, \{2, 3\}\}$. The preimage of $\{3, 4, 5\}$ under the map above is $\{3, 4, 5\}\cap V\smallsetminus \{1\}=\{3, 4, 5\}\cap \{2, 3, 4, 5\}=\{3, 4, 5\}$ that it is clearly not a face in $\lk_\triangle(\{1\})$.
\end{remark}

\begin{lemma}[{\cite[Example 6.2.11]{Boettger}}]\label{lemma:alpha-simplicial}
    Let $F$ be a face of the simplicial complex $\triangle$ on vertex set $V$. $\lk_\triangle(F)$ can be seen as a simplicial complex on $V$ and the identity is an $\alpha$-simplicial map:
    \[
    \begin{tikzcd}[baseline=(current  bounding  box.center), cramped, row sep = 0ex,
    /tikz/column 1/.append style={anchor=base east},
    /tikz/column 2/.append style={anchor=base west}]
    \mathrm{Id}:(V, \triangle)\rar & (V, \lk_\triangle(F))\\
    v\rar[mapsto]& v
    \end{tikzcd}
    \]
    This map is not simplicial.
\end{lemma}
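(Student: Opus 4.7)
My plan is to verify the two stated properties of the identity map directly from the definitions of $\alpha$-simplicial and simplicial, using the description $\lk_\triangle(F) = \{G \in \triangle \mid F\cup G \in \triangle, F\cap G = \varnothing\}$. Viewing $\lk_\triangle(F)$ as a simplicial complex on the full vertex set $V$ (rather than on $V\smallsetminus F$) is just a matter of regarding its faces as subsets of $V$, since $\lk_\triangle(F)\subseteq 2^{V\smallsetminus F}\subseteq 2^V$ and the subset-closure and membership of $\varnothing$ are inherited.

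For the $\alpha$-simplicial property, I would use the second equivalent characterization: every preimage of a face is a face. Let $G \in \lk_\triangle(F)$ be a face of the target complex. The preimage of $G$ under the identity is $G$ itself, so I need $G \in \triangle$. This follows immediately from the definition of the link: $F\cup G \in \triangle$, and since $\triangle$ is subset-closed and $G\subseteq F\cup G$, we conclude $G \in \triangle$. This is the entire content of $\alpha$-simpliciality.

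For the failure of simpliciality (when $F\neq\varnothing$, which is the non-trivial case), I would exhibit an explicit face of $\triangle$ whose image under the identity is not a face of $\lk_\triangle(F)$. The natural choice is any singleton $\{v\}$ with $v\in F$: such a $\{v\}$ is automatically a face of $\triangle$ by subset-closure of $F$, but it cannot lie in $\lk_\triangle(F)$ because the defining condition $\{v\}\cap F = \varnothing$ fails (we have $\{v\}\cap F=\{v\}$). This provides an explicit counterexample, and the case $F=\varnothing$ is degenerate since then $\lk_\triangle(\varnothing)=\triangle$ and the identity is trivially simplicial. The only delicate point is the initial observation that $\lk_\triangle(F)$, though traditionally indexed by vertices in $V\smallsetminus F$, is equally well a simplicial complex on $V$; once this reindexing is accepted, both assertions reduce to one-line checks from the definitions.
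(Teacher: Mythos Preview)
Your proof is correct. The paper does not actually supply a proof of this lemma; it simply cites \cite[Example 6.2.11]{Boettger} and moves on, so there is nothing to compare against beyond confirming that your direct verification from the definitions is the natural argument. Your observation that the ``not simplicial'' claim requires $F\neq\varnothing$ is a valid caveat that the paper's statement leaves implicit.
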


Our goal is to prove in the next section that $M_{\lk_\triangle(F)}\wedge {(\ZZ^F)}^\infty \cong {(M_\triangle)}_{x_F}$.

In order to use the universal property of the smash product (see \cite[Proposition 1.8.10]{Boettger}) to get a map $M_{\lk_\triangle(F)}\wedge {(\ZZ^F)}^\infty \longrightarrow {(M_\triangle)}_{x_F}$ in the following diagram,
\begin{equation}\label{equation:diagram-smash}
\begin{tikzcd}[baseline=(current  bounding  box.center), cramped]
M_{\lk_\triangle(F)}\wedge {(\ZZ^F)}^\infty & {(\ZZ^F)}^\infty \lar["i", swap] \dar["\psi"]\\
M_{\lk_\triangle(F)} \uar["j"] \rar["\varphi", swap] & {(M_\triangle)}_{x_F}
\end{tikzcd}
\end{equation}
we first need all the other maps. $i$ and $j$ are the maps that come from the smash product, so we are just missing $\varphi$ and $\psi$. We gave already $\psi$ above in Proposition~\ref{proposition:map_Zface-localization}, and we can give $\varphi$ as follows, although it does not come from simplicial properties (they would go the other way around, see \cite[Corollary 6.5.18]{Boettger}).

\begin{proposition}
    Let $F$ be a face of $\triangle$. There exists a binoid homomorphism
    \[
    \begin{tikzcd}[baseline=(current  bounding  box.center), cramped, row sep = 0ex,
    /tikz/column 1/.append style={anchor=base east},
    /tikz/column 2/.append style={anchor=base west}]
    M_{\lk_\triangle(F)} \rar & {(M_\triangle)}_{x_F}\\
    \displaystyle\sum_{w\in G} n_w x_w \rar[mapsto] & \displaystyle\sum_{w\in G} n_w x_w
    \end{tikzcd}
    \]
\end{proposition}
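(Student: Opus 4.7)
The plan is to invoke the universal property of the presentation of $M_{\lk_\triangle(F)}$ recorded in Lemma~\ref{lemma:generators-relations-link}: it suffices to prescribe the image of each generator and then verify that every defining relation of $M_{\lk_\triangle(F)}$ becomes a valid equality in $(M_\triangle)_{x_F}$. The assignment will then extend uniquely to a binoid homomorphism.

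First, I would describe the map on generators. By Lemma~\ref{lemma:generators-relations-link}, $M_{\lk_\triangle(F)}$ is generated by $\{x_w\mid w\in V\smallsetminus F,\ \{w\}\cup F\in\triangle\}$. Every such $w$ is in particular a vertex of $\triangle$, so $x_w$ makes sense as an element of $(M_\triangle)_{x_F}$, and I send it there. The extension to arbitrary $\sum_{w\in G} n_w x_w$ is forced, once the relations are checked.

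Next, I would check the relations. A defining relation of $M_{\lk_\triangle(F)}$ has the form $\sum_{k\in K}x_k=\infty$ for $K\subseteq V\smallsetminus F$ a minimal non-face of $\lk_\triangle(F)$. Since every such $K$ is disjoint from $F$, the definition of the link gives $K\cup F\notin\triangle$, so the relation
\[
\sum_{k\in K}x_k + \sum_{j\in F}x_j=\infty
\]
already holds in $M_\triangle$, hence in $(M_\triangle)_{x_F}$. Now $\sum_{j\in F}x_j$ is a unit in the localization; adding its inverse to both sides and using that $\infty$ absorbs any element of the binoid (so $\infty+a=\infty$ for all $a$, and in particular a unit may be cancelled from an equation whose right hand side is $\infty$) yields $\sum_{k\in K}x_k=\infty$ in $(M_\triangle)_{x_F}$, as required. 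The universal property then produces the stated homomorphism $\varphi$, thereby completing the diagram~\eqref{equation:diagram-smash} used in the planned proof of $M_{\lk_\triangle(F)}\wedge (\ZZ^F)^\infty\cong(M_\triangle)_{x_F}$.

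The main obstacle, such as it is, is bookkeeping rather than conceptual: one has to match carefully the presentation of $\lk_\triangle(F)$ (especially the fact that its minimal non-faces are exactly the $K\subseteq V\smallsetminus F$ with $K\cup F\notin\triangle$) with the algebraic consequence of inverting $\sum_{j\in F}x_j$, and to justify the cancellation step against $\infty$ in a binoid, which is exactly where the localization hypothesis is used.
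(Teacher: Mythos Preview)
Your proposal is correct and takes essentially the same approach as the paper: both construct $\varphi$ via the universal property of the presentation of $M_{\lk_\triangle(F)}$, checking that each non-face $K$ of the link yields $\sum_{k\in K}x_k=\infty$ in the localization. The paper delegates this to \cite[6.5.16]{Boettger} (the $N$-point factorisation property) and simply asserts that the condition holds, whereas you carry out the verification explicitly via the cancellation of the unit $\sum_{j\in F}x_j$; one small bookkeeping remark is that Lemma~\ref{lemma:generators-relations-link} is stated only for $\lk_\triangle(i)$ with $i$ a single vertex, but your argument does not actually rely on that specific statement---it uses only the general fact that the defining relations of a simplicial binoid are indexed by non-faces, which is immediate from the definition of $M_{\lk_\triangle(F)}$.
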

\begin{proof}
    From \cite[6.5.16]{Boettger}, for any binoid $N$, any $N$-point $\rho:V\smallsetminus F\longrightarrow N$ factors through $M_{\lk_\triangle(F)}$. In particular, the map
    \[
    \begin{tikzcd}[baseline=(current  bounding  box.center), cramped, row sep = 0ex,
    /tikz/column 1/.append style={anchor=base east},
    /tikz/column 2/.append style={anchor=base west}]
    \rho:V\smallsetminus F\rar & (M_\triangle)_{x_F}\\
    w\rar[mapsto]& w
    \end{tikzcd}
    \]
    is a $(V\smallsetminus F)$-point, since it respects the condition $\displaystyle \sum_{i\in I} w_i=\infty$ if $I\notin \lk_\triangle(F)$, so it factors through $M_{\lk_\triangle(F)}$, thus yielding us the map in the statement.
\end{proof}

We now have all the maps that we need in the diagram above, and we are ready to prove the isomorphism in the next section.

\begin{proposition}\label{proposition:dxi-spec-link}
    $\Spec M_{\lk_\triangle(i)}$ is homeomorphic to $D(x_i)$ via the injection
    \[
    \begin{tikzcd}[baseline=(current  bounding  box.center), row sep = 0,
    /tikz/column 1/.append style={anchor=base east},
    /tikz/column 2/.append style={anchor=base west}]
    \Spec M_{\lk_\triangle(i)}\rar[hook, "j"]&\Spec M_\triangle\\
    \p\rar[mapsto] & \p
    \end{tikzcd}
    \]
\end{proposition}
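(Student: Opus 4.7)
The plan is to identify $\Spec M_{\lk_\triangle(i)}$ with $D(x_i)$ via the well-known bijection between faces of $\lk_\triangle(i)$ and faces of $\triangle$ containing $i$, and then to read off that the map $j$ is a homeomorphism from the fact that the Zariski topology on a binoid spectrum is determined by the inclusion order on prime ideals.

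First, I would invoke the inclusion-reversing correspondence of Remark~\ref{remark:boettger:correspondence-faces-prime-ideals}, applied to both binoids, to obtain
\[
\Spec M_\triangle \longleftrightarrow \triangle, \qquad \Spec M_{\lk_\triangle(i)} \longleftrightarrow \lk_\triangle(i).
\]
The first bijection restricts to a bijection $D(x_i) \longleftrightarrow \{F \in \triangle \mid i \in F\}$, since $x_i \notin \langle x_k \mid k \in V \smallsetminus F\rangle$ exactly when $i \in F$. The definition of the link supplies a third (inclusion-preserving) bijection $\lk_\triangle(i) \longleftrightarrow \{F \in \triangle \mid i \in F\}$ given by $G \longleftrightarrow G \cup \{i\}$. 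Composing, one gets a bijection $\Spec M_{\lk_\triangle(i)} \longleftrightarrow D(x_i)$; I claim it agrees with $j$.

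Next, I would verify this identification. If $G \in \lk_\triangle(i)$ corresponds to $\p = \langle x_j \mid j \in J \smallsetminus G\rangle \in \Spec M_{\lk_\triangle(i)}$, where $J$ is the vertex set of the link (Lemma~\ref{lemma:generators-relations-link}), then the composition sends $\p$ to the prime
\[
\langle x_k \mid k \in V \smallsetminus (G \cup \{i\})\rangle
= \langle x_j \mid j \in J \smallsetminus G\rangle + \langle x_k \mid k \in V \smallsetminus (J \cup \{i\})\rangle,
\]
where the extra generators $x_k$ with $\{i,k\} \notin \triangle$ are forced into any prime of $M_\triangle$ lying in $D(x_i)$, since the relation $x_i + x_k = \infty \in \p$ together with $x_i \notin \p$ immediately gives $x_k \in \p$. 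Thus the informal notation $\p \mapsto \p$ means exactly this canonical augmentation of the generators, and the map $j$ is well defined and set-theoretically bijective onto $D(x_i)$.

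Finally, I would argue the homeomorphism: by Remark~\ref{remark:boettger:closedsubset-closedsuperset}, the open sets in both spectra are precisely the subset-closed collections of primes, so continuity of a map of spectra is equivalent to being monotonic for inclusion. Now $j$ is the composition of two inclusion-reversing bijections with the inclusion-preserving map $G \mapsto G \cup \{i\}$, so $j$ preserves inclusions of primes; the same reasoning applied in reverse shows $j^{-1}$ is also inclusion-preserving. Hence $j$ is a poset isomorphism onto $D(x_i)$, and therefore a homeomorphism. The only real subtlety (the main thing that needs care rather than being a genuine obstacle) is clarifying the abusive notation $\p \mapsto \p$ and tracking the extra generators forced by the relations $x_i + x_k = \infty$; once this bookkeeping is done, the topological statement is essentially a direct consequence of the face-prime dictionary.
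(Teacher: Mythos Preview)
Your proposal is correct and follows essentially the same route as the paper: both arguments unwind the face--prime correspondence of Remark~\ref{remark:boettger:correspondence-faces-prime-ideals} together with the bijection $G\leftrightarrow G\cup\{i\}$ between $\lk_\triangle(i)$ and the faces of $\triangle$ containing $i$. The paper's proof is a one-line chain of set equalities (implicitly viewing $\lk_\triangle(i)$ on the ambient vertex set $[n]\smallsetminus\{i\}$, so the ``extra'' generators are already present in every prime and $\p\mapsto\p$ is literal), whereas you work on the smaller vertex set $J$ and explicitly force $x_k\in\p$ via $x_i+x_k=\infty$; you also spell out the homeomorphism step via Proposition~\ref{remark:boettger:closedsubset-closedsuperset}, which the paper leaves implicit.
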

\begin{proof}
    From the definitions and Remark~\ref{remark:boettger:correspondence-faces-prime-ideals}, it is just an easy computation
    \begin{align*}
    \Spec M_{\lk_\triangle(i)} &= \left\{\p\in\Spec M_{\lk_\triangle (i)}\right\}\\
    &=\left\{([n]\smallsetminus\left\{i\right\})\smallsetminus G\mid G\in\lk_\triangle(i)\right\}\\
    &=\left\{[n]\smallsetminus(G\cup\left\{i\right\})\mid G\cup\left\{i\right\}\in\triangle, i\notin G\right\}\\
    &=\left\{[n]\smallsetminus F\mid i\in F\in\triangle\right\}\\
    &= \left\{\p\in\Spec M_\triangle\mid x_i\notin \p\right\} = D(x_i)\qedhere
    \end{align*}
\end{proof}

\begin{example}\label{example:spectrum-simplicial-binoid-4}
    We go on with our favourite Example~\ref{example:spectrum-simplicial-binoid-3} and we describe its link complexes. Recall
    
    \begin{minipage}[c]{0.5\textwidth}
        \[
        \triangle = \left\{\begin{aligned}
        &\varnothing,\{1\},\{2\},\{3\},\{4\},\\
        &\{1, 2\}, \{1, 3\}, \{2, 3\}, \{3, 4\}\\
        &\{1, 2, 3\}\end{aligned}\right\}
        \]\vfill
    \end{minipage}\begin{minipage}[c]{0.5\textwidth}
    \hspace{6em}\begin{tikzpicture}
    \tikzstyle{point}=[circle,thick,draw=black,fill=black,inner sep=0pt,minimum width=4pt,minimum height=4pt]
    \node (v1)[point, label={[label distance=0cm]-135:$1$}] at (0,0) {};
    \node (v2)[point, label={[label distance=0cm]90:$2$}] at (0.5,0.87) {};
    \node (v3)[point, label={[label distance=0cm]-45:$3$}] at (1,0) {};
    \node (v4)[point, label={[label distance=0cm]-45:$4$}] at (1.5,0.87) {};
    
    \draw (v1.center) -- (v2.center);
    \draw (v1.center) -- (v3.center);
    \draw (v2.center) -- (v3.center);
    \draw (v3.center) -- (v4.center);
    
    \draw[color=black!20, style=fill, outer sep = 20pt] (0.1,0.06) -- (0.5,0.77) -- (0.9,0.06) -- cycle;
    \end{tikzpicture}
\end{minipage}

and its simplicial binoid is
\[
M_\triangle=\{x_1, x_2, x_3, x_4\mid x_1+x_4=\infty, x_2+x_4=\infty\}.
\]
Then the link of the vertices in the simplicial complex and their simplicial binoids are

\vspace{1ex}

\begin{minipage}[c]{0.4\textwidth}
    \[
    \lk_\triangle(1) = \left\{\begin{aligned}
    \varnothing,&\{2\},\{3\},\\
    &\{2, 3\}\end{aligned}\right\}
    \]\vfill
\end{minipage}\begin{minipage}[c]{0.2\textwidth}
\hspace{2em}\begin{tikzpicture}
\tikzstyle{point}=[circle,thick,draw=black,fill=black,inner sep=0pt,minimum width=4pt,minimum height=4pt]
%    \node (v1)[point, label={[label distance=0cm]-135:$1$}] at (0,0) {};
\node (v2)[point, label={[label distance=0cm]90:$2$}] at (0.5,0.87) {};
\node (v3)[point, label={[label distance=0cm]-45:$3$}] at (1,0) {};
%    \node (v4)[point, label={[label distance=0cm]-45:$4$}] at (1.5,0.87) {};

%    \draw (v1.center) -- (v2.center);
%    \draw (v1.center) -- (v3.center);
\draw (v2.center) -- (v3.center);
%    \draw (v3.center) -- (v4.center);

%    \draw[color=black!20, style=fill, outer sep = 20pt] (0.1,0.06) -- (0.5,0.77) -- (0.9,0.06) -- cycle;
\end{tikzpicture}
\end{minipage}\begin{minipage}[c]{0.4\textwidth}
\[
M_{\lk_\triangle(1)} = (x_2, x_3\mid \varnothing)
\]\vfill
\end{minipage}

\vspace{1ex}

\begin{minipage}[c]{0.4\textwidth}
    \[
    \lk_\triangle(2) = \left\{\begin{aligned}
    \varnothing,&\{1\},\{3\},\\
    &\{1, 3\}\end{aligned}\right\}
    \]\vfill
\end{minipage}\begin{minipage}[c]{0.2\textwidth}
\begin{tikzpicture}[baseline=(current  bounding  box.center)]
\tikzstyle{point}=[circle,thick,draw=black,fill=black,inner sep=0pt,minimum width=4pt,minimum height=4pt]
\node (v1)[point, label={[label distance=0cm]-135:$1$}] at (0,0) {};
%    \node (v2)[point, label={[label distance=0cm]90:$2$}] at (0.5,0.87) {};
\node (v3)[point, label={[label distance=0cm]-45:$3$}] at (1,0) {};
%    \node (v4)[point, label={[label distance=0cm]-45:$4$}] at (1.5,0.87) {};

%        \draw (v1.center) -- (v2.center);
\draw (v1.center) -- (v3.center);
%    \draw (v2.center) -- (v3.center);
%    \draw (v3.center) -- (v4.center);

%    \draw[color=black!20, style=fill, outer sep = 20pt] (0.1,0.06) -- (0.5,0.77) -- (0.9,0.06) -- cycle;
\end{tikzpicture}
\end{minipage}\begin{minipage}[c]{0.4\textwidth}
\[
M_{\lk_\triangle(2)} = (x_1, x_3\mid \varnothing)
\]\vfill
\end{minipage}

\vspace{1ex}

\begin{minipage}[c]{0.4\textwidth}
    \[
    \lk_\triangle(3) = \left\{\begin{aligned}
    &\varnothing,\{1\},\{2\},\\
    &\{4\},\{1, 2\}\end{aligned}\right\}
    \]\vfill
\end{minipage}\begin{minipage}[c]{0.2\textwidth}
\begin{tikzpicture}
\hspace{.5em}\tikzstyle{point}=[circle,thick,draw=black,fill=black,inner sep=0pt,minimum width=4pt,minimum height=4pt]
\node (v1)[point, label={[label distance=0cm]-135:$1$}] at (0,0) {};
\node (v2)[point, label={[label distance=0cm]90:$2$}] at (0.5,0.87) {};
%    \node (v3)[point, label={[label distance=0cm]-45:$3$}] at (1,0) {};
\node (v4)[point, label={[label distance=0cm]-45:$4$}] at (1.5,0.87) {};

\draw (v1.center) -- (v2.center);
%    \draw (v1.center) -- (v3.center);
%    \draw (v2.center) -- (v3.center);
%    \draw (v3.center) -- (v4.center);

%    \draw[color=black!20, style=fill, outer sep = 20pt] (0.1,0.06) -- (0.5,0.77) -- (0.9,0.06) -- cycle;
\end{tikzpicture}
\end{minipage}\begin{minipage}[c]{0.4\textwidth}
\[
M_{\lk_\triangle(3)} = \left(\begin{aligned}& x_1, x_2, x_4\\\midrule & x_1+x_4=\infty,\\ & x_2+x_4=\infty\end{aligned}\right)
\]\vfill
\end{minipage}

\vspace{1ex}

\begin{minipage}[c]{0.4\textwidth}
    \[
    \lk_\triangle(4) = \left\{\begin{aligned}
    \varnothing,\{3\}\end{aligned}\right\}
    \]\vfill
\end{minipage}\begin{minipage}[c]{0.2\textwidth}
\begin{tikzpicture}[baseline=(current  bounding  box.center)]
\hspace{4.5em}\tikzstyle{point}=[circle,thick,draw=black,fill=black,inner sep=0pt,minimum width=4pt,minimum height=4pt]
%    \node (v1)[point, label={[label distance=0cm]-135:$1$}] at (0,0) {};
%    \node (v2)[point, label={[label distance=0cm]90:$2$}] at (0.5,0.87) {};
\node (v3)[point, label={[label distance=0cm]-45:$3$}] at (1,0) {};
%    \node (v4)[point, label={[label distance=0cm]-45:$4$}] at (1.5,0.87) {};

%    \draw (v1.center) -- (v2.center);
%    \draw (v1.center) -- (v3.center);
%    \draw (v2.center) -- (v3.center);
%    \draw (v3.center) -- (v4.center);

%    \draw[color=black!20, style=fill, outer sep = 20pt] (0.1,0.06) -- (0.5,0.77) -- (0.9,0.06) -- cycle;
\end{tikzpicture}
\end{minipage}\begin{minipage}[c]{0.4\textwidth}
\[
M_{\lk_\triangle(4)} = (x_3\mid \varnothing).\qedhere
\]
\end{minipage}

\end{example}

\subsection{\texorpdfstring{$\O_{M_\triangle}(D(x_i))$}{OM localization}}
From the definition of the structure sheaf of a binoid, Definition~\ref{definition:affine-scheme-binoid}, we know that in general $\O_M(D(f))=M_f$, the localization of $M$ at $f$. 

\begin{theorem}\label{theorem:localization-simplicial-binoid-multiple}
    For any face $F\in \triangle$ there is an isomorphism
    \begin{equation}\label{equation:localization-simpicial-binoid-wedge}
    (M_\triangle)_{x_F}\cong M_{\triangle'}\wedge(\ZZ^F)^\infty
    \end{equation}
    where $\triangle'=\lk_\triangle(F)$.
\end{theorem}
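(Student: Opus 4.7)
The plan is to exhibit the isomorphism via the universal property of the smash product. First I would use the diagram \eqref{equation:diagram-smash} together with the universal property to construct the canonical binoid homomorphism
\[
\Phi: M_{\lk_\triangle(F)} \wedge (\ZZ^F)^\infty \longrightarrow (M_\triangle)_{x_F}
\]
from the already defined morphisms $\varphi: M_{\lk_\triangle(F)} \to (M_\triangle)_{x_F}$ and $\psi: (\ZZ^F)^\infty \to (M_\triangle)_{x_F}$, noting that these are automatically compatible on the special point.

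To show $\Phi$ is an isomorphism, I would verify that both sides represent the same functor on the category of binoids. For an arbitrary binoid $N$, a morphism $(M_\triangle)_{x_F} \to N$ is the same data as a morphism $M_\triangle \to N$ sending $\sum_{j\in F} x_j$ to a unit. This is determined by images $y_i \in N$ for $i \in V$ satisfying (a) $y_j \in N^*$ for every $j \in F$, and (b) $\sum_{k \in K} y_k = \infty$ in $N$ for every minimal non-face $K$ of $\triangle$.

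The key technical step is to translate the family of relations in (b) into relations that live purely on the link. Using (a), for any non-face $K$ with $K \cap F \neq \varnothing$ one may cancel the units $y_j$ with $j \in K \cap F$ and rewrite the relation as $\sum_{k \in K \smallsetminus F} y_k = \infty$. Since any superset of a non-face is a non-face, $K \cup F$ is a non-face of $\triangle$, so by Lemma~\ref{lemma:generators-relations-link} the set $(K \cup F) \smallsetminus F = K \smallsetminus F$ is a non-face of $\lk_\triangle(F)$; conversely every non-face of $\lk_\triangle(F)$ arises this way. Moreover, for any $i \in V \smallsetminus (F \cup W)$ with $W$ the vertex set of $\lk_\triangle(F)$, the singleton $\{i\}$ is a non-face of $\lk_\triangle(F)$, forcing $y_i = \infty$ on both sides. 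Thus specifying a morphism $(M_\triangle)_{x_F} \to N$ is equivalent to giving a pair of morphisms $M_{\lk_\triangle(F)} \to N$ and $(\ZZ^F)^\infty \to N$, which is precisely the universal property of the smash product.

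By Yoneda's lemma, $\Phi$ is then an isomorphism. The main obstacle I anticipate is the bookkeeping in the translation step above, particularly handling simultaneously the three possibilities for a non-face $K$ (disjoint from $F$, meeting $F$ properly, or containing $F$) and verifying that no extra relations appear in $(M_\triangle)_{x_F}$ beyond those already encoded by $\lk_\triangle(F)$ together with the free abelian group structure on $F$.
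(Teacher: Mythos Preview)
Your approach is correct and gives a genuinely different proof from the paper's. The paper also obtains the map $\Phi$ (called $\zeta$ there) from diagram~\eqref{equation:diagram-smash} via the universal property of the smash product, but then verifies bijectivity by hand: it writes $\zeta$ explicitly on generators, uses the injectivity of $\varphi$ and $\psi$ to deduce injectivity of $\zeta$, and appeals to the semifree structure of $(M_\triangle)_{x_F}$ to get surjectivity by decomposing an arbitrary element uniquely as a sum over $V\smallsetminus F$ plus a sum over $F$.

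Your route via Yoneda trades this explicit unique-expression argument for a bookkeeping of relations, which is arguably more conceptual and does not invoke semifreeness of the localization. Two small points to tighten: first, Lemma~\ref{lemma:generators-relations-link} is stated only for the link of a single vertex, so you should either iterate it over the vertices of $F$ or observe that the same proof goes through verbatim for a face. Second, in your step (a) you silently use that in a commutative binoid a finite sum is a unit if and only if each summand is; this is immediate (if $\sum y_j$ has inverse $u$ then $y_j$ has inverse $u+\sum_{k\neq j}y_k$), but it is the hinge that turns ``$\sum_{j\in F}x_j$ becomes a unit'' into ``each $x_j$ becomes a unit'', so it is worth stating. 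With these clarifications the functor-of-points comparison goes through exactly as you outline.
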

\begin{proof}
    We just have to prove that the map $M_{\lk_\triangle(F)}\wedge {(\ZZ^F)}^\infty \longrightarrow {(M_\triangle)}_{x_F}$, that we get from diagram~\eqref{equation:diagram-smash}, is an isomorphism.    
    In order to do so, we give an explicit description of it. In the diagram
    \[
    \begin{tikzcd}[baseline=(current  bounding  box.center), cramped]
    M_{\lk_\triangle(F)}\wedge {(\ZZ^F)}^\infty\drar["\zeta"] & {(\ZZ^F)}^\infty \lar["i", swap] \dar["\psi"]\\
    M_{\lk_\triangle(F)} \uar["j"] \rar["\varphi", swap] & {(M_\triangle)}_{x_F}
    \end{tikzcd}
    \]
    we know the maps $\varphi$ and $\psi$, we know that $i$ and $j$ are the inclusions in the smash product and we know that the triangles commute, so $\zeta i=\psi$ and $\zeta j=\varphi$. We can then explicitly describe $\zeta$ on the generators of $M_{\lk_\triangle(F)}\wedge {(\ZZ^F)}^\infty$ as
    \[
    \begin{tikzcd}[baseline=(current  bounding  box.center), row sep = 0,
    /tikz/column 1/.append style={anchor=base east},
    /tikz/column 2/.append style={anchor=base west}]
    M_{\lk_\triangle(F)}\wedge {(\ZZ^F)}^\infty\rar["\zeta"]&{(M_\triangle)}_{x_F}\\
    w\wedge 0\rar[mapsto]&\varphi(w)=w\\
    0\wedge v\rar[mapsto]&\psi(v)=v
    \end{tikzcd}
    \]
    more in general,
    \[
    \zeta\left(\sum_{w\in G} n_w x_w\wedge \sum_{v\in F} m_vx_v\right)=\sum_{w\in G} n_w x_w + \sum_{v\in F} m_vx_v
    \]
    and this is not $\infty$ because $G\cup F\in\triangle$, since $G\in\lk_\triangle(F)$.
    Apart from $\infty$, this map is injective because the maps $\psi$ and $\varphi$ are injective themselves.
    
    Moreover, it is surjective because every element $f\in (M_\triangle)_{x_F}$ has a unique description (the binoid is semifree) with respect to the semibasis
    \[
    f=\sum_{w\in V\smallsetminus F} n_w x_w + \sum_{v\in F} n_v x_v.
    \]
    
    If $f=\infty$, since $x_v$ is a unit for any $v\in F$, we have that
    \[
    \sum_{w\in V\smallsetminus F} n_w x_w =\infty,
    \]
    that is true if and only if $\{x_w\mid n_w\neq 0\}\notin\lk_\triangle(F)$ so, in particular, $f=\zeta(\infty\wedge 0)$ (and, by the properties of the smash product, this is also $\zeta(0\wedge \infty)$).
    
    If $f\neq \infty$, then it is clear that $f=\zeta(\sum_{w\in G} n_w x_w \wedge \sum_{v\in F} n_v x_v)$ for $G=\{x_w\mid n_w\neq 0\}\in\lk_\triangle(F)$.
    
    So, $\zeta$ is an isomorphism and we get our result.
\end{proof}

The following Example shows in full details that the localization at a face $F$ of $\triangle$ yields a free part $\ZZ^{F}$ of rank equal to the cardinality of $F$.

\begin{example}\label{example:spectrum-simplicial-binoid-5}
    Let's have another look at our favourite Example~\ref{example:spectrum-simplicial-binoid-4}, whose simplicial binoid is
    \[
    M_\triangle=\{x_1, x_2, x_3, x_4\mid x_1+x_4=\infty, x_2+x_4=\infty\}.
    \]
    Let us compute its localization at $x_1$
    \begin{align*}
    M_{x_1}=\bigl(x_1, -x_1, x_2, x_3, x_4\mid x_1+x_4=\infty, x_2+x_4=\infty, x_1+(-x_1)=0\bigr)
    \end{align*}
    
    But $x_4=(-x_1+x_1)+x_4=-x_1+(x_1+x_4)=-x_1+\infty=\infty$, and the second relation becomes trivial, $x_2+\infty=\infty$. So the minimal generators and relations are now
    \begin{align*}
    M_{x_1} &=\bigl(x_1, -x_1, x_2, x_3, x_4\mid x_4=\infty, x_1+(-x_1)=0\bigr)\\
    &=\bigl(x_1, -x_1, x_2, x_3\mid x_1+(-x_1)=0\bigr)\\
    &=\bigl(x_2, x_3\mid\varnothing\bigr)\wedge\bigl(x_1, -x_1\mid x_1+(-x_1)=0\bigr)\\
    &=M_{\lk_\triangle(1)}\wedge\ZZ^\infty.
    \end{align*}
    By chance, here we can eliminate the dependence from the link, since $M_{\lk_\triangle(1)}=M_{\mathscr{P}(\{2, 3\})}\cong {(\NN^2)}^\infty$, and obtain
    \begin{align*}
    M_{x_1} &\cong {(\NN^2)}^\infty\wedge \ZZ^\infty.
    \end{align*}
    $M_{x_2}$ is exactly the symmetric situation, so $M_{x_2}=M_{\lk_\triangle(2)}\wedge\ZZ^\infty\cong {(\NN^2)}^\infty\wedge \ZZ^\infty$.\\
    $x_3$ is not involved in any of the relations, so these do not change when localizing and they stay exactly as they are, giving us $M_{x_3}=M_{\lk_\triangle(3)}\wedge\ZZ^\infty$.\\
    The next case is again interesting
    \begin{align*}
    M_{x_4} &=\bigl(x_1, x_2, x_3, x_4, -x_4 \mid x_1=\infty, x_2=\infty, x_4+(-x_4)=0\bigr)\\
    &=\bigl(x_3\mid\varnothing\bigr)\wedge\bigl(x_4, -x_4\mid x_4+(-x_4)=0\bigr)\\
    &=M_{\lk_\triangle(4)}\wedge\ZZ^\infty\cong\NN^\infty\wedge\ZZ^\infty
    \end{align*}
    When we localize two times we end up with the following binoids
    \begin{align*}
    M_{x_1+ x_2} &=\left(M_{x_1}\right)_{x_2}=\bigl(x_2, -x_2, x_3\mid x_2+(-x_2)=0\bigr)\wedge\bigl(x_1, -x_1\mid x_1+(-x_1)=0\bigr)\\
    &=\bigl(x_3\mid\varnothing\bigr)\wedge\bigl(x_2, -x_2\mid x_2+(-x_2)=0\bigr)\wedge\bigl(x_1, -x_1\mid x_1+(-x_1)=0\bigr)\\
    &=M_{\lk_\triangle\left(\{1, 2\}\right)}\wedge\ZZ^\infty\wedge\ZZ^\infty=M_{\lk_\triangle\left(\{1, 2\}\right)}\wedge(\ZZ^2)^\infty\cong \NN^\infty \wedge(\ZZ^2)^\infty
    \end{align*}
    \begin{align*}
    M_{x_1+ x_3} &=\bigl(x_2\mid\varnothing\bigr)\wedge\bigl(x_3, -x_3\mid x_3+(-x_3)=0\bigr)\wedge\bigl(x_1, -x_1\mid x_1+(-x_1)=0\bigr)\\
    &=M_{\lk_\triangle\left(\{1, 3\}\right)}\wedge(\ZZ^2)^\infty\cong \NN^\infty \wedge(\ZZ^2)^\infty
    \end{align*}
    \begin{align*}
    M_{x_2+ x_3} &=\bigl(x_1\mid\varnothing\bigr)\wedge\bigl(x_2, -x_2\mid x_2+(-x_2)=0\bigr)\wedge\bigl(x_3, -x_3\mid x_3+(-x_3)=0\bigr)\\
    &=M_{\lk_\triangle\left(\{2, 3\}\right)}\wedge(\ZZ^2)^\infty\cong \NN^\infty \wedge(\ZZ^2)^\infty
    \end{align*}
    \begin{align*}
    M_{x_3+ x_4} &=\bigl(x_3, -x_3\mid x_3+(-x_3)=0\bigr)\wedge\bigl(x_4, -x_4\mid x_4+(-x_4)=0\bigr)\\
    &=M_{\lk_\triangle\left(\{3, 4\}\right)}\wedge(\ZZ^2)^\infty=M_{\{\varnothing\}}\wedge(\ZZ^2)^\infty=(\ZZ^2)^\infty
    \end{align*}
    
    It is worth pointing out that inverting $\{x_1, x_4\}$ -- or any other non-face in $\triangle$ -- would yield the degenerate case $M_{x_1, x_4}=\infty$, because we are inverting the infinity.\footnote{Recall from Definition~\ref{definition:simplicial-binoid-emptyset} that $M_{\{\varnothing\}}=\{0, \infty\}\neq \infty$}
    
    The last localization that gives us a non-degenerate situation is $M_{x_1, x_2, x_3}\cong{(\ZZ^3)}^\infty$.
\end{example}

\section{The punctured \texorpdfstring{\v{C}}{C}ech-Picard Complex}

Our goal is to compute the cohomology of the sheaf of units $\O^*$ in the simplicial case. In order to do so, we should look for an appropriate \v{C}ech covering of the punctured spectrum. We proved in Theorem~\ref{theorem:vanishing-combinatorial-cohomology-affine} that the cohomology of $\O^*$ vanishes on affine pieces, since it is a sheaf of abelian groups, so in particular we can use $\{D(x_i)\}$ as an affine open cover to compute cohomology of $\O^*_{M_\triangle}$ on $\Spec^\bullet M_\triangle$ via \v{C}ech cohomology.

In what follows, assume that $M$ is a positive semifree binoid with semibasis $\{x_1, \dots, x_n\}$.\footnote{See Definition~\ref{definition:semifree} for the definition of semifree binoid.}

\begin{definition}\label{definition:extension-by-zeros}
    Let $\ZZ$ be the constant sheaf on $D(x_i)$ and let $j_i:D(x_i)\longrightarrow \Spec^\bullet M$ be the usual open embedding. Denote by $\O^*_{x_i}$ the extension by zero of $\ZZ$ along $j$, that is the sheafification of the presheaf on $\Spec^\bullet M$\footnote{This sheaf would usually be denoted by ${j_i}_!\ZZ$, but we simplify the notation since there is no confusion. See \cite[Exercise II.1.19.b]{hartshorne1977algebraic} for the definition of the operation $!$.}
    \[
    \begin{tikzcd}[baseline=(current  bounding  box.center), cramped, row sep = 0ex,
    /tikz/column 1/.append style={anchor=base east},
    /tikz/column 2/.append style={anchor=base west},
    ampersand replacement=\&]
    \G:U \rar[mapsto] \& \left\{\begin{aligned}
    &\ZZ, && \text{ if } U\subseteq D(x_i),\\
    &0, && \text{ otherwise.}
    \end{aligned}\right.
    \end{tikzcd}
    \]
\end{definition}

\begin{notation}
    Since $x_i$ is a unit of $\O_M$ on $D(x_i)$, we would think of $\ZZ$ on $D(x_i)$ as multiples of $x_i$, thus denoting this group with $\ZZ x_i$.
\end{notation}

\begin{remark}
    We can easily describe the stalk of $\O^*_{x_i}$ at $\p$ as follows
    \[
    \left(\O^*_{x_i}\right)_\p=\varinjlim_{\p\in U}\O^*_{x_i}(U)=\left\{\begin{aligned}
    & \ZZ x_i, && \text{ if } \p\in D(x_i),\\
    &0, &&\text{ otherwise.}
    \end{aligned}\right.
    \]
\end{remark}

\begin{proposition}
    There exists a morphism of sheaves
    \[
    \begin{tikzcd}[baseline=(current  bounding  box.center), cramped, row sep = 0ex,
    /tikz/column 1/.append style={anchor=base east},
    /tikz/column 2/.append style={anchor=base west}]
    \O^*_{x_i} \rar & \O^*_{M}.
    \end{tikzcd}
    \]
\end{proposition}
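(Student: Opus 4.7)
The plan is to construct the morphism on the level of presheaves and then invoke the universal property of sheafification, since by Definition~\ref{definition:extension-by-zeros} the sheaf $\O^*_{x_i}$ is the sheafification of the presheaf $\G$ that assigns $\ZZ$ to open subsets of $D(x_i)$ and $0$ elsewhere. Equivalently, one may appeal to the adjunction between extension by zero and restriction, which reduces the problem to giving a morphism $\ZZ \to \O^*_M\restriction_{D(x_i)}$ of sheaves of abelian groups on $D(x_i)$.

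First, observe that for any open $U \subseteq D(x_i)$ the element $x_i$ is a unit in $\O_M(U)$. Indeed, thanks to Remark~\ref{remark:Df=Dintersectionsupport}, $D(x_i)$ is covered by fundamental open subsets $D(f)$ with $x_i \in \supp f$, and in each such $M_f$ the variable $x_i$ is invertible; passing to sections on $U$ preserves invertibility. Define the presheaf morphism $\G \to \O^*_M$ by sending, on $U \subseteq D(x_i)$, the element $n \in \ZZ x_i$ to $n x_i \in \O^*_M(U)$, and by the zero map on all other open subsets. Compatibility with restrictions is immediate: when $V \subseteq U \subseteq D(x_i)$ the restriction $\O^*_M(U) \to \O^*_M(V)$ carries $n x_i$ to $n x_i$, while every other case is trivial because the source group is already $0$.

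By the universal property of sheafification, this morphism of presheaves extends uniquely to a morphism of sheaves $\O^*_{x_i} \to \O^*_M$, as required. The argument is essentially formal; the only point of substance is the invertibility of $x_i$ throughout $D(x_i)$, which however is immediate from the construction of the structure sheaf. On stalks one may verify directly that the morphism specialises to the homomorphism $\ZZ x_i \to M_\p^*$, $n \mapsto n x_i$, for $\p \in D(x_i)$ and to the zero map otherwise, so there is no hidden obstacle.
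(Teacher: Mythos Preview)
Your proof is correct and follows essentially the same approach as the paper: define a presheaf morphism $\G \to \O^*_M$ by $n \mapsto n x_i$ on opens $U \subseteq D(x_i)$ and zero elsewhere, then invoke the universal property of sheafification. You add some welcome detail (checking restriction compatibility, noting the adjunction formulation, and the description on stalks), but the core argument is identical.
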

\begin{proof}
    There is a morphism of presheaves
    \[
    \begin{tikzcd}[baseline=(current  bounding  box.center), cramped, row sep = 0ex,
    /tikz/column 1/.append style={anchor=base east},
    /tikz/column 2/.append style={anchor=base west}]
    \G \rar & \O^*_{M}
    \end{tikzcd}
    \]
    because if $U\subseteq D(x_i)$ then $\G(U)=\ZZ$ and $x_i$ is a unit of $\O_M(U)$, so we just send $1\mapsto x_i$. If $U\nsubseteq D(x_i)$ then the value of the presheaf is $0$, so we send it to $0$ in $\O_M^*(U)$.
    Thanks to the universal property of the sheafification, we have the following diagram
    \[
    \begin{tikzcd}[baseline=(current  bounding  box.center), cramped]
    \G \rar\dar & \O^*_{M}\\
    \O^*_{x_i}\urar
    \end{tikzcd}
    \]
    that yields the desired morphism $\O^*_{x_i} \longrightarrow \O^*_{M}$.
\end{proof}

\begin{example}
    Consider the binoid $M=(x, y\mid x+y=\infty)$. Its punctured spectrum is $U=\{\langle x\rangle, \langle y\rangle\}$, that we can cover with $D(x)$ and $D(y)$, that have empty intersection. Indeed, $\O^*_x(D(x))=\ZZ$ and $\O^*_x(D(y))=0$, so $\O^*_x(U)=\ZZ$.
\end{example}

\begin{theorem}\label{theorem:semifree-decomposition-of-sheaf}
    There exists an isomorphism of sheaves
    \[
    \begin{tikzcd}[baseline=(current  bounding  box.center), cramped, row sep = 0ex,
    /tikz/column 1/.append style={anchor=base east},
    /tikz/column 2/.append style={anchor=base west}]
    \displaystyle\bigoplus_{i=1}^n\O^*_{x_i} \rar & \O^*_{M}.
    \end{tikzcd}
    \]
\end{theorem}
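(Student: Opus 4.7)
The plan is to build the map by adding up the sheaf morphisms $\O^*_{x_i}\to \O^*_M$ from the preceding proposition, and then verify that it is an isomorphism stalk by stalk. Since direct sums of sheaves of abelian groups commute with taking stalks, we are reduced to a purely binoid-theoretic statement about the units of $M_\p$ for $\p\in\Spec^\bullet M$.

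First I would fix $\p\in\Spec^\bullet M$. Thanks to Remark~\ref{remark:generators-prime-ideals}, $\p$ is generated by a subset of the semibasis, so after relabelling we may write $\p=\langle x_j : j\in J\rangle$ with $J\subsetneq\{1,\dots,n\}$, and set $I=\{1,\dots,n\}\smallsetminus J$, which is non-empty because $\p\neq M_+$. The stalk of the LHS is then
\[
\Bigl(\bigoplus_{i=1}^n\O^*_{x_i}\Bigr)_\p=\bigoplus_{i\in I}(\O^*_{x_i})_\p=\bigoplus_{i\in I}\ZZ x_i,
\]
since $(\O^*_{x_i})_\p=\ZZ x_i$ precisely when $\p\in D(x_i)$, i.e.\ when $i\in I$. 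By Proposition~\ref{proposition:stalk-binoid-scheme} the stalk of the RHS is $(M_\p)^*$, so the proof reduces to exhibiting a natural isomorphism $\bigoplus_{i\in I}\ZZ x_i\longrightarrow (M_\p)^*$, sending $(n_i)_{i\in I}$ to $\sum_{i\in I}n_i x_i$, which is precisely the stalk map induced by the morphism we constructed.

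The key ingredient, and the step that needs the most care, is to show that $(M_\p)^*$ is freely generated by $\{x_i : i\in I\}$. I would argue as follows. Since $M$ is semifree with semibasis $\{x_1,\dots,x_n\}$, every element of $M$ has a unique expression $\sum n_i x_i$ with $n_i\in\NN$. Localising at $\p$ inverts exactly the elements outside $\p$, hence in particular each $x_i$ for $i\in I$; combining semifreeness with positivity one shows that every non-$\infty$ element of $M_\p$ admits a unique representation
\[
\sum_{i\in I}n_i x_i + \sum_{j\in J}m_j x_j,\qquad n_i\in\ZZ,\ m_j\in\NN.
\]
The unique maximal ideal of $M_\p$ is generated by $\{x_j : j\in J\}$, so an element is a unit if and only if all the $m_j$ vanish. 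This gives the desired identification $(M_\p)^*\cong\bigoplus_{i\in I}\ZZ x_i$, and the stalk map is visibly this identification.

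The main obstacle is establishing the uniqueness of the mixed representation in $M_\p$ claimed above: a priori, the infinity relations of $M$ might interact with the newly introduced inverses in ways that spoil uniqueness. I would handle this by describing $M_\p$ via its universal property as the pushout of $M$ along the inclusion of the free submonoid $\langle x_i : i\in I\rangle$ into its groupification $\ZZ^I$, so that semifreeness of $M$ pulls back to the required unique mixed representation, while positivity ensures that no non-trivial unit can involve the generators in $J$. Once this structural fact about $M_\p$ is in hand, the proof of the theorem is immediate.
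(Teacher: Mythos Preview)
Your approach is essentially the same as the paper's: construct the map from the preceding proposition and verify it is an isomorphism on stalks, reducing to the identification $(M_\p)^*\cong\bigoplus_{i\in I}\ZZ x_i$ for $I=\{i:x_i\notin\p\}$. The paper's proof is more terse at exactly the point you flag as the main obstacle: it simply asserts that $\O^*_{M,\p}\cong\ZZ^r$ with $r=|I|$, whereas you take the trouble to justify the unique mixed representation in $M_\p$ and to argue that positivity forces units to have all $m_j=0$. Your more careful treatment is warranted and correct; the only small refinement is that some $x_j$ with $j\in J$ may become $\infty$ in $M_\p$, but this does not affect the description of the unit group, since any non-$\infty$ element still has a unique representation of the stated form and the argument goes through unchanged.
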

\begin{proof}
    This map exists because it is induced component-wise by the maps obtained by the previous Proposition applied to the different $x_i$'s.
    
    In order to show that it is an isomorphism, recall from \cite[Exercise II.1.2]{hartshorne1977algebraic} that a morphism $\varphi:\F\longrightarrow \G$ between two sheaves on a topological space $X$ is an isomorphism if and only if it is an isomorphism for the stalks of these sheaves.
    
    Let $\p\in\Spec M$. Thanks to Lemma~\ref{proposition:minimal-open-set-prime-ideal}, there exists a unique minimal open subset that contains $\p$, namely the fundamental open subset $D\left(\sum_{x_j\notin \p} x_j\right)$
    
    On the right hand side,
    \[
    \O^*_{M, \p} = \O^*_{M}\left(D\left(\sum_{x_j\notin \p} x_j\right)\right)\cong\ZZ^r,
    \]
    where $r$ is the cardinality of $\{x_j\mid x_j\notin \p\}$.
    
    On the other hand,
    \[
    \left(\displaystyle\bigoplus_{i=1}^n\O^*_{x_i}\right)_\p\cong \displaystyle\bigoplus_{i=1}^n\left(\O^*_{x_i}\right)_\p\cong \displaystyle\bigoplus_{i=1}^n\O^*_{x_i}\left(D\left(\sum_{x_j\notin \p} x_j\right)\right)\cong\ZZ^s
    \]
    where $s$ is the cardinality of $\{i\in[n]\mid {x_i\notin \p}\}$, so $s=r$, and the isomorphism between them is the identity.
\end{proof}

Thanks to this Theorem, we already know that we can decompose the \v{C}ech complex for $\O^*$ on the covering $\{D(x_i)\}$ in the direct sum of the \v{C}ech subcomplexes of this decomposition. In this section, we are going to give explicit descriptions of these complexes.

\begin{example}
    We illustrate with a counter example why we use the extension by zeros and not the pushforward of $\ZZ$.
    
    Consider the simplicial binoid $M=(x_1, x_2, x_3\mid x_1+x_2+x_3=\infty)$. Its simplicial complex is the open triangle, with 3 sides and no $2$-dimensional face. The simplicial binoid of the link of $1$ is $M_{\lk_\triangle(1)} = (x_2, x_3\mid x_2+x_3=\infty)$, whose spectrum is homeomorphic to $D(x_1)$.
    When we consider $U=D(x_3)$, we have a non-empty intersection $D(x_1)\cap U=\{\{x_2\}\}$, and then, when we try to compare the evaluations of $\O^*_{x_1}$, ${j_i}_*\widetilde{\ZZ}$ and ${j_i}_!\ZZ$, we get
    \begin{align*}
    \O^*_{x_1}(D(x_3))&=\O^*_{x_1}(D(x_3)\cap D(x_1))=0\\
    {j_i}_!\ZZ(D(x_3))&=0\\
    {j_i}_*\ZZ(D(x_3))&={j_i}_*\ZZ(D(x_3)\cap D(x_1))=\ZZ,
    \end{align*}
    thus showing that the pushforward is not the right sheaf to consider here.
\end{example}

\subsection{The Groups}
We will now investigate the groups involved in the punctured \v{C}ech-Picard complex
\[
\vC^\bullet = \vC^\bullet(\{D(x_i)\}, \O^*),
\]
i.e.\ the \v{C}ech complex of the sheaf of units w.r.t.\ the combinatorial affine covering of $\Spec^\bullet M$.
\begin{theorem}\label{theorem:value-sheaf}
    Let $M_\triangle=(x_1, \dots, x_n\mid\R)$ be the simplicial binoid associated to the simplicial complex $\triangle$ on the vertex set $V=[n]$ and let $F\subseteq[n]$. Then
    \begin{equation}
    \O^*_{M_\triangle}\left(\displaystyle\bigcap_{i\in F}D(x_i)\right)\cong\left\{\begin{aligned}
    \ZZ^F && \text{ if } F\in\triangle,\\
    0 && \text{ otherwise.}
    \end{aligned}\right.
    \end{equation}
\end{theorem}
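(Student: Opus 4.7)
The plan is to split into the two cases indicated by the statement and, in each case, reduce to a computation already available from earlier results.

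First I would rewrite the intersection in a more convenient form. By Remark~\ref{remark:Df=Dintersectionsupport} (or directly Proposition~\ref{proposition:intersection-open-subsets}), we have
\[
\bigcap_{i\in F}D(x_i)=D(F)\quad\text{when }F\in\triangle,
\]
and $\bigcap_{i\in F}D(x_i)=\varnothing$ otherwise (since a non-face corresponds to a relation forcing the support sum to be $\infty$). In the latter case, Proposition~\ref{proposition:structure-sheaf-on-empty-set} gives $\O_{M_\triangle}(\varnothing)=\infty$, and the trivial binoid has no units, so $\O^*_{M_\triangle}\bigl(\bigcap_{i\in F}D(x_i)\bigr)=0$, as required.

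The interesting case is $F\in\triangle$. Here $\O^*_{M_\triangle}(D(F))=\bigl((M_\triangle)_{x_F}\bigr)^*$, and I would invoke Theorem~\ref{theorem:localization-simplicial-binoid-multiple} to obtain the isomorphism
\[
(M_\triangle)_{x_F}\cong M_{\lk_\triangle(F)}\wedge (\ZZ^F)^\infty.
\]
It then remains to compute the unit group of this smash product. For a smash product $A\wedge B$ of commutative binoids the unit group splits as $(A\wedge B)^*\cong A^*\oplus B^*$, because an element $a\wedge b$ is invertible exactly when each factor is, and the inverse is then $(-a)\wedge(-b)$. Since $M_{\lk_\triangle(F)}$ is simplicial, hence positive (its only unit is $0$), the first summand is trivial, while $((\ZZ^F)^\infty)^*=\ZZ^F$. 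Assembling these pieces yields
\[
\O^*_{M_\triangle}(D(F))\cong 0\oplus \ZZ^F\cong \ZZ^F.
\]

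The only genuinely non-trivial ingredient is the localisation isomorphism of Theorem~\ref{theorem:localization-simplicial-binoid-multiple}; everything else is a bookkeeping check. The main potential obstacle is to make sure that the identification of units in the smash product is done cleanly — in particular, one should verify that the isomorphism $(A\wedge B)^*\cong A^*\oplus B^*$ uses the commutativity of the binoids involved and the fact that neither factor is the zero binoid $\{0,\infty\}$ (which would collapse the smash product). Both conditions are satisfied here since $M_{\lk_\triangle(F)}$ is a genuine simplicial binoid whenever $F$ is a proper face, and the degenerate corner case $F=V$ (when $\lk_\triangle(F)$ is the empty complex and $M_{\lk_\triangle(F)}=\{0,\infty\}$) just gives $(\ZZ^F)^\infty$ up to isomorphism, whose units are still $\ZZ^F$, matching the claimed formula.
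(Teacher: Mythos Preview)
Your proof is correct and follows essentially the same route as the paper: split on whether $F$ is a face, invoke Proposition~\ref{proposition:intersection-open-subsets} and Proposition~\ref{proposition:structure-sheaf-on-empty-set} for the non-face case, and use the localisation isomorphism of Theorem~\ref{theorem:localization-simplicial-binoid-multiple} together with positivity of the simplicial link binoid for the face case. The paper is slightly terser (it simply remarks that any simplicial binoid is positive, leaving the smash-product unit computation implicit), whereas you spell out $(A\wedge B)^*\cong A^*\oplus B^*$ and check the degenerate case explicitly; one minor wording slip is that smashing with $\{0,\infty\}$ is the identity rather than a collapse, but your final conclusion for that edge case is correct.
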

\begin{proof} Thanks to Proposition~\ref{proposition:intersection-open-subsets} and Proposition~\ref{proposition:structure-sheaf-on-empty-set} we have
    \[
    \O_{M_\triangle}\left(\displaystyle\bigcap_{i\in F}D(x_i)\right)\cong 
    \left\{\begin{aligned}
    (M_\triangle)_{x_F} && \text{ if } F\in\triangle,\\
    0 && \text{ otherwise.}
    \end{aligned}\right.
    \]
    Recall from Theorem~\ref{theorem:localization-simplicial-binoid-multiple} the Equation~\eqref{equation:localization-simpicial-binoid-wedge} that states
    \[
    (M_\triangle)_{x_F}\cong M_{\lk_\triangle(F)}\wedge\ZZ^F.
    \]
    Lastly, after recalling that any simplicial binoid is positive, we obtain our result.
\end{proof}

\begin{remark}\label{remark:inverses-simplicial-binoid-vertices}
    This group $\ZZ^F$ is generated by inverting the $j$ variables in $x_F$ that have no relations between themselves, thanks to the fact that they correspond to vertices contained in a face.
\end{remark}

\begin{remark}
    Another proof of this Theorem comes from Theorem~\ref{theorem:semifree-decomposition-of-sheaf} and Definition~\ref{definition:extension-by-zeros}. Since $\displaystyle\bigcap_{i\in F}D(x_i)\subseteq D(x_i)$ for any $i\in F$ and $\O^*=\displaystyle\bigoplus_{i\in[n]}\O^*_{x_i}$, we get a $\ZZ$ for any $i$ in $F$, thus giving us the thesis. 
\end{remark}

\begin{example}\label{example:spectrum-simplicial-binoid-6}
    Let's go back again at our Example~\ref{example:spectrum-simplicial-binoid-5}, whose simplicial binoid is
    \[
    M_\triangle=\{x_1, x_2, x_3, x_4\mid x_1+x_4=\infty, x_2+x_4=\infty\}.
    \]
    We already computed the localizations, and so we are able to describe the values of the sheaf of units $\O^*_{M_\triangle}$ on these open subsets
    \begin{align*}
    \O^*_{M_\triangle}(D(x_1))=M_{x_1}^* &\cong \bigl(x_1, -x_1, x_2, x_3\mid x_1+(-x_1)=0\bigr)^*\\
    &=\bigl(x_1, -x_1\mid x_1+(-x_1)=0\bigr)^*\cong \ZZ,\\
    \O^*_{M_\triangle}(D(x_2))\cong M_{x_2}^*&\cong \ZZ,\\
    \O^*_{M_\triangle}(D(x_3))\cong M_{x_3}^*&\cong \ZZ,\\
    \O^*_{M_\triangle}(D(x_4))\cong M_{x_4}^*&\cong \ZZ,
    \end{align*}
    on intersections of two of them
    \begin{align*}
    \O^*_{M_\triangle}(D(x_1)\cap D(x_2))&=M_{x_1,x_2}^* \cong \left(M_{\lk_\triangle\left(\{1, 2\}\right)}\wedge(\ZZ^2)^\infty\right)^*\cong \ZZ^2,\\
    \O^*_{M_\triangle}(D(x_1)\cap D(x_3))&=M_{x_1,x_3}^* \cong \ZZ^2,\\
    \O^*_{M_\triangle}(D(x_1)\cap D(x_4))&\cong 0,\\
    \O^*_{M_\triangle}(D(x_2)\cap D(x_3))&=M_{x_2,x_3}^* \cong \ZZ^2,\\
    \O^*_{M_\triangle}(D(x_2)\cap D(x_4))&\cong 0,\\
    \O^*_{M_\triangle}(D(x_3)\cap D(x_4))&=M_{x_3,x_4}^* \cong \ZZ^2,
    \end{align*}
    and intersections of three (or more) of them
    \begin{align*}
    \O^*_{M_\triangle}(D(x_1)\cap D(x_2)\cap D(x_3)) &=M_{x_1,x_2, x_3}^* \cong \ZZ^3,\\
    \O^*_{M_\triangle}(D(x_1)\cap D(x_2)\cap D(x_4)) &\cong 0,\\
    \O^*_{M_\triangle}(D(x_1)\cap D(x_3)\cap D(x_4)) &\cong 0,\\
    \O^*_{M_\triangle}(D(x_2)\cap D(x_3)\cap D(x_4)) &\cong 0,\\
    \O^*_{M_\triangle}(D(x_1)\cap D(x_2)\cap D(x_3)\cap D(x_4)) &\cong 0.\qedhere
    \end{align*}
\end{example}

So we can explicitly describe the groups of the \v{C}ech complex for the sheaf of units on the combinatorial affine covering $\{D(x_i)\}$, defined in general in Definition~\ref{definition:cech-picard-complex}, as\footnote{Recall that $\triangle_j$ is the set of faces of $\triangle$ of dimension $j$.}
\begin{equation}
\vC^j=\bigoplus_{\begin{subarray}{c}F\in\triangle\\\left|{F}\right|=j+1\end{subarray}}\ZZ^{j+1}=\bigoplus_{F\in\gls{triangle-j}}\ZZ^{F}.
\end{equation}

We have indeed an even nicer description of the groups, where we can index them on the single vertices, using the results from Theorem~\ref{theorem:semifree-decomposition-of-sheaf} above.

\begin{theorem}\label{theorem:split-cech-picard-groups}
    There exist groups $\vC^j_i$ indexed by the vertices $i\in V$ such that $\vC^j$ is a direct sum of them
    \begin{equation}
    \vC^j=\bigoplus_{i\in V}\vC^j_{i}.
    \end{equation}
\end{theorem}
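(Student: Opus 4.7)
My plan is to deduce this decomposition directly from Theorem~\ref{theorem:semifree-decomposition-of-sheaf}, which provides an isomorphism of sheaves $\O^*_{M_\triangle} \cong \bigoplus_{i \in V} \O^*_{x_i}$. Since the \v{C}ech cochain functor with respect to the fixed cover $\U = \{D(x_i)\}$ is, in each degree, a composition of section functors over the finite family of opens $D(x_F)$, $F \in \triangle_j$, followed by a finite direct sum, it commutes with finite direct sums of sheaves; consequently the sheaf decomposition propagates termwise to the cochain complex.

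Concretely, I would set
\[
\vC^j_i \;:=\; \vC^j(\U, \O^*_{x_i}) \;=\; \bigoplus_{F \in \triangle_j} \O^*_{x_i}(D(x_F))
\]
and then obtain the required identity by interchanging the two direct sums and invoking the evaluation of Theorem~\ref{theorem:semifree-decomposition-of-sheaf} on each combinatorial open,
\[
\bigoplus_{i \in V} \vC^j_i \;=\; \bigoplus_{F \in \triangle_j}\bigoplus_{i \in V} \O^*_{x_i}(D(x_F)) \;=\; \bigoplus_{F \in \triangle_j} \O^*_{M_\triangle}(D(x_F)) \;=\; \vC^j.
\]

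For later use it will be convenient to record the explicit shape of these summands. Combining Definition~\ref{definition:extension-by-zeros} with Theorem~\ref{theorem:value-sheaf} and Remark~\ref{remark:inverses-simplicial-binoid-vertices}, one sees that $\O^*_{x_i}(D(x_F))$ is the copy of $\ZZ$ generated by the unit $x_i$ when $i \in F$, and vanishes otherwise; hence
\[
\vC^j_i \;=\; \bigoplus_{\substack{F \in \triangle_j \\ i \in F}} \ZZ\, x_i,
\]
which matches, vertex by vertex, the $\ZZ^F$ decomposition of each summand of $\vC^j$ given by Theorem~\ref{theorem:value-sheaf}. I do not anticipate any real obstacle: the only minor point to verify is that the sheaf isomorphism of Theorem~\ref{theorem:semifree-decomposition-of-sheaf} restricts to an isomorphism of sections over each $D(x_F)$, but this is automatic since the opens $D(x_F)$ form the basis along which the stalk-wise argument in that theorem was already carried out.
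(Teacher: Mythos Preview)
Your proposal is correct and follows the same approach as the paper: both invoke Theorem~\ref{theorem:semifree-decomposition-of-sheaf} to decompose $\O^*_{M_\triangle}$ as $\bigoplus_{i\in V}\O^*_{x_i}$ and read off the \v{C}ech group decomposition from there. Your write-up is in fact more detailed than the paper's one-line proof, as it spells out why the \v{C}ech construction commutes with finite direct sums and records the explicit shape of the summands, which the paper only does in the subsequent Remark.
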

\begin{proof}
    $M_\triangle$ is semifree, so Theorem~\ref{theorem:semifree-decomposition-of-sheaf} applies, and we can decompose
    \[
    \O^*_{M_\triangle} = \bigoplus_{i\in V}\O^*_{x_i}.\qedhere
    \]
\end{proof}

\begin{remark}
    Consider a  face $F$ of dimension $j$. The evaluation $\O^*_{x_i}(D(F))$ is non zero if and only if $F$ contains $i$, so we can define
    \[
    \vC^j_{i} = \bigoplus_{\left\{F\in\triangle_j\midd i\in F\right\}}\O^*_{x_i}(D(F)) = \bigoplus_{\left\{F\in\triangle_j\midd i\in F\right\}} \ZZ.
    \]
    Taking the sum over the vertices, we get an explicit description
    \begin{equation*}
    \vC^j=\bigoplus_{i\in V}\biggl(\bigoplus_{\left\{F\in\triangle_j\midd i\in F\right\}}\ZZ\biggr) = \bigoplus_{i\in V}\vC^j_{i}.\qedhere
    \end{equation*}
\end{remark}

So we can write explicitly any group in the \v{C}ech-Picard complex as a sum of groups indexed by the vertices. 
In what follows it will be convenient to use an uncommon notation, so we will take some time now to explain it, then we will do an Example to understand it better.

\begin{notation}\label{notation:simplicial-binoid-faces}
    Let $F=\{i_0, \dots, i_j\}$ be a face in the simplicial complex $\triangle$. Thanks to what we just proved, $\O^*_{M_\triangle}(D(F))=\ZZ^F=\ZZ^{j+1}$. When we have a section of $\O^*(D(F))$, we would like to be able to recover $F$, so we denote such a section with
    \[
    \sigma^F\in\O^*_{M_\triangle}(D(F)).
    \]
    This $\sigma^F$ is indeed a $j$-tuple of integers, where $j$ is the cardinality of $F$. We want to be able to address its components. To do so, we place indices in the usual lower right corner, making our general section a $j$-uple indexed by elements of $F$, that looks like
    \[
    \sigma^F=\left(\sigma^F_{i_1}, \sigma^F_{i_2}, \dots, \sigma^F_{i_j}\right) = \left(\sigma^F_{i} \midd i\in F\right).
    \]
    
    A particular case is when $F$ is a vertex, say $F=\{i\}$, so that $\sigma^F=\left(\sigma^F_{i_1}\right)$. In this case we will often omit the subscripted index.
\end{notation}

\begin{example}\label{example:spectrum-simplicial-binoid-7}
    To better explain this notation, let's have a look again at our usual Example~\ref{example:spectrum-simplicial-binoid-6}, whose simplicial binoid is
    \[
    M_\triangle=\{x_1, x_2, x_3, x_4\mid x_1+x_4=\infty, x_2+x_4=\infty\}.
    \]
    We computed the localizations, and we computed the images of $\O^*$ on the intersections of the coordinate affine fundamental open subsets. 
    
    Let us describe $\vC^1$. We know by definition that
    \begin{align*}
    \vC^1=\bigoplus_{1\leq i < j\leq 4}\O^*_{M_\triangle}(D(x_i)\cap D(x_j))
    \end{align*}
    Thanks to what we already did, we know that we can rewrite this as
    \begin{align*}
    \vC^1\cong\bigoplus_{F\in\triangle_1}\ZZ^2
    \end{align*}
    Let us fix for example $F=\{1, 2\}$. Then $\ZZ^2$ describes the invertible elements of the form $\alpha x_1+\beta x_2$, with $(\alpha, \beta)\in\ZZ^2$. So we denote the groups in $\vC^1$ with the face they correspond to
    \[
    \vC^1\cong \ZZ^{\{1,2\}}\oplus\ZZ^{\{1,3\}}\oplus\ZZ^{\{2,3\}}\oplus\ZZ^{\{3,4\}}
    \]
    and we can write a general element of $\sigma\in\vC^1$ as
    \begin{align*}
    \sigma&=\left(\sigma^{\{1,2\}}, \sigma^{\{1,3\}}, \sigma^{\{2,3\}}, \sigma^{\{3,4\}}\right)\\
    &=\left((\sigma^{\{1,2\}}_{1}, \sigma^{\{1,2\}}_{2}), (\sigma^{\{1,3\}}_{1},\sigma^{\{1,3\}}_{3}), (\sigma^{\{2,3\}}_{2}, \sigma^{\{2,3\}}_{3}), (\sigma^{\{3,4\}}_{3}, \sigma^{\{3,4\}}_{4})\right).
    \end{align*}
    In this way we know that, for example, $\sigma^{\{1,3\}}_{3}$ is the coefficient of $x_3$ in $\O^*(D(\{x_1, x_3\}))$ and $\sigma^{\{3,4\}}_{3}$ is the coefficient of $x_3$ in $\O^*(D(\{x_3, x_4\}))$.
    
    In our re-indexing of the groups, what we are doing is actually grouping together the coefficients related to the same coordinate. So, the $\vC^1_i$'s will be
    \begin{align*}
    \vC^1_1 = \ZZ^2 &=\left\{\left(\sigma^{\{1,2\}}_{1}, \sigma^{\{1,3\}}_{1}\right)\right\}\\
    \vC^1_2 = \ZZ^2 &=\left\{\left(\sigma^{\{1,2\}}_{2}, \sigma^{\{2,3\}}_{2}\right)\right\}\\
    \vC^1_3 = \ZZ^3 &=\left\{\left(\sigma^{\{1,3\}}_{3}, \sigma^{\{2,3\}}_{3}, \sigma^{\{3,4\}}_{3}\right)\right\}\\
    \vC^1_4 = \ZZ &=\left\{\left(\sigma^{\{3, 4\}}_{4}\right)\right\}\qedhere
    \end{align*}
\end{example}

We now understand the groups in the \v{C}ech-Picard complex, and we have a way to describe them in terms of membership of a vertex to a face. What about the maps? Can we split them too?

\subsection{The Maps}

An explicit description of the coboundary map is rather easy. Consider two faces $G\neq F$ such that $G=\{i_0, \ldots, i_{j-1}\}$ and $F=G\cup\{i_l\}$, and an element $\sigma^{G}\in\O_{M_\triangle}^*(D(G))$. Then $\sigma^{G}\in\ZZ^j$ is
\[
\sigma^{G}=\left(\sigma^{G}_{i_0}, \sigma^{G}_{i_1}, \ldots, \sigma^{G}_{i_{j-1}}\right)
\]
and we can then restrict it to $D(F)$ to obtain 
\[
\sigma^{G}\restriction_{F}=\left(\sigma^{G}_{i_0}, \sigma^{G}_{i_1}, \ldots, 0_{i_l}, \dots, \sigma^{G}_{i_{j-1}}\right).
\]
Indeed, due to this observation, the image of the $j$-th coboundary map (as defined in Definition~\ref{definition:cech-picard-complex} following \cite[Section III.4]{hartshorne1977algebraic})
\begin{equation}
\begin{tikzcd}[baseline=(current  bounding  box.center), cramped, row sep = 0ex,
/tikz/column 1/.append style={anchor=base east},
/tikz/column 2/.append style={anchor=base west}]
\partial^{j}:\vC^j\arrow[r] & \vC^{j+1}\\
\sigma  \arrow[r, mapsto] & \tau:=\partial^j(\sigma)
\end{tikzcd}
\end{equation}
can be described component-wise.

To make the situation a bit more clear, we are working with the following groups

\begin{equation*}
\begin{tikzcd}[baseline=(current  bounding  box.center), cramped, row sep = 0, column sep=1.3em,
/tikz/column 1/.append style={anchor=base west},
/tikz/column 2/.append style={anchor=base west}]
\hspace{.5em}\displaystyle\bigoplus_{i_0<\ldots< i_j}\Gamma\left(D(x_{i_0})\cap\dots\cap D(x_{i_j}), \O^*\right) \arrow[r, "\partial"] & \hspace{.5em}\displaystyle\bigoplus_{i_0<\ldots< i_{j+1}}\Gamma\left(D(x_{i_0})\cap\dots\cap D(x_{i_{j+1}}), \O^*\right)\\
\hspace{1.7em}\rotatebox[origin=c]{-90}{$=$} & \hspace{2.1em}\rotatebox[origin=c]{-90}{$=$}\\
\displaystyle\bigoplus_{\begin{subarray}{c}
    i_0<\ldots< i_j\\
    \{i_0, \ldots, i_j\}\in\triangle
    \end{subarray}}\Gamma\left(D(x_{i_0})\cap\dots\cap D(x_{i_j}), \O^*\right) \arrow[r] & \displaystyle\bigoplus_{\begin{subarray}{c}
    i_0<\ldots< i_{j+1}\\
    \{i_0, \ldots, i_{j+1}\}\in\triangle
    \end{subarray}}\Gamma\left(D(x_{i_0})\cap\dots\cap D(x_{i_{j+1}}), \O^*\right)\\
\hspace{1.7em}\rotatebox[origin=c]{-90}{$=$} & \hspace{2.1em}\rotatebox[origin=c]{-90}{$=$}\\
\hspace{1em}\displaystyle\bigoplus_{G\in\triangle_j}\ZZ^G  \arrow[r] & \displaystyle\hspace{1em}\bigoplus_{F\in\triangle_{j+1}}\ZZ^F
\end{tikzcd}
\end{equation*}

\begin{theorem}\label{theorem:split-cech-picard-maps} Let $F=\{i_0, \ldots, i_j\}\in\triangle$. Then we can write the above map component-wise as
    \begin{equation}
    \tau^{F}_{i_k}=\sum_{l=0}^{j}(-1)^{l}\left(\sigma^{F\smallsetminus\{i_l\} }\restriction_F\right)_{i_k}
    \end{equation}
    for every $k\in\{0, 1, \ldots, j\}$.
\end{theorem}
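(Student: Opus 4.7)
The plan is to unwind the standard Čech coboundary definition from Definition~\ref{definition:cech-picard-complex} in terms of the explicit descriptions of the groups $\vC^j$ given by Theorem~\ref{theorem:value-sheaf} and Theorem~\ref{theorem:split-cech-picard-groups}. The key observation is that all the work has effectively been done already: what remains is to identify, under the identifications $\O^*_{M_\triangle}(D(G)) \cong \ZZ^G$, the restriction maps on units induced by $D(F) \subseteq D(G)$ when $G \subseteq F$.

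First I would fix a face $F = \{i_0, \ldots, i_j\} \in \triangle$ and, for each $l \in \{0, \ldots, j\}$, set $G_l := F \smallsetminus \{i_l\}$, which is again a face of $\triangle$ by subset-closedness. By Theorem~\ref{theorem:value-sheaf} we have canonical identifications $\O^*_{M_\triangle}(D(G_l)) \cong \ZZ^{G_l}$ and $\O^*_{M_\triangle}(D(F)) \cong \ZZ^F$, where the $m$-th component of a section records the coefficient of $x_m$ in its expression as an element of the localisation. The containment $D(F) \subseteq D(G_l)$ corresponds, on the binoid side, to the further localisation $(M_\triangle)_{x_{G_l}} \hookrightarrow (M_\triangle)_{x_F}$, which on units simply inverts the additional variable $x_{i_l}$. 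Under the identifications above, this means that the restriction homomorphism
\[
\O^*_{M_\triangle}(D(G_l)) \longrightarrow \O^*_{M_\triangle}(D(F))
\]
is precisely the inclusion $\ZZ^{G_l} \hookrightarrow \ZZ^F$ that pads a tuple with a $0$ in the $i_l$-th coordinate. Equivalently, the only coordinate of $\sigma^{G_l}\!\restriction_F$ that differs from $\sigma^{G_l}$ is the $i_l$-th one, which is forced to be $0$.

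With this identification in hand, the proof is then a direct substitution. By the definition of $\partial^j$ in Definition~\ref{definition:cech-picard-complex} applied to our indexing conventions,
\[
\tau^F = \partial^j(\sigma)^F = \sum_{l=0}^{j}(-1)^l\, \sigma^{G_l}\!\restriction_F \;\in\; \ZZ^F,
\]
and reading off the $i_k$-th coordinate on both sides yields the stated component-wise formula
\[
\tau^F_{i_k} = \sum_{l=0}^{j}(-1)^l \bigl(\sigma^{F\smallsetminus\{i_l\}}\!\restriction_F\bigr)_{i_k}.
\]
Note in particular that the summand with $l = k$ contributes zero, by the description of the restriction above, so the sum can equivalently be taken over $l \neq k$.

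The only step requiring genuine care is therefore the identification of the restriction maps on units with the zero-padding inclusions $\ZZ^{G_l}\hookrightarrow \ZZ^F$; everything else is book-keeping. This identification is essentially Remark~\ref{remark:inverses-simplicial-binoid-vertices} combined with Theorem~\ref{theorem:semifree-decomposition-of-sheaf}, which tells us that the sheaf $\O^*_{M_\triangle}$ splits as $\bigoplus_{i\in V} \O^*_{x_i}$ so that each coordinate behaves independently and is sensitive only to whether the relevant vertex lies in the face being localised at. I expect no real obstacle beyond making this identification precise.
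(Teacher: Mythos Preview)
Your proposal is correct and follows essentially the same approach as the paper: both simply unwind the \v{C}ech coboundary from Definition~\ref{definition:cech-picard-complex} and read off the $i_k$-th coordinate. Your explicit justification that the restriction maps $\O^*_{M_\triangle}(D(G_l)) \to \O^*_{M_\triangle}(D(F))$ are the zero-padding inclusions $\ZZ^{G_l}\hookrightarrow \ZZ^F$ is precisely what the paper records in the paragraph immediately preceding the theorem, so the remaining computation in both cases is identical bookkeeping.
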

\begin{proof}
    This is just a straightforward computation, since the description of the map in the complex is
    \begin{align*}
    \hspace{-2em}\left(\partial^j(\sigma)\right)^F:&=\sum_{l=1}^{j+1}(-1)^{l-1}\left(\sigma^{F\smallsetminus\{i_l\}}\restriction_F\right)\\
    &=\sum_{l=1}^{j+1}(-1)^{l-1}\left(\left(\sigma^{F\smallsetminus\{i_l\}}\restriction_F\right)_{i_1}, \dots, \left(\sigma^{F\smallsetminus\{i_l\}}\restriction_F\right)_{i_k}, \dots, \left(\sigma^{F\smallsetminus\{i_l\}}\restriction_F\right)_{i_{j+1}}\right)\\
    &=\left(\dots, \sum_{l=1}^{j+1}(-1)^{l-1}\left(\sigma^{F\smallsetminus\{i_l\}}\restriction_F\right)_{i_k}, \dots\right)\qedhere
    \end{align*}
\end{proof}

\begin{remark}\label{corollary:image-cohomology-simplicial-binoid}
    By the definition of the restriction we have  $\left(\sigma^{F\smallsetminus\{i\}}\restriction_F\right)_{i}=0$, while everything else is untouched. So we can rewrite the above formula as
    \begin{equation*}
    \tau^{F}_{i}=\sum_{i_l\in F\smallsetminus \{i\}}(-1)^{l-1}\left(\sigma^{F\smallsetminus\{i_l\} }\restriction_F\right)_{i}=\sum_{i_l\in F\smallsetminus \{i\}}(-1)^{l-1}\left(\sigma^{F\smallsetminus\{i_l\}}_{i}\right).
    \end{equation*}
\end{remark}

This means that the maps split on the vertex-wise indexing that we introduced above and the \v{C}ech-Picard complex splits in the following complexes indexed by the vertices
\begin{equation}
\begin{tikzcd}[baseline=(current  bounding  box.center), cramped, row sep = 0ex,]
\vC^\bullet_{i}:0\arrow[r] &  \vC_{i}^0\arrow[r, "\partial^0_{i}"] & \vC_{i}^1\arrow[r, "\partial^1_{i}"] & \dots\arrow[r, "\partial^{j-1}_{i}"] & \vC_{i}^j\arrow[r, "\partial^j_{i}"] & \dots\arrow[r] &  0
\end{tikzcd}
\end{equation}
where by $\partial^j_{i}$ we denote the map of the \v{C}ech complex that we described above. So we have a decomposition in smaller complexes
\[
\vC^\bullet=\bigoplus_{i\in V}\vC^\bullet_{i}
\]
where, in case they don't have all the same length, we extend these complexes on the right with enough $0$'s.

We will prove in the next Section that this decomposition is the same as the decomposition that we get from Theorem~\ref{theorem:semifree-decomposition-of-sheaf} for a general semifree binoid. In the case of a simplicial binoid, we will be able to relate this to the links of the vertices of the simplicial complex we started with.

\begin{example}\label{example:spectrum-simplicial-binoid-8}
    Let's go back to our usual Example~\ref{example:spectrum-simplicial-binoid-7}, whose simplicial binoid is
    \[
    M_\triangle=\{x_1, x_2, x_3, x_4\mid x_1+x_4=\infty, x_2+x_4=\infty\}.
    \]
    We computed the localizations, we computed the images of $\O^*$ on the intersections of the coordinate affine fundamental open subsets and we computed the groups $\vC^j_i$. Let's put together these results to build up the \v{C}ech-Picard complex.
    
    The complex is
    \begin{equation*}
    \begin{tikzcd}[baseline=(current  bounding  box.center), cramped]
    \vC^\bullet: \vC^0\arrow[r, "\partial^0"] & \vC^1\arrow[r, "\partial^1"] & \vC^2\arrow[r, "\partial^2"] & 0
    \end{tikzcd}
    \end{equation*}
    that we can explicitly write as
    
    \vspace{1ex}
    
    \begin{tikzcd}[baseline=(current  bounding  box.center), cramped,
        /tikz/column 2/.append style={anchor=base east},
        /tikz/column 3/.append style={anchor=base west}]
        \ZZ^{\{1\}}\oplus\ZZ^{\{2\}}\oplus\ZZ^{\{3\}}\oplus\ZZ^{\{4\}}\arrow[r, "\partial^0"] & ({\ZZ^2})^{\{1,2\}}\oplus ({\ZZ^2})^{\{1,3\}}\oplus ({\ZZ^2})^{\{2,3\}}\oplus({\ZZ^2})^{\{3,4\}}\arrow[out=0, in=180, looseness=5, overlay, d, pos=0.04, "\partial^1"]\\ & ({\ZZ^3})^{\{1,2, 3\}}\arrow[r, "\partial^2"] & 0
    \end{tikzcd}
    
    \vspace{3ex}
    
    and we know that it is the direct sum of the chain complexes
    \begin{equation*}
    \begin{tikzcd}[baseline=(current  bounding  box.center), cramped]
    \vC^\bullet: \vC_i^0\arrow[r, "\partial^0"] & \vC_i^1\arrow[r, "\partial^1"] & \vC_i^2\arrow[r, "\partial^2"] & 0
    \end{tikzcd}
    \end{equation*}
    with $i=1, \dots, 4$.
    
    The groups in these smaller complexes are\footnote{Recall from our choice of notation that, since there is not any ambiguity, we omit the subscript index when $F$ is a vertex.}
    \begin{align*}
    \vC^0_1 = \ZZ &=\left\{\left(\sigma^{\{1\}}\right)\right\}&
    \vC^0_2 = \ZZ &=\left\{\left(\sigma^{\{2\}}\right)\right\}\\
    \vC^0_3 = \ZZ &=\left\{\left(\sigma^{\{3\}}\right)\right\} &
    \vC^0_4 = \ZZ &=\left\{\left(\sigma^{\{4\}}\right)\right\}    
    \end{align*}
    
    \begin{align*}
    \vC^1_1 = \ZZ^2 &=\left\{\left(\sigma^{\{1,2\}}_{1}, \sigma^{\{1,3\}}_{1}\right)\right\}&
    \vC^1_2 = \ZZ^2 &=\left\{\left(\sigma^{\{1,2\}}_{2}, \sigma^{\{2,3\}}_{2}\right)\right\}\\
    \vC^1_3 = \ZZ^3 &=\left\{\left(\sigma^{\{1,3\}}_{3}, \sigma^{\{2,3\}}_{3}, \sigma^{\{3,4\}}_{3}\right)\right\}&
    \vC^1_4 = \ZZ &=\left\{\left(\sigma^{\{3, 4\}}_{4}\right)\right\}
    \end{align*}
    
    \begin{align*}
    \vC^2_1 = \ZZ &=\left\{\left(\sigma^{\{1,2,3\}}_{1}\right)\right\}\hspace{2em} &         \vC^2_2 = \ZZ &=\left\{\left(\sigma^{\{1,2,3\}}_{2}\right)\right\}\hspace{2em}\\
    \vC^2_3 = \ZZ &=\left\{\left(\sigma^{\{1,2,3\}}_{3}\right)\right\}&
    \vC^2_4 = 0  &
    \end{align*}
    where all the $\sigma_*^{\{*\}}$ are integer numbers.
    
    Now that we split the groups, we turn to the maps. We start with $\partial^0$
    
    \vspace{1ex}
    
    \begin{tikzcd}[baseline=(current  bounding  box.center), cramped, row sep = 0pt, column sep = 1.3em,
        /tikz/column 1/.append style={anchor=base east},
        /tikz/column 2/.append style={anchor=base west},]
        \left(\sigma^{\{1\}}, \sigma^{\{2\}}, \sigma^{\{3\}}, \sigma^{\{4\}}\right)\arrow[r, mapsto, "\partial^0"] & \left((-\sigma^{\{1\}}, \sigma^{\{2\}}), (-\sigma^{\{1\}}, \sigma^{\{3\}}), (-\sigma^{\{2\}}, \sigma^{\{3\}}), (-\sigma^{\{3\}},\sigma^{\{4\}})\right)\\
    \end{tikzcd}
    
    \vspace{3ex}
    
    that we can write as the direct sum of the maps $\partial^0_i$ of the complexes $\vC^\bullet_i$
    
    \vspace{1ex}
    \begin{minipage}{0.5\textwidth}
        \begin{tikzcd}[baseline=(current  bounding  box.center), cramped, row sep = 0pt,
            /tikz/column 1/.append style={anchor=base east},
            /tikz/column 2/.append style={anchor=base west},]
            \ZZ^{\{1\}}\arrow[r, "\partial^0_1"] & \ZZ^{\{1,2\}}_{1}\oplus \ZZ^{\{1,3\}}_{1}\\
            \left(\sigma^{\{1\}}\right)\arrow[r, mapsto] & \left(-\sigma^{\{1\}}, -\sigma^{\{1\}}\right)\\
        \end{tikzcd}
    \end{minipage}\begin{minipage}{0.5\textwidth}        
    \begin{tikzcd}[baseline=(current  bounding  box.center), cramped, row sep =0pt,
        /tikz/column 1/.append style={anchor=base east},
        /tikz/column 2/.append style={anchor=base west},]
        \ZZ^{\{2\}}\arrow[r, "\partial^0_2"] & \ZZ^{\{1,2\}}_{2}\oplus \ZZ^{\{2,3\}}_{2}\\
        \left(\sigma^{\{2\}}\right)\arrow[r, mapsto] & \left(\sigma^{\{2\}}, -\sigma^{\{2\}}\right)\\
    \end{tikzcd}
\end{minipage}

\vspace{4ex}

\begin{minipage}{0.5\textwidth}
    \begin{tikzcd}[baseline=(current  bounding  box.center), cramped, row sep = 0pt,
        /tikz/column 1/.append style={anchor=base east},
        /tikz/column 2/.append style={anchor=base west},]
        \ZZ^{\{3\}}\arrow[r, "\partial^0_3"] & \ZZ^{\{1,3\}}_{3}\oplus \ZZ^{\{2,3\}}_{3}\oplus \ZZ^{\{3,4\}}_{3}\\
        \left(\sigma^{\{3\}}\right)\arrow[r, mapsto] & \left(\sigma^{\{3\}},\sigma^{\{3\}}, -\sigma^{\{3\}}\right)\\
    \end{tikzcd}
\end{minipage}\begin{minipage}{0.5\textwidth}        
\begin{tikzcd}[baseline=(current  bounding  box.center), cramped, row sep = 0pt,
    /tikz/column 1/.append style={anchor=base east},
    /tikz/column 2/.append style={anchor=base west},]
    \ZZ^{\{4\}}\arrow[r, "\partial^0_4"] & \ZZ^{\{3,4\}}_{4}\\
    \left(\sigma^{\{4\}}\right)\arrow[r, mapsto] & \left(\sigma^{\{4\}}\right)\\
\end{tikzcd}
\end{minipage}

\vspace{4ex}

We deal now with $\partial^1$

\vspace{1ex}

\begin{tikzcd}[baseline=(current  bounding  box.center), cramped,
    /tikz/column 1/.append style={anchor=base east}, column sep = -130pt]
    \left((\sigma^{\{1, 2\}}_{1}, \sigma^{\{1, 2\}}_{2}), (\sigma^{\{1, 3\}}_{1}, \sigma^{\{1, 3\}}_{3}), (\sigma^{\{2, 3\}}_{2}, \sigma^{\{2, 3\}}_{3}), (\sigma^{\{3, 4\}}_{3},\sigma^{\{3, 4\}}_{4})\right)\arrow[out=0, in=180, looseness=3, overlay, dr, pos=0.05, "\partial^1", mapsto]\\
    & \left(-\sigma^{\{1, 3\}}_{1}+\sigma^{\{1, 2\}}_{1}, \sigma^{\{2, 3\}}_{2}+\sigma^{\{1, 2\}}_{2}, \sigma^{\{2, 3\}}_{3}-\sigma^{\{1, 3\}}_{3}, 0\right)
\end{tikzcd}

\vspace{4ex}

That again we can split as the sum of

\vspace{2ex}

\begin{minipage}{0.5\textwidth}
    \begin{tikzcd}[baseline=(current  bounding  box.center), cramped, row sep = 0pt,
        /tikz/column 1/.append style={anchor=base east},
        /tikz/column 2/.append style={anchor=base west},]
        \ZZ^{\{1,2\}}_{1}\oplus \ZZ^{\{1,3\}}_{1}\arrow[r, "\partial^1_1"] & \ZZ^{\{1, 2, 3\}}_{1}\\
        \left(\sigma^{\{1, 2\}}_{1}, \sigma^{\{1, 3\}}_{1}\right)\arrow[r, mapsto] & \left(-\sigma^{\{1, 3\}}_{1}+\sigma^{\{1, 2\}}_{1}\right)
    \end{tikzcd}
\end{minipage}\begin{minipage}{0.5\textwidth}\flushright    
\begin{tikzcd}[baseline=(current  bounding  box.center), cramped, row sep =0pt,
    /tikz/column 1/.append style={anchor=base east},
    /tikz/column 2/.append style={anchor=base west},]
    \ZZ^{\{1,2\}}_{2}\oplus \ZZ^{\{2,3\}}_{2}\arrow[r, "\partial^1_2"] & \ZZ^{\{1, 2, 3\}}_{2}\\
    \left(\sigma^{\{1, 2\}}_{2}, \sigma^{\{2, 3\}}_{2}\right)\arrow[r, mapsto] & \left(\sigma^{\{2, 3\}}_{2}+\sigma^{\{1, 2\}}_{2}\right)
\end{tikzcd}
\end{minipage}

\vspace{4ex}

\begin{minipage}{0.65\textwidth}
    \begin{tikzcd}[baseline=(current  bounding  box.center), cramped, row sep = 0pt,
        /tikz/column 1/.append style={anchor=base east},
        /tikz/column 2/.append style={anchor=base west},]
        \ZZ^{\{1,3\}}_{3}\oplus \ZZ^{\{2,3\}}_{3}\oplus \ZZ^{\{3,4\}}_{3}\arrow[r, "\partial^1_3"] & \ZZ^{\{1, 2, 3\}}_{3}\\
        \left(\sigma^{\{1, 3\}}_{3}, \sigma^{\{2, 3\}}_{3}, \sigma^{\{3, 4\}}_{3}\right)\arrow[r, mapsto] & \left(\sigma^{\{2, 3\}}_{3}-\sigma^{\{1, 3\}}_{3}\right)
    \end{tikzcd}
\end{minipage}\begin{minipage}{0.35\textwidth}        
\begin{tikzcd}[baseline=(current  bounding  box.center), cramped, row sep = 0pt,
    /tikz/column 1/.append style={anchor=base east},
    /tikz/column 2/.append style={anchor=base west},]
    \ZZ^{\{3, 4\}}_{4}\arrow[r, "\partial^1_4"] & 0\\
    \left(\sigma^{\{3, 4\}}_{4}\right)\arrow[r, mapsto] & 0
\end{tikzcd}
\end{minipage}

\vspace{4ex}

So we recover that we can describe the original complex as a direct sum of the smaller ones, like we proved in general above.
\end{example}

\subsection{The Link Complex and its Chain Complex}
In the previous Section we described the \v{C}ech-Picard complex for the covering $D(x_i)$ of the punctured spectrum of a simplicial binoid completely in terms of the combinatorics of the simplicial complex itself.
In this Section, we will relate it to the chain complex used to compute simplicial cohomology with coefficients in $\ZZ$ of the link of a vertex in our simplicial complex.

\begin{lemma}\label{lemma:commentD}We have\footnote{Recall that $\widetilde{\C}^\bullet$ is the extended complex that computes reduced simplicial cohomology.}
    \[
    \vC^\bullet_{i}=\widetilde{\C}^{\bullet-1}(\lk_{\triangle}(i), \ZZ)
    \]
    as cochain complexes.
\end{lemma}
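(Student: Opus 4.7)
The plan is to exhibit an explicit chain-level isomorphism. On the group level we have already observed, as a consequence of the decomposition in Theorem~\ref{theorem:split-cech-picard-groups} and the formula in Theorem~\ref{theorem:value-sheaf}, that
\[
\vC^j_i=\bigoplus_{\{F\in\triangle_j\mid i\in F\}}\ZZ,
\]
and the assignment $F\mapsto F\smallsetminus\{i\}$ gives a bijection between $j$-faces of $\triangle$ containing $i$ and $(j-1)$-faces of $\lk_\triangle(i)$. For $j=0$ the unique $0$-face $\{i\}$ corresponds to the empty face of $\lk_\triangle(i)$, and this is precisely why the correct target is the \emph{reduced} simplicial cochain complex of $\lk_\triangle(i)$ shifted by one, i.e.\ $\widetilde{\C}^{\bullet-1}(\lk_\triangle(i),\ZZ)$.

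The heart of the proof is to check that under this bijection the coboundary $\partial^{j}_i$ of $\vC^\bullet_i$ coincides with the simplicial coboundary $\delta^{j-1}$ of $\widetilde{\C}^\bullet(\lk_\triangle(i),\ZZ)$. Using the vertex-wise formula from Theorem~\ref{theorem:split-cech-picard-maps} and the Remark after it, for $F=\{i_0<i_1<\dots<i_j\}\in\triangle_j$ containing $i$ at position $k$ (so $i=i_k$), we have
\[
\left(\partial^{j-1}_i\sigma\right)^F_{\,i}=\sum_{l\neq k}(-1)^{l}\,\sigma^{F\smallsetminus\{i_l\}}_{\,i},
\]
where each summand involves a $(j-1)$-face $F\smallsetminus\{i_l\}$ still containing $i$, which under the bijection corresponds to the $(j-2)$-face $F\smallsetminus\{i,i_l\}$ of $\lk_\triangle(i)$. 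On the simplicial side, writing $H=F\smallsetminus\{i\}=\{h_0<\dots<h_{j-1}\}$, the position of $h_m=i_l$ is $l$ if $l<k$ and $l-1$ if $l>k$, so the standard simplicial coboundary reads
\[
\delta^{j-2}(\widetilde{\sigma})_H=\sum_{l<k}(-1)^{l}\widetilde{\sigma}_{H\smallsetminus\{i_l\}}+\sum_{l>k}(-1)^{l-1}\widetilde{\sigma}_{H\smallsetminus\{i_l\}}.
\]

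The main obstacle, and essentially the only nontrivial point, is therefore reconciling these two sign patterns: the \v{C}ech formula uses the position $l$ of a vertex inside $F$, whereas the simplicial formula uses its position inside $H=F\smallsetminus\{i\}$, and the two agree exactly for $l<k$ but differ by $-1$ for $l>k$. The remedy is to define the identification $\vC^j_i\to\widetilde{\C}^{j-1}(\lk_\triangle(i),\ZZ)$ not as the naive sending $e_F\mapsto e_{F\smallsetminus\{i\}}$ but with a sign twist depending on the position of $i$ in $F$, i.e.\ $e_F\mapsto (-1)^{k}e_{F\smallsetminus\{i\}}$ (or equivalently $(-1)^{k+1}$, the correct choice being dictated by a straightforward parity check). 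A direct substitution then shows that the two coboundary formulas become term-by-term equal. Once this sign twist is in place, the isomorphism is an isomorphism of cochain complexes, which is exactly the claim of the Lemma.
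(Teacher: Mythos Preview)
Your approach is essentially the paper's: both rest on the bijection $F\mapsto F\smallsetminus\{i\}$ between $j$-faces of $\triangle$ containing $i$ and $(j-1)$-faces of $\lk_\triangle(i)$. The paper packages this via the sheaf identification $\O^*_{x_i}\cong (j_i)_!\ZZ$ and then writes out a chain of equalities of the \v{C}ech groups, whereas you work directly with the index sets --- but the underlying combinatorics is identical. Where you go further is in actually checking the differentials: the paper's proof only verifies the groups degree by degree and says nothing about the maps, so your observation that the naive bijection fails to be a chain map and that a position-dependent sign twist is needed is a genuine point the paper glosses over.

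One small correction on that twist: $e_F\mapsto(-1)^{k}e_{F\smallsetminus\{i\}}$ alone (with $k$ the position of $i$ in $F$) yields an anti-chain map, not a chain map --- try it on the full $2$-simplex with $i$ the middle vertex. The two candidates you list, $(-1)^k$ and $(-1)^{k+1}$, differ only by a global sign and neither works; the degree has to enter. The twist that actually commutes with both differentials is $e_F\mapsto(-1)^{k+\dim F}e_{F\smallsetminus\{i\}}$. Your parenthetical about a ``straightforward parity check'' is right in spirit, but the outcome of that check is not among the options you wrote down.
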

\begin{proof}
    From Definition~\ref{definition:extension-by-zeros} we have the isomorphism $\O^*_{x_i}\cong {j_i}_!\ZZ$, induced by the open embedding.
    Let $\m_i$ be the maximal ideal (w.r.t.\ inclusion) of $D(x_i)$, i.e.\ $\m_i =\langle x_j\mid j\neq i\rangle$. Let $D^\bullet(x_i)$ be the punctured spectrum of the link, i.e.\ $D^\bullet(x_i)=D(x_i)\smallsetminus\{\m_i\}\cong \Spec^\bullet M_{\lk_\triangle(i)}$.
    
    We can cover $j_i(\Spec^\bullet M_{\lk_\triangle(i)})$ with the affine open subsets $D(x_i+x_j)=D(x_i)\cap D(x_j)$, with $i\neq j$, on which we know that $\ZZ$ is an acyclic sheaf, because it is a sheaf of abelian groups on an affine scheme of binoids.
    
    $D(x_i)$ is the only coordinate affine open subset of $\Spec^\bullet M_\triangle$ that covers $\m_i$, and this is the only point not covered by another coordinate affine open subset, i.e.\ $\m_i\notin D(x_j)$, for any $j\neq i$.
    
    When we look for cohomology, we can use the following complexes to compute the simplicial cohomology of the link $\lk_\triangle(i)$, the cohomology of the sheaf $\ZZ$ on $D^\bullet(x_i)$ and the cohomology of $\O^*_{x_i}$ on $\Spec^\bullet M_\triangle$, respectively
    \[
    \C^\bullet(\lk_\triangle(i), \ZZ), \qquad \vC^\bullet(\{D(x_i)\cap D(x_j)\}_{j\neq i}, \ZZ), \qquad \vC^\bullet(\Spec^\bullet M_\triangle, \O^*_{x_i}).
    \]
    We have that the first two are the same, thanks to Theorem~\ref{thm:simplicial-cohomology}. The third is different, because there is one more open subset (namely $D(x_i)$) on which this sheaf evaluation is $\ZZ$.
    
    What we have is indeed that, for any $k\geq 1$,
    \begin{align*}
    \vC^k(\Spec^\bullet M_\triangle, \O^*_{x_i}) &= \bigoplus_{\begin{subarray}{c}
        F\in\triangle_k
        \end{subarray}}\Gamma(D(F), \O^*_{x_i})\\
    &=\bigoplus_{\begin{subarray}{c}
        i\in F\in\triangle_k
        \end{subarray}}\Gamma(D(F), \O^*_{x_i})\oplus \bigoplus_{\begin{subarray}{c}
        i\notin G\in\triangle_k
        \end{subarray}}\Gamma(D(G), \O^*_{x_i})\\
    &=\bigoplus_{\begin{subarray}{c}
        i\in F\in\triangle_k
        \end{subarray}}\Gamma(D(F), \O^*_{x_i})\oplus 0\\
    &=\bigoplus_{\begin{subarray}{c}
        i\in F\in\triangle_k
        \end{subarray}}\Gamma(D(F\smallsetminus \{i\})\cap D(x_i), \O^*_{x_i})\\
    &=\bigoplus_{\begin{subarray}{c}
        i\in F\in\triangle_k\\
        G=F\smallsetminus\{i\}
        \end{subarray}}\Gamma(D(G)\cap D(x_i), {j_i}_!\ZZ)\\
    &=\bigoplus_{\begin{subarray}{c}
        i\in F\in\triangle_k\\
        G=F\smallsetminus\{i\}
        \end{subarray}}\Gamma(D(G)\cap D(\{i\}), \ZZ)\\
    &=\bigoplus_{\begin{subarray}{c}
        G\in\left(\lk_\triangle(i)\right)_{k-1}
        \end{subarray}}\Gamma(D(G), \ZZ)\\
    &=\C^{k-1}(\lk_\triangle(i), \ZZ).
    \end{align*}
    So we just proved that, assuming $k\geq 1$,
    \[
    \vC^k(\Spec^\bullet M_\triangle, \O^*_{x_i})=\vC^{k-1}(\{D(x_i)\cap D(x_j)\}_{j\neq i}, \ZZ) = \C^{k-1}(\lk_\triangle(i), \ZZ).
    \]
    For $k= 0$ these groups are trivially the same
    \[
    \vC^0(\Spec^\bullet M, \O^*_{x_i}) = \ZZ \cong \widetilde{\C}^{-1}(\lk_\triangle(i), \ZZ).
    \]
    Applying Theorem~\ref{theorem:split-cech-picard-groups} we then get our thesis
    \[
    \vC^\bullet_i=\vC^\bullet(\Spec^\bullet M, \O^*_{x_i}) = \widetilde{\C}^{\bullet-1}(\lk_\triangle(i), \ZZ).\qedhere
    \]
\end{proof}

Patching together what we proved in the previous sections we get the following result, that makes the final step towards the next section, where we will finally relate sheaf and simplicial cohomology.

\begin{theorem}\label{theorem:sum-cech-picard-link-groups} We can rewrite the punctured \v{C}ech-Picard complex as direct sum of the reduced simplicial cochain complexes of the links of vertices in $\triangle$
    \[
    \vC^{\bullet}=\bigoplus_{i\in V}\widetilde{\C}^{\bullet-1}(\lk_\triangle(i), \ZZ).
    \]
\end{theorem}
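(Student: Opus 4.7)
The plan is to combine the two preceding structural results: the vertex-indexed splitting of the \v{C}ech-Picard complex and the identification of each vertex-indexed summand with the shifted reduced simplicial cochain complex of the corresponding link. First I would invoke Theorem~\ref{theorem:split-cech-picard-groups} together with Theorem~\ref{theorem:split-cech-picard-maps}: the former provides a vertex-wise decomposition of the groups $\vC^j=\bigoplus_{i\in V}\vC^j_i$, where $\vC^j_i$ collects the $\ZZ$-summands coming from the free generator $x_i$ appearing in $\O^*_{x_i}(D(F))$ for faces $F$ containing $i$; the latter shows that the \v{C}ech differential $\partial^j$ respects this index, so that it restricts to differentials $\partial^j_i$ on each $\vC^\bullet_i$. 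Thus the punctured \v{C}ech-Picard complex splits as a direct sum of subcomplexes
\[
\vC^\bullet=\bigoplus_{i\in V}\vC^\bullet_i,
\]
with no cross terms, because the sheaf isomorphism $\O^*_{M_\triangle}\cong\bigoplus_{i\in V}\O^*_{x_i}$ from Theorem~\ref{theorem:semifree-decomposition-of-sheaf} is an isomorphism of sheaves of abelian groups and hence induces an isomorphism of \v{C}ech complexes.

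Next I would apply Lemma~\ref{lemma:commentD}, which for each vertex $i\in V$ identifies $\vC^\bullet_i$ with $\widetilde{\C}^{\bullet-1}(\lk_\triangle(i), \ZZ)$ as cochain complexes. Substituting this identification into the decomposition above yields
\[
\vC^\bullet=\bigoplus_{i\in V}\vC^\bullet_i=\bigoplus_{i\in V}\widetilde{\C}^{\bullet-1}(\lk_\triangle(i),\ZZ),
\]
which is the desired equality.

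Since both ingredients are already proved, the theorem is essentially a bookkeeping statement: the only thing to verify is that the direct-sum decomposition of complexes (groups plus differentials) matches the degree shift on the nose. The one point I would be careful about is the degree $0$ term: $\vC^0_i\cong\ZZ$ and must correspond to $\widetilde{\C}^{-1}(\lk_\triangle(i),\ZZ)\cong\ZZ$, the augmentation term of the reduced simplicial cochain complex. This is exactly the case already handled at the end of the proof of Lemma~\ref{lemma:commentD}, so no additional argument is required. The main (minor) obstacle, if any, is purely notational: making sure the index shift $\bullet\mapsto\bullet-1$ and the reindexing by \lq $F\mapsto F\smallsetminus\{i\}$'' are applied consistently across all degrees so that the assembled isomorphism is genuinely an isomorphism of cochain complexes, not merely a degree-wise isomorphism of abelian groups.
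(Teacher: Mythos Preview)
Your proof is correct and follows essentially the same approach as the paper: combine the vertex-wise splitting $\vC^\bullet=\bigoplus_{i\in V}\vC^\bullet_i$ (coming from the sheaf decomposition $\O^*_{M_\triangle}\cong\bigoplus_i\O^*_{x_i}$, exploited in Theorem~\ref{theorem:split-cech-picard-groups} and Theorem~\ref{theorem:split-cech-picard-maps}) with the identification $\vC^\bullet_i=\widetilde{\C}^{\bullet-1}(\lk_\triangle(i),\ZZ)$ from Lemma~\ref{lemma:commentD}. Your extra care about the degree-$0$ term and the compatibility of differentials is well placed but already covered by the cited results.
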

\begin{proof}
    This follows from the previous Lemma and from the sheaf decomposition
    \[
    \O^*_{M_\triangle}=\O^*_{x_1}\oplus\dots \oplus\O^*_{x_n},
    \]
    already exploited in Theorem~\ref{theorem:split-cech-picard-groups}.
\end{proof}

\section{Cohomology}
Summing up what we did until now, we can produce the following Theorem that allows us to compute sheaf cohomology n term of reduced simplicial cohomology.
\begin{theorem}\label{theorem:cohomology-simplicial-complex}
    Let $\triangle$ be a simplicial complex on the finite vertex set $V = [n]$. We have the following explicit formula for the computation of the cohomology groups of its \v{C}ech-Picard complex
    \begin{equation}\label{formula:cohomology-simplicial-binoid}
    {\H^j\left(\Spec^\bullet M_\triangle, \O^*_{M_\triangle}\right)}\cong\bigoplus_{i\in V}\glslink{reduced-simplicial-cohomology}{\widetilde{\H}^{j-1}\left(\lk_\triangle(i), \ZZ\right)}
    \end{equation}
    for $j\geq0$.
\end{theorem}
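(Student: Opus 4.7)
The plan is to assemble the pieces already established in the preceding sections and then pass to cohomology. First I would observe that $\{D(x_i)\}_{i\in V}$ is an affine open cover of $\Spec^\bullet M_\triangle$ by Proposition~\ref{proposition:covering-punctured-spectrum-binoid}, and that affine open subsets of a scheme of binoids are acyclic for any sheaf of abelian groups by Theorem~\ref{theorem:vanishing-combinatorial-cohomology-affine} (intersections of the $D(x_i)$ are again affine since they equal $D(F)$ for faces $F\in\triangle$ by Proposition~\ref{proposition:intersection-open-subsets}). Hence Leray's theorem (Theorem~\ref{theorem:leray}) applies and sheaf cohomology can be computed as \v{C}ech cohomology on this cover:
\[
\H^j(\Spec^\bullet M_\triangle, \O^*_{M_\triangle}) \cong \vH^j(\{D(x_i)\}, \O^*_{M_\triangle}).
\]

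Next I would invoke the decomposition of the sheaf of units established in Theorem~\ref{theorem:semifree-decomposition-of-sheaf}, namely $\O^*_{M_\triangle} \cong \bigoplus_{i\in V} \O^*_{x_i}$, which holds because $M_\triangle$ is semifree. Since the \v{C}ech construction commutes with finite direct sums of sheaves (it is built out of sections, which commute with finite direct sums), the \v{C}ech-Picard complex decomposes accordingly, and Theorem~\ref{theorem:sum-cech-picard-link-groups} identifies each summand: namely
\[
\vC^\bullet(\{D(x_i)\}, \O^*_{M_\triangle}) \cong \bigoplus_{i\in V} \widetilde{\C}^{\bullet-1}(\lk_\triangle(i), \ZZ).
\]
This identification is the one proved in Lemma~\ref{lemma:commentD}, where each subcomplex $\vC^\bullet_i$ was shown to coincide with $\widetilde{\C}^{\bullet-1}(\lk_\triangle(i),\ZZ)$ by exploiting Proposition~\ref{proposition:dxi-spec-link} (that $D(x_i) \cong \Spec M_{\lk_\triangle(i)}$) together with the shift in degree coming from the extra copy of $\ZZ$ contributed by the section on $D(x_i)$ itself, which plays the role of the $(-1)$-dimensional face.

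Finally, cohomology commutes with finite direct sums of cochain complexes, so taking $H^j$ of both sides yields
\[
\H^j(\Spec^\bullet M_\triangle, \O^*_{M_\triangle}) \cong \bigoplus_{i\in V} \widetilde{\H}^{j-1}(\lk_\triangle(i), \ZZ),
\]
as required. The mild subtlety (and the only place where care is needed) is the degree shift in the identification with reduced rather than ordinary simplicial cohomology, which comes from the fact that $\O^*_{x_i}$ evaluated on the whole space $\Spec^\bullet M_\triangle$ (corresponding to $j=0$) contributes a $\ZZ$, matching $\widetilde{\C}^{-1}(\lk_\triangle(i),\ZZ) = \ZZ$; this is exactly what is already verified in Lemma~\ref{lemma:commentD} and ensures the formula holds for all $j\geq 0$, including the boundary case $j=0$.
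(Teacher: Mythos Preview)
Your proposal is correct and follows essentially the same route as the paper: establish that $\{D(x_i)\}$ is an acyclic cover so that sheaf cohomology equals \v{C}ech cohomology, decompose $\O^*_{M_\triangle}$ as $\bigoplus_i \O^*_{x_i}$ via Theorem~\ref{theorem:semifree-decomposition-of-sheaf}, and then invoke Lemma~\ref{lemma:commentD}/Theorem~\ref{theorem:sum-cech-picard-link-groups} to identify each summand with the shifted reduced simplicial cochain complex of the link before passing to cohomology. One small imprecision: in your last paragraph the $\ZZ$ in degree $0$ of $\vC^\bullet_i$ is $\O^*_{x_i}(D(x_i))$ rather than the evaluation on all of $\Spec^\bullet M_\triangle$, but since you defer to Lemma~\ref{lemma:commentD} for the actual identification this does not affect the argument.
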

\begin{proof}
    Recall from the results in this chapter that we can use the open subsets defined by the variables $\{D(x_i)\}$ as a \v{C}ech covering for $\Spec^\bullet M_\triangle$ (Proposition~\ref{proposition:minimal-covering}) and we can associate to any non empty intersection in this covering a face of the simplicial complex (Proposition ~\ref{proposition:intersection-open-subsets}).
    This allows us to index the components of the sections with faces and their elements (Notation on page~\pageref{notation:simplicial-binoid-faces}), making us able to index the groups appearing in the \v{C}ech-Picard complex with the vertices in the simplicial complex and finally allowing us to split the cochain complex into smaller ones (Theorem~\ref{theorem:split-cech-picard-groups} for the groups and Theorem~\ref{theorem:split-cech-picard-maps} for the maps), and obtain
    \begin{equation*}
    \vC^\bullet=\bigoplus_{i\in V}\vC^\bullet_{i}.
    \end{equation*}
    On the other hand, in Theorem~\ref{theorem:semifree-decomposition-of-sheaf} we proved that there exists an isomorphism of sheaves
    \[
    \O^*_{M_\triangle}=\O^*_{x_1}\oplus \O^*_{x_2}\oplus \dots \oplus \O^*_{x_n}
    \]
    where $\O^*_{x_i}$ is isomorphic to ${j_i}_!\ZZ$, where $j_i$ is the open embedding $D(x_i)\longrightarrow \Spec^\bullet M_\triangle$.
    
    In Lemma~\ref{lemma:commentD} we observed that
    \[
    \vC^\bullet\left(\Spec^\bullet M, \O^*_{x_i}\right) = \widetilde{\C}^{\bullet-1}\left(\lk_\triangle(i), \ZZ\right).
    \]
    By putting all these things together, we obtain that
    \begin{equation}
    \begin{aligned}
    \H^j\left(\Spec^\bullet M_\triangle, \O^*_{M_\triangle}\right)&=\H^j\left(\Spec^\bullet M_\triangle, \bigoplus_{i\in V}\O^*_{x_i}\right)\\
    &=\bigoplus_{i\in V}\H^j\left(\Spec^\bullet M_\triangle, \O^*_{x_i}\right)\\
    &=\bigoplus_{i\in V}\widetilde{\H}^{j-1}\left(\lk_\triangle(i), \ZZ\right),
    \end{aligned}
    \end{equation}
    where $\widetilde{\H}$ is the usual reduced simplicial cohomology.
\end{proof}

\begin{corollary}\label{corollary:cohomology-simplicial-binoid}
    The $0$-th and $1$-th cohomology groups are always free and they have the following form
    \begin{align*}
    \H^0(\Spec^\bullet M_\triangle, \O^*)&=\ZZ^{\#\{0-\dim\text{ facets of $\triangle$}\}}\\
    \H^1(\Spec^\bullet M_\triangle, \O^*)&=\ZZ^r
    \end{align*}
    where
    \begin{align*}
    r&=\sum_{v\in V}\rk({\widetilde{\H}^0(\lk_\triangle(v), \ZZ)})\\
    &=\sum_{v\in V} \rk({\H^0(\lk_\triangle(v), \ZZ)}) -\#\{\text{$0$-$\dim$ non-facets of $\triangle$}\}
    \end{align*}
\end{corollary}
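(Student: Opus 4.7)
The strategy is to specialize Theorem~\ref{theorem:cohomology-simplicial-complex} to $j=0$ and $j=1$, so that the statement reduces to understanding the reduced simplicial cohomology of the link complexes $\lk_\triangle(v)$ in degrees $-1$ and $0$. Both of these are elementary to describe purely in terms of vertex data, which is why the resulting formulas are so clean.

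For $j=0$, the main theorem gives
\[
\H^0(\Spec^\bullet M_\triangle, \O^*_{M_\triangle}) \;\cong\; \bigoplus_{v \in V} \widetilde{\H}^{-1}(\lk_\triangle(v), \ZZ).
\]
I would then recall that $\widetilde{\H}^{-1}(\triangle', \ZZ) \cong \ZZ$ precisely when $\triangle' = \{\varnothing\}$ (the complex consisting of just the empty face), and vanishes for any simplicial complex containing at least one vertex. The next step is the combinatorial observation that $\lk_\triangle(v) = \{\varnothing\}$ if and only if the only face of $\triangle$ containing $v$ is $\{v\}$ itself, i.e.\ $\{v\}$ is a $0$-dimensional facet. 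Combining these two facts one summand of $\ZZ$ appears for each such vertex and nothing else contributes, proving the first formula.

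For $j=1$, the same theorem yields
\[
\H^1(\Spec^\bullet M_\triangle, \O^*_{M_\triangle}) \;\cong\; \bigoplus_{v \in V} \widetilde{\H}^0(\lk_\triangle(v), \ZZ).
\]
Each summand $\widetilde{\H}^0(\lk_\triangle(v), \ZZ)$ is free abelian, since reduced $0$-th simplicial cohomology with $\ZZ$ coefficients is always free (it counts connected components of the realization minus one); so the whole direct sum is free. The rank formula is then pure bookkeeping: for any non-empty simplicial complex $\triangle'$ on at least one vertex one has $\rk \widetilde{\H}^0(\triangle', \ZZ) = \rk \H^0(\triangle', \ZZ) - 1$, while if $\triangle' = \{\varnothing\}$ both sides vanish. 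Applied with $\triangle' = \lk_\triangle(v)$, the subtraction of $1$ happens exactly for those $v$ with $\{v\}$ not a facet, i.e.\ for the $0$-dimensional non-facets of $\triangle$. Summing over $v$ and recording the two equivalent expressions gives the two displayed formulas for $r$.

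I do not foresee a real obstacle here: the whole corollary is an unwinding of the main theorem in low degree. The only point worth treating carefully is the convention for $\widetilde{\H}^{-1}$ and the degenerate case $\lk_\triangle(v) = \{\varnothing\}$, since this is exactly where the $0$-dimensional facets enter the $\H^0$-count and simultaneously get excluded from the $\H^1$-count; being explicit there ensures that the bookkeeping between the two cohomology groups is consistent.
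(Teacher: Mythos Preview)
Your proposal is correct and matches the paper's approach: the paper states this corollary without proof, treating it as an immediate consequence of Theorem~\ref{theorem:cohomology-simplicial-complex}, and your argument is precisely the unwinding one would supply. Your handling of the degenerate case $\lk_\triangle(v)=\{\varnothing\}$ is exactly the point that makes the bookkeeping between the two displayed formulas for $r$ consistent.
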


\begin{corollary}
    $\H^j\left(\Spec^\bullet M_\triangle, \O^*_{M_\triangle}\right)=0$, for $j\geq \dim\triangle+1$.
\end{corollary}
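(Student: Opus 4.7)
The plan is to reduce the claim immediately to Theorem~\ref{theorem:cohomology-simplicial-complex}, which gives the isomorphism
\[
\H^j\left(\Spec^\bullet M_\triangle, \O^*_{M_\triangle}\right)\cong\bigoplus_{i\in V}\widetilde{\H}^{j-1}\left(\lk_\triangle(i), \ZZ\right).
\]
So it suffices to show that each summand on the right vanishes when $j\geq \dim\triangle+1$, that is, when $j-1\geq \dim\triangle$.

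The key observation is a purely combinatorial bound on the dimension of the link: for any vertex $i\in V$, one has $\dim \lk_\triangle(i)\leq \dim\triangle-1$. This is immediate from Definition~\ref{definition:link-complex}: if $G\in\lk_\triangle(i)$ is a face of dimension $k$, then $G\cup\{i\}\in\triangle$ is a face of dimension $k+1$, so $k+1\leq \dim\triangle$. (If $i$ is not contained in any face, then $\lk_\triangle(i)=\varnothing$ and the inequality is vacuous in the usual convention; I would mention this edge case briefly.) Hence $\dim\lk_\triangle(i)\leq \dim\triangle-1$ for every $i\in V$.

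Now I would invoke the standard fact that reduced simplicial cohomology of a simplicial complex $\triangle'$ with coefficients in an abelian group vanishes in degrees strictly greater than $\dim\triangle'$, since the cochain complex $\widetilde{\C}^\bullet(\triangle',\ZZ)$ is itself zero in those degrees. Applied to $\triangle'=\lk_\triangle(i)$, this gives $\widetilde{\H}^{j-1}(\lk_\triangle(i),\ZZ)=0$ whenever $j-1>\dim\lk_\triangle(i)$, which, by the bound above, is certainly satisfied once $j-1\geq\dim\triangle$, i.e.\ $j\geq\dim\triangle+1$. Combining this with the direct sum decomposition from Theorem~\ref{theorem:cohomology-simplicial-complex} yields the vanishing in the stated range. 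There is no real obstacle here; the content of the corollary is essentially the dimension bound on the links together with the main theorem.
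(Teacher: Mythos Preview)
Your proof is correct, but it takes a different route from the paper's primary argument. The paper argues directly at the level of the \v{C}ech complex: since $\dim\triangle=m$, any intersection of $m+2$ of the $D(x_i)$'s is empty (there being no face on $m+2$ vertices), so $\vC^{m+1}(\{D(x_i)\},\O^*_{M_\triangle})=0$ and the vanishing follows without invoking Theorem~\ref{theorem:cohomology-simplicial-complex} at all. Your argument instead feeds through the decomposition of that theorem and uses the dimension bound $\dim\lk_\triangle(i)\le\dim\triangle-1$; this is exactly the alternative proof the paper itself records in the Remark immediately following the Corollary. The paper's version is a bit more self-contained (it does not rely on the main theorem), while yours makes the role of the links explicit; both are short and essentially equivalent in strength.
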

\begin{proof}
    Let $m$ be the dimension of $\triangle$, so the facets of maximal dimension have $m+1$ vertices. Each intersection of $m+2$ subsets in $\{D(x_i)\}$ will then be empty, because there are no faces with $m+2$ vertices, so $\vC^{m+1}\left(\{D(x_i)\}, \O^*_{M_\triangle}\right)=0$ and the results follows trivially.
\end{proof}

\begin{remark}
    Another proof of the previous Corollary can be achieved through the dimension of the link complexes, that have dimension at most $m-1$, so their reduced cohomology is trivial in degree higher than that.
\end{remark}

\begin{remark}\label{remark:grothendieck-theorem}
    The previous Corollary is an expected result, particularly in view of the known Theorem of Grothendieck in the famous Tohoku paper, \cite[Theorem 3.6.5]{grothendieck1957surquelques}, where he shows it for any abelian sheaf on a topological space $X$ and degree $j \geq n+1$, where $n$ is the combinatorial dimension of $X$, i.e\ the maximum length of strictly decreasing chains of closed subsets.
\end{remark}

\section{Special cases}\label{section:special-cases-simplicialcomplexes}
In this section we will study some special cases and examples, for which we carry on explicit computations of the cohomology groups.

\subsection{The \texorpdfstring{$0$}{0}-dimensional case}

\begin{corollary}[$\dim\triangle = 0$]\label{proposition:cohmology-0-dim}
    For a $0$-dimensional simplicial complex on $V=[n]$ the only non trivial cohomology is $\H^0(\Spec^\bullet M_\triangle, \O^*) = \ZZ^n$.
\end{corollary}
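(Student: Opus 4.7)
The plan is to apply the main Theorem~\ref{theorem:cohomology-simplicial-complex} of the chapter, reducing the problem to computing reduced simplicial cohomology of the links of vertices in $\triangle$. The key observation is that a $0$-dimensional simplicial complex on $V=[n]$ consists precisely of $\triangle=\{\varnothing,\{1\},\dots,\{n\}\}$, with no edges or higher faces.

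First I would compute $\lk_\triangle(i)$ for each vertex $i\in V$. By Definition~\ref{definition:link-complex}, $\lk_\triangle(i)=\{G\in\triangle\mid G\cup\{i\}\in\triangle,\ G\cap\{i\}=\varnothing\}$. Since the only face of $\triangle$ containing $i$ is $\{i\}$ itself, the only $G$ satisfying both conditions is $G=\varnothing$. Hence $\lk_\triangle(i)=\{\varnothing\}$ for every $i\in V$, i.e.\ the link is the degenerate simplicial complex from Definition~\ref{definition:simplicial-binoid-emptyset}.

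Next I would compute the reduced simplicial cohomology of $\{\varnothing\}$ with coefficients in $\ZZ$. The reduced cochain complex $\widetilde{\C}^\bullet(\{\varnothing\},\ZZ)$ has $\widetilde{\C}^{-1}=\ZZ$ (generated by the empty face) and $\widetilde{\C}^k=0$ for all $k\geq 0$. Hence $\widetilde{\H}^{-1}(\{\varnothing\},\ZZ)=\ZZ$ and $\widetilde{\H}^k(\{\varnothing\},\ZZ)=0$ for every $k\geq 0$.

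Finally I would plug this into formula \eqref{formula:cohomology-simplicial-binoid} of Theorem~\ref{theorem:cohomology-simplicial-complex}:
\[
\H^j(\Spec^\bullet M_\triangle,\O^*_{M_\triangle})\cong\bigoplus_{i\in V}\widetilde{\H}^{j-1}(\lk_\triangle(i),\ZZ)=\bigoplus_{i\in V}\widetilde{\H}^{j-1}(\{\varnothing\},\ZZ).
\]
For $j=0$ each summand is $\ZZ$, so $\H^0=\ZZ^n$; for $j\geq 1$ each summand vanishes, so $\H^j=0$. This proof is essentially a direct bookkeeping exercise, with no real obstacle — the only thing to be careful about is the degree shift in \eqref{formula:cohomology-simplicial-binoid} and the fact that the nontrivial contribution comes from degree $-1$ of the reduced cochain complex of the empty link. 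As a sanity check, Corollary~\ref{corollary:cohomology-simplicial-binoid} gives the same answer immediately: the $0$-dimensional facets of $\triangle$ are exactly its $n$ vertices, and $\widetilde{\H}^0(\{\varnothing\},\ZZ)=0$ makes the rank of $\H^1$ vanish.
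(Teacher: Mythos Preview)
Your proof is correct and takes essentially the same approach as the paper: both rely on the main formula of Theorem~\ref{theorem:cohomology-simplicial-complex} (the paper via its specialization Corollary~\ref{corollary:cohomology-simplicial-binoid}, you directly), using that in dimension~$0$ every vertex is an isolated facet with link $\{\varnothing\}$. Your version is slightly more explicit in that it handles all $j\geq 1$ uniformly via the reduced cohomology of $\{\varnothing\}$, whereas the paper's one-line proof literally invokes only the Corollary covering $\H^0$ and $\H^1$ and leaves the higher vanishing implicit.
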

\begin{proof}
    We observe that the complex is just $n$ isolated vertices, then we apply the previous Corollary~\ref{corollary:cohomology-simplicial-binoid} and we get our thesis.
\end{proof}

\subsection{The affine space}

\begin{corollary}[The affine space]
    If $\triangle$ is an $(n-1)$-dimensional simplex, then $M_\triangle\cong{(\NN^n)}^\infty$. If $n\geq 2$, we have that $\lk_\triangle(i)\cong\Delta_{n-2}$, the $(n-2)$-dimensional simplex, whose reduced cohomology is trivial, so 
    \[
    \H^k(\Spec^\bullet M_\triangle, \O^*)=0
    \]
    for all $k\geq 0$.
    On the other hand, if $n=1$ then $\triangle$ is a point, the $0$-dimensional simplex, so $\lk_\triangle(i)=\{\varnothing\}$ and we can either use the result above or the property of reduced simplicial cohomology to get that $\H^0(\Spec^\bullet M_\triangle, \O^*) = \ZZ$, $\H^k(\Spec M_\triangle, \O^*) = 0$ for $k\geq 1$.
\end{corollary}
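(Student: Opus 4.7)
The plan is to derive this corollary as a direct application of the main Theorem~\ref{theorem:cohomology-simplicial-complex}, which reduces the computation of $\H^j(\Spec^\bullet M_\triangle, \O^*_{M_\triangle})$ to reduced simplicial cohomology of the links $\lk_\triangle(i)$ with coefficients in $\ZZ$. So the work is essentially bookkeeping: identify $\triangle$, compute its links, and invoke the known vanishing of reduced cohomology of a simplex.

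First I would verify the identification $M_\triangle \cong (\NN^n)^\infty$. If $\triangle$ is the full $(n-1)$-simplex on $V=[n]$, then every subset of $V$ is a face, so there are no minimal non-faces and the defining set of relations $\R$ of $M_\triangle = (x_1, \dots, x_n \mid \R)$ is empty, giving exactly the free commutative binoid $(\NN^n)^\infty$.

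Next, for $n \geq 2$ and any vertex $i \in V$, I would use Definition~\ref{definition:link-complex}: since every subset of $V$ lies in $\triangle$, the link $\lk_\triangle(i) = \{G \subseteq V \mid G \cup \{i\} \in \triangle,\ i \notin G\}$ is exactly the power set of $V \smallsetminus \{i\}$, i.e.\ the full $(n-2)$-simplex $\Delta_{n-2}$. A simplex is contractible (or, more elementarily, $\Delta_{n-2}$ is a cone on $\Delta_{n-3}$), hence $\widetilde{\H}^{k}(\Delta_{n-2}, \ZZ) = 0$ for every $k \geq -1$. Plugging this into formula~\eqref{formula:cohomology-simplicial-binoid} of Theorem~\ref{theorem:cohomology-simplicial-complex} yields
\[
\H^k(\Spec^\bullet M_\triangle, \O^*_{M_\triangle}) \cong \bigoplus_{i \in V} \widetilde{\H}^{k-1}(\Delta_{n-2}, \ZZ) = 0
\]
for all $k \geq 0$, which is the claim.

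For the degenerate case $n = 1$, the complex $\triangle$ is a single vertex, so $M_\triangle = \NN^\infty$ and the unique link is the void complex $\{\varnothing\}$. By the standard convention for the augmented simplicial chain complex, $\widetilde{\H}^{-1}(\{\varnothing\}, \ZZ) = \ZZ$ and $\widetilde{\H}^{k}(\{\varnothing\}, \ZZ) = 0$ for $k \geq 0$, so the formula gives $\H^0 = \ZZ$ and $\H^k = 0$ for $k \geq 1$. There is really no obstacle here; the only mild subtlety is keeping the indexing convention for reduced cohomology consistent with Theorem~\ref{theorem:cohomology-simplicial-complex} (in particular, that the degree shift by $-1$ handles the $n=1$ case correctly via $\widetilde{\H}^{-1}$).
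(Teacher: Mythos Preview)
Your proposal is correct and follows essentially the same approach as the paper: the corollary in the paper has no separate proof, the reasoning being embedded in the statement itself, and it amounts precisely to identifying the links as $(n-2)$-simplices (or $\{\varnothing\}$ when $n=1$), invoking the vanishing of their reduced cohomology, and feeding this into Theorem~\ref{theorem:cohomology-simplicial-complex}. Your write-up is simply a slightly more detailed unpacking of the same argument.
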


\subsubsection{The affine \texorpdfstring{${(\NN^3)}^\infty$}{aff3space}}\label{subsection:3-vertices}
The simplicial complex $\triangle$ is

\begin{minipage}[c]{0.5\textwidth}
    \[
    \triangle = \left\{\begin{aligned}
    &\varnothing,\{1\},\{2\}, \{3\},\\
    &\{1, 2\}, \{1, 3\}, \{2, 3\},\\
    &\{1, 2, 3\}\end{aligned}\right\}
    \]\vfill
\end{minipage}\begin{minipage}[c]{0.2\textwidth}
\begin{tikzpicture}[baseline=(current  bounding  box.center)]
\tikzstyle{point}=[circle,thick,draw=black,fill=black,inner sep=0pt,minimum width=4pt,minimum height=4pt]
\node (v1)[point, label={[label distance=0cm]-135:$1$}] at (0,0) {};
\node (v2)[point, label={[label distance=0cm]90:$2$}] at (0.5,0.87) {};
\node (v3)[point, label={[label distance=0cm]-45:$3$}] at (1,0) {};
\draw (v1.center) -- (v2.center);
\draw (v1.center) -- (v3.center);
\draw (v2.center) -- (v3.center);
\draw[color=black!20, style=fill, outer sep = 20pt] (0.1,0.06) -- (0.5,0.77) -- (0.9,0.06) -- cycle;
\end{tikzpicture}
\end{minipage}\begin{minipage}[c]{0.3\textwidth}
\vfill
$M_{\triangle} = (x_1, x_2, x_3\mid \varnothing)$\\
\vfill
\end{minipage}

Its spectrum is
\[
\Spec^\bullet  M_\triangle=\bigl\{\infty, \{x_1\}, \{x_2\}, \{x_3\}, \{x_1, x_2\}, \{x_1, x_3\}, \{x_2, x_3\}\bigr\}
\]
Thanks to the symmetry of this complex, we can describe any of the $\O^*_{x_i}$ and then use it for all the others.
The link of $x_1$ is the simplex on $\{x_2, x_3\}$, so we can see that the complex for simplicial cohomology of this link is
\begin{equation*}
\begin{tikzcd}[baseline=(current  bounding  box.center), cramped, row sep = 0pt,
/tikz/column 1/.append style={anchor=base east},
/tikz/column 2/.append style={anchor=base west}, column sep = 2em]
\widetilde{\C}_{\lk_\triangle(1)}: & \ZZ^\varnothing \arrow[r] & \ZZ^{\{2\}}\oplus\ZZ^{\{3\}} \arrow[r] & \ZZ^{\{2, 3\}} \arrow[r] & 0\\
& \alpha \arrow[r, mapsto]  & (\alpha, \alpha)                       & &\\
&   & (\alpha_2, \alpha_3) \arrow[r, mapsto]                        & -\alpha_2+\alpha_3 &\\
\end{tikzcd}
\end{equation*}
and has trivial cohomology groups, so the cohomology $\H^i(\Spec^\bullet M_\triangle, \O^*)$ is $0$ in every degree.
The details of the computations of the complex of the link are carried over in Subsection~\ref{subsection:computing-for-favourite-example}, where we have a similar situation.

\subsection{Graphs}

\begin{corollary}[$\dim\triangle = 1$]\label{corollary:graphs-general}
    Let $\triangle = (V, E)$ be a simple graph. Then $\H^0(\vC^\bullet)=\ZZ^s$, where $s=\#\{v\in V\mid \nexists e\in E, v\in e\}$ is the number of isolated vertices of the graph and $\H^1(\vC^\bullet)= \ZZ^r$, where $r=\displaystyle\sum_{\begin{subarray}{c}
        v\in V\\
        v\text{ not isolated}
        \end{subarray}}\left(\#\{e\in E\mid v\in e\}-1\right)$, the sum over all non-isolated vertices of the number of edges containing them, minus 1. All higher cohomologies vanish.
\end{corollary}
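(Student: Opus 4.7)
The plan is to apply Theorem~\ref{theorem:cohomology-simplicial-complex} directly, reducing the sheaf cohomology computation to a computation of reduced simplicial cohomology of the links of the vertices, and then to observe that, for a graph, each link is a $0$-dimensional simplicial complex whose reduced cohomology is trivial to describe.

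First, I would unwind what $\lk_\triangle(v)$ looks like. Since $\dim\triangle=1$, the faces of $\triangle$ containing $v$ are $\{v\}$ and the edges $\{v,w\}$ with $\{v,w\}\in E$. Hence $\lk_\triangle(v)=\{\varnothing\}\cup\{\{w\}\mid \{v,w\}\in E\}$. Let $d(v)=\#\{e\in E\mid v\in e\}$. Two cases arise: if $v$ is isolated, then $\lk_\triangle(v)=\{\varnothing\}$ is the void complex, whose reduced simplicial cohomology is $\widetilde{\H}^{-1}=\ZZ$ and $\widetilde{\H}^{i}=0$ for $i\geq 0$; if $v$ is not isolated, then $\lk_\triangle(v)$ is a $0$-dimensional complex on $d(v)$ vertices, whose reduced cohomology is $\widetilde{\H}^0=\ZZ^{d(v)-1}$ and $\widetilde{\H}^i=0$ for $i\neq 0$.

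Plugging these into the formula
\[
\H^j(\Spec^\bullet M_\triangle, \O^*_{M_\triangle})\cong \bigoplus_{v\in V}\widetilde{\H}^{j-1}(\lk_\triangle(v), \ZZ)
\]
gives the statement immediately: in degree $j=0$ only the isolated vertices contribute, each with one copy of $\ZZ$, yielding $\ZZ^s$; in degree $j=1$ only the non-isolated vertices contribute, each with $\ZZ^{d(v)-1}$, yielding $\ZZ^r$ with $r=\sum_{v\text{ non-isolated}}(d(v)-1)$; and in degree $j\geq 2$ every summand vanishes.

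This corollary is essentially a bookkeeping exercise once Theorem~\ref{theorem:cohomology-simplicial-complex} is available, so there is no real obstacle; the only mild subtlety is being consistent about the convention for reduced cohomology in degree $-1$ on the void complex $\{\varnothing\}$, which is exactly what makes isolated vertices contribute to $\H^0$ rather than to $\H^1$. Vanishing of $\H^j$ for $j\geq 2$ could equivalently be obtained from the Corollary immediately preceding, since $\dim\triangle+1=2$.
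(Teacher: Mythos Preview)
Your proof is correct and is exactly the intended approach: the paper states this as a corollary without proof, since it follows immediately from Theorem~\ref{theorem:cohomology-simplicial-complex} (and its specialization in Corollary~\ref{corollary:cohomology-simplicial-binoid}) by computing the links of the vertices, which for a graph are either $\{\varnothing\}$ (isolated vertices) or a discrete set of $d(v)$ points (non-isolated vertices). Your handling of the degree $-1$ reduced cohomology of $\{\varnothing\}$ is also consistent with the paper's conventions.
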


\begin{remark}
    It is important to highlight that, in this case with this very explicit description, again we are able to compute cohomology via indexing on the vertices, but we are not able to transform this in an index that depends on the edges. Morally, vertices give generators and edges give relations between them, although not in a completely straight-forward way.
\end{remark}

\begin{definition}
    Let $(V, E)$ be a graph. The degree of a vertex $v\in V$ is the number of neighbours of $v$, i.e.\ the number of edges that contain $v$. A graph is $k$-regular if every vertex has the same degree $k$.
\end{definition}

\begin{remark}
    Let $k_v$ be the degree of $v\in V$ in $\triangle$. The Corollary above for the $\H^1$ of a graph can be rewritten as $\H^1(\vC^\bullet)= \ZZ^r$, where $r=\displaystyle\sum_{\begin{subarray}{c}
        v\in V\\
        v\text{ not isolated}
        \end{subarray}}\left(k_v-1\right).$
    
\end{remark}

\begin{remark}[$k$-regular graph]
    Let $V=[n]$. In the case $k=0$, we are looking at the case in Corollary~\ref{proposition:cohmology-0-dim}. Let $k>0$, so any vertex $i$ is contained in $k$ edges. There are no isolated vertices, so $\H^0(\vC^\bullet)=0$. About the first cohomology, the link of each $i$ is exactly $k$ disjoint vertices, so
    \[
    \H^1(\vC^\bullet) = \ZZ^{n(k-1)}.
    \]
\end{remark}
\begin{remark}[Complete graph]
    A particular case is the complete graph $K_n$, that is $(n-1)$-regular and for which we get
    \[
    \H^1(\vC^\bullet) = \ZZ^{n(n-2)}.
    \]
\end{remark}

\begin{remark}[Cycle graph]
    The cycle graph on $V=[n]$ is the graph with edges $E = \left\{\{i, i+1\}\mid i\in [n-1]\right\}\cup\{1, n\}$. This graphs is $2$-regular, so
    \[
    \H^1(\vC^\bullet) = \ZZ^n.
    \]
\end{remark}

\begin{remark}[Path graph]
    The path graph on $V=[n]$ is the acyclic graph with edges $E = \left\{\{i, i+1\}\mid i\in [n-1]\right\}$. In this case all but two vertices ($1$ and $n$, the extremal ones) have degree $2$, and the two extremal have degree 1, so
    \[
    \H^1(\vC^\bullet) = \ZZ^{n-2}.
    \]
\end{remark}

During the presentation of a poster in October 2015 in Osnabrück, professor Gunnar Fløystad asked us what is the relation, if there is any, between this local Picard group of a simplicial binoid of a graph and the graph-theoretical Picard group of the graph, also knows as Jacobian group, Sandpile group and Critical group. The latter is defined to be the torsion part of $\faktor{\ZZ^V}{\mathrm{im}(L)}$, there $L$ is the adiacency matrix of the graph. One can prove that this group is finite, see for example \cite[Proposition 32.2]{biggs1997algebraic} and \cite[Theorem 7.3]{biggs1999chip}. So, as far as we know, there is no relation between our (free) groups and the graph-theoretical ones.

\subsection{An example with isolated vertices}\label{secondexample}
Let $\triangle$ be the simplicial complex on $V=\{1, 2, 3, 4\}$ with facets $\{1, 2\}$, $\{3\}$ and $\{4\}$.
\begin{minipage}[c]{0.5\textwidth}
    \[
    \triangle = \left\{\begin{aligned}
    &\varnothing,\{1\}, \{2\}, \{3\}, \{4\}, \\
    &\{1, 2\}
    \end{aligned}\right\}
    \]\vfill
\end{minipage}
\begin{minipage}[c]{0.5\textwidth}
    \hspace{6em}\begin{tikzpicture}
    \tikzstyle{point}=[circle,thick,draw=black,fill=black,inner sep=0pt,minimum width=4pt,minimum height=4pt]
    \node (v1)[point, label={[label distance=0cm]-135:$1$}] at (0,0) {};
    \node (v2)[point, label={[label distance=0cm]90:$2$}] at (0.5,0.87) {};
    \node (v3)[point, label={[label distance=0cm]-45:$3$}] at (1,0) {};
    \node (v4)[point, label={[label distance=0cm]+45:$4$}] at (1.5,0.87) {};
    
    \draw (v1.center) -- (v2.center);
    \end{tikzpicture}
\end{minipage}

with simplicial binoid 
\[
M_\triangle=\left\{x_1, x_2, x_3, x_4\midd \begin{aligned}
x_1+x_3=\infty, x_1+x_4=\infty\\
x_2+x_3=\infty, x_2+x_4=\infty\\
x_3+x_4=\infty
\end{aligned}\right\}.
\]

The \v{C}ech-Picard complex on the punctured spectrum looks like
\begin{equation*}
\begin{tikzcd}[cramped, row sep = 0pt]
\vC: 0\rar &\displaystyle \bigoplus_{\{i\}\in\triangle}\ZZ\rar["\partial^0"]&\displaystyle \bigoplus_{\{i, j\}\in\triangle}\ZZ^2\rar & 0\\
&(\alpha_1, \alpha_2, \alpha_3, \alpha_4)\rar[mapsto] & (-\alpha_1, \alpha_2)
\end{tikzcd}
\end{equation*}
So we have $\H^0(\vC)=\ZZ^2, \H^1(\vC)=0$ and these are the results that we were expecting from Corollary~\ref{corollary:graphs-general}.

\subsection{The case \texorpdfstring{$x_1+x_2+x_3=\infty$}{x1+x2+x3=inf}}\label{subsection:x+y+z=infty}
The (one-dimensional) simplicial complex $\triangle$ is

\begin{minipage}[c]{0.45\textwidth}
    \[
    \triangle = \left\{\begin{aligned}
    &\varnothing,\{1\},\{2\}, \{3\},\\
    &\{1, 2\}, \{1, 3\}, \{2, 3\}\end{aligned}\right\}
    \]\vfill
\end{minipage}\begin{minipage}[c]{0.18\textwidth}
\begin{tikzpicture}[baseline=(current  bounding  box.center)]
\tikzstyle{point}=[circle,thick,draw=black,fill=black,inner sep=0pt,minimum width=4pt,minimum height=4pt]
\node (v1)[point, label={[label distance=0cm]-135:$1$}] at (0,0) {};
\node (v2)[point, label={[label distance=0cm]90:$2$}] at (0.5,0.87) {};
\node (v3)[point, label={[label distance=0cm]-45:$3$}] at (1,0) {};
\draw (v1.center) -- (v2.center);
\draw (v1.center) -- (v3.center);
\draw (v2.center) -- (v3.center);
\end{tikzpicture}
\end{minipage}\begin{minipage}[c]{0.37\textwidth}
\vfill
$(x_1, x_2, x_3\mid x_1+x_2+x_3=\infty)$\\
\vfill
\end{minipage}

and its spectrum is
\[
\Spec^\bullet M_\triangle=\bigl\{\{x_1\}, \{x_2\}, \{x_3\}, \{x_1, x_2\}, \{x_1, x_3\}, \{x_2, x_3\}\bigr\}
\]
From what we did above, we know that the cohomology groups are $\H^0=0$ and $\H^1=\ZZ^3$. In this case we have a nice explicit description of the elements of $\H^1$ with $M_\triangle$-sets that are invertible on the punctured spectrum.\footnote{Recall that an invertible $M$-set $S$ on an open subset $U$ is such that $\widetilde{S}$ is invertible as a $\O_{M_\triangle}$-sheaf on $U$, see Definition~\ref{definition:invertible-sheaf} for the latter.} Indeed, they have a global description as $M_\triangle$-sets of the form
\[
S=\left(e_1, e_2, e_3\midd \begin{aligned}
e_1+ax_2&=e_2+bx_1,\\
e_1+cx_3&=e_3+dx_1,\\
e_2+ex_3&=e_3+fx_2
\end{aligned}\right)
\]
with $a, b, c, d, e$ and $f$ natural strictly positive numbers (so they yield global objects). For all $x_i$'s we have $S_{x_i}=\langle e_i\rangle_{{M_\triangle}_{x_i}}$, so for every choice of $(a,b,c,d,e,f)$, the $M_\triangle$-set $S$ is invertible on the punctured spectrum.
\\
To study the isomorphism classes, we focus on $x_1$ and $x_2$ and then other cases will follow similarly, thanks to the symmetry of the problem.

We have isomorphisms $M_{x_1}\cong S_{x_1}$, where $e_2=e_1+ax_2-bx_1$, $e_3=e_1+cx_3-dx_1$, so it is clear that we need $b$ and $d$ strictly positive in order to compute these localisations. Similarly we have that $M_{x_2}\cong S_{x_2}$, that gives us other positivity conditions on the coefficients.
When we intersect, we have that the second localizations are equal
\[
{(S_{x_1})}_{x_2}=S_{x_1+x_2}=S_{x_2+x_1}={(S_{x_2})}_{x_1}
\]and we can draw the following diagram
\begin{equation*}
\begin{tikzcd}[baseline=(current  bounding  box.center),
/tikz/column 1/.append style={anchor=base east},
/tikz/column 2/.append style={anchor=base east}, 
/tikz/column 4/.append style={anchor=base west},
/tikz/column 5/.append style={anchor=base west}, column sep = 2em]
M_{x_1}\rar["\sim"] \arrow[dr]& S_{x_1}\arrow[dr] & & S_{x_2}\arrow[dl]& \lar[swap, "\sim"] M_{x_2}\arrow[dl]\\
&M_{x_1+x_2}\arrow[r, "\sim"] \arrow[rr, bend right=30, "\sim", swap]& S_{x_1+x_2} & \arrow[swap, l,"\sim"] M_{x_2+x_1}&\\[-1.5em]
&0\arrow[rr, bend right=30, mapsto, start anchor = east, end anchor = west]& & ax_2-bx_1&
\end{tikzcd}
\end{equation*}

By symmetry we now know that locally, on the double intersections, we have the relations

\vspace{2ex}

\hspace{\mathindent}\begin{tabular}{l|c}
    $D(x_1)\cap D(x_2)$ & $e_1=e_2+bx_1-ax_2$\\
    $D(x_1)\cap D(x_3)$ & $e_1=e_3+dx_1-cx_3$\\
    $D(x_2)\cap D(x_3)$ & $e_2=e_3+fx_2-ex_3$\\
\end{tabular}

\vspace{2ex}

Now, since every cocycle in $\H^1$ represents a local isomorphism $\varphi$ of the vector bundles, we can try to express these relations by studying the cohomology.

Thanks to symmetry, we can concentrate on one particular variable, say $x_1$, and study $\O^*_{x_1}$. The link of $1$ in the complex is $\{\varnothing, \{2\}, \{3\}\}$. This gives us the chain complex for reduced simplicial cohomology\footnote{Here $\varnothing$ on $\ZZ$ is an index, the object $\ZZ^\varnothing$ is not the set of maps, that would be empty.}
\begin{equation*}
\begin{tikzcd}[baseline=(current  bounding  box.center), row sep = 0pt,
/tikz/column 1/.append style={anchor=base east},
/tikz/column 2/.append style={anchor=base west}, column sep = 2em]
\widetilde{\C}_{\lk_\triangle(1)}: & \ZZ^\varnothing \arrow[r, "\partial^{-1}"] & \ZZ^{\{2\}}\oplus\ZZ^{\{3\}} \arrow[r, "\partial^0"] & 0\\
& \alpha \arrow[r, mapsto]  & (\alpha, \alpha)                       & &
\end{tikzcd}
\end{equation*}

Thus inducing the relation in $\H^1$
\[
(\beta_1, \beta_2)\sim (\beta'_1, \beta'_2) \Longleftrightarrow \beta_1-\beta_2 = \beta'_1 - \beta'_2
\]

When reporting this back to the original simplicial complex, this means that two $M_\triangle$-sets $S$ and $S'$ are in the same isomorphism class if the coefficients of $x_1$ in the face $\{1, 2\}$ and $\{1, 3\}$ have the same difference.

So, such class $\varphi\in\H^1$ corresponds to the $6$-uple $(b, -a, d, -c, f, -e)$ of coefficients up to the equivalence relation given by the subgroup $(\alpha, \beta, \alpha, \gamma, -\beta, \gamma)$.
\\
So, all the possible $M_\triangle$-sets invertible on $\Spec^\bullet M_\triangle$ are of the same form as $S$, varying the parameters $(a, b, c, d, e, f)$ with isomorphism given by the relations stated above.
To explain more clearly, consider another invertible $M_\triangle$-set 
\[
S'=\left(e_1, e_2, e_3\midd \begin{aligned}
e_1+a'x_2&=e_2+b'x_1,\\
e_1+c'x_3&=e_3+d'x_1,\\
e_2+e'x_3&=e_3+f'x_2
\end{aligned}\right)
\]
Then $S'\cong S$ if and only if
\begin{align*}
b-d=b'-d'&&a-f=a'-f'&&c-e=c'-e'.
\end{align*}

We can see the line bundles that correspond to the vertex $x_1$ in the cohomology of $\O^*_{x_1}$, $\H^1(\O^*_{x_1})\cong \ZZ$, as the subgroup of $\faktor{\ZZ^6}{\ZZ^3}$ generated by $(1, b, 1, d, 1, 1)$. Similarly for $x_2$ the subgroup is generated by $(a, 1, 1, 1, 1, f)$ and for $x_3$ by $(1, 1, c, 1, e, 1)$.

\subsection{The star graph}\label{example:the-star-graph}

Consider the simplicial complex on $V=[n]$ with facets $\{1, k\}$ for any $k=2, \dots, n$, and no other facets

\begin{minipage}[c]{0.5\textwidth}
    \[
    \triangle = \left\{\begin{aligned}
    &\varnothing,\{1\},\{2\}, \{3\},\dots, \{n\}\\
    &\{1, 2\}, \{1, 3\}, \dots, \{1, n\}\end{aligned}\right\}
    \]\vfill
\end{minipage}
\begin{minipage}[c]{0.5\textwidth}
    \begin{tikzpicture}[baseline=(current  bounding  box.center)]
    \tikzstyle{point}=[circle,thick,draw=black,fill=black,inner sep=0pt,minimum width=4pt,minimum height=4pt]
    \node (v1)[point, label={[label distance=0cm]-135:$1$}] at (0,0) {};
    \node (v2)[point, label={[label distance=0cm]45:$2$}] at (1,0.87) {};
    \node (v3)[point, label={[label distance=0cm]0:$3$}] at (1,0) {};
    \node (v4)[] at (1,-0.87) {$\vdots$};
    \node (vk)[point, label={[label distance=0cm]-45:$n$}] at (1,-2) {};
    
    \draw (v1.center) -- (v2.center);
    \draw (v1.center) -- (v3.center);
    \draw (v1.center) -- (vk.center);
    \end{tikzpicture}
\end{minipage}

From above we know that $\H^0=0$ and $\H^1=\ZZ^{n-2}$. We can use an argument similar to the previous example to show that the line bundles can be globally represented by $M$-sets of the form
\[
S=\left(e_1, e_2, e_3, \dots, e_n\midd \begin{aligned}
e_1+x_2&=e_2+a_2x_1,\\
e_1+x_3&=e_3+a_3x_1,\\
&\dots\\
e_1+x_n&=e_n+a_nx_1
\end{aligned}\right),
\]
with $a_k$ a strictly positive integer. It is easy to check that on any $D(x_i)$, $S_{x_i}$ is generated by $e_i$. The intersections $D(x_i)\cap D(x_j)$ are empty if and only if at least one of $i$ and $j$ is different from one. Assume that $i=1$, then over $D(x_1)\cap D(x_j)=D(x_1+x_j)$ the cohomology class is given by $a_jx_1-x_j$. In particular, we can easily see how
\[
\H^1\cong \ZZ^{n-2}\cong \faktor{\ZZ^{n-1}}{\Delta \ZZ},
\]
where $\Delta\ZZ$ is the diagonal. We can understand this if we think of adding $(\lambda, \lambda, \dots, \lambda)$ as a scaling of the variables by the same factor, i.e.\ $S$, defined by $(a_2, \dots, a_n)$ and $S'$, defined by $(b_2, \dots, b_n)$, are isomorphic if and only if $(a_2, \dots, a_n)-(b_2, \dots, b_n)=(\lambda, \dots, \lambda)\in\ZZ^{n-1}$.

So, the cohomology classes are generated by the elements $(0, \dots, 0, 1, 0, \dots, 0)$, but if we pick $a_i=1$ and $a_j=0$ for $j\neq i$, the $M$-set
\[
S=\left(e_1, e_2, e_3, \dots, e_n\midd \begin{aligned}
e_1+x_2&=e_2,\\
e_1+x_3&=e_3,\\
&\dots\\
e_1+x_i&=e_i+x_1\\
&\dots\\
e_1+x_n&=e_n
\end{aligned}\right)\cong \left(e_1, e_i \midd e_1+x_i=e_i+x_1\right)
\]
does not define a line bundle because, for example, it is not trivial on $D(x_2)$. So, a representation of the generators is given by $(1, 1, \dots, 2, \dots, 1)$, that is clearly in relation with $(0, 0,\dots, 1,\dots, 0)$, but the latter does not define a line bundle.

Moreover, since smashing line bundles corresponds to adding the classes of the vectors $(a_2, \dots, a_n)$ in $\H^1$, we get that $S\wedge S'$, restricted to the punctured spectrum, is given by $(a_2+b_2, \dots, a_n+b_n)$, where all this $a_i$'s and $b_i$'s are strictly positive.

\subsection{Our favourite Example}\label{subsection:computing-for-favourite-example}
Let us go back one more time to our favourite Example~\ref{example:spectrum-simplicial-binoid-8} 

\begin{minipage}[c]{0.5\textwidth}
    \[
    \triangle = \left\{\begin{aligned}
    &\varnothing,\{1\},\{2\},\{3\},\{4\},\\
    &\{1, 2\}, \{1, 3\}, \{2, 3\}, \{3, 4\}\\
    &\{1, 2, 3\}\end{aligned}\right\}
    \]\vfill
\end{minipage}\begin{minipage}[c]{0.5\textwidth}
\hspace{6em}\begin{tikzpicture}
\tikzstyle{point}=[circle,thick,draw=black,fill=black,inner sep=0pt,minimum width=4pt,minimum height=4pt]
\node (v1)[point, label={[label distance=0cm]-135:$1$}] at (0,0) {};
\node (v2)[point, label={[label distance=0cm]90:$2$}] at (0.5,0.87) {};
\node (v3)[point, label={[label distance=0cm]-45:$3$}] at (1,0) {};
\node (v4)[point, label={[label distance=0cm]-45:$4$}] at (1.5,0.87) {};

\draw (v1.center) -- (v2.center);
\draw (v1.center) -- (v3.center);
\draw (v2.center) -- (v3.center);
\draw (v3.center) -- (v4.center);

\draw[color=black!20, style=fill, outer sep = 20pt] (0.1,0.06) -- (0.5,0.77) -- (0.9,0.06) -- cycle;
\end{tikzpicture}
\end{minipage}

with simplicial binoid
\[
M_\triangle=\{x_1, x_2, x_3, x_4\mid x_1+x_4=\infty, x_2+x_4=\infty\}.
\]
of which we computed the simplicial binoids of the links of the vertices in Example~\ref{example:spectrum-simplicial-binoid-4}

\vspace{1ex}

\begin{minipage}[c]{0.4\textwidth}
    \[
    \lk_\triangle(1) = \left\{\begin{aligned}
    \varnothing,&\{2\},\{3\},\\
    &\{2, 3\}\end{aligned}\right\}
    \]\vfill
\end{minipage}\begin{minipage}[c]{0.2\textwidth}
\hspace{2em}\begin{tikzpicture}
\tikzstyle{point}=[circle,thick,draw=black,fill=black,inner sep=0pt,minimum width=4pt,minimum height=4pt]
\node (v2)[point, label={[label distance=0cm]90:$2$}] at (0.5,0.87) {};
\node (v3)[point, label={[label distance=0cm]-45:$3$}] at (1,0) {};
\draw (v2.center) -- (v3.center);
\end{tikzpicture}
\end{minipage}\begin{minipage}[c]{0.4\textwidth}
\[
M_{\lk_\triangle(1)} = (x_2, x_3\mid \varnothing)
\]\vfill
\end{minipage}

\vspace{1ex}

\begin{minipage}[c]{0.4\textwidth}
    \[
    \lk_\triangle(2) = \left\{\begin{aligned}
    \varnothing,&\{1\},\{3\},\\
    &\{1, 3\}\end{aligned}\right\}
    \]\vfill
\end{minipage}\begin{minipage}[c]{0.2\textwidth}
\begin{tikzpicture}[baseline=(current  bounding  box.center)]
\tikzstyle{point}=[circle,thick,draw=black,fill=black,inner sep=0pt,minimum width=4pt,minimum height=4pt]
\node (v1)[point, label={[label distance=0cm]-135:$1$}] at (0,0) {};
\node (v3)[point, label={[label distance=0cm]-45:$3$}] at (1,0) {};
\draw (v1.center) -- (v3.center);
\end{tikzpicture}
\end{minipage}\begin{minipage}[c]{0.4\textwidth}
\[
M_{\lk_\triangle(2)} = (x_1, x_3\mid \varnothing)
\]\vfill
\end{minipage}

\vspace{1ex}

\begin{minipage}[c]{0.4\textwidth}
    \[
    \lk_\triangle(3) = \left\{\begin{aligned}
    &\varnothing,\{1\},\{2\},\\
    &\{4\},\{1, 2\}\end{aligned}\right\}
    \]\vfill
\end{minipage}\begin{minipage}[c]{0.2\textwidth}
\begin{tikzpicture}
\hspace{.5em}\tikzstyle{point}=[circle,thick,draw=black,fill=black,inner sep=0pt,minimum width=4pt,minimum height=4pt]
\node (v1)[point, label={[label distance=0cm]-135:$1$}] at (0,0) {};
\node (v2)[point, label={[label distance=0cm]90:$2$}] at (0.5,0.87) {};
\node (v4)[point, label={[label distance=0cm]-45:$4$}] at (1.5,0.87) {};
\draw (v1.center) -- (v2.center);
\end{tikzpicture}
\end{minipage}\begin{minipage}[c]{0.4\textwidth}
\[
M_{\lk_\triangle(3)} = \left(\begin{aligned}& x_1, x_2, x_4\\\midrule & x_1+x_4=\infty,\\ & x_2+x_4=\infty\end{aligned}\right)
\]\vfill
\end{minipage}

\vspace{1ex}

\begin{minipage}[c]{0.4\textwidth}
    \[
    \lk_\triangle(4) = \left\{\begin{aligned}
    \varnothing,\{3\}\end{aligned}\right\}
    \]\vfill
\end{minipage}\begin{minipage}[c]{0.2\textwidth}
\begin{tikzpicture}[baseline=(current  bounding  box.center)]
\hspace{4.5em}\tikzstyle{point}=[circle,thick,draw=black,fill=black,inner sep=0pt,minimum width=4pt,minimum height=4pt]
\node (v3)[point, label={[label distance=0cm]-45:$3$}] at (1,0) {};
\end{tikzpicture}
\end{minipage}\begin{minipage}[c]{0.4\textwidth}
\[
M_{\lk_\triangle(4)} = (x_3\mid \varnothing)
\]\vfill
\end{minipage}

Their reduced simplicial cohomology complexes look like
\begin{equation*}\begin{tikzcd}[baseline=(current  bounding  box.center), cramped, row sep = 0pt, /tikz/column 1/.append style={anchor=base east},
/tikz/column 2/.append style={anchor=base west}, column sep = 2em]
\widetilde{\C}_{\lk_\triangle(1)}: & \ZZ^\varnothing \arrow[r] & \ZZ^{\{2\}}\oplus\ZZ^{\{3\}} \arrow[r] & \ZZ^{\{2, 3\}} \arrow[r] & 0\\
& \alpha \arrow[r, mapsto]  & (\alpha, \alpha)                       & &\\
&   & (\alpha_2, \alpha_3) \arrow[r, mapsto]                        & -\alpha_2+\alpha_3 &\\
\end{tikzcd}
\end{equation*}

\begin{equation*}\begin{tikzcd}[baseline=(current  bounding  box.center), cramped, row sep = 0pt, /tikz/column 1/.append style={anchor=base east},
/tikz/column 2/.append style={anchor=base west}, column sep = 2em]        
\widetilde{\C}_{\lk_\triangle(2)}: & \ZZ^\varnothing \arrow[r] & \ZZ^{\{1\}}\oplus\ZZ^{\{3\}} \arrow[r] & \ZZ^{\{1, 3\}} \arrow[r]  & 0\\
& \alpha \arrow[r, mapsto]   & (\alpha, \alpha)                       & &\\
&   & (\alpha_1, \alpha_3)  \arrow[r, mapsto]                       & -\alpha_1+\alpha_3 &\\        
\end{tikzcd}
\end{equation*}        

\begin{equation*}\begin{tikzcd}[baseline=(current  bounding  box.center), cramped, row sep = 0pt, /tikz/column 1/.append style={anchor=base east},
/tikz/column 2/.append style={anchor=base west}, column sep = 2em]        
\widetilde{\C}_{\lk_\triangle(3)}: & \ZZ^\varnothing \arrow[r] & \ZZ^{\{1\}}\oplus\ZZ^{\{2\}}\oplus\ZZ^{\{4\}} \arrow[r] & \ZZ^{\{1, 2\}} \arrow[r] & 0\\
& \alpha  \arrow[r, mapsto]  & (\alpha, \alpha, \alpha)                       & &\\
&   & (\alpha_1, \alpha_2, \alpha_4)  \arrow[r, mapsto]                  & -\alpha_1+\alpha_2 &\\
\end{tikzcd}
\end{equation*}        

\begin{equation*}\begin{tikzcd}[baseline=(current  bounding  box.center), cramped, row sep = 0pt, /tikz/column 1/.append style={anchor=base east},
/tikz/column 2/.append style={anchor=base west}, column sep = 2em]        
\widetilde{\C}_{\lk_\triangle(4)}: & \ZZ^\varnothing \arrow[r] & \ZZ^{\{3\}} \arrow[r] & 0\\
& \alpha  \arrow[r, mapsto]  & \alpha                        &
\end{tikzcd}
\end{equation*}
So we have the cohomology groups\\\vspace{1ex}

\hspace{\mathindent}\begin{tabular}{l|cccc}
    & $\widetilde{\C}_{\lk_\triangle(1)}$ & $\widetilde{\C}_{\lk_\triangle(2)}$ & $\widetilde{\C}_{\lk_\triangle(3)}$ & $\widetilde{\C}_{\lk_\triangle(4)}$\\
    $\H^{-1}(\cdot)$ & $0$ & $0$ & $0$ &  $0$ \\
    $\H^0(\cdot)$ & $\ZZ$ & $\ZZ$ & $\ZZ^2$ &  $\ZZ$ \\
    $\H^1(\cdot)$ & $0$ & $0$ & $0$ &  $0$ 
\end{tabular}\\\vspace{1ex}
and we can apply the formula in Theorem~\ref{theorem:cohomology-simplicial-complex} and its specialization of Corollary~\ref{corollary:cohomology-simplicial-binoid} to obtain the cohomology groups for the \v{C}ech-Picard chain complex
\begin{align*}
\H^0(\Spec^\bullet M_\triangle, \O^*)&=\ZZ^{0}=0\\
\H^1(\Spec^\bullet M_\triangle, \O^*)&=\ZZ^{(1+1+2+1)-4}=\ZZ^1\\
\H^j(\Spec^\bullet M_\triangle, \O^*)&=0 \quad \text{ for } j\geq 2
\end{align*}

A generator of the group $\H^1$ is given by the ideal $I=\langle x_1, x_2, x_4\rangle$, and we can easily see that locally $I_{x_i}\cong M_{x_i}$ for any variable.

The inverse of $I$ in $\Pic^{\loc}M_\triangle$ is its dual $I^\vee$, as in general for ideals.

\subsection{An example with torsion cohomology}\label{thirdexample}
We know already that $\H^0$ and $\H^1$ are always free groups. Consider the following well-known minimal triangulation of $\PP^2_\RR$
\begin{center}
    \begin{tikzpicture}
    \tikzstyle{point}=[circle,thick,draw=black,fill=black,inner sep=0pt,minimum width=4pt,minimum height=4pt]
    \fill[fill=black!20] (3,0)--(1,0)--(0,1.732)--(1,3.464)--(3,3.464)--(4,1.732)--(3,0)--cycle;
    
    \node (1a)[point, label={-60:\scriptsize $1$}] at (3,0) {};
    \node (2a)[point, label={240:\scriptsize $2$}] at (1,0) {};
    \node (3a)[point, label={180:\scriptsize $3$}] at (0,1.732) {};
    \node (1b)[point, label={120:\scriptsize $1$}] at (1,3.464) {};
    \node (2b)[point, label={60:\scriptsize $2$}] at (3,3.464) {};
    \node (3b)[point, label={0:\scriptsize $3$}] at (4,1.732) {};
    \node (4)[point, label={60:\scriptsize $4$}] at (1.5,0.866) {};
    \node (5)[point, label={-60:\scriptsize $5$}] at (1.5,2.598) {};
    \node (6)[point, label={180:\scriptsize $6$}] at (3,1.732) {};
    
    \draw (1a)--(2a)--(4)--(1a);
    \draw (2a)--(3a)--(4)--(2a);
    \draw (3a)--(4)--(5)--(3a);
    \draw (1b)--(3a)--(5)--(1b);
    \draw (1b)--(2b)--(5)--(1b);
    \draw (4)--(5)--(6)--(4);
    \draw (2b)--(5)--(5)--(2b);
    \draw (2b)--(3b)--(6)--(2b);
    \draw (3b)--(6)--(1a)--(3b);
    \draw (1a)--(4)--(6)--(1a);
    
    \end{tikzpicture}
\end{center}
with 6 vertices, 15 edges and 10 faces. Now we extend it to obtain an example with torsion cohomology. To do so, we add one new vertex to every face of the previous triangulation and then we close it under the subset operation. We obtain the following complex
\[
\triangle = \left\{\begin{aligned}
&\varnothing,\{1\}, \{2\}, \{3\}, \{4\}, \{5\}, \{6\}, \{7\},\\
&\{ 1, 2 \}, \{ 1, 3 \}, \{ 1, 4 \}, \{ 1, 5 \}, \{ 1, 6 \}, \{ 1, 7 \}, \{ 2, 3 \}, \{ 2, 4 \}, \{ 2, 5 \}, \{ 2, 6 \}, \\
&\{ 2, 7 \}, \{ 3, 4 \}, \{ 3, 5 \}, \{ 3, 6 \}, \{ 3, 7 \}, \{ 4, 5 \}, \{ 4, 6 \}, \{ 4, 7 \}, \{ 5, 6 \}, \{ 5, 7 \}, \{ 6, 7 \},\\ 
&\{ 1, 2, 4 \}, \{ 1, 2, 5 \}, \{ 1, 2, 7 \}, \{ 1, 3, 5 \}, \{ 1, 3, 6 \}, \{ 1, 3, 7 \}, \{ 1, 4, 6 \}, \{ 1, 4, 7 \},\\
&\{ 1, 5, 7 \}, \{ 1, 6, 7 \}, \{ 2, 3, 4 \}, \{ 2, 3, 6 \}, \{ 2, 3, 7 \}, \{ 2, 4, 7 \}, \{ 2, 5, 6 \}, \{ 2, 5, 7 \}, \\
&\{ 2, 6, 7 \}, \{ 3, 4, 5 \}, \{ 3, 4, 7 \}, \{ 3, 5, 7 \}, \{ 3, 6, 7 \}, \{ 4, 5, 6 \}, \{ 4, 5, 7 \}, \{ 4, 6, 7 \}, \{ 5, 6, 7 \},\\
&\{1, 2, 4, 7\}, \{1, 2, 5, 7\}, \{1, 3, 5, 7\}, \{1, 3, 6, 7\}, \{1, 4, 6, 7\},\\
&\{2, 3, 4, 7\}, \{2, 3, 6, 7\}, \{2, 5, 6, 7\}, \{3, 4, 5, 7\}, \{4, 5, 6, 7\}
\end{aligned}\right.
\]
that, from a geometrical point of view, corresponds to (a triangulation of) a cone over $\PP^2_\RR$.\\
We have 7 simplicial complexes given by the links of the points. The first 6 are isomorphic to each other thanks to the properties of the projective space (they are triangulations of fundamental affine open subsets of this cone) while the 7th is nothing else than the triangulation of the projective space we started with.
\begin{align*}
\lk_\triangle(1)&=\left\{\begin{aligned}
&\varnothing, \{ 2 \}, \{ 3 \}, \{ 4 \}, \{ 5 \}, \{ 6 \}, \{ 7 \},\\
&\{ 2, 4 \}, \{ 2, 5 \}, \{ 2, 7 \}, \{ 3, 5 \}, \{ 3, 6 \}, \{ 3, 7 \}, \{ 4, 6 \}, \{ 4, 7 \}, \{ 5, 7 \}, \{ 6, 7 \},\\
&\{ 2, 4, 7 \}, \{ 2, 5, 7 \}, \{ 3, 5, 7 \}, \{ 3, 6, 7 \}, \{ 4, 6, 7 \}
\end{aligned}\right.\end{align*}
\begin{align*}
\lk_\triangle(2)&=\left\{\begin{aligned}
&\varnothing, \{ 1 \}, \{ 3 \}, \{ 4 \}, \{ 5 \}, \{ 6 \}, \{ 7 \},\\
&\{ 1, 4 \}, \{ 1, 5 \}, \{ 1, 7 \}, \{ 3, 4 \}, \{ 3, 6 \}, \{ 3, 7 \}, \{ 4, 7 \}, \{ 5, 6 \}, \{ 5, 7 \}, \{ 6, 7 \},\\
&\{ 1, 4, 7 \}, \{ 1, 5, 7 \}, \{ 3, 4, 7 \}, \{ 3, 6, 7 \}, \{ 5, 6, 7 \}
\end{aligned}\right.\end{align*}
\begin{align*}
\lk_\triangle(3)&=\left\{\begin{aligned}
&\varnothing, \{ 1 \}, \{ 2 \}, \{ 4 \}, \{ 5 \}, \{ 6 \}, \{ 7 \},\\
&\{ 1, 5 \}, \{ 1, 6 \}, \{ 1, 7 \}, \{ 2, 4 \}, \{ 2, 6 \}, \{ 2, 7 \}, \{ 4, 5 \}, \{ 4, 7 \}, \{ 5, 7 \}, \{ 6, 7 \},\\
&\{ 1, 5, 7 \}, \{ 1, 6, 7 \}, \{ 2, 4, 7 \}, \{ 2, 6, 7 \}, \{ 4, 5, 7 \}
\end{aligned}\right.\end{align*}
\begin{align*}
\lk_\triangle(4)&=\left\{\begin{aligned}
&\varnothing, \{ 1 \}, \{ 2 \}, \{ 3 \}, \{ 5 \}, \{ 6 \}, \{ 7 \},\\
&\{ 1, 2 \}, \{ 1, 6 \}, \{ 1, 7 \}, \{ 2, 3 \}, \{ 2, 7 \}, \{ 3, 5 \}, \{ 3, 7 \}, \{ 5, 6 \}, \{ 5, 7 \}, \{ 6, 7 \},\\
&\{ 1, 2, 7 \}, \{ 1, 6, 7 \}, \{ 2, 3, 7 \}, \{ 3, 5, 7 \}, \{ 5, 6, 7 \}
\end{aligned}\right.\end{align*}
\begin{align*}
\lk_\triangle(5)&=\left\{\begin{aligned}
&\varnothing, \{ 1 \}, \{ 2 \}, \{ 3 \}, \{ 4 \}, \{ 6 \}, \{ 7 \},\\
&\{ 1, 2 \}, \{ 1, 3 \}, \{ 1, 7 \}, \{ 2, 6 \}, \{ 2, 7 \}, \{ 3, 4 \}, \{ 3, 7 \}, \{ 4, 6 \}, \{ 4, 7 \}, \{ 6, 7 \},\\
&\{ 1, 2, 7 \}, \{ 1, 3, 7 \}, \{ 2, 6, 7 \}, \{ 3, 4, 7 \}, \{ 4, 6, 7 \}
\end{aligned}\right.\end{align*}
\begin{align*}
\lk_\triangle(6)&=\left\{\begin{aligned}
&\varnothing, \{ 1 \}, \{ 2 \}, \{ 3 \}, \{ 4 \}, \{ 5 \}, \{ 7 \},\\
&\{ 1, 3 \}, \{ 1, 4 \}, \{ 1, 7 \}, \{ 2, 3 \}, \{ 2, 5 \}, \{ 2, 7 \}, \{ 3, 7 \}, \{ 4, 5 \}, \{ 4, 7 \}, \{ 5, 7 \},\\
&\{ 1, 3, 7 \}, \{ 1, 4, 7 \}, \{ 2, 3, 7 \}, \{ 2, 5, 7 \}, \{ 4, 5, 7 \}
\end{aligned}\right.\end{align*}
\begin{align*}
\lk_\triangle(7)&=\left\{\begin{aligned}
&\varnothing, \{ 1 \}, \{ 2 \}, \{ 3 \}, \{ 4 \}, \{ 5 \}, \{ 6 \},\\
&\{ 1, 2 \}, \{ 1, 3 \}, \{ 1, 4 \}, \{ 1, 5 \}, \{ 1, 6 \}, \{ 2, 3 \}, \{ 2, 4 \}, \{ 2, 5 \}, \{ 2, 6 \}, \{ 3, 4 \},\\
&\{ 3, 5 \}, \{ 3, 6 \}, \{ 4, 5 \}, \{ 4, 6 \}, \{ 5, 6 \}\\
&\{ 1, 2, 4 \}, \{ 1, 2, 5 \}, \{ 1, 3, 5 \}, \{ 1, 3, 6 \}, \{ 1, 4, 6 \},\\
&\{ 2, 3, 4 \}, \{ 2, 3, 6 \}, \{ 2, 5, 6 \}, \{ 3, 4, 5 \}, \{ 4, 5, 6 \}
\end{aligned}\right.
\end{align*}
In particular, it is really easy to compute the cohomology of the \v{C}ech-Picard chain complex of $\triangle$, since reduced simplicial cohomology of the above smaller complexes is well known

\vspace{1ex}

\hspace{\mathindent}\begin{tabular}{c|cc}
    & $\lk_\triangle(i)$ & $\lk_\triangle(7)$\\
    $\widetilde{\H}^{-1}(\cdot)$ & $0$                & $0$\\
    $\widetilde{\H}^0(\cdot)$ & $0$                & $0$\\
    $\widetilde{\H}^1(\cdot)$ & $0$                & $0$\\
    $\widetilde{\H}^2(\cdot)$ & $0$                & $\faktor{\ZZ}{2\ZZ}$
\end{tabular}

\vspace{2ex}

So we have that $\H^0(\vC_\triangle)=\H^1(\vC_\triangle)=\H^2(\vC_\triangle)=0$ but $\H^3(\vC_\triangle)=\faktor{\ZZ}{2\ZZ}$ and we obtained torsion cohomology, as expected.

\section{The Picard group of an open subset of \texorpdfstring{$\Spec M_\triangle$}{specM}}
Recall from Definition~\ref{definition:divisor-class-group-general} that we defined the divisor class group of a general binoid to be $\Pic(W)$, where $W$ is the set of prime ideals of height at most one. Obviously if $\dim \triangle\leq 1$ then $W=\Spec^\bullet M_\triangle$, so we already computed it above.

In what follows, we consider an arbitrary dimension of the simplicial complex, and we will see that without strong assumptions, like in the next Proposition, we have little hope to find an easy description of $\Pic(V)$ for some open subset $V$ of $\Spec M_\triangle$ and even of the simpler $\Cl(M_\triangle)$.

\begin{proposition}\label{proposition:cover-W-pure-smp-cpx}
    Let $\triangle$ be a pure simplicial complex of dimension $m$.\footnote{Recall that a simplicial complex is pure if all its facets have the same dimension.} Let $W$ be the subset of $\Spec M$ of prime ideals of height at most 1. Then $W$ can be covered by $\{D(G)\}_{G\in\triangle_{m-1}}$.
\end{proposition}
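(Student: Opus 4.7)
The plan is to translate the statement entirely into the language of the simplicial complex via the inclusion-reversing correspondence of Remark~\ref{remark:boettger:correspondence-faces-prime-ideals}, and then verify it combinatorially using the purity hypothesis.

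First I would characterise, in combinatorial terms, which primes lie in $W$. Under the bijection $F \leftrightarrow \langle x_i \mid i \in V \smallsetminus F\rangle$, a chain of prime ideals $\p_0 \subsetneq \dots \subsetneq \p_k$ corresponds to a chain of faces $F_0 \supsetneq \dots \supsetneq F_k$ in $\triangle$. Because $\triangle$ is pure of dimension $m$, every face sits inside a facet of size exactly $m+1$. Consequently, the length of the longest descending chain of faces terminating at $F_k$ equals $|F_0^{\max}| - |F_k|$, where $F_0^{\max}$ is any facet containing $F_k$. Thus the prime attached to a face $F$ has height $0$ precisely when $F$ is a facet (so $|F|=m+1$), and height $1$ precisely when $|F|=m$, i.e.\ $F \in \triangle_{m-1}$. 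So $W$ corresponds to the set of faces of dimension $m$ or $m-1$.

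Next I would unpack $D(G)$ for $G \in \triangle_{m-1}$. By Remark~\ref{remark:Df=Dintersectionsupport}, $D(G) = \bigcap_{i \in G} D(x_i)$, and by Proposition~\ref{proposition:intersection-open-subsets} combined with Remark~\ref{remark:boettger:correspondence-faces-prime-ideals} this is
\[
D(G) = \{F \in \triangle \mid G \subseteq F\}.
\]
Every such $F$ has $|F| \geq |G| = m$, so the corresponding prime has height at most $1$; hence $D(G) \subseteq W$.

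For the reverse inclusion, take any $\p \in W$, corresponding to a face $F$ with $|F| \in \{m, m+1\}$. If $|F|=m$ then $F \in \triangle_{m-1}$ itself, so $\p \in D(F)$. If $|F|=m+1$ then $F$ is a facet; choose any subset $G \subsetneq F$ of size $m$, which is a face because $\triangle$ is subset-closed, and $\p \in D(G)$ with $G \in \triangle_{m-1}$. Combining both inclusions yields $W = \bigcup_{G \in \triangle_{m-1}} D(G)$.

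I do not foresee a serious obstacle: the entire argument is bookkeeping on the face lattice, and purity is used exactly at the point where we need every facet to have the same size $m+1$ (so that faces of size $m$ are genuinely of height $1$, never of height $\geq 2$). Without purity, a face of size $m$ could be contained in a larger facet, producing a longer chain and pushing the corresponding prime to higher height, so the cover by $\{D(G)\}_{G \in \triangle_{m-1}}$ would miss parts of $W$.
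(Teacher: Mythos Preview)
Your proof is correct and follows essentially the same route as the paper's: translate primes to faces via the inclusion-reversing bijection, use purity to identify height-$0$ and height-$1$ primes with faces of dimension $m$ and $m-1$ respectively, and then check the cover (the paper invokes Proposition~\ref{proposition:minimal-open-set-prime-ideal} for the minimal open neighbourhood, you instead verify both inclusions directly, but the substance is the same). One small slip in your closing commentary: without purity the danger is not that a face of size $m$ could sit inside something larger and acquire height $\geq 2$ (the complex has dimension $m$, so no face exceeds size $m+1$), but rather that there would exist \emph{facets of size strictly less than $m$}, which are height-$0$ primes not contained in any $D(G)$ with $G\in\triangle_{m-1}$, so the cover would miss them.
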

\begin{proof}
    Thanks to the correspondence between prime ideals and faces, we know that the prime ideals of height zero correspond exactly to the facets, all of dimension $m$ thanks to the pureness of the simplicial complex, and so the prime ideals of height $1$ correspond to all the faces of dimension $m-1$. Thanks to Proposition~\ref{proposition:minimal-open-set-prime-ideal} we know that each of this prime ideals is minimally covered by only one open subset. Let $x_1, \dots, x_n$ be the generators of $M_\triangle$, let $\p$ be a prime of height 1 and assume without loss of generality that it corresponds to the face $G=\{x_1, \dots, x_m\}\in\triangle_{m-1}$. The minimal open subset covering it is then $D(x_1+\dots +x_m)=D(G)$.
\end{proof}

Thanks to the correspondences proven in this Chapter, it is easy to prove the next result.

\begin{proposition}
    Let $F$ be a face of dimension $k$ in a simplicial complex $\triangle$ of dimension $m$ and let $\p$ be its corresponding prime ideal. 
    Let $h$ be the maximal dimension of a facet that contains $F$. Then $\gls{height-prime}=h-k$.
\end{proposition}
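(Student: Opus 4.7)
The plan is to leverage the inclusion-reversing bijection between faces of $\triangle$ and prime ideals of $M_\triangle$ stated in Remark~\ref{remark:boettger:correspondence-faces-prime-ideals}, so that the height of $\p$ can be computed by studying chains of faces containing $F$ rather than chains of prime ideals contained in $\p$.

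\textbf{First step.} Let $\p = \p_F = \langle x_i \mid i \in V \smallsetminus F\rangle$. Under the correspondence, a strictly increasing chain of prime ideals
\[
\p_0 \subsetneq \p_1 \subsetneq \dots \subsetneq \p_n = \p_F
\]
corresponds bijectively to a strictly decreasing chain of faces
\[
F = F_n \subsetneq F_{n-1} \subsetneq \dots \subsetneq F_0,
\]
all lying in $\triangle$ and containing $F$. So $\htnew(\p_F)$ equals the maximum length of such a chain of faces starting from $F$.

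\textbf{Second step (upper bound).} Any chain as above satisfies $n = |F_0| - |F| \leq |F_0| - (k+1)$, because consecutive faces differ by at least one vertex. Moreover $F_0 \in \triangle$ must contain $F$, so $\dim F_0 \leq h$, i.e.\ $|F_0| \leq h+1$. Hence $n \leq h - k$.

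\textbf{Third step (lower bound).} To realize the bound, choose a facet $G$ of maximal dimension $h$ containing $F$, which exists by hypothesis. Enumerate the vertices in $G \smallsetminus F = \{v_1, \dots, v_{h-k}\}$ and set
\[
F_i := F \cup \{v_1, \dots, v_{h-k-i}\}, \qquad i = 0, 1, \dots, h-k,
\]
so that $F_0 = G$ and $F_{h-k} = F$. Since $\triangle$ is subset-closed, each $F_i$ is a face, and $F_0 \supsetneq F_1 \supsetneq \dots \supsetneq F_{h-k}$. Translating back via the correspondence produces a chain of prime ideals of length $h-k$ ending at $\p_F$.

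\textbf{Conclusion.} Combining the two bounds yields $\htnew(\p_F) = h - k$. The argument is essentially bookkeeping once the face/prime correspondence is in place, so no real obstacle arises; the only point to be careful about is that the minimal element $\p_0$ of a chain need not be $\langle \infty \rangle$ (since $M_\triangle$ is not integral in general), but this is exactly why we are free to let $F_0$ be any facet containing $F$, not only a facet of maximal global dimension $m$.
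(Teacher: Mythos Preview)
Your proof is correct and follows the same approach as the paper: translate chains of primes into chains of faces via the correspondence of Remark~\ref{remark:boettger:correspondence-faces-prime-ideals}, then observe that a facet of maximal dimension containing $F$ yields the longest such chain. One small slip: in the upper bound step you write $n = |F_0| - |F|$, but your own justification (``consecutive faces differ by at least one vertex'') only gives $n \leq |F_0| - |F|$; that inequality is all you need, so the conclusion is unaffected.
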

\begin{proof}
    An increasing chain of prime ideals that has $\p$ as maximal element, corresponds to a decreasing chain of faces that has $F$ as minimal element. Clearly the facet with the maximal dimension that contains $F$ gives the longest chain, that has length exactly $h-k+1$, so $\htnew\p=h-k$.
\end{proof}

\begin{example}\label{example:picW-favourite}
    Let us continue with our favourite Example~\ref{example:spectrum-simplicial-binoid-8}
    
    \begin{minipage}[c]{0.5\textwidth}
        \[
        \triangle = \left\{\begin{aligned}
        &\varnothing,\{1\},\{2\},\{3\},\{4\},\\
        &\{1, 2\}, \{1, 3\}, \{2, 3\}, \{3, 4\}\\
        &\{1, 2, 3\}\end{aligned}\right\}
        \]\vfill
    \end{minipage}\begin{minipage}[c]{0.5\textwidth}
    \hspace{6em}\begin{tikzpicture}
    \tikzstyle{point}=[circle,thick,draw=black,fill=black,inner sep=0pt,minimum width=4pt,minimum height=4pt]
    \node (v1)[point, label={[label distance=0cm]-135:$1$}] at (0,0) {};
    \node (v2)[point, label={[label distance=0cm]90:$2$}] at (0.5,0.87) {};
    \node (v3)[point, label={[label distance=0cm]-45:$3$}] at (1,0) {};
    \node (v4)[point, label={[label distance=0cm]-45:$4$}] at (1.5,0.87) {};
    
    \draw (v1.center) -- (v2.center);
    \draw (v1.center) -- (v3.center);
    \draw (v2.center) -- (v3.center);
    \draw (v3.center) -- (v4.center);
    
    \draw[color=black!20, style=fill, outer sep = 20pt] (0.1,0.06) -- (0.5,0.77) -- (0.9,0.06) -- cycle;
    \end{tikzpicture}
\end{minipage}

and recall that its spectrum is the poset 

\vspace{1ex}

\hspace{\mathindent}$\Spec M_\triangle=$\begin{tikzpicture}[baseline={(current bounding box.center)}]
\node (1234) at (0,0){$\langle x_1,x_2,x_3, x_4\rangle$};
\node (123) at (-3,-1){$\langle x_1, x_2, x_3\rangle$};
\node (124) at (-1,-1){$\langle x_1, x_2, x_4\rangle$};
\node (134) at (1,-1){$\langle x_1, x_3, x_4\rangle$};
\node (234) at (3,-1){$\langle x_2, x_3, x_4\rangle$};
\node (12) at (-3,-2){$\langle x_1, x_2\rangle$};
\node (14) at (-1,-2){$\langle x_1, x_4\rangle$};
\node (24) at (1,-2){$\langle x_2, x_4\rangle$};
\node (34) at (3,-2){$\langle x_3, x_4\rangle$};
\node (4) at (1,-3){$\langle x_4\rangle$};

\draw[-](1234)--(123);
\draw[-](1234)--(124);
\draw[-](1234)--(134);
\draw[-](1234)--(234);
\draw[-](123)--(12);
\draw[-](124)--(12);
\draw[-](124)--(14);
\draw[-](124)--(24);
\draw[-](134)--(14);
\draw[-](134)--(34);
\draw[-](234)--(24);
\draw[-](234)--(34);
\draw[-](14)--(4);
\draw[-](24)--(4);
\draw[-](34)--(4);
\end{tikzpicture}
\vspace{1ex}

The prime ideals of height 0 correspond to the facets of the simplicial complex, so they are
\[
X^{(0)}=\{\langle x_4\rangle, \langle x_1, x_2\rangle\}
\]
and the prime ideals of height 1 are
\[
X^{(1)}=\{ \langle x_1, x_2, x_3\rangle, \langle x_1, x_4\rangle, \langle x_2, x_4\rangle, \langle x_3, x_4\rangle \}
\]
so the minimal cover of $W$ is
\[
\mathscr{V}=\{ D(x_4), D(x_2+x_3), D(x_1+x_3), D(x_1+x_2) \}.
\]
The intersections of these open subsets are
\begin{align*}
D(x_4)\cap D(x_i+x_j)&=\varnothing, \ \forall i, j\neq 4\\
D(x_i+x_j)\cap D(x_j+x_k) &= \{\langle x_4\rangle\}, \ \forall i\neq k\\
D(x_2+x_3)\cap D(x_1+x_3)\cap D(x_1+x_2) &=\{\langle x_4 \rangle\}
\end{align*}
so the \v{C}ech complex that we have to consider is
\[
\begin{tikzcd}[baseline=(current  bounding  box.center), cramped,
/tikz/column 1/.append style={anchor=base east},
/tikz/column 2/.append style={anchor=base west}]
\ZZ^{\{4\}}\oplus \ZZ^{\{2,3\}}\oplus \ZZ^{\{1,3\}}\oplus \ZZ^{\{1,2\}} \arrow[out=0, in=180, looseness=2, overlay, d, pos=0.04, "\delta^0"]\\
\ZZ^{\{1,2,3\}}\oplus \ZZ^{\{1,2,3\}}\oplus \ZZ^{\{1,2,3\}} \arrow[r, "\delta^1"] & \ZZ^{\{1,2, 3\}}\arrow[r, "\delta^2"] & 0
\end{tikzcd}
\]
Since we know that we can decompose $\O^*_{M_\triangle}$ in the direct sum of $\O^*_{x_i}$'s, we can decompose this chain complex into the direct sum of the chain complexes
\begin{equation*}
\begin{tikzcd}[baseline=(current  bounding  box.center), cramped]
\vC^\bullet: \vC_i^0\arrow[r, "\partial^0"] & \vC_i^1\arrow[r, "\partial^1"] & \vC_i^2\arrow[r, "\partial^2"] & 0
\end{tikzcd}
\end{equation*}
with $i=1, \dots, 4$.

The groups in these smaller complexes are
\begin{align*}
\vC^0_1 = \ZZ^2 &=\left\{\left(\sigma^{\{1,2\}}_{1}, \sigma^{\{1,3\}}_{1}\right)\right\}&
\vC^0_2 = \ZZ^2 &=\left\{\left(\sigma^{\{1,2\}}_{2}, \sigma^{\{2,3\}}_{2}\right)\right\}\\
\vC^0_3 = \ZZ^3 &=\left\{\left(\sigma^{\{1,3\}}_{3}, \sigma^{\{2,3\}}_{3}\right)\right\}&
\vC^0_4 = \ZZ &=\left\{\left(\sigma^{\{4\}}_{4}\right)\right\}
\end{align*}
but for higher degree we have to be careful, because any face con be represented many times. Here $\{1, 2, 3\}$ has to be taken in account 3 times for $\vC^1$, so for clarity we index it with a subscript $a$, $b$ or $c$, that correspond to the intersections $D(\{1,2\})\cap D(\{1,3\})$, $D(\{1,2\})\cap D(\{2,3\})$ and $D(\{1,3\})\cap D(\{2,3\})$ respectively.
\begin{align*}
\vC^1_1 &= \ZZ^3 =\left\{\left(\sigma^{\{1,2,3\}}_{1, a}, \sigma^{\{1,2,3\}}_{1, b}, \sigma^{\{1,2,3\}}_{1, c}\right)\right\}\\
\vC^1_2 &= \ZZ^3 =\left\{\left(\sigma^{\{1,2,3\}}_{2, a}, \sigma^{\{1,2,3\}}_{2, b}, \sigma^{\{1,2,3\}}_{2, c}\right)\right\}\\
\vC^1_3 &= \ZZ^3 =\left\{\left(\sigma^{\{1,2,3\}}_{3, a}, \sigma^{\{1,2,3\}}_{3, b}, \sigma^{\{1,2,3\}}_{3, c}\right)\right\}\\
\vC^1_4 &= 0
\end{align*}
for $\vC^2$, the face $\{1, 2, 3\}$ has to be taken in account only once, so we get
\begin{align*}
\vC^2_1 &= \ZZ =\left\{\left(\sigma^{\{1,2,3\}}_{1}\right)\right\}\\
\vC^2_2 &= \ZZ =\left\{\left(\sigma^{\{1,2,3\}}_{2}\right)\right\}\\
\vC^2_3 &= \ZZ =\left\{\left(\sigma^{\{1,2,3\}}_{3}\right)\right\}\\
\vC^2_4 &= 0
\end{align*}
where all the $\sigma_*^{\{*\}}$ are integer numbers.

Now that we split the groups, we turn to the maps. We start with $\partial^0$
\[
\begin{tikzcd}[baseline=(current  bounding  box.center), cramped,
/tikz/column 1/.append style={anchor=base east}, column sep = -18em,
ampersand replacement=\&]
\left((\sigma^{\{1, 2\}}_{1}, \sigma^{\{1, 2\}}_{2}), (\sigma^{\{1, 3\}}_{1}, \sigma^{\{1, 3\}}_{3}), (\sigma^{\{2, 3\}}_{2}, \sigma^{\{2, 3\}}_{3}), (\sigma^{\{4\}}_{4})\right)\arrow[out=0, in=180, looseness=3, overlay, dr, pos=0.05, "\partial^0", mapsto, end anchor=170]\\
\& \begin{aligned}\left(\sigma^{\{1, 3\}}_{1}\right.-\sigma^{\{1, 2\}}_{1}&,&\ -\sigma^{\{1,2\}}_2&,& \sigma^{\{1, 3\}}_{3},\hspace{1em}\\
-\sigma^{\{1, 2\}}_{1}&,&\ \sigma^{\{2, 3\}}_2-\sigma^{\{1,2\}}_2&,& \sigma^{\{2, 3\}}_{3},\hspace{1em}\\
-\sigma^{\{1, 3\}}_{1}&,&\ \sigma^{\{2, 3\}}_2&,& \sigma^{\{2, 3\}}_{3}\left.-\sigma^{\{1,3\}}_3\right)
\end{aligned}
\end{tikzcd}
\]
That we can split as the sum of
\[
\begin{tikzcd}[baseline=(current  bounding  box.center), cramped, row sep = 0pt,
/tikz/column 1/.append style={anchor=base east},
/tikz/column 2/.append style={anchor=base west},]
\ZZ^{\{1,2\}}_{1}\oplus \ZZ^{\{1,3\}}_{1}\arrow[r, "\partial^0_1"] & \ZZ^{\{1, 2, 3\}}_{1}\oplus \ZZ^{\{1, 2, 3\}}_{1}\oplus \ZZ^{\{1, 2, 3\}}_{1}\\
\left(\sigma^{\{1, 2\}}_{1}, \sigma^{\{1, 3\}}_{1}\right)\arrow[r, mapsto] & \left(\sigma^{\{1, 3\}}_{1}-\sigma^{\{1, 2\}}_{1}, -\sigma^{\{1, 2\}}_{1}, -\sigma^{\{1, 3\}}_{1}\right)
\end{tikzcd}
\]
and the equivalent diagrams for the vertices $2$ and $3$.

We look at the map $\partial^1$ only in the simpler case when we concentrate on the vertex $1$.
This map looks like
\[
\begin{tikzcd}[baseline=(current  bounding  box.center), cramped, row sep = 0pt,
/tikz/column 1/.append style={anchor=base east},
/tikz/column 2/.append style={anchor=base west},]
\ZZ^{\{1, 2, 3\}}_{1, a}\oplus \ZZ^{\{1, 2, 3\}}_{1, b}\oplus \ZZ^{\{1, 2, 3\}}_{1, c}\arrow[r, "\partial^1_1"] & \ZZ^{\{1, 2, 3\}}_1\\
\left(\sigma^{\{1, 2, 3\}}_{1, a}, \sigma^{\{1, 2, 3\}}_{1, b}, \sigma^{\{1, 2, 3\}}_{1, a}\right)\arrow[r, mapsto] & \left(\sigma^{\{1, 2, 3\}}_{1, a}-\sigma^{\{1, 2, 3\}}_{1, b}+\sigma^{\{1, 2, 3\}}_{1, c}\right)
\end{tikzcd}
\]

So we recover that we can describe the original complex as a direct sum of the smaller ones, like we proved in general above.
\end{example}

\begin{definition}
    Let $\triangle$ be a simplicial complex and let $\{G_i\}_{i\in I}\subseteq\triangle$ be a subset of the faces. The \emph{crosscut complex of $\{G_i\}_{i\in I}$ in $\triangle$} is the simplicial complex $\gls{crosscut-complex}$ on vertex set $I$ such that 
    \begin{itemize}
        \item $\varnothing\in\ccc(\{G_i\}_{i\in I}, \triangle)$,
        \item for any $J\subseteq I$, $J\in\ccc(\{G_i\}_{i\in I}, \triangle)$ if and only if $\displaystyle\bigcup_{j\in J} G_j\in\triangle$.
    \end{itemize}
\end{definition}

This definition is a special case of the more general crosscut complex of a join-semi\-lattice, where we see $\{G_i\}_{i\in I}$ as generating a sub-lattice of $\triangle_\cup^\infty=(\triangle \cup \{V\}, \cup, \varnothing, V)$. The interested reader might refer to \cite[Section 6.4]{Boettger} for more details on $\triangle_\cup^\infty$ and to \cite[Section 10]{bjorner1995topological} for more applications of the crosscut complex.\\
It is worth noting the similarity between the definition of the crosscut complex and the nerve of a covering, given in Definition~\ref{definition:nerve-of-covering}, where we use the intersection instead of the union.

\begin{remark}
    Clearly if $\{G_i\}$ is the set of vertices $\{v\}_{v\in V}$ of $\triangle$, then
    \[
    \ccc(\{G_i\}_{i\in I}, \triangle)=\triangle.
    \]
\end{remark}

The following Proposition generalizes Corollary~\ref{corollary:cech-covering-nerve}.

\begin{proposition}\label{proposition:nerve-crosscut}
    Let $\{D(G_i)\}$ be a collection of fundamental open subsets of $\Spec M_\triangle$. Then $\nerve(\{D(G_i)\})=\ccc(\{G_i\}_{i\in I}, \triangle)$.
\end{proposition}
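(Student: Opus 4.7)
The plan is to verify the equality of the two simplicial complexes directly from the definitions, using as the only nontrivial input the characterization of nonempty intersections of combinatorial fundamental open subsets given in Proposition~\ref{proposition:intersection-open-subsets}.

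Both $\nerve(\{D(G_i)\})$ and $\ccc(\{G_i\}_{i\in I}, \triangle)$ are simplicial complexes on the same vertex set $I$, and both contain $\varnothing$ by definition. So it suffices to show, for every nonempty $J \subseteq I$, that
\[
\bigcap_{j \in J} D(G_j) \neq \varnothing \quad \Longleftrightarrow \quad \bigcup_{j \in J} G_j \in \triangle.
\]

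The key step is to rewrite each $D(G_j)$ as an intersection over the vertices it contains. By Remark~\ref{remark:Df=Dintersectionsupport}, for a face $G_j=\{i_1, \dots, i_k\}$ we have $D(G_j) = D(x_{i_1}+\dots+x_{i_k}) = \bigcap_{i \in G_j} D(x_i)$. Taking the intersection over $j \in J$ then collapses to
\[
\bigcap_{j \in J} D(G_j) \;=\; \bigcap_{j \in J}\bigcap_{i \in G_j} D(x_i) \;=\; \bigcap_{i \in \bigcup_{j \in J} G_j} D(x_i).
\]
Proposition~\ref{proposition:intersection-open-subsets} then tells us that this last intersection is nonempty if and only if $\bigcup_{j \in J} G_j$ is a face of $\triangle$, which is precisely the condition defining membership in $\ccc(\{G_i\}_{i \in I}, \triangle)$. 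This yields the required biconditional and hence the equality of simplicial complexes.

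There is no real obstacle in this proof; the substance is entirely contained in Proposition~\ref{proposition:intersection-open-subsets}, and the rest is formal manipulation. The only small care point is ensuring that we include the empty face in both complexes explicitly (so that the equivalence above does not need to be checked for $J = \varnothing$), which is immediate from the definitions of $\nerve$ and $\ccc$.
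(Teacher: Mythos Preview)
Your proof is correct and follows the same approach as the paper: both reduce the nonemptiness of $\bigcap_{j\in J} D(G_j)$ to the condition $\bigcup_{j\in J} G_j\in\triangle$ via Proposition~\ref{proposition:intersection-open-subsets}. Your version is simply more explicit, spelling out the intermediate identity $\bigcap_{j\in J} D(G_j)=\bigcap_{i\in\bigcup_j G_j} D(x_i)$ and handling the empty face separately, whereas the paper compresses this into a single sentence.
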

\begin{proof}
    From Proposition~\ref{proposition:intersection-open-subsets} and Definition~\ref{definition:nerve-of-covering} we know that $D\left(\displaystyle\bigcup_{j\in J}G_j\right)\neq\varnothing$ if and only if $\displaystyle\bigcup_{j\in J}G_j\in\triangle$, and the latter is true if and only if $J\in\ccc(\{G_i\}_{i\in I}, \triangle)$ by definition.
\end{proof}

Recall from Lemma~\ref{lemma:minimal-covering} that any open subset $V$ of $\Spec^\bullet M$ can be minimally covered by some $D(F_j)$ corresponding to maximal primes in $V$.

\begin{theorem}Let $V$ be an open subset of $\Spec^\bullet M_{\triangle}$ with maximal primes $\p_0, \dots, \p_r$. We can compute the cohomology of $\O^*_{M_\triangle}\restriction_V$ through \v{C}ech cohomology using the minimal open covering $\mathscr{V}=\{D(F_0), \dots, D(F_r)\}$ defined in Lemma~\ref{lemma:minimal-covering}. In particular we have that
    \[
    \vC^k(\mathscr{V}, \O^*_{M_\triangle})\cong\bigoplus_{J\in\left(\ccc(\{F_i\}, \triangle)\right)_k}\ZZ^{\bigcup_{j\in J} F_j}.
    \]
\end{theorem}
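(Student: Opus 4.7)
The plan is to prove the two claims separately: first the acyclicity of the covering $\mathscr{V}$ (so that Čech cohomology on it computes sheaf cohomology), then the explicit description of the Čech groups as the direct sum indexed by faces of the crosscut complex.

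For the acyclicity, I would argue as follows. Each $D(F_j)$ is an affine open subscheme of $\Spec M_\triangle$, and any finite intersection is of the form
\[
D(F_{i_0})\cap\dots\cap D(F_{i_k})=D\bigl(F_{i_0}\cup\dots\cup F_{i_k}\bigr),
\]
which is again a fundamental open subset, hence affine. By Theorem~\ref{theorem:vanishing-combinatorial-cohomology-affine}, the cohomology of any sheaf of abelian groups (in particular $\O^*_{M_\triangle}$) vanishes in positive degree on each such affine intersection. Therefore $\mathscr{V}$ is an acyclic covering for $\O^*_{M_\triangle}\restriction_V$, and Leray's Theorem~\ref{theorem:leray} applies.

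For the explicit description of the groups, I would start from the definition of the Čech complex
\[
\vC^k(\mathscr{V},\O^*_{M_\triangle})=\bigoplus_{0\leq i_0<\dots<i_k\leq r}\O^*_{M_\triangle}\bigl(D(F_{i_0})\cap\dots\cap D(F_{i_k})\bigr).
\]
Write $J=\{i_0,\dots,i_k\}$ and set $G_J:=\bigcup_{j\in J}F_j$. By Proposition~\ref{proposition:intersection-open-subsets} the intersection $D(G_J)$ is non-empty precisely when $G_J\in\triangle$, which by definition of the crosscut complex is the same as $J\in\ccc(\{F_i\},\triangle)_k$. When $G_J\in\triangle$, Theorem~\ref{theorem:value-sheaf} (together with the isomorphism of Theorem~\ref{theorem:localization-simplicial-binoid-multiple}) gives $\O^*_{M_\triangle}(D(G_J))\cong\ZZ^{G_J}$, and the empty-intersection summands vanish. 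Collecting these facts yields
\[
\vC^k(\mathscr{V},\O^*_{M_\triangle})\cong\bigoplus_{J\in\ccc(\{F_i\},\triangle)_k}\ZZ^{\bigcup_{j\in J}F_j}.
\]

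There is no real obstacle here: the statement is essentially a bookkeeping consequence of the translation dictionary between faces of $\triangle$, fundamental open subsets, and sections of $\O^*_{M_\triangle}$ developed earlier in the chapter. The crosscut complex is introduced precisely to package the non-emptiness condition on unions of the chosen faces, in parallel to how the nerve of the covering $\{D(x_i)\}$ recovered $\triangle$ in Corollary~\ref{corollary:cech-covering-nerve}; indeed the identification above is just Proposition~\ref{proposition:nerve-crosscut} applied to the covering $\mathscr{V}$, combined with the computation of $\O^*_{M_\triangle}$ on fundamental open subsets.
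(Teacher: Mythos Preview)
Your proposal is correct and follows essentially the same line as the paper's proof: affineness of the $D(F_j)$ and their intersections gives acyclicity via Theorem~\ref{theorem:vanishing-combinatorial-cohomology-affine} and Leray, and then Proposition~\ref{proposition:nerve-crosscut} together with Theorem~\ref{theorem:value-sheaf} identify the non-empty intersections and their sections. The paper's argument is slightly terser but invokes exactly the same ingredients.
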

\begin{proof}
    Since $\mathscr{V}$ is a covering of $V$ with affine open subsets, it is acyclic for any sheaf of abelian groups, in particular for $\O^*_{M_\triangle}$, so we can use it to compute its cohomology.
    
    Thanks to Proposition~\ref{proposition:nerve-crosscut},
    \[
    \bigcap_{j\in J} D(G_j)\neq\varnothing\text{ if and only if } \bigcup_{j\in J} G_j\in\triangle \text{ if and only if } J\in\ccc(\{F_i\}, \triangle)
    \] and, thanks to the results in Theorem~\ref{theorem:value-sheaf}, for $J\in \ccc(\{F_i\}, \triangle)$,
    \[
    \O^*_{M_\triangle}\left(\bigcap_{j\in J} D(G_j)\right)=\O^*_{M_\triangle}\left(D\left(\bigcup_{j\in J} G_j\right)\right)\cong\ZZ^{\bigcup_{j\in J} G_j},
    \]
    where we use again the notation introduced at page~\pageref{notation:simplicial-binoid-faces}.
\end{proof}

\begin{corollary}\label{corollary:pure-dimensional}
    If $\triangle$ is pure of dimension $m$, $\triangle_{m-1}=\{F_1, \dots, F_r\}$ and $W$ is the set of primes of height at most one, then
    \begin{align*}
    \vC^0(W, \O^*_M)&\cong\bigoplus_{F\in\triangle_{m-1}}\ZZ^F\cong\bigoplus_{F\in\triangle_{m-1}}\ZZ^{m}\\
    \vC^j(W, \O^*_M)&\cong\bigoplus_{F_0\cup \dots\cup F_j\in \triangle_m}\ZZ^{F_0\cup \dots\cup F_j}\cong\bigoplus_{F_0\cup \dots\cup F_j\in \triangle_m}\ZZ^{m+1}
    \end{align*}
    for $1\leq j\leq m$ and $\vC^j(W, \O^*_M)=0$ for any $j\geq m+1$.
\end{corollary}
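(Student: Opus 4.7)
The plan is to specialize the preceding Theorem to the covering coming from Proposition~\ref{proposition:cover-W-pure-smp-cpx}. Since $\triangle$ is pure of dimension $m$, the maximal primes of $W$ correspond exactly to the $(m-1)$-dimensional faces $F_1,\dots,F_r\in\triangle_{m-1}$, and so $W$ is minimally covered by $\mathscr{V}=\{D(F_1),\dots,D(F_r)\}$. This covering is affine, hence acyclic for the sheaf $\O^*_{M_\triangle}$, so I can use it to describe the groups of the \v{C}ech complex directly.

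For the computation of $\vC^j$ I would invoke Proposition~\ref{proposition:intersection-open-subsets} (together with its notation $D(F)=\bigcap_{i\in F}D(x_i)$) to rewrite $D(F_{i_0})\cap\dots\cap D(F_{i_j})=D(F_{i_0}\cup\dots\cup F_{i_j})$, which is non-empty if and only if $F_{i_0}\cup\dots\cup F_{i_j}\in\triangle$. On such an intersection, Theorem~\ref{theorem:value-sheaf} gives $\O^*_{M_\triangle}(D(F_{i_0}\cup\dots\cup F_{i_j}))\cong\ZZ^{F_{i_0}\cup\dots\cup F_{i_j}}$. The case $j=0$ is then immediate: the sum runs over $F\in\triangle_{m-1}$, each contributing a $\ZZ^F\cong\ZZ^m$.

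For $1\le j\le m$ the argument is a dimension count. Any two distinct $(m-1)$-faces $F_{i_0},F_{i_1}$ each have $m$ vertices, so their union has at least $m+1$ vertices; since $\dim\triangle=m$, the union of any collection of distinct $(m-1)$-faces is a face of $\triangle$ exactly when it has exactly $m+1$ vertices, i.e.\ when it belongs to $\triangle_m$. Hence only those tuples whose union is a facet contribute, giving the desired description with $\ZZ^{F_0\cup\dots\cup F_j}\cong\ZZ^{m+1}$.

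Finally, for the vanishing in degree $j\ge m+1$, I would use the fact that inside any fixed facet $F\in\triangle_m$ (which has $m+1$ vertices) there are exactly $m+1$ subfaces of dimension $m-1$, namely the sets $F\setminus\{v\}$. Consequently one cannot choose $j+1\ge m+2$ distinct elements of $\triangle_{m-1}$ whose union is a face of $\triangle$, so every term of the direct sum vanishes and $\vC^j(W,\O^*_M)=0$. The only mildly delicate point is bookkeeping the fact that many different tuples $(F_{i_0},\dots,F_{i_j})$ may share the same union (a single facet $F$ produces $\binom{m+1}{j+1}$ such tuples); but since the statement indexes the sum by tuples with a given union rather than by unions, this multiplicity is absorbed automatically by the notation and no obstacle arises.
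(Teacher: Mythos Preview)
Your proposal is correct and follows essentially the same approach as the paper: specialize the preceding Theorem to the covering $\{D(F_i)\}_{F_i\in\triangle_{m-1}}$ of $W$ from Proposition~\ref{proposition:cover-W-pure-smp-cpx}, and for the vanishing in degree $j\ge m+1$ use that each facet contains exactly $m+1$ faces of dimension $m-1$. Your treatment of the intermediate range $1\le j\le m$ (the dimension count forcing the union of distinct $(m-1)$-faces to lie in $\triangle_m$) is in fact more explicit than the paper's, which simply declares this part ``clear from the Theorem above''.
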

\begin{proof}
    The first part is clear from the Theorem above, so we have just to prove that $\vC^j(W, \O^*_M)=0$ for any $j\geq m+1$. Let $G$ be a facet of dimension $m$, so its cardinality is $m+1$. There exist $m+1$ faces in $G$ that define primes of height one, namely we remove one vertex at a time from $G$ to obtain them, call them $\{F_i\}$. These define open subsets that cover part of $W$, namely $V(G)\cap W$, so in the chain of intersections between these open subsets that give rise to the cochain complex above, we have that $\vC^j\restriction_{V(G)\cap W}$ is determined by the intersection of $j+1$ of these $\{F_i\}$. In particular, $\vC^m$ is determined by the intersection of all the $F_i$'s and $\vC^j=0$ for every $j\geq m+1$ because we have nothing to intersect more.
\end{proof}

\begin{proposition}Let $\triangle$ be any simplicial complex of dimension $m$. Then
    \[
    \vC^j(W, \O^*_M)=0
    \] for any $j\geq m+1$.
\end{proposition}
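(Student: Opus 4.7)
The plan is to follow the strategy of Corollary~\ref{corollary:pure-dimensional}, replacing the uniform dimension count that purity affords by a short case analysis. By the theorem preceding that corollary, $\vC^{\bullet}(W, \O^*_M)$ is computed from the minimal cover $\mathscr{V} = \{D(F_0), \dots, D(F_r)\}$ of $W$, where the $F_i$ are the faces corresponding to the maximal primes of $W$, and its $j$-th group is a direct sum indexed by those $(j+1)$-subsets $\{F_{i_0}, \dots, F_{i_j}\} \subseteq \mathscr{V}$ whose union lies in $\triangle$. So it suffices to prove the combinatorial claim that for every face $F \in \triangle$, at most $m+1$ distinct elements of $\mathscr{V}$ are contained in $F$; from this, the bound $j+1 \leq m+1$ for any nonzero summand of $\vC^j$ follows at once.

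First I would pin down a sharp shape constraint on the elements of $\mathscr{V}$ sitting inside a given face $F$. If $F_i \in \mathscr{V}$ with $F_i \subseteq F$, then $\htnew \p_{F_i} \leq 1$ translates to the combinatorial inequality $|\tilde F_i| - |F_i| \leq 1$, where $\tilde F_i$ is the largest facet containing $F_i$. Since $F_i \subseteq F \in \triangle$, any facet containing $F$ is also a facet containing $F_i$, so $|\tilde F_i| \geq |F|$; combined with $F_i \subseteq F$ this forces $|F_i| \in \{|F|-1, |F|\}$. In other words, each such $F_i$ is either $F$ itself or a codimension-1 subface of $F$, yielding at most $|F| + 1$ candidates in total.

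The decisive step—the place where the non-pure case actually needs work—is that when $F$ is a top-dimensional facet ($|F| = m+1$), the option $F_i = F$ is automatically excluded. Indeed, any codimension-1 subface $G \subsetneq F$ is contained only in facets of size at most $m+1 = |F|$, so $\htnew \p_G = |F| - |G| = 1$; but then $\p_G \in W$ lies strictly above $\p_F$, so $\p_F$ is not maximal in $W$ and $F \notin \mathscr{V}$. Consequently the count collapses to $|F| = m+1$ in the top-dimensional case, and to $|F|+1 \leq m+1$ in every other case (non-top-dimensional faces satisfy $|F| \leq m$, whether they are themselves lower-dimensional facets or proper subfaces of a facet of dimension $\leq m$). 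Putting the pieces together, any nonzero summand of $\vC^j(W, \O^*_M)$ forces $j+1 \leq m+1$, hence $\vC^j = 0$ for all $j \geq m+1$.

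The only real obstacle I foresee is the bookkeeping between top-dimensional facets, lower-dimensional facets, and non-facet faces, together with the trivial edge cases ($|F| = 1$ or $F = \varnothing$); all of these are controlled by the single identity $\htnew \p_F = |\tilde F_F| - |F|$ supplied by the face–prime correspondence of Remark~\ref{remark:boettger:correspondence-faces-prime-ideals}, which is the engine that specialises the purity argument of Corollary~\ref{corollary:pure-dimensional} to the general setting.
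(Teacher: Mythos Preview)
Your proof is correct and follows the same approach as the paper's one-sentence proof, which simply refers back to the argument of Corollary~\ref{corollary:pure-dimensional} with $G$ taken to be a facet of maximal dimension. You have made explicit the case distinction (top-dimensional faces versus smaller ones) and the exclusion of top-dimensional facets from $\mathscr{V}$ that the paper's terse pointer leaves to the reader.
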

\begin{proof}
    We can use the same proof as for the Corollary above, where we select $G$ to be a facet of maximal dimension.
\end{proof}

\begin{remark}
    Since we are still talking about units of localizations of $M_\triangle$, it is easy to see that again these groups can be split, together with the maps, into groups regarding one vertex at a time, and so also the \v{C}ech complex can be again split into smaller ones.
\end{remark}

\begin{example}
    Let $\triangle$ be the pure simplicial complex of dimension 2 on 5 vertices
    \begin{minipage}[c]{0.5\textwidth}
        \[
        \triangle=\left\{\begin{aligned}
        &\varnothing, \{1\}, \{2\}, \{3\}, \{4\}, \{5\},\\
        &\{1, 2\}, \{1, 3\}, \{2, 3\},\\
        & \{1, 4\}, \{1, 5\}, \{4, 5\},\\
        &\{1, 2, 3\}, \{1, 4, 5\}
        \end{aligned}
        \right\}
        \]\vfill
    \end{minipage}\begin{minipage}[c]{0.5\textwidth}
    \hspace{6em}\begin{tikzpicture}
    \tikzstyle{point}=[circle,thick,draw=black,fill=black,inner sep=0pt,minimum width=4pt,minimum height=4pt]
    \node (v1)[point, label={[label distance=0cm]-90:$1$}] at (0,0) {};
    \node (v2)[point, label={[label distance=0cm]-135:$2$}] at (-0.87, -0.5) {};
    \node (v3)[point, label={[label distance=0cm]135:$3$}] at (-0.87, 0.5) {};
    \node (v4)[point, label={[label distance=0cm]-45:$4$}] at (0.87, -0.5) {};
    \node (v5)[point, label={[label distance=0cm]45:$5$}] at (0.87, 0.5) {};
    
    \draw (v1.center) -- (v2.center);
    \draw (v1.center) -- (v3.center);
    \draw (v2.center) -- (v3.center);
    \draw (v1.center) -- (v4.center);
    \draw (v1.center) -- (v5.center);
    \draw (v4.center) -- (v5.center);
    
    \draw[color=black!20, style=fill, outer sep = 20pt] (0.1, 0) -- (0.8, -0.4) -- (0.8, 0.4) -- cycle;
    \draw[color=black!20, style=fill, outer sep = 20pt] (-0.1, 0) -- (-0.8, -0.4) -- (-0.8, 0.4) -- cycle;
    \end{tikzpicture}
\end{minipage}

Then the primes of height at most one are
\[
W^{(0)}=\left\{\langle 4, 5\rangle, \langle 2, 3\rangle\right\}
\qquad
W^{(1)}=\left\{\begin{aligned}
&\langle 3, 4, 5\rangle, \langle 2, 4, 5\rangle, \langle 1, 4, 5\rangle,\\
&\langle 2, 3, 5\rangle, \langle 2, 3, 4\rangle, \langle 1, 2, 3\rangle
\end{aligned}
\right\}.
\]
$W$ is then covered by
\[
\mathscr{W}=\left\{\begin{aligned}
&D(\{1, 2\}), D(\{1, 3\}), D(\{2, 3\}),\\
&D(\{1, 4\}), D(\{1, 5\}), D(\{4, 5\})
\end{aligned}
\right\}
\]
and the chain complex looks like
\begin{align*}
\vC^0(W, \O^*_M)&\cong\bigoplus_{F\in\triangle_1}\ZZ^F\\
&\cong \ZZ^{\{1, 2\}}\oplus \ZZ^{\{1, 3\}}\oplus\ZZ^{\{2, 3\}}\\
&\hspace{2em}\oplus\ZZ^{\{1, 4\}}\oplus\ZZ^{\{1, 5\}}\oplus\ZZ^{\{4, 5\}}
\end{align*}
\begin{align*}
\vC^1(W, \O^*_M)&\cong\bigoplus_{F_0\cup F_1\in\triangle_2}\ZZ^{F_0\cup F_1}\\
&\cong (\ZZ)^{\{1, 2\}\cup\{1, 3\}}\oplus(\ZZ)^{\{1, 2\}\cup\{2, 3\}}\oplus(\ZZ)^{\{1, 3\}\cup\{2, 3\}}\\
&\hspace{2em}\oplus(\ZZ)^{\{1, 4\}\cup\{1, 5\}}\oplus(\ZZ)^{\{1, 4\}\cup\{4, 5\}}\oplus(\ZZ)^{\{1, 5\}\cup\{4, 5\}}\\
&\cong (\ZZ)^{\{1, 2, 3\}}\oplus(\ZZ)^{\{1, 2, 3\}}\oplus(\ZZ)^{\{1, 2, 3\}}\\
&\hspace{2em}\oplus(\ZZ)^{\{1, 4, 5\}}\oplus(\ZZ)^{\{1, 4, 5\}}\oplus(\ZZ)^{\{1, 4, 5\}}
\end{align*}
\begin{align*}
\vC^2(W, \O^*_M)&\cong\bigoplus_{F_0\cup F_1\cup F_2\in\triangle_2}(\ZZ^2)^{F_0\cup F_1\cup F_2}\\
&\cong (\ZZ)^{\{1, 2\}\cup\{1, 3\}\cup\{2, 3\}}\oplus(\ZZ)^{\{1, 4\}\cup\{1, 5\}\cup\{4, 5\}}\\
&\cong(\ZZ)^{\{1, 2, 3\}}\oplus(\ZZ)^{\{1, 4, 5\}}
\end{align*}
and then, whenever we try to unite 4 faces, we end up outside of the facets, so the higher groups are all trivial.

As we said, since we are looking at the restriction of the units sheaf of $M_\triangle$, we can concentrate on the single vertices and look in more details at the complex $\vC^\bullet_i(W, \O^*_M)$. Thanks to the symmetry of the problem, we can treat $2, 3, 4$ and $5$ similarly and then it will suffice to add the results.
$1$ is a particular case, and we start from it.
\begin{align*}
\vC^0_1(W, \O^*_M)&\cong (\ZZ)^{\{1, 2\}}_1\oplus (\ZZ)^{\{1, 3\}}_1\oplus(\ZZ)^{\{1, 4\}}_1\oplus(\ZZ)^{\{1, 5\}}_1
\end{align*}
\begin{align*}
\vC^1_1(W, \O^*_M)&\cong (\ZZ)_1^{\{1, 2\}\cup\{1, 3\}}\oplus(\ZZ)_1^{\{1, 2\}\cup\{2, 3\}}\oplus(\ZZ)_1^{\{1, 3\}\cup\{2, 3\}}\\
&\hspace{2em}\oplus(\ZZ)_1^{\{1, 4\}\cup\{1, 5\}}\oplus(\ZZ)_1^{\{1, 4\}\cup\{4, 5\}}\oplus(\ZZ)_1^{\{1, 5\}\cup\{4, 5\}}
\end{align*}
\begin{align*}
\vC^2_1(W, \O^*_M)&\cong (\ZZ)_1^{\{1, 2\}\cup\{1, 3\}\cup\{2, 3\}}\oplus(\ZZ)_1^{\{1, 4\}\cup\{1, 5\}\cup\{4, 5\}}
\end{align*}

Let us start from $\partial^0_1:\vC^0_1\longrightarrow\vC^1_1$. Let $(\alpha_1, \dots, \alpha_4)\in\vC^0_1(W, \O^*_M)$. Then
\[
\partial^0_1(\alpha_1, \dots, \alpha_4)=(-\alpha_1+\alpha_2, -\alpha_1, -\alpha_2, -\alpha_3+\alpha_4, -\alpha_3, -\alpha_4)
\]
that is clearly injective, so $\H^0_1(W, \O^*_M)=0$, and has image
\[
\im(\partial^0_1)=\{(\beta_1, \dots, \beta_6)\in\vC^1_1(W, \O^*_M)\mid \beta_2=\beta_1+\beta_3, \beta_5=\beta_4+\beta_6 \}.
\]

What about $\partial^1_1:\vC^1_1\longrightarrow\vC^2_1$? Let $(\beta_1, \dots, \beta_6)\in\vC^1_1(W, \O^*_M)$. Then
\[
\partial^1_1(\beta_1, \dots, \beta_6)=(\beta_1-\beta_2+\beta_3, \beta_4-\beta_5+\beta_6)
\]
that is clearly surjective, so $\H^2_1(W, \O^*_M)=0$, and has kernel
\[
\ker(\partial^1_1)=\{(\beta_1, \dots, \beta_6)\in\vC^1_1(W, \O^*_M)\mid \beta_2=\beta_1+\beta_3, \beta_5=\beta_4+\beta_6 \}
\]

Since $\ker\partial^1_1=\im\partial^0_1$ we have that $\H^1_1(W, \O^*_M)=0$.

When we concentrate on another vertex, say $2$, we get a situation similar to the vertex $2$ in Example~\ref{example:picW-favourite}, for which we already showed the maps. It is easy to see that also in this case $\H^1_2(W, \O^*_M)=0$ so, by the symmetry properties of the problem, we can conclude that $\Pic(W)=0$.
\end{example}

Morally, what happens in the previous example is that the intersection between simplexes happens in codimension 2 (at a point), and this does not provide elements in the Picard group.

\begin{proposition}\label{proposition:cohomology-W-simplicial-cohomology}
    Let $\triangle$ be a pure dimensional simplicial complex of dimension $m$ with $(m-1)$-dimensional faces $\{F_i\}$. Then we can again relate sheaf cohomology on $W$ and simplicial cohomology, and prove this vanishing result
    \[
    {\H^j(W, \O^*_{M_\triangle})}=\H^j(\ccc(\{F_i\}, \triangle), \ZZ^{m+1})
    \]
    for all $j\geq 2$.
\end{proposition}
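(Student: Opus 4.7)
The plan is to identify the \v{C}ech cochain complex computing the left-hand side with the simplicial cochain complex $\C^\bullet(\ccc(\{F_i\}, \triangle), \ZZ^{m+1})$ in positive degrees, and then to conclude for $j \geq 2$ by noting that $\H^j$ of a cochain complex depends only on its restriction to degrees $\geq 1$.

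First I would invoke Proposition~\ref{proposition:cover-W-pure-smp-cpx} to cover $W$ by the affine opens $\{D(F_i)\}_{F_i \in \triangle_{m-1}}$, which are acyclic for any sheaf of abelian groups, so that \v{C}ech cohomology on this covering computes sheaf cohomology. Corollary~\ref{corollary:pure-dimensional} then gives, for $j \geq 1$,
\[
\vC^j(W, \O^*_{M_\triangle}) \cong \bigoplus_{F_{i_0} \cup \dots \cup F_{i_j} \in \triangle_m} \ZZ^{F_{i_0} \cup \dots \cup F_{i_j}}.
\]
Because $\triangle$ is pure of dimension $m$ and each $F_i$ has exactly $m$ vertices, any union of two or more distinct $F_i$'s that lies in $\triangle$ must be a facet, hence have exactly $m+1$ vertices. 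Combining this with Proposition~\ref{proposition:nerve-crosscut}, the indexing set is precisely the set $\ccc(\{F_i\}, \triangle)_j$ of $j$-dimensional faces of the crosscut complex, and each summand is canonically $\ZZ^{m+1}$; termwise this matches $\C^j(\ccc(\{F_i\}, \triangle), \ZZ^{m+1})$.

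The next step is to match the differentials. A component of the \v{C}ech coboundary $\partial^{j-1} : \vC^{j-1} \to \vC^j$ indexed by $J = \{i_0, \dots, i_j\}$ is an alternating sum of restriction maps of the form
\[
\O^*_{M_\triangle}\!\left(D\!\left(\bigcup_{l \neq k} F_{i_l}\right)\right) \longrightarrow \O^*_{M_\triangle}\!\left(D\!\left(\bigcup_l F_{i_l}\right)\right).
\]
For $j \geq 2$ both index sets $J \smallsetminus \{i_k\}$ and $J$ have cardinality at least two, so by the observation above both unions coincide with the same facet of $\triangle$; the two open sets are equal and the restriction is the identity on $\ZZ^{m+1}$. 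Thus for $j \geq 2$ the \v{C}ech differential is exactly the alternating sum of identities, which is the simplicial coboundary of $\C^\bullet(\ccc(\{F_i\}, \triangle), \ZZ^{m+1})$.

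Finally, since $\H^j$ for $j \geq 2$ is computed from $\partial^{j-1}$ and $\partial^j$, both of which lie in the range where the two complexes have been identified, we obtain
\[
\H^j(W, \O^*_{M_\triangle}) \cong \H^j(\ccc(\{F_i\}, \triangle), \ZZ^{m+1})
\]
for all $j \geq 2$. The main subtle point to track carefully is the bookkeeping that, once $|J| \geq 2$, the union $\bigcup_{l \in J} F_{i_l}$ stabilises to a fixed facet and does not grow under further enlargement of $J$ inside $\ccc(\{F_i\}, \triangle)$: this is what collapses all \v{C}ech restrictions in degrees $\geq 1$ into identity maps. The mismatch in degree $0$, where $\vC^0$ has $\ZZ^m$-summands while $\C^0(\ccc(\{F_i\}, \triangle), \ZZ^{m+1})$ has $\ZZ^{m+1}$-summands, is precisely why the statement must be restricted to $j \geq 2$.
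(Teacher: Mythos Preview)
Your proof is correct and follows essentially the same approach as the paper: identify the \v{C}ech groups in degrees $\geq 1$ with those of the simplicial cochain complex of the crosscut complex (via Corollary~\ref{corollary:pure-dimensional} and Proposition~\ref{proposition:nerve-crosscut}), and conclude for $j\geq 2$. The paper phrases the identification of differentials more tersely---it says the sheaf ``is the constant sheaf $\ZZ^{m+1}$'' on the relevant intersections and then invokes Theorem~\ref{thm:simplicial-cohomology} for $\Spec^\bullet M_{\ccc(\{F_i\},\triangle)}$---whereas you spell out explicitly why the restriction maps collapse to identities once $|J|\geq 2$ (both unions equal the same facet); this is exactly the content hidden in the paper's appeal to the constant sheaf, so the two arguments are the same in substance.
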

\begin{proof}
    Thanks to Corollary~\ref{corollary:pure-dimensional} we know that the image of the sheaf on $D(F_{i_0})\cap \dots\cap D(F_{i_j})$ is the constant sheaf $\ZZ^{m+1}$, provided $j\geq 1$. Thanks to Proposition~\ref{proposition:nerve-crosscut}, $D(F_{i_0})\cap \dots\cap D(F_{i_j})\neq \varnothing$ if and only if $\{v_{F_{i_0}}, \dots, v_{F_{i_j}}\}$ is a face in the crosscut complex, so the groups in the cochain complex are the same on the right and on the left for any $j\geq 1$.
    Moreover, thanks to Corollary~\ref{corollary:cech-covering-nerve}, we can relate the covering of $W$ to the covering of $\Spec^\bullet (M_{\ccc(\{F_i\}, \triangle)})$ in a very easy way, since
    \[
    \nerve(\{D(F_i)\})\cong \ccc(\{F_i\}, \triangle)\cong\nerve{\{v_{F_i}\}}.
    \]
    
    Finally, thanks to Theorem~\ref{thm:simplicial-cohomology}, we have that
    \[
    \vC^\bullet(\Spec^\bullet (M_{\ccc(\{F_i\}, \triangle)}), \ZZ^{m+1})\cong\vC^\bullet(\ccc(\{F_i\}, \triangle), \ZZ^{m+1})
    \] for any $j\geq 0$. When we take cohomology in degree higher of equal $2$, we get the isomorphism.
\end{proof}

The previous proposition cannot be extended to degree $0$ or $1$, because $\vH^0=\ker \partial^0$, $\vH^1=\faktor{\ker \partial^1}{\im(\partial^0)}$ but $\C^0$ is the product of some $\ZZ^m$'s, not $\ZZ^{m+1}$'s, so the map $\partial^0$ is not part of the complex used to compute simplicial cohomology with coefficients in $\ZZ^{m+1}$, and this leaves out exactly the first two cohomologies.
The next results show, indeed, that these are actually the only non trivial ones.

\begin{lemma}
    Let $V$ be any subset of $\Spec M_\triangle$ of dimension $k$. Then
    \[
    \H^j(V, \O^*_{M_\triangle})=0,
    \]
    for all $j\geq k+1$.
\end{lemma}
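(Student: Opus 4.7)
The plan is to invoke Grothendieck's vanishing theorem, already recalled in Remark~\ref{remark:grothendieck-theorem}: on any Noetherian topological space $X$ of combinatorial dimension $n$ (the supremum of lengths of strictly decreasing chains of irreducible closed subsets), the cohomology of every abelian sheaf vanishes in degrees strictly greater than $n$. The whole proof is then a matter of verifying that the dimension hypothesis on $V$ matches Grothendieck's notion of combinatorial dimension, and checking the Noetherianity.

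The first step is to observe that $V$, with the topology inherited from $\Spec M_\triangle$, is a finite poset with the Alexandrov-type Zariski topology, hence trivially Noetherian. Its irreducible closed subsets are exactly the closures $\overline{\{\p\}}$ for $\p\in V$ (closed subsets in $\Spec M_\triangle$ are superset-closed by Proposition~\ref{remark:boettger:closedsubset-closedsuperset}, so the argument is identical to the ring case), and strictly decreasing chains of these correspond bijectively to strictly increasing chains of primes in $V$. By hypothesis the longest such chain has $k+1$ primes, so the combinatorial dimension of $V$ equals $k$.

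Applying Grothendieck's theorem to $\mathscr F = \O^*_{M_\triangle}\!\restriction_V$, a sheaf of abelian groups on $V$, yields $\H^j(V,\O^*_{M_\triangle})=0$ for all $j\geq k+1$.

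The main obstacle, were one to insist on a more elementary \v{C}ech-theoretic proof in the spirit of the rest of the chapter, is that the \v{C}ech complex on the minimal affine cover $\{D(F_l)\}$ of Lemma~\ref{lemma:minimal-covering} does \emph{not} in general truncate at degree $k$: it is easy to arrange an open $V$ of dimension $k$ whose $k+2$ maximal primes share a common smaller prime $\p\in V$, forcing $\vC^{k+1}\neq 0$. A simple instance is $V=\Spec^\bullet M_\triangle$ for $\triangle$ the boundary of the $2$-simplex, where $\dim V=1$ but the triple intersection $D(\{2,3\})\cap D(\{1,3\})\cap D(\{1,2\})=\{\langle\infty\rangle\}$ is non-empty. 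So the vanishing in high degrees must come from genuine cohomological cancellation inside the complex, not from its being zero — which is precisely what Grothendieck's theorem delivers for free.
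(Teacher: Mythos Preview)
Your proof is correct and is exactly the paper's argument: a direct appeal to Grothendieck's vanishing theorem, with the Noetherianity and dimension hypotheses verified (the paper's own proof is a one-liner citing \cite[Theorem 3.6.5]{grothendieck1957surquelques}).

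One small correction to the illustrative example in your final paragraph: for $\triangle$ the boundary of the $2$-simplex the binoid $M_\triangle$ is \emph{not} integral (since $x_1+x_2+x_3=\infty$), so $\langle\infty\rangle$ is not a prime ideal and the triple intersection $D(\{2,3\})\cap D(\{1,3\})\cap D(\{1,2\})$ is in fact empty. The phenomenon you want to illustrate does occur, but elsewhere---for instance in Example~\ref{example:picW-favourite}, where $W$ has dimension $1$ yet the triple intersection $D(\{1,2\})\cap D(\{1,3\})\cap D(\{2,3\})=\{\langle x_4\rangle\}$ is nonempty, forcing $\vC^2\neq 0$.
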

\begin{proof}
    By the vanishing Theorem of Grothendieck, \cite[Theorem 3.6.5]{grothendieck1957surquelques}, we obtain our thesis.\footnote{Recall from Remark~\ref{remark:grothendieck-theorem} that the combinatorial dimension is the maximum length of strictly decreasing chains of closed subsets.}
\end{proof}

\begin{corollary}
    For all $j\geq 2$,
    \[
    \H^j(W, \O^*_{M_\triangle})=0.
    \]
\end{corollary}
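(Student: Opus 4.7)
The plan is to deduce this corollary directly from the preceding Lemma, which already does the heavy lifting via Grothendieck's vanishing theorem on the combinatorial dimension of the topological space. So all that needs to be verified is that $W$ has combinatorial dimension at most $1$.

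First I would recall the correspondence from Remark~\ref{remark:boettger:correspondence-faces-prime-ideals} together with Proposition~\ref{remark:boettger:closedsubset-closedsuperset}: closed subsets of $W$ (viewed as a subspace of $\Spec M_\triangle$) correspond to superset-closed collections of prime ideals of height at most $1$. A strictly decreasing chain of closed subsets of $W$ therefore corresponds to a strictly increasing chain of prime ideals lying in $W$. Since $W$ only contains primes of height $0$ and $1$ and any such chain can move through at most one prime of each height, the longest strictly increasing chain has length at most $2$, and hence the combinatorial dimension of $W$ is at most $1$.

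Then I would simply invoke the preceding Lemma with $V=W$ and $k=1$: it gives $\H^j(W,\O^*_{M_\triangle})=0$ for all $j\geq k+1=2$, which is exactly the statement of the corollary.

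There is no real obstacle here; the corollary is a direct specialisation of the Lemma, and the only thing to check is the bound on the dimension, which follows immediately from the order-reversing correspondence between prime ideals and faces. Morally, the combinatorial bound encodes the geometric intuition that $W$ is a one-dimensional piece of $\Spec M_\triangle$ (its generic points together with the height-one specialisations), and therefore it cannot support cohomology of an abelian sheaf beyond degree one.
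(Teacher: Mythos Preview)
Your proposal is correct and follows exactly the same approach as the paper: observe that $W$, being the set of primes of height at most $1$, has (combinatorial) dimension $1$, and then apply the preceding Lemma with $k=1$. The paper's proof is a one-liner to this effect; your extra justification of the dimension bound is fine, though strictly speaking the combinatorial dimension is measured by chains of \emph{irreducible} closed subsets (equivalently, chains of primes), which is what you end up using anyway.
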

\begin{proof}
    Since $W$ is the set of primes of height at most 1, it has dimension 1 and from the previous Lemma we get the result.
\end{proof}

\begin{remark}
    The previous Corollary leaves out $j=0, 1$, and it shows clearly that it might be hard to compute the divisor class group, even in simple cases.
\end{remark}

\begin{example}
    Let $\triangle$ be the simplex on $V=[n+1]$. Then $M_\triangle\cong(\NN^{n+1})^\infty$ and $\Cl(M_\triangle)=0$.
\end{example}

\begin{proposition}
    Let $\triangle$ be a pure simplicial complex of dimension $m$ whose $(m-1)$-dimensional faces are $\triangle_{m-1}=\{F_0, \dots, F_r\}$, and let $\triangle'$ be the crosscut complex of these faces in $\triangle$, $\triangle'=\ccc(\{F_i\}_{i=0, \dots, r}, \triangle)$. In order to compute $\H^0(W, \O^*_{M_\triangle})$ and $\H^1(W, \O^*_{M_\triangle})$ it is enough to consider the cohomology of the smaller chain complex
    \[
    \begin{tikzcd}[baseline=(current  bounding  box.center), cramped]
    0\rar&\displaystyle\bigoplus_{v\in \triangle'_0}\ZZ^{m}\arrow[r, "\partial^0"] & \displaystyle\bigoplus_{\{v_0, v_1\}\in \triangle'_1}\ZZ^{m+1} \arrow[r, "\partial^1"] & \displaystyle\bigoplus_{\{v_0, v_1, v_2\}\in \triangle'_2}\ZZ^{m+1}\arrow[r, "\partial^2"] & 0
    \end{tikzcd}
    \]
    where the maps are the usual maps that arise when restricting the units to smaller subsets in $\Spec M_\triangle$.
\end{proposition}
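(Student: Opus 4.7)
The plan is to identify the truncated complex in the statement with the first three terms of the \v{C}ech complex associated to an affine acyclic covering of $W$, and then observe that $\H^0$ and $\H^1$ only depend on those first three terms.

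First, by Proposition~\ref{proposition:cover-W-pure-smp-cpx}, $W$ admits the minimal affine cover $\mathscr{W}=\{D(F_0),\dots,D(F_r)\}$ indexed by the $(m-1)$-dimensional faces of $\triangle$. Since each $D(F_i)$ is affine and $\O^*_{M_\triangle}$ is a sheaf of abelian groups, Theorem~\ref{theorem:vanishing-combinatorial-cohomology-affine} ensures that the cohomology of $\O^*_{M_\triangle}$ on each intersection vanishes in positive degrees. Hence $\mathscr{W}$ is acyclic and by Leray's Theorem~\ref{theorem:leray} we have $\H^j(W,\O^*_{M_\triangle})\cong \vH^j(\mathscr{W},\O^*_{M_\triangle})$ for every $j\geq 0$.

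Next I would identify the groups $\vC^j(\mathscr{W},\O^*_{M_\triangle})$ for $j=0,1,2$ following Corollary~\ref{corollary:pure-dimensional}. In degree zero, Theorem~\ref{theorem:value-sheaf} gives $\O^*_{M_\triangle}(D(F_i))\cong\ZZ^{F_i}\cong\ZZ^m$, so $\vC^0\cong\bigoplus_{v\in\triangle'_0}\ZZ^m$ via the bijection between the $F_i$ and the vertices of the crosscut complex $\triangle'$. In degrees one and two, the intersection $D(F_{i_0})\cap\dots\cap D(F_{i_k})$ equals $D(F_{i_0}\cup\dots\cup F_{i_k})$, which is nonempty if and only if $F_{i_0}\cup\dots\cup F_{i_k}\in\triangle$ by Proposition~\ref{proposition:intersection-open-subsets}; by Proposition~\ref{proposition:nerve-crosscut} the latter condition is exactly that $\{v_{i_0},\dots,v_{i_k}\}$ is a face of $\triangle'$. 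Since $\triangle$ is pure of dimension $m$, every face of $\triangle$ has at most $m+1$ vertices, while $F_{i_0}\cup\dots\cup F_{i_k}$ contains the $(m-1)$-face $F_{i_0}$ plus at least one additional vertex whenever $k\geq 1$; hence, when the union lies in $\triangle$, it is forced to be an $m$-face, and Theorem~\ref{theorem:value-sheaf} yields $\O^*_{M_\triangle}(D(F_{i_0}\cup\dots\cup F_{i_k}))\cong\ZZ^{m+1}$. This matches the descriptions of $\vC^1$ and $\vC^2$ in the statement, and by construction the coboundary maps are the usual alternating-sum restrictions of the \v{C}ech complex, i.e.\ the injections sending each unit to its image under the localization at an additional variable.

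Finally, $\H^0(W,\O^*_{M_\triangle})=\ker\partial^0$ depends only on $\vC^0$ and $\partial^0:\vC^0\to\vC^1$, while $\H^1(W,\O^*_{M_\triangle})=\ker\partial^1/\im\partial^0$ depends only on the first three terms and the maps between them. Thus the truncated complex displayed in the statement has the same $\H^0$ and $\H^1$ as the full \v{C}ech complex, which in turn computes sheaf cohomology by the first paragraph. The only point requiring care, rather than any serious obstacle, is the bookkeeping that identifies pairs and triples of $(m-1)$-faces of $\triangle$ with edges and triangles of the crosscut complex $\triangle'$; the pureness hypothesis is exactly what forces every such nonempty union to be a full $m$-face, so that the rank in every summand past degree zero is uniformly $m+1$.
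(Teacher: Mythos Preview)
Your proposal is correct and follows essentially the same approach as the paper, which simply cites Corollary~\ref{corollary:pure-dimensional} together with the definition of the crosscut complex. You have spelled out the details the paper leaves implicit: the acyclicity of the affine cover $\{D(F_i)\}$, the identification of the \v{C}ech groups via Theorem~\ref{theorem:value-sheaf} and Proposition~\ref{proposition:nerve-crosscut}, and the observation that pureness forces any nonempty union of distinct $(m-1)$-faces lying in $\triangle$ to be an $m$-face.
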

\begin{proof}
    This result is trivial thanks to Corollary~\ref{corollary:pure-dimensional} and the definition of the crosscut complex.
\end{proof}

\afterpage{\null\newpage}

\chapter{From Combinatorics to Algebra}\label{chapter:injections}

In this Chapter we investigate some relations between the local Picard group of binoids and the local Picard group of binoid $\KK$-algebras. We will address the problem by trying to understand how the sheaf of units behave on the combinatorial side and on the algebraic side, and if there is any relation between $\O^*_{M}(U)$ and $\O^*_{\KK[M]}(V)$ for some $U\subseteq \Spec M$ and $V\subseteq\Spec \KK[M]$ that have a common description.
We are going to recall some functors between binoids and rings, $M$-sets and $\KK[M]$-modules, and sheaves. In general the sheaves will be different, because the algebraic side will depend on $\KK$, and so the cohomologies can be different too. Maybe some will vanish on one hand but not on the other, maybe it depends on the field, maybe not. We will address some of these behaviours in this and in the next Chapter.\\
In what follows, when we say scheme, we always refer to a subscheme of an affine scheme $\Spec R$ of some noetherian and finitely generated $\KK$-algebra $R$. This will save us to repeat every time some hypothesis that we would assume anyway, like separateness and noetherianity.\\
For simplicity, with $\KK$ we denote any field and we always consider $\KK$-algebras, but many results presented here are true for a wider class of base rings.

\section{Injections between \texorpdfstring{$\Spec M$}{Spec M} and \texorpdfstring{$\Spec \KK[M]$}{Spec K M}}

The faithful functor\footnote{Recall that a functor $\mathcal{F}:\C\longrightarrow \mathcal{D}$ is faithful (resp.\ full) if for any two objects $X, Y\in \C$, the induced map $\mathcal{F}_{X, Y}\Hom_\C(X, Y)\longrightarrow \Hom_\mathcal{D}(\C(X), \C(Y))$ is injective (resp.\ surjective).\\
    $\mathcal{F}$ is essentially injective (resp.\ essentially surjective) if it is injective (resp.\ surjective) on the objects.}
\begin{equation}\label{functor:binoids-rings}
\begin{tikzcd}[baseline=(current  bounding  box.center),
/tikz/column 1/.append style={anchor=base east},
/tikz/column 2/.append style={anchor=base west}, 
row sep = 0pt]
\gls{functor-KK}: \mathrm{Binoids}\rar & \mathrm{Rings} \\
M\rar[mapsto] & \gls{binoid-algebra}
\end{tikzcd}
\end{equation}
induces other functors of spectra, sheaves and then cohomology groups, that we are going to exploit in what follows.
We note that this functor is only faithful. It is neither full, essentially injective nor surjective.

The second functor that we want to consider is between finitely generated $M$-sets and finitely generated $\KK[M]$-modules:
\begin{equation}\label{functor:mset-kmmodules}
\begin{tikzcd}[baseline=(current  bounding  box.center),
/tikz/column 1/.append style={anchor=base east},
/tikz/column 2/.append style={anchor=base west}, 
row sep = 0pt]
\KK[\hspace{1em}]: M\text{-}\mathrm{Sets}\rar & \KK[M]\text{-}\mathrm{Modules} \\
S \hspace{1em\rar[mapsto]}& \KK[S]
\end{tikzcd}
\end{equation}
where $\KK[S]$ is the free $\KK$-module on $S\smallsetminus\{p\}$, where $p$ is the special point of $S$, together with the natural action of $\KK[M]$.
It is easy to see that this functor is again faithful.

\begin{lemma}[{\cite[Corollary  3.2.8]{Boettger}}]\label{lemma:functor-respects-localizations}
    The functor $\KK[\hspace{1em}]$ respects localizations, i.e.\ let $E$ be an additive subset of $M$, that defines the multiplicative subset $\overline{E}=\{T^a\mid a\in E\}$ of $\KK[M]$. Then $\KK[M_E]=\overline{E}^{-1}\KK[M]$.
\end{lemma}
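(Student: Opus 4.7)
The plan is to establish the isomorphism $\KK[M_E] \cong \overline{E}^{-1}\KK[M]$ by constructing mutually inverse maps via the two relevant universal properties: that of binoid localization $M \to M_E$ on the combinatorial side, and that of ring localization $\KK[M] \to \overline{E}^{-1}\KK[M]$ on the algebraic side.

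First I would build the map $\varphi: \overline{E}^{-1}\KK[M] \to \KK[M_E]$. The functor $\KK[\hspace{1em}]$ applied to the canonical binoid localization $\iota: M \to M_E$ gives a ring homomorphism $\KK[\iota]: \KK[M] \to \KK[M_E]$ sending $T^m \mapsto T^{\iota(m)}$. For every $a \in E$, the image $T^{\iota(a)}$ is a unit in $\KK[M_E]$, with inverse $T^{-\iota(a)}$, because $\iota(a)$ is invertible in $M_E$ by construction. Hence $\KK[\iota]$ sends $\overline{E}$ into the units of $\KK[M_E]$, and the universal property of ring localization produces a unique ring map $\varphi$ with $\varphi(T^m/T^a) = T^{\iota(m)-\iota(a)}$.

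Next I would build the map $\psi: \KK[M_E] \to \overline{E}^{-1}\KK[M]$ in the opposite direction. The composition $M \hookrightarrow \KK[M] \to \overline{E}^{-1}\KK[M]$, sending $m \mapsto T^m/1$, is a binoid homomorphism (with $\infty \mapsto 0$). Every $a \in E$ is sent to a unit $T^a/1$ in $\overline{E}^{-1}\KK[M]$, so by the universal property of binoid localization it factors uniquely through $M_E \to \overline{E}^{-1}\KK[M]$, sending $m - a \mapsto T^m/T^a$. Extending $\KK$-linearly (which is possible since $\KK[M_E]$ is free as a $\KK$-module on $M_E^\bullet$) yields the ring homomorphism $\psi$.

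It then remains to verify that $\varphi$ and $\psi$ are mutually inverse, which I would do by checking them on generators: $\varphi \psi(T^{m-a}) = \varphi(T^m/T^a) = T^{\iota(m)-\iota(a)} = T^{m-a}$, and $\psi\varphi(T^m/T^a) = \psi(T^{\iota(m)-\iota(a)}) = T^m/T^a$. Both computations are immediate from the defining formulas. The main subtlety, and the only place where any care is required, is ensuring that the universal properties interface cleanly across the change of categories: one must remember that $\KK[\hspace{1em}]$ sends the absorbing element $\infty \in M$ to $0 \in \KK[M]$, so that the binoid homomorphism out of $M$ used in constructing $\psi$ really does produce a well-defined $\KK$-algebra map after linear extension. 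Beyond this bookkeeping, the argument is a straightforward double application of universal properties.
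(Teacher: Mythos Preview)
Your proof is correct and follows the standard route via the two universal properties. Note, however, that the paper does not actually supply a proof of this lemma: it is quoted directly as \cite[Corollary 3.2.8]{Boettger} and used as a black box. So there is no proof in the paper to compare against; your argument is precisely the kind of verification one would expect to find in the cited reference.
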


From now on, assume that $M$ is \emph{torsion-free up to nilpotence} and \emph{cancellative}.

\begin{lemma}
    If $M$ is a torsion-free up to nilpotence, integral and cancellative binoid, then $\KK[M]$ is an integral domain.
\end{lemma}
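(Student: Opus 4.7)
The plan is to reduce the statement to the classical fact that the group algebra of a torsion-free abelian group over a field is an integral domain. I expect the argument to go in four short steps.

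First I would unpack the hypotheses on $M$. Integrality means that $M^\bullet = M \smallsetminus \{\infty\}$ is a submonoid of $M$, i.e.\ the sum of two non-infinity elements is never $\infty$. Cancellativity on $M$ restricts to cancellativity on $M^\bullet$, and torsion-freeness up to nilpotence, combined with integrality, says exactly that $M^\bullet$ is a torsion-free cancellative commutative monoid.

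Next I would pass to the Grothendieck group. A torsion-free cancellative commutative monoid embeds into its group completion $\Gamma^\bullet$ (the difference group introduced in Definition~\ref{definition:gamma}), and $\Gamma^\bullet$ is then a torsion-free abelian group. On the algebra side, Lemma~\ref{lemma:functor-respects-localizations} guarantees that $\KK[\Gamma] = \KK[\Gamma^\bullet \cup \{\infty\}]$ is the localization of $\KK[M]$ at the multiplicative set $\{T^a : a \in M^\bullet\}$. Since by cancellativity this set consists of non-zero-divisors (each $T^a$ is matched by the existence of $-a$ in $\Gamma^\bullet$, and multiplication by $T^a$ on monomials is injective), the localization map $\KK[M] \hookrightarrow \KK[\Gamma^\bullet]$ is injective. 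So it suffices to show $\KK[\Gamma^\bullet]$ is a domain.

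Then I would invoke the classical fact that the group algebra $\KK[\Gamma^\bullet]$ of a torsion-free abelian group over a field is an integral domain. The standard argument is: every torsion-free abelian group admits a total order compatible with its group operation; given two nonzero elements
\[
f = \sum_{i=1}^r \alpha_i T^{a_i}, \qquad g = \sum_{j=1}^s \beta_j T^{b_j},
\]
with the $a_i$ and $b_j$ listed in strictly increasing order, the coefficient of $T^{a_r + b_s}$ in $fg$ equals $\alpha_r \beta_s \neq 0$, so $fg \neq 0$.

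Finally, combining the embedding $\KK[M] \hookrightarrow \KK[\Gamma^\bullet]$ with the integrality of $\KK[\Gamma^\bullet]$ yields that $\KK[M]$ is a subring of a domain, hence itself a domain. The only mildly delicate point is the existence of a compatible total order on an arbitrary torsion-free abelian group; this is standard but worth citing, and is the one step I would flag as the heart of the argument rather than a formal manipulation.
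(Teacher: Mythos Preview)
Your proof is correct and follows the same overall strategy as the paper: embed $\KK[M]$ into the group algebra of the difference group $\Gamma^\bullet$ and argue that the latter is a domain. The paper's own proof is shorter because it invokes the standing finite-generation assumption on $M$: under that hypothesis $\Gamma^\bullet \cong \ZZ^r$ for some $r$, so $\KK[\Gamma]$ is just a Laurent polynomial ring, visibly a domain. Your argument via a compatible total order on an arbitrary torsion-free abelian group is the right general statement and does not need finite generation, so it actually proves a bit more; but in the context of this paper the $\ZZ^r$ shortcut is available and avoids the ordering lemma entirely. One small remark: your justification of the injectivity of $\KK[M] \to \KK[\Gamma^\bullet]$ via non-zero-divisors is fine, but the more direct route (which the paper takes) is simply that an injective binoid map $M \hookrightarrow \Gamma$ sends basis elements to distinct basis elements, hence induces an injective ring map.
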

\begin{proof}
    Under this conditions, the map $M\rightarrow\Gamma$ is an injection and this injection reflects on the rings, $\KK[M]\hookrightarrow\KK[\Gamma]$.\footnote{See Definition~\ref{definition:gamma} for a definition of $\Gamma$.} Moreover, $\Gamma\cong(\ZZ^r)^\infty$ for some $r$. Since $\KK[(\ZZ^r)^\infty]$ is an integral domain, also $\KK[M]$ is such.
\end{proof}

\begin{lemma}\label{lemma:prime-in-M-iff-prime-in-KM}
    $\p$ is a prime ideal of $M$ if and only if $\mathfrak{P}=\KK[\p]$ is a prime ideal of $\KK[M]$.
\end{lemma}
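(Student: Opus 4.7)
The plan is to reduce the lemma to the classical fact that the monoid algebra of a cancellative torsion-free commutative monoid over a field is a domain. The bridge is the identification of $\KK[M]/\KK[\p]$ with $\KK[M/\p]$, where $M/\p$ denotes the Rees quotient of $M$ by $\p$ (collapsing $\p$ to $\infty$). This identification is immediate from the construction of $\KK[\cdot]$ on pointed sets: both sides have $\KK$-basis $\{T^a : a \in M\smallsetminus\p\}$, and the induced multiplication $T^a\cdot T^b = T^{a+b}$ if $a+b\notin\p$ and $0$ otherwise coincides on both sides. Hence $\KK[\p]$ is prime in $\KK[M]$ if and only if $\KK[M/\p]$ is an integral domain.

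For the forward direction, assume $\p$ is prime in $M$. Then $M\smallsetminus\p$ is a submonoid; it inherits cancellativity from $M$, and it is nil-free because any nilpotent $a$ of $M$ lies in $\p$ (if $na=\infty\in\p$, primeness iterated $n$ times puts $a\in\p$). Combined with the standing hypothesis that $M$ is torsion-free up to nilpotence, this forces $M\smallsetminus\p$ to be a cancellative, torsion-free, commutative monoid. The classical theorem on monoid algebras then yields that $\KK[M\smallsetminus\p]$ is a domain. Since $\KK[M/\p]$ is precisely this monoid algebra with the formal relation $T^\infty=0$ (where $\infty$ is an absorbing element not produced by multiplying two basis elements from $M\smallsetminus\p$), it is still a domain, so $\KK[\p]$ is prime.

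For the backward direction, suppose $\KK[\p]$ is prime in $\KK[M]$ and let $a,b\in M$ satisfy $a+b\in\p$. Then $T^{a+b}=T^a\cdot T^b\in\KK[\p]$, so primality gives $T^a\in\KK[\p]$ or $T^b\in\KK[\p]$. Because $\KK[\p]$ has the explicit $\KK$-basis $\{T^c : c\in\p\smallsetminus\{\infty\}\}$ and the $T^c$ for distinct $c\in M\smallsetminus\{\infty\}$ are $\KK$-linearly independent in $\KK[M]$, membership of a pure basis element $T^a$ in $\KK[\p]$ forces $a\in\p$. Together with $\p\ne M$ (which follows from $\KK[\p]\ne\KK[M]$, guaranteed by primality), this shows $M\smallsetminus\p$ is a nonempty submonoid, hence $\p$ is prime in $M$.

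The main obstacle is not computational but conceptual: one has to be careful that the hypotheses "cancellative" and "torsion-free up to nilpotence" on $M$ descend to the submonoid $M\smallsetminus\p$, so that the classical monoid-algebra theorem can be invoked. The key input here is the elementary observation that $\nil(M)\subseteq\p$ for every prime $\p$, which upgrades "torsion-free up to nilpotence" on $M$ to genuine torsion-freeness on $M\smallsetminus\p$.
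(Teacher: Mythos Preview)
Your proof is correct and follows essentially the same strategy as the paper: both directions hinge on the identification $\KK[M]/\KK[\p]\cong\KK[M/\p]$ and the fact that the algebra of an integral, cancellative, torsion-free (up to nilpotence) binoid over a field is a domain. The paper packages the forward direction by citing its preceding lemma (which embeds such a binoid into $\Gamma\cong(\ZZ^r)^\infty$), while you invoke the classical monoid-algebra theorem directly and are more explicit about why cancellativity and torsion-freeness descend to $M\smallsetminus\p$; the paper's backward direction is the one-line remark $\KK[\p]\cap M=\p$, which your argument with $T^aT^b=T^{a+b}$ simply unpacks.
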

\begin{proof}
    $\Longleftarrow$ trivial, since $\KK[\p]\cap M=\p$.\\
    $\Longrightarrow$ $\mathfrak{P}$ is prime if and only if $\faktor{\KK[M]}{\mathfrak{P}}$ is an integral domain. We know that $\KK\left[\faktor{M}{I}\right]\cong\faktor{\KK[M]}{\KK[I]}$ for any ideal, and $\faktor{M}{\p}$ is integral because $\p$ is a prime ideal. We can apply the Lemma above and get the result.
\end{proof}

The functor \eqref{functor:mset-kmmodules}, together with Lemma~\ref{lemma:prime-in-M-iff-prime-in-KM}, gives rise to an injection of sets
\begin{equation}\label{injection:specm-speckm}
\begin{tikzcd}[baseline=(current  bounding  box.center),
/tikz/column 1/.append style={anchor=base east},
/tikz/column 2/.append style={anchor=base west}, 
row sep = 0pt]
i: \Spec M\rar[hook] & \Spec \KK[M]\\
\p \hspace{1em}\rar[mapsto] & \KK[\p]
\end{tikzcd}
\end{equation}

\begin{lemma}\label{lemma:i-continuous}
    $i$ is a continuous map between the two spaces equipped with the respective Zariski topologies.
\end{lemma}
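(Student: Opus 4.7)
The plan is to verify continuity by pulling back a subbasis of the Zariski topology on $\Spec \KK[M]$, namely the fundamental open subsets $D(f)$ for $f\in\KK[M]$. Since preimage commutes with arbitrary unions and finite intersections, it suffices to show that $i^{-1}(D(f))$ is open in $\Spec M$ for every such $f$.

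The first step is to understand when a general element of $\KK[M]$ lies in an ideal of the form $\KK[\p]$. Any $f\in\KK[M]$ has a unique expression $f=\sum_{a\in A} c_a T^a$ with $A\subseteq M^\bullet$ finite and $c_a\in\KK^*$; denote $\supp(f):=A$. Because $\KK[\p]$ is, by definition, the free $\KK$-submodule on the monomials $T^a$ with $a\in \p$, the element $f$ belongs to $\KK[\p]$ if and only if every $a\in \supp(f)$ lies in $\p$. Equivalently, $f\notin\KK[\p]$ if and only if there exists $a\in\supp(f)$ with $a\notin \p$.

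From this it follows directly that
\[
i^{-1}(D(f))=\{\p\in\Spec M\mid f\notin \KK[\p]\}=\bigcup_{a\in\supp(f)} D(a),
\]
which is a finite union of fundamental open subsets of $\Spec M$, hence open in the Zariski topology. Since the $D(f)$ form a basis of $\Spec\KK[M]$, this proves that $i$ is continuous.

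I expect no real obstacle: the argument is a bookkeeping check about supports of elements of $\KK[M]$, combined with the fact already established in Lemma~\ref{lemma:prime-in-M-iff-prime-in-KM} that $\KK[\p]$ is a prime ideal so that $i$ is well defined. If one prefers to argue with closed sets instead, the same support argument yields $i^{-1}(V(I))=V_M\bigl(\bigcup_{f\in I}\supp(f)\bigr)$, giving continuity in a single line.
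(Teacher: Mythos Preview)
Your proof is correct and follows essentially the same approach as the paper: both reduce to preimages of basic opens $D(f)$, use the characterization $f\in\KK[\p]\iff\supp(f)\subseteq\p$, and conclude $i^{-1}(D(f))=\bigcup_{a\in\supp(f)}D(a)$. Your write-up is slightly more detailed in justifying the support criterion, but the argument is the same.
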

\begin{proof}
    It suffices to prove that the preimage of a fundamental open subset $D(F)$ is an open subset in $\Spec M$. Indeed, $i^{-1}(D(F))=\{\p\in\Spec M\mid F\notin\KK[\p]\}$.
    \\
    Let $F=\sum_{\mu\in M}\alpha_\mu T^\mu$. Then $F\in\KK[\p]$ if and only if $\supp F\subseteq\p$. So $F\notin\KK[\p]$ if and only if there exists $\mu\in\supp F$ such that $\mu\notin\p$. This happens if and only if $\p\in D(\mu)$. So
    \[
    i^{-1}(D(F))=\bigcup_{\mu\in\supp F}D(\mu).\qedhere
    \]
\end{proof}
\begin{remark}
    $i$ is not a closed injection.
\end{remark}

\begin{lemma}\label{lemma:U-intersect-SpecM}
    For any non empty open subset $\varnothing\neq U\subseteq\Spec\KK[M]$ the intersection $U\cap i(\Spec M)$ is non empty.
\end{lemma}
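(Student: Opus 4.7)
The plan is to reduce to the case of a basic open subset $U=D(F)$ for some $F\in\KK[M]$, and then explicitly produce a prime of $M$ whose image under $i$ lies in $D(F)$. Since the sets $D(F)$ form a basis of the Zariski topology on $\Spec\KK[M]$, any nonempty open $U$ contains some nonempty $D(F)$, so it suffices to show $D(F)\cap i(\Spec M)\neq\varnothing$ whenever $D(F)\neq\varnothing$.

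First I would recall from the proof of Lemma~\ref{lemma:i-continuous} the explicit description
\[
i^{-1}(D(F))=\bigcup_{\mu\in\supp F}D(\mu),
\]
which immediately translates the problem into a statement purely about $\Spec M$: we need some $\mu\in\supp F$ that is not contained in every prime of $M$, i.e.\ some $\mu\in\supp F$ which is not nilpotent. Given such a $\mu$, by Definition~\ref{definition:reduced} $D(\mu)$ is nonempty, so any $\p\in D(\mu)$ yields $\KK[\p]=i(\p)\in D(F)\cap i(\Spec M)$.

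The remaining (and only real) point is to show that if every $\mu\in\supp F$ is nilpotent in $M$, then $F$ itself is nilpotent in $\KK[M]$, contradicting the hypothesis $D(F)\neq\varnothing$ (which means $F$ is not contained in every prime of $\KK[M]$, hence not nilpotent). This is the standard argument: for each $\mu\in\supp F$ there is an integer $n_\mu\geq 1$ with $n_\mu\mu=\infty$, so $T^\mu$ satisfies $(T^\mu)^{n_\mu}=T^{n_\mu\mu}=T^\infty=0$ in $\KK[M]$; thus every monomial appearing in $F$ is nilpotent. Since the nilpotent elements of the commutative ring $\KK[M]$ form an ideal, the finite sum $F=\sum_{\mu\in\supp F}\alpha_\mu T^\mu$ is nilpotent, contradicting $D(F)\neq\varnothing$. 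Therefore some $\mu\in\supp F$ is non-nilpotent, which completes the proof.

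I do not expect any serious obstacle here; the only subtlety is remembering that the hypothesis on $M$ (torsion-free up to nilpotence, cancellative) still permits nilpotent elements, so we genuinely need the closure of the nilradical of $\KK[M]$ under addition to pass from ``each $T^\mu$ nilpotent'' to ``$F$ nilpotent''. No appeal to the field $\KK$ beyond the ring-theoretic structure of $\KK[M]$ is needed.
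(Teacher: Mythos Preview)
Your argument is correct. The paper, however, takes a different route: it observes that any nonempty open $U\subseteq\Spec\KK[M]$ contains a minimal prime $\mathfrak{P}$ of $\KK[M]$ (open sets are closed under generization), and then invokes \cite[Corollary 3.3.5]{Boettger} to conclude that every minimal prime of $\KK[M]$ is of the form $\KK[\p]=i(\p)$ for some $\p\in\Spec M$; in the integral case this is just the generic point $\langle 0\rangle=i(\langle\infty\rangle)$. Your approach avoids the external citation by working directly with the basic open $D(F)$ and the explicit preimage formula from Lemma~\ref{lemma:i-continuous}, reducing the claim to the elementary fact that a sum of nilpotents is nilpotent. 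This is more self-contained and arguably cleaner; the paper's proof is shorter but leans on the structural result that minimal primes of $\KK[M]$ are combinatorial.
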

\begin{proof}
    If $M$ is integral then $\langle0\rangle=i(\langle\infty\rangle)\in U$. If $M$ is non integral, consider a minimal prime ideal $\mathfrak{P}\in\Spec \KK[M]$. Then $\mathfrak{P}=i(\p)$, see \cite[Corollary 3.3.5]{Boettger}. Thanks to the correspondence between prime ideals in $\KK[M]$ that contain $\mathfrak{P}$ and prime ideals in $\faktor{\KK[M]}{\mathfrak{P}}$ and thanks to the fact that $\faktor{\KK[M]}{\mathfrak{P}}$ is integral, we can apply the previous case and obtain our result.
\end{proof}

What about sheaves? Since the map in \eqref{injection:specm-speckm} is continuous, we can pushforward a sheaf from the combinatorial spectrum to the algebraic spectrum
\begin{equation}\label{functor:sheavesM-sheavesKM}
\begin{tikzcd}[baseline=(current  bounding  box.center),
/tikz/column 1/.append style={anchor=base east},
/tikz/column 2/.append style={anchor=base west}, 
row sep = 0pt]
\KK[\hspace{1em}]: \mathfrak{Sheaves}_{\Spec M}\rar & \mathfrak{Sheaves}_{\Spec \KK[M]}\\
\F\rar[mapsto] & \KK[\mathcal{\F}]:=i_*\F
\end{tikzcd}
\end{equation}
\begin{remark}
    If $\F$ is a sheaf of $M$-sets (resp.\ sets, abelian groups, \dots) on $\Spec M$, then $\KK[\F]$ is a sheaf of $M$-sets (resp.\ sets, abelian groups, \dots) on $\Spec \KK[M]$, because $\KK[\F](U)=\F(i^{-1}(U))$ by definition of the pushforward $i_*$.
\end{remark}

This restricts in a natural way to the punctured situation we are mainly interested in.

The next step is to say something more explicit about the relation between the corresponding topologies. The injection \eqref{injection:specm-speckm} induces a functor between the two topologies%
\begin{equation}\label{functor:topspecm-topspeckm}
\begin{tikzcd}[baseline=(current  bounding  box.center),
/tikz/column 1/.append style={anchor=base east},
/tikz/column 2/.append style={anchor=base west}, 
row sep = 0pt]
\KK[\hspace{1em}]: \Top_{\Spec M}\rar & \Top_{\Spec \KK[M]}\\
D(I) \hspace{1em}\rar[mapsto]& D(\KK[I])\\
\end{tikzcd}
\end{equation}
Thus this functor maps affine open subsets of the form $D(I)$ to affine open subsets of the same form, namely $D(\KK[I])$, inducing another topology on $\Spec\KK[M]$, coarser than the Zariski topology, that we exploit in detail below.

\begin{definition}
    The covering $D(X_i)$ of $\Spec^\bullet \KK[M]$ is called affine \emph{combinatorial covering}.
\end{definition}

Indeed, the image of this functor gives rise to a new topological space
\[
(\Spec \KK[M], \KK[\Top_{\Spec  M}]),
\] subspace of $(\Spec  \KK[M],\Top_{\Spec  \KK[M]})$. In some situations, like in the Stanley-Reisner rings in the next Chapter, we are able to show that this last topology is fine enough for us, in the sense that these open sets are acyclic for $\O^*_{\KK[M]}$ and so we can compute cohomology of this sheaf through \v{C}ech cohomology on a nice finite combinatorial covering induced by this combinatorial topology $\KK[\Top_{\Spec  M}]$.

Via $i$ and $\KK[\hspace{1em}]$ we obtain also an identification of the finite topological spaces
\[
(\Spec  M, \Top_{\Spec  M})\longleftrightarrow(i(\Spec  M), \KK[\Top_{\Spec  M}]\restriction_{i(\Spec  M)}).
\]
The topology on the right is the same as the one induced on these points by the restriction of $\Top_{\Spec  \KK[M]}$, so we have that
\[
(i(\Spec  M), \KK[\Top_{\Spec  M}]\restriction_{i(\Spec  M)})=(i(\Spec  M), \Top_{\Spec  \KK[M]}\restriction_{i(\Spec  M)})
\]
is a subspace of $(\Spec  \KK[M], \Top_{\Spec  \KK[M]})$.

\begin{definition}
    Let $\widehat{I}$ be an ideal in $\KK[M]$. We say that $\widehat{I}$ is \emph{combinatorial} if $\widehat{I}=\KK[I]$ for some $I$ ideal of $M$.
\end{definition}
\begin{remark}
    Any combinatorial ideal is monomial.
\end{remark}

\begin{definition}
    Given a prime ideal $\mathfrak{P}\in\KK[M]$ we denote by $\mathfrak{P}^{\mathrm{mon}}$ the ideal of $\KK[M]$ generated by the monomials in $\mathfrak{P}$, and by $\mathfrak{P}^{\comb}$ the ideal in $M$ such that $\mathfrak{P}^{\mathrm{mon}}=\KK[\mathfrak{P}^{\comb}]$.
\end{definition}
\begin{remark}
    The ideal $\mathfrak{P}^{\mathrm{mon}}$ is clearly combinatorial.
\end{remark}

\begin{remark}\label{remark:abuse-notation-Pcomb}
    Every monomial $G\in\mathfrak{P}^{\mathrm{mon}}$ corresponds uniquely to an element $g$ in $\mathfrak{P}^{\comb}$, so we will abuse the notation and say that $\mathfrak{P}^{\comb}=\mathfrak{P}\cap M$. Moreover, $\mathfrak{P}^{\comb}$ is a prime ideal in $M$.
\end{remark}

\begin{lemma}
    Let $P=\sum \alpha_jP_j\in\KK[M]$ with $P_j$ monomials. Then $i^{-1}(D(P))=\bigcup D(p_j)$, where $p_j$ is the element in $M$ corresponding to the monomial $P_j$ (i.e.\ the sum of exponents).
\end{lemma}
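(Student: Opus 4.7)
The plan is to unravel the definitions and reduce the statement to the observation that a polynomial lies in a combinatorial ideal precisely when all its monomial terms do. This is essentially a refinement/restatement of the computation already carried out inside Lemma~\ref{lemma:i-continuous}, so I would simply make that computation explicit.

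First, I would fix notation: write $P = \sum_{j} \alpha_j P_j = \sum_j \alpha_j T^{p_j}$ with $\alpha_j \in \KK^*$ and $P_j = T^{p_j}$ pairwise distinct monomials, so $p_j \in M$ and $\supp P = \{p_j\}$. Then, for any $\p \in \Spec M$, unraveling the definition of $i$ and of the fundamental open set gives the chain of equivalences
\[
\p \in i^{-1}(D(P)) \;\Longleftrightarrow\; i(\p) = \KK[\p] \in D(P) \;\Longleftrightarrow\; P \notin \KK[\p].
\]

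The key structural step is the identification $P \in \KK[\p] \Leftrightarrow p_j \in \p$ for every $j$. Here I would invoke the fact that $\KK[\p]$ is, by construction, the free $\KK$-vector subspace of $\KK[M]$ spanned by the monomials $\{T^\mu \mid \mu \in \p\}$; in particular, since the monomials $\{T^\mu\}_{\mu \in M}$ form a $\KK$-basis of $\KK[M]$, an element lies in $\KK[\p]$ iff each of the basis elements in its unique expansion does. Hence $P \in \KK[\p]$ iff $p_j \in \p$ for all $j$, and negating, $P \notin \KK[\p]$ iff there exists some $j$ with $p_j \notin \p$, i.e.\ $\p \in D(p_j)$ for that $j$.

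Putting the two steps together yields
\[
\p \in i^{-1}(D(P)) \;\Longleftrightarrow\; \exists\, j\text{ with } \p \in D(p_j) \;\Longleftrightarrow\; \p \in \bigcup_j D(p_j),
\]
which is exactly the claim. There is no real obstacle here: the only point worth underlining is why $\KK[\p]$ behaves as a \emph{monomial} (equivalently, combinatorial) ideal, which is immediate from its definition and the $\KK$-basis property of the monomials. One could also observe that this lemma can be read as an explicit description of $i$ being continuous (already proved in Lemma~\ref{lemma:i-continuous}) on generic elements of $\KK[M]$, so it may simply be stated as a corollary of that proof with the remark that the formula $i^{-1}(D(P)) = \bigcup_{p_j \in \supp P} D(p_j)$ holds verbatim.
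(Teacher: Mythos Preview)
Your proposal is correct and follows exactly the same argument as the paper: the lemma is stated in the paper without a separate proof because its content was already established verbatim inside the proof of Lemma~\ref{lemma:i-continuous}, where the identity $i^{-1}(D(F))=\bigcup_{\mu\in\supp F}D(\mu)$ is derived via the same chain of equivalences you write out (namely, $F\in\KK[\p]\Leftrightarrow\supp F\subseteq\p$). Your observation that this lemma is a corollary of that computation is precisely how the paper treats it.
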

\begin{example}
    Let $M=(x, y\mid x+y=\infty)$. Let $P=2X+4Y^2\in\KK[M]$. Then $D(P)=\{\mathfrak{P}\mid P\notin \mathfrak{P}\}$ and $D(P)\cap i(\Spec M)=\{\langle X\rangle, \langle Y\rangle\}=i(D(y)\cup D(x))$.
\end{example}

All the above restricts naturally to $\Spec^\bullet\KK[M]$, the punctured situation we are interested in.

\begin{definition}
    We denote by $\gls{punctured-spectrum-KM}$ the \emph{punctured spectrum of $\KK[M]$}, i.e.\
    \[
    \Spec^\bullet\KK[M]:=\Spec \KK[M]\smallsetminus \{\KK[M_+]\}.
    \]
\end{definition}
In local cohomology one defines the punctured spectrum for a local ring, when we remove the maximal ideal, for example in \cite[Definition 15.3]{iyengar2007twenty}. Here we don't have in general a local ring, but the maximal ideal $\KK[M_+]$ is special enough to consider it differently from the others. If the binoid ring is $\NN$-graded, $\KK[M_+]$ is the irrelevant ideal, and this will often be the case in our work.

We will be interested in computing the cohomology of some sheaves, both on $\Spec \KK[M]$ and on its punctured version.
It is a known result that $\H^1(X, \O^*_X)$ is the Picard group of $X$, i.e.\ the group of invertible $\O_X$-sheaves on $X$, for any ringed space $(X, \O^*_X)$, see for example \cite[Exercise III.4.5]{hartshorne1977algebraic}.
\begin{definition}
    Let $\KK[M]$ be a binoid ring. Its \emph{local Picard group} is the Picard group of the scheme
    \[
    (\Spec^\bullet \KK[M], \O_{\KK[M]}\restriction_{\Spec^\bullet \KK[M]})
    \]
    and it is denoted by $\gls{PiclocKM}$.
\end{definition}

The adjective \emph{local} that we are using is motivated because what we are doing is looking at that happens around the special point $\KK[M_+]$, and then we might think of the long exact sequence of cohomology groups in \cite[Exercise III.2.3.(e)]{hartshorne1977algebraic} that relates cohomology with support in the point to the cohomology of the punctured spectrum. The hope is to understand something about $\Spec^\bullet R$ in order to better understand what happens in the special point.

In general, $\KK[M_+]$ will be a singularity, and it goes a while back the idea of study a singularity by removing it and see what happens around it. Trivially, $\KK[M_+]$ is at most an isolated singularity if and only if $\Spec^\bullet \KK[M]$ is be smooth. Another classical example where we remove a point in order to study it is the famous theorem published by Mumford in 1961.
\begin{theorem}[\cite{mumford1961topology}]
    Let $P$ be a point lying on a normal complex projective variety $X$ of dimension 2. $P$ is smooth if and only if $\pi_1(X\smallsetminus\{P\})=0$.
\end{theorem}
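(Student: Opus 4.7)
The plan is to split into the easy and hard directions. For the easy direction, assume $P$ is smooth. Then the analytic germ $(X,P)$ is biholomorphic to $(\mathbb{C}^2,0)$, so there is a neighborhood $B$ of $P$ in $X$ homeomorphic to an open $4$-ball, and $B\smallsetminus\{P\}$ deformation retracts onto $S^3$, hence is simply connected. Interpreting the statement locally (which is the content of Mumford's original result), this suffices; to recover the global statement one can use the long exact sequence of the pair $(X, X\smallsetminus\{P\})$ together with the excision-type fact that $\pi_1$ is detected locally near an isolated singular point.

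For the hard direction, assume $\pi_1$ of (a punctured neighborhood of) $P$ is trivial and aim to show $P$ is smooth. First apply resolution of singularities for normal surface singularities (classical, due to Walker and Zariski) to obtain a proper birational morphism $\pi:\tilde X\to X$ with $\tilde X$ smooth, $\pi$ an isomorphism over $X\smallsetminus\{P\}$, and exceptional fiber $E=\pi^{-1}(P)_{\mathrm{red}}=\bigcup E_i$. After further blow-ups we may assume $E$ is a normal crossings divisor with smooth components $E_i$ of genus $g_i$. A small punctured neighborhood of $P$ in $X$ is then homotopy equivalent to the link $L$, the boundary of a tubular neighborhood of $E$ in $\tilde X$. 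Since $X$ is normal of dimension $2$, $E$ is connected, and the intersection matrix $Q=(E_i\cdot E_j)$ is negative definite (the Du Val--Mumford criterion).

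Next, compute $\pi_1(L)$ combinatorially from the dual graph $\Gamma$ decorated by $(E_i^2,g_i)$: $L$ is a plumbed $3$-manifold, so one writes down Mumford's presentation with generators $\gamma_i$ (meridians around $E_i$) plus generators for the genera, relations coming from the incidence of components, and central relations encoding the self-intersection numbers. Abelianizing, one obtains $H_1(L,\mathbb{Z})\cong\operatorname{coker} Q\oplus\mathbb{Z}^{2\sum g_i}$. The hypothesis $\pi_1(L)=0$ forces both $g_i=0$ for all $i$ and $\operatorname{coker} Q=0$, i.e.\ $\det Q=\pm 1$; combined with negative definiteness, $Q$ is a negative definite unimodular integer matrix realized by a tree of rational curves.

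The crux, and the main obstacle, is to then show that no non-empty such configuration has simply connected plumbed $3$-manifold: one has to exclude every candidate dual tree by a combinatorial reduction (contracting $(-1)$-curves would contradict minimality of the resolution, and the negative-definiteness together with the explicit presentation constrain the graph so severely that only $E=\varnothing$ survives). This is precisely Mumford's detailed graph-theoretic analysis in \cite{mumford1961topology}, and it is the step where the real technical weight of the theorem lies. Once $E=\varnothing$, $\pi$ is an isomorphism, so $P$ was smooth to begin with, completing the proof.
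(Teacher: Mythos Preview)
The paper does not prove this theorem at all: it is stated with the citation \cite{mumford1961topology} and used only as historical motivation for studying punctured spectra, with no argument given. There is therefore no ``paper's proof'' to compare your proposal against.

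That said, your sketch is a faithful outline of Mumford's actual argument in the cited paper: resolve the singularity to a normal-crossings exceptional divisor, identify the link as a plumbed $3$-manifold, read off $H_1$ from the intersection form and the genera, and then carry out the combinatorial elimination showing that no non-trivial negative-definite tree of rational curves yields a simply connected link. You are also right to flag the local-versus-global issue: Mumford's theorem is really about the fundamental group of a punctured neighbourhood (equivalently, of the link), and the paper's phrasing with the global $\pi_1(X\smallsetminus\{P\})$ is a slight imprecision which you handle correctly by passing to the local statement.
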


\section{The Combinatorial Topology}

In this Section we describe a new topology on $\Spec \KK[M]$ that we call combinatorial, and we will prove that in some cases this topology is sufficient to compute cohomology of the sheaves defined in the Zariski topology.

\begin{proposition}
    The collection of sets $\{D(\mathfrak{A})\}$ with $\mathfrak{A}$ a combinatorial ideal is a topology on $\Spec\KK[M]$.
\end{proposition}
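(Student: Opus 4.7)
The plan is to verify the three axioms of a topology directly, using the fact that combinatorial ideals are preserved by the ideal-theoretic operations needed to describe unions and intersections of the $D(\mathfrak{A})$'s.

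First I would handle the trivial endpoints. The zero ideal is combinatorial, since $\{0\}=\KK[\{\infty\}]$ and $\{\infty\}$ is an ideal of $M$, and $\varnothing=D(\{0\})$. Dually, the whole ring $\KK[M]=\KK[M]$ is combinatorial and $\Spec\KK[M]=D(\KK[M])$, so both the empty set and the total space belong to the collection.

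Next I would check stability under arbitrary unions. Recalling the standard identity $\bigcup_{i\in I}D(\mathfrak{A}_i)=D\bigl(\sum_{i\in I}\mathfrak{A}_i\bigr)$ in $\Spec\KK[M]$, it suffices to see that a sum of combinatorial ideals is again combinatorial. If $\mathfrak{A}_i=\KK[J_i]$ for ideals $J_i\subseteq M$, then since ideals in a binoid are precisely the $M$-subsets, the union $J:=\bigcup_{i\in I}J_i$ is again an ideal of $M$, and
\[
\sum_{i\in I}\KK[J_i]=\KK\Bigl[\bigcup_{i\in I}J_i\Bigr]=\KK[J],
\]
because a $\KK$-linear combination of monomials from the various $J_i$ is exactly a polynomial whose support lies in $J$.

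For finite intersections it is enough by induction to handle the binary case. I would use $D(\mathfrak{A})\cap D(\mathfrak{B})=D(\mathfrak{A}\cap\mathfrak{B})$ and verify that the intersection of combinatorial ideals is combinatorial: if $\mathfrak{A}=\KK[I]$ and $\mathfrak{B}=\KK[J]$, then a polynomial $f=\sum_\mu\alpha_\mu T^\mu$ lies in $\KK[I]\cap\KK[J]$ iff every $\mu\in\supp f$ lies in both $I$ and $J$, i.e.\ in $I\cap J$, whence $\KK[I]\cap\KK[J]=\KK[I\cap J]$. Since $I\cap J$ is an ideal of $M$, this intersection is combinatorial.

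These three observations combine to show that $\{D(\mathfrak{A})\mid\mathfrak{A}\text{ combinatorial}\}$ is closed under the topology axioms. The only subtlety, and hence the \emph{main (mild) obstacle}, is to keep track of the dictionary between monomial operations in $\KK[M]$ and set-theoretic operations on $M$-ideals; once one recognizes that in the binoid setting ideals are just $M$-stable subsets (so unions are again ideals), everything reduces to the standard computation of the support of a polynomial.
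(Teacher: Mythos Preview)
Your proof is correct and follows essentially the same approach as the paper. The only notable variation is that for finite intersections the paper uses the identity $D(\mathfrak{A})\cap D(\mathfrak{B})=D(\mathfrak{A}\cdot\mathfrak{B})$ and observes that the product of monomial ideals is monomial, whereas you use $D(\mathfrak{A})\cap D(\mathfrak{B})=D(\mathfrak{A}\cap\mathfrak{B})$ and show $\KK[I]\cap\KK[J]=\KK[I\cap J]$; both identities are standard and both resulting ideals are combinatorial, so this is a cosmetic difference rather than a genuinely different route.
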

\begin{proof}
    $D(\langle 0\rangle)=\varnothing$ and $D(\langle 1\rangle)=\Spec\KK[M]$. $D(\mathfrak{A})\cap D(\mathfrak{B})=D(\mathfrak{A}\cdot\mathfrak{B})$, and the latter is monomial (so combinatorial) again. Finally, $\bigcup_iD(\mathfrak{A}_i)=D(\sum_i \mathfrak{A}_i)$ and the latter is again monomial.
\end{proof}

\begin{definition}\label{definition:combinatorial-topology}
    The topology $\{D(\mathfrak{A})\}$ is called \emph{combinatorial topology} of $\Spec \KK[M]$ and it is denoted by $\Top_{\Spec \KK[M]}^{\comb}$.
\end{definition}

\begin{corollary}\label{corollary:basis-combinatorial-topology}
    The collection of sets $\{D(P)\}$, with $P$ a monomial in $\KK[M]$, is a basis for the combinatorial topology.
\end{corollary}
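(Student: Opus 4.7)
The plan is straightforward: I would verify the two standard properties defining a basis, namely that each $D(P)$ for $P$ a monomial is itself open in the combinatorial topology, and that any combinatorial open set can be written as a union of such $D(P)$'s.

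For the first step, I would observe that if $P \in \KK[M]$ is a monomial, say $P = \alpha T^p$ with $\alpha \in \KK^*$ and $p \in M$, then the principal ideal $\langle P \rangle$ coincides with $\KK[\langle p \rangle]$, where $\langle p \rangle$ is the (monoidal) ideal of $M$ generated by $p$. In particular, $\langle P \rangle$ is combinatorial, so $D(P) = D(\langle P \rangle) = D(\KK[\langle p \rangle])$ lies in $\Top_{\Spec \KK[M]}^{\comb}$.

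For the second step, let $\mathfrak{A} = \KK[I]$ be an arbitrary combinatorial ideal. Since $M$ is finitely generated (and hence noetherian as a commutative binoid with finitely generated maximal ideal), the ideal $I \subseteq M$ is finitely generated, say $I = \langle p_1, \ldots, p_r \rangle$. Then $\mathfrak{A}$ is generated as an ideal of $\KK[M]$ by the monomials $P_i = T^{p_i}$, and consequently
\[
D(\mathfrak{A}) = D(P_1, \ldots, P_r) = \bigcup_{i=1}^r D(P_i),
\]
which exhibits $D(\mathfrak{A})$ as a union of sets from the proposed basis.

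There is no real obstacle here; the content of the corollary is essentially a repackaging of the fact that combinatorial ideals are precisely the monomial ideals that are $\KK$-spans of monoidal ideals, combined with the finite generation of ideals in $M$. The one mild point to be careful about is that when writing a generator $P$ of $\langle P \rangle$ we may absorb the scalar $\alpha \in \KK^*$, so that $D(P) = D(T^p)$; this ensures that restricting to monic monomials (i.e.\ pure $T^p$) would give the same basis, reinforcing that $\{D(P)\}$ really is intrinsic to the combinatorics of $M$.
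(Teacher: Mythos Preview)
Your proof is correct and follows essentially the same approach as the paper: show that each $D(P)$ is a combinatorial open set via $D(P)=D(\langle P\rangle)=D(\KK[\langle p\rangle])$, and then express an arbitrary $D(\mathfrak{A})$ as a union of such sets. The only minor difference is that you invoke finite generation of ideals in $M$ to get a finite union, whereas the paper simply writes $D(\mathfrak{A})=\bigcup_{P\in\mathfrak{A}^{\comb}}D(P)$ over all monomials in the ideal; finiteness is not needed here, so your appeal to noetherianity is harmless but superfluous.
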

\begin{proof}
    These sets form a basis for a topology on $\Spec \KK[M]$ because $D(P_1)\cap D(P_2)=D(P_1\cdot P_2)$. To prove that they generate the combinatorial topology, we first observe that $D(P)=D(\langle P\rangle)$ was already part of the basis of the combinatorial topology. On the other side, $\displaystyle D(\mathfrak{A})=\bigcup_{P\in\mathfrak{A}^{\comb}}D(P)$, so we can generate the basis before with these new open subsets.
\end{proof}

\begin{remark}
    If $P=\alpha\prod X_i^{r_i}$ with all $r_i$'s non zero, then $D(P)=\bigcap D(X_i)$, so $\{D(X_i)\}$ generate this topology via union and intersections, but it is not a basis because the intersection of any two of them does not contain one of the same type.
\end{remark}

\begin{example}
    Let $M=\NN^\infty$, so $\KK[M]=\KK[X]$. The combinatorial topology of $\Spec \KK[M]$ is given by $D(0)=\varnothing$, $D(1)=\Spec \KK[M]$ and $D(X)$.
\end{example}

\begin{remark}
    The combinatorial topology is, in general, a really coarse topology, since $\Spec\KK[M]$ equipped with it is not even $T_0$. In fact, two points $\mathfrak{P}$ and $\mathfrak{Q}$ have exactly the same combinatorial neighbourhoods if and only if they have the same set of monomials, $\mathfrak{P}^{\mathrm{mon}}=\mathfrak{Q}^{\mathrm{mon}}$.
\end{remark}

\begin{remark}
    The combinatorial topology is a topology for $\Spec \KK[M]$, that is a subtopology of the Zariski topology, so we can restrict Zariski sheaves to combinatorial open subsets, and they are still sheaves for the new space.
\end{remark}

\begin{remark}
    We have indeed this diagram
    \[
    \begin{tikzcd}
    \Spec M\rar["i"] \drar["j", swap]& \Spec \KK[M]_{\mathrm{Zar}}\dar["\lambda"]\\
    &\Spec \KK[M]_{\comb}
    \end{tikzcd}
    \]
    where $\lambda$ is the identity, $i$ is the injection that we proved to be continuous in Lemma~\ref{lemma:i-continuous} and $j$ is the embedding to the space with the combinatorial topology, that is again obviously continuous.
    
    The restriction in the previous remark is nothing else than the pushforward along $\lambda$. Given any sheaf $\F$ on $\Spec\KK[M]_{\mathrm{Zar}}$, we get a sheaf $\lambda_*\F=\F\restriction_{\Top_{\comb}}$ and in particular also $\lambda_*(i_*\O^*_M)=j_*\O^*_M$.
\end{remark}

\begin{definition}
    Let $\F$ be a sheaf on $\Spec \KK[M]$ equipped with the Zariski topology. We denote by $\F^{\comb}$ the restriction of this sheaf to the combinatorial topology.
\end{definition}

%\begin{lemma}
%    If $\mathfrak{A}$ is a combinatorial ideal in $\KK[M]$ reduced binoid algebra, then
%    \[
%        \O^*_{\KK[M]}(D(\mathfrak{A}))\cong\KK^*(D(\mathfrak{A}))\oplus i_*\O^*_M(D(\mathfrak{A})).
%    \]
%\end{lemma}
%\begin{proof}
%    Since $\{D(X_i)\}$ form a basis for the topology, it is enough to prove it for these open subsets. This is now clear because
%    \begin{align*}
%        \O^*_{\KK[M]}(D(X_i))&\cong (\KK[M]_{X_i})^*\\
%        &\cong (\KK[M_{x_i}])^*\cong \KK^*\oplus \O^*_M(D(x_i))\\
%        &\cong\KK^*\oplus i_*\O^*_M(D(X_i))\cong \KK^*(D(X_i))\oplus i_*\O^*_M(D(X_i)).\qedhere
%    \end{align*}
%\end{proof}

\begin{lemma}
    For any sheaf of abelian groups $\F$ on $\Spec\KK[M]$ equipped with the combinatorial topology and any $\mathfrak{P}$ prime ideal of $\KK[M]$ we have that
    \[
    \F_{\mathfrak{P}}\cong \F_{\KK[\mathfrak{P}^{\comb}]}.
    \]
\end{lemma}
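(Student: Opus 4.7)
The plan is to show that $\mathfrak{P}$ and $\KK[\mathfrak{P}^{\comb}]$ have literally the same system of open neighbourhoods in the combinatorial topology, so that the colimits defining the two stalks are indexed by the same directed set and hence coincide. This is essentially a sharpening of the remark made just above the lemma, that two points have the same combinatorial neighbourhoods if and only if they have the same monomial part.

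First I would recall that, by Corollary~\ref{corollary:basis-combinatorial-topology}, the sets $\{D(P)\}$ with $P$ a monomial form a basis of the combinatorial topology, and that any combinatorial open subset has the form $D(\mathfrak{A})$ with $\mathfrak{A}$ a monomial ideal. Thus when forming the stalk
\[
\F_{\mathfrak{P}}^{\comb}=\varinjlim_{\mathfrak{P}\in D(\mathfrak{A})}\F(D(\mathfrak{A})),
\]
the indexing set is the collection of monomial ideals $\mathfrak{A}$ not contained in $\mathfrak{P}$.

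Next I would make the key observation: for a monomial ideal $\mathfrak{A}$,
\[
\mathfrak{A}\subseteq\mathfrak{P}\quad\Longleftrightarrow\quad \mathfrak{A}\subseteq\mathfrak{P}^{\mathrm{mon}}\quad\Longleftrightarrow\quad \mathfrak{A}\subseteq\KK[\mathfrak{P}^{\comb}],
\]
because containment of a monomial ideal in an arbitrary ideal is detected on a monomial generating set, and $\KK[\mathfrak{P}^{\comb}]=\mathfrak{P}^{\mathrm{mon}}$ by Remark~\ref{remark:abuse-notation-Pcomb}. Equivalently,
\[
\mathfrak{P}\in D(\mathfrak{A})\quad\Longleftrightarrow\quad \KK[\mathfrak{P}^{\comb}]\in D(\mathfrak{A}).
\]

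Finally, I would conclude that the directed sets of combinatorial open neighbourhoods of $\mathfrak{P}$ and of $\KK[\mathfrak{P}^{\comb}]$ agree, so
\[
\F_{\mathfrak{P}}=\varinjlim_{\mathfrak{P}\in D(\mathfrak{A})}\F(D(\mathfrak{A}))=\varinjlim_{\KK[\mathfrak{P}^{\comb}]\in D(\mathfrak{A})}\F(D(\mathfrak{A}))=\F_{\KK[\mathfrak{P}^{\comb}]},
\]
which gives the desired (in fact canonical) isomorphism. There is no real obstacle here: the statement is essentially tautological once one notices that, in the combinatorial topology, a point is only seen through its monomial part, and the only thing to verify carefully is the equivalence of containments for monomial ideals, which itself follows from $\KK$-linear independence of distinct monomials in $\KK[M]$.
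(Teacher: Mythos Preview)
Your proof is correct and follows exactly the same approach as the paper: both argue that $\mathfrak{P}$ and $\KK[\mathfrak{P}^{\comb}]$ have the same combinatorial neighbourhoods, hence the same stalk. The paper's proof is a single sentence stating this fact without justification, whereas you have spelled out the reason (containment of a monomial ideal in $\mathfrak{P}$ is detected on monomial generators, hence equivalent to containment in $\mathfrak{P}^{\mathrm{mon}}=\KK[\mathfrak{P}^{\comb}]$), so your version is strictly more detailed.
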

\begin{proof}
    $\mathfrak{P}$ and $\KK[\mathfrak{P}^{\comb}]$ have the same combinatorial neighbourhoods, i.e.\ $\mathfrak{P}\in U\in\Top^{\comb}$ if and only if $\KK[\mathfrak{P}^{\comb}]\in U\in\Top^{\comb}$.
\end{proof}

\begin{lemma}\label{lemma:units-group-algebra}
    $\KK\left[(\ZZ^l)^\infty\right]^*\cong \KK^*\oplus \ZZ^l$.
\end{lemma}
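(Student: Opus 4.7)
The plan is to identify the binoid algebra explicitly and then read off its units. First I would observe that, since $(\ZZ^l)^\infty$ is obtained from the group $\ZZ^l$ by adjoining an absorbing element $\infty$, the functor in \eqref{functor:binoids-rings} gives the identification
\[
\KK\bigl[(\ZZ^l)^\infty\bigr] \cong \KK[\ZZ^l] \cong \KK\bigl[X_1^{\pm 1}, \ldots, X_l^{\pm 1}\bigr],
\]
the Laurent polynomial ring in $l$ variables over $\KK$.

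Next I would determine the units of this ring. The natural $\ZZ^l$-grading on $\KK[X_1^{\pm 1}, \ldots, X_l^{\pm 1}]$ makes it an integral domain graded by a torsion-free abelian group, and for any such graded domain a unit must be homogeneous: if $f = \sum_{a \in A} \alpha_a X^a$ has more than one term (so $|A| \geq 2$, with $A$ totally ordered by some fixed term order compatible with the grading) and $g = \sum_{b \in B} \beta_b X^b$ satisfies $fg = 1$, then $fg$ has leading term $\alpha_{\max A}\beta_{\max B} X^{\max A + \max B}$ and trailing term $\alpha_{\min A}\beta_{\min B} X^{\min A + \min B}$, and these two terms are distinct, contradicting $fg = 1$. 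Hence every unit is a single monomial $\alpha X^a$ with $\alpha \in \KK^*$ and $a \in \ZZ^l$, and conversely every such monomial is invertible with inverse $\alpha^{-1} X^{-a}$.

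Finally, the map
\[
\begin{tikzcd}[row sep = 0ex,
/tikz/column 1/.append style={anchor=base east},
/tikz/column 2/.append style={anchor=base west}]
\KK^* \oplus \ZZ^l \rar & \KK\bigl[(\ZZ^l)^\infty\bigr]^* \\
(\alpha, a) \rar[mapsto] & \alpha X^a
\end{tikzcd}
\]
is a group homomorphism (using multiplication on the right), is surjective by the previous step, and is injective because $\alpha X^a = 1$ forces $a = 0$ (comparing degrees) and then $\alpha = 1$. This gives the claimed isomorphism. No single step is really hard here; the only point that requires a small argument is the reduction of units to monomials, but that is a standard consequence of being a domain graded by a torsion-free group.
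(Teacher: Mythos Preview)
Your proof is correct and follows the same route as the paper: identify $\KK[(\ZZ^l)^\infty]$ with the monoid algebra $\KK[\ZZ^l]$ and read off its units. The paper simply cites this last step as a known fact about group algebras, whereas you supply the standard graded-domain argument that every unit is a monomial; your version is more self-contained but not a different approach.
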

\begin{proof}
    The algebra on the left is the same as the monoid algebra $\KK[\ZZ^l]$, for which the result is known.
\end{proof}

\begin{lemma}\label{lemma:units-minimal-prime}
    If $\p$ is a minimal prime ideal of $M$, then
    \[
    \KK\left[M_\p\right]^*\cong \KK\left[(\ZZ^l)^\infty\right]^*
    \]
    for some $l$.
\end{lemma}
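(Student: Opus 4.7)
The plan is to prove the statement by actually identifying $M_\p$ itself as $(\ZZ^l)^\infty$; then the algebra isomorphism, and in particular the claim about units, follows at once from functoriality of $\KK[-]$.

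First I would observe that $\Spec M_\p = \{\p M_\p\}$ is a single point. This is because localization restricts the spectrum to primes contained in $\p$, and minimality of $\p$ leaves only $\p$ itself. Since the nilradical of any binoid is the intersection of its prime ideals, $\nil(M_\p) = \p M_\p$ coincides with the unique maximal ideal of $M_\p$.

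Next comes the key step: $M_\p$ is reduced. Localization preserves cancellativity, so $M_\p$ is cancellative. I would then argue that a cancellative binoid has no nontrivial nilpotents: if $nx = \infty$ with $n$ minimal and $x \neq \infty$, then $(n-1)x + x = \infty = (n-1)x + \infty$; by minimality $(n-1)x \neq \infty$, so cancellation forces $x = \infty$, a contradiction. Combining this with the first step, $\p M_\p = \{\infty\}$ in $M_\p$, so $M_\p^\bullet = M_\p \smallsetminus \{\infty\}$ is a commutative monoid in which every element is a unit, i.e., an abelian group $G$.

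Now $G$ is finitely generated since $M$ is, and it is torsion-free because the torsion-freeness up to nilpotence of $M$ passes to the reduced binoid $M_\p$. By the structure theorem for finitely generated abelian groups, $G \cong \ZZ^l$ for some $l \geq 0$, and hence $M_\p \cong (\ZZ^l)^\infty$ as binoids. Applying the functor $\KK[-]$, which is compatible with localization by Lemma~\ref{lemma:functor-respects-localizations}, gives $\KK[M_\p] \cong \KK[(\ZZ^l)^\infty]$ as $\KK$-algebras, and taking units yields the desired isomorphism.

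The delicate point is the reduction step: one has to use cancellativity precisely to eliminate nilpotents in $M_\p$, since otherwise $\KK[M_\p]$ could contain nontrivial unipotent units of the form $1 + \text{(nilpotent)}$, which would prevent the unit groups from matching those of $\KK[(\ZZ^l)^\infty]$. Every other step in the plan is either a routine functoriality or a direct invocation of the bijection between primes under localization.
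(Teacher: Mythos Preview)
The paper states this lemma without proof, so your overall strategy---showing directly that $M_\p\cong(\ZZ^l)^\infty$ and then applying $\KK[-]$---is exactly the intended one, and the torsion-freeness and finite-generation steps at the end are fine.

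There is, however, a genuine gap in your reduction step. The cancellation law for binoids (as in B\"ottger's thesis, which the paper uses as its reference) reads: $a+b=a+c$ \emph{and $a+b\neq\infty$} implies $b=c$. In your argument you have $(n-1)x+x=(n-1)x+\infty$, but both sides equal $nx=\infty$, so you are not entitled to cancel. Concretely, $M=(x\mid 2x=\infty)$ is cancellative in this sense and torsion-free up to nilpotence (its reduction is $\{0,\infty\}$), yet $x$ is a nonzero nilpotent. Here $\p=\langle x\rangle$ is the unique (hence minimal) prime, $M_\p=M$, and $\KK[M_\p]=\KK[X]/(X^2)$ has unit group $\KK^*\times(\KK,+)$, which is not of the form $\KK[(\ZZ^l)^\infty]^*\cong\KK^*\times\ZZ^l$. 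So not only does your argument fail, the lemma itself needs the extra hypothesis that $M$ is reduced---which is indeed present in Proposition~\ref{proposition:split-algebra}, the only place the lemma is invoked.

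The fix is to replace the cancellation argument by the observation that reducedness passes to localizations: if $n(a-s)=\infty$ in $M_\p$ then $na+t=\infty$ in $M$ for some $t\notin\p$, hence $n(a+t)=\infty$, hence $a+t=\infty$ by reducedness of $M$, and therefore $a-s=\infty$ in $M_\p$. With $M_\p$ reduced your argument then goes through unchanged: $\p M_\p=\nil(M_\p)=\{\infty\}$, so $M_\p^\bullet$ is a finitely generated torsion-free abelian group, i.e.\ $\ZZ^l$.
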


\begin{lemma}\label{lemma:torsion-free-cancellative-reduced-algebra}
    Let $M$ be a reduced, torsion-free, cancellative binoid. Then $\KK[M]$ is reduced.
\end{lemma}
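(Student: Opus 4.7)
The plan is to reduce to the integral case via minimal primes, where the result follows from the injection into the difference group.

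First, I would observe that for an integral, torsion-free, cancellative binoid $N$, Proposition~\ref{proposition:injectionM-Gamma} gives an injection $N\hookrightarrow\Gamma_N\cong(\ZZ^{r})^\infty$, which in turn induces an injection of $\KK$-algebras $\KK[N]\hookrightarrow\KK[\Gamma_N]\cong\KK[\ZZ^{r}]$. The latter is a Laurent polynomial ring and hence an integral domain, so $\KK[N]$ is a domain, and in particular reduced. This handles the integral case and is essentially Lemma~\ref{lemma:units-minimal-prime} applied at the level of rings rather than units.

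Next, I would pass from $M$ to its quotients by minimal primes. For each minimal prime $\p\in\Spec M$, the quotient $M/\p$ is integral by Definition~\ref{definition:prime-ideal}. It remains torsion-free, because an equation $n\bar a=\bar 0$ in $M/\p$ with $\bar a\neq\infty$ lifts to $na=0$ in $M$ with $a\notin\p$, and then torsion-freeness of $M$ gives $a=0$. It is also cancellative: if $\bar a+\bar b=\bar a+\bar c$ with $\bar a\neq\infty$, then $a\notin\p$, and since $\p$ is prime, $b\in\p$ iff $a+b\in\p$ iff $a+c\in\p$ iff $c\in\p$, so either both $\bar b$ and $\bar c$ equal $\infty$, or neither does and cancellativity in $M$ forces $b=c$. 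Therefore $\KK[M/\p]$ is an integral domain by the previous paragraph.

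Finally, I would combine these into an embedding. Since $M$ is reduced, $\bigcap_{\p\text{ min}}\p=\nil(M)=\{\infty\}$ (the binoid analogue of Krull's theorem; this is in Böttger's thesis). The product of the quotient morphisms gives a ring homomorphism
\[
\varphi:\KK[M]\longrightarrow\prod_{\p\text{ min}}\KK[M/\p],
\]
whose kernel is $\bigcap_{\p}\ker(\KK[M]\to\KK[M/\p])=\bigcap_{\p}\KK[\p]$. Because each $\KK[\p]$ is the $\KK$-span of $\p\smallsetminus\{\infty\}$ inside the free $\KK$-module $\KK[M]$, intersections commute with $\KK[\,\cdot\,]$, so this kernel equals $\KK\bigl[\bigcap_\p\p\bigr]=\KK[\{\infty\}]=0$. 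Thus $\KK[M]$ embeds into a product of integral domains and is therefore reduced.

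The main obstacle is the commutation $\bigcap_\p\KK[\p]=\KK[\bigcap_\p\p]$; it is essentially automatic here because the ideals $\KK[\p]$ are monomial (spanned by a subset of the $\KK$-basis $M\smallsetminus\{\infty\}$ of $\KK[M]$), but this monomial/combinatorial character is the crucial point that would not be available for arbitrary ideals of $\KK[M]$. The other subtlety, checking that cancellativity descends to $M/\p$, is elementary but uses primality of $\p$ in an essential way.
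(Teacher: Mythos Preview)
Your proof is correct and follows essentially the same idea as the paper's: both arguments reduce to the minimal primes of $M$ and use that $\bigcap_{\p}\KK[\p]=0$ because $M$ is reduced. The paper's version is terser, invoking the correspondence between minimal primes of $M$ and minimal primes of $\KK[M]$ (from B\"ottger) to identify $\nil(\KK[M])$ directly with $\bigcap_{\p}\KK[\p]$; your route via the embedding $\KK[M]\hookrightarrow\prod_{\p}\KK[M/\p]$ is slightly more self-contained, since it only requires that each $\KK[M/\p]$ is a domain, not the full bijection of minimal primes on the ring side.
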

\begin{proof}
    Since $M$ is torsion-free and cancellative, there is a correspondence between minimal prime ideals of $M$ and minimal prime ideals of $\KK[M]$. Since the nilradical is the intersection of minimal prime ideals and it is trivial in the binoid because it is reduced, it is trivial in the algebra, thus proving that the latter is also reduced.
\end{proof}

\begin{proposition}\label{proposition:split-algebra}
    Let $M$ be a reduced, torsion-free, cancellative binoid and let $\KK[M]$ be its binoid algebra. Then
    \[
    \left(\KK[M]\right)^*=\KK^*\oplus M^*.
    \]
\end{proposition}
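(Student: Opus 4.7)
The inclusion $\KK^* \oplus M^* \hookrightarrow \left(\KK[M]\right)^*$ is immediate, since $\alpha T^u$ with $\alpha \in \KK^*$ and $u \in M^*$ has inverse $\alpha^{-1} T^{-u}$. For the reverse inclusion, the plan is to first treat the integral case by embedding $\KK[M]$ into a Laurent polynomial ring, and then reduce the general case to it via the minimal prime decomposition of $M$.

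For the integral step, suppose momentarily that $N$ is integral, cancellative, and torsion-free. I would use Proposition~\ref{proposition:injectionM-Gamma} to embed $N$ into its difference binoid $\Gamma_N \cong (\ZZ^l)^\infty$; combined with Lemma~\ref{lemma:functor-respects-localizations} and the unnamed lemma showing that $\KK[N]$ is then a domain, this yields an injection of integral domains $\KK[N] \hookrightarrow \KK[\Gamma_N^\bullet] \cong \KK[X_1^{\pm 1}, \dots, X_l^{\pm 1}]$. The units of the Laurent polynomial ring are exactly the scalar multiples of monomials, i.e.\ $\KK^* \oplus \ZZ^l$; requiring the inverse of $\alpha T^m$ to also lie in $\KK[N]$ forces $m \in N^*$, giving $\KK[N]^* = \KK^* \oplus N^*$.

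Next I would pass to the general case using the minimal primes $\p_1, \dots, \p_r$ of $M$. Reducedness of $M$ gives $\bigcap_i \p_i = \{\infty\}$, and Lemma~\ref{lemma:torsion-free-cancellative-reduced-algebra} shows $\KK[M]$ is reduced with minimal primes exactly $\KK[\p_i]$, so $\KK[M]$ embeds in $\prod_i \KK[N_i]$ for $N_i := M/\p_i$. Each $N_i$ is integral, cancellative, and torsion-free, so the integral step applies. Writing a unit $f = \sum_{k=1}^s \alpha_k T^{n_k}$ with distinct $n_k \in M \smallsetminus \{\infty\}$, its image in every $\KK[N_i]$ must be a unit monomial $\alpha_{(i)} T^{\bar m_i}$ with $\bar m_i \in N_i^*$. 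Since distinct elements of $M \smallsetminus \p_i$ remain distinct in $N_i$, and monomials with distinct exponents are $\KK$-linearly independent in $\KK[N_i]$, exactly one exponent $n_{\sigma(i)}$ can lie outside $\p_i$, and it must satisfy $\bar n_{\sigma(i)} \in N_i^*$. Writing out this unit relation in $N_i$ and using $0 \notin \p_i$ lifts it to $n_{\sigma(i)} + n' = 0$ in $M$ for some $n' \in M$, so $n_{\sigma(i)} \in M^*$; being a unit, $n_{\sigma(i)}$ lies in no prime of $M$, which forces $\sigma$ to be constant and all remaining $n_k$ to lie in $\bigcap_j \p_j = \{\infty\}$. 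Hence $s = 1$ and $f = \alpha_1 T^{n_1}$ with $n_1 \in M^*$.

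The main obstacle will be the careful handling of the final step: showing that a unit in $N_i$ coming from some $n \notin \p_i$ genuinely lifts to a unit of $M$, rather than merely producing a pair $n, n'$ with $n + n' \in \p_i$. The key is that $0 \notin \p_i$, so the equation $\bar n + \bar n' = \bar 0$ in $N_i$ cannot be the image of $n + n' \in \p_i$ and must instead be an honest identity $n + n' = 0$ in $M$; everything else is routine combinatorics on monomial supports once the integral case is in place.
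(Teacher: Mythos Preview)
Your proof is correct and follows the same core strategy as the paper: handle the integral case via the embedding into $\KK[\ZZ^l]$, then reduce the general case through the quotients $N_i = M/\p_i$ by minimal primes and use $\bigcap_i \p_i = \nil(M) = \{\infty\}$. The only difference is organizational: the paper first projects to $\KK[M]/\KK[M_+] \cong \KK[M^*]$ to isolate a single term $\lambda_\nu X^\nu$ with $\nu \in M^*$ up front, so that the remaining sum is supported on $M_+$ and can then be shown to land in every $\KK[\p]$; you skip this preliminary step and instead recover $n_{\sigma(i)} \in M^*$ by lifting the unit relation from $N_i$ back to $M$ via the Rees-quotient structure and the observation $0 \notin \p_i$, which is a slightly more hands-on but equally valid route to the same conclusion.
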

\begin{proof}
    What we have to prove is that, under these hypothesis, any unit is a product of a monomial and a unit in the field.
    On the binoid side, since by definition $M_+=M\smallsetminus M^*$, there is an isomorphism
    \[
    M_*^\infty \cong \faktor{M}{M_+}.
    \]
    Let $\p$ be a minimal prime ideal of $M$. Since $\p\subseteq M_+$, there are maps
    \[
    \begin{tikzcd}[cramped]
    M\rar["\pi_\p"]& \faktor{M}{\p}\rar["\pi_{M_+}"]&\faktor{M}{M_+}=(M^*)^\infty
    \end{tikzcd}
    \]
    and, since $(M^*)^\infty\subseteq M$, we have a map $\sigma$ going the other way
    \[
    \begin{tikzcd}[cramped]
    M\rar["\pi_\p"]& \faktor{M}{\p}\rar["\pi_{M_+}"]&(M^*)^\infty\arrow[ll, out=-10, in=190, overlay, pos=0.07, "\sigma"]
    \end{tikzcd}
    \]
    such that the composition $\pi_{M_+}\circ \pi_\p\circ \sigma$ is the identity of $(M^*)^\infty$.
    Thanks to the functor from binoids to rings, we get maps for the rings
    \[
    \begin{tikzcd}[cramped]
    \KK[M]\rar& \KK\left[\faktor{M}{\p}\right]\rar&\KK\left[(M^*)^\infty\right]\arrow[ll, out=-10, in=190, overlay, pos=0.07]
    \end{tikzcd}
    \]
    that again compose to the identity on the right and that induce maps of groups
    \[
    \begin{tikzcd}[cramped]
    \KK[M]^*\rar& \KK\left[\faktor{M}{\p}\right]^*\rar&\KK\left[(M^*)^\infty\right]^*\arrow[ll, out=-10, in=190, overlay, pos=0.07]
    \end{tikzcd}
    \]
    that again compose to the identity on the right.
    Let $P$ be a unit in $\KK[M]$. Then
    \[
    P= \lambda_\nu X^\nu +\sum_{\mu\in M_+}\lambda_{\mu}T^\mu
    \]
    with $\nu\in M^*$, since $\KK[M^*]\cong \faktor{\KK[M]}{\KK[M_+]}$ and the statement is true for $\KK[M^*]$ thanks to Lemma~\ref{lemma:units-group-algebra}.
    If we apply the first map $\pi_\p$ to $P$, we get
    \[
    \faktor{\lambda_\nu X^\nu +\sum_{\mu\in M_+}\lambda_{\mu}T^\mu}{\KK[\p]}\in \KK\left[\faktor{M}{\p}\right]^*.
    \]
    We can apply the previous Lemma~\ref{lemma:units-minimal-prime} to obtain that this has to be a monomial. So, in particular, $\sum_{\mu\in M_+}\lambda_{\mu}T^\mu\in\KK[\p]$ for all minimal prime ideals $\p$. This means that
    \[
    \sum_{\mu\in M_+}\lambda_{\mu}T^\mu \in \bigcap_{\begin{subarray}{c}
        \p \text{ minimal}\\
        \text{prime of } M
        \end{subarray}} \KK[\p]=\nil(\KK[M]).
    \]
    Since $\KK[M]$ is reduced thanks to Lemma~\ref{lemma:torsion-free-cancellative-reduced-algebra}, its nilradical is trivial, so
    \[
    \sum_{\mu\in M_+}\lambda_{\mu}T^\mu=0
    \]
    and $P$ is a monomial
    \[
    P=\lambda_\nu X^\nu\in\KK^*\times M^*.\qedhere
    \]
\end{proof}

\begin{remark}\label{remark:units-non-reduced}
    If $M$ is torsion-free and cancellative but not reduced, then the algebra is not reduced. Still, we can split its units as
    \[
    \left(\KK[M]\right)^*=\KK^*\oplus M^*\oplus(1+\n)
    \]
    where $\n$ is the nilradical of $\KK[M]$. Indeed, in the above proof, we would just have that
    \[
    N=\sum_{\mu\in M_+}\lambda_{\mu}T^\mu
    \]
    is nilpotent. So in particular $1+N$ is a unit, as well as $1+\frac{N}{\lambda_\nu X^\nu}$. let $(1+M)$ be the inverse of $1+\frac{N}{\lambda_\nu X^\nu}$.
    So the inverse of $P=\lambda_\nu X^\nu +N$ is then $(\frac{1+M}{\lambda_\nu X^\nu})$.
\end{remark}

\begin{proposition}\label{proposition:split-sheaves-comb-top}
    Let $M$ be a reduced, torsion-free and cancellative binoid and let $\KK[M]$ be its binoid algebra. Then
    \[
    (\O_{\KK[M]}^*)^{\comb}\cong (\KK^*)^{\comb}\oplus (i_*\O^*_M)^{\comb},
    \]
    where $\KK^*$ is the constant sheaf, as usual.
\end{proposition}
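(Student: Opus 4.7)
The plan is to verify the asserted decomposition on a basis of the combinatorial topology and then to invoke the fact that a sheaf is determined by compatible values on a basis of open sets. By Corollary~\ref{corollary:basis-combinatorial-topology}, the sets $D(P)$ with $P$ a monomial of $\KK[M]$ form such a basis, so throughout I would fix a monomial $P=\lambda T^{\mu}$ with $\lambda\in\KK^*$ and $\mu\in M$, giving $D(P)=D(T^{\mu})$.

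First I would compute the left-hand sheaf on this basis. By Lemma~\ref{lemma:functor-respects-localizations} the structure sheaf satisfies $\O_{\KK[M]}(D(P))=\KK[M]_{T^{\mu}}=\KK[M_{\mu}]$, so $\O_{\KK[M]}^{*}(D(P))=\KK[M_{\mu}]^{*}$. Reducedness, torsion-freeness and cancellativity are all inherited by the localization $M_{\mu}$, so Proposition~\ref{proposition:split-algebra} applies to $M_{\mu}$ and yields a natural splitting
\[
\KK[M_{\mu}]^{*}\;\cong\;\KK^{*}\oplus M_{\mu}^{*},
\]
where the first summand consists of the invertible scalars and the second of the invertible monomials.

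Next I would match these two summands with the corresponding evaluations of the right-hand sheaf on $D(P)$. For the pushforward summand, the explicit description of $i^{-1}(D(P))=D(\mu)$ given in the Lemma preceding Remark~\ref{remark:abuse-notation-Pcomb} gives $(i_{*}\O^{*}_{M})(D(P))=\O^{*}_{M}(D(\mu))=M_{\mu}^{*}$, matching the second summand on the nose. For the constant summand I would argue that $(\KK^{*})^{\comb}(D(P))\cong\KK^{*}$: two primes lying in $D(P)$ that have the same combinatorial part have the same combinatorial neighbourhoods (by the stalk lemma for the combinatorial topology), so the combinatorial connected components of $D(P)$ are in bijection with those of $D(\mu)\subseteq\Spec M$, and under the reduced torsion-free cancellative assumption on $M$ these correspond exactly to the scalar factors appearing in the decomposition of $\KK[M_\mu]^*$ given by Proposition~\ref{proposition:split-algebra}, so the counts match component by component.

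The main obstacle I anticipate is checking that the basis-wise splittings are functorial in the restriction maps: whenever $D(P_{2})\subseteq D(P_{1})$ one must verify that the restriction $\KK[M_{\mu_{1}}]^{*}\to\KK[M_{\mu_{2}}]^{*}$ carries the $\KK^{*}$-summand into the $\KK^{*}$-summand and the monomial summand into the monomial summand. This follows from the naturality of the construction in Proposition~\ref{proposition:split-algebra}, since scalars restrict to scalars and a monomial in $M_{\mu_{1}}$ restricts to a monomial in the further localization $M_{\mu_{2}}$; but to be rigorous I would use the argument from that proposition applied uniformly: the nilradical characterization that forced the non-monomial tail to vanish persists along the localization chain, so the direct-sum projections commute with restriction. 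Once this naturality is in hand, the basis-level isomorphism extends uniquely to an isomorphism of the two sheaves on $\Spec\KK[M]$ equipped with the combinatorial topology, giving the claim.
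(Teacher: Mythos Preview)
Your proposal is correct and follows essentially the same route as the paper: check the splitting on the basis $\{D(P)\mid P\text{ monomial}\}$ by applying Proposition~\ref{proposition:split-algebra} to the localization $M_{\mu}$, and identify the two summands with $(\KK^*)^{\comb}(D(P))$ and $(i_*\O^*_M)(D(P))$ respectively. The only stylistic difference is that the paper begins by writing down the natural global morphism $\KK^*\oplus i_*\O^*_M\to\O^*_{\KK[M]}$ (each summand visibly lands in the units) and then checks it is an isomorphism on each $D(P)$; this makes your naturality paragraph unnecessary, since compatibility with restrictions is automatic once the map is defined globally. Your detour through connected components for the constant sheaf is also avoidable: $D(P)\cong\Spec\KK[M_\mu]$ and $M_\mu$ again has a unique maximal ideal, so $\KK[(M_\mu)_+]$ is a prime containing every minimal prime and $D(P)$ is connected, giving $(\KK^*)^{\comb}(D(P))=\KK^*$ directly, in agreement with the single $\KK^*$ produced by Proposition~\ref{proposition:split-algebra}.
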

\begin{proof}
    We have a natural sheaf homomorphism in the Zariski topology
    \[
    \begin{tikzcd}[cramped]
    \KK^*\oplus i_*\O^*_M\rar & \O_{\KK[M]}^*
    \end{tikzcd}
    \]
    because every element in $\KK^*(U)\oplus i_*\O^*_M(U)$ is trivially a unit in $\O^*_{\KK[M]}(U)$ for any $U$, and in particular in the combinatorial topology the map
    \[
    \begin{tikzcd}[cramped]
    (\KK^*)^{\comb}\oplus (i_*\O^*_M)^{\comb}\rar & (\O_{\KK[M]}^*)^{\comb}
    \end{tikzcd}
    \]
    is an isomorphism. It is enough to show the isomorphism of combinatorial affine open subsets $D(P)$, with $P$ a monomial. Indeed,
    \begin{align*}
    \Gamma\left(D(P), (\O_{\KK[M]}^*)^{\comb}\right)&=\Gamma\left(D(P), \O_{\KK[M]}^*\right)\\
    &=\Gamma\left(\Spec\KK[M_P], \O_{\KK[M]}^*\right)=\KK[M_P]^*
    \end{align*}
    Since $M_P$ is again reduced, torsion-free and cancellative, we can apply the proposition above and obtain a decomposition
    \begin{align*}
    \Gamma\left(D(P), (\O_{\KK[M]}^*)^{\comb}\right)&=\KK[M_P]^*=\KK^*\oplus (M_P)^*\\
    &=\Gamma\left(D(P), (\KK^*)^{\comb}\right)\oplus \Gamma\left(D(P), (i_*\O^*)^{\comb}\right)\\
    &=\Gamma\left(D(P), (\KK^*)^{\comb}\oplus (i_*\O^*)^{\comb}\right)\qedhere
    \end{align*}
\end{proof}

\begin{notation} For any combinatorial open subset $U$ of $\Spec\KK[M]$ and Zariski sheaf $\F$ on $U$, we use the usual notation $\H^i_{\comb}(U, \F)$ to denote the cohomology of the sheaf $\F^{\comb}$ on $U$, i.e.\ the cohomology of $\F$ in the combinatorial topology.
\end{notation}

\begin{proposition}\label{proposition:hzar-hcomb}
    If $U=D(\mathfrak{A})=\bigcup D(P)$ is a combinatorial open subset of $\Spec \KK[M]$ and $\{D(P)\}$, with $P$ monomial, is an acyclic covering for the Zariski sheaf $\F$ on $U$, then
    \[
    \H^j_{\mathrm{Zar}}(U, \F)=\H^j_{\comb}(U, \F),
    \]
    for all $j\geq 0$.
\end{proposition}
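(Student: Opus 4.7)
The plan is to compute both sides via the same \v{C}ech complex and apply Leray's theorem (Theorem~\ref{theorem:leray}) in each topology separately. First I would note that the cover $\{D(P)\}$ consists of basic combinatorial open subsets, and every finite intersection is again of the same form, $D(P_{i_0})\cap\dots\cap D(P_{i_k})=D(P_{i_0}\cdots P_{i_k})$. Since $\F^{\comb}$ is by definition the restriction of the Zariski sheaf $\F$ to combinatorial opens, the sections agree on every such intersection, and therefore the two \v{C}ech complexes coincide:
\[
\vC^\bullet(\{D(P)\},\F)=\vC^\bullet(\{D(P)\},\F^{\comb}).
\]
By hypothesis $\{D(P)\}$ is Zariski-acyclic for $\F$, so Leray gives $\H^j_{\mathrm{Zar}}(U,\F)=\vH^j(\{D(P)\},\F)$. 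To conclude, I would apply Leray in the combinatorial topology as well; for this I must verify that $\{D(P)\}$ is combinatorially acyclic for $\F^{\comb}$, i.e.\ that for every monomial $Q$ and every $j\geq 1$,
\[
\H^j_{\comb}(D(Q),\F^{\comb})=0.
\]

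The main obstacle is establishing this vanishing on basic combinatorial affines. My approach is to exploit the natural surjection $\pi:\Spec\KK[M_q]\longrightarrow \Spec M_q$, $\mathfrak{P}\mapsto \mathfrak{P}\cap M_q$, where $q$ is the element of $M$ corresponding to $Q$ (so $D(Q)=\Spec\KK[M_q]$). The functor \eqref{functor:topspecm-topspeckm} gives a bijection between the topology of $\Spec M_q$ and the combinatorial topology of $\Spec\KK[M_q]$, sending $D(I)\leftrightarrow D(\KK[I])$ and respecting finite intersections and arbitrary unions. This bijection upgrades to an equivalence of categories between sheaves of abelian groups on $\Spec M_q$ and on $(\Spec\KK[M_q])_{\comb}$, realized by the adjoint pair $(\pi^{-1},\pi_*)$, under which $\pi_*\F^{\comb}$ corresponds to $\F^{\comb}$. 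Since this is an equivalence, it preserves sheaf cohomology, giving
\[
\H^j_{\comb}(D(Q),\F^{\comb})\cong \H^j(\Spec M_q,\pi_*\F^{\comb}).
\]
Now $\Spec M_q$ is an affine binoid scheme, so Theorem~\ref{theorem:vanishing-combinatorial-cohomology-affine} applied to the sheaf of abelian groups $\pi_*\F^{\comb}$ yields the vanishing in positive degree.

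Putting everything together, the cover $\{D(P)\}$ is acyclic for $\F^{\comb}$ in the combinatorial topology, and a second application of Leray's theorem gives
\[
\H^j_{\comb}(U,\F)=\vH^j(\{D(P)\},\F^{\comb})=\vH^j(\{D(P)\},\F)=\H^j_{\mathrm{Zar}}(U,\F),
\]
which is the desired statement. The only genuinely subtle point is the equivalence of sheaf categories induced by $\pi$; the rest is a matching of \v{C}ech complexes and an appeal to the already-established vanishing Theorem~\ref{theorem:vanishing-combinatorial-cohomology-affine}.
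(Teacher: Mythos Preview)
Your proof is correct and follows the same basic strategy as the paper: identify the two \v{C}ech complexes on the cover $\{D(P)\}$ and invoke Leray. The paper's argument is much terser---it simply observes that the \v{C}ech complexes $\vC(\{D(P)\},\F)$ and $\vC(\{D(P)\},\F^{\comb})$ coincide and concludes directly---without explicitly checking that the cover is acyclic for $\F^{\comb}$ in the combinatorial topology. You correctly isolate this as the one nontrivial point and supply a clean justification: the lattice of combinatorial opens on $D(Q)=\Spec\KK[M_q]$ is in bijection with the lattice of opens on $\Spec M_q$ via $D(I)\leftrightarrow D(\KK[I])$, so the sheaf categories are equivalent and cohomology transfers, after which Theorem~\ref{theorem:vanishing-combinatorial-cohomology-affine} gives the needed vanishing on each $D(Q)$. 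Your version is therefore the more complete of the two; the paper is presumably relying on this same equivalence implicitly (it appears in the surrounding discussion of $\mathfrak{P}^{\comb}$ and the diagram with $i$, $j$, $\lambda$), but does not spell it out inside the proof itself.
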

\begin{proof}
    Since $\{D(P)\}$ is an acyclic covering for $\F$ in the Zariski topology, we can use it to compute Zariski cohomology via \v{C}ech cohomology on this covering, and the complexes look the same for the Zariski topology and for the combinatorial topology
    \[
    \vC(\{D(P)\}, \F)=\vC(\{D(P)\}, \F^{\comb})
    \]
    so in particular
    \[
    \H^i_{\mathrm{Zar}}(U, \F)=\H^i_{\comb}(U, \F).\qedhere
    \]
\end{proof}

\begin{remark}
    For the toric case in Remark~\ref{remark:toric-case-sheaf-units} and for the Stanley-Reisner case in the next Chapter, we proved that the \v{C}ech complex for $\O^*$ can be split in the sum of the \v{C}ech complexes for $\KK^*$ and for $i_*\O^*$. The advantage of the combinatorial topology is that this decomposition comes already at a sheaf level, which is not true for the Zariski topology.
\end{remark}

\begin{remark}
    Since the combinatorial topology is a subtopology of the Zariski topology, we get a continuous map $X_{\mathrm{Zar}}\longrightarrow X_{\comb}$ that is not bicontinuous, so the Proposition above cannot be extended to arbitrary sheaves without considering the acyclic covering.
\end{remark}

\begin{remark}
    Let $\G$ be a Zariski sheaf on $U$ combinatorial open subset of $\Spec \KK[M]$, and let
    \[
    \G\longrightarrow\F_\bullet
    \]
    be a flasque resolution of it. We get naturally a sequence of flasque sheaves in the combinatorial topology via the restriction to it,
    \[
    \G^{\comb}\longrightarrow\F^{\comb}_{\bullet}.
    \]
    Indeed, each sheaf in $\F^{\comb}_\bullet$ is again flasque because the restriction maps
    \[
    \begin{tikzcd}[cramped]
    \F^{\comb}_i(D(\mathfrak{A}))\rar&\F^{\comb}_i(D(\mathfrak{B}))
    \end{tikzcd}
    \]
    are surjective, since they were surjective before.    
    Since, in general, $\F^{\comb}_{\mathfrak{P}}\neq \F_{\mathfrak{P}}$, this new flasque sequence of sheaves is not exact, so it is not a resolution and we cannot use it to compute cohomology. 
\end{remark}

\section{Pushforwards}

\begin{lemma}\label{lemma:cohomology_pushforward}
    Let $\widetilde{U}$ be a combinatorial open subset of $\Spec \KK[M]$, with a covering $\widetilde{\U}=\{\widetilde{U_j}\}_{j\in J}$ made of combinatorial affine open subsets.
    Let $U=i^{-1}(\widetilde{U})$ be the correspondent open subset of $\Spec M$, covered by $\U=i^{-1}(\widetilde{\U})=\{i^{-1}(\widetilde{U_j})\}_{j\in J}$ and let $\F$ be a sheaf of abelian groups on $U$. Then
    \begin{equation}\label{isomorphism:cohomologyM-cohomologyKM}
    \H^j(U, \F)\cong \vH^j(\widetilde{\U}, i_*\F)
    \end{equation}
    for all $j$.
\end{lemma}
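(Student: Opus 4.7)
The plan is to identify the two sides with \v{C}ech cohomology on the combinatorial cover $\U$ of $U$ in $\Spec M$, and then invoke the vanishing result for affine binoid schemes to recognise that cover as acyclic.

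First I would observe that the hypothesis that each $\widetilde{U_j}$ is combinatorial affine means, by Corollary~\ref{corollary:basis-combinatorial-topology}, that $\widetilde{U_j} = D(P_j)$ for some monomial $P_j \in \KK[M]$; write $p_j \in M$ for the corresponding element. Then by the Lemma preceding Corollary~\ref{corollary:basis-combinatorial-topology} one has $U_j := i^{-1}(\widetilde{U_j}) = D(p_j)$, which is a fundamental affine open subset of $\Spec M$. Moreover finite intersections behave well: $\widetilde{U_{j_0}} \cap \dots \cap \widetilde{U_{j_k}} = D(P_{j_0} \cdots P_{j_k})$ is again combinatorial affine, and its preimage is $D(p_{j_0} + \dots + p_{j_k}) = U_{j_0} \cap \dots \cap U_{j_k}$, which is again affine in $\Spec M$.

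Next I would compare the two \v{C}ech complexes term by term. By the very definition of the pushforward,
\[
(i_*\F)(\widetilde{U_{j_0}} \cap \dots \cap \widetilde{U_{j_k}}) = \F(i^{-1}(\widetilde{U_{j_0}} \cap \dots \cap \widetilde{U_{j_k}})) = \F(U_{j_0} \cap \dots \cap U_{j_k}),
\]
and the restriction maps on the two sides agree under these identifications. Hence the two \v{C}ech complexes $\vC^\bullet(\widetilde{\U}, i_*\F)$ and $\vC^\bullet(\U, \F)$ are isomorphic, yielding
\[
\vH^j(\widetilde{\U}, i_*\F) \cong \vH^j(\U, \F)
\]
for all $j$.

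Finally I would upgrade \v{C}ech cohomology on $\U$ to sheaf cohomology on $U$. By the previous paragraph every finite intersection of sets in $\U$ is an affine open subset of $\Spec M$, so Theorem~\ref{theorem:vanishing-combinatorial-cohomology-affine} gives $\H^k(U_{j_0} \cap \dots \cap U_{j_p}, \F\restriction) = 0$ for all $k \geq 1$. Thus $\U$ is an acyclic cover of $U$ for $\F$ and Leray's theorem (Theorem~\ref{theorem:leray}) yields $\vH^j(\U, \F) \cong \H^j(U, \F)$. Chaining the two isomorphisms gives the claim. The only subtle point is checking that combinatorial affineness is preserved under $i^{-1}$ and finite intersections, but this is precisely what the description of the combinatorial topology gives us, so there is no serious obstacle.
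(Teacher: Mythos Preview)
Your proposal is correct and follows essentially the same approach as the paper's proof: identify the two \v{C}ech complexes via the definition of $i_*$, and then use the vanishing result on affine binoid schemes (Theorem~\ref{theorem:vanishing-combinatorial-cohomology-affine}) together with Leray's theorem to pass from \v{C}ech to sheaf cohomology on $U$. The paper's version is terser---it simply asserts that the $U_j$ are affine and that the \v{C}ech complexes coincide---while you spell out explicitly why combinatorial affineness is preserved under $i^{-1}$ and under finite intersections, which is a helpful elaboration but not a different strategy.
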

\begin{proof}
    Let $U_j=i^{-1}(\widetilde{U_j})$. $\{U_j\}$ defines an acyclic covering of $U$ for $\F$, because they are affine open subsets of $\Spec M$, so its \v{C}ech cohomology computes the cohomology on the left. Moreover, $i_*\F(\widetilde{U_j})=\F(i^{-1}(\widetilde{U_j}))=\F(U_j)$, so the \v{C}ech complexes are the same, $\vC(\widetilde{\U}, i_*\F)=\vC(\U, \F)$, and we get our result.
\end{proof}

\begin{corollary}
    Let $\F$ be a sheaf of abelian groups on $\Spec^\bullet M$ and let $\U=\{D(X_k)\}$ be the combinatorial covering of $\Spec^\bullet \KK[M]$. Then
    \begin{equation}
    \H^j(\Spec^\bullet M, \F)\cong \vH^j(\U, i_*\F)
    \end{equation}
    for all $j$.
\end{corollary}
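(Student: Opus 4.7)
The plan is to recognize this Corollary as a direct instance of Lemma~\ref{lemma:cohomology_pushforward}, once we verify that the combinatorial covering $\U=\{D(X_k)\}$ of $\Spec^\bullet\KK[M]$ pulls back under $i$ to the expected affine covering of $\Spec^\bullet M$. So the proof is essentially an identification of data, not a new computation.

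First I would check that $\Spec^\bullet\KK[M]$ is itself a combinatorial open subset of $\Spec\KK[M]$: by definition $\Spec^\bullet\KK[M]=\Spec\KK[M]\smallsetminus\{\KK[M_+]\}$, and since $M_+=\langle x_1,\dots,x_n\rangle$ is generated by the variables, the complement of the single point $\KK[M_+]$ is $D(\KK[M_+])=D(\langle X_1,\dots,X_n\rangle)=\bigcup_k D(X_k)$. In particular $\widetilde{\U}=\{D(X_k)\}$ is a covering by combinatorial affine open subsets, exactly as required by the Lemma.

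Next I would compute the pullbacks. Using the explicit description of $i^{-1}$ on fundamental combinatorial open subsets from Lemma~\ref{lemma:i-continuous}, each monomial $X_k$ (whose support is just the singleton $\{x_k\}$) satisfies $i^{-1}(D(X_k))=D(x_k)$. Taking the union over $k$, we obtain $i^{-1}(\Spec^\bullet\KK[M])=\bigcup_k D(x_k)=\Spec^\bullet M$ by Proposition~\ref{proposition:covering-punctured-spectrum-binoid}. Thus, in the notation of Lemma~\ref{lemma:cohomology_pushforward}, we have $\widetilde{U}=\Spec^\bullet\KK[M]$, $U=\Spec^\bullet M$, $\widetilde{\U}=\{D(X_k)\}$, and $\U=\{D(x_k)\}$.

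Finally, I would simply invoke Lemma~\ref{lemma:cohomology_pushforward} with this data. It yields
\[
\H^j(\Spec^\bullet M, \F)\cong \vH^j(\{D(X_k)\}, i_*\F)
\]
for every $j$, which is exactly the claimed isomorphism. There is no real obstacle here, as the entire content of the Corollary is packaged into the hypotheses of the Lemma; the only thing to be careful about is the verification that $\Spec^\bullet\KK[M]$ is combinatorial and that its combinatorial cover $\{D(X_k)\}$ pulls back to the standard affine cover $\{D(x_k)\}$ of $\Spec^\bullet M$, both of which are immediate.
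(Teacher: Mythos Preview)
Your proof is correct and takes essentially the same approach as the paper: the Corollary is stated immediately after Lemma~\ref{lemma:cohomology_pushforward} without an explicit proof, precisely because it is the direct specialization of that Lemma to $\widetilde{U}=\Spec^\bullet\KK[M]$ with the covering $\{D(X_k)\}$. Your verification that $i^{-1}(D(X_k))=D(x_k)$ and that these cover $\Spec^\bullet M$ simply makes explicit what the paper leaves implicit.
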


\begin{lemma}\label{lemma:trivial-cohomology-push-forward}
    Let $\F$ be a sheaf of abelian groups on $\Spec M$ and $\U$ any covering of $\Spec\KK[M]$. Then
    \begin{equation}
    \vH^j(\U, i_*\F)=0
    \end{equation}
    for all $j\geq 1$.
\end{lemma}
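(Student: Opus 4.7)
The plan is to reduce the Čech cohomology on $\Spec\KK[M]$ to a Čech cohomology computation on $\Spec M$ and then to exploit the fact that $\Spec M$ is affine, with the unique maximal point $M_+$, to show that any covering of $\Spec\KK[M]$ pulls back to a covering of $\Spec M$ that must already contain $\Spec M$ itself as a member.

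First I would rewrite the complex. For any open $V\subseteq \Spec\KK[M]$ one has $(i_*\F)(V)=\F(i^{-1}(V))$, so the Čech complex $\vC^\bullet(\U,i_*\F)$ agrees termwise with $\vC^\bullet(i^{-1}(\U),\F)$, where $i^{-1}(\U)=\{i^{-1}(U_j)\}_{j\in J}$ is a covering of $\Spec M$ by the open subsets $i^{-1}(U_j)$ (open since $i$ is continuous, by Lemma~\ref{lemma:i-continuous}). Consequently
\[
\vH^j(\U,i_*\F)\cong \vH^j(i^{-1}(\U),\F).
\]

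Second, I would show that the pulled-back covering contains $\Spec M$ as one of its members. Since $\U$ covers $\Spec\KK[M]$, the prime ideal $\KK[M_+]=i(M_+)$ lies in some $U_k$, so $M_+\in i^{-1}(U_k)$. But any open subset of $\Spec M$ containing $M_+$ must be all of $\Spec M$: indeed open subsets of $\Spec M$ are subset-closed by Remark~\ref{remark:boettger:closedsubset-closedsuperset} and $M_+$ is the unique maximal element (Remark~\ref{remark:no-open-covers-M+}). Therefore $i^{-1}(U_k)=\Spec M$.

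Finally, I would invoke the standard fact that a Čech complex with respect to a covering one of whose members is the whole space is acyclic in positive degrees. Up to reindexing we may assume the distinguished member is $U_0$, so $i^{-1}(U_{0,j_0,\ldots,j_{p-1}})=i^{-1}(U_{j_0,\ldots,j_{p-1}})$ for every tuple. Define $h:\vC^p(i^{-1}(\U),\F)\to \vC^{p-1}(i^{-1}(\U),\F)$ by
\[
(h\sigma)_{j_0,\ldots,j_{p-1}}=\sigma_{0,j_0,\ldots,j_{p-1}}.
\]
A direct computation with the Čech differential shows $(\partial h+h\partial)(\sigma)=\sigma$ for every $\sigma$ of positive degree, giving the required contracting homotopy and hence $\vH^j(i^{-1}(\U),\F)=0$ for all $j\geq 1$.

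The only potential obstacle is technical rather than conceptual: keeping the ordering of indices straight in the chain-homotopy computation (and making sure the argument works both for the alternating and the ordered Čech complex), but this is routine once the distinguished cover element is in place. Notably, this proof is much stronger than merely invoking the vanishing Theorem~\ref{theorem:vanishing-combinatorial-cohomology-affine} for sheaf cohomology on an affine combinatorial scheme: no acyclicity assumption on the covering is needed, because the presence of $\Spec M$ itself in the pulled-back covering already trivialises the Čech complex.
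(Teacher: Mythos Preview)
Your proof is correct and follows the same two-step outline as the paper: identify $\vC^\bullet(\U,i_*\F)$ with $\vC^\bullet(i^{-1}(\U),\F)$, then use that $\Spec M$ is affine. The paper's proof is terser in the second step --- it simply asserts that ``since $\Spec M$ is affine, we know that the cohomology of degree bigger than $0$ of the combinatorial complex is zero'' --- whereas you make explicit the mechanism behind this (some $i^{-1}(U_k)$ must contain $M_+$ and hence equal $\Spec M$, yielding a contracting homotopy on the \v{C}ech complex); your version therefore supplies the justification that the paper leaves implicit.
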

\begin{proof}
    The preimage of the covering $i^{-1}(\U)$ is a covering of $\Spec M$. In particular, since $i_*\F(U_j)=\F(i^{-1}(U_j))$ for all $U_j\in\U$, the \v{C}ech complexes are the same 
    \[
    \C(\U, i_*\F)=\C(i^{-1}(\U), \F).
    \]
    Finally, since $\Spec M$ is affine, we know that the cohomology of degree bigger than 0 of the combinatorial complex is zero, and so it is the one of the pushforward.
\end{proof} 

\begin{theorem}\label{theorem:H1pushforward_is_0}
    $\H^1(\Spec \KK[M], i_*\F)=0$
\end{theorem}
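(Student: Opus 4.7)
My plan is to deduce the vanishing of sheaf cohomology in degree one directly from the \v{C}ech-theoretic vanishing already established in Lemma~\ref{lemma:trivial-cohomology-push-forward}, using a standard comparison theorem between \v{C}ech and derived-functor cohomology in degree one.

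First I would invoke the general fact, valid on any topological space $X$ and for any sheaf of abelian groups $\mathcal{G}$, that the natural map
\[
\varinjlim_{\mathcal{U}} \vH^1(\mathcal{U}, \mathcal{G}) \longrightarrow \H^1(X, \mathcal{G})
\]
is an isomorphism, where the direct limit is taken over the filtered system of open covers $\mathcal{U}$ of $X$ ordered by refinement. (This is the classical result of Serre, a proof of which can be found for instance in \cite[Exercise III.4.4]{hartshorne1977algebraic} or in Godement's treatise on sheaves; it is the degree-one case where the comparison holds without any acyclicity hypothesis on the cover or separatedness assumption on the space.)

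Next I would apply Lemma~\ref{lemma:trivial-cohomology-push-forward} with $j = 1$: for every open cover $\mathcal{U}$ of $\Spec \KK[M]$ the \v{C}ech cohomology $\vH^1(\mathcal{U}, i_*\F)$ vanishes. Taking the direct limit over all such covers, the right-hand side of the displayed isomorphism above becomes $\H^1(\Spec \KK[M], i_*\F)$ while the left-hand side is zero, which gives the theorem.

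There is no real obstacle here: the argument is simply a combination of the already-established \v{C}ech vanishing (which in turn relied on the identification $i_*\F(U) = \F(i^{-1}(U))$ together with the vanishing in Theorem~\ref{theorem:vanishing-combinatorial-cohomology-affine}) with the general degree-one comparison isomorphism. The slightly more delicate point, which I would flag but not re-prove, is that the comparison $\vH^1 \cong \H^1$ really is universal in degree one and does not require the cover to be affine or acyclic for $i_*\F$; this is precisely what makes the argument go through cleanly in the Zariski topology on $\Spec \KK[M]$, even though the preimages $i^{-1}(\widetilde{U_j})$ of arbitrary Zariski opens need not themselves be coordinate affine opens of $\Spec M$.
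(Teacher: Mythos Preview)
Your proposal is correct and follows essentially the same approach as the paper: both invoke the isomorphism $\H^1(X,\mathcal{G}) \cong \varinjlim_{\mathcal{U}} \vH^1(\mathcal{U},\mathcal{G})$ from \cite[Exercise III.4.4]{hartshorne1977algebraic} and then feed in the vanishing of $\vH^1(\mathcal{U}, i_*\F)$ for every cover $\mathcal{U}$ from Lemma~\ref{lemma:trivial-cohomology-push-forward}. The paper phrases the final step as a contradiction (a nonzero class would be realized on some cover), whereas you take the direct limit directly, but the content is identical.
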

\begin{proof}
    From \cite[Exercise III.4.4]{hartshorne1977algebraic} we know that
    \[
    \H^1(\Spec \KK[M], i_*\F)=\varinjlim_\U\vH^1(\U, i_*\F),
    \]
    where the limit is taken over all the possible coverings of $X$.
    Assume that there is a non-zero cohomology class $[c]$ in $\H^1(\Spec \KK[M], i_*\F)$. Then there exists a covering that realizes it, i.e.\ $[c]\in\vH^1(\U, i_*\F)$. But this is impossible, thanks to Lemma~\ref{lemma:trivial-cohomology-push-forward}.
\end{proof}

\begin{lemma}
    $(i_*\F)_{\mathfrak{P}}\cong\F_{\mathfrak{P}^{\comb}}$.\footnote{Recall from Remark~\ref{remark:abuse-notation-Pcomb} our abuse of notation $\mathfrak{P}^{\comb}=\mathfrak{P}\cap M$.}
\end{lemma}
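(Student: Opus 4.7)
The plan is to unwind the definition of the stalk of the pushforward and exhibit a cofinal system of neighbourhoods that matches the stalk of $\F$ at $\mathfrak{P}^{\comb}$. By definition,
\[
(i_*\F)_{\mathfrak{P}} = \varinjlim_{\mathfrak{P} \in U} (i_*\F)(U) = \varinjlim_{\mathfrak{P} \in U} \F(i^{-1}(U)),
\]
where $U$ ranges over open neighbourhoods of $\mathfrak{P}$ in $\Spec\KK[M]$. First I would check that every such $i^{-1}(U)$ actually contains $\mathfrak{P}^{\comb}$: since $\mathfrak{P}^{\mathrm{mon}} = \KK[\mathfrak{P}^{\comb}] \subseteq \mathfrak{P}$ and Zariski open subsets are downward-closed with respect to inclusion of primes, the point $i(\mathfrak{P}^{\comb}) = \mathfrak{P}^{\mathrm{mon}}$ lies in $U$, hence $\mathfrak{P}^{\comb} \in i^{-1}(U)$. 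This produces by the universal property of colimits a canonical morphism
\[
\varphi : (i_*\F)_{\mathfrak{P}} \longrightarrow \F_{\mathfrak{P}^{\comb}} = \varinjlim_{\mathfrak{P}^{\comb} \in V} \F(V).
\]

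The main point is then to show that $\varphi$ is an isomorphism, which I would prove by exhibiting a cofinal subsystem of monomial neighbourhoods. Recall from the lemma just before the statement that for any monomial $T^g \in \KK[M]$ one has $i^{-1}(D(T^g)) = D(g)$, and that $\mathfrak{P} \in D(T^g)$ if and only if $g \notin \mathfrak{P}^{\comb}$, i.e.\ $\mathfrak{P}^{\comb} \in D(g)$. Therefore the subfamily
\[
\{\,D(T^g) \;\mid\; g \in M,\ g \notin \mathfrak{P}^{\comb}\,\}
\]
is a collection of neighbourhoods of $\mathfrak{P}$, and their preimages under $i$ run through the basic open neighbourhoods $D(g)$ of $\mathfrak{P}^{\comb}$ in $\Spec M$.

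For surjectivity of $\varphi$, a class in $\F_{\mathfrak{P}^{\comb}}$ is represented by some $s \in \F(D(g))$ with $g \notin \mathfrak{P}^{\comb}$; taking $U = D(T^g)$ one has $i^{-1}(U) = D(g)$, so $s$ is already a section of $i_*\F$ on $U$ and its image is the given class. For injectivity, suppose $s \in \F(i^{-1}(U))$ has trivial image in $\F_{\mathfrak{P}^{\comb}}$. Then there exists $g \notin \mathfrak{P}^{\comb}$ with $D(g) \subseteq i^{-1}(U)$ and $s\restriction_{D(g)} = 0$. Setting $U' = U \cap D(T^g)$, which is still an open neighbourhood of $\mathfrak{P}$, we get $i^{-1}(U') = i^{-1}(U) \cap D(g) = D(g)$, so $s\restriction_{i^{-1}(U')} = 0$, showing $s$ represents the zero element of $(i_*\F)_{\mathfrak{P}}$. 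There is no serious obstacle here; the only subtlety to keep an eye on is verifying that the monomial opens $D(T^g)$ really are cofinal among \emph{all} Zariski neighbourhoods of $\mathfrak{P}$ \emph{for the purposes of computing this particular colimit}, and this is exactly what the two displays above accomplish, since they produce for every relevant $V \ni \mathfrak{P}^{\comb}$ a witness $U \ni \mathfrak{P}$ with $i^{-1}(U) \subseteq V$.
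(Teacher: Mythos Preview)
Your proof is correct and follows essentially the same approach as the paper: both arguments establish the isomorphism by showing that the monomial opens $D(T^g)$ with $g\notin\mathfrak{P}^{\comb}$ form a cofinal system among the neighbourhoods of $\mathfrak{P}$ for the purposes of this colimit, since $i^{-1}(D(T^g))=D(g)$. The only cosmetic difference is the direction of the comparison map --- you build $\varphi:(i_*\F)_{\mathfrak{P}}\to\F_{\mathfrak{P}^{\comb}}$ and check it is bijective, while the paper builds the map the other way --- and correspondingly your surjectivity and injectivity arguments are swapped relative to the paper's.
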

\begin{proof}
    We begin by investigating the stalk of the pushforward
    \begin{align*}
    (i_*\F)_{\mathfrak{P}}=\varinjlim_{\mathfrak{P}\in U}\F(i^{-1}(U))=\varinjlim_{P\notin \mathfrak{P}}\F(i^{-1}(D(P)))=\varinjlim_{P\notin \mathfrak{P}}\F(\cup(D(P_j)))
    \end{align*}
    where $P=\sum \alpha_jP_j$, $\alpha_j\neq 0$. Moreover, $P\notin \mathfrak{P}$ implies that there exists $j$ such that $P_j\notin\mathfrak{P}$, and this is true if and only if $P_j\notin \mathfrak{P}^{\comb}$.
    Consider the direct limit
    \[
    \varinjlim_{g\notin\mathfrak{P}^{\comb}}\F(D(g)).
    \]
    Since $\{g\notin\mathfrak{P}^{\comb}\}\subseteq \{P\notin\mathfrak{P}\}$, there is a natural map
    \[
    \varinjlim_{g\notin\mathfrak{P}^{\comb}}\F(D(g))\longrightarrow \varinjlim_{P\notin \mathfrak{P}}\F(\cup(D(P_j))).
    \]
    This map is surjective because, given a section in the stalk $s\in (i_*\F)_{\mathfrak{P}}$, there exists a polynomial $P=\sum\alpha_jP_j$ such that $s\in\F(\cup(D(P_j)))$. In particular, one of these $P_j$'s is not in $\mathfrak{P}$ and so not in $\mathfrak{P}^{\comb}$. Let $P_k$ be this monomial, so $s$ comes via the restriction $\F(\cup(D(P_j)))\longrightarrow \F(D(P_k))$ also from a section in $\F(D(P_k))$. As such, it comes from the left, so the map is surjective.
    \\
    This map is also injective because, given $s$ and $t$ in $\displaystyle\varinjlim_{g\notin\mathfrak{P}^{\comb}}\F(D(g))$, if their images are the same in the limit then in particular they are the same on some open subset $D(P_j)$ such that $P_j\notin\mathfrak{P}^{\comb}$, so they were already the same before. This proves that
    \[
    (i_*\F)_{\mathfrak{P}}\cong \varinjlim_{g\notin\mathfrak{P}^{\comb}}\F(D(g))=\F_{\mathfrak{P}^{\comb}}.\qedhere
    \]
\end{proof}

\begin{theorem}\label{theorem:push-forward-exact}
    The pushforward of a sheaf of abelian groups along $i$ is exact.
\end{theorem}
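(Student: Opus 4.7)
The plan is to use the standard stalk-wise criterion for exactness of a sequence of sheaves of abelian groups, combined with the stalk formula $(i_*\F)_{\mathfrak{P}} \cong \F_{\mathfrak{P}^{\comb}}$ that was just established. Since the functor $i_*$ is always left exact, the only thing to verify is right exactness: given a short exact sequence
\[
0 \longrightarrow \F' \longrightarrow \F \longrightarrow \F'' \longrightarrow 0
\]
of sheaves of abelian groups on $\Spec M$, we want to show that the pushed-forward sequence
\[
0 \longrightarrow i_*\F' \longrightarrow i_*\F \longrightarrow i_*\F'' \longrightarrow 0
\]
is exact on $\Spec\KK[M]$.

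First I would recall that exactness of a sequence of sheaves is equivalent to exactness on all stalks. Fix any $\mathfrak{P}\in\Spec\KK[M]$; by Remark~\ref{remark:abuse-notation-Pcomb}, $\mathfrak{P}^{\comb}=\mathfrak{P}\cap M$ is a genuine prime ideal of $M$, i.e.\ a point of $\Spec M$. Since passing to stalks is an exact functor on sheaves on $\Spec M$, the sequence
\[
0 \longrightarrow \F'_{\mathfrak{P}^{\comb}} \longrightarrow \F_{\mathfrak{P}^{\comb}} \longrightarrow \F''_{\mathfrak{P}^{\comb}} \longrightarrow 0
\]
is exact. Then, applying the previous stalk lemma to each of the three sheaves, we identify this sequence with
\[
0 \longrightarrow (i_*\F')_{\mathfrak{P}} \longrightarrow (i_*\F)_{\mathfrak{P}} \longrightarrow (i_*\F'')_{\mathfrak{P}} \longrightarrow 0,
\]
and these identifications are natural in the sheaf (they come from the same cofinality argument used to prove the lemma, namely that the combinatorial monomial localizations $D(g)$ with $g\notin\mathfrak{P}^{\comb}$ are cofinal among the neighbourhoods of $\mathfrak{P}$ whose preimages under $i$ appear in the limit defining $(i_*(-))_{\mathfrak{P}}$). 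Since this holds at every $\mathfrak{P}$, the pushed-forward sequence is exact, proving that $i_*$ is exact.

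The main content of the argument is really the stalk lemma preceding the theorem, which is the non-formal ingredient: for a general continuous injection between topological spaces, pushforward is only left exact, and the obstruction to right exactness is precisely the failure of the formula $(i_*\F)_{\mathfrak{P}} \cong \F_{i^{-1}(\mathfrak{P})}$. The special structure of $i:\Spec M \hookrightarrow \Spec\KK[M]$ — namely that every $\mathfrak{P}$ has a cofinal system of neighbourhoods of the form $D(P)$ with $P$ a monomial, whose preimages are exactly $D(p)$ for $p\in M\smallsetminus\mathfrak{P}^{\comb}$ — is what makes this formula hold, and hence what makes $i_*$ exact here. Thus once the stalk formula is in hand the theorem is essentially automatic, and I do not expect any further obstacle.
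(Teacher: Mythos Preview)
Your proof is correct and follows essentially the same approach as the paper: check exactness on stalks and invoke the preceding lemma $(i_*\F)_{\mathfrak{P}}\cong\F_{\mathfrak{P}^{\comb}}$. Your version is more carefully written (in particular you make the naturality of the stalk identification explicit, which the paper's one-line proof glosses over), but the strategy is identical.
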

\begin{proof}
    Let $\F$ be a sheaf of abelian groups on $\Spec M$. Since exactness is a local property, it is enough to prove it on the stalks. We can apply the Lemma above to obtain
    \[
    (i_*\F)_{\mathfrak{P}}\cong\F_{\mathfrak{P}^{\comb}},
    \]
    that is exactly our desired result.
\end{proof}

\begin{remark}
    When we have an exact sequence of sheaves on $\Spec M$
    \[
    \begin{tikzcd}[cramped, row sep = 0pt]
    0\rar & \F\rar & \G\rar & \mathscr{H}\rar & 0
    \end{tikzcd}
    \]
    and we pushforward it, in general we get an exact sequence
    \[
    \begin{tikzcd}[cramped, row sep = 0pt]
    0\rar & i_*\F\rar & i_*\G\rar & \faktor{i_*\G}{i_*\F}\rar & 0.
    \end{tikzcd}
    \]
    The Proposition above proves that we can see the last quotient as the pushforward of the original quotient on $\Spec M$.
\end{remark}

\section[\texorpdfstring{$\H^\bullet(\MakeLowercase{i}_*\O^*_M)$}{Cohomology of pushforward} and \texorpdfstring{$\H^\bullet(\O^*)$}{sheaf of units} in the combinatorial topology]{\texorpdfstring{$\H^\bullet(\MakeLowercase{i}_*\O^*_M)$}{Cohomology of pushforward} and \texorpdfstring{$\H^\bullet(\O^*)$}{sheaf of units} in the combinatorial topology
    \sectionmark{\texorpdfstring{$\H^\bullet_{\comb}(\MakeLowercase{i}_*\O^*_M)$}{Cohomology of pushforward} and \texorpdfstring{$\H^\bullet_{\comb}(\O^*)$}{sheaf of units}}
}

\sectionmark{\texorpdfstring{$\H^\bullet_{\comb}(\MakeLowercase{i}_*\O^*_M)$}{Cohomology of pushforward} and \texorpdfstring{$\H^\bullet_{\comb}(\O^*)$}{sheaf of units}}

The idea behind what follows is to try to describe the local Picard group of a binoid $\KK$-algebra in terms of some combinatorial properties coming from the binoid itself.

\begin{proposition}\label{proposition:cohomology-pushforward-affine} For any sheaf of abelian groups $\F$ on $\Spec M$, the cohomology of the pushforward vanishes
    \[
    \H^j(\Spec \KK[M], i_*\F)=0
    \] for all $j\geq 1$.
\end{proposition}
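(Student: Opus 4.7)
The plan is to lift Theorem~\ref{theorem:H1pushforward_is_0} from degree one to all positive degrees by recognizing that $\H^j(\Spec\KK[M], i_*\F)$ must agree with $\H^j(\Spec M, \F)$, at which point the desired vanishing is immediate from Theorem~\ref{theorem:vanishing-combinatorial-cohomology-affine}. Two essentially equivalent routes are available; I would present the resolution argument because it is self-contained and avoids citing the Leray spectral sequence.

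First I would pick a flasque resolution $\F \longrightarrow \F^\bullet$ of $\F$ in the category of sheaves of abelian groups on $\Spec M$ (for instance the Godement resolution). Applying $i_*$ gives a complex $i_*\F \longrightarrow i_*\F^\bullet$. Two observations are required: exactness of $i_*$, which is precisely Theorem~\ref{theorem:push-forward-exact}, ensures that $i_*\F^\bullet$ is exact in positive degrees, so it resolves $i_*\F$; and pushforward preserves flasqueness, since if $\G$ is flasque on $\Spec M$ and $\widetilde{V}\subseteq \widetilde{U}$ are open in $\Spec\KK[M]$, then $i_*\G(\widetilde{U})=\G(i^{-1}\widetilde{U})\longrightarrow \G(i^{-1}\widetilde{V})=i_*\G(\widetilde{V})$ is surjective because $i^{-1}\widetilde{V}\subseteq i^{-1}\widetilde{U}$ are open in $\Spec M$. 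Hence $i_*\F \longrightarrow i_*\F^\bullet$ is a flasque resolution of $i_*\F$, and may be used to compute its cohomology.

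Now compute the global sections complex. By definition $\Gamma(\Spec\KK[M], i_*\F^k)=\F^k(i^{-1}(\Spec\KK[M]))=\F^k(\Spec M)=\Gamma(\Spec M,\F^k)$, so the two cochain complexes coincide term by term. Taking cohomology gives
\[
\H^j(\Spec\KK[M], i_*\F)\;\cong\;\H^j(\Spec M, \F)
\]
for every $j\geq 0$. For $j\geq 1$ the right-hand side vanishes by Theorem~\ref{theorem:vanishing-combinatorial-cohomology-affine}, which yields the claim.

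There is no real obstacle: the two facts doing the work (exactness of $i_*$ and the vanishing of $\H^{\geq 1}(\Spec M,-)$ on the affine side) have already been established earlier in the chapter, and the preservation of flasqueness under direct image is a general topological fact. The one point deserving a line of care in the writeup is the pushforward of a flasque sheaf being flasque; beyond that the argument is formal. If preferred, the same conclusion can be packaged via the Leray spectral sequence $\H^p(\Spec\KK[M], R^q i_*\F)\Rightarrow \H^{p+q}(\Spec M,\F)$: exactness of $i_*$ makes $R^q i_*\F=0$ for $q\geq 1$, the sequence degenerates on the $q=0$ row, and the edge isomorphism gives the same identification.
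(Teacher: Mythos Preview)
Your proof is correct and uses the same two key ingredients as the paper (exactness of $i_*$ from Theorem~\ref{theorem:push-forward-exact} and preservation of flasqueness under pushforward), but the structure differs. The paper argues by induction on $j$: it takes the $j=1$ case from Theorem~\ref{theorem:H1pushforward_is_0} as a base, embeds $\F$ in a single flasque sheaf $\G$, pushes forward the short exact sequence $0\to\F\to\G\to\mathcal{Q}\to0$, and uses the resulting dimension-shift $\H^j(i_*\mathcal{Q})\cong\H^{j+1}(i_*\F)$ to climb. Your version is more direct: by pushing forward a whole flasque resolution you obtain the identification $\H^j(\Spec\KK[M],i_*\F)\cong\H^j(\Spec M,\F)$ for every $j$ at once, and then invoke Theorem~\ref{theorem:vanishing-combinatorial-cohomology-affine}. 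This has the advantage of bypassing Theorem~\ref{theorem:H1pushforward_is_0} entirely and yielding the stronger intermediate isomorphism as a bonus; the paper's route, on the other hand, only ever handles one flasque embedding at a time and leans on the previously established $\H^1$ vanishing.
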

\begin{proof}
    We use induction on $j\geq 1$. For $j=1$ this is true, for all sheaves of abelian groups, thanks to Theorem~\ref{theorem:H1pushforward_is_0}. Let $j\geq 1$. We can embed $\F$ in a flasque sheaf $\G$ on $\Spec M$ and build the exact sequence
    \[
    \begin{tikzcd}[cramped, row sep = 0pt]
    0\rar & \F\rar & \G\rar & \mathcal{Q}\rar & 0
    \end{tikzcd}
    \]
    on $\Spec M$, where $\mathcal{Q}=\faktor{\G}{\F}$. Then we pushforward this sequence along $i$ and, thanks to Theorem~\ref{theorem:push-forward-exact}, we get an exact sequence on $\Spec \KK[M]$
    \[
    \begin{tikzcd}[cramped, row sep = 0pt]
    0\rar & i_*\F\rar & i_*\G\rar & i_*\mathcal{Q}\rar & 0
    \end{tikzcd}
    \]
    that yields a long exact sequence in cohomology (we omit the topological space for ease of notation)
    \[
    \begin{tikzcd}[cramped, row sep = 2ex,
    /tikz/column 3/.append style={anchor=base west}]
    0\rar & \H^0(i_*\F)\rar & \H^0(i_*\G)\rar & \H^0(i_*\mathcal{Q})\arrow[out=-5, in=175, looseness=1.5, overlay, dll]\\
    & \H^1( i_*\F)\rar & \H^1(i_*\G)\rar & \H^1(i_*\mathcal{Q})\arrow[out=-5, in=175, looseness=1.5, overlay, dll]\\
    & \H^2( i_*\F)\rar & \dots
    \end{tikzcd}
    \]
    Thanks to \cite[Exercise II.1.16.(d)]{hartshorne1977algebraic}, we know that $i_*\G$ is again flasque, so
    \[
    \H^j(\Spec \KK[M],i_*\G)=0
    \]
    for all $j\geq 1$, and we get isomorphisms
    \[
    \H^j(i_*\mathcal{Q})\cong \H^{j+1}( i_*\F).
    \]
    By the inductive hypothesis, the left hand side is $0$, and so is the right hand side.
\end{proof}

\begin{remark}
    The previous result is somehow expected, in view of Theorem~\ref{theorem:vanishing-combinatorial-cohomology-affine}. Nevertheless, it is an interesting result that will find applications in computing the cohomology of the sheaf of units on $\Spec^\bullet\KK[M]$.
\end{remark}

\begin{remark}
    The cohomology of any sheaf $i_*\F$ on any open subset $U$ of $\Spec\KK[M]$ can be computed by \v{C}ech cohomology, using the affine combinatorial covering of $U$, that is the cover given by the fundamental open subsets $\{D(P)\}$, with $P$ monomials. This is true because $D(P)\cong \Spec\KK[M_P]$ and $(i_*\F)\restriction_{D(P)}=i_*(\F\restriction_{D(P)})$ and from Proposition~\ref{proposition:cohomology-pushforward-affine} this cover is acyclic.
\end{remark}

\begin{remark}
    The Proposition is true in particular for the cohomology of the sheaf $i_*\O^*_M$, that in general is a subsheaf of $\O^*_{\KK[M]}$, so we can compute the cohomology of $i_*\O^*$ on the punctured spectrum using the acyclic covering given by the coordinates $\{D(X_i)\}$.
\end{remark}

\begin{definition}
    $i_*\O^*_M$ is called the sheaf of \emph{combinatorial units} of $\KK[M]$.
\end{definition}

\begin{corollary}
    $\H^i(\Spec^\bullet\KK[M], i_*\O^*_M)\cong \H^i(\Spec^\bullet M, \O^*_M)$
\end{corollary}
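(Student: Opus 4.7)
The plan is to assemble the three tools just developed: Lemma~\ref{lemma:cohomology_pushforward} relating sheaf cohomology on the combinatorial side to \v{C}ech cohomology of the pushforward, Proposition~\ref{proposition:cohomology-pushforward-affine} (vanishing of $\H^{\geq 1}$ of pushforwards on affine combinatorial opens), and Theorem~\ref{theorem:vanishing-combinatorial-cohomology-affine} (vanishing of $\H^{\geq 1}$ of any abelian sheaf on an affine scheme of binoids). The key observation is that both sides of the claimed isomorphism are computed by the \emph{same} \v{C}ech complex on the coordinate cover.

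First I would note that $\Spec^\bullet \KK[M] = \Spec\KK[M]\smallsetminus\{\KK[M_+]\}$ is a combinatorial open subset, namely $D(\KK[M_+])$, and that $i^{-1}\left(\Spec^\bullet \KK[M]\right) = \Spec^\bullet M$, since the only prime of $M$ mapping to $\KK[M_+]$ is $M_+$ itself. The cover $\widetilde{\U} = \{D(X_i)\}$ is a combinatorial affine cover of $\Spec^\bullet \KK[M]$, and its pullback under $i$ is exactly $\U = \{D(x_i)\}$, the canonical affine cover of $\Spec^\bullet M$ from Proposition~\ref{proposition:covering-punctured-spectrum-binoid}. Applying Lemma~\ref{lemma:cohomology_pushforward} with $\F = \O^*_M$ then gives
\[
\H^j\left(\Spec^\bullet M, \O^*_M\right) \cong \vH^j\left(\{D(X_i)\}, i_*\O^*_M\right)
\]
for every $j\geq 0$.

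Next I would show that the right-hand \v{C}ech cohomology in fact computes the sheaf cohomology $\H^j\left(\Spec^\bullet\KK[M], i_*\O^*_M\right)$. Each intersection $D(X_{i_0})\cap\dots\cap D(X_{i_p}) = D(X_{i_0}\cdots X_{i_p})$ is a combinatorial affine open, isomorphic to $\Spec\KK[M_{x_{i_0}+\dots+x_{i_p}}]$, and $(i_*\O^*_M)\restriction_{D(X_{i_0}\cdots X_{i_p})}$ is the pushforward of $\O^*_M\restriction_{D(x_{i_0}+\dots+x_{i_p})}$ along the corresponding injection. Hence Proposition~\ref{proposition:cohomology-pushforward-affine} yields
\[
\H^k\left(D(X_{i_0})\cap\dots\cap D(X_{i_p}),\, i_*\O^*_M\right) = 0 \quad\text{for all } k\geq 1,
\]
so $\{D(X_i)\}$ is an acyclic cover for $i_*\O^*_M$ on $\Spec^\bullet \KK[M]$. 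Leray's theorem (Theorem~\ref{theorem:leray}) then gives
\[
\vH^j\left(\{D(X_i)\}, i_*\O^*_M\right) \cong \H^j\left(\Spec^\bullet\KK[M], i_*\O^*_M\right).
\]

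Chaining the two isomorphisms produces the claim. No step is a real obstacle here: the content was already packaged in Lemma~\ref{lemma:cohomology_pushforward} (a Čech-complex identification via $i_*\F(\widetilde{U}) = \F(i^{-1}\widetilde{U})$) and in the acyclicity supplied by Proposition~\ref{proposition:cohomology-pushforward-affine}. The only small point to verify carefully is that the hypotheses of Lemma~\ref{lemma:cohomology_pushforward} actually apply, i.e.\ that $\Spec^\bullet\KK[M]$ is combinatorial and that $i^{-1}(D(X_i)) = D(x_i)$, both of which are immediate from the definition of the injection $i$ and the fact that $X_i\notin \KK[\p]$ iff $x_i\notin\p$.
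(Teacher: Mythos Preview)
Your proposal is correct and follows essentially the same approach as the paper: both arguments reduce to the observation that $\{D(X_i)\}$ is an acyclic cover for $i_*\O^*_M$ (via Proposition~\ref{proposition:cohomology-pushforward-affine}) and that the resulting \v{C}ech complex coincides with the one on $\{D(x_i)\}$ for $\O^*_M$ (via Lemma~\ref{lemma:cohomology_pushforward}). The paper compresses this into a single sentence, while you spell out each step and verify the hypotheses explicitly, but the route is the same.
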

\begin{proof}
    We can use $\{D(X_i)\}$ as a \v{C}ech covering on the left, and we obtain the same \v{C}ech cochain complex that we have on the right when covering $\Spec^\bullet M$ with $\{D(x_i)\}$.
\end{proof}

\begin{corollary}
    If we have $\O^*_{\KK[M]}\cong i_*\O^*_M\oplus\F$ for some sheaf of abelian groups $\F$ in the combinatorial topology then $\Pic^{\loc}(M)\neq 0$ implies $\Pic^{\loc}(\KK[M])\neq 0$.
\end{corollary}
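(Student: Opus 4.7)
The plan is to extract a nonzero class in $\Pic^{\loc}(\KK[M])$ from a nonzero class in $\Pic^{\loc}(M)$ by running the \v{C}ech computation on the combinatorial covering $\U = \{D(X_i)\}$ of $\Spec^\bullet\KK[M]$. The key point is that every intersection $D(X_{i_0})\cap\dots\cap D(X_{i_k}) = D(X_{i_0}\cdots X_{i_k})$ is a combinatorial open subset, so the \v{C}ech complex $\vC^\bullet(\U, \O^*_{\KK[M]})$ only ever evaluates $\O^*_{\KK[M]}$ on combinatorial open subsets. In particular, it depends only on the restriction $(\O^*_{\KK[M]})^{\comb}$.

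Under the hypothesis $(\O^*_{\KK[M]})^{\comb} \cong (i_*\O^*_M)^{\comb} \oplus \F$, the \v{C}ech complex therefore splits as a direct sum of complexes,
\[
\vC^\bullet(\U, \O^*_{\KK[M]}) \cong \vC^\bullet(\U, i_*\O^*_M) \oplus \vC^\bullet(\U, \F),
\]
and this splitting is inherited by cohomology. I would then identify the first summand: by Lemma~\ref{lemma:cohomology_pushforward} (or equivalently the Corollary immediately preceding the statement), the \v{C}ech cochain complex $\vC^\bullet(\U, i_*\O^*_M)$ coincides with the \v{C}ech complex on the preimage covering $\{D(x_i)\}$ of $\Spec^\bullet M$, which is acyclic for $\O^*_M$ by Theorem~\ref{theorem:vanishing-combinatorial-cohomology-affine}. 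Hence
\[
\vH^1(\U, i_*\O^*_M) \cong \H^1(\Spec^\bullet M, \O^*_M) = \Pic^{\loc}(M) \neq 0.
\]
Consequently $\vH^1(\U, \O^*_{\KK[M]}) \neq 0$ as well, since it contains $\vH^1(\U, i_*\O^*_M)$ as a direct summand.

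Finally I would conclude by invoking the standard fact that, for any topological space $X$, sheaf $\mathcal{G}$, and open covering $\mathcal{V}$, the natural map from first \v{C}ech cohomology to first sheaf cohomology is injective,
\[
\vH^1(\mathcal{V}, \mathcal{G}) \hookrightarrow \H^1(X, \mathcal{G}).
\]
Applied to $X = \Spec^\bullet\KK[M]$ with the Zariski topology, $\mathcal{G} = \O^*_{\KK[M]}$, and $\mathcal{V} = \U$, this yields an injection of $\vH^1(\U, \O^*_{\KK[M]}) \neq 0$ into $\H^1(\Spec^\bullet\KK[M], \O^*_{\KK[M]}) = \Pic^{\loc}(\KK[M])$, so the latter is nonzero.

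The main subtlety to check is that the splitting of sheaves in the combinatorial topology really does propagate to a splitting of the \v{C}ech complex that computes Zariski \v{C}ech cohomology; this is clean precisely because the covering $\U$ consists of combinatorial open subsets whose pairwise (and higher) intersections remain combinatorial, so the values of the Zariski sheaf $\O^*_{\KK[M]}$ that enter the \v{C}ech complex agree with those of its combinatorial restriction. No use is made of whether $\U$ is acyclic for $\O^*_{\KK[M]}$ in the Zariski topology (which in general it is not), only of the universal injection $\vH^1 \hookrightarrow \H^1$, which is the piece of the argument that genuinely transfers the nonvanishing from the combinatorial world to the Zariski world.
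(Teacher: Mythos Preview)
Your proof is correct, and in fact it is more careful than the paper's own argument. The paper simply writes
\[
\H^j(\Spec^\bullet\KK[M], \O^*_{\KK[M]})=\H^j(\Spec^\bullet\KK[M], i_*\O^*_M)\oplus \H^j(\Spec^\bullet\KK[M], \F),
\]
invokes the preceding Corollary to identify the first summand with $\H^j(\Spec^\bullet M, \O^*_M)$, and concludes. This is fine if the splitting is a splitting of Zariski sheaves, but the hypothesis only gives the splitting in the combinatorial topology, so the displayed equality of \emph{Zariski} sheaf cohomologies is not immediate. The paper glosses over this point.

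Your route avoids the issue cleanly: you observe that the \v{C}ech complex on the combinatorial covering $\{D(X_i)\}$ only evaluates $\O^*_{\KK[M]}$ on combinatorial open subsets, so the hypothesis suffices to split \emph{that} complex; you then identify the $i_*\O^*_M$ summand with $\Pic^{\loc}(M)$ via Lemma~\ref{lemma:cohomology_pushforward}, and finally transfer the nonvanishing to Zariski sheaf cohomology via the universal injection $\vH^1(\U,\cdot)\hookrightarrow\H^1(X,\cdot)$ (which the paper itself uses elsewhere, in the proof of Proposition~\ref{proposition:injection-PicU-PicU'}). This last step is exactly what is needed to pass from the combinatorial world, where the hypothesis lives, to the Zariski world, where $\Pic^{\loc}(\KK[M])$ is defined, and you are right to flag that no acyclicity assumption on $\U$ is needed for it.
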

\begin{proof}
    Since cohomology commutes with direct sums, we have that
    \[
    \H^j(\Spec^\bullet\KK[M], \O^*_{\KK[M]})=\H^j(\Spec^\bullet\KK[M], i_*\O^*_M)\oplus \H^j(\Spec^\bullet\KK[M], \F).
    \]
    From the previous Corollary,
    \[
    \H^j(\Spec^\bullet\KK[M], i_*\O^*_M)\cong \H^j(\Spec^\bullet M, \O^*_M)
    \]
    so, in particular,
    \begin{align*}
    \Pic^{\loc}(\KK[M])&=\H^1(\Spec^\bullet\KK[M], \O^*_{\KK[M]})\\
    &=\H^1(\Spec^\bullet\KK[M], i_*\O^*_M)\oplus \H^1(\Spec^\bullet\KK[M], \F)\\
    &=\Pic^{\loc}(M)\oplus \H^1(\Spec^\bullet\KK[M], \F)
    \end{align*}
    and if $\Pic^{\loc}(M)\neq 0$ then $\Pic^{\loc}(\KK[M])\neq 0$.
\end{proof}

\begin{remark}
    In general, $i_*\O^*_M\subsetneq \O^*_{\KK[M]}$. In particular, at least $\KK^*$ contributes to the units of the algebra, but not to the ones of the binoid. There are cases, though, in which the cohomology is the same, and we will see some examples of it in the next Chapter.
\end{remark}

\begin{remark}\label{remark:toric-case-sheaf-units}
    A special situation in which the cohomology of the sheaf of units of the binoid and the cohomology of the sheaf of units of the binoid algebra with the combinatorial topology agree, is the divisor class group in the normal toric case, and higher cohomology of the sheaf of units.
    
    Let $\Spec\KK[M]$ be a normal toric variety and let $Q=X_1\dots X_m$ be the product of the variables in the ring. Then $D(Q)$ is the open torus inside the variety. Clearly $\KK[M]_Q^*\cong \KK^*\oplus\ZZ^m$. Since any localization at $D(P)$ of a toric variety is again a toric variety for any $P$ monomial in $\KK[M]$, and these form a basis for the combinatorial topology, we can easily see that
    \[
    \O^*_{\KK[M]}\cong\KK^*\oplus i_*\O^*_M.
    \]
    The cohomology of degree bigger than 0 of $\KK^*$ vanishes because the ring is an integral domain, so we have that
    \[
    \H^j_{\comb}(\O^*_{\KK[M]})=\H^j_{\comb}(i_*\O^*_M)
    \]
    for all $j\geq 1$.   
    
    In more details, when we look at the divisor class group of $\KK[M]$, we can deduce the following sequence 
    from \cite[Proposition II.6.5]{hartshorne1977algebraic}
    \[
    \KK[M]^*_Q\stackrel{\phi}{\longrightarrow} \Div(V(Q))\longrightarrow\Cl(\KK[M])\longrightarrow \Cl(\KK[M]_Q)\longrightarrow 0
    \]
    where $\Div(V(Q))\cong\ZZ^n$, where $n$ is the number of facets of the cone, and $\Cl(\KK[M]_Q)=0$ because it is the torus. The map $\phi$ send the elements of the field to $0$ and between the $\ZZ$'s is the same map between divisors in Proposition~\ref{proposition:morphism-divisors}, so the groups are the same. More on this property can be found in \cite[Section 4.1]{cox2011toric}.
\end{remark}

Unlike for the toric case above and the Stanley Reisner ring that we will cover in the next Chapter, in general it is not true that $\O^*_{\KK[M]}\cong \KK^*\oplus i_*\O^*_M$, even when regarded as presheaves on the cover $\{D(X_j)\}$ of $\Spec^\bullet \KK[M]$, and so moreover in the combinatorial topology. The following are examples of this behaviour, because they involve nilpotents.

\begin{example}
    Consider the non cancellative and torsion binoid $M=(x, y \mid 2x=x+y, 2y=x+y)$, whose ring is $R=\faktor{\KK[X, Y]}{(X^2-XY, Y^2-XY)}$. The element $X-Y$ is nilpotent in $R$, since $(X-Y)^2=X^2-2XY+Y^2$, but does not come from a nilpotent element in $M$, since $M$ is reduced. So $1+X-Y$ is algebraically invertible but it is not the product of a combinatorially invertible element and a field unit, and this shows that the sheaf of units of the ring is not a direct sum of the sheaf of units of the binoid and the units of the field.
\end{example}

The next example show that this behaviour can be not only global, but also local on the punctured spectum.

\begin{example}
    Consider the ring $R=\faktor{\KK[X, Y, Z]}{(XY(Z^2-1))}$, coming from the torsion free but non-cancellative binoid $M=(x, y, z \mid x+y+2z=x+y)$.\\
    Then the $\KK$-spectrum of $M$ is the union of the four planes $X=0, Y=0$ and $Z=\pm1$, represented in Figure~\ref{fig:spec(xyz2-xy)} for $\KK=\RR$. The punctured spectrum $\Spec^\bullet \KK[M]$ is covered by $D(X)$ and $D(Y)$.
    
    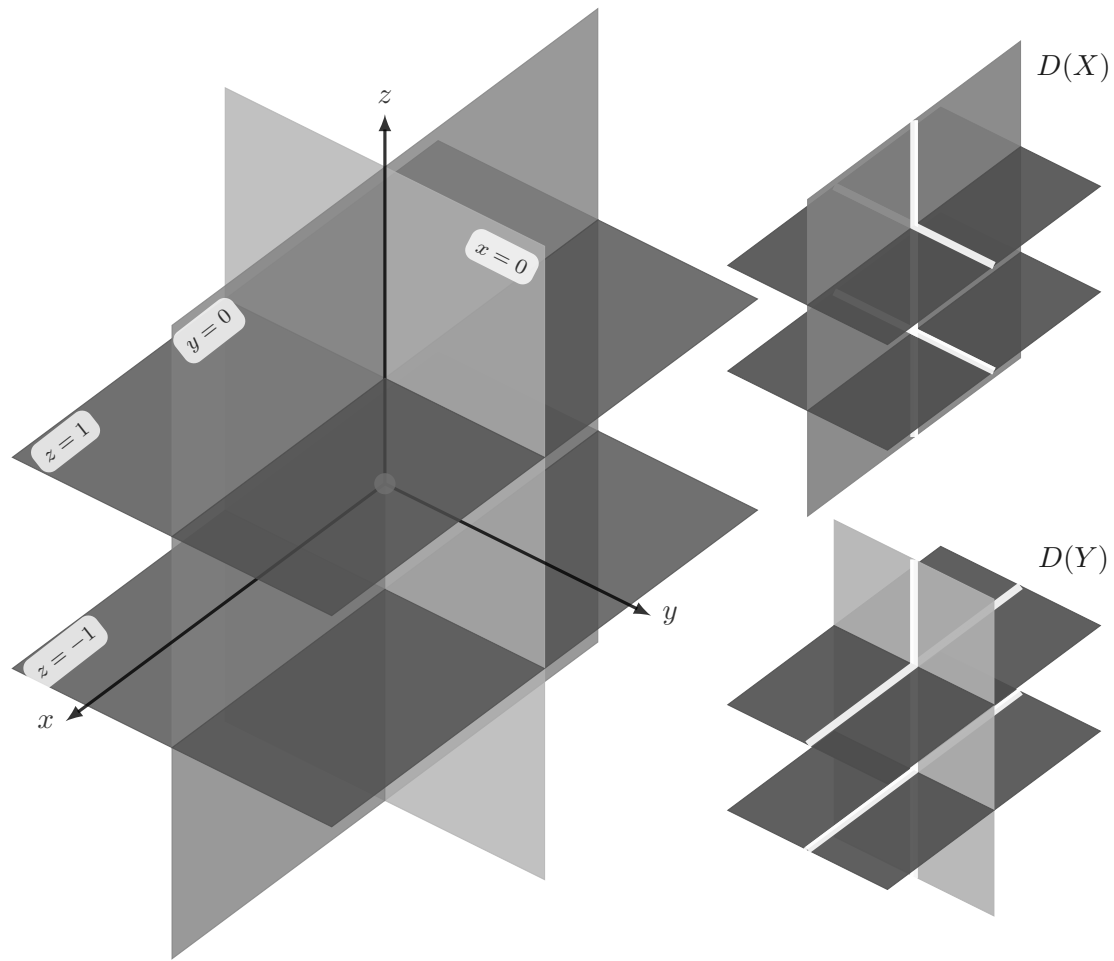
\begin{figure}[ht]
        \centering
        \begin{subfigure}[b]{0.64\linewidth}
            \begin{tikzpicture}[opacity=0.8, scale=0.7]
            \filldraw [grey1] (0,-4) -- (-3,-2.5) -- (1,0.5) -- (4, -1) -- cycle;
            \filldraw [grey2] (0,-4) -- (0,-8) -- (4, -5) -- (4, -1) -- cycle;
            \filldraw [grey2] (0,-4) -- (0,0) -- (4, 3) -- (4, -1) -- cycle;
            \filldraw [grey1] (0,-4) -- (3,-5.5) -- (7,-2.5) -- (4, -1) -- cycle;
            \filldraw [grey3] (0,-4) -- (0,-8) -- (-3, -6.5) -- (-3,-2.5) -- cycle;
            \filldraw [grey3] (0,-4) -- (0,0) -- (-3, 1.5) -- (-3,-2.5) -- cycle;
            \filldraw [grey3] (0,-4) -- (0,0) -- (3, -1.5) -- (3,-5.5) -- cycle;
            \filldraw [grey3] (0,-4) -- (0,-8) -- (3, -9.5) -- (3,-5.5) -- cycle;
            \filldraw [grey1] (0,-4) -- (-3,-2.5) -- (-7,-5.5) -- (-4, -7) -- cycle;
            \filldraw [grey2] (0,-4) -- (0,-8) -- (-4, -11) -- (-4, -7) -- cycle;
            \filldraw [grey2] (0,-4) -- (0,0) -- (-4, -3) -- (-4, -7) -- cycle;
            \filldraw [grey1] (0,-4) -- (3,-5.5) -- (-1,-8.5) -- (-4, -7) -- cycle;
            \filldraw [grey1] (0,0) -- (-3,1.5) -- (1,4.5) -- (4, 3) -- cycle;
            \filldraw [grey2] (0,0) -- (0,4) -- (4, 7) -- (4, 3) -- cycle;
            \filldraw [grey1] (0,0) -- (3,-1.5) -- (7,1.5) -- (4, 3) -- cycle;
            \filldraw [grey3] (0,0) -- (0,4) -- (-3, 5.5) -- (-3,1.5) -- cycle;
            \filldraw [grey3] (0,0) -- (0,4) -- (3, 2.5) -- (3,-1.5) -- cycle;
            \filldraw [grey1] (0,0) -- (-3,1.5) -- (-7,-1.5) -- (-4, -3) -- cycle;
            \filldraw [grey2] (0,0) -- (0,4) -- (-4, 1) -- (-4, -3) -- cycle;
            \draw[->, very thick, >=latex] (0,-2) -- (0,5) node [above] {$z$};
            \draw[->, very thick, >=latex] (0,-2) -- (-6,-6.5) node [left] {$x$};
            \draw[->, very thick, >=latex] (0,-2) -- (5,-4.5) node [right] {$y$};
            \fill[white] (0,-2) circle[radius=2mm];
            \filldraw [grey1] (0,0) -- (3,-1.5) -- (-1,-4.5) -- (-4, -3) -- cycle;
            \node[fill=white, rounded corners, rotate=38] (z=-1) at (-6, -5.2){\scriptsize$z=-1$};
            \node[fill=white, rounded corners, rotate=38] (z=1) at (-6, -1.2){\scriptsize$z=1$}; \node[fill=white, rounded corners, rotate=-27] (x=0) at (2.2, 2.3){\scriptsize$x=0$};
            \node[fill=white, rounded corners, rotate=38] (y=0) at (-3.3, 0.9){\scriptsize$y=0$};
            \end{tikzpicture}
        \end{subfigure}
        \begin{subfigure}[b]{0.35\linewidth}
            \begin{tikzpicture}[opacity=0.9, scale=0.35]
            \filldraw [grey1] (0,-4) -- (-3,-2.5) -- (1,0.5) -- (4, -1) -- cycle;
            \filldraw [grey2] (0,-4) -- (0,-8) -- (4, -5) -- (4, -1) -- cycle;
            \filldraw [grey2] (0,-4) -- (0,0) -- (4, 3) -- (4, -1) -- cycle;
            \filldraw [grey1] (0,-4) -- (3,-5.5) -- (7,-2.5) -- (4, -1) -- cycle;
            \filldraw [grey1] (0,-4) -- (-3,-2.5) -- (-7,-5.5) -- (-4, -7) -- cycle;
            \draw [white, line width=1mm] (-3,-2.5) -- (0,-4) -- (0,-8);
            \filldraw [grey2] (0,-4) -- (0,-8) -- (-4, -11) -- (-4, -7) -- cycle;
            \filldraw [grey2] (0,-4) -- (0,0) -- (-4, -3) -- (-4, -7) -- cycle;
            \draw [white, line width=1mm] (0,-8) -- (0,-4) -- (3,-5.5);
            \filldraw [grey1] (0,-4) -- (3,-5.5) -- (-1,-8.5) -- (-4, -7) -- cycle;
            \draw [white, line width=1mm] (3,-5.5) -- (0,-4) -- (0,0) -- (3, -1.5);
            \filldraw [grey1] (0,0) -- (-3,1.5) -- (1,4.5) -- (4, 3) -- cycle;
            \filldraw [grey2] (0,0) -- (0,4) -- (4, 7) -- (4, 3) -- cycle;
            \filldraw [grey1] (0,0) -- (3,-1.5) -- (7,1.5) -- (4, 3) -- cycle;
            \filldraw [grey1] (0,0) -- (-3,1.5) -- (-7,-1.5) -- (-4, -3) -- cycle;
            \draw [white, line width=1mm] (-3,-2.5) -- (0,-4) -- (0,0) -- (-3, 1.5);
            \draw [white, line width=1mm] (-3,1.5) -- (0,0) -- (0,4);
            \filldraw [grey2] (0,0) -- (0,4) -- (-4, 1) -- (-4, -3) -- cycle;
            \filldraw [grey1] (0,0) -- (3,-1.5) -- (-1,-4.5) -- (-4, -3) -- cycle;
            \draw [white, line width=1mm] (3,-1.5) -- (0,0) -- (0,4);
            \node (DX) at (6, 6){$D(X)$};
            \end{tikzpicture}\\
            \begin{tikzpicture}[opacity=0.9, scale=0.35]
            \filldraw [grey1] (0,-4) -- (-3,-2.5) -- (1,0.5) -- (4, -1) -- cycle;
            \draw [white, line width=1mm] (0,-4) -- (0,0) -- (4, 3) -- (4, -1) -- cycle;
            \filldraw [grey1] (0,-4) -- (3,-5.5) -- (7,-2.5) -- (4, -1) -- cycle;
            \filldraw [grey3] (0,-4) -- (0,-8) -- (-3, -6.5) -- (-3,-2.5) -- cycle;
            \filldraw [grey3] (0,-4) -- (0,0) -- (-3, 1.5) -- (-3,-2.5) -- cycle;
            \draw [white, line width=1mm] (0,-8) -- (0,-4) -- (4, -1);
            \filldraw [grey3] (0,-4) -- (0,0) -- (3, -1.5) -- (3,-5.5) -- cycle;
            \filldraw [grey3] (0,-4) -- (0,-8) -- (3, -9.5) -- (3,-5.5) -- cycle;
            \filldraw [grey1] (0,-4) -- (-3,-2.5) -- (-7,-5.5) -- (-4, -7) -- cycle;
            \draw [white, line width=1mm] (0,-4) -- (0,-8) -- (-4, -11) -- (-4, -7) -- cycle;
            \filldraw [grey1] (0,-4) -- (3,-5.5) -- (-1,-8.5) -- (-4, -7) -- cycle;
            \filldraw [grey1] (0,0) -- (-3,1.5) -- (1,4.5) -- (4, 3) -- cycle;
            \draw [white, line width=1mm] (-4, -7) -- (0,-4) -- (0,0);
            \filldraw [grey1] (0,0) -- (3,-1.5) -- (7,1.5) -- (4, 3) -- cycle;
            \filldraw [grey3] (0,0) -- (0,4) -- (-3, 5.5) -- (-3,1.5) -- cycle;
            \draw [white, line width=1mm] (4, 3) -- (0,0) -- (0,4);
            \filldraw [grey3] (0,0) -- (0,4) -- (3, 2.5) -- (3,-1.5) -- cycle;
            \filldraw [grey1] (0,0) -- (-3,1.5) -- (-7,-1.5) -- (-4, -3) -- cycle;
            \filldraw [grey1] (0,0) -- (3,-1.5) -- (-1,-4.5) -- (-4, -3) -- cycle;
            \draw [white, line width=1mm] (-4, -3) -- (0,0) -- (0,4);
            \node (DY) at (6, 4){$D(Y)$};
            \end{tikzpicture}
        \end{subfigure}
        \caption{Visual representation of $\RR-\Spec^\bullet(M)$ and its combinatorial covering}
        \label{fig:spec(xyz2-xy)}
    \end{figure}
    
    On these open subsets we can use a Mayer-Vietoris-like argument (as in the case of the Stanley-Reisner ring, developed in Chapter~\ref{chapter:SR-rings}) to prove that this covering is acyclic for the sheaf of units $\O^*_R$, thus we can use them as \v{C}ech covering to compute cohomology.
    \begin{align*}
    &\O(D(X))=R_X=\faktor{\KK[X, X^{-1}, Y, Z]}{(Y(Z^2-1))}\cong\faktor{\KK[Y, Z]}{(Y(Z^2-1))}[X, X^{-1}]\\
    &\O(D(Y))=R_Y=\faktor{\KK[X, Y, Y^{-1}, Z]}{(X(Z^2-1))}\cong\faktor{\KK[X, Z]}{(X(Z^2-1))}[Y, Y^{-1}]\\
    &\O(D(XY))=R_{XY}=\faktor{\KK[X, X^{-1}, Y, Y^{-1}, Z]}{(Z^2-1)}\\
    &\hspace{10em}\cong\faktor{\KK[Z]}{(Z^2-1)}[X, X^{-1}, Y, Y^{-1}]
    \end{align*}
    The groups of units are
    \begin{align*}
    &R_{X}^*=\{\alpha x^m \mid \alpha\in\KK^*, m\in\ZZ\}\cong \KK^*\oplus\ZZ\\
    &R_{Y}^*=\{\beta y^n \mid \alpha\in\KK^*, n\in\ZZ\}\cong\KK^*\oplus\ZZ\\
    &R_{XY}^*=\{\gamma x^m y^n (\delta z+\varepsilon)\mid \gamma\in\KK^*,\ m, n\in\ZZ,\ \delta, \varepsilon\in\KK,\ \delta^2\neq \varepsilon^2\}\ncong\KK^*\oplus\ZZ^2
    \end{align*}
    In $R_{XY}$ also the elements of the form $(\delta z+\varepsilon)$ are invertible, provided $\delta^2\neq \varepsilon^2$. These units are not combinatorial, they show up only at this level ant do not come from earlier in the \v{C}ech complex, so they will add to the combinatorial Picard group.
    
    Indeed, the \v{C}ech complex looks like
    \begin{equation}
    \begin{tikzcd}[baseline=(current  bounding  box.center),
    row sep = 0pt]
    0\rar & R_X^*\oplus R_Y^*\rar & R_{XY}^* \rar & 0\\
    &    (\alpha x^m, \beta y^n)\rar[mapsto] &\dfrac{\beta}{\alpha}x^{-m}y^{n}&\\
    &                            &\gamma x^my^n (\delta z+\varepsilon)\rar[mapsto] & 0
    \end{tikzcd}
    \end{equation}
    And the Picard group of the punctured spectrum (or local Picard group of $R$) is then exactly the group of these ``new'' invertible elements $\Pic(\Spec^\bullet(R))=\{(\delta z+\varepsilon)\mid \delta, \varepsilon\in\KK,\ \delta^2\neq \varepsilon^2\}$.
\end{example}

The previous Example shows that the combinatorial topology, although it is coarser than the Zariski topology and more similar to the topology on the binoid spectrum, does not automatically give us an isomorphism of the cohomologies.

\section{Cohomology of \texorpdfstring{$\O^*$}{pippo} in the Zariski topology}
In this Section we will present some known results about cohomology of $\O^*$ in the Zariski topology and try to relate them with the combinatorial topology.
We begin with a result on the vanishing of $\O^*$ for the affine space.

\subsection{The affine space}

\begin{notation}Let $R$ be a commutative ring and $\F$ a sheaf on $\Spec R$. We ease the notation by denoting
    $\H^i(R, \F):=\H^i(\Spec(R), \F)$ in the Zariski topology. If, moreover, $R=\KK[M]$ for some binoid $M$, we denote also
    $\H^i(\Spec^\bullet R, \F):=\H^i(\Spec(R)\smallsetminus \KK[M_+], \F\restriction_{\Spec R\smallsetminus \KK[M_+]})$.
\end{notation}

\begin{lemma}\label{Lemma:aciclicityforaffinespace}
    Let $V=\AA^n$ or $V=\AA^n\times(\AA^*)^m$. Then $\H^i(V, \O^*)=0$ for $i\geq 1$.
\end{lemma}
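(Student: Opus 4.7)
The plan is to exploit that in both cases $V = \Spec R$ where $R$ is a regular UFD ($R = \KK[X_1,\dots,X_n]$ or its localization $R = \KK[X_1,\dots,X_n][Y_1^{\pm 1},\dots,Y_m^{\pm 1}]$, which is again a UFD since localization preserves the UFD property). In particular $V$ is integral, noetherian and locally factorial, so the classical machinery of Cartier/Weil divisors applies.

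First, I would invoke the short exact sequence of sheaves of abelian groups on $V$
\begin{equation*}
0 \longrightarrow \O^* \longrightarrow \mathcal{K}^* \longrightarrow \mathcal{K}^*/\O^* \longrightarrow 0,
\end{equation*}
already mentioned after Proposition~\ref{proposition:isomorphism-CaCl-Pic}, where $\mathcal{K}^*$ is the constant sheaf of nonzero rational functions on $V$. Since $V$ is irreducible, $\mathcal{K}^*$ is a constant sheaf on an irreducible topological space and is therefore flasque, so $\H^i(V,\mathcal{K}^*) = 0$ for all $i \geq 1$. Next, because $V$ is locally factorial, the quotient sheaf $\mathcal{K}^*/\O^*$ coincides with the sheaf of Weil divisors, which admits the explicit description
\begin{equation*}
\mathcal{K}^*/\O^* \;\cong\; \bigoplus_{\p \in V^{(1)}} (i_\p)_* \,\ZZ,
\end{equation*}
a direct sum of skyscraper-type pushforwards along the inclusions of the generic points of height-one primes; each summand is flasque, and a direct sum of flasque sheaves is flasque, so this sheaf also has vanishing higher cohomology.

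From the long exact sequence in cohomology induced by the above short exact sequence, the vanishing of $\H^i$ of the two outer terms for $i \geq 1$ immediately gives $\H^i(V,\O^*) = 0$ for all $i \geq 2$, together with an exact sequence
\begin{equation*}
\Gamma(V,\mathcal{K}^*) \longrightarrow \Gamma(V,\mathcal{K}^*/\O^*) \longrightarrow \H^1(V,\O^*) \longrightarrow 0,
\end{equation*}
identifying $\H^1(V,\O^*)$ with $\CaCl(V) = \Cl(V)$ via Proposition~\ref{proposition:isomorphism-CaCl-Pic} and Proposition~\ref{proposition:CaCl-Cl} (the latter applies because $V$ is regular, in particular regular in codimension one).

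Finally, I would conclude the case $i=1$ by noting that $R$ is a UFD, so every height-one prime is principal; hence $\Cl(V) = 0$, and therefore $\H^1(V,\O^*) = \Pic(V) = 0$ as well. The only mildly subtle step is the identification of $\mathcal{K}^*/\O^*$ with the sheaf of Weil divisors and the verification that it is flasque; this is where the regularity (or at least local factoriality) of $R$ enters, and it is essential that $\AA^n$ and $\AA^n \times (\AA^*)^m$ are smooth so that Cartier and Weil divisors agree. Everything else is formal and follows from the classical long exact cohomology argument.
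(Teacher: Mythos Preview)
Your argument is correct and essentially identical to the paper's own proof: both exploit the short exact sequence $0 \to \O^* \to \mathcal{K}^* \to \mathcal{K}^*/\O^* \to 0$, the flasqueness of $\mathcal{K}^*$ (integrality) and of $\mathcal{K}^*/\O^* \cong \mathcal{W}\mathrm{Div}$ (smoothness/local factoriality), and then the UFD property to conclude $\H^1(\O^*) = \Cl(R) = 0$. One minor point: Propositions~\ref{proposition:isomorphism-CaCl-Pic} and~\ref{proposition:CaCl-Cl} are stated for \emph{binoid} schemes, so in the ring setting the paper instead invokes the classical analogues \cite[Proposition II.6.2 and Corollary II.6.16]{hartshorne1977algebraic}.
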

\begin{proof}
    Recall that the spaces that we are considering are $\AA^n=\Spec(\KK[x_1, \dots, x_n])$ and $\AA^n\times\AA^*=\Spec(\KK[x_1, \dots, x_n][y_1^{\pm1}, \dots, y_m^{\pm1}])$. Since the two schemes are integral, the sheaf $\mathcal{K}^*$ of non-zero rational functions is constant, and hence flasque, in both cases. This constant sheaf fits into the exact sequence     
    \begin{equation*}
    \begin{tikzcd}[baseline=(current  bounding  box.center), cramped, row sep =0pt]
    0 \rar & \O^* \rar & \mathcal{K}^* \rar & \faktor{\mathcal{K}^*}{\O^*} \rar & 0
    \end{tikzcd}
    \end{equation*}
    The sheaf of Cartier divisors is exactly the last sheaf
    \[
    \glslink{cadivX}{\mathcal{C}\mathrm{aDiv}}=\faktor{\mathcal{K}^*}{\O^*}
    \]
    and thanks to smoothness and local factoriality Cartier and Weil divisors agree, as well as their sheafifications, $\mathcal{C}\mathrm{aCl}$ and $\glslink{wdivX}{\mathcal{W}\mathrm{Div}}$ (see \cite[Section II.6]{hartshorne1977algebraic}).
    So $\faktor{\mathcal{K}^*}{\O^*}$ is flasque because $\mathcal{W}\mathrm{Div}$ is such, thanks to integrality (in this case every Weil divisor can be extended).
    \\
    When taking cohomology of this short exact sequence, we obtain the long exact sequence
    \begin{equation*}
    \begin{tikzcd}[baseline=(current  bounding  box.center), row sep=0pt,
    tikz/column 4/.append style={anchor=west}]
    0 \rar &  \H^0(\O^*) \rar &  \H^0(\mathcal{K}^*) \rar &  \displaystyle \H^0\left(\faktor{\mathcal{K}^*}{\O^*}\right)=\Div(R)\arrow[out=-5, in=175, looseness=1.5, overlay, dll]\\
    & \H^1(\O^*) \rar &  \H^1(\mathcal{K}^*) \rar &  \dots\hspace{4.5em}
    \end{tikzcd}
    \end{equation*}
    Thanks to flasqueness we have that $\H^i(\mathcal{K}^*) = \displaystyle\H^i\left(\faktor{\mathcal{K}^*}{\O^*}\right) = 0$ for every $i\geq 1$, so $\H^i(\O^*)=0$ for every $i\geq 2$. We still have to prove it for $i=1$.
    \\
    Both $\AA^n$ and $\AA^n\times(\AA^*)^m$ come from Unique Factorization Domains, so thanks to \cite[Proposition II.6.2 and Corollary II.6.16]{hartshorne1977algebraic} we have that
    \[
    \H^1(\O^*)\cong\faktor{\Div(R)}{\KK^*}=\Cl(R)=0
    \] and the proof is complete.
\end{proof}

\begin{lemma}\label{Lemma:aciclicityaffinevariety-overdim}
    Let $V$ be a variety. Then $\H^i(V, \O^*)=0$ for $i>\dim(V)$.
\end{lemma}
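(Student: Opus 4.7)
The plan is to deduce this statement directly from Grothendieck's classical vanishing theorem, which is precisely the tool that was already invoked in Remark~\ref{remark:grothendieck-theorem} for the combinatorial analogue.

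First, I would observe that a variety $V$ is, in particular, a Noetherian topological space with the Zariski topology, and that the combinatorial dimension of $V$ (the maximum length of a strictly decreasing chain of irreducible closed subsets) coincides with $\dim V$ in the usual sense of algebraic geometry. This is the only geometric input needed.

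Next, I would note that $\O^*$ is a sheaf of abelian groups on $V$. Grothendieck's vanishing theorem (\cite[Theorem 3.6.5]{grothendieck1957surquelques}), already cited earlier in the paper for the analogous combinatorial statement, asserts that for any abelian sheaf $\F$ on a Noetherian topological space $X$ one has $\H^j(X, \F) = 0$ for all $j > \dim X$, where $\dim X$ denotes the combinatorial dimension. Applying this with $X = V$ and $\F = \O^*$ immediately yields $\H^i(V, \O^*) = 0$ for $i > \dim V$.

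There is no real obstacle here: the lemma is a direct specialization of Grothendieck's theorem, and the only thing worth mentioning explicitly is the agreement of combinatorial dimension and variety dimension, which is standard. In the write-up I would simply recall the statement of Grothendieck's vanishing and apply it, parallel to the way it is used in Remark~\ref{remark:grothendieck-theorem} for the combinatorial Picard setting.
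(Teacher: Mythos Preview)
Your proposal is correct and matches the paper's approach exactly: the paper simply cites Grothendieck's vanishing theorem \cite[Theorem 3.6.5]{grothendieck1957surquelques}, already invoked in Remark~\ref{remark:grothendieck-theorem}, as the entire justification.
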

This is a well-known result by Grothendieck~\cite[Theorem 3.6.5]{grothendieck1957surquelques}, already cited in Remark~\ref{remark:grothendieck-theorem}.

\begin{remark}
    If $V$ is $W\times(\AA^*)^m$, then it is still affine because
    \[
    (\AA^*)^m=\Spec\KK[y_1^{\pm1},\dots, y_m^{\pm1}],
    \] and the result holds then for $i>\dim(W)+m$.
\end{remark}

\subsection{Toric varieties}
We state now two important results about the cohomology of $\O^*$ in the case of a toric variety, both presented in \cite{demeyer1992cohomological}. There, the authors use them to prove some results that relate usual sheaf cohomology in the Zariski topology and \'etale cohomology on toric varieties.

\begin{theorem}[{\cite[Lemma 5]{demeyer1992cohomological}}]
    1. If $X = U_\sigma$ is a normal affine toric variety, then
    \[
    \H^p_{\mathrm{Zar}}(X, \O^*) = 0,
    \] for all $p\geq 1$.\\
    2. If $X_\Sigma$ is the toric variety associated to the fan $\Sigma$ and $\U=\{U_{\sigma_j}\}_{j=1, \dots, r}$ is the usual covering of $X$ with affine open subsets associated to the maximal cones $\{\sigma_1, \dots, \sigma_r\}$ in $\Sigma$, then
    \[
    \vH^p(\U, \O^*)=\H_{\mathrm{Zar}}(X, \O^*).
    \]
\end{theorem}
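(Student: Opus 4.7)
The plan is to reduce both parts to a single vanishing statement and then to Leray's theorem. Throughout, I use the fact that a normal affine toric variety $U_\sigma = \Spec \KK[S_\sigma]$ is integral, so the sheaf $\mathcal{K}^*$ of non-zero rational functions is a constant sheaf and therefore flasque.

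For part 1, I would copy the strategy of Lemma~\ref{Lemma:aciclicityforaffinespace} and start from the short exact sequence
\[
0 \longrightarrow \O^* \longrightarrow \mathcal{K}^* \longrightarrow \mathcal{C}\mathrm{aDiv} \longrightarrow 0.
\]
Since $U_\sigma$ is normal (hence locally factorial in codimension $1$), Cartier and Weil divisors agree as sheaves, and $\mathcal{W}\mathrm{Div}$ is flasque because on an integral scheme every Weil divisor extends across any open subset. So both $\mathcal{K}^*$ and $\mathcal{C}\mathrm{aDiv}$ are flasque and the long exact sequence immediately yields $\H^p(U_\sigma, \O^*) = 0$ for all $p \geq 2$. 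What remains is the case $p = 1$, i.e.\ $\Pic(U_\sigma) = 0$, and this is where the essential input from toric geometry enters. I would invoke the classical fact (following from $\mathrm{Div}(U_\sigma) \to \Cl(U_\sigma) \to 0$ being generated by the torus-invariant prime divisors corresponding to the rays of $\sigma$, together with the description of Cartier divisors on $U_\sigma$ as those given by a single character $\chi^m$) that every Cartier divisor on a normal affine toric variety is principal; equivalently, $\Pic(U_\sigma) = 0$. Plugging this back in the long exact sequence closes part 1.

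For part 2, the key observation is that the intersection pattern of the usual affine cover is again toric: for any two cones $\sigma_i, \sigma_j$ of $\Sigma$, the intersection $\sigma_i \cap \sigma_j$ is a common face of both, and $U_{\sigma_i} \cap U_{\sigma_j} = U_{\sigma_i \cap \sigma_j}$ is again a normal affine toric variety. By induction on the number of indices, every finite intersection $U_{\sigma_{i_0}} \cap \dots \cap U_{\sigma_{i_p}}$ equals $U_{\sigma_{i_0} \cap \dots \cap \sigma_{i_p}}$, again a normal affine toric variety. Applying part 1 to each such intersection shows that $\U$ is an acyclic cover for $\O^*$ in the sense of Theorem~\ref{theorem:leray}, and Leray's theorem then gives the identification $\vH^p(\U, \O^*) \cong \H^p_{\mathrm{Zar}}(X_\Sigma, \O^*)$ for every $p \geq 0$.

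The main obstacle is the $p=1$ case of part 1, which is not formal: unlike the $p \geq 2$ vanishing (which follows by pure sheaf-theoretic bookkeeping from flasqueness), it really requires the geometric content that Cartier divisors on $U_\sigma$ are principal. Everything else, including the reduction from part 2 to part 1, is combinatorics of cones plus Leray.
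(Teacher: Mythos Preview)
The paper does not give its own proof of this statement; it is quoted verbatim from \cite{demeyer1992cohomological} and used as a black box. So there is no in-paper argument to compare against, and I will just assess your proposal on its own terms.

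Your argument for part~2, and for the case $p=1$ of part~1, is fine: intersections of the $U_{\sigma_j}$ are again normal affine toric (they equal $U_{\sigma_{i_0}\cap\dots\cap\sigma_{i_p}}$), so once part~1 is known Leray's theorem~\ref{theorem:leray} applies; and $\Pic(U_\sigma)=0$ is indeed the standard toric fact you invoke.

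The gap is in your treatment of $p\geq 2$. You write that since $U_\sigma$ is normal, ``Cartier and Weil divisors agree as sheaves'', and conclude that $\mathcal{C}\mathrm{aDiv}$ is flasque. This step fails: normality gives regularity in codimension~$1$, not local factoriality, and on a normal but non-factorial variety the inclusion $\mathcal{C}\mathrm{aDiv}\hookrightarrow\mathcal{W}\mathrm{Div}$ is strict. Concretely, take the quadric cone $U_\sigma=\Spec \KK[X,Y,Z]/(XY-Z^2)$, which is a normal affine toric variety with $\Cl(U_\sigma)\cong\ZZ/2\ZZ$. The ruling $L=V(X,Z)$ is a Weil divisor that is Cartier on the smooth locus $U=U_\sigma\smallsetminus\{0\}$ but not Cartier on $U_\sigma$; since any Cartier extension of $L|_U$ would have to coincide with $L$ as a Weil divisor (by taking closures), there is no such extension, and $\mathcal{C}\mathrm{aDiv}(U_\sigma)\to\mathcal{C}\mathrm{aDiv}(U)$ is not surjective. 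So $\mathcal{C}\mathrm{aDiv}$ is not flasque on $U_\sigma$, and your long-exact-sequence argument for $p\geq 2$ does not go through as written. The strategy of Lemma~\ref{Lemma:aciclicityforaffinespace} worked there precisely because $\AA^n$ and $\AA^n\times(\AA^*)^m$ are smooth; for singular normal $U_\sigma$ a different argument is needed, and you should consult \cite{demeyer1992cohomological} for it.
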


\begin{remark}
    From \cite[Theorem 3.1.5]{cox2011toric}, if $\sigma$ is a maximal cone in $\Sigma$ we know that $U_\sigma$ is normal and separated.
\end{remark}

Let $\Spec\KK[M]=X_\Sigma$ be a normal affine separated toric variety and let $X=\Spec^\bullet \KK[M]$. $D(X_i)$ is a normal affine separated toric variety embedded in $X$, so $\{D(X_i)\}$ defines an acyclic covering for $\O^*$ on $X$. In particular, we can use it to compute Zariski sheaf cohomology via \v{C}ech cohomology on $X$. Moreover, since the $D(X_i)$'s are combinatorial, we can apply Proposition~\ref{proposition:hzar-hcomb} to obtain isomorphisms
\[
\H^p_{\mathrm{Zar}}(X, \O^*) = \H^p_{\comb}(X, \O^*) = \vH^p(\{D(X_i)\}, \O^*)
\]
Moreover, we can then apply Proposition~\ref{proposition:split-sheaves-comb-top} to split the sheaf $(\O^*)^{\comb}$ and obtain
\begin{align*}
\H^p_{\mathrm{Zar}}(X, \O^*) &= \H^p_{\comb}(X, \O^*) = \H^p_{\comb}(X, \KK^*)\oplus \H^p_{\comb}(X, i_*\O^*_M)\\
&= \vH^p(\{D(X_i)\}, \KK^*)\oplus \vH^p(\{D(X_i)\}, i_*\O^*_M)\\
& = \vH^p(\{D(X_i)\}, \KK^*)\oplus \H^p(\Spec^\bullet M, \O^*_M).
\end{align*}

\begin{remark} The first part of the theorem above show really that, when $\Spec\KK[M]$ is a normal separated toric variety,
    \[
    \H^p(\Spec\KK[M], \O^*)=\H^p(\Spec M, \O^*_M)=0,
    \]
    for all $p\geq 1$.
    If we drop the hypothesis of normality, this might be not true, as the following Example shows.
\end{remark}

\begin{example}
    Consider the Neil binoid $M=(x, y\mid 2x=3y)$. Its algebra is $\faktor{\KK[X, Y]}{\langle X^2-Y^3\rangle}$ and it defines the curve called Neil parabola, i.e.\
    \[
    C=\Spec\faktor{\KK[X, Y]}{\langle X^2-Y^3\rangle},
    \]
    that is a toric variety.\footnote{This curve is also known as semicubical curve, or simply cusp.}
    
    We already know, from Theorem~\ref{theorem:vanishing-combinatorial-cohomology-affine}, that $\H^p(\Spec M, \O^*_M)=0$ for all $p\geq 1$, because it is affine.
    
    On the other hand, this variety is not normal and, indeed
    \[
    \Pic(C)=\KK_+,
    \]
    where $\KK_+$ is $\KK$ seen as the additive group.\footnote{More on this result can be found in \cite[Exercise 11.15 and 11.16]{eisenbud2013commutative}.}
    
    So, in this case, we have that
    \[
    \KK_+=\H^1(\Spec\KK[M], \O^*)\neq \H^1(\Spec M, \O^*_M)=0.
    \]

    In particular, this shows that if we have a variety $X$ such that one of its $D(X_i)$'s (or intersection of them) is isomorphic to a product of the cusp with something else, we cannot use this covering to compute cohomology. For example, we cannot use $\{D(X_i)\}$ to compute cohomology of the sheaf of units on 
    \[
    V=\Spec^\bullet \faktor{\KK[X, Y, Z]}{\langle X^2-Y^3\rangle}.
    \]
    Indeed, $D(Z)\cong C\times \AA^*$, so $\H^1(D(Z), \O^*)\supseteq \KK_+\neq 0$, and the combinatorial covering $\{D(X), D(Y), D(Z)\}$ is not acyclic for $\O^*$.
\end{example}

\section{Non reduced case}
In the next Chapter we will discuss in detail the case of the Stanley-Reisner rings, reduced algebras that are quotients of polynomial rings by radical monomial ideals. In this Section we discuss the relations between the behaviour of the cohomology of the sheaf of units in the Zariski topology in the reduced and in the non reduced case.

In particular, one might expect some differences, because any nilpotent element $n\in R=\KK[M]$ defines a unit $1+n$, that is obviously non combinatoric (in the sense that it does not come from a unit is $M$), and so they make the computations more difficult (both in Zariski and combinatorial topology). We will see some examples where this behaviour is really explicit.

In what follows, unless otherwise specified, we consider $R$ to be a noetherian ring, not necessarily a binoid algebra, and $X=\Spec R=\Spec R_{\red}$.

\begin{definition}
    Let $R$ be any ring. $\nil(R)$ is the nilradical of $R$.
\end{definition}

The exact sequence
\begin{equation*}
\begin{tikzcd}[baseline=(current  bounding  box.center),
/tikz/column 1/.append style={anchor=base east},
/tikz/column 2/.append style={anchor=base west}, 
row sep = 0pt]
0\rar &\nil(R) \rar& R\rar &R_{\red}\rar&0
\end{tikzcd}
\end{equation*}

induces the exact sequence on the units
\begin{equation*}
\begin{tikzcd}[baseline=(current  bounding  box.center),
/tikz/column 1/.append style={anchor=base east},
/tikz/column 2/.append style={anchor=base west}, 
row sep = 0pt]
1\rar &1+\nil(R) \rar& R^*\rar &R^*_{\red}\rar&0,
\end{tikzcd}
\end{equation*}
where $1+\nil(R)$ is the group of units of $R$ of the form $1+n$ with $n\in\nil(R)$.

\begin{definition}
    The \emph{sheaf of nilpotents} is the sheaf of ideals $\N$ on $\Spec R$ such that
    \[
    \N(U)=\nil(\O_X(U)).
    \]
\end{definition}

\begin{proposition}
    $\N$ is a sheaf of ideals that fits into a short exact sequence
    \begin{equation*}
    \begin{tikzcd}[baseline=(current  bounding  box.center),
    /tikz/column 1/.append style={anchor=base east},
    /tikz/column 2/.append style={anchor=base west}, 
    row sep = 0pt]
    0\rar &\N \rar& \O_X\rar &\O_{X_{\red}}\rar&0.
    \end{tikzcd}
    \end{equation*}
\end{proposition}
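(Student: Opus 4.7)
The plan is to obtain the short exact sequence first and then identify $\N$ as a kernel sheaf, which is the cleanest route. The closed immersion $\iota:X_{\red}\hookrightarrow X$ comes from the surjective ring map $R\twoheadrightarrow R_{\red}$, and pushing forward the structure sheaf yields a morphism of sheaves of rings $\varphi:\O_X\longrightarrow \iota_*\O_{X_{\red}}$ on $X$. Since $X$ and $X_{\red}$ share the same underlying topological space (the closed immersion is a homeomorphism onto its image, which equals $X$), I may identify $\iota_*\O_{X_{\red}}$ with $\O_{X_{\red}}$ as sheaves on $X$, and hence think of $\varphi$ as a map $\O_X\longrightarrow\O_{X_{\red}}$. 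I then define $\N:=\ker\varphi$.

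With this definition, the fact that $\N$ is a sheaf of ideals of $\O_X$ is automatic: kernels of morphisms of sheaves of abelian groups are sheaves, and since $\varphi$ is a morphism of sheaves of rings, its kernel is a sheaf of ideals. The sequence
\[
0\longrightarrow \N \longrightarrow \O_X\stackrel{\varphi}{\longrightarrow}\O_{X_{\red}}\longrightarrow 0
\]
is then exact on the left and in the middle by construction. For right-exactness, it is enough to check surjectivity of $\varphi$ on stalks: for $\mathfrak{P}\in X$, the stalks are $\O_{X,\mathfrak{P}}=R_\mathfrak{P}$ and $\O_{X_{\red},\mathfrak{P}}=(R_{\red})_\mathfrak{P}$. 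Since localization commutes with reduction, i.e.\ $(R_{\red})_\mathfrak{P}\cong (R_\mathfrak{P})_{\red}=R_\mathfrak{P}/\nil(R_\mathfrak{P})$, the stalk of $\varphi$ is just the canonical surjection $R_\mathfrak{P}\twoheadrightarrow R_\mathfrak{P}/\nil(R_\mathfrak{P})$, which takes care of right-exactness and simultaneously identifies $\N_\mathfrak{P}=\nil(R_\mathfrak{P})=\nil(\O_{X,\mathfrak{P}})$.

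Finally, I need to reconcile the kernel description with the definition $\N(U)=\nil(\O_X(U))$ given in the statement. On a distinguished affine open $U=D(f)$ this is straightforward: $\N(D(f))=\ker(R_f\to (R_f)_{\red})=\nil(R_f)$, so the two descriptions agree on a basis of the topology, and hence determine the same sheaf. The mildly delicate point, and the one I would spell out carefully, is the equality on an \emph{arbitrary} open $U$: a global section of $\N$ over $U$ is a section of $\O_X(U)$ that is nilpotent in every stalk, and one has to pass from pointwise nilpotency to a uniform exponent of nilpotency in $\O_X(U)$. For $X=\Spec R$ noetherian and $U$ quasi-compact this is immediate because $\nil(R)$ is finitely generated, so a uniform exponent exists globally; together with the (trivial) containment $\nil(\O_X(U))\subseteq \N(U)$ this gives the claimed equality, and completes the proof.
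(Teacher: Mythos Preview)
Your argument is correct and complete. The paper itself states this proposition without proof, treating it as standard; your route via the kernel of $\O_X\to\O_{X_{\red}}$, together with the verification that the presheaf $U\mapsto\nil(\O_X(U))$ agrees with this kernel on arbitrary opens (using noetherianity for a uniform nilpotency exponent), is exactly the kind of justification the paper omits.
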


\begin{definition}
    Let $\N$ be the sheaf of nilpotents. We denote by $1+\N$ the sheaf $(1+\N)(U)=1+\nil(\O_X(U))$ of the \emph{unipotent elements}.\footnote{Recall that an element $P\in R$ is unipotent if $P-1$ is nilpotent, see for example \cite[Section 6]{borel1956Groupes}.}
\end{definition}

\begin{proposition}\label{proposition:exact-sequence-units-1+nilpotents-reduction}
    $1+\N$ is a sheaf of abelian groups that fits into a short exact sequence
    \begin{equation*}
    \begin{tikzcd}[baseline=(current  bounding  box.center),
    /tikz/column 1/.append style={anchor=base east},
    /tikz/column 2/.append style={anchor=base west}, 
    row sep = 0pt]
    1\rar &1+\N \rar& \O^*_X\rar &\O^*_{X_{\red}}\rar&1
    \end{tikzcd}
    \end{equation*}
\end{proposition}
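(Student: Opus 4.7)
The plan is to build the claimed short exact sequence starting from the standard one for the structure sheaves and then restricting to units. The key input from earlier in the chapter is the already-established exact sequence
\[
0 \longrightarrow \N \longrightarrow \O_X \longrightarrow \O_{X_{\red}} \longrightarrow 0,
\]
together with the fact that $\N$ is a sheaf of ideals and $\O_{X_{\red}}$ is the structure sheaf of the reduction.

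First, I would verify that $1+\N$ is a sheaf of abelian groups. For each open $U$, the set $1+\N(U)$ is a subgroup of $\O_X^*(U)$: if $n, n' \in \N(U)$ then $(1+n)(1+n') = 1 + (n+n'+nn')$, and the element in parentheses is again nilpotent because sums and products of sections of $\N$ lie in $\N$; inverses exist as the (locally finite) Neumann series $(1+n)^{-1} = \sum_{k\geq 0} (-n)^k$, which terminates on any affine open where $n$ has bounded nilpotency index. The restriction maps of $\O_X^*$ preserve this subgroup because they preserve $\N$, so $1+\N$ is a subpresheaf of $\O_X^*$; it is in fact a sheaf since it will appear as the kernel of a sheaf map (which is automatic), or directly because the sheaf axioms for $\O_X$ and $\N$ transfer.

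Next, the quotient morphism $\pi \colon \O_X \twoheadrightarrow \O_{X_{\red}}$ sends units to units, giving a morphism of sheaves of abelian groups $\pi^* \colon \O_X^* \to \O_{X_{\red}}^*$. Its kernel consists, on each $U$, of those $u \in \O_X^*(U)$ with $\pi(u)=1$, i.e.\ $u - 1 \in \N(U)$; this is exactly $1+\N(U)$. Thus
\[
1 \longrightarrow 1+\N \longrightarrow \O_X^* \longrightarrow \O_{X_{\red}}^*
\]
is exact as sheaves of groups.

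The main obstacle, and the only nontrivial step, is surjectivity of $\pi^*$ as a morphism of sheaves, since $\pi$ need not be surjective on sections. I would prove surjectivity on stalks: fix $x \in X$ and $\bar u \in \O_{X_{\red},x}^*$ with inverse $\bar v$. Lift both to $u, v \in \O_{X,x}$ via the surjection $\O_{X,x} \to \O_{X_{\red},x}$, whose kernel is $\N_x = \nil(\O_{X,x})$. Then $uv - 1 \in \N_x$, so $uv = 1 + n$ with $n$ nilpotent; since $1+n$ is a unit in $\O_{X,x}$ (inverse given by the terminating geometric series), $u$ is a unit with inverse $v(1+n)^{-1}$, and $\pi^*(u) = \bar u$. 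This shows $\pi^*_x$ is surjective for every $x$, hence $\pi^*$ is an epimorphism of sheaves, completing the exact sequence.
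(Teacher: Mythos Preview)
Your proof is correct and follows essentially the same approach as the paper: identify $1+\N$ as the kernel of the induced map $\O_X^* \to \O_{X_{\red}}^*$, and prove surjectivity by lifting a unit and observing that any preimage is again a unit since it differs from $1$ times a unit by a nilpotent. Your treatment is in fact more careful than the paper's---you explicitly verify the group structure on $1+\N$ and argue surjectivity on stalks, whereas the paper's proof is terser and leaves these details implicit.
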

\begin{proof}
    Call $\pi_{\red}$ the map on the right. It is surjective because it is induced by the corresponding surjective map $\O_X\longrightarrow\O_{X_{\red}}$, where a unit on the right has a preimage in $R$, what is again a unit.
    
    Every section of $1+\N$ is a section of $\ker(\pi_{\red})$, so $1+\N\hookrightarrow\ker(\pi_{\red})$. Moreover, this inclusion is an isomorphism because, for every open subset $U$ of $X$, the map $\O_X(U)\rightarrow\O_{X_{\red}}(U)$ kills only the nilpotents, so there is a reversed inclusion $\ker(\pi_{\red})\hookrightarrow1+\N$.
\end{proof}

\begin{lemma}\label{lemma:1+N^m-1-coherent}
    Let $\nil(R)$ be the nilradical of a noetherian ring $R$. Then there exist $m\in \NN$ such that $\nil(R)^n=0$ for any $n\geq m$ and an isomorphism of sheaves of abelian groups
    \begin{equation*}
    \begin{tikzcd}[baseline=(current  bounding  box.center),
    /tikz/column 1/.append style={anchor=base east},
    /tikz/column 2/.append style={anchor=base west}, 
    row sep = 0pt]
    \N^{m-1} \rar["\sim"]& 1+\N^{m-1}
    \end{tikzcd}
    \end{equation*}
\end{lemma}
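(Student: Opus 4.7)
The plan is to split the statement into two parts: first, the existence of $m$ such that $\nil(R)^n=0$ for $n\geq m$; second, the construction of the sheaf isomorphism.

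For the first part, I would use that $R$ is noetherian, so $\nil(R)$ is finitely generated, say by nilpotent elements $r_1,\dots,r_k$ with $r_i^{e_i}=\infty$ (i.e.\ $=0$). Taking $m$ large enough (for instance $m \geq \sum(e_i-1)+1$), every monomial of degree $m$ in the $r_i$ vanishes, so $\nil(R)^m=0$, and hence $\nil(R)^n=0$ for all $n\geq m$. The same argument applies to every localization $R_f$, because $\nil(R_f)=\nil(R)\cdot R_f$ in the noetherian setting, which gives $\nil(\O_X(D(f)))^m=0$ and in particular $\N^m=0$ as a sheaf of ideals on $X$.

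For the second part, I would define, on every affine open $D(f)$, the map
\[
\varphi_{D(f)} : \N^{m-1}(D(f)) \longrightarrow (1+\N^{m-1})(D(f)), \qquad n \longmapsto 1+n.
\]
The key algebraic observation is that for $n_1,n_2\in\nil(R_f)^{m-1}$ one has $n_1 n_2\in\nil(R_f)^{2(m-1)}\subseteq \nil(R_f)^m=0$ (assuming $m\geq 2$; the case $m\leq 1$ is trivial since then $R$ is reduced and $1+\N^{m-1}$ is either trivial or equals the full sheaf of units in a degenerate way, handled separately). Consequently
\[
(1+n_1)(1+n_2) = 1 + n_1 + n_2 + n_1 n_2 = 1+(n_1+n_2),
\]
so $\varphi_{D(f)}$ is a homomorphism from the additive group to the multiplicative group. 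It is injective because $1+n=1$ forces $n=0$, and surjective by the very definition of $(1+\N^{m-1})(D(f))$; an explicit inverse is given by $1+n\mapsto n$, and multiplicatively one even has $(1+n)^{-1}=1-n$ thanks to the same vanishing $n^2=0$.

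The maps $\varphi_{D(f)}$ are clearly compatible with restriction, so they glue to a morphism of sheaves of abelian groups $\varphi:\N^{m-1}\to 1+\N^{m-1}$ whose inverse is constructed in the same local fashion; by the stalk-local criterion for sheaf isomorphism (or directly, since we checked bijectivity on a basis of opens), $\varphi$ is an isomorphism. The main, and really only, subtlety is keeping track of the bound $2(m-1)\geq m$ to ensure that quadratic terms in the expansion of $(1+n_1)(1+n_2)$ actually vanish; once this is in place the rest is formal.
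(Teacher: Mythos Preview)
Your proposal is correct and follows essentially the same route as the paper: use noetherianity to bound the nilpotency index, then observe that $f\mapsto 1+f$ is a group homomorphism because the cross term $fg$ lands in a power of the nilradical that vanishes. If anything, you are more careful than the paper, which asserts $fg\in\nil(R)^m$ directly without making explicit the inequality $2(m-1)\geq m$ (equivalently $m\geq 2$) that you rightly flag; the paper also works only at the level of global sections and then passes to sheaves in one line, whereas you spell out the compatibility with restrictions on the basis $\{D(f)\}$.
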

\begin{proof} The existence of $m$ that annihilates $\N$ is trivial, because for each section there exists a finite $m$ that annihilates it and our ring is noetherian, so we can just look at the generators of the nilradical. For the second claim, there exists an isomorphism of groups%
    \begin{equation*}
    \begin{tikzcd}[baseline=(current  bounding  box.center),
    /tikz/column 1/.append style={anchor=base east},
    /tikz/column 2/.append style={anchor=base west}, 
    row sep = 0pt]
    \nil(R)^{m-1} \rar["\sim"]& 1+\nil(R)^{m-1}\\
    f\rar[mapsto]& (1+f)
    \end{tikzcd}
    \end{equation*}%
    where the operation on the left is the sum and on the right is the multiplication. Indeed, for any two elements in $1+\nil(R)$ we have that $(1+f)(1+g)=(1+f+g+fg)$ but the latter element is $0$ because $fg\in\nil(R)^m=0$.\\
    This induces the wanted isomorphism of the sheaves $1+\N^{m-1}\cong \N^{m-1}$.
\end{proof}

\begin{remark}
    As sheaves of set $\N^k\cong 1+\N^k$ for all $k\geq 1$, via the map above. However, for $k\neq m-1$ this is not a map of sheaves of abelian groups, unless $\N^2=0$, because this is the only case in which $f\cdot g$ would be zero for all $f, g$ sections of $\N$.
\end{remark}

\begin{theorem}\label{theorem:1+N-acyclic}
    $\H^i(X, 1+\N)=0$ for all $i\geq 1$ and $X=\Spec R$.
\end{theorem}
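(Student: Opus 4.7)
The plan is to exploit the finiteness of the nilradical on a noetherian affine scheme by building up $1+\N$ from subsheaves $1+\N^k$ whose successive quotients are quasi-coherent, hence acyclic on $X=\Spec R$.

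First, by Lemma~\ref{lemma:1+N^m-1-coherent} there exists $m\in\NN$ with $\N^m=0$, so we obtain a finite descending filtration by sheaves of abelian groups
\[
1+\N \;\supseteq\; 1+\N^2 \;\supseteq\; \cdots \;\supseteq\; 1+\N^{m-1} \;\supseteq\; 1+\N^m = 1.
\]
Each $1+\N^k$ is a subsheaf of groups of $1+\N$ because $(1+f)^{-1}=1-f+f^2-\cdots$ terminates in $\N^k$ when $f\in\N^k$. The key observation is that, for $k\geq 1$, the map $1+f\longmapsto f\bmod\N^{k+1}$ defines an isomorphism of sheaves of abelian groups
\[
\faktor{1+\N^k}{1+\N^{k+1}} \;\stackrel{\sim}{\longrightarrow}\; \faktor{\N^k}{\N^{k+1}},
\]
since $(1+f)(1+g)=1+(f+g)+fg$ and $fg\in\N^{2k}\subseteq\N^{k+1}$ whenever $k\geq 1$. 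The base case $k=m-1$ of this calculation is exactly the isomorphism of Lemma~\ref{lemma:1+N^m-1-coherent}.

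Next I would run a downward induction on $k$ starting at $k=m-1$ to show that $\H^i(X,1+\N^k)=0$ for all $i\geq 1$. For the base case, $1+\N^{m-1}\cong\N^{m-1}$ as sheaves of abelian groups, and $\N^{m-1}$ is the quasi-coherent sheaf associated to the ideal $\nil(R)^{m-1}\subseteq R$, so it has vanishing higher cohomology on the affine scheme $X$ by the standard Serre vanishing for quasi-coherent sheaves. For the inductive step, the short exact sequence
\[
1 \longrightarrow 1+\N^{k+1} \longrightarrow 1+\N^k \longrightarrow \faktor{\N^k}{\N^{k+1}} \longrightarrow 0
\]
yields a long exact sequence in cohomology; the outer terms vanish in positive degree by inductive hypothesis and by quasi-coherence of $\N^k/\N^{k+1}$ respectively, giving $\H^i(X,1+\N^k)=0$ for all $i\geq 1$. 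Taking $k=1$ concludes the proof.

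The main subtlety to check carefully is that the map $1+f\mapsto f$ really descends to a group homomorphism on the quotients: this relies crucially on $k\geq 1$ so that the cross term $fg$ lies in $\N^{2k}\subseteq\N^{k+1}$, which is exactly why one filters by $\N^k$ rather than attempting a direct argument on $1+\N$ itself. Everything else is a routine application of the long exact sequence and the vanishing of higher cohomology of quasi-coherent sheaves on affine noetherian schemes.
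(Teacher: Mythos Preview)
Your proof is correct and follows essentially the same approach as the paper: filter $1+\N$ by the subsheaves $1+\N^k$, identify each successive quotient with a quasi-coherent sheaf so that it is acyclic on the affine $X$, and run a descending induction via the long exact sequence. Your identification $(1+\N^k)/(1+\N^{k+1})\cong \N^k/\N^{k+1}$ via $1+f\mapsto f$ using $fg\in\N^{2k}\subseteq\N^{k+1}$ is in fact a bit more direct than the paper's detour through the quotient ring $R/\nil(R)^{k+1}$ and its nilpotent sheaf $\mathcal{M}$, but the underlying argument is the same.
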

\begin{proof}
    The first step is to observe again that
    \[
    \Spec R\cong \Spec\faktor{R}{\nil(R)^k}
    \]
    for all $k\geq 1$. So, in particular, for any sheaf it is the same to compute the cohomology on the space on left or on the space on the right.
    The surjective map
    \[
    \begin{tikzcd}[baseline=(current  bounding  box.center),
    /tikz/column 1/.append style={anchor=base east},
    /tikz/column 2/.append style={anchor=base west}, 
    row sep = 0pt]
    \O_R \rar["\pi"]& \O_{\faktor{R}{\nil(R)^{k+1}}}
    \end{tikzcd}
    \]
    has kernel exactly $\N^{k+1}=\widetilde{\nil(R)^{k+1}}$. Let $\mathcal{M}$ be the sheaf of nilpotents of $\O_{\faktor{R}{\nil(R)^{k+1}}}$. Clearly $\mathcal{M}$ is the image of $\N$ via this map. Thanks to the ideal properties, moreover, $\mathcal{M}^k$ is the image of $\N^k$, so $\mathcal{M}^k\cong \faktor{\N^k}{\N^{k+1}}$. The induced map between the groups of units
    \[
    \begin{tikzcd}[baseline=(current  bounding  box.center),
    /tikz/column 1/.append style={anchor=base east},
    /tikz/column 2/.append style={anchor=base west}, 
    row sep = 0pt]
    \O^*_R \rar& \O^*_{\faktor{R}{\nil(R)^{k+1}}}
    \end{tikzcd}
    \]
    has kernel $1+\N^{k+1}$, so this induces an injective map
    \[
    \begin{tikzcd}[baseline=(current  bounding  box.center),
    /tikz/column 1/.append style={anchor=base east},
    /tikz/column 2/.append style={anchor=base west}, 
    row sep = 0pt]
    \faktor{1+\N^k}{1+\N^{k+1}} \rar[hook]& \O^*_{\faktor{R}{\nil(R)^{k+1}}}.
    \end{tikzcd}
    \]
    On the other side, there is a trivial injection
    \[
    \begin{tikzcd}[baseline=(current  bounding  box.center),
    /tikz/column 1/.append style={anchor=base east},
    /tikz/column 2/.append style={anchor=base west}, 
    row sep = 0pt]
    1+\mathcal{M}^k \rar[hook]& \O^*_{\faktor{R}{\nil(R)^{k+1}}}.
    \end{tikzcd}
    \]
    Since $\pi(1+\N^k)\subseteq 1+\mathcal{M}^k$ and has kernel exactly $1+\N^{k+1}$, we get an injective map
    \[
    \begin{tikzcd}[baseline=(current  bounding  box.center),
    /tikz/column 1/.append style={anchor=base east},
    /tikz/column 2/.append style={anchor=base west}, 
    row sep = 0pt]
    \faktor{1+\N^k}{1+\N^{k+1}}\rar[hook] & 1+\mathcal{M}^k
    \end{tikzcd}
    \]
    that fits in the diagram
    \[
    \begin{tikzcd}[baseline=(current  bounding  box.center),
    /tikz/column 1/.append style={anchor=base east},
    /tikz/column 2/.append style={anchor=base west}]
    \faktor{1+\N^k}{1+\N^{k+1}} \arrow[hook]{rr}\drar[hook]&& \O^*_{\faktor{R}{\nil(R)^{k+1}}}\\
    &1+\mathcal{M}^k \urar[hook]&
    \end{tikzcd}
    \]
    
    This map is also surjective, since $\N^k$ maps surjectively on $\mathcal{M}^k$. Clearly $\mathcal{M}^{k+1}=0$, so by Lemma~\ref{lemma:1+N^m-1-coherent} we obtain that $1+\mathcal{M}^k\cong\mathcal{M}^k$. In particular then, for any $i\geq1$,
    \[
    \H^i\left(X, \faktor{1+\N^k}{1+\N^{k+1}}\right)\cong\H^i(X, 1+\mathcal{M}^k)\cong\H^i(X, \mathcal{M}^k)=0
    \]
    the latter because $\mathcal{M}^k$ is coherent on $\Spec\faktor{R}{\nil(R)^{k+1}}$, but we observed at the beginning that cohomology there and on $X$ agree.
    
    Consider now the short exact sequence of sheaves on $X$
    \[
    \begin{tikzcd}[cramped, row sep = 0pt]
    1\rar &  1+\N^{k+1} \rar &  1+\N^k \rar & \faktor{1+\N^k}{1+\N^{k+1}}\rar & 1.
    \end{tikzcd}
    \]
    We just proved that the cohomology of degree at least $1$ of the latter is trivial, so we have that
    \[
    \H^i(X, 1+\N^{k+1})\cong \H^i(X, 1+\N^{k})
    \]
    for all $i\geq 2$ and for all $k\geq 1$. If $i=1$ we get a surjective map
    \[
    \begin{tikzcd}[cramped, row sep = 0pt]
    \rar&  \H^1(1+\N^{k+1}) \rar &  \H^1(1+\N^k) \rar & 0.
    \end{tikzcd}
    \]
    Now we apply Lemma~\ref{lemma:1+N^m-1-coherent}, and for the $m$ such that $\N^m=0$, we obtain that its cohomology vanishes.
    We can then easily apply a descending induction on the power of this sheaf of ideals to get that
    \[
    \H^i(X, 1+\N^k)=0
    \]
    for all $i\geq 1$ and for all $k\geq 1$.
\end{proof}

\begin{remark}
    Another proof of the previous theorem can be deduced in characteristic 0 from \cite[Lemma 8.43]{bruns2009polytopes}, because in this situation $\nil(R)\cong 1+\nil(R)$ via exponentiation and logarithm, so the sheaf $1+\N$ becomes isomorphic to a sheaf of ideals, that is a coherent sheaf, thus proving that higher cohomology vanishes. It is worth noting that we do not prove that the sheaf is coherent in other characteristics, but just that cohomology in higher degrees vanishes.
\end{remark}

\begin{remark}
    From Proposition~\ref{proposition:exact-sequence-units-1+nilpotents-reduction} and the previous Theorem we obtain that, on an affine scheme $X$,
    \[
    \H^i(X, \O^*_X)\cong\H^i(X, \O^*_{X_{\red}}).
    \]
    In some cases, like when the reduction is a normal toric variety, or when we consider a multiple affine space, this is zero.
    
    This is not true, obviously, in non-affine situations, like general open subsets or the punctured spectrum that we will investigate further in the next Chapter.
\end{remark}

\begin{corollary}\label{corollary:combinatorial-covering-non-reduced}
    Let $M$ be a binoid. Assume that $\O^*_{\KK[M]_{\red}}$ is acyclic on the covering $\{D(X_j)\}$ of $\Spec^\bullet\KK[M]$. Then we can use the combinatorial covering $\{D(X_j)\}$ to compute the cohomology of the non reduced sheaf $\O^*_{\KK[M]}$ on the punctured spectrum $\Spec^\bullet\KK[M]$ through \v{C}ech cohomology on the same covering.
\end{corollary}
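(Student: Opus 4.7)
The plan is to deduce acyclicity of the covering $\{D(X_j)\}$ for $\O^*_{\KK[M]}$ from the assumed acyclicity for $\O^*_{\KK[M]_{\red}}$, by using the short exact sequence of Proposition~\ref{proposition:exact-sequence-units-1+nilpotents-reduction} together with the vanishing result of Theorem~\ref{theorem:1+N-acyclic}. Once acyclicity of the combinatorial covering is established, Leray's Theorem~\ref{theorem:leray} immediately gives the desired isomorphism between sheaf and \v{C}ech cohomology.

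Concretely, I would first note that any finite intersection of members of the combinatorial covering has the form
\[
V = D(X_{j_0}) \cap \dots \cap D(X_{j_p}) = D(X_{j_0} + \dots + X_{j_p}) \cong \Spec \KK[M]_{X_{j_0}\cdots X_{j_p}},
\]
which is again an affine scheme of the same type (the localisation of a binoid algebra at a monomial is again a binoid algebra), and moreover it is itself a combinatorial affine open subset. Consequently the hypothesis that $\{D(X_j)\}$ is acyclic for $\O^*_{\KK[M]_{\red}}$ yields $\H^i(V, \O^*_{\KK[M]_{\red}}) = 0$ for all $i \geq 1$ on every such intersection, while Theorem~\ref{theorem:1+N-acyclic} (applied to the affine scheme $V$) yields $\H^i(V, 1+\N) = 0$ for all $i \geq 1$.

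Next, I would restrict the short exact sequence
\[
\begin{tikzcd}[cramped, row sep = 0pt]
1 \rar & 1+\N \rar & \O^*_{\KK[M]} \rar & \O^*_{\KK[M]_{\red}} \rar & 1
\end{tikzcd}
\]
to $V$ and take the associated long exact sequence of cohomology. For each $i \geq 1$, the groups flanking $\H^i(V, \O^*_{\KK[M]})$ are both zero by the two vanishing results above, so $\H^i(V, \O^*_{\KK[M]}) = 0$ on every finite intersection $V$ of members of the covering. This is exactly the statement that $\{D(X_j)\}$ is acyclic for $\O^*_{\KK[M]}$ on $\Spec^\bullet \KK[M]$.

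Finally I would invoke Leray's Theorem~\ref{theorem:leray} to conclude that the natural maps
\[
\vH^p\bigl(\{D(X_j)\}, \O^*_{\KK[M]}\bigr) \longrightarrow \H^p\bigl(\Spec^\bullet \KK[M], \O^*_{\KK[M]}\bigr)
\]
are isomorphisms for every $p \geq 0$, which is the conclusion of the corollary. There is essentially no hard step here, since everything has been prepared by the earlier results; the only mild subtlety is checking that intersections of combinatorial open subsets remain of the same combinatorial form so that the assumption on the covering really applies to every term of the \v{C}ech complex, but this is straightforward from $D(X_{j_0}) \cap \dots \cap D(X_{j_p}) = D(X_{j_0}\cdots X_{j_p})$.
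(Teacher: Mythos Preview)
Your proof is correct and is precisely the argument the paper has in mind: the corollary is stated without explicit proof, but it is meant to follow immediately from the preceding Remark (which combines Proposition~\ref{proposition:exact-sequence-units-1+nilpotents-reduction} and Theorem~\ref{theorem:1+N-acyclic} to get $\H^i(V,\O^*_X)\cong\H^i(V,\O^*_{X_{\red}})$ on affine $V$) together with Leray's Theorem. One small notational slip: in the ring you should write $D(X_{j_0}\cdots X_{j_p})$ rather than $D(X_{j_0}+\dots+X_{j_p})$, as you correctly do later.
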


\begin{remark}
    In the next Chapter we compute explicitly the cohomology of $\O^*$ in the Stanley-Reisner case. At first we prove that $D(X_j)$ is an acyclic covering for this sheaf. The previous result implies that this covering is also acyclic for any other monomial ideal, since the radical of a monomial ideal is the Stanley-Reisner ideal of some simplicial complex $\triangle$, and
    \[
    \left(\faktor{\KK[X_1, \dots, X_n]}{I}\right)_{\red}\cong\faktor{\KK[X_1, \dots, X_n]}{\sqrt{I}}.
    \]
\end{remark}

\afterpage{\null\newpage}

\chapter{Stanley-Reisner rings}\label{chapter:SR-rings}
In this Chapter, we address the problem of computing the local Picard group of a Stanley-Reisner ring, through the study of the cohomology of the sheaf of units $\O^*$ in the Zariski topology. While doing so, we will also look at cohomology of higher degrees of the sheaf of units, and we will give some combinatorial description of this cohomology, in a fashion similar to what we did in Chapter~\ref{chapter:simplcial-binoids}.
In order to describe this cohomology group, we first prove that $\H^i(\KK[\triangle], \O^*)=0$ and $\H^i(\KK[\triangle][x, x^{-1}], \O^*)=0$ for $i\geq 1$. We use then this result in a Corollary to give a combinatorial \v{C}ech covering to compute cohomology of $\O^*$ on $\Spec^\bullet(\KK[\triangle])$ and this will ultimately allow us to give explicit formulas similar to the ones in the combinatorial case.
Lastly, we will look at the non-reduced monomial case and we will apply results from the last section of Chapter~\ref{chapter:injections} in order to get some explicit results.

\section{The Spectrum of a Stanley-Reisner ring}
\subsection{Irreducible components}
Let $\triangle$ denote a simplicial complex on the finite set $V$ of vertices.

\begin{lemma}\label{Lemma:correspondencefacessubsetsofspec}
    There is a bijective correspondence between the faces of the complex $\triangle$ and the reduced, irreducible linear coordinate subspaces contained in $\Spec \glslink{SRring}{\KK[\triangle]}$. This correspondence is dimension-preserving, in the sense that the dimension of the irreducible linear coordinate subspace is the dimension of the face associated to it plus one.
\end{lemma}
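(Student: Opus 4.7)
The plan is to exhibit the bijection explicitly by passing through the prime ideals of $\KK[X_1,\ldots,X_n]$ generated by subsets of the variables, since these are precisely the ideals that cut out linear coordinate subspaces of $\AA^n_\KK$, and the quotient $\KK[X_i: i \in F]$ by such an ideal is a polynomial ring (hence integral domain), confirming the subspace is automatically reduced and irreducible of dimension $|F|$.

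First I would set up the correspondence. Given a subset $T \subseteq [n]$, let $F = [n]\smallsetminus T$ and consider the linear coordinate subspace $L_F = V(X_j : j \in T) \subseteq \AA^n_\KK$. Since $L_F \cong \Spec \KK[X_i : i \in F] \cong \AA^{|F|}_\KK$, it is reduced, irreducible, of dimension $|F|$. Moreover, every linear coordinate subspace of $\AA^n_\KK$ arises in this form for a unique $T$ (equivalently, a unique $F$), so the set of linear coordinate subspaces is in natural bijection with $\mathcal{P}([n])$.

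Next I would identify which of these subspaces are contained in $\Spec\KK[\triangle] = V(I_\triangle) \subseteq \AA^n_\KK$. The containment $L_F \subseteq V(I_\triangle)$ is equivalent to the ideal inclusion $I_\triangle \subseteq \langle X_j : j \in T\rangle$. Since $I_\triangle$ is generated by the squarefree monomials $\prod_{i \in G} X_i$ with $G$ a non-face of $\triangle$, and such a monomial lies in $\langle X_j : j \in T\rangle$ iff $G \cap T \neq \varnothing$ iff $G \not\subseteq F$, the inclusion $I_\triangle \subseteq \langle X_j : j \in T\rangle$ holds iff no non-face is contained in $F$, iff every subset of $F$ is a face, iff $F \in \triangle$. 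This is the main computational step; it is routine but is the heart of the argument.

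Combining the two steps yields the desired bijection $F \longleftrightarrow L_F$ between faces of $\triangle$ and reduced irreducible linear coordinate subspaces contained in $\Spec\KK[\triangle]$. The dimension statement follows immediately since $\dim L_F = |F| = \dim F + 1$ by the usual convention that the dimension of a face equals its cardinality minus one. I do not foresee a serious obstacle: the only subtle point is the direction showing that no other reduced irreducible linear coordinate subspaces arise, which is handled by observing that a coordinate subspace is determined by the set of coordinates it annihilates, so the parametrization by subsets $T \subseteq [n]$ (equivalently $F \subseteq [n]$) is exhaustive.
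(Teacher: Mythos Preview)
Your proof is correct and follows essentially the same route as the paper: both establish the bijection $F \leftrightarrow V(X_j : j \notin F)$ and check that this linear coordinate subspace lies in $\Spec\KK[\triangle]$ precisely when $F$ is a face. The only difference is packaging: the paper factors the argument through the binoid framework developed earlier (faces $\leftrightarrow$ prime ideals of $M_\triangle$ via Remark~\ref{remark:boettger:correspondence-faces-prime-ideals}, then $\p \mapsto \KK[\p]$ via Lemma~\ref{lemma:prime-in-M-iff-prime-in-KM}), whereas you carry out the containment check $I_\triangle \subseteq \langle X_j : j \in T\rangle$ directly at the level of monomial generators. Your version is slightly more self-contained; the paper's version ties the lemma into the thesis's binoid machinery.
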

\begin{proof}
    Faces of the simplicial complex correspond bijectively to prime ideals of the simplicial binoid via the identification stated before $F\mapsto\p:=\langle V\smallsetminus F\rangle$ (in Remark~\ref{remark:boettger:correspondence-faces-prime-ideals}).
    Prime ideals of the binoid injects to the prime ideals of the algebra via $\p\mapsto \KK[\p]$, where $\KK[\p]$ is the ideal generated by the elements corresponding to the generators of $\p$, as stated in Lemma~\ref{lemma:prime-in-M-iff-prime-in-KM}.
    Clearly $\KK[\p]$ defines a reduced and irreducible linear coordinate subspace $\mathcal{V}(\p)$, being just generated by a subset of the variables.
    
    On the other side, consider a reduced and irreducible linear coordinate subspace of $\Spec(\KK[\triangle])$. Its defining ideal is prime and generated by some of the variables. We can map it back to a prime ideal in the simplicial binoid and, thanks again to the map of Remark~\ref{remark:boettger:correspondence-faces-prime-ideals}, we get a face $F\in\triangle$.
    
    The dimension-preserving part is trivial, since such a subspace of $\AA^n$ of dimension $d$ is the zero locus of a prime ideal generated by $n-d$ variables.
\end{proof}

\begin{lemma}\label{Lemma:correspondencefacetsirreduciblecomponents}
    The irreducible components of $\Spec(\KK[\triangle])$ correspond bijectively to the facets of $\triangle$. Indeed, this spectrum is a union of irreducible coordinate linear subspaces defined by the facets in the correspondence of Lemma~\ref{Lemma:correspondencefacessubsetsofspec}.
\end{lemma}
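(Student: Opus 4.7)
The plan is to combine the correspondence of Lemma~\ref{Lemma:correspondencefacessubsetsofspec} with the standard primary decomposition of the Stanley-Reisner ideal. First I would note that the correspondence $F \mapsto \mathcal{V}(\p_F)$, with $\p_F = \langle X_i \mid i \in V \smallsetminus F\rangle$, is inclusion-reversing: $F \subseteq G$ holds if and only if $\p_F \supseteq \p_G$, equivalently $\mathcal{V}(\p_F) \subseteq \mathcal{V}(\p_G)$. So the maximal subspaces in the image of this correspondence are precisely those coming from facets, and their corresponding primes $\KK[\p_F]$ are minimal among the primes of the form $\KK[\p_G]$.

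Next I would establish the key algebraic identity
\[
I_\triangle = \bigcap_{F \text{ facet of } \triangle} \KK[\p_F],
\]
where $I_\triangle$ is the Stanley-Reisner ideal (the kernel of the presentation $\KK[X_1,\dots,X_n] \twoheadrightarrow \KK[\triangle]$). Since both sides are monomial ideals, it suffices to check containment of monomials: a squarefree monomial $X_{a_1}\cdots X_{a_k}$ lies in $\KK[\p_F]$ exactly when $\{a_1,\dots,a_k\} \nsubseteq F$, hence it lies in the intersection exactly when $\{a_1,\dots,a_k\}$ is contained in no facet, i.e.\ when it is a non-face, i.e.\ when it lies in $I_\triangle$. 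The non-squarefree case reduces immediately to the squarefree one since both ideals are radical. Taking $\mathcal{V}(\cdot)$ of this identity yields
\[
\Spec(\KK[\triangle]) = \mathcal{V}(I_\triangle) = \bigcup_{F \text{ facet of } \triangle} \mathcal{V}(\KK[\p_F]),
\]
which already proves the claim that the spectrum is the union of the coordinate subspaces indexed by facets.

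Finally, I would conclude that this union is the decomposition into irreducible components. Each $\mathcal{V}(\KK[\p_F])$ is irreducible by Lemma~\ref{Lemma:correspondencefacessubsetsofspec} (or directly because $\KK[\p_F]$ is prime by Lemma~\ref{lemma:prime-in-M-iff-prime-in-KM}). The decomposition is irredundant because if $\mathcal{V}(\KK[\p_F]) \subseteq \mathcal{V}(\KK[\p_G])$ then $\p_G \subseteq \p_F$, hence $F \subseteq G$, contradicting the fact that distinct facets are incomparable under inclusion. Bijectivity between facets and irreducible components is then immediate.

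The main step requiring actual work is the primary decomposition identity above; the rest is bookkeeping using the correspondence already recorded in Lemma~\ref{Lemma:correspondencefacessubsetsofspec} and the order-reversing dictionary between faces and prime ideals from Remark~\ref{remark:boettger:correspondence-faces-prime-ideals}. No delicate argument is needed, as the whole lemma is a translation between the combinatorics of maximal faces and the algebra of minimal primes over a radical monomial ideal.
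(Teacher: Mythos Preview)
Your proof is correct. The paper actually states this lemma without proof, treating it as a standard fact about Stanley-Reisner rings (the primary decomposition $I_\triangle = \bigcap_{F \text{ facet}} \langle X_i \mid i \notin F\rangle$ is classical; see e.g.\ Bruns--Herzog or Miller--Sturmfels). Your argument via this decomposition, together with the inclusion-reversing dictionary from Lemma~\ref{Lemma:correspondencefacessubsetsofspec} and Remark~\ref{remark:boettger:correspondence-faces-prime-ideals}, is exactly the standard justification and fills the gap cleanly.
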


\begin{lemma}\label{Lemma:descriptionofcomponentoffacet}
    Let $F$ be a facet of $\triangle$. The irreducible component defined by $F$ is $\Spec(\KK[\glslink{closure-face}{\mathcal{P}(F)}])$, where $\mathcal{P}(F)$ is the power set of $F$, i.e.\ the $\subseteq$-closure of $F$ in $\triangle$. In particular, since $\mathcal{P}(F)$ is a simplex, this irreducible component is an affine space $\AA^{\#F}$.
\end{lemma}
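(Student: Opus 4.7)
The plan is to combine the correspondences established in Lemmata~\ref{Lemma:correspondencefacessubsetsofspec} and~\ref{Lemma:correspondencefacetsirreduciblecomponents} with a direct computation of the quotient ring. By Lemma~\ref{Lemma:correspondencefacetsirreduciblecomponents} the irreducible component associated to the facet $F$ is the closed subscheme $\mathcal{V}(\KK[\p_F])$, where by Remark~\ref{remark:boettger:correspondence-faces-prime-ideals} and Lemma~\ref{Lemma:correspondencefacessubsetsofspec} the corresponding prime ideal is $\KK[\p_F] = (X_i \mid i \in V \smallsetminus F)$ inside $\KK[\triangle]$. So the irreducible component is $\Spec \left(\KK[\triangle]/(X_i \mid i \in V \smallsetminus F)\right)$, and the task reduces to identifying this quotient with $\KK[\mathcal{P}(F)]$.

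The next step is to compute the quotient explicitly. Write $\KK[\triangle] = \KK[X_1, \dots, X_n]/I_\triangle$, where $I_\triangle$ is generated by the squarefree monomials $X_{i_1}\cdots X_{i_k}$ indexed by the minimal non-faces of $\triangle$. Killing the variables $X_i$ with $i \notin F$ sends every such generator of $I_\triangle$ that involves at least one vertex outside $F$ to zero, so those relations become trivial; the only surviving generators would come from non-faces of $\triangle$ entirely contained in $F$. The key combinatorial observation is that no such non-face exists: because $F$ is a facet and $\triangle$ is subset-closed, every subset of $F$ lies in $\triangle$, i.e.\ $\mathcal{P}(F) \subseteq \triangle$. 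Therefore the ideal $I_\triangle$ maps to zero in the quotient, and
\[
\KK[\triangle]/(X_i \mid i \in V \smallsetminus F) \;\cong\; \KK[X_i \mid i \in F],
\]
which is precisely the Stanley-Reisner ring of the simplex $\mathcal{P}(F)$, since its Stanley-Reisner ideal is zero.

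Finally, this identifies the irreducible component with $\Spec \KK[\mathcal{P}(F)] \cong \AA^{\#F}$, giving both the claimed description and the dimension assertion, consistent with the dimension-preserving statement of Lemma~\ref{Lemma:correspondencefacessubsetsofspec}. I do not anticipate any real obstacle here: the only subtle point is the combinatorial observation that $F$ being a facet (in a subset-closed complex) forces $\mathcal{P}(F) \subseteq \triangle$, so that no minimal non-face is contained in $F$ and no non-trivial relation survives the quotient.
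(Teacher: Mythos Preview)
Your proof is correct and follows essentially the same approach as the paper, which simply says the result follows from Lemma~\ref{Lemma:correspondencefacessubsetsofspec} applied to $\mathcal{P}(F)$. You unpack this one-line reference into an explicit computation of the quotient $\KK[\triangle]/(X_i \mid i \in V\smallsetminus F)$, which is helpful; note, however, that the key observation $\mathcal{P}(F)\subseteq\triangle$ only uses that $F$ is a face (via subset-closure), not that it is a facet.
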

\begin{proof}
    This follows from Lemma~\ref{Lemma:correspondencefacessubsetsofspec} applied to $\mathcal{P}(F)$.
\end{proof}

\begin{lemma}\label{Lemma:decompositionofspectrum}
    Let $F$ be a facet of $\triangle$ and $\triangle':=\triangle\smallsetminus\{F\}$. Then \[\Spec(\KK[\triangle])=\Spec(\KK[\triangle'])\cup \Spec(\KK[\mathcal{P}(F)]).\]
\end{lemma}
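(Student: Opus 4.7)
The plan is to reduce the set-theoretic equality to a comparison of irreducible components by invoking Lemma~\ref{Lemma:correspondencefacetsirreduciblecomponents}, which already decomposes $\Spec(\KK[\triangle])$ as a union of the subspaces $\Spec(\KK[\mathcal{P}(F')])$, one for each facet $F'$ of $\triangle$. The goal then becomes grouping the contribution of $F$ (going into $\Spec(\KK[\mathcal{P}(F)])$) separately from the contribution of all other facets (which together should fill out $\Spec(\KK[\triangle'])$).

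First I would check that $\triangle' := \triangle\smallsetminus\{F\}$ is actually a simplicial complex, i.e.\ that it is subset-closed. This is where the hypothesis that $F$ is a facet, and not merely a face, is essential: no proper superset of $F$ lies in $\triangle$, so removing the single set $F$ cannot break the subset-closure of $\triangle$. In the same spirit I would record the elementary fact that for any facet $F'\neq F$ of $\triangle$ one has $F\nsubseteq F'$ (two distinct facets are incomparable), hence every subset of $F'$ still belongs to $\triangle'$; that is, $\mathcal{P}(F')\subseteq\triangle'$.

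The inclusion $\Spec(\KK[\triangle'])\cup\Spec(\KK[\mathcal{P}(F)])\subseteq\Spec(\KK[\triangle])$ is immediate: both $\triangle'$ and $\mathcal{P}(F)$ are subcomplexes of $\triangle$, and a subcomplex $\triangle''\subseteq\triangle$ produces more non-faces, hence a larger Stanley-Reisner ideal $I_\triangle\subseteq I_{\triangle''}$, and therefore a smaller closed subscheme. For the reverse inclusion, I would take any $\mathfrak{P}\in\Spec(\KK[\triangle])$ and apply Lemma~\ref{Lemma:correspondencefacetsirreduciblecomponents} to locate $\mathfrak{P}$ inside some irreducible component $\Spec(\KK[\mathcal{P}(F')])$ for a facet $F'$ of $\triangle$. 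If $F'=F$, then $\mathfrak{P}\in\Spec(\KK[\mathcal{P}(F)])$ and we are done. Otherwise $F'\neq F$, and by the observation above $\mathcal{P}(F')\subseteq\triangle'$, which yields $I_{\triangle'}\subseteq I_{\mathcal{P}(F')}$ and hence $\Spec(\KK[\mathcal{P}(F')])\subseteq\Spec(\KK[\triangle'])$, placing $\mathfrak{P}$ in the first piece.

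There is no serious obstacle here; this is really a bookkeeping step preparing the induction used in subsequent results. The only point that genuinely requires attention is the use of the hypothesis that $F$ is a facet in two places: to guarantee that $\triangle'$ is still a simplicial complex, and to guarantee $\mathcal{P}(F')\subseteq\triangle'$ for the other facets. Both fail for arbitrary faces, so the statement would be false if $F$ were merely a face.
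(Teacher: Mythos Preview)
Your proposal is correct and follows essentially the same approach as the paper: the paper's proof is a one-line appeal to Lemma~\ref{Lemma:correspondencefacetsirreduciblecomponents} and Lemma~\ref{Lemma:descriptionofcomponentoffacet}, and you have simply unpacked that appeal carefully, including the verification that $\triangle'$ is still a simplicial complex and that each facet $F'\neq F$ satisfies $\mathcal{P}(F')\subseteq\triangle'$. Your added remarks about where the facet hypothesis is genuinely used are accurate and helpful, but there is no real divergence in method.
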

\begin{proof}
    This follows from Lemma~\ref{Lemma:correspondencefacetsirreduciblecomponents} and Lemma~\ref{Lemma:descriptionofcomponentoffacet}.
\end{proof}

\begin{lemma}\label{Lemma:intersectionofspecs}
    Let $F$ be a facet of $\triangle$, $\triangle'=\triangle\smallsetminus\{F\}$ and $\triangle''=\triangle'\cap\mathcal{P}(F)$. Then \[\Spec(\KK[\triangle''])=\Spec(\KK[\triangle'])\cap\Spec(\KK[\mathcal{P}(F)]).\]
\end{lemma}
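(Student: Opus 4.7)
The plan is to recognise $\Spec(\KK[\triangle'])$, $\Spec(\KK[\mathcal{P}(F)])$ and $\Spec(\KK[\triangle''])$ as three closed subschemes of the ambient affine space $\AA^{|V|}=\Spec(\KK[x_v:v\in V])$ (using Lemma~\ref{Lemma:correspondencefacessubsetsofspec} and its consequences), and to observe that the intersection on the right-hand side is the scheme-theoretic intersection inside this $\AA^{|V|}$. In terms of ideals this reduces the statement to the purely combinatorial identity
\[
I_{\triangle''}=I_{\triangle'}+I_{\mathcal{P}(F)}
\]
between Stanley--Reisner ideals in $\KK[x_v:v\in V]$.

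First I would prove the following general fact: for any two simplicial complexes $\triangle_1,\triangle_2$ on the common vertex set $V$ one has $I_{\triangle_1\cap\triangle_2}=I_{\triangle_1}+I_{\triangle_2}$. The inclusion $\supseteq$ is immediate, since any non-face of $\triangle_i$ is a non-face of $\triangle_1\cap\triangle_2$ and so its squarefree monomial lies in $I_{\triangle_1\cap\triangle_2}$. For the reverse inclusion, a subset $G\subseteq V$ fails to be a face of $\triangle_1\cap\triangle_2$ exactly when $G\notin\triangle_1$ or $G\notin\triangle_2$; splitting the squarefree generators of $I_{\triangle_1\cap\triangle_2}$ according to which of the two conditions holds shows that each generator lies in $I_{\triangle_1}+I_{\triangle_2}$.

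Specialising to $\triangle_1=\triangle'$ and $\triangle_2=\mathcal{P}(F)$, so that $\triangle_1\cap\triangle_2=\triangle''$ by definition, this yields $I_{\triangle''}=I_{\triangle'}+I_{\mathcal{P}(F)}$. Translating back to the geometric picture, the standard identity
\[
\Spec\!\bigl(R/(I_1+I_2)\bigr)=\Spec(R/I_1)\cap\Spec(R/I_2)
\]
inside $\Spec R$ (applied to $R=\KK[x_v:v\in V]$) gives the claimed equality of subschemes of $\AA^{|V|}$.

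There is no real obstacle here; the main point to check carefully is that the three Stanley--Reisner rings are being viewed as quotients of the \emph{same} polynomial ring $\KK[x_v:v\in V]$, so that taking intersections makes unambiguous sense. Since Stanley--Reisner ideals are radical, the scheme-theoretic intersection coincides with the set-theoretic one, and the same identity can equivalently be read off by combining Lemmata~\ref{Lemma:correspondencefacessubsetsofspec}--\ref{Lemma:decompositionofspectrum} with the bijection between faces of $\triangle_1\cap\triangle_2$ and pairs of coinciding coordinate subspaces in $\Spec(\KK[\triangle_1])$ and $\Spec(\KK[\triangle_2])$.
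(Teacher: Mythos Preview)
Your proof is correct. The paper's own argument is a single sentence invoking the face--linear-subspace correspondence (Lemmata~\ref{Lemma:correspondencefacessubsetsofspec}, \ref{Lemma:correspondencefacetsirreduciblecomponents}, \ref{Lemma:descriptionofcomponentoffacet}): since each $\Spec(\KK[-])$ is a union of coordinate subspaces indexed by faces, intersecting the two varieties amounts to intersecting the two face-sets. Your route is the algebraic dual of this: you pass to the ambient polynomial ring and prove the general identity $I_{\triangle_1\cap\triangle_2}=I_{\triangle_1}+I_{\triangle_2}$ for Stanley--Reisner ideals, then read off the scheme-theoretic intersection. The two arguments are really the same fact seen from opposite sides of the $V(-)/I(-)$ correspondence; your version has the mild advantage of making the scheme structure explicit and of isolating a reusable lemma about arbitrary pairs of simplicial complexes, while the paper's version is shorter because the geometric picture has already been set up in the preceding lemmata.
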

\begin{proof}
    This follows from the correspondence of faces and linear subspaces, Lemma~\ref{Lemma:correspondencefacetsirreduciblecomponents} and Lemma~\ref{Lemma:descriptionofcomponentoffacet}.
\end{proof}

\begin{lemma}\label{Lemma:dimensionintersection}
    Under the hypothesis of the previous Lemma,
    \[
    \dim\left(\Spec\left(\KK[\triangle'']\right)\right)\leq\min\left\{\dim\left(\Spec\left(\KK[\triangle']\right)\right), \dim\left(\Spec\left(\KK[\mathcal{P}(F)]\right)\right)\right\}.
    \]
\end{lemma}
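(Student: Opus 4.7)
The plan is to reduce the claim to the basic fact that for any simplicial complex $\Sigma$ one has $\dim(\Spec(\KK[\Sigma]))=\dim\Sigma+1$, which follows directly from Lemma~\ref{Lemma:correspondencefacetsirreduciblecomponents} and Lemma~\ref{Lemma:descriptionofcomponentoffacet}: the irreducible components are affine spaces $\AA^{|G|}$, one for each facet $G$, so the Krull dimension of $\KK[\Sigma]$ equals the size of the largest facet, i.e.\ $\dim\Sigma+1$. Once this is available, the lemma becomes a purely combinatorial statement about $\triangle''$.

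First I would observe that $\triangle''=\triangle'\cap \mathcal{P}(F)$ is a simplicial subcomplex of both $\triangle'$ and $\mathcal{P}(F)$, simply because the intersection of two subset-closed collections is subset-closed. Hence any face of $\triangle''$ is simultaneously a face of $\triangle'$ and of $\mathcal{P}(F)$, giving the combinatorial inequality
\[
\dim\triangle''\leq\min\bigl\{\dim\triangle',\ \dim\mathcal{P}(F)\bigr\}.
\]
Applying the dimension formula recalled above then yields the desired inequality on the Krull dimensions of the corresponding Stanley-Reisner spectra.

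Alternatively, and perhaps more geometrically, one can invoke Lemma~\ref{Lemma:intersectionofspecs} directly: it identifies $\Spec(\KK[\triangle''])$ with the scheme-theoretic intersection $\Spec(\KK[\triangle'])\cap\Spec(\KK[\mathcal{P}(F)])$, which is a closed subscheme of each factor. Since the dimension of a closed subscheme cannot exceed the dimension of the ambient scheme, the inequality follows at once.

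There is essentially no obstacle here: the only subtlety is making sure that a facet of $\triangle''$ need not be a facet of $\triangle'$ or of $\mathcal{P}(F)$, but this is irrelevant because the dimension of a simplicial complex depends only on the cardinality of its largest face, and every face of $\triangle''$ is a face of the two larger complexes. The proof should therefore be short, amounting to little more than citing Lemma~\ref{Lemma:correspondencefacetsirreduciblecomponents} and the previous lemma.
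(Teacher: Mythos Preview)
Your proposal is correct and follows essentially the same route as the paper: reduce the Krull-dimension inequality to the combinatorial inequality $\dim\triangle''\leq\min\{\dim\triangle',\dim\mathcal{P}(F)\}$ via the dimension-preserving correspondence, and then observe this is immediate because $\triangle''=\triangle'\cap\mathcal{P}(F)$ is contained in both complexes. The paper's proof is a two-line version of your first argument (citing Lemma~\ref{Lemma:correspondencefacessubsetsofspec} rather than the two lemmas you name, and simply declaring the combinatorial step ``trivial since $F$ is a facet''); your alternative geometric argument via closed subschemes is also valid but not used there.
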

\begin{proof}
    Thanks to Lemma~\ref{Lemma:correspondencefacessubsetsofspec}, this is equivalent to $\dim(\triangle'')\leq\min\{\dim(\triangle'), \dim(F)\}$. This is trivial since $F$ is a facet of $\triangle$.
\end{proof}

\begin{corollary}
    Under the same hypothesis as above, equality holds if and only if $F$ is the unique facet of maximal dimension.
\end{corollary}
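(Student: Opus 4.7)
The plan is to unpack each of the three dimensions and reduce the question to a purely combinatorial comparison of facets.

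First I would identify $\triangle'' = \triangle' \cap \mathcal{P}(F)$ explicitly. Since every proper subset of the facet $F$ already lies in $\triangle$ (simplicial complexes are subset-closed) and only the single face $F$ is removed in passing to $\triangle'$, we get $\triangle'' = \mathcal{P}(F) \setminus \{F\}$, i.e.\ the boundary of the simplex on vertex set $F$. By Lemma~\ref{Lemma:correspondencefacessubsetsofspec} this gives the two equalities
\[
\dim\left(\Spec(\KK[\mathcal{P}(F)])\right) = \dim F + 1, \qquad \dim\left(\Spec(\KK[\triangle''])\right) = \dim F,
\]
so the comparison between $\triangle''$ and $\mathcal{P}(F)$ is always \emph{strict}. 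Consequently the inequality in Lemma~\ref{Lemma:dimensionintersection} is an equality if and only if
\[
\dim\left(\Spec(\KK[\triangle'])\right) = \dim F,
\]
i.e.\ $\dim \triangle' = \dim F - 1$.

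Next I would describe the facets of $\triangle'$. These are of exactly two kinds: the facets of $\triangle$ different from $F$, and the codimension-one subfaces of $F$ that are not contained in any other facet of $\triangle$. The second kind has dimension $\dim F - 1$, so $\dim \triangle' \geq \dim F - 1$ automatically, and the only way to avoid a strict increase is to ensure every facet of $\triangle$ other than $F$ has dimension strictly less than $\dim F$. That is exactly the statement that $F$ is the unique facet of maximal dimension.

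The converse direction is immediate: if $F$ is the unique facet of maximal dimension, then every facet of $\triangle$ other than $F$ has dimension at most $\dim F - 1$, and the new facets appearing in $\triangle'$ from removing $F$ are exactly of dimension $\dim F - 1$, so $\dim \triangle' = \dim F - 1$ and equality holds. Since the main point is a dimension count on facets rather than any deep algebraic input, I do not expect a real obstacle; the only mildly delicate point is remembering that boundary-of-simplex complex has dimension one less than the simplex, which makes the $\mathcal{P}(F)$ side of the minimum always strictly larger than $\dim(\Spec \KK[\triangle''])$, forcing the $\triangle'$ side to govern the equality case.
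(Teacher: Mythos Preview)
Your argument is correct and follows essentially the same route as the paper: both reduce to a dimension count on the simplicial complexes via Lemma~\ref{Lemma:correspondencefacessubsetsofspec}, and both hinge on the observation that $\dim\triangle'' = \dim F - 1$ while $\dim\mathcal{P}(F) = \dim F$, so equality is governed entirely by whether $\dim\triangle' = \dim F - 1$.

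Your presentation is slightly more streamlined in one respect: you note up front that $\triangle'' = \mathcal{P}(F)\setminus\{F\}$ \emph{exactly} (not just an inclusion), which lets you dispose of the $\mathcal{P}(F)$ side of the minimum once and for all. The paper instead splits into the two contrapositive cases (\lq $F$ not of maximal dimension'' versus \lq $F$ maximal but not unique'') and checks the strict inequality in each. One small remark: your detour through the description of the facets of $\triangle'$ is a bit more than you need---since $\dim\triangle'$ is simply the maximum dimension among the faces of $\triangle$ other than $F$, the conclusion $\dim\triangle' = \dim F - 1 \iff F$ is the unique maximal-dimensional facet follows directly without classifying which faces become facets in $\triangle'$. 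The bound $\dim\triangle' \geq \dim F - 1$ holds because the codimension-one subfaces of $F$ lie in $\triangle'$, regardless of whether they are facets there.
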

\begin{proof}
    $\Longleftarrow)$ If $F$ is the unique facet of maximal dimension, then $\left(\mathcal{P}(F)\smallsetminus \{F\}\right)\subseteq\triangle'$, $\dim(\triangle'')=\dim(\triangle')=\dim(F)-1$.
    \\
    $\Longrightarrow)$ We prove the contropositive. If $F$ is a facet but not of maximal dimension, then $\dim(\triangle')=\dim(\triangle)>\dim(F)$ and $\triangle'\cap \mathcal{P}(F)\subseteq(\mathcal{P}(F)\smallsetminus \{F\})$, so we have the following inequalities of the dimensions
    \[
    \dim(\triangle'')\leq\dim(\mathcal{P}(F)\smallsetminus\{F\})=\dim(F)-1<\dim(F)<\dim(\triangle')
    \]
    Assume now that $F$ is not unique, so there exists another facet $G$ of maximal dimension in $\triangle$. Then $G\in\triangle'$, $\dim(\triangle')=\dim(F)$, and $\dim(\triangle'')=\dim(F)-1$, so again equality does not hold.
\end{proof}

\begin{example}\label{example:spectrum-simplicial-ring-1}
    Let us go back to our favourite Example~\ref{example:spectrum-simplicial-binoid-8} 
    
    \begin{minipage}[c]{0.5\textwidth}
        \[
        \triangle = \left\{\begin{aligned}
        &\varnothing,\{1\},\{2\},\{3\},\{4\},\\
        &\{1, 2\}, \{1, 3\}, \{2, 3\}, \{3, 4\}\\
        &\{1, 2, 3\}\end{aligned}\right\}
        \]\vfill
    \end{minipage}\begin{minipage}[c]{0.5\textwidth}
    \hspace{6em}\begin{tikzpicture}
    \tikzstyle{point}=[circle,thick,draw=black,fill=black,inner sep=0pt,minimum width=4pt,minimum height=4pt]
    \node (v1)[point, label={[label distance=0cm]-135:$1$}] at (0,0) {};
    \node (v2)[point, label={[label distance=0cm]90:$2$}] at (0.5,0.87) {};
    \node (v3)[point, label={[label distance=0cm]-45:$3$}] at (1,0) {};
    \node (v4)[point, label={[label distance=0cm]-45:$4$}] at (1.5,0.87) {};
    
    \draw (v1.center) -- (v2.center);
    \draw (v1.center) -- (v3.center);
    \draw (v2.center) -- (v3.center);
    \draw (v3.center) -- (v4.center);
    
    \draw[color=black!20, style=fill, outer sep = 20pt] (0.1,0.06) -- (0.5,0.77) -- (0.9,0.06) -- cycle;
    \end{tikzpicture}
\end{minipage}

If $F=\{3, 4\}$ then $\triangle'$ is the simplex on $\{1, 2, 3\}$ and $\triangle''$ is just the vertex $\{3\}$, so its dimension is strictly smaller than both the other two.

But, if $F=\{1, 2, 3\}$, then its closure is the 2-dimensional simplex and $\triangle'$ is

\begin{minipage}[c]{0.5\textwidth}
    \[
    \triangle' = \left\{\begin{aligned}
    &\varnothing,\{1\},\{2\},\{3\},\{4\},\\
    &\{1, 2\}, \{1, 3\}, \{2, 3\}, \{3, 4\}\end{aligned}\right\}
    \]\vfill
\end{minipage}\begin{minipage}[c]{0.5\textwidth}
\hspace{6em}\begin{tikzpicture}
\tikzstyle{point}=[circle,thick,draw=black,fill=black,inner sep=0pt,minimum width=4pt,minimum height=4pt]
\node (v1)[point, label={[label distance=0cm]-135:$1$}] at (0,0) {};
\node (v2)[point, label={[label distance=0cm]90:$2$}] at (0.5,0.87) {};
\node (v3)[point, label={[label distance=0cm]-45:$3$}] at (1,0) {};
\node (v4)[point, label={[label distance=0cm]-45:$4$}] at (1.5,0.87) {};

\draw (v1.center) -- (v2.center);
\draw (v1.center) -- (v3.center);
\draw (v2.center) -- (v3.center);
\draw (v3.center) -- (v4.center);
\end{tikzpicture}
\end{minipage}

and $\triangle''$ is 

\begin{minipage}[c]{0.5\textwidth}
    \[
    \triangle'' = \left\{\begin{aligned}
    &\varnothing,\{1\},\{2\},\{3\},\\
    &\{1, 2\}, \{1, 3\}, \{2, 3\}\end{aligned}\right\}
    \]\vfill
\end{minipage}\begin{minipage}[c]{0.5\textwidth}
\hspace{6em}\begin{tikzpicture}
\tikzstyle{point}=[circle,thick,draw=black,fill=black,inner sep=0pt,minimum width=4pt,minimum height=4pt]
\node (v1)[point, label={[label distance=0cm]-135:$1$}] at (0,0) {};
\node (v2)[point, label={[label distance=0cm]90:$2$}] at (0.5,0.87) {};
\node (v3)[point, label={[label distance=0cm]-45:$3$}] at (1,0) {};

\draw (v1.center) -- (v2.center);
\draw (v1.center) -- (v3.center);
\draw (v2.center) -- (v3.center);
\end{tikzpicture}
\end{minipage}
\end{example}

\begin{corollary}
    If $\triangle$ is a simplex we have $\triangle''=\triangle'$ and, equivalently,
    \[
    \Spec\left(\KK[\triangle'']\right)=\Spec\left(\KK[\triangle']\right).
    \]
\end{corollary}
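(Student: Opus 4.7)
The plan is to unwind the definitions of $\triangle'$ and $\triangle''$ when $\triangle$ is a simplex. By the standing hypothesis from Lemma~\ref{Lemma:intersectionofspecs}, $F$ is a facet of $\triangle$ with $\triangle'=\triangle\smallsetminus\{F\}$ and $\triangle''=\triangle'\cap\mathcal{P}(F)$. If $\triangle$ is a simplex on vertex set $W$, then $\triangle=\mathcal{P}(W)$ and the only facet is $W$ itself, so necessarily $F=W$.

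First I would observe that, with $F=W$, we have $\mathcal{P}(F)=\mathcal{P}(W)=\triangle$. Therefore
\[
\triangle''=\triangle'\cap\mathcal{P}(F)=(\triangle\smallsetminus\{W\})\cap\triangle=\triangle\smallsetminus\{W\}=\triangle',
\]
which gives the first claim. The equivalence with $\Spec(\KK[\triangle''])=\Spec(\KK[\triangle'])$ is then immediate, since equal simplicial complexes yield the same Stanley--Reisner ring and hence the same affine spectrum.

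There is no real obstacle here: the statement is a direct consequence of the fact that a simplex has a unique facet equal to the entire vertex set, so removing that facet is the same as intersecting with its power set (which is the whole complex). The only thing worth double-checking is the edge case where $\triangle$ is the void or empty complex, but under our conventions $\triangle$ contains all singletons and in particular its ambient vertex set $W$ is unambiguously its sole facet, so the argument applies verbatim.
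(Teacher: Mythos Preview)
Your proof is correct and follows essentially the same approach as the paper: both observe that a simplex $\triangle$ equals $\mathcal{P}(F)$ for its unique facet $F$, so $\triangle''=\triangle'\cap\mathcal{P}(F)=\triangle'\cap\triangle=\triangle'$. Your version is simply more explicit about the intermediate steps.
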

\begin{proof}
    If $\triangle$ is a simplex and $F$ is the facet, than $\triangle=\mathcal{P}(F)$, so $\triangle''=\triangle'$ and the stated equality follows.
\end{proof}

\begin{example}\label{example:spectrum-simplicial-ring-2}
    Let us start from
    
    \begin{minipage}[c]{0.5\textwidth}
        \[
        \triangle = \left\{\begin{aligned}
        &\varnothing,\{1\},\{2\},\{3\},\\
        &\{1, 2\}, \{1, 3\}, \{2, 3\}\end{aligned}\right\}
        \]\vfill
    \end{minipage}\begin{minipage}[c]{0.5\textwidth}
    \hspace{6em}\begin{tikzpicture}
    \tikzstyle{point}=[circle,thick,draw=black,fill=black,inner sep=0pt,minimum width=4pt,minimum height=4pt]
    \node (v1)[point, label={[label distance=0cm]-135:$1$}] at (0,0) {};
    \node (v2)[point, label={[label distance=0cm]90:$2$}] at (0.5,0.87) {};
    \node (v3)[point, label={[label distance=0cm]-45:$3$}] at (1,0) {};
    
    \draw (v1.center) -- (v2.center);
    \draw (v1.center) -- (v3.center);
    \draw (v2.center) -- (v3.center);
    \end{tikzpicture}
\end{minipage}

We will use $x, y, z$ instead of $x_1, x_2, x_3$ for the generators of the simplicial binoid, for ease of notation.\\
The associated simplicial binoid is $M_\triangle = (x, y, z\mid x+y+z=\infty)$ and the associated Stanley-Reisner algebra is
\[
\KK[M_\triangle]\cong\KK[\triangle] = \faktor{\KK[X, Y, Z]}{(XYZ)}
\]
whose spectrum $\Spec\KK[\triangle]$ can be drawn as\footnote{For obvious drawing reasons, here we draw $\RR-\Spec \RR[\triangle]$.}
\begin{equation*}
\begin{tikzpicture}[opacity=0.8, scale=0.7]
\filldraw [grey2] (0,-4) -- (0,0) -- (4, 3) -- (4, -1) -- cycle;
\filldraw [grey3] (0,-4) -- (0,0) -- (-3, 1.5) -- (-3,-2.5) -- cycle;
\filldraw [grey3] (0,-4) -- (0,0) -- (3, -1.5) -- (3,-5.5) -- cycle;
\filldraw [grey2] (0,-4) -- (0,0) -- (-4, -3) -- (-4, -7) -- cycle;
\filldraw [grey1] (0,0) -- (-3,1.5) -- (1,4.5) -- (4, 3) -- cycle;
\filldraw [grey2] (0,0) -- (0,4) -- (4, 7) -- (4, 3) -- cycle;
\filldraw [grey1] (0,0) -- (3,-1.5) -- (7,1.5) -- (4, 3) -- cycle;
\filldraw [grey3] (0,0) -- (0,4) -- (-3, 5.5) -- (-3,1.5) -- cycle;
\filldraw [grey3] (0,0) -- (0,4) -- (3, 2.5) -- (3,-1.5) -- cycle;
\filldraw [grey1] (0,0) -- (-3,1.5) -- (-7,-1.5) -- (-4, -3) -- cycle;
\filldraw [grey2] (0,0) -- (0,4) -- (-4, 1) -- (-4, -3) -- cycle;
\draw[->, very thick, >=latex] (0,0) -- (0,5) node [above] {$Z$};
\draw[->, very thick, >=latex] (0,0) -- (-6,-4.5) node [left] {$X$};
\draw[->, very thick, >=latex] (0,0) -- (5,-2.5) node [right] {$Y$};
%            \fill[white] (0,-0) circle[radius=2mm];
\filldraw [grey1] (0,0) -- (3,-1.5) -- (-1,-4.5) -- (-4, -3) -- cycle;
\node[fill=white, rounded corners, rotate=38] (z=0) at (-6, -1.2){\scriptsize$Z=0$}; 
\node[fill=white, rounded corners, rotate=-27] (x=0) at (2.2, 2.3){\scriptsize$X=0$};
\node[fill=white, rounded corners, rotate=38] (y=0) at (-3.3, 0.9){\scriptsize$Y=0$};
\end{tikzpicture}
\end{equation*}

Consider now the facet $F=\{2, 3\}$, whose complement in $\triangle$ is

\vspace{1ex}

\begin{minipage}[c]{0.5\textwidth}
    \[
    \triangle' = \left\{\begin{aligned}
    &\varnothing,\{1\},\{2\},\{3\},\\
    &\{1, 2\}, \{1, 3\}\end{aligned}\right\}
    \]\vfill
\end{minipage}\begin{minipage}[c]{0.5\textwidth}
\hspace{2em}\begin{tikzpicture}
\tikzstyle{point}=[circle,thick,draw=black,fill=black,inner sep=0pt,minimum width=4pt,minimum height=4pt]
\node (v1)[point, label={[label distance=0cm]-135:$1$}] at (0,0) {};
\node (v2)[point, label={[label distance=0cm]90:$2$}] at (0.5,0.87) {};
\node (v3)[point, label={[label distance=0cm]-45:$3$}] at (1,0) {};

\draw (v1.center) -- (v2.center);
\draw (v1.center) -- (v3.center);
\end{tikzpicture}
\end{minipage}

\vspace{1ex}

whose associated algebra is $\KK[\triangle']=\faktor{\KK[X, Y, X]}{(YZ)}$ with spectrum $\Spec\KK[\triangle']$

\vspace{1ex}

\begin{minipage}{0.5\textwidth}
    \begin{tikzpicture}[opacity=0.8, scale=0.5]
    \filldraw [grey2] (0,-4) -- (0,0) -- (4, 3) -- (4, -1) -- cycle;
    %        \filldraw [grey3] (0,-4) -- (0,0) -- (-3, 1.5) -- (-3,-2.5) -- cycle;
    %        \filldraw [grey3] (0,-4) -- (0,0) -- (3, -1.5) -- (3,-5.5) -- cycle;
    \filldraw [grey2] (0,-4) -- (0,0) -- (-4, -3) -- (-4, -7) -- cycle;
    \filldraw [grey1] (0,0) -- (-3,1.5) -- (1,4.5) -- (4, 3) -- cycle;
    \filldraw [grey2] (0,0) -- (0,4) -- (4, 7) -- (4, 3) -- cycle;
    \filldraw [grey1] (0,0) -- (3,-1.5) -- (7,1.5) -- (4, 3) -- cycle;
    %        \filldraw [grey3] (0,0) -- (0,4) -- (-3, 5.5) -- (-3,1.5) -- cycle;
    %        \filldraw [grey3] (0,0) -- (0,4) -- (3, 2.5) -- (3,-1.5) -- cycle;
    \filldraw [grey1] (0,0) -- (-3,1.5) -- (-7,-1.5) -- (-4, -3) -- cycle;
    \filldraw [grey2] (0,0) -- (0,4) -- (-4, 1) -- (-4, -3) -- cycle;
    \draw[->, thick, >=latex] (0,0) -- (0,5);
    \draw[->, thick, >=latex] (0,0) -- (-6,-4.5);
    \draw[->, thick, >=latex] (0,0) -- (5,-2.5);
    \node (name) at (-4, 4){\small$\Spec \KK[\triangle']$};
    %            \fill[white] (0,-0) circle[radius=2mm];
    \filldraw [grey1] (0,0) -- (3,-1.5) -- (-1,-4.5) -- (-4, -3) -- cycle;
    %        \node[fill=white, rounded corners, rotate=38] (z=0) at (-6, -1.2){\scriptsize$Z=0$}; 
    %        \node[fill=white, rounded corners, rotate=-27] (x=0) at (2.2, 2.3){\scriptsize$X=0$};
    %        \node[fill=white, rounded corners, rotate=38] (y=0) at (-3.3, 0.9){\scriptsize$Y=0$};
    \end{tikzpicture}
\end{minipage}
\begin{minipage}{0.5\textwidth}
    \begin{tikzpicture}[opacity=0.8, scale=0.5]
    %            \filldraw [grey2] (0,-4) -- (0,0) -- (4, 3) -- (4, -1) -- cycle;
    \filldraw [grey3] (0,-4) -- (0,0) -- (-3, 1.5) -- (-3,-2.5) -- cycle;
    \filldraw [grey3] (0,-4) -- (0,0) -- (3, -1.5) -- (3,-5.5) -- cycle;
    %            \filldraw [grey2] (0,-4) -- (0,0) -- (-4, -3) -- (-4, -7) -- cycle;
    %            \filldraw [grey1] (0,0) -- (-3,1.5) -- (1,4.5) -- (4, 3) -- cycle;
    %            \filldraw [grey2] (0,0) -- (0,4) -- (4, 7) -- (4, 3) -- cycle;
    %            \filldraw [grey1] (0,0) -- (3,-1.5) -- (7,1.5) -- (4, 3) -- cycle;
    \filldraw [grey3] (0,0) -- (0,4) -- (-3, 5.5) -- (-3,1.5) -- cycle;
    \filldraw [grey3] (0,0) -- (0,4) -- (3, 2.5) -- (3,-1.5) -- cycle;
    %            \filldraw [grey1] (0,0) -- (-3,1.5) -- (-7,-1.5) -- (-4, -3) -- cycle;
    %            \filldraw [grey2] (0,0) -- (0,4) -- (-4, 1) -- (-4, -3) -- cycle;
    \draw[->, thick, >=latex] (0,0) -- (0,5);
    \draw[->, thick, >=latex] (0,0) -- (-6,-4.5);
    \draw[->, thick, >=latex] (0,0) -- (5,-2.5);
    \node (name) at (4, 4){\small$\Spec \KK[\mathcal{P}(F)]$};
    %            \fill[white] (0,-0) circle[radius=2mm];
    %            \filldraw [grey1] (0,0) -- (3,-1.5) -- (-1,-4.5) -- (-4, -3) -- cycle;
    %        \node[fill=white, rounded corners, rotate=38] (z=0) at (-6, -1.2){\scriptsize$Z=0$}; 
    %        \node[fill=white, rounded corners, rotate=-27] (x=0) at (2.2, 2.3){\scriptsize$X=0$};
    %        \node[fill=white, rounded corners, rotate=38] (y=0) at (-3.3, 0.9){\scriptsize$Y=0$};
    \end{tikzpicture}
\end{minipage}

\vspace{1ex}

and we can see that the algebraic spectrum of the intersection of simplicial complexes is the intersection of the spectra, i.e.\ two coordinate lines.

\begin{minipage}{0.5\textwidth}
    \[
    \triangle''=\triangle'\cap \mathcal{P}(F)=\bigl\{\varnothing, \{2\}, \{3\}\bigr\}
    \]
\end{minipage}
\begin{minipage}{0.5\textwidth}
    \begin{tikzpicture}[ scale=0.5]
    %            \filldraw [grey2] (0,-4) -- (0,0) -- (4, 3) -- (4, -1) -- cycle;
    %        \filldraw [grey3] (0,-4) -- (0,0) -- (-3, 1.5) -- (-3,-2.5) -- cycle;
    %        \filldraw [grey3] (0,-4) -- (0,0) -- (3, -1.5) -- (3,-5.5) -- cycle;
    %            \filldraw [grey2] (0,-4) -- (0,0) -- (-4, -3) -- (-4, -7) -- cycle;
    %            \filldraw [grey1] (0,0) -- (-3,1.5) -- (1,4.5) -- (4, 3) -- cycle;
    %            \filldraw [grey2] (0,0) -- (0,4) -- (4, 7) -- (4, 3) -- cycle;
    %            \filldraw [grey1] (0,0) -- (3,-1.5) -- (7,1.5) -- (4, 3) -- cycle;
    %        \filldraw [grey3] (0,0) -- (0,4) -- (-3, 5.5) -- (-3,1.5) -- cycle;
    %        \filldraw [grey3] (0,0) -- (0,4) -- (3, 2.5) -- (3,-1.5) -- cycle;
    %            \filldraw [grey1] (0,0) -- (-3,1.5) -- (-7,-1.5) -- (-4, -3) -- cycle;
    %            \filldraw [grey2] (0,0) -- (0,4) -- (-4, 1) -- (-4, -3) -- cycle;
    %        \filldraw [grey3] (0.1,-4.1) -- (0.1,3.9) -- (-0.1, )

    \draw[->, very thick, >=latex] (0,0) -- (0,5);
    \draw[->, very thick, >=latex] (0,0) -- (-6,-4.5);
    \draw[->, very thick, >=latex] (0,0) -- (5,-2.5);
    
    \draw [grey3, line width=2mm] (0,-4) -- (0,4);
    \draw [grey3, line width=2mm] (3, -1.5) -- (-3,1.5);
    
    \node (name) at (3, 2){\small$\Spec \KK[\triangle'']$};
    %            \fill[white] (0,-0) circle[radius=2mm];
    %            \filldraw [grey1] (0,0) -- (3,-1.5) -- (-1,-4.5) -- (-4, -3) -- cycle;
    %        \node[fill=white, rounded corners, rotate=38] (z=0) at (-6, -1.2){\scriptsize$Z=0$}; 
    %        \node[fill=white, rounded corners, rotate=-27] (x=0) at (2.2, 2.3){\scriptsize$X=0$};
    %        \node[fill=white, rounded corners, rotate=38] (y=0) at (-3.3, 0.9){\scriptsize$Y=0$};
    \end{tikzpicture}
\end{minipage}

\vspace{1ex}

and we can see that the dimension is strictly smaller, since we are not dealing with a simplex.
\end{example}

These previous Lemmata will play a key role in the rest of the Chapter.

\begin{proposition}
    By intersecting $\Spec\RR[\triangle]$ with the hyperplane $\{\sum X_i=1 \}$ and considering $X_i\geq 0$, we recover a geometric realization of the abstract simplicial complex we started with.
\end{proposition}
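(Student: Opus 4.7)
The plan is to unpack the two sides of the claimed identification explicitly and then observe that they agree componentwise on the facets.

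First I would use Lemma~\ref{Lemma:correspondencefacetsirreduciblecomponents} together with Lemma~\ref{Lemma:descriptionofcomponentoffacet} to write the real points of $\Spec\RR[\triangle]$ as the union of coordinate linear subspaces
\[
\RR\text{-}\Spec\RR[\triangle]=\bigcup_{F\text{ facet of }\triangle}V_F,\qquad V_F=\{(x_1,\dots,x_n)\in\RR^n\mid x_i=0\text{ for }i\notin F\},
\]
since each irreducible component $\Spec\RR[\mathcal{P}(F)]$ sits inside $\AA^n_\RR$ as exactly the coordinate subspace $V_F\cong\RR^{\#F}$.

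Next I would intersect each $V_F$ with the closed standard simplex
\[
\Delta^{n-1}_{+}=\Bigl\{(x_1,\dots,x_n)\in\RR^n_{\geq 0}\Bigm|\sum_{i=1}^n x_i=1\Bigr\}.
\]
The intersection $V_F\cap\Delta^{n-1}_{+}$ is precisely the convex hull of the vertices $\{e_i\mid i\in F\}$ of the standard simplex, i.e.\ the geometric simplex $|F|$ associated with the abstract face $F$. Taking the union over the facets I obtain
\[
\RR\text{-}\Spec\RR[\triangle]\cap\Delta^{n-1}_{+}=\bigcup_{F\text{ facet of }\triangle}|F|.
\]

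Finally I would check that this union is exactly a geometric realization of $\triangle$. Since $\triangle$ is closed under taking subsets, every face of $\triangle$ lies inside some facet, and the corresponding geometric simplex is a face of the corresponding $|F|$; conversely, every face of some $|F|$ is of this form. The gluing along common subfaces $F\cap F'$ is automatic, because both $|F|$ and $|F'|$ sit inside the same ambient affine hyperplane and share exactly the convex hull of $\{e_i\mid i\in F\cap F'\}$, which is $|F\cap F'|$. This is precisely the standard model of the geometric realization $|\triangle|$ as a subspace of $\Delta^{n-1}_{+}$, so the claim follows.

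The only mild obstacle is the bookkeeping distinction between the scheme-theoretic $\Spec\RR[\triangle]$ and its set of $\RR$-rational points: the statement only makes sense on real points because the inequality $X_i\geq 0$ is not algebraic. Once one has fixed this convention, the argument is essentially the standard identification of a Stanley--Reisner variety with the affine cone over the geometric realization of $\triangle$, restricted to the positive orthant and normalized by the hyperplane $\sum X_i=1$.
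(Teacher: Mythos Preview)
Your argument is correct and follows essentially the same approach as the paper: both identify the real locus of $\Spec\RR[\triangle]$ as a union of coordinate linear subspaces and intersect with the standard simplex $\{\sum X_i=1,\ X_i\geq 0\}$ to recover the usual model of $|\triangle|$. The only cosmetic difference is that the paper indexes over all subsets $J\subseteq[n]$ and checks $T_J\subseteq\Spec\RR[\triangle]\iff J\in\triangle$, whereas you index over facets and then close under subfaces; your remark on passing to $\RR$-points is a welcome clarification that the paper leaves implicit.
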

\begin{proof}
    Assume that the vertex set is $[n]$ and consider $e_j=(0, \dots, 0, 1, 0, \dots, 0)^t\in\RR^n$, where the only non-zero entry is in position $j$. Thanks to Lemma~\ref{Lemma:correspondencefacetsirreduciblecomponents}, $\mathrm{span}_\RR(\{ e_j\mid j\in J\})\subseteq\Spec\RR[\triangle]$ if and only $J\in\triangle$. Let $T_J$ be the convex hull of the points $\{e_j\mid j\in J\}$, so $T_J$ is a geometric simplex $\triangle_{|J|-1}$, defined by the equation $\sum_{j=1}^{|J|} x_j=1$ and all the inequalities $x_j\geq 0$. Then $T_J\subseteq\mathrm{span}_\RR(\{ e_j\mid j\in J\})$ and again $T_J\subseteq\Spec\RR[\triangle]$ if and only if $J\in\triangle$. The collection of these sets $\{T_J\}$ that are in $\Spec \RR[\triangle]$ is then the geometric realization of the simplex.
\end{proof}

\begin{example}
    In the previous example, when we intersect the spectrum with the plane $X+Y+Z=1$ and restrict to $X\geq0$, $Y\geq 0$ and $Z\geq 0$, we obtain the white triangle in figure (we do not draw the whole plane, for simplicity).
    
    \begin{equation*}
    \begin{tikzpicture}[opacity=0.8, scale=0.6] %scale=0.6 is for having it in the same page! 0.7 sould be much better.
    \filldraw [grey2] (0,-4) -- (0,0) -- (4, 3) -- (4, -1) -- cycle;
    \filldraw [grey3] (0,-4) -- (0,0) -- (-3, 1.5) -- (-3,-2.5) -- cycle;
    \filldraw [grey3] (0,-4) -- (0,0) -- (3, -1.5) -- (3,-5.5) -- cycle;
    \filldraw [grey2] (0,-4) -- (0,0) -- (-4, -3) -- (-4, -7) -- cycle;
    \filldraw [grey1] (0,0) -- (-3,1.5) -- (1,4.5) -- (4, 3) -- cycle;
    \filldraw [grey2] (0,0) -- (0,4) -- (4, 7) -- (4, 3) -- cycle;
    \filldraw [grey1] (0,0) -- (3,-1.5) -- (7,1.5) -- (4, 3) -- cycle;
    \filldraw [grey3] (0,0) -- (0,4) -- (-3, 5.5) -- (-3,1.5) -- cycle;
    \filldraw [grey3] (0,0) -- (0,4) -- (3, 2.5) -- (3,-1.5) -- cycle;
    \filldraw [grey1] (0,0) -- (-3,1.5) -- (-7,-1.5) -- (-4, -3) -- cycle;
    \filldraw [grey2] (0,0) -- (0,4) -- (-4, 1) -- (-4, -3) -- cycle;
    
    \filldraw [grey1] (0,0) -- (3,-1.5) -- (-1,-4.5) -- (-4, -3) -- cycle;
    
    \draw[->, thick, >=latex] (0,0) -- (0,5);
    \draw[->, thick, >=latex] (0,0) -- (-6,-4.5);
    \draw[->, thick, >=latex] (0,0) -- (5,-2.5);
    
    \fill[white, opacity=1] (-2,-1.5) circle[radius=2mm] node[above left] {\small $(1, 0, 0)$};
    \fill[white, opacity=1] (0,2) circle[radius=2mm] node[above right] {\small $(0, 0, 1)$};
    \fill[white, opacity=1] (1.5,-0.75) circle[radius=2mm] node[shift={(-0.2, -0.4)}] {\small $(0, 1, 0)$};
    \draw[white, opacity=1, very thick] (-2,-1.5) -- (0,2) -- (1.5,-0.75) --cycle;
    \end{tikzpicture}
    \end{equation*}
    
\end{example}

\subsection{Covering the punctured Spectrum}

Similarly to what we did in the combinatorial case, we can associate some combinatorial structure to the open combinatorial covering $\{D(X_i)\}$ of $\Spec^\bullet\KK[\triangle]$.

\begin{proposition}\label{proposition:covering-algebraic-spectrum}
    Let $F=\{i_1, \dots, i_j\}$ be a subset of $V$ and let $D(F)$ denote the affine open subset $D(X_{i_1}\cdots X_{i_j})=D(X_{i_1})\cap\dots\cap D(X_{i_j})$. Then $D(F)\neq\varnothing$ if and only if $F\in\triangle$.
\end{proposition}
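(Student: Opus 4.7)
The plan is to deduce this statement directly from the irreducible decomposition of $\Spec\KK[\triangle]$ given by Lemma~\ref{Lemma:correspondencefacetsirreduciblecomponents} and Lemma~\ref{Lemma:descriptionofcomponentoffacet}, which exhibit
\[
\Spec\KK[\triangle]=\bigcup_{G\text{ facet of }\triangle}\Spec\KK[\mathcal{P}(G)],
\]
with each component an affine coordinate subspace $\AA^{|G|}$ cut out by the vanishing of the variables $X_k$ for $k\notin G$.

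For the forward direction, I would assume $F\in\triangle$ and pick any facet $G$ containing $F$ (which exists because $\triangle$ is closed under taking subsets and a facet is by definition a maximal face). On the irreducible component $\Spec\KK[\mathcal{P}(G)]$, whose coordinate ring is the polynomial ring $\KK[X_i\mid i\in G]$, the product $X_{i_1}\cdots X_{i_j}$ is a nonzero monomial, so its vanishing locus is a proper closed subset. Any closed point of $\Spec\KK[\mathcal{P}(G)]$ outside this vanishing locus gives a point of $\Spec\KK[\triangle]$ in $D(F)$, proving $D(F)\neq\varnothing$.

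For the converse, I would argue contrapositively. Suppose $F\notin\triangle$; then, since $\triangle$ is subset-closed, $F$ is not contained in any face of $\triangle$, and in particular not in any facet $G$. So for every facet $G$ there exists $i\in F$ with $i\notin G$, which means $X_i$, and hence $X_{i_1}\cdots X_{i_j}$, vanishes identically on $\Spec\KK[\mathcal{P}(G)]$. By the decomposition recalled above, $X_{i_1}\cdots X_{i_j}$ vanishes at every point of $\Spec\KK[\triangle]$, i.e.\ lies in the nilradical. Since $\KK[\triangle]$ is reduced (the Stanley–Reisner ideal is radical), this forces $X_{i_1}\cdots X_{i_j}=0$ in $\KK[\triangle]$, and therefore $D(F)=D(X_{i_1}\cdots X_{i_j})=\varnothing$.

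There is no real obstacle here, since everything reduces to the already-proven correspondence between facets and irreducible components together with the radicality of $I_\triangle$; the only mild subtlety is to invoke the subset-closedness of $\triangle$ in the converse direction to guarantee that $F\notin\triangle$ really does imply $F$ is not contained in any facet. This will run in complete parallel to Proposition~\ref{proposition:intersection-open-subsets} in the binoid setting and is the algebraic counterpart that allows us to conclude, as announced, that the nerve of the covering $\{D(X_i)\}$ of $\Spec^\bullet\KK[\triangle]$ is again $\triangle$.
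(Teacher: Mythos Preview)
Your argument is correct, but it is considerably more elaborate than the paper's. The paper handles both directions in one line each, without invoking the irreducible decomposition or reducedness at all. For the forward direction, rather than passing to a facet $G\supseteq F$ and finding a closed point on the corresponding affine space, the paper simply observes that the ideal $(X_i\mid i\notin F)$ is a prime of $\KK[\triangle]$ lying in $\bigcap_{i\in F}D(X_i)$; this is exactly the combinatorial prime corresponding to $F$ under Lemma~\ref{Lemma:correspondencefacessubsetsofspec}. For the converse, the paper does not go through the nilradical and reducedness: it uses directly that $F\notin\triangle$ means, by the very definition of the Stanley--Reisner ideal, that the monomial $\prod_{i\in F}X_i$ lies in $I_\triangle$ and is therefore zero in $\KK[\triangle]$, whence $D(F)=D(0)=\varnothing$. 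Your route recovers this same vanishing, but only after passing through the component decomposition and invoking that $I_\triangle$ is radical; the paper bypasses both steps by appealing to the definition.
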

\begin{proof}
    This is easy to see. If $F$ is a face in $\triangle$ then $(X_i\mid i\notin F)$ is a prime ideal in $\KK[\triangle]$ contained in $\displaystyle\bigcap_{i\in F} D(X_i)$.\\
    If $F$ is not a face, then $\prod X_i=0$, so $\displaystyle\bigcap_{i\in F} D(X_i)=\displaystyle D(\prod_{i\in F}X_i)=D(0)=\varnothing$.
\end{proof}

The previous Propositions shows that we have the same intersection pattern that we have in the combinatorial case.

\begin{corollary}\label{corollary:nerve-covering-stanley-reisner}
    The nerve of the covering $\{D(X_i)\}$ of $\Spec^\bullet\KK[\triangle]$ is the simplicial complex itself.
\end{corollary}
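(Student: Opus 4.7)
The plan is to deduce this corollary as essentially a direct translation of Proposition~\ref{proposition:covering-algebraic-spectrum} through the definition of the nerve. Recall from Definition~\ref{definition:nerve-of-covering} that the nerve of a finite collection $\{U_i\}_{i \in I}$ of open subsets has $I$ as vertex set, contains $\varnothing$, and contains a subset $J \subseteq I$ as a face precisely when $\bigcap_{j \in J} U_j \neq \varnothing$. In our setting $I = V$ is the vertex set of $\triangle$ and the open subsets are $U_i = D(X_i) \subseteq \Spec^\bullet \KK[\triangle]$.

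First I would verify that $\{D(X_i)\}_{i \in V}$ is indeed a covering of $\Spec^\bullet \KK[\triangle]$: any prime ideal $\mathfrak{P} \in \Spec^\bullet \KK[\triangle]$ is strictly contained in the irrelevant ideal $\KK[M_{\triangle,+}] = (X_1, \dots, X_n)$, so at least one variable $X_i$ lies outside $\mathfrak{P}$, giving $\mathfrak{P} \in D(X_i)$. This mirrors Proposition~\ref{proposition:covering-punctured-spectrum-binoid} in the combinatorial setting.

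Next, I would apply Proposition~\ref{proposition:covering-algebraic-spectrum} directly: for any subset $F = \{i_1, \dots, i_j\} \subseteq V$, the intersection $D(X_{i_1}) \cap \dots \cap D(X_{i_j})$ is nonempty if and only if $F \in \triangle$. By the definition of the nerve, this says exactly that $F$ is a face of $\nerve(\{D(X_i)\}_{i \in V})$ if and only if $F$ is a face of $\triangle$. The empty set lies in both complexes by convention. Hence the two simplicial complexes have identical face sets on the common vertex set $V$, and we conclude $\nerve(\{D(X_i)\}) = \triangle$.

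There is no real obstacle here; the work has already been done in Proposition~\ref{proposition:covering-algebraic-spectrum}. The only minor bookkeeping is to make sure the identification of vertex sets between the nerve (whose vertex set is the indexing set of the covering) and $\triangle$ is the natural one, which is immediate since both are indexed by $V$ via $i \leftrightarrow D(X_i)$. This is exactly parallel to Corollary~\ref{corollary:cech-covering-nerve} in the combinatorial chapter, and will be the key geometric input for later comparing \v{C}ech cohomology on $\{D(X_i)\}$ to simplicial cohomology of $\triangle$.
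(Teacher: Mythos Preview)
Your proposal is correct and follows the same approach as the paper: the corollary is stated there without a separate proof, as an immediate consequence of Proposition~\ref{proposition:covering-algebraic-spectrum} together with the definition of the nerve. Your write-up simply spells out the details (including the covering check and the vertex-set identification) that the paper leaves implicit.
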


\begin{example}
    Consider again the empty triangle above with ring $\KK[\triangle]$. Intersecting $D(X)$ and $D(Y)$ in the spectrum, leaves only the line $Z=0$. If we try to intersect again with $D(Z)$, we end up empty.
\end{example}

\section{Cohomology of the sheaf of units}

In this section we are going to prove that the covering of the punctured spectrum of a Stanley-Reisner algebra given by the coordinate fundamental open subsets is an acyclic covering for the sheaf of units. In order to show this, we will use the fact that $D(X)\cong\Spec\KK[(M_\triangle)_x]\cong\Spec\KK[\triangle']\times \AA^*$ that we described above, and in particular we will prove that $\H^j(\Spec\KK[\triangle], \O^*)=0$ for $j\geq 1$.

\begin{lemma}
    Let $R$ be a local ring and $\mathfrak{A}$ an ideal of $R$. Then the map
    \[
    \begin{tikzcd}[cramped]
    R^*\rar&\left(\faktor{R}{\mathfrak{A}}\right)^*
    \end{tikzcd}
    \]
    is surjective.
\end{lemma}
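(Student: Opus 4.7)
The plan is to lift a unit modulo $\mathfrak{A}$ back to a unit in $R$, using the defining feature of local rings: an element is a unit if and only if it lies outside the unique maximal ideal $\mathfrak{m}$, and in particular every element of the form $1+x$ with $x\in\mathfrak{m}$ is a unit.

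First I would dispense with the trivial case $\mathfrak{A}=R$, in which $R/\mathfrak{A}$ is the zero ring and there is nothing to prove. So assume $\mathfrak{A}\subsetneq R$, which forces $\mathfrak{A}\subseteq\mathfrak{m}$. Now take any $\bar u\in (R/\mathfrak{A})^*$ and pick a lift $u\in R$. By hypothesis there exists $\bar v\in R/\mathfrak{A}$ with $\bar u\bar v=\bar 1$; pick a lift $v\in R$. Then $uv=1+a$ for some $a\in\mathfrak{A}\subseteq\mathfrak{m}$.

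The key step is to observe that $1+a$ is a unit in $R$: indeed, if it were not, then it would lie in $\mathfrak{m}$, and combined with $a\in\mathfrak{m}$ this would give $1=(1+a)-a\in\mathfrak{m}$, a contradiction. So $1+a\in R^*$, and setting $u':=v(1+a)^{-1}$ we have $uu'=1$, which shows $u\in R^*$. Its image in $(R/\mathfrak{A})^*$ is $\bar u$ by construction, proving surjectivity.

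There is no real obstacle here; the statement is essentially a reformulation of the characterization of units in a local ring. The only subtlety worth flagging is the edge case $\mathfrak{A}=R$, which must be handled separately because the argument above uses $\mathfrak{A}\subseteq\mathfrak{m}$.
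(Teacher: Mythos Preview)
Your proof is correct and uses essentially the same idea as the paper: units in a local ring are precisely the elements outside the maximal ideal. The paper's argument is even terser, observing directly that $R/\mathfrak{A}$ is again local with maximal ideal $\mathfrak{m}/\mathfrak{A}$, so a unit $\bar u$ lifts to some $u\notin\mathfrak{m}$, which is therefore already a unit; your version reaches the same conclusion by explicitly lifting an inverse and invoking that $1+a\in R^*$ for $a\in\mathfrak{m}$.
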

\begin{proof}
    This is easily proved because in a local ring the group of units is the complement of the maximal ideal, and quotients of local rings by ideals are again local rings.
\end{proof}

\begin{proposition}\label{proposition:exact-sequence-units-ideals-trivial-intersection}
    Let $\mathfrak{A}$ and $\mathfrak{B}$ be ideals of a commutative ring $R$ such that $\mathfrak{A}\cap \mathfrak{B}=0$. Let $X=\Spec R$, $Y=\Spec \faktor{R}{\mathfrak{A}}$ and $Z=\Spec\faktor{R}{\mathfrak{B}}$. Then there exists a short exact sequence of sheaves
    \[
    \begin{tikzcd}[cramped]
    1\rar&\O^*_X\rar["\varphi"]&i_*\O^*_Y\oplus i_*\O^*_Z\rar["\psi"] & i_*\O^*_{Y\cap Z}\rar&1,
    \end{tikzcd}
    \]
    where $i$ are the inclusion maps and $\varphi(f)=(f\restriction_Y,  f\restriction_Z)$ and $\psi(g, h)=gh^{-1}$.\footnote{We use the same symbol for the inclusion of $Y$ and $Z$ because there is no ambiguity.}
    \begin{align*}
    \end{align*}
\end{proposition}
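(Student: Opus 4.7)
The plan is to verify this exact sequence on stalks, exploiting that the previous Lemma guarantees that units lift along the quotient morphisms of local rings. The key observation is that $\mathfrak{A}\cap\mathfrak{B}=0$ implies $V(\mathfrak{A})\cup V(\mathfrak{B})=V(\mathfrak{A}\cap\mathfrak{B})=V(0)=X$, so topologically $X=Y\cup Z$, and moreover after localizing at any prime $\mathfrak{P}\in X$ we still have $\mathfrak{A}_{\mathfrak{P}}\cap\mathfrak{B}_{\mathfrak{P}}=0$ in the local ring $R_{\mathfrak{P}}$.

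First I would distinguish three cases according to whether $\mathfrak{P}\in Y\cap Z$, $\mathfrak{P}\in Y\smallsetminus Z$, or $\mathfrak{P}\in Z\smallsetminus Y$ (no other case occurs since $X=Y\cup Z$). In the mixed cases, say $\mathfrak{P}\in Y\smallsetminus Z$, the closed embedding makes the stalk $(i_*\O^*_Z)_{\mathfrak{P}}$ and $(i_*\O^*_{Y\cap Z})_{\mathfrak{P}}$ trivial, since any small enough open neighbourhood $U$ of $\mathfrak{P}$ in $X$ satisfies $U\cap Z=\varnothing$. On the other hand $\mathfrak{P}\not\supseteq\mathfrak{B}$ means $\mathfrak{B}_{\mathfrak{P}}=R_{\mathfrak{P}}$, so from $\mathfrak{A}_{\mathfrak{P}}\cap\mathfrak{B}_{\mathfrak{P}}=0$ one gets $\mathfrak{A}_{\mathfrak{P}}=0$, hence $R_{\mathfrak{P}}\cong (R/\mathfrak{A})_{\mathfrak{P}}$ and $\varphi$ is an isomorphism of stalks. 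Case $\mathfrak{P}\in Z\smallsetminus Y$ is symmetric.

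The substantive case is $\mathfrak{P}\in Y\cap Z$, where the stalk sequence reads
\[
1\longrightarrow R_{\mathfrak{P}}^*\longrightarrow (R_{\mathfrak{P}}/\mathfrak{A}_{\mathfrak{P}})^*\oplus (R_{\mathfrak{P}}/\mathfrak{B}_{\mathfrak{P}})^*\longrightarrow (R_{\mathfrak{P}}/(\mathfrak{A}_{\mathfrak{P}}+\mathfrak{B}_{\mathfrak{P}}))^*\longrightarrow 1.
\]
Injectivity of $\varphi$ follows directly from $\mathfrak{A}_{\mathfrak{P}}\cap\mathfrak{B}_{\mathfrak{P}}=0$: if $f\equiv 1\pmod{\mathfrak{A}_{\mathfrak{P}}}$ and $f\equiv 1\pmod{\mathfrak{B}_{\mathfrak{P}}}$ then $f-1\in\mathfrak{A}_{\mathfrak{P}}\cap\mathfrak{B}_{\mathfrak{P}}=0$. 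The composition $\psi\circ\varphi$ is trivial by construction. For exactness in the middle, given $(g,h)$ with $gh^{-1}=1$ in the quotient, I use the previous Lemma to lift $g$ and $h$ to units $\tilde g,\tilde h\in R_{\mathfrak{P}}^*$; then $\tilde g-\tilde h\in\mathfrak{A}_{\mathfrak{P}}+\mathfrak{B}_{\mathfrak{P}}$, so I can write $\tilde g-\tilde h=a+b$ and set $f:=\tilde g-a=\tilde h+b$, which satisfies $f\restriction_Y=g$ and $f\restriction_Z=h$; $f$ is a unit because $\tilde g\notin\mathfrak{P}_{\mathfrak{P}}$ and $a\in\mathfrak{A}_{\mathfrak{P}}\subseteq\mathfrak{P}_{\mathfrak{P}}$. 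Surjectivity of $\psi$ also uses the previous Lemma: any unit $u$ in $R_{\mathfrak{P}}/(\mathfrak{A}_{\mathfrak{P}}+\mathfrak{B}_{\mathfrak{P}})$ lifts to a unit $\tilde u\in (R_{\mathfrak{P}}/\mathfrak{A}_{\mathfrak{P}})^*$, and $\psi(\tilde u,1)=u$.

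The main obstacle is the careful handling of the stalks of the pushforward sheaves at the different points — in particular realising that one must compute pushforward stalks for closed embeddings and that they vanish off the closed subscheme — together with the correct bookkeeping of the fact that after localization the condition $\mathfrak{A}\cap\mathfrak{B}=0$ still yields $\mathfrak{A}_{\mathfrak{P}}\cap\mathfrak{B}_{\mathfrak{P}}=0$, which is what makes the Mayer–Vietoris argument go through.
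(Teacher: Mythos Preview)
Your proof is correct and follows essentially the same approach as the paper: both check exactness on stalks, use $\mathfrak{A}_{\mathfrak{P}}\cap\mathfrak{B}_{\mathfrak{P}}=0$ for injectivity, invoke the preceding lemma on lifting units in local rings for surjectivity, and for exactness in the middle produce an element $f\in R_{\mathfrak{P}}$ that restricts correctly and lies outside the maximal ideal. The only notable difference is that you explicitly separate the cases $\mathfrak{P}\in Y\smallsetminus Z$ and $\mathfrak{P}\in Z\smallsetminus Y$, whereas the paper treats all stalks uniformly via $(R_{\mathfrak{P}}/\mathfrak{A}_{\mathfrak{P}})^*$ (which simply becomes the trivial group when $\mathfrak{A}_{\mathfrak{P}}=R_{\mathfrak{P}}$); your case analysis is a bit more transparent but not a different argument.
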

\begin{proof}
    Clearly $X=Y\cup Z$. These maps exist because they are induced by the taking the quotients of the involved rings, and the fact that this is a complex is clear.
    
    In order to prove the exactness of this sequence, we look at the stalks at a point $\mathfrak{P}$. The surjectivity of $\psi$ follows from the Lemma above, because the stalks are local rings and $Y\cap Z$ is defined by a quotient of the ring of $Y$ (and of the ring of $Z$). In order to prove injectivity of $\varphi$, we look at it on a stalk
    \[
    \begin{tikzcd}[cramped]
    \O^*_{X,\mathfrak{P}}\rar["\varphi_{\mathfrak{P}}"]&(i_*\O^*_Y)_{\mathfrak{P}}\oplus (i_*\O^*_Z)_{\mathfrak{P}}.
    \end{tikzcd}
    \]
    Since $Y=\Spec \faktor{R}{\mathfrak{A}}$ and $Z=\Spec \faktor{R}{\mathfrak{B}}$ we can rewrite this sequence as
    \[
    \begin{tikzcd}[cramped,
    /tikz/column 1/.append style={anchor=base east},
    /tikz/column 2/.append style={anchor=base west},
    row sep = 0ex]
    (R_\mathfrak{P})^*\rar["\varphi_{\mathfrak{P}}"]&\left(\faktor{R_\mathfrak{P}}{\mathfrak{A}}\right)^*\oplus \left(\faktor{R_\mathfrak{P}}{\mathfrak{B}}\right)^*\\
    f\rar[mapsto]&(f, f)
    \end{tikzcd}
    \]
    where, with $\faktor{R_\mathfrak{P}}{\mathfrak{A}}$ and $\faktor{R_\mathfrak{P}}{\mathfrak{B}}$ we mean the quotients via the extended ideals. Consider now $f\in (R_\mathfrak{P})^*$ such that $\varphi_\mathfrak{P}(f)=(1, 1)$. Then $f-1\in\mathfrak{A}$ and $f-1\in\mathfrak{B}$, so $f-1\in\mathfrak{A}\cap\mathfrak{B}=0$, so finally $f=1$ and this map is injective.
    
    In order to prove exactness in the middle, we have to show that if $\psi(g, h)=1$ then they both lie in the image of $\varphi$.    
    Recall that we have an exact sequence of rings
    \[
    \begin{tikzcd}[cramped]
    0\rar& R_\mathfrak{P}\rar &\faktor{R_\mathfrak{P}}{\mathfrak{A}} \oplus \faktor{R_\mathfrak{P}}{\mathfrak{B}}\rar["\psi"] & \faktor{R_\mathfrak{P}}{\mathfrak{A}+\mathfrak{B}}\rar&0.
    \end{tikzcd}
    \]    
    Let $g, h\in R$ such that $g$ is a unit on $Y$, $h$ is a unit on $Z$ and $\psi(g, h)=1$. This happens if and only if $g=h$ in $Y\cap Z$, because the map $\psi$ sends them to $gh^{-1}$.
    The same holds for $R_\mathfrak{P}$ and the quotients in the sequence above.
    So, there exists $f$ in $R_\mathfrak{P}$ such that $f=g+a=h+b$ in $R_\mathfrak{P}$, with $a\in\mathfrak{A}$ and $b\in\mathfrak{B}$ (where these are the extended ideals in $R_\mathfrak{P}$). What is left to prove, is that $f$ is a unit of $R_\mathfrak{P}$.
    Clearly $\faktor{f}{\mathfrak{A}}=g$ is invertible. Assume that $f$ is not, so it belongs to the maximal ideal $\mathfrak{P}R_\mathfrak{P}$, and if we now go modulo $\mathfrak{A}$, it belongs to $\mathfrak{P}\faktor{R_\mathfrak{P}}{\mathfrak{A}}$, that is again the maximal ideal, and so it would not be invertible.
    So $f$ is invertible and $g, h$ both come from before, thus that the sequence is also exact in the middle.
\end{proof}

\begin{lemma}\label{lemma:trivial-intersection-ideals-SR-ring}
    Let $\triangle$ be a simplicial complex and let $F$ be one of its facets. Let $X$ be $\Spec \KK[\triangle]$ and $Y$ be the maximal linear coordinate component of $X$ that corresponds to $F$. Let $Z=\overline{X\smallsetminus Y}$ be the union of all the other maximal linear coordinate components in $X$. Then we can apply the Proposition above on $Y$ and $Z$.
\end{lemma}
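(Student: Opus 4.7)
The plan is to identify the ideals that realize $Y$ and $Z$ as closed subschemes of $X=\Spec \KK[\triangle]$ and verify the hypothesis $\mathfrak{A}\cap\mathfrak{B}=0$ of Proposition~\ref{proposition:exact-sequence-units-ideals-trivial-intersection}. By Lemma~\ref{Lemma:correspondencefacetsirreduciblecomponents}, the maximal linear coordinate components of $X$ are in bijection with the facets of $\triangle$; let $F=F_1,F_2,\dots,F_r$ be these facets, and let $\mathfrak{P}_i=(X_j\mid j\notin F_i)\subseteq \KK[\triangle]$ be the corresponding minimal prime ideals.

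First I would set $\mathfrak{A}:=\mathfrak{P}_1$, so that $Y=V(\mathfrak{A})=\Spec \KK[\mathcal{P}(F)]$ by Lemma~\ref{Lemma:descriptionofcomponentoffacet}. For $Z$, since $Z=\overline{X\smallsetminus Y}$ is the union of the remaining maximal linear coordinate components, I would take $\mathfrak{B}:=\mathfrak{P}_2\cap\cdots\cap \mathfrak{P}_r$, so that $Z=V(\mathfrak{B})$ and $\Spec \KK[\triangle]/\mathfrak{B}\cong Z$ as a reduced scheme.

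The key step is then to verify that $\mathfrak{A}\cap\mathfrak{B}=0$. By construction,
\[
\mathfrak{A}\cap\mathfrak{B}=\mathfrak{P}_1\cap \mathfrak{P}_2\cap\cdots\cap \mathfrak{P}_r=\bigcap_{i=1}^r \mathfrak{P}_i,
\]
which is the intersection of all the minimal primes of $\KK[\triangle]$ and hence equals the nilradical $\nil(\KK[\triangle])$. Since Stanley–Reisner rings are reduced (the ideal $I_\triangle$ being a squarefree monomial ideal, equivalently by Lemma~\ref{lemma:torsion-free-cancellative-reduced-algebra} applied to the reduced, torsion-free, cancellative simplicial binoid $M_\triangle$), the nilradical vanishes, so $\mathfrak{A}\cap\mathfrak{B}=0$.

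With this, the hypotheses of Proposition~\ref{proposition:exact-sequence-units-ideals-trivial-intersection} are satisfied, and moreover $Y\cap Z=V(\mathfrak{A}+\mathfrak{B})$ is exactly the intersection of $\Spec \KK[\mathcal{P}(F)]$ with the union of the other components, matching the geometric picture from Lemma~\ref{Lemma:intersectionofspecs}. No serious obstacle is expected here; the only point to keep clearly in mind is to confirm that $Z=\overline{X\smallsetminus Y}$ as a reduced subscheme really is cut out by $\mathfrak{B}=\bigcap_{i\geq 2}\mathfrak{P}_i$, which follows from the bijection between facets and irreducible components together with the fact that, in a reduced ring, the ideal of a finite union of irreducible closed subsets is the intersection of their defining prime ideals.
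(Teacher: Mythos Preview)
Your proof is correct and follows essentially the same approach as the paper: both arguments identify the defining ideals of $Y$ and $Z$, observe that their intersection is the intersection of all minimal primes of $\KK[\triangle]$ (equivalently, the ideal of $Y\cup Z=X$), and conclude this is zero because the Stanley--Reisner ring is reduced. Your version is in fact more explicit in naming $\mathfrak{A}=\mathfrak{P}_1$ and $\mathfrak{B}=\bigcap_{i\geq 2}\mathfrak{P}_i$, whereas the paper appeals more loosely to the correspondence between radical ideals and closed subsets.
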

\begin{proof}
    We have to prove that the ideal that defines $Y$ and the ideal that defines $Z$ in $X$ have trivial intersection.
    $Y$ is an affine subspace $Y\cong\AA^k$ of $X$, so the ideal that defines it is radical.
    In the same way, $Z$ is a union of affine spaces, so again defined by a radical ideal.\footnote{In details, the ideal of $Z$ can be seen as the colon of two other ideals, see \cite[Corollary 4.8]{cox2006using} for a reference.}
    
    We can apply a basic fact about radical ideals and algebraic sets and see that $Y\cup Z$ is defined by the intersection of the two ideals. In particular, since $Y\cup Z=X$, we have that the two ideals have trivial intersection in $\KK[\triangle]$.
\end{proof}

\begin{remark}
    In the lemma above, $Z$ corresponds to a simplicial complex\footnote{Like before in Lemma~\ref{Lemma:descriptionofcomponentoffacet}, $\mathcal{P}(F)$ is the subset-closure of $F$, i.e.\ its power set.}
    \[
    \triangle'=\overline{\left(\triangle\smallsetminus \mathcal{P}(F)\right)}_{\subseteq},
    \]
    that is the subset-closure of the subset of $\triangle$ obtained by removing $F$ and all its subsets from $\triangle$.
\end{remark}

\begin{example}
    Let $\triangle$ be the simplicial complex on $V=[5]$ with facets $\{1, 2, 3\}$ and $\{3, 4, 5\}$, that we have already seen in Remark~\ref{remark:not-alpha-simplicial}
    
    \begin{minipage}[c]{0.5\textwidth}
        \[
        \triangle = \left\{\begin{aligned}
        &\varnothing, \{1\}, \{2\}, \{3\}, \{4\}, \{5\},\\
        &\{1, 2\}, \{1, 3\}, \{2, 3\}, \\
        &\{3, 4\}, \{3, 5\}, \{4, 5\},\\
        &\{1, 2, 3\}, \{3, 4, 5\}\end{aligned}\right\}
        \]\vfill
    \end{minipage}\begin{minipage}[c]{0.5\textwidth}
    \hspace{6em}\begin{tikzpicture}
    \tikzstyle{point}=[circle,thick,draw=black,fill=black,inner sep=0pt,minimum width=4pt,minimum height=4pt]
    \node (v1)[point, label={[label distance=0cm]-135:$1$}] at (0,0) {};
    \node (v2)[point, label={[label distance=0cm]135:$2$}] at (0,1) {};
    \node (v3)[point, label={[label distance=0cm]90:$3$}] at (1, .5) {};
    \node (v4)[point, label={[label distance=0cm]45:$4$}] at (2,1) {};
    \node (v5)[point, label={[label distance=0cm]-45:$5$}] at (2,0) {};
    
    \draw (v1.center) -- (v2.center);
    \draw (v1.center) -- (v3.center);
    \draw (v2.center) -- (v3.center);
    \draw (v3.center) -- (v4.center);
    \draw (v3.center) -- (v5.center);
    \draw (v4.center) -- (v5.center);
    
    \draw[color=black!20, style=fill, outer sep = 20pt] (0.1,0.1) -- (0.1,0.9) -- (0.9, 0.5) -- cycle;
    \draw[color=black!20, style=fill, outer sep = 20pt] (1.9,0.1) -- (1.9,0.9) -- (1.1, 0.5) -- cycle;
    \end{tikzpicture}
\end{minipage}

Let $F=\{3,4,5\}$. Then $\mathcal{P}(F)$ is the simplex on $\{3, 4, 5\}$,
\[        
\mathcal{P}(F)=\{\varnothing, \{3\}, \{4\}, \{5\}, \{3, 4\}, \{3, 5\}, \{4, 5\},  \{3, 4, 5\}\}
\]
So
\[
\triangle\smallsetminus \mathcal{P}(F)=\{\{1\}, \{2\}, \{1, 2\}, \{1, 3\}, \{2, 3\}, \{1, 2, 3\}\}
\]
and

\begin{minipage}[c]{0.5\textwidth}
    \begin{align*}
    \triangle' & = \overline{\left(\triangle\smallsetminus \mathcal{P}(F)\right)}_{\subseteq}\\
    & = \left\{\begin{aligned}
    &\varnothing, \{1\}, \{2\}, \{3\},\\
    &\{1, 2\}, \{1, 3\}, \{2, 3\}, \\
    &\{1, 2, 3\}\end{aligned}\right\}
    \end{align*}\vfill
\end{minipage}\begin{minipage}[c]{0.5\textwidth}
\hspace{6em}\begin{tikzpicture}
\tikzstyle{point}=[circle,thick,draw=black,fill=black,inner sep=0pt,minimum width=4pt,minimum height=4pt]
\node (v1)[point, label={[label distance=0cm]-135:$1$}] at (0,0) {};
\node (v2)[point, label={[label distance=0cm]135:$2$}] at (0,1) {};
\node (v3)[point, label={[label distance=0cm]90:$3$}] at (1, .5) {};

\draw (v1.center) -- (v2.center);
\draw (v1.center) -- (v3.center);
\draw (v2.center) -- (v3.center);

\draw[color=black!20, style=fill, outer sep = 20pt] (0.1,0.1) -- (0.1,0.9) -- (0.9, 0.5) -- cycle;
\end{tikzpicture}
\end{minipage}\qedhere
\end{example}

\begin{theorem}\label{Thm:vanishingspecstanleyreisner}
    $\H^j(\KK[\triangle], \O^*)=0$ for all $j\geq 1$.
\end{theorem}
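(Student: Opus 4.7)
The plan is to argue by induction on the number of faces $\#\triangle$. The base case is when $\triangle$ has a single facet $F$, so that $\triangle=\mathcal{P}(F)$ and $\KK[\triangle]\cong\KK[X_i:i\in F]$; here $\Spec\KK[\triangle]\cong\AA^{\#F}$ and the vanishing is exactly Lemma~\ref{Lemma:aciclicityforaffinespace}. For the inductive step, suppose $\triangle$ has at least two facets and pick one, say $F$. Set $\triangle':=\triangle\smallsetminus\{F\}$ and $\triangle'':=\triangle'\cap\mathcal{P}(F)=\mathcal{P}(F)\smallsetminus\{F\}$. Both are simplicial complexes with strictly fewer faces than $\triangle$ (for $\triangle''$ one uses $\mathcal{P}(F)\subsetneq\triangle$, which holds because $\triangle$ has another facet), so the inductive hypothesis applies. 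Writing $X=\Spec\KK[\triangle]$, $Y=\Spec\KK[\mathcal{P}(F)]\cong\AA^{\#F}$, and $Z=\Spec\KK[\triangle']$, Lemmas~\ref{Lemma:decompositionofspectrum} and~\ref{Lemma:intersectionofspecs} give $X=Y\cup Z$ with $Y\cap Z=\Spec\KK[\triangle'']$.

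Lemma~\ref{lemma:trivial-intersection-ideals-SR-ring} shows the defining ideals of $Y$ and $Z$ in $\KK[\triangle]$ intersect trivially, so Proposition~\ref{proposition:exact-sequence-units-ideals-trivial-intersection} supplies the short exact sequence
\[
1\to\O^*_X\to i_*\O^*_Y\oplus i_*\O^*_Z\to i_*\O^*_{Y\cap Z}\to 1
\]
of sheaves on $X$. Pushforward along a closed immersion is exact on sheaves of abelian groups (stalks at points of the subscheme are preserved, stalks outside vanish), so $\H^p(X,i_*\F)=\H^p(W,\F)$ for $i\colon W\hookrightarrow X$ any of the three closed embeddings. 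The associated long exact sequence reads
\[
\cdots\to\H^{j-1}(\KK[\triangle''],\O^*)\to\H^j(\KK[\triangle],\O^*)\to\H^j(\AA^{\#F},\O^*)\oplus\H^j(\KK[\triangle'],\O^*)\to\H^j(\KK[\triangle''],\O^*)\to\cdots
\]
For every $j\geq 2$ all three outer groups vanish by Lemma~\ref{Lemma:aciclicityforaffinespace} and the inductive hypothesis, which yields $\H^j(\KK[\triangle],\O^*)=0$ at once.

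The delicate case is $j=1$: after using the vanishing of $\H^1(Y,\O^*)$ and $\H^1(\KK[\triangle'],\O^*)$, the sequence reduces to
\[
\H^0(Y,\O^*)\oplus\H^0(\KK[\triangle'],\O^*)\xrightarrow{\ \psi\ }\H^0(\KK[\triangle''],\O^*)\to\H^1(\KK[\triangle],\O^*)\to 0,
\]
so I need $\psi$ to be surjective. The main obstacle is the auxiliary claim that $\KK[\triangle]^*=\KK^*$ for every simplicial complex, which I plan to establish as follows: $\KK[\triangle]$ is $\NN$-graded with $R_0=\KK$ and reduced (its defining ideal is radical), so for any unit $u=u_0+u_+$ (with $u_0\in\KK$ and $u_+$ of positive degree) and any minimal prime $\mathfrak{p}_G=\langle X_i:i\notin G\rangle$ (with $G$ a facet), the image of $u$ in $\KK[\triangle]/\mathfrak{p}_G\cong\KK[X_i:i\in G]$ is a unit in a polynomial ring, hence equal to the constant $u_0$; therefore $u_+\in\mathfrak{p}_G$ for every minimal prime, and $u_+\in\nil(\KK[\triangle])=0$. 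Granted the claim, $\psi$ becomes the surjection $\KK^*\oplus\KK^*\to\KK^*$, $(\alpha,\beta)\mapsto\alpha\beta^{-1}$, whence $\H^1(\KK[\triangle],\O^*)=0$ and the induction closes.
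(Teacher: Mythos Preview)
Your proof is correct and follows essentially the same strategy as the paper: induct using the short exact sequence of Proposition~\ref{proposition:exact-sequence-units-ideals-trivial-intersection}, invoke Lemma~\ref{Lemma:aciclicityforaffinespace} for the affine piece, and handle $j=1$ via surjectivity of $\KK^*\oplus\KK^*\to\KK^*$.

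There is one small but genuine variation worth noting. The paper inducts on the number of \emph{facets} and, in its proof, takes $Z=\overline{X\smallsetminus Y}$, i.e.\ $\triangle'=\overline{(\triangle\smallsetminus\mathcal{P}(F))}_{\subseteq}$; this $\triangle'$ has exactly one fewer facet. You instead induct on the number of \emph{faces} and take $\triangle'=\triangle\smallsetminus\{F\}$ as in Lemmas~\ref{Lemma:decompositionofspectrum} and~\ref{Lemma:intersectionofspecs}; your $\triangle'$ has one fewer face but may well have \emph{more} facets than $\triangle$, so the paper's induction variable would not decrease with your choice, and conversely. Both schemes of induction are valid, and your $\triangle'$ has the virtue of matching the preparatory lemmas directly (so that $\triangle''=\mathcal{P}(F)\smallsetminus\{F\}$ exactly). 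A tiny caveat: Lemma~\ref{lemma:trivial-intersection-ideals-SR-ring} is stated for the paper's $Z=\overline{X\smallsetminus Y}$, not yours, but its proof goes through verbatim for your $Z$ since it only uses that the defining ideals are radical and that $Y\cup Z=X$. Finally, you supply a proof that $\KK[\triangle]^*=\KK^*$ where the paper simply asserts it; your argument via reduction modulo the minimal primes $\mathfrak{p}_G$ is clean and correct.
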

\begin{proof}
    Let $X=\Spec\KK[\triangle]$. We prove the claim by induction on the number of facets of $\triangle$, that correspond to the number of maximal coordinate linear subspaces of $X$. If $\triangle$ has only one facet, then it is a simplex and $X\cong\AA^n$ for some $n$, so we proved in Lemma~\ref{Lemma:aciclicityforaffinespace} that $\H^i(X, \O^*)=0$ for all $j\geq 1$.
    
    Let now $\triangle$ be any simplicial complex. Consider $Y\cong\AA^m$ a subset of $X$ associated to a facet $F$ of $\triangle$, so $Y$ is a maximal coordinate linear subset of $X$. Let $Z$ be the closure of the complement of $Y$ in $X$, i.e.\ $Z=\overline{X\smallsetminus Y}$. Clearly $Z\cong\KK[\triangle']$ for some simplicial complex $\triangle'$, where $\triangle'=\overline{\left(\triangle\smallsetminus \mathcal{P}(F)\right)}_{\subseteq}$, the subset-closure of the subset of $\triangle$ obtained by removing $F$ and all its subsets from $\triangle$, as we showed in the previous remark and example. Clearly, $\triangle'$ has a facet less than $\triangle$, namely $F$.
    
    In the same way, $Y\cap Z$ is again a union of coordinate linear subspaces, whose maximal components are the intersection of the maximal components of $Z$ with $Y$, so again coming from another simplicial complex $\triangle''$, that is easier (with smaller dimension and with less facets) than before.
    
    Thanks to the Lemma above, we know that the radical ideal defining $Y$ and the radical ideal defining $Z$ have trivial intersection in $\KK[\triangle]$. We can then apply Proposition~\ref{proposition:exact-sequence-units-ideals-trivial-intersection} to obtain the short exact sequence of sheaves
    \[
    \begin{tikzcd}[cramped, column sep = 2em]
    1\rar&\O^*_X\rar["\varphi"]&i_*\O^*_Y\oplus i_*\O^*_Z\rar["\psi"] & i_*\O^*_{Y\cap Z}\rar&1.
    \end{tikzcd}
    \]
    When we take cohomology, we obtain the long exact sequence of cohomology on $X$ (we omit the space for clarity)
    \[
    \begin{tikzcd}[row sep=2ex, column sep = 2em, cramped]
    \dots\rar&\H^j(\O^*_X)\rar&\H^j(i_*\O^*_Y)\oplus \H^j(i_*\O^*_Z)\rar & \H^j(i_*\O^*_{Y\cap Z})\arrow[dll, overlay, start anchor = east, end anchor = west, to path={ .. controls +(4, -0.8) and +(-2,.8).. (\tikztotarget)}]\\
    &\H^{j+1}(\O^*_X)\rar&\dots
    \end{tikzcd}
    \]
    where, if $j\geq1$, we have that $\H^j(i_*\O^*_Z)=\H^j(Z, \O^*_Z)$ because $Z$ is a closed quotient, and this in turn is $0$ by induction on the number of facets, and $\H^j(i_*\O^*_{Y\cap Z})=\H^j(Y\cap Z, \O^*_{Y\cap Z})=0$. Since $Y$ is an affine space, we already know that $\H^j(i_*\O^*_Y)=\H^j(Y, \O^*_Y)=0$. So for $j>1$ we squeeze $\H^j(\O^*_X)$ between two zeros, and this proves that it is zero itself.
    
    For $j=1$, we have a look at
    \[
    \begin{tikzcd}[row sep=2ex, column sep = 2em, cramped]
    \H^0(\O^*_X)\rar&\H^0(i_*\O^*_Y)\oplus \H^0(i_*\O^*_Z)\rar & \H^0(i_*\O^*_{Y\cap Z})\rar&\H^1(\O^*_X)\rar&\dots
    \end{tikzcd}
    \]
    Since $X$, $Y$ and $Z$ are all defined by Stanley-Reisner ideals, whose units are just the units of the field, this sequence becomes
    \[
    \begin{tikzcd}[row sep=2ex, column sep = 2em, cramped]
    \KK^*\rar&\KK^*\oplus \KK^*\rar["\psi"] & \KK^*\rar&\H^1(\O^*_X)\rar&\dots.
    \end{tikzcd}
    \]
    But $\psi(s, t)=s^{-1}t$, so it is surjective, so also $\H^1(\O^*_X)=0$.
\end{proof}

\begin{corollary}
    The Picard group of a Stanley-Reisner algebra is always trivial.
\end{corollary}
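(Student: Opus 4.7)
The plan is to deduce the corollary immediately from the preceding theorem, since all the work has already been done. First I would recall from Proposition~\ref{proposition:cohomology-vector-bundles} that for any scheme $(X, \O_X)$ there is a canonical isomorphism of groups
\[
\Pic(X) \cong \H^1(X, \O_X^*).
\]
Applying this with $X = \Spec \KK[\triangle]$ and the structure sheaf of the Stanley-Reisner algebra, the Picard group of $\KK[\triangle]$ is identified with $\H^1(\Spec \KK[\triangle], \O^*)$.

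Next I would invoke Theorem~\ref{Thm:vanishingspecstanleyreisner}, which asserts that $\H^j(\KK[\triangle], \O^*) = 0$ for all $j \geq 1$. Specializing to $j = 1$ immediately yields $\H^1(\Spec \KK[\triangle], \O^*) = 0$, and combining with the isomorphism above gives $\Pic(\KK[\triangle]) = 0$.

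There is no main obstacle to surmount here, as the corollary is an immediate specialization of the vanishing theorem in the smallest nontrivial degree; the entire inductive and Mayer-Vietoris argument establishing the theorem has already been carried out, and the only remaining content is the identification of the Picard group with the first cohomology of the sheaf of units, which is a standard fact recorded earlier in the text.
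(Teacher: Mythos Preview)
Your proposal is correct and matches the paper's intent: the corollary is stated there without proof, as an immediate consequence of Theorem~\ref{Thm:vanishingspecstanleyreisner} in degree $j=1$ together with the identification $\Pic(X)\cong\H^1(X,\O_X^*)$. One small remark: Proposition~\ref{proposition:cohomology-vector-bundles} is formulated for binoid schemes, so for the ringed space $\Spec\KK[\triangle]$ you would more precisely cite the general fact for ringed spaces recorded just before the definition of $\Pic^{\loc}(\KK[M])$ (via \cite[Exercise III.4.5]{hartshorne1977algebraic}); the argument is otherwise exactly as you wrote.
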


\begin{theorem}\label{Thm:vanishingspecstanleyreisnerA*}
    $\H^j(\KK[\triangle][y_1^{\pm1}, \dots, y_m^{\pm1}], \O^*)=0$ for all $j\geq 1$.
\end{theorem}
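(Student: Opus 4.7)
The plan is to extend the argument of Theorem~\ref{Thm:vanishingspecstanleyreisner} by the Laurent polynomial ring $\KK[y_1^{\pm 1}, \dots, y_m^{\pm 1}]$, inducting on the total number of faces of $\triangle$. For the base case, if $\triangle$ is a simplex with a single facet on $n$ vertices, then $\KK[\triangle] = \KK[X_1, \dots, X_n]$, so $\KK[\triangle][y_1^{\pm 1}, \dots, y_m^{\pm 1}] = \KK[X_1, \dots, X_n][y_1^{\pm 1}, \dots, y_m^{\pm 1}]$ and vanishing of positive cohomology follows directly from Lemma~\ref{Lemma:aciclicityforaffinespace}, which treats exactly this case.

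For the inductive step, I would pick a facet $F$ of $\triangle$ and let $Y \cong \AA^{|F|}$ be the corresponding maximal coordinate linear subspace of $\Spec \KK[\triangle]$ and $Z = \overline{\Spec \KK[\triangle] \setminus Y}$, defined by the Stanley-Reisner ideal of a simplicial complex $\triangle'$ with strictly fewer faces. Writing $X' = \Spec \KK[\triangle][y_1^{\pm 1}, \dots, y_m^{\pm 1}]$, $Y' = Y \times (\AA^*)^m$, $Z' = Z \times (\AA^*)^m$, we have $X' = Y' \cup Z'$. Since tensoring with the (flat) Laurent polynomial ring preserves the intersection of ideals, the trivial intersection provided by Lemma~\ref{lemma:trivial-intersection-ideals-SR-ring} lifts to $X'$, and Proposition~\ref{proposition:exact-sequence-units-ideals-trivial-intersection} yields a short exact sequence
\[
1 \to \O^*_{X'} \to i_*\O^*_{Y'} \oplus i_*\O^*_{Z'} \to i_*\O^*_{Y' \cap Z'} \to 1.
\]

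Taking the associated long exact sequence in cohomology on $X'$, and using that closed immersions have exact pushforward so $\H^j(X', i_*\O^*_{W'}) = \H^j(W', \O^*_{W'})$ for $W \in \{Y, Z, Y \cap Z\}$, I would argue: the higher cohomology on $Y' \cong \AA^{|F|} \times (\AA^*)^m$ vanishes by Lemma~\ref{Lemma:aciclicityforaffinespace}; on $Z'$ and $Y' \cap Z'$ it vanishes by the inductive hypothesis, applied respectively to $\triangle'$ and to $\triangle'' = \triangle' \cap \mathcal{P}(F)$, both of which have strictly fewer faces than $\triangle$. This handles all $j \geq 2$ at once.

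The remaining case $j = 1$ requires verifying surjectivity of $\H^0(Y', \O^*) \oplus \H^0(Z', \O^*) \to \H^0(Y' \cap Z', \O^*)$, and this is where the main technical care is needed (and where the argument genuinely differs from Theorem~\ref{Thm:vanishingspecstanleyreisner}). Unlike the previous situation, global units are no longer just $\KK^*$: the Laurent variables contribute an extra $\ZZ^m$. The key observation is that every Stanley-Reisner scheme is connected through the origin, so $Y$, $Z$, and $Y \cap Z$ are each connected; combining this with the embedding $\KK[\triangle''] \hookrightarrow \prod_G \KK[X_i : i \in G]$ into the product of coordinate rings of facets and the standard computation of units of Laurent polynomial algebras over a polynomial ring, one checks that $\O^*(W')$ equals $\KK^* \oplus \ZZ^m$ in each case. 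The connecting map then sends $((\alpha, \vec{a}), (\beta, \vec{b}))$ to $(\alpha^{-1}\beta, \vec{b} - \vec{a})$, which is manifestly surjective. The hard part of the proof is thus isolating and justifying this unit-group computation uniformly for all the Stanley-Reisner schemes appearing in the induction.
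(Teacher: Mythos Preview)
Your proposal is correct and follows essentially the same route as the paper: decompose $X' = Y' \cup Z'$ with $Y' = \AA^{|F|}\times(\AA^*)^m$ and $Z' = \Spec\KK[\triangle']\times(\AA^*)^m$, invoke Proposition~\ref{proposition:exact-sequence-units-ideals-trivial-intersection} after lifting the trivial ideal intersection of Lemma~\ref{lemma:trivial-intersection-ideals-SR-ring} to the Laurent extension, and run the long exact sequence with Lemma~\ref{Lemma:aciclicityforaffinespace} and induction. The only cosmetic differences are that you induct on the total number of faces rather than on the number of facets (both work, since $\triangle'$ and $\triangle''$ are strictly smaller in either count), and you give a slightly more explicit justification for the $\H^0$ computation in the $j=1$ step, where the paper simply records that global units become $\KK^*\oplus\ZZ^m$ and that the map $(s,t)\mapsto s^{-1}t$ remains surjective on the extra $\ZZ^m$ summand.
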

\begin{proof}
    The proof is essentially the same as in Theorem~\ref{Thm:vanishingspecstanleyreisner}, modulo some small tweakings. In particular,
    \begin{itemize}
        \item instead of taking $Y=\AA^n$ we take $Y=\AA^n\times(\AA^*)^m$, where we can apply again Lemma~\ref{Lemma:aciclicityforaffinespace},
        \item in the same way, we consider $Z=\KK[\triangle']\times\AA^*$,
        \item we apply Proposition~\ref{proposition:exact-sequence-units-ideals-trivial-intersection} on the ring $\KK[\triangle][y_1^{\pm1}, \dots, y_m^{\pm1}]$ and its quotients,
        \item we can apply Lemma~\ref{lemma:trivial-intersection-ideals-SR-ring} essentially in the same way, because the ideals do not involve the units of the new ring. 
    \end{itemize} 
    A special observation has to be made for $\H^1$, because the sequence of global units becomes now
    \[
    \begin{tikzcd}[row sep=2ex, column sep = 2em, cramped]
    \KK^*\oplus(\ZZ)^m\rar&(\KK^*\oplus(\ZZ)^m) \bigoplus (\KK^*\oplus(\ZZ)^m)\rar["\pi"] & \KK^*\oplus(\ZZ)^m\rar&\dots
    \end{tikzcd}
    \]
    but again it is easy to see that the last map is surjective also on the $\ZZ$'s.
\end{proof}

\begin{corollary}
    The \'etale cohomology group $\H^1_{\mathrm{et}}(\KK[\triangle], \ZZ)$ is zero for every simplicial complex.
\end{corollary}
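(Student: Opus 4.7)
The plan is to reduce the vanishing to the elementary fact that there are no nontrivial continuous homomorphisms from a profinite group into the discrete group $\ZZ$. Write $X = \Spec \KK[\triangle]$.

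First I would verify that $X$ is connected. By Lemma~\ref{Lemma:correspondencefacetsirreduciblecomponents}, $X$ is a finite union of coordinate linear subspaces $L_F \cong \AA^{\#F}$ indexed by the facets $F$ of $\triangle$, and each such $L_F$ contains the closed point $\langle X_1,\dots, X_n \rangle$. The irreducible components of $X$ therefore pairwise intersect at the origin, and $X$ is connected.

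Next, fixing a geometric base point $\bar x \in X$, the standard identification of $\H^1_{\mathrm{et}}$ of a constant sheaf on a connected scheme gives
\[
\H^1_{\mathrm{et}}(X, \underline{\ZZ}) \;\cong\; \Hom_{\mathrm{cont}}\bigl(\pi_1^{\mathrm{et}}(X, \bar x), \ZZ\bigr),
\]
where $\ZZ$ carries the discrete topology. The étale fundamental group $\pi_1^{\mathrm{et}}(X, \bar x)$ is profinite, so the image of any continuous homomorphism into the discrete $\ZZ$ is compact, hence finite, hence trivial (the only finite subgroup of $\ZZ$ being $\{0\}$). Thus $\H^1_{\mathrm{et}}(X, \ZZ) = 0$.

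There is essentially no obstacle once connectedness is settled; the argument is uniform in $\triangle$ and $\KK$. The preceding Theorem~\ref{Thm:vanishingspecstanleyreisnerA*} nevertheless supplies a parallel Zariski-side story that motivates placing the corollary here: the Leray spectral sequence for the projection $\pi : X \times \mathbb{G}_m \to X$, together with $R^0 \pi_* \O^*_{X \times \mathbb{G}_m} \cong \O^*_X \oplus \underline{\ZZ}$ and $R^1 \pi_* \O^*_{X \times \mathbb{G}_m} = 0$, yields a natural splitting $\Pic(X \times \mathbb{G}_m) \cong \Pic(X) \oplus \H^1_{\mathrm{Zar}}(X, \ZZ)$. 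Theorem~\ref{Thm:vanishingspecstanleyreisnerA*} with $m = 1$ then forces both summands to be zero, reconciling the étale corollary with the Zariski vanishing computations of the chapter.
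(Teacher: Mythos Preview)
There is a genuine gap in your main argument. The identification
\[
\H^1_{\mathrm{et}}(X, \underline{\ZZ}) \;\cong\; \Hom_{\mathrm{cont}}\bigl(\pi_1^{\mathrm{et}}(X,\bar x),\, \ZZ\bigr)
\]
is standard only for \emph{finite} constant coefficients, or for schemes that are normal (more generally, geometrically unibranch). For arbitrary connected schemes it fails. Indeed, your argument as written would prove $\H^1_{\mathrm{et}}(X,\ZZ)=0$ for \emph{every} connected scheme, since the \'etale fundamental group is always profinite. But take the affine nodal curve over a field: it is connected, its profinite $\pi_1^{\mathrm{et}}$ admits no nonzero continuous map to $\ZZ$, yet $\H^1_{\mathrm{et}}(X,\ZZ)\cong\ZZ$, the generator being the $\ZZ$-torsor given by the infinite chain of copies of $\AA^1$ glued end to end along the preimages of the node. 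Stanley--Reisner rings are not normal as soon as $\triangle$ has more than one facet, so you are squarely in the regime where the identification you invoke is unavailable.

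The paper's proof sidesteps this entirely. It applies Weibel's decomposition
\[
\Pic\bigl(A[x,x^{-1}]\bigr)\;\cong\;\Pic(A)\oplus\mathrm{NPic}(A)\oplus\mathrm{NPic}(A)\oplus \H^1_{\mathrm{et}}(\Spec A,\ZZ),
\]
valid for any commutative ring $A$, and then feeds in Theorem~\ref{Thm:vanishingspecstanleyreisnerA*} with $m=1$ to obtain $\Pic\bigl(\KK[\triangle][x,x^{-1}]\bigr)=0$; every summand on the right, in particular the \'etale $\H^1$, must then vanish. Your closing paragraph is close in spirit, but the Leray argument you sketch lands only in $\H^1_{\mathrm{Zar}}(X,\ZZ)$, and the step from Zariski to \'etale vanishing is exactly the nontrivial content here --- it is again false for the node. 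Weibel's formula is what bridges that gap, and it is precisely the input the paper uses.
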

\begin{proof}
    From \cite{WeibelPic}, we know that for every ring $A$
    \[
    \Pic(A[x, x^{-1}])=\Pic(A)\oplus\mathrm{NPic}(A)\oplus\mathrm{NPic}(A)\oplus\H^1_{\mathrm{et}}(A, \ZZ)
    \]
    and now we know that $\Pic(\KK[\triangle][x,x^{-1}])=0$, so the result follows easily.\footnote{$\mathrm{NPic}(A)$ appears two times in the expression, and it is $\faktor{\Pic A[t]}{\Pic A}$. The idea behind it, is that Weibel treats separately $x$ and $x^{-1}$, as if they were two different variables, and then with the Étale cohomology with coefficients in $\ZZ$ he takes in account their relation.}
\end{proof}

\section{The \texorpdfstring{\v{C}}{C}ech-Picard complex on the punctured spectrum}
Using what we have proved in the previous section, we will show that $\{D(X_i)\}$ is an acyclic covering for $\Spec^\bullet\KK[\triangle]$ with respect to the sheaf $\O^*$ and we will describe the groups and the maps appearing in the \v{C}ech complex relative to this covering.

In the following theorems, we prove that $\{D(X_i)\}$ is an acyclic covering and we describe explicitly the groups appearing in the \v{C}ech complex
\begin{equation*}
\begin{tikzcd}[baseline=(current  bounding  box.center), row sep=2ex, column sep = 2em, cramped, 
/tikz/column 1/.append style={anchor=base east},
/tikz/column 2/.append style={anchor=base west}]
\displaystyle \vC: \bigoplus_{1\leq i\leq n}\O^*(D(X_i))\rar &  \displaystyle \bigoplus_{1\leq i_0<i_2\leq n}\O^*(D(X_{i_0})\cap D(X_{i_1}))\longrightarrow \dots \arrow[out=0, in=180, overlay, d, start anchor = east, end anchor = west, to path={ .. controls +(4, -0.8) and +(-4,.8).. (\tikztotarget)}] \\
& \displaystyle \bigoplus_{1\leq i_0 < i_1 <\dots < i_j \leq n }\O^*(D(X_{i_0})\cap D(X_{i_1}) \cap \dots\cap D(X_{i_j}) ) \arrow[out=0, in=180, overlay, d, start anchor = east, end anchor = west, to path={ .. controls +(4, -0.8) and +(-4,.8).. (\tikztotarget)}]\\
&\dots    
\end{tikzcd}
\end{equation*}

\begin{lemma}\label{theorem:localization-stanley-reisner-combinatorial-multiple} Let $F$ be a face of $\triangle$. The localization of the Stanley-Reisner ring of $\triangle$ at $X_F=\{X_{i_0}, \dots, X_{i_j}\}$ is
    \[
    \KK[\triangle]_{X_F}\cong \KK[\triangle'][\ZZ^F]
    \]
    where $\triangle'=\lk_\triangle(F)$.
\end{lemma}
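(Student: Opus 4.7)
The plan is to bootstrap this statement directly from its binoid counterpart, Theorem~\ref{theorem:localization-simplicial-binoid-multiple}, by applying the functor $\KK[\hspace{1em}]$ from binoids to rings. Recall that in Chapter~2 we already proved the combinatorial isomorphism
\[
(M_\triangle)_{x_F}\cong M_{\lk_\triangle(F)}\wedge(\ZZ^F)^\infty.
\]
Since $\KK[M_\triangle]=\KK[\triangle]$ by definition, the whole task is to show that applying $\KK[\hspace{1em}]$ to this isomorphism yields exactly the statement we want.

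First I would invoke Lemma~\ref{lemma:functor-respects-localizations}, which tells us that $\KK[\hspace{1em}]$ commutes with localizations at multiplicative subsets of monomial elements. Taking $E=\NN\cdot x_F\subseteq M_\triangle$, whose image $\overline{E}$ in $\KK[\triangle]$ is the multiplicative system generated by $\{X_i\}_{i\in F}$, this gives
\[
\KK\bigl[(M_\triangle)_{x_F}\bigr]\;\cong\;\KK[\triangle]_{X_F}.
\]
So the left-hand side of the desired isomorphism is already identified with $\KK$ applied to $(M_\triangle)_{x_F}$.

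Next I would handle the right-hand side by using that $\KK[\hspace{1em}]$ turns the smash product of binoids into the tensor product over $\KK$ of their algebras, i.e.\ $\KK[M\wedge N]\cong \KK[M]\otimes_\KK\KK[N]$ (this is a routine property, following from the fact that both constructions are the coproduct in their respective categories and $\KK[\hspace{1em}]$ is a left adjoint; alternatively, one just checks it on generators since $\KK[M\wedge N]$ is free on $(M\smallsetminus\{\infty\})\times(N\smallsetminus\{\infty\})$). Applying this and observing that $\KK[(\ZZ^F)^\infty]$ is by definition the group algebra $\KK[\ZZ^F]$, i.e.\ a Laurent polynomial ring in $|F|$ variables, we get
\[
\KK\bigl[M_{\lk_\triangle(F)}\wedge(\ZZ^F)^\infty\bigr]\;\cong\;\KK[\lk_\triangle(F)]\otimes_\KK\KK[\ZZ^F]\;=\;\KK[\triangle']\bigl[\ZZ^F\bigr],
\]
where $\triangle'=\lk_\triangle(F)$ and we use $\KK[M_{\lk_\triangle(F)}]=\KK[\lk_\triangle(F)]=\KK[\triangle']$.

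Combining the two computations gives the desired isomorphism $\KK[\triangle]_{X_F}\cong\KK[\triangle'][\ZZ^F]$, with the explicit map sending a variable $X_i$ with $i\in F$ to the corresponding invertible generator of $\ZZ^F$ and a variable $X_j$ with $j\notin F$ to its image in $\KK[\triangle']$ (it becomes zero precisely when $\{j\}\cup F\notin\triangle$, which is exactly the passage to the link). No step presents a serious obstacle: the only thing that requires any care is the compatibility $\KK[M\wedge N]\cong\KK[M]\otimes_\KK\KK[N]$, but this is entirely formal and is already implicit throughout the preceding chapters where the combinatorial/algebraic dictionary is built.
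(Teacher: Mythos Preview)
Your proposal is correct and follows the same approach as the paper: invoke the binoid isomorphism from Theorem~\ref{theorem:localization-simplicial-binoid-multiple} and transport it to rings via Lemma~\ref{lemma:functor-respects-localizations}. You are in fact more explicit than the paper, which compresses the argument to a single sentence and leaves the smash-to-tensor compatibility implicit.
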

\begin{proof}
    This follows from the equivalent theorem on binoids, Theorem~\ref{theorem:localization-simplicial-binoid-multiple}, together with the fact that the functor $\KK[\hspace{1em}]$ respects localizations, stated in Lemma~\ref{lemma:functor-respects-localizations}.
\end{proof}

\begin{corollary}\label{corollary:covering-cohomology-stanley-reisner}
    The cohomology of $\O^*$ on the punctured spectrum $\Spec^\bullet(\KK[\triangle])$ can be computed using as \v{C}ech covering the one given by the fundamental combinatorial open subsets $\{D(X_i)\}$.
\end{corollary}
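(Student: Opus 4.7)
The plan is to invoke Leray's Theorem~\ref{theorem:leray} applied to the covering $\{D(X_i)\}_{i\in V}$ of $\Spec^\bullet(\KK[\triangle])$, so the work reduces to verifying two things: that it is indeed a covering, and that it is acyclic for $\O^*$.

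First, I would check that $\{D(X_i)\}_{i\in V}$ covers $\Spec^\bullet(\KK[\triangle])$. If $\mathfrak{P} \in \Spec^\bullet(\KK[\triangle])$ then $\mathfrak{P} \subsetneq \KK[M_{\triangle,+}] = (X_1,\dots,X_n)$, so some generator $X_i$ must lie outside $\mathfrak{P}$, placing $\mathfrak{P}\in D(X_i)$. Conversely $\KK[M_{\triangle,+}]$ itself is the unique point left uncovered, which is exactly the one we have removed.

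Next, for acyclicity, consider any finite intersection of open sets in the cover,
\[
D(X_{i_0})\cap D(X_{i_1})\cap\dots\cap D(X_{i_j}) \;=\; D(X_{i_0}\cdots X_{i_j}).
\]
By Proposition~\ref{proposition:covering-algebraic-spectrum} this intersection is non-empty precisely when $F=\{i_0,\dots,i_j\}\in\triangle$, and by Lemma~\ref{theorem:localization-stanley-reisner-combinatorial-multiple} we then have an isomorphism of affine schemes
\[
D(F)\;\cong\;\Spec\bigl(\KK[\lk_\triangle(F)]\bigl[y_1^{\pm 1},\dots,y_{j+1}^{\pm 1}\bigr]\bigr).
\]
Theorem~\ref{Thm:vanishingspecstanleyreisnerA*} applied to the simplicial complex $\lk_\triangle(F)$ and the $j+1$ extra invertible variables gives
\[
\H^k\bigl(D(F),\O^*\restriction_{D(F)}\bigr)\;=\;0 \qquad \text{for all } k\geq 1,
\]
which is exactly the hypothesis required by Leray's Theorem~\ref{theorem:leray} for the sheaf $\O^*$ on the cover $\{D(X_i)\}$ of $\Spec^\bullet(\KK[\triangle])$.

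Concluding, Leray yields natural isomorphisms $\vH^p(\{D(X_i)\},\O^*)\cong \H^p(\Spec^\bullet(\KK[\triangle]),\O^*)$ for every $p\geq 0$, which is the claim. No serious obstacle arises: the entire argument is the assembly of results already proved in this chapter, with Theorem~\ref{Thm:vanishingspecstanleyreisnerA*} doing the real work by supplying acyclicity on each combinatorial affine piece, and Lemma~\ref{theorem:localization-stanley-reisner-combinatorial-multiple} ensuring that such pieces really are of the required form $\KK[\triangle'][y_1^{\pm 1},\dots,y_m^{\pm 1}]$.
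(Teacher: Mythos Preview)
Your proof is correct and follows essentially the same route as the paper: the paper's proof simply cites Lemma~\ref{theorem:localization-stanley-reisner-combinatorial-multiple} and Theorem~\ref{Thm:vanishingspecstanleyreisnerA*} to obtain acyclicity and then invokes Leray's Theorem~\ref{theorem:leray}, while you have additionally spelled out the covering check and the case distinction on whether $F\in\triangle$.
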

\begin{proof}
    From Lemma~\ref{theorem:localization-stanley-reisner-combinatorial-multiple} and Theorem~\ref{Thm:vanishingspecstanleyreisnerA*} we get that the sheaf of units is acyclic on this covering and, by applying the Theorem of Leray~\ref{theorem:leray}, we get the result.
\end{proof}

\begin{remark}
    For a Stanley-Reisner ring, it does not make a difference whether we compute the cohomology of the sheaf of units $\H^j(\Spec^\bullet \KK[\triangle], \O^*)$ on the Zariski or in the combinatorial topology, since the covering $\{D(X_i)\}$ is acyclic in both topologies, and this yields the same \v{C}ech complex. Since the combinatorial topology is simpler, we can then restrict to work with it.
\end{remark}

\begin{theorem}\label{theorem:SR-splits-comb-top}
    In the combinatorial topology of $\Spec^\bullet\KK[M_\triangle]$ we have that the sheaf of units splits
    \[
    \O^*_{\KK[\triangle]}=\KK^*\oplus i_*\O^*_{M_\triangle}
    \]
    where $\KK^*$ is the usual constant sheaf.
\end{theorem}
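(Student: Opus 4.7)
The plan is to deduce this theorem as a direct application of Proposition~\ref{proposition:split-sheaves-comb-top}, which already establishes the analogous splitting $(\O_{\KK[M]}^*)^{\comb}\cong (\KK^*)^{\comb}\oplus (i_*\O^*_M)^{\comb}$ for any reduced, torsion-free, cancellative binoid $M$. What remains is simply to verify that the simplicial binoid $M_\triangle$ meets these three hypotheses, and then to note that restriction to $\Spec^\bullet\KK[M_\triangle]$ preserves the splitting.

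First I would check the three properties of $M_\triangle$. Reducedness is immediate from Böttger's characterization (Theorem~6.5.8 of \cite{Boettger}, quoted just before Definition~\ref{definition:reduction-of-element}), which asserts that simplicial binoids are precisely the finitely generated, commutative, semifree, reduced binoids. Torsion-freeness and cancellativity follow from the semifree structure: every element of $M_\triangle^\bullet$ admits a unique expression $\sum_{i} n_i x_i$ with $n_i\in\NN$ and $\supp \subseteq $ a face of $\triangle$, so addition on $M_\triangle^\bullet$ is inherited from the free commutative monoid $\NN^n$ and is therefore cancellative and torsion-free.

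With the hypotheses verified, Proposition~\ref{proposition:split-sheaves-comb-top} applies to $M=M_\triangle$ and gives an isomorphism of sheaves on $\Spec\KK[M_\triangle]$ in the combinatorial topology. Restricting this isomorphism along the open embedding $\Spec^\bullet\KK[M_\triangle]\hookrightarrow \Spec\KK[M_\triangle]$ (noting that the punctured spectrum is a combinatorial open subset, since $\KK[M_{\triangle,+}]=\langle X_1,\dots,X_n\rangle$ is combinatorial) yields the stated splitting.

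I do not expect any substantive obstacle: the theorem is essentially a specialization of Proposition~\ref{proposition:split-sheaves-comb-top}, the only work being the bookkeeping verification that $M_\triangle$ satisfies the three structural hypotheses and that the restriction to the punctured spectrum does not affect the combinatorial-sheaf splitting. No field-dependent argument or new cohomological input is required at this step; those enter later when we compute $\H^j$ of this decomposition in Theorem~\ref{theorem:cohomology-stanley-reisner}.
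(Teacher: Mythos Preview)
Your proposal is correct and matches the paper's own proof essentially verbatim: the paper simply says ``This is just an application of Proposition~\ref{proposition:split-sheaves-comb-top}, because $\KK[\triangle]=\KK[M_\triangle]$ is reduced and $M_\triangle$ is semifree, so torsion-free and cancellative.'' Your additional remark about restricting to the punctured spectrum is a minor bookkeeping point the paper leaves implicit, but it is correct and harmless.
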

\begin{proof}
    This is just an application of Proposition~\ref{proposition:split-sheaves-comb-top}, because $\KK[\triangle]=\KK[M_\triangle]$ is reduced and $M_\triangle$ is semifree, so torsion-free and cancellative.
\end{proof}

\begin{remark}
    In the Zariski topology the sheaf of units does not split, because there are non-combinatorial open subsets on which $i_*\O^*$ does not yields the right units. This can be easily see already in the $\AA^1$, for any $D(P)$ when $P$ is not a monomial.
\end{remark}

\begin{remark}
    Since we are able to split $\O^*_M$ into smaller subsheaves (see Theorem~\ref{theorem:semifree-decomposition-of-sheaf}), we can do the same here, and obtain that in the combinatorial topology
    \[
    \O^*_{\KK[\triangle]}=\KK^*\oplus i_*\left(\displaystyle\bigoplus_{i=1}^n\O^*_{x_i}\right)=\KK^*\oplus \left(\displaystyle\bigoplus_{i=1}^ni_*\O^*_{x_i}\right).
    \]
\end{remark}

\begin{corollary}\label{corollary:cech-complex-groups-algebraic-case}
    Let $\triangle$ be a simplicial complex on the vertex set $V$, let $\KK[\triangle]$ be its Stanley-Reisner algebra and $\O^*=\O^*_{\KK[\triangle]}$ the sheaf of units. Then
    \begin{equation*}
    \O^*\left(\bigcap_{j\in F}D(X_j)\right)\cong\left\{\begin{aligned}
    &\KK^*\times\ZZ^F, &\text{ if } F\in\triangle,\\
    &1,  & \text{ otherwise}.
    \end{aligned}
    \right.
    \end{equation*}
\end{corollary}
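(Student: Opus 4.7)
The plan is to combine three results from earlier in the paper: the splitting of the sheaf of units in the combinatorial topology (Theorem~\ref{theorem:SR-splits-comb-top}), the description of the intersection pattern of the combinatorial covering (Proposition~\ref{proposition:covering-algebraic-spectrum}), and the purely combinatorial computation of $\O^*_{M_\triangle}$ on intersections (Theorem~\ref{theorem:value-sheaf}). Since we are working in the combinatorial topology (which, by Corollary~\ref{corollary:covering-cohomology-stanley-reisner} and the remark following Theorem~\ref{theorem:SR-splits-comb-top}, gives the same answer as the Zariski topology on the open subsets $D(X_F)$), the splitting is at our disposal.

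First, I would treat the two cases separately. If $F \notin \triangle$, then Proposition~\ref{proposition:covering-algebraic-spectrum} gives $\bigcap_{j \in F} D(X_j) = \varnothing$, hence $\O^*$ evaluated there is the trivial group $1$. If instead $F \in \triangle$, the same proposition yields $\bigcap_{j \in F} D(X_j) = D(X_F) \neq \varnothing$, which is affine and corresponds to $\Spec \KK[\triangle]_{X_F}$.

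Next, apply Theorem~\ref{theorem:SR-splits-comb-top} to obtain the splitting $\O^*_{\KK[\triangle]} \cong \KK^* \oplus i_*\O^*_{M_\triangle}$ in the combinatorial topology, and evaluate both summands on $D(X_F)$. The constant sheaf contribution gives $\KK^*(D(X_F)) = \KK^*$ since $D(X_F)$ is non-empty and connected (being an affine scheme over an integral quotient up to units, or more simply since $\Spec \KK[\triangle]_{X_F}$ is a combinatorial open subset with the global sections of $\KK^*$ being $\KK^*$). The pushforward part gives
\[
(i_*\O^*_{M_\triangle})(D(X_F)) = \O^*_{M_\triangle}(i^{-1}(D(X_F))) = \O^*_{M_\triangle}\Bigl(\bigcap_{j \in F} D(x_j)\Bigr),
\]
which by Theorem~\ref{theorem:value-sheaf} equals $\ZZ^F$ when $F \in \triangle$. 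Putting the two together yields $\O^*(D(X_F)) \cong \KK^* \times \ZZ^F$.

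The only mildly subtle point is verifying that the splitting of Theorem~\ref{theorem:SR-splits-comb-top} applies, i.e.\ that $M_\triangle$ is reduced, torsion-free and cancellative: this is immediate since a simplicial binoid is semifree (hence torsion-free and cancellative) and reduced by construction. No real obstacle arises; this is genuinely a book-keeping corollary consolidating the combinatorial topology splitting with the computation already carried out for $\O^*_{M_\triangle}$ in Chapter~\ref{chapter:simplcial-binoids}.
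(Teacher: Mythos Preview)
Your proof is correct and follows essentially the same route as the paper: invoke the splitting $\O^*_{\KK[\triangle]}\cong \KK^*\oplus i_*\O^*_{M_\triangle}$ from Theorem~\ref{theorem:SR-splits-comb-top}, then evaluate each summand on $\bigcap_{j\in F}D(X_j)$ using Theorem~\ref{theorem:value-sheaf} for the combinatorial part. You are slightly more explicit than the paper in citing Proposition~\ref{proposition:covering-algebraic-spectrum} for the empty/non-empty dichotomy and in checking the hypotheses on $M_\triangle$, but the argument is the same.
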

\begin{proof}
    From Theorem~\ref{theorem:SR-splits-comb-top} we know that
    \begin{equation*}
    \O^*\left(\bigcap_{j\in F}D(X_j)\right)\cong \KK^*\left(\bigcap_{j\in F}D(X_j)\right)\oplus \O^*\left(\bigcap_{j\in F}D(X_j)\right).
    \end{equation*}
    Clearly $\KK^*\left(\bigcap_{j\in F}D(X_j)\right)$ is either $\KK^*$ or $1$, depending on the intersection being empty or not.
    Finally, by applying Theorem~\ref{theorem:value-sheaf} on $\O^*_{M_\triangle}$, we obtain the wanted result
    \begin{equation*}
    \O^*\left(\bigcap_{j\in F}D(X_j)\right)\cong\left\{\begin{aligned}
    &\KK^*\times\ZZ^F, &\text{ if } F\in\triangle,\\
    &1,  & \text{ otherwise}.
    \end{aligned}
    \right.\qedhere
    \end{equation*}
\end{proof}

In particular, this tells us that a unit $P$ in $\left(\KK[M_{\triangle'}][X_{i_1}^{\pm1}, \dots, X_{i_j}^{\pm1}]\right)^*$ is a monomial and can be written as
\[
P=a_P{X_{i_1}^{e_1}\cdots X_{i_j}^{e_j}}
\]
with $a_P\in\KK^*$ and exponents in $\ZZ$.

\begin{remark}We can see that the maps split more explicitly.
    It is easy to see that the maps in the \v{C}ech complex, defined for example in \cite[Section III.4]{hartshorne1977algebraic} using a additive notation as
    \[
    (d\alpha)_{i_0, \dots, i_{p+1}}=\sum_{k=0}^{p+1}(-1)^k\alpha_{i_0, \dots, \widehat{i_k}, \dots, i_{p+1}}\restriction_{U_{i_0, \ldots, i_{p+1}}},
    \]
    become the following when using our multiplicative notation, the combinatorial covering $\{D(X_i)\}$ and assuming that $\{i_0, \ldots, i_{p+1}\}$ is still a face,
    \[
    (d\alpha)_{i_0, \dots, i_{p+1}}=\prod_{k=0}^{p+1}\left(\alpha_{i_0, \dots, \widehat{i_k}, \dots, i_{p+1}}\restriction_{U_{i_0, \ldots, i_{p+1}}}\right)^{(-1)^k},
    \]
    but
    \[
    \alpha_{i_0, \dots, \widehat{i_k}, \dots, i_{p+1}}\restriction_{U_{i_0, \ldots, i_{p+1}}}=\alpha_{i_0, \dots, \widehat{i_k}, \dots, i_{p+1}}
    \]
    because it is just the restriction
    \[
    \alpha_{i_0, \dots, \widehat{i_k}, \dots, i_{p+1}}=a_{i_0, \dots, \widehat{i_k}, \dots, i_{p+1}}X_{i_0}^{e_{i_0}}\cdots X_{i_{p+1}}^{e_{i_{p+1}}}=a_{i_0, \dots, \widehat{i_k}, \dots, i_{p+1}}X_{i_0}^{e_{i_0}}\cdots X_{i_k}^0\cdots X_{i_{p+1}}^{e_{i_{p+1}}}
    \]
    for some $a_{i_0, \dots, \widehat{i_k}, \dots, i_{p+1}}\in\KK^*$ and $e_{i_k}\in\ZZ$. If $\{i_0, \ldots, i_{p+1}\}$ is not a face, the restriction $\alpha_{i_0, \dots, \widehat{i_k}, \dots, i_{p+1}}\restriction_{U_{i_0, \ldots, i_{p+1}}}$ is $1$, because of Corollary~\ref{corollary:cech-complex-groups-algebraic-case}.
    
    For ease of notation, we denote with $e_{i_0, \dots, \widehat{i_k}, \dots, i_{p+1}}$ the vector with a $0$ in position $k$, $(e_{i_0}, \dots, e_{i_{k-1}}, 0, e_{i_{k+1}}, \dots, e_{i_{p+1}})$ and with $X^{e_{i_0, \dots, \widehat{i_k}, \dots, i_{p+1}}}$ the corresponding monomial $X_{i_0}^{e_{i_0}}\cdots X_{i_k}^0\cdots X_{i_{p+1}}^{e_{i_{p+1}}}$.
    
    So, going back to the map, we can split the part belonging to the field and the combinatorial part (involving only the variables) and describe it as
    \begin{align*}
    (d\alpha)_{i_0, \dots, i_{p+1}} & = \prod_{k=0}^{p+1}\left(a_{i_0, \dots, \widehat{i_k}, \dots, i_{p+1}}X^{e_{i_0, \dots, \widehat{i_k}, \dots, i_{p+1}}}\right)^{(-1)^k}\\
    & = \prod_{k=0}^{p+1}\left(a_{i_0, \dots, \widehat{i_k}, \dots, i_{p+1}}\right)^{(-1)^k}\cdot \prod_{k=0}^{p+1}\left(X^{e_{i_0, \dots, \widehat{i_k}, \dots, i_{p+1}}}\right)^{(-1)^k}.
    \end{align*}
    
    The part involving the variables is the part that comes from the cohomology of the simplicial binoid, and as such it can be rewritten exclusively in terms of operations on the exponents, that belong to $M_\triangle$
    \begin{align*}
    \prod_{k=0}^{p+1}\left(X^{e_{i_0, \dots, \widehat{i_k}, \dots, i_{p+1}}}\right)^{(-1)^k} = X^{m}
    \end{align*}
    with
    \[
    m=\sum_{k=0}^{p+1}(-1)^k e_{i_0, \dots, \widehat{i_k}, \dots, i_{p+1}} \in M_\triangle
    \]
    and we recovered the same map that we had before when computing cohomology for the combinatorial case, in Chapter~\ref{chapter:simplcial-binoids}.
\end{remark}

\begin{corollary}\label{corollary:cech-complex-split-stanley-reisner}
    The complex for computing \v{C}ech cohomology of $\O^*_{\KK[\triangle]}$ on $\Spec^\bullet\KK[\triangle]$ with respect to the combinatorial covering given by $\{D(X_i)\}$ can be split as a direct sum of two other complexes
    \[
    \vC\left(\{D(X_i)\}, \O^*_{\KK[\triangle]}\right)=\vC\left(\{D(x_i)\}, \KK^*\right)\oplus \vC\left(\{D(x_i)\}, \O^*_{M_\triangle}\right).
    \]
\end{corollary}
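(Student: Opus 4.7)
The plan is to combine the sheaf-level splitting from Theorem~\ref{theorem:SR-splits-comb-top} with the fact that the \v{C}ech complex construction is additive in the sheaf argument, and then identify the pushforward part with the \v{C}ech complex on the combinatorial side.

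First I would recall that, by Corollary~\ref{corollary:covering-cohomology-stanley-reisner}, the covering $\{D(X_i)\}$ of $\Spec^\bullet\KK[\triangle]$ is acyclic for $\O^*_{\KK[\triangle]}$, so the \v{C}ech complex with respect to this covering genuinely computes the cohomology and is the right object to study. Moreover every finite intersection $D(X_{i_0})\cap\dots\cap D(X_{i_p})$ equals $D(X_{i_0}\cdots X_{i_p})$, which is a basic open in the combinatorial topology; hence the \v{C}ech complex computed in the Zariski topology coincides with the one computed in the combinatorial topology (this is essentially Proposition~\ref{proposition:hzar-hcomb} for this particular covering).

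Next I would apply Theorem~\ref{theorem:SR-splits-comb-top}, which gives, in the combinatorial topology, the splitting $\O^*_{\KK[\triangle]}\cong \KK^*\oplus i_*\O^*_{M_\triangle}$. Since evaluation on open sets and formation of finite direct sums commute, and since the \v{C}ech differentials are induced functorially by the restriction maps of the sheaf, the whole \v{C}ech complex splits as
\[
\vC\bigl(\{D(X_i)\},\O^*_{\KK[\triangle]}\bigr)\;\cong\;\vC\bigl(\{D(X_i)\},\KK^*\bigr)\;\oplus\;\vC\bigl(\{D(X_i)\},i_*\O^*_{M_\triangle}\bigr).
\]
One can read off the same decomposition term by term from Corollary~\ref{corollary:cech-complex-groups-algebraic-case}, where the group attached to a face $F\in\triangle$ is $\KK^*\oplus \ZZ^F$, and the two summands are preserved by the coboundary (as observed in the remark following that corollary, the differential factors through the field-part and the monomial-exponent-part separately).

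Finally I would identify the two summands with the complexes named in the statement. The first summand $\vC(\{D(X_i)\},\KK^*)$ is already, by definition, the \v{C}ech complex of the constant sheaf $\KK^*$ on the covering $\{D(X_i)\}$ of $\Spec^\bullet\KK[\triangle]$; via the bijection between nonempty intersections and faces of $\triangle$ (Proposition~\ref{proposition:covering-algebraic-spectrum}, matching Proposition~\ref{proposition:intersection-open-subsets} on the binoid side) this is the same complex as $\vC(\{D(x_i)\},\KK^*)$. For the second summand I would use Lemma~\ref{lemma:cohomology_pushforward}: setting $\widetilde{U}_j=D(X_j)$ one has $i^{-1}(\widetilde{U}_j)=D(x_j)$, and the equality $i_*\O^*_{M_\triangle}(\widetilde{U}_j)=\O^*_{M_\triangle}(D(x_j))$ (with compatible restriction maps) gives an isomorphism of cochain complexes
\[
\vC\bigl(\{D(X_i)\},i_*\O^*_{M_\triangle}\bigr)\;=\;\vC\bigl(\{D(x_i)\},\O^*_{M_\triangle}\bigr).
\]
There is no real obstacle here since both factors are essentially formal once the sheaf splitting of Theorem~\ref{theorem:SR-splits-comb-top} is in place; the only point that needs care is checking that the Čech differentials really respect the decomposition, which is precisely the content of the explicit description of the map in the remark following Corollary~\ref{corollary:cech-complex-groups-algebraic-case}.
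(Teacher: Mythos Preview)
Your proposal is correct and follows essentially the same approach as the paper: use the sheaf-level splitting of Theorem~\ref{theorem:SR-splits-comb-top}, invoke additivity of the \v{C}ech construction (with the explicit remark after Corollary~\ref{corollary:cech-complex-groups-algebraic-case} handling the differentials), and then identify the two summands via the matching intersection patterns of $\{D(X_i)\}$ and $\{D(x_i)\}$. The paper's own proof is much terser---it only spells out the $\KK^*$ identification via Propositions~\ref{proposition:intersection-open-subsets} and~\ref{proposition:covering-algebraic-spectrum}, leaving the rest to the preceding remark---so your version is if anything more complete, and your use of Lemma~\ref{lemma:cohomology_pushforward} for the $i_*\O^*_{M_\triangle}$ part is a clean way to make that identification explicit.
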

\begin{proof}
    The only thing that we have to notice is that 
    \[
    \vC\left(\{D(X_i)\}, \KK^*\right)=\vC\left(\{D(x_i)\}, \KK^*\right),
    \]
    because $\{D(X_i)\}$ and $\{D(x_i)\}$ have the same intersection patterns, thanks to Proposition~\ref{proposition:intersection-open-subsets} and Proposition~\ref{proposition:covering-algebraic-spectrum}.
\end{proof}

\begin{example}
    Consider again
    \[
    \KK[M] = \faktor{\KK[X, Y, Z]}{(XYZ)}.
    \]
    Its punctured spectrum is then covered by $D(X)$, $D(Y)$ and $D(Z)$, and the \v{C}ech complex w.r.t.\ this acyclic covering for $\O^*_{\KK[M]}$ is
    \begin{equation*}
    \begin{tikzcd}[baseline=(current  bounding  box.center), row sep=2ex, column sep = 2em, cramped, 
    /tikz/column 1/.append style={anchor=base east},
    /tikz/column 2/.append style={anchor=base west}]
    \displaystyle \C: & (\KK^*\oplus\ZZ)\bigoplus (\KK^*\oplus\ZZ)\bigoplus(\KK^*\oplus\ZZ)\arrow[out=0, in=180, overlay, d, start anchor = east, end anchor = west, to path={ .. controls +(4, -0.8) and +(-4,.8).. (\tikztotarget)}, "\partial^0"]\\
    &(\KK^*\oplus\ZZ^2)\bigoplus (\KK^*\oplus\ZZ^2)\bigoplus(\KK^*\oplus\ZZ^2) \rar & 1
    \end{tikzcd}
    \end{equation*}
    and we have the components
    \begin{equation*}
    \begin{tikzcd}[row sep=2ex, 
    /tikz/column 1/.append style={anchor=base east},
    /tikz/column 2/.append style={anchor=base east},
    /tikz/column 3/.append style={anchor=base west}]
    \displaystyle \C(\KK^*): \KK^*\oplus\KK^*\oplus\KK^*\rar["\partial^0_{\KK^*}"] &\KK^*\oplus \KK^*\oplus\KK^* \rar & 1\\
    (\alpha, \beta, \gamma)\rar[mapsto] &\left(\dfrac{\beta}{\alpha}, \dfrac{\gamma}{\alpha}, \dfrac{\gamma}{\beta}\right)\\
    \\
    \displaystyle \C(\O^*_M): \ZZ\oplus\ZZ\oplus\ZZ\rar["\partial^0_M"]&\ZZ^2\oplus\ZZ^2\oplus\ZZ^2 \rar["\partial^1_M"] & 0\\
    (a, b, c)\rar[mapsto]& (-a, b, -a, c, -b, c)
    \end{tikzcd}
    \end{equation*}
    that give us the decomposition.
\end{example}

\begin{remark}
    Thanks to the fact that the nerve of the covering $\{D(X_i)\}$ is the simplicial complex we started with, the previous Corollary tells us that the punctured \v{C}ech-Picard complex of a Stanley Reisner ring splits in two parts, that are both completely determined by the combinatorics of the simplicial complex.
\end{remark}

\section{Cohomology}
In this last section of the Chapter we sum up our results and give the explicit formulas for computing cohomology of the sheaf of units of a Stanley Reisner ring on the punctured spectrum.

\begin{lemma}
    For any constant sheaf of abelian groups $G$ on $\Spec^\bullet\KK[M_\triangle]$, we have that
    \[
    \vH^j\left(\left\{D(X_i)\right\}, G\right)=\H^j(\triangle, G).
    \]
\end{lemma}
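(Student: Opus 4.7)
The plan is to proceed in direct analogy with Theorem~\ref{thm:simplicial-cohomology}, by showing that the \v{C}ech cochain complex $\vC^\bullet(\{D(X_i)\}, G)$ is isomorphic (as a cochain complex) to the simplicial cochain complex $\C^\bullet(\triangle, G)$, and then invoking the fact that the latter computes $\H^j(\triangle, G)$.

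First I would identify the groups appearing in $\vC^j(\{D(X_i)\}, G)$. Given a subset $F = \{i_0,\dots,i_j\}\subseteq V$, Proposition~\ref{proposition:covering-algebraic-spectrum} (or equivalently Corollary~\ref{corollary:nerve-covering-stanley-reisner}) tells us that $D(F) := \bigcap_{k} D(X_{i_k})$ is non-empty if and only if $F\in\triangle$, in which case $D(F)\cong \Spec\KK[\triangle]_{X_F}\cong \Spec(\KK[\lk_\triangle(F)][\ZZ^F])$ by Lemma~\ref{theorem:localization-stanley-reisner-combinatorial-multiple}. The key observation is then that such $D(F)$ is connected: the factor $\Spec\KK[\lk_\triangle(F)]$ is a union of coordinate affine subspaces all passing through the origin (as described in Lemma~\ref{Lemma:correspondencefacetsirreduciblecomponents}), hence connected, and the product with $(\AA^*)^{|F|}$ preserves connectedness. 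Therefore, for any non-empty $F\in\triangle$, evaluating the constant sheaf yields $G(D(F)) = G$, while for $F\notin\triangle$ we have $G(D(F)) = 0$. This produces exactly
\[
\vC^j(\{D(X_i)\}, G) = \bigoplus_{F\in\triangle_j} G^{F},
\]
matching the groups appearing in $\C^j(\triangle, G)$.

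Next I would check that the coboundary maps coincide. Since both the \v{C}ech differential and the simplicial coboundary are described by the standard alternating sum of the restriction maps indexed by omitting a vertex of an ordered face, and since each such restriction (for non-empty intersections) is the identity on $G$, the maps agree on the nose. Thus we get an isomorphism of cochain complexes
\[
\vC^\bullet(\{D(X_i)\}, G) \;\cong\; \C^\bullet(\triangle, G),
\]
and taking cohomology yields the desired equality $\vH^j(\{D(X_i)\}, G) = \H^j(\triangle, G)$.

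The only non-routine step is the connectedness of $D(F)$, which could be a pitfall because $\KK[\triangle]$ is generally non-integral; the rest follows formally from the argument already used in Theorem~\ref{thm:simplicial-cohomology} transported from the combinatorial setting via Corollary~\ref{corollary:nerve-covering-stanley-reisner}.
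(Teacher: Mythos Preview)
Your proof is correct and follows essentially the same approach as the paper. The paper's version is slightly more indirect: it identifies the nerves $\nerve\{D(X_i)\}=\triangle=\nerve\{D(x_i)\}$ and then asserts that ``the \v{C}ech complexes look the same for $\{D(X_i)\}$ and $\{D(x_i)\}$'', reducing to the binoid result (Theorem~\ref{thm:simplicial-cohomology} and Corollary~\ref{corollary:cech-covering-nerve-simplicial-cohomology}), whereas you compare directly with the simplicial cochain complex. Your explicit verification that each nonempty $D(F)$ is connected (via Lemma~\ref{theorem:localization-stanley-reisner-combinatorial-multiple} and the fact that $\Spec\KK[\lk_\triangle(F)]$ is a union of coordinate subspaces through the origin) is precisely the point the paper passes over when it says the two \v{C}ech complexes ``look the same'', so your write-up is in fact a bit more careful on the one nontrivial step.
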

\begin{proof}
    Thanks to Proposition~\ref{proposition:covering-algebraic-spectrum} and Proposition~\ref{proposition:intersection-open-subsets},
    \[
    \nerve\{D(X_i)\}=\triangle=\nerve\{D(x_i)\}
    \]
    the nerve of the combinatorial covering of $\Spec^\bullet M_\triangle$. Thanks to Corollary~\ref{corollary:cech-covering-nerve-simplicial-cohomology} the simplicial cohomology of the latter is \v{C}ech cohomology of the covering, and since the \v{C}ech complexes look the same for $\{D(X_i)\}$ and $\{D(x_i)\}$, we get our thesis.
\end{proof}

\begin{theorem}\label{theorem:cohomology-stanley-reisner}
    Let $\KK[\triangle]$ be the Stanley-Reisner ring of a simplicial complex $\triangle$ on a finite vertex set $V$. We have the following explicit formula for the computation of the cohomology groups of the sheaf of units, $\O^*_{\KK[\triangle]}$ restricted to the punctured spectrum $\Spec^\bullet\KK[\triangle]$.
    \begin{equation}\label{formula:cohomology-SR}
    \H^j(\Spec^\bullet(\KK[\triangle]), \O^*_{\KK[\triangle]})=\H^j(\triangle, \KK^*)\oplus \bigoplus_{v\in V}\widetilde{\H}^{j-1}(\lk_\triangle(v), \ZZ),
    \end{equation}
    where $\H^j(\triangle, \KK^*)$ is the $i$-th simplicial cohomology group with coefficients in $\KK^*$, for $j\geq0$.
\end{theorem}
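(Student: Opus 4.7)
The plan is to assemble this formula directly from the infrastructure already built up in this Chapter and the previous one. First I would invoke Corollary~\ref{corollary:covering-cohomology-stanley-reisner} to replace sheaf cohomology of $\O^*_{\KK[\triangle]}$ on $\Spec^\bullet\KK[\triangle]$ with \v{C}ech cohomology on the combinatorial covering $\{D(X_i)\}_{i\in V}$, which is acyclic for this sheaf. The whole problem then becomes a purely combinatorial calculation on this covering.

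Next, I would apply Corollary~\ref{corollary:cech-complex-split-stanley-reisner}, which splits the \v{C}ech--Picard complex as a direct sum
\[
\vC\bigl(\{D(X_i)\}, \O^*_{\KK[\triangle]}\bigr)=\vC\bigl(\{D(x_i)\}, \KK^*\bigr)\oplus \vC\bigl(\{D(x_i)\}, \O^*_{M_\triangle}\bigr).
\]
Since taking cohomology of a complex of abelian groups commutes with finite direct sums, in every degree $j\geq 0$ one obtains
\[
\vH^j\bigl(\{D(X_i)\}, \O^*_{\KK[\triangle]}\bigr)\cong \vH^j\bigl(\{D(x_i)\}, \KK^*\bigr)\oplus \vH^j\bigl(\{D(x_i)\}, \O^*_{M_\triangle}\bigr).
\]

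For the first summand I would use the Lemma immediately preceding the theorem, which identifies \v{C}ech cohomology of the constant sheaf $\KK^*$ on $\{D(X_i)\}$ with the simplicial cohomology $\H^j(\triangle, \KK^*)$; this is just the usual Theorem~\ref{thm:simplicial-cohomology} applied with coefficients $\KK^*$ instead of $\ZZ$, exploiting that the nerve of the covering equals $\triangle$ (Corollary~\ref{corollary:cech-covering-nerve}). For the second summand, the covering $\{D(x_i)\}$ is acyclic for $\O^*_{M_\triangle}$ on $\Spec^\bullet M_\triangle$, so its \v{C}ech cohomology computes the sheaf cohomology, and Theorem~\ref{theorem:cohomology-simplicial-complex} rewrites it as $\displaystyle\bigoplus_{v\in V}\widetilde{\H}^{j-1}(\lk_\triangle(v), \ZZ)$. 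Putting the two pieces together yields the claimed formula.

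Since every ingredient is already proved, there is no real obstacle; the only point that deserves care is checking that the splitting of Corollary~\ref{corollary:cech-complex-split-stanley-reisner} is a splitting of cochain complexes (so that cohomology splits accordingly), and that the intersection patterns of $\{D(X_i)\}$ and $\{D(x_i)\}$ genuinely coincide so the two \v{C}ech complexes are indexed over the same faces of $\triangle$ -- both facts have been verified in Propositions~\ref{proposition:intersection-open-subsets} and~\ref{proposition:covering-algebraic-spectrum}.
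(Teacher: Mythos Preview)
Your proposal is correct and follows essentially the same route as the paper's proof: invoke Corollary~\ref{corollary:covering-cohomology-stanley-reisner} to pass to \v{C}ech cohomology on $\{D(X_i)\}$, split via Corollary~\ref{corollary:cech-complex-split-stanley-reisner}, then identify the two summands using the preceding Lemma (simplicial cohomology with coefficients in $\KK^*$) and Theorem~\ref{theorem:cohomology-simplicial-complex} respectively. Your added remarks on why the splitting is one of complexes and on the matching intersection patterns are sound and make the argument slightly more explicit than the paper's version.
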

\begin{proof}
    Recall from the results in this chapter that we can use the covering of the open subsets defined by the variables $\{D(X_i)\}$ as an acyclic covering for computing this cohomology via \v{C}ech cohomology (Corollary~\ref{corollary:covering-cohomology-stanley-reisner})
    \[
    \H^j(\Spec^\bullet\KK[\triangle], \O^*_{\KK[\triangle]})=\vH^j(\{D(X_i)\}, \O^*_{\KK[\triangle]})=\H_j\left(\vC\left(D(X_i), \O^*_{\KK[\triangle]}\right)\right).
    \]
    We observed in Corollary~\ref{corollary:cech-complex-split-stanley-reisner} that we can rewrite this \v{C}ech complex as a direct sum
    \[
    \C\left(\left\{D(X_i)\right\}, \O^*_{\KK[\triangle]}\right)=\C\left(\left\{D(x_i)\right\}, \KK^*\right)\oplus \C\left(\left\{D(x_i)\right\}, \O^*_{M_\triangle}\right)
    \]
    and the same holds for the cohomology
    \[
    \vH^j\left(\left\{D(X_i)\right\}, \O^*_{\KK[\triangle]}\right)=\vH^j\left(\left\{D(X_i)\right\}, \KK^*\right)\oplus \vH^j\left(\left\{D(x_i)\right\}, \O^*_{M_\triangle}\right).
    \]
    Thanks to the fact that the nerve of the covering $\{D(X_i)\}$ is the simplicial complex $\triangle$ we started with (Corollary~\ref{corollary:nerve-covering-stanley-reisner}), we can apply the previous Lemma and see the first summand as simplicial cohomology
    \[
    \vH^j\left(\left\{D(X_i)\right\}, \KK^*\right)=\H^j(\triangle, \KK^*)
    \]
    and, finally, we can use Theorem~\ref{theorem:cohomology-simplicial-complex} to recall the fact that
    \[
    \vH^j\left(\left\{D(x_i)\right\}, \O^*_{M_\triangle}\right)=\H^j(\Spec^\bullet M_\triangle, \O^*_{M_\triangle})=\bigoplus_{v\in V}\widetilde{\H}^{j-1}(\lk_\triangle(v), \ZZ)
    \]
    and obtain our thesis
    \[
    \H^j(\Spec^\bullet(\KK[\triangle]), \O^*_{\KK[\triangle]})=\H^j(\triangle, \KK^*)\oplus \bigoplus_{v\in V}\widetilde{\H}^{j-1}(\lk_\triangle(v), \ZZ).\qedhere
    \]
\end{proof}

\begin{example}
    Consider again 
    \[
    \KK[M] = \faktor{\KK[X, Y, Z]}{(XYZ)}.
    \] Given the \v{C}ech complex and its decomposition that we computed in the Example above, it is easy to explicitly compute its cohomology, since
    \begin{align*}
    \H^0(\C)=\ker\partial^0&=\ker\partial^0_{\KK^*}\oplus\ker\partial^0_M\\
    &\cong\{(\alpha, \beta, \gamma)\in(\KK^*)^3\mid\alpha=\beta=\gamma\}\oplus\{(0,0,0)\}\cong\KK^*
    \end{align*}
    and the image of $\partial^0$ is
    \begin{align*}
    \im\partial^0&=\im\partial^0_{\KK^*}\oplus\im\partial^0_M\\
    &\cong\{(\rho, \sigma,\tau)\in(\KK^*)^3\mid\sigma=\rho\tau\}\\
    &\hspace{2em}\oplus\{(p, q, r, s, t, u)\in\ZZ^6\mid p=r, q=-t, s=u\}\cong(\KK^*)2\oplus \ZZ^3
    \end{align*}
    and clearly
    \[
    \H^1(\C)=\faktor{\ker\partial^1}{\im\partial^0}\cong\KK^*\oplus\ZZ^3\cong\H^1(\triangle, \KK^*)\oplus\H^1(\O^*_M).
    \]
    We recover that $\H^0(\triangle, \KK^*)\cong\H^1(\triangle, \KK^*)\cong\KK^*$, with a description in \v{C}ech cohomology given by $(1, 1, \lambda)$ for $\lambda\in\KK^*$, that we could expect because our simplicial complex is topologically a circle.
\end{example}

\subsection{Line bundles}
A line bundle on $X=\Spec \KK[M]$ is a locally free $\O_{\KK[M]}$-module of constant rank 1. Similarly, a line bundle on $U=\Spec^\bullet \KK[M]$ is a locally free $\O_{\KK[M]}\restriction_U$-module of constant rank 1.

From Chapter~\ref{chapter:injections} we know that there is an injection from $\Pic^{\loc} M$ to $\Pic^{\loc} \KK[M]$. In particular, any combinatorial line bundle defines an algebraic line bundle. In general, the second is much bigger than the first, due to the contribution of the first cohomology of the sheaf of constant units $\KK^*$.

\begin{example}\label{example:cohomology-xyz=0}
    Let us go back to our favourite Example~\ref{example:spectrum-simplicial-ring-2},
    \[
    M_\triangle=(x, y, z\mid x+y+z=\infty) \hspace{4em} \KK[M_\triangle]=\faktor{\KK[X, Y, Z]}{(XYZ)}
    \]
    From what we proved in Theorem~\ref{theorem:cohomology-simplicial-complex} and Theorem~\ref{theorem:cohomology-stanley-reisner} we know that
    \[
    \Pic^{\loc} M_\triangle = \ZZ^3 \hspace{4em} \Pic^{\loc} \KK[M_\triangle]=\KK^*\oplus\ZZ^3
    \]
    so we can easily see that there are many more algebraic line bundles, as there are combinatorial ones.
    
    From the extensive study conducted in Section~\ref{subsection:x+y+z=infty}, we can recall that a line bundle in $\Pic^{\loc} M$ is represented (up to isomorphism) by a locally free $M_\triangle$-set of rank 1 of the form
    \[
    S=\left(e_1, e_2, e_3\midd \begin{aligned}
    e_1+ay&=e_2+bx,\\
    e_1+cz&=e_3+dx,\\
    e_2+ez&=e_3+fy
    \end{aligned}\right)
    \]
    with $a, b, c, d, e$ and $f$ positive natural numbers. 
    We can think of this isomorphism class as represented by $(a-f, b-d, c-e)\in\ZZ^3$.

    We know automatically that such $M_\triangle$-set gives rise to a locally free $\KK[M_\triangle]$-module of rank 1 in $\Pic^{\loc}\KK[M_\triangle]$
    \[
    \KK[S]=\left(E_1, E_2, E_3\midd \begin{aligned}
    E_1Y^a&=E_2X^b,\\
    E_1Z^c&=E_3X^d,\\
    E_2Z^e&=E_3Y^f
    \end{aligned}\right)
    \]
    with $a, b, c, d, e$ and $f$ natural numbers. This indeed identifies an isomorphism class in $\Pic^{\loc}$. In order to see the contribution of $\KK^*$, it is enough to notice that
    \[
    \left(E_1, E_2, E_3\midd \begin{aligned}
    \lambda_1 E_1Y^a&=E_2X^b,\\
    \lambda_2 E_1Z^c&=E_3X^d,\\
    E_2Z^e&=E_3Y^f
    \end{aligned}\right)
    \quad \text{ and }\quad
    \left(E_1, E_2, E_3\midd \begin{aligned}
    \lambda'_1 E_1Y^a&=E_2X^b,\\
    \lambda'_2 E_1Z^c&=E_3X^d,\\
    E_2Z^e&=E_3Y^f
    \end{aligned}\right)
    \]
    with $\lambda_i, \lambda'_i\in\KK^*$ define two different line bundles if $\dfrac{\lambda_1}{\lambda_2}\neq\dfrac{\lambda'_1}{\lambda'_2}$.
    
    With an argument similar to the one that we used on page~\pageref{example:the-star-graph} for the star graph, we can see that $a, b, c, d, e, f$ have to be different from $0$.
    
    In particular, we can think of the bundles with exponents all $1$'s as the bundles that come from the units of the field
    \[
    \left(E_1, E_2, E_3\midd \begin{aligned}
    \lambda'_1 E_1Y&=E_2X,\\
    \lambda'_2 E_1Z&=E_3X,\\
    E_2Z&=E_3Y
    \end{aligned}\right),
    \]
    that is in the same class as the bundle
    \[
    \left(E_1, E_2, E_3\midd \begin{aligned}
    \gamma E_1Y&=E_2X,\\
    E_1Z&=E_3X,\\
    E_2Z&=E_3Y
    \end{aligned}\right),
    \]
    thus giving us an easy decomposition of any bundle in what comes from $\KK^*$ and what is combinatorial.
\end{example}

\section{The general monomial case}
In this Section we present and discuss a couple of examples that show how the general non-reduced monomial case can be already much harder than the reduced case, but can still be handled with our methods.

If $I$ is any monomial ideal in $S=\KK[X_1, \dots, X_n]$, its radical ideal is a Stanley Reisner ideal, $\sqrt{I}=I_\triangle$. Let $R=\faktor{S}{I}$ and $R_{\red}=\KK[M_\triangle]=\faktor{S}{I_\triangle}$.
In Theorems~\ref{Thm:vanishingspecstanleyreisner} and \ref{Thm:vanishingspecstanleyreisnerA*} we proved that the covering of $\Spec^\bullet\faktor{S}{I}$ generated by $\{D(X_i)\}$ is acyclic for $\O^*_{R_{\red}}$ with the Zariski topology.

\begin{theorem}\label{theorem:split-non-reduced-monomial}
    Let $I$ be a monomial ideal in $S=\KK[X_1, \dots, X_n]$, let $R=\faktor{S}{I}$, let $M$ be its binoid ($R=\KK[M]$) and let $X=\Spec^\bullet R$.
    
    Let $I_\triangle=\sqrt{I}$ be the radical of $I$, let $R_{\red}=\faktor{S}{I_\triangle}$ be the reduction of $R$, let $M_\triangle$ be the simplicial binoid associated to $R_{\red}$ and let $\triangle$ be the respective simplicial complex.
    
    We can compute the cohomology of $\O^*_X$ on $X$ with the Zariski topology as
    \[
    \H^j(X, \O^*_X)=\H^j(\triangle, \KK^*)\oplus \bigoplus_{v\in V}\widetilde{\H}^{j-1}(\lk_\triangle(v), \ZZ) \oplus \vH^j(\{D(X_i)\}, 1+\N)
    \]
    for any $j\geq 0$.
\end{theorem}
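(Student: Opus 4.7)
The plan is to compute $\H^j(X, \O^*_X)$ via \v{C}ech cohomology on the combinatorial covering $\{D(X_i)\}$, and to split the resulting \v{C}ech complex into three subcomplexes whose cohomologies are exactly the three summands appearing in the statement.

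First I would establish that $\{D(X_i)\}$ is an acyclic covering of $X$ for $\O^*_X$. By Corollary~\ref{corollary:covering-cohomology-stanley-reisner} this covering is acyclic for $\O^*_{R_\red}$. Each intersection $D(X_{i_0}) \cap \dots \cap D(X_{i_j}) = \Spec R_{X_{i_0}\cdots X_{i_j}}$ is an affine noetherian scheme, so Theorem~\ref{theorem:1+N-acyclic} gives $\H^k(D(X_{i_0}) \cap \dots \cap D(X_{i_j}), 1+\N) = 0$ for all $k \geq 1$. Combining these via Corollary~\ref{corollary:combinatorial-covering-non-reduced} (applied to the short exact sequence of Proposition~\ref{proposition:exact-sequence-units-1+nilpotents-reduction}) shows that $\{D(X_i)\}$ is acyclic for $\O^*_X$ as well, so $\H^j(X, \O^*_X) \cong \vH^j(\{D(X_i)\}, \O^*_X)$ for all $j$.

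Next I would split the \v{C}ech complex $\vC^\bullet(\{D(X_i)\}, \O^*_X)$ as a direct sum of three subcomplexes. On each combinatorial affine open $U = D(X_{i_0}\cdots X_{i_j})$ with $\{i_0,\dots,i_j\} \in \triangle$, we have $\O_X(U) = \KK[(M_{\triangle})_{X_F}][\text{nilpotents from } I]$, and by Remark~\ref{remark:units-non-reduced} applied to this localization there is a decomposition of abelian groups
\[
\O^*_X(U) = \KK^* \oplus \ZZ^F \oplus (1+\N(U)),
\]
where the first two summands together form $\O^*_{X_\red}(U)$ as described in Corollary~\ref{corollary:cech-complex-groups-algebraic-case}. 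The crucial point is that this decomposition is natural with respect to the restriction maps appearing in the \v{C}ech complex: the constant part $\KK^*$ restricts to $\KK^*$, the monomial part $\ZZ^F$ restricts by the usual combinatorial rule of Theorem~\ref{theorem:split-cech-picard-maps}, and $1+\N$ is preserved because nilpotents localize to nilpotents. Hence the \v{C}ech complex decomposes as
\[
\vC^\bullet(\{D(X_i)\}, \O^*_X) \cong \vC^\bullet(\{D(X_i)\}, \KK^*) \oplus \vC^\bullet(\{D(X_i)\}, i_*\O^*_{M_\triangle}) \oplus \vC^\bullet(\{D(X_i)\}, 1+\N).
\]

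Finally, I would identify each piece. Taking cohomology commutes with direct sums, so
\[
\H^j(X, \O^*_X) \cong \vH^j(\{D(X_i)\}, \KK^*) \oplus \vH^j(\{D(X_i)\}, i_*\O^*_{M_\triangle}) \oplus \vH^j(\{D(X_i)\}, 1+\N).
\]
The first two summands are computed by the proof of Theorem~\ref{theorem:cohomology-stanley-reisner}: since the nerve of $\{D(X_i)\}$ is $\triangle$ itself (Corollary~\ref{corollary:nerve-covering-stanley-reisner}), the first summand equals $\H^j(\triangle, \KK^*)$, and by Theorem~\ref{theorem:cohomology-simplicial-complex} the second summand equals $\bigoplus_{v \in V} \widetilde{\H}^{j-1}(\lk_\triangle(v), \ZZ)$. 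The third summand is left as $\vH^j(\{D(X_i)\}, 1+\N)$, which is the new, genuinely non-reduced contribution.

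The main obstacle is verifying the naturality of the decomposition in the second step: one must check that the isomorphism $\O^*_X(U) \cong \KK^* \oplus \ZZ^F \oplus (1+\N(U))$ given pointwise by Remark~\ref{remark:units-non-reduced} is compatible with the \v{C}ech restriction maps, i.e.\ that a unit's ``monomial part'' is unambiguously defined and commutes with localization at further $X_i$'s. This essentially reduces to showing that if $P = aX^\nu + N$ is the decomposition in $\O^*_X(U)$ with $N \in \N(U)$, then after further localization $N$ stays nilpotent and the leading monomial $aX^\nu$ is unchanged, which follows from the fact that $\N$ is a sheaf of ideals and from the uniqueness of the leading monomial modulo the nilradical.
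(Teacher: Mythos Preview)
Your proposal is correct and follows essentially the same route as the paper: establish acyclicity of $\{D(X_i)\}$ for $\O^*_X$ via Corollary~\ref{corollary:combinatorial-covering-non-reduced}, split the units into $\KK^* \oplus \O^*_M \oplus (1+\N)$ using Remark~\ref{remark:units-non-reduced}, and identify the first two summands via Theorem~\ref{theorem:cohomology-stanley-reisner}. The only presentational difference is that the paper phrases the splitting at the sheaf level in the combinatorial topology (extending Proposition~\ref{proposition:split-sheaves-comb-top} via Remark~\ref{remark:units-non-reduced}) rather than checking naturality by hand on the \v{C}ech groups as you do in your ``main obstacle'' paragraph; these amount to the same verification.
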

\begin{proof}
    Thanks to the exact sequence presented in Proposition~\ref{proposition:exact-sequence-units-1+nilpotents-reduction}, the sheaf of abelian groups $(1+\N)$ fits into the short exact sequence
    \begin{equation*}
    \begin{tikzcd}[baseline=(current  bounding  box.center),
    /tikz/column 1/.append style={anchor=base east},
    /tikz/column 2/.append style={anchor=base west}, 
    row sep = 0pt]
    1\rar &1+\N \rar& \O^*_X\rar &\O^*_{X_{\red}}\rar&1
    \end{tikzcd}
    \end{equation*}
    in the Zariski topology, and $(1+\N)$ is acyclic for affine schemes, so thanks to Corollary~\ref{corollary:combinatorial-covering-non-reduced} we can then use any covering by combinatorial open subsets to compute the cohomology of $\O^*_{R}$.
    
    By applying Remark~\ref{remark:units-non-reduced} to the proof of Proposition~\ref{proposition:split-sheaves-comb-top} we can easily that even in the non reduced case we can split the sheaf of units as a direct sum in the combinatorial topology as
    \[
    \O^*_X=\KK^*\oplus i_*\O^*_M\oplus (1+\N).
    \]
    In particular, we have again a direct sum of complexes when we look at the \v{C}ech cohomology on the combinatorial covering $\{D(X_i)\}$ of $\Spec^\bullet \KK[M]$.
    
    The sheaves, obviously, do not split in the Zariski topology but, thanks to the fact that the \v{C}ech complexes for $\O^*$ on the covering $\{D(X_i)\}$ are the same in the combinatorial and in the Zariski topology, we can use the smaller complexes, that come from the decomposition above in the combinatorial topology, to compute cohomology in the Zariski topology as
    \begin{align*}
    \H^j_{\mathrm{Zar}}(X, \O^*_X)&=\H^j_{\comb}(X, \O^*_X)=\H^j_{\comb}(X, \KK^*\oplus i_*\O^*_M\oplus (1+\N))\\
    &=\H^j_{\comb}(X, \KK^*)\oplus \H^j_{\comb}(X, i_*\O^*_M)\oplus \H^j_{\comb}(X, 1+\N)\\
    &=\vH^j(\{D(x_i)\}, \KK^*)\oplus \vH^j(\{D(x_i)\}, \O^*_M)\oplus \H^j(\{D(X_i)\}, 1+\N)\\
    &=\H^j(\triangle, \KK^*)\oplus \bigoplus_{v\in V}\widetilde{\H}^{j-1}(\lk_\triangle(v), \ZZ) \oplus \vH^j(\{D(X_i)\}, 1+\N)\qedhere
    \end{align*}
\end{proof}

\begin{remark}
    This theorem explains well why our methods can be useful to compute of the cohomology in the non-reduced monomial case. Indeed, most of the computations now become again simplicial cohomology, that in general is easier to compute.
\end{remark}

\begin{remark}\label{remark:rewrite-non-reduced}
    Since $\O^*_M=\O^*_{M_{\red}}$, we can rewrite the result above as
    \begin{align*}
    \H^j_{\mathrm{Zar}}(X, \O^*_X)&=\H^j(X_{\red}, \O^*_{X_{\red}})\oplus \vH^j(\{D(X_i)\}, 1+\N)\\
    &=\H^j(\triangle, \KK^*)\oplus\H^j(\Spec^\bullet M, \O^*_M)\oplus \vH^j(\{D(X_i)\}, 1+\N).
    \end{align*}
\end{remark}

\begin{remark}
    Let $M$ be a binoid such that the radical ideal $I_\triangle$ corresponds to a simplicial complex $\triangle$ of dimension 0. Then $\Pic^{\loc} \KK[M]$ and all the higher cohomology groups are trivial. This is true because of the intersection pattern. Indeed, if $\triangle$ has dimension 0, then all the possible intersections $D(X_i)\cap D(X_j)$ are empty, so 
    \[
    \vC^{j\geq 1}(\{D(X_i)\}, \O^*)=0,
    \] thus proving our claim.
\end{remark}

\begin{example}
    Let $M$ be the non reduced binoid
    \[
    M=(x, y, z\mid x+y+2z=\infty)
    \]
    whose associated rings is
    \[
    \KK[M]=\faktor{\KK[X, Y, Z]}{\langle XYZ^2\rangle}.
    \]
    We want to compute $\Pic^{\loc}(\KK[M])$ and we can do it by meaning of \v{C}ech cohomology on the combinatorial covering. We have first to compute the localizations at the elements of the covering and their groups of units
    \begin{align*}
    \KK[M]_X &\cong \faktor{\KK[X^{\pm 1}, Y, Z]}{\langle YZ^2\rangle}\\
    \KK[M]_X^*&\cong\left\{aX^r\left(1+P\right)\midd a\in\KK^*, r\in\ZZ, P\in\N(D(X))=\langle YZ\rangle\KK[M]_X\right\}
    \end{align*}
    \begin{align*}
    \KK[M]_Y &\cong \faktor{\KK[X, Y^{\pm 1}, Z]}{\langle XZ^2\rangle}\\
    \KK[M]_Y^*&\cong\left\{bY^s\left(1+Q\right)\midd b\in\KK^*, s\in\ZZ, Q\in\langle XZ\rangle\KK[M]_Y\right\}
    \end{align*}
    \begin{align*}
    \KK[M]_Z &\cong \faktor{\KK[X, Y, Z^{\pm 1}]}{\langle XY\rangle}\\
    \KK[M]_Z^*&\cong\left\{cZ^t \midd c\in\KK^*, t\in\ZZ\right\}
    \end{align*}
    \begin{align*}
    \KK[M]_{XY} &\cong \faktor{\KK[X^{\pm 1}, Y^{\pm 1}, Z]}{\langle Z^2\rangle}\\
    \KK[M]_{XY}^*&\cong\left\{\alpha X^mY^n\left(1+T\right)\midd \alpha \in\KK^*, m, n\in\ZZ, T\in\langle Z\rangle\KK[M]_{XY}\right\}
    \end{align*}
    \begin{align*}
    \KK[M]_{XZ} &\cong \faktor{\KK[X^{\pm 1}, Y, Z^{\pm 1}]}{\langle Y\rangle}\cong\KK[X^{\pm 1}, Z^{\pm 1}]\\
    \KK[M]_{XZ}^*&\cong\left\{\beta X^mY^n\midd \beta\in\KK^*, m, n\in\ZZ\right\}
    \end{align*}
    \begin{align*}
    \KK[M]_{YZ} &\cong \faktor{\KK[X, Y^{\pm 1}, Z^{\pm 1}]}{\langle X\rangle}\cong\KK[Y^{\pm 1}, Z^{\pm 1}]\\
    \KK[M]_{XY}^*&\cong\left\{\gamma Y^mZ^n\midd \gamma\in\KK^*, m, n\in\ZZ\right\}
    \end{align*}
    The \v{C}ech complex looks like
    \begin{equation*}
    \begin{tikzcd}[baseline=(current  bounding  box.center), row sep=2ex, column sep = 2em, cramped, 
    /tikz/column 1/.append style={anchor=base east},
    /tikz/column 2/.append style={anchor=base west}]
    \KK[M]^*_X\oplus\KK[M]^*_Y\oplus\KK[M]^*_Z\rar["\partial^0"] &  \KK[M]^*_{XY}\oplus\KK[M]^*_{XZ}\oplus\KK[M]^*_{YZ}\rar["\partial^1"] & 0,
    \end{tikzcd}
    \end{equation*}
    where
    \[
    \partial^0\left(aX^r\left(1+P\right), bY^s\left(1+Q\right), cZ^t\right)=\left(\dfrac{bY^s\left(1+Q\right)}{aX^r\left(1+P\right)},\ \dfrac{cZ^t}{aX^r\left(1+P\right)},\ \dfrac{cZ^t}{bY^s\left(1+Q\right)}\right).
    \]
    Now we can easily compute the inverses, since $\N^2=0$, so $1+\N\cong \N$ as sheaves of groups, and
    \begin{align*}
    \left(aX^r\left(1+P\right)\right)^{-1}&=\dfrac{1}{a}X^{-r}\left(1-P\right)\\
    \left(bY^s\left(1+Q\right)\right)^{-1}&=\dfrac{1}{b}Y^{-s}\left(1-Q\right)
    \end{align*}
    so
    \begin{align*}
    \partial^0&\left(aX^r\left(1+P\right), bY^s\left(1+Q\right), cZ^t\right)\\
    &=\left(\dfrac{b}{a}Y^sX^{-r}\left(1+Q-P\right),\ \dfrac{c}{a}Z^tX^{-r}\left(1-P\right),\ \dfrac{c}{b}Z^tY^{-s}\left(1-Q\right)\right).
    \end{align*}
    
    When we split this map on the components, we already know the cohomology for $\KK^*$ and for $\O^*_{M_\triangle}$, so we have only to compute it for $1+\N\cong \N$
    \begin{equation*}
    \begin{tikzcd}[baseline=(current  bounding  box.center), row sep=2ex, column sep = 2em, cramped, 
    /tikz/column 1/.append style={anchor=base east},
    /tikz/column 2/.append style={anchor=base west}]
    (1+\N(D(X)))\oplus (1+\N(D(Y)))\oplus 1\rar["\partial^0_{1+\N}"] &  (1+\N(D(X)\cap D(Y))\oplus 1 \oplus 1\rar["\partial^1_{1+\N}"] & 0\\
    (1+P, 1+Q, 1)\rar[mapsto]& (1+Q-P, 1, 1)
    \end{tikzcd}
    \end{equation*}
    
    Since $P\in\N(X)$ and $Q\in\N(Y)$ we know that $P=fYZ$ and $Q=gXZ$ for some polynomials $f\in\KK[M]_X$ and $g\in\KK[M]_Y$. So $\partial^0(Q-P)=0\in\R_{XY}$ if and only if $\partial^0(Q-P)\in\langle Z^2\rangle\KK[M]_{XY}$, so $Z^2\mid P$ and $Z^2\mid Q$ in the respective rings, so $YZ^2\mid P$, so $P=0\in\KK[M]_X$ and similarly for $Q$. So $\partial^0_{1+\N}$ is injective and $\H^0(1+\N)=0$.
    
    The image of this map is $\im\partial^0_{1+\N}=\{1+(gX-fY)Z\mid f\in\KK[M]_X, g\in\KK[M]_Y\}$ and the quotient is then
    \[
    \H^1(1+\N)\cong \faktor{\langle Z\rangle \KK[M]_{XY}}{\langle (gX-fY)Z\rangle\KK[M]_{XY}}.
    \]
    As a $\KK$-vector space, this quotient is generated by the monomials of the type $X^iY^jZ$ such that $i, j\leq0$, because as soon as one of these exponents is positive, it comes from before.
    
    Summing up, the local Picard group of this ring will then be
    \[
    \Pic^{\loc}(\KK[M])\cong\KK^*\oplus\ZZ^3\oplus \faktor{\langle Z\rangle \KK[M]_{XY}}{\langle (gX-fY)Z\rangle\KK[M]_{XY}}.\qedhere
    \]
\end{example}

\begin{example}
    Let $M$ be the non reduced binoid
    \[
    M=(x, y, z\mid 2x+y+3z=\infty, x+2y+2z=\infty) 
    \]
    whose associated ring is
    \[
    \KK[M]=\faktor{\KK[X, Y, Z]}{\langle X^2YZ^3, XY^2Z^2\rangle}.
    \]
    We want to compute $\Pic^{\loc}(\KK[M])$ and we can do it by meaning of \v{C}ech cohomology on the combinatorial covering. We have first to compute the localizations at the elements of the covering and their groups of units
    \begin{align*}
    \KK[M]_X &\cong \faktor{\KK[X^{\pm 1}, Y, Z]}{\langle YZ^3, Y^2Z^2\rangle}\\
    \KK[M]_X^*&\cong\left\{aX^r\left(1+\sum_{i\geq 1}b_iY^iZ+cYZ^2\right)\midd r\in\ZZ, \text{ $b_i=0$ for almost all $i$}\right\}
    \end{align*}
    \begin{align*}
    \KK[M]_Y &\cong \faktor{\KK[X, Y^{\pm 1}, Z]}{\langle XZ^2\rangle}\\
    \KK[M]_Y^*&\cong\left\{dY^s\left(1+\sum_{i\geq 1}e_iX^iZ\right)\midd s\in\ZZ, \text{ $e_i=0$ for almost all $i$}\right\}
    \end{align*}
    \begin{align*}
    \KK[M]_Z &\cong \faktor{\KK[X, Y, Z^{\pm 1}]}{\langle X^2Y, XY^2\rangle}\\
    \KK[M]_Z^*&\cong\left\{fZ^t\left(1+gXY\right)\midd t\in\ZZ\right\}
    \end{align*}
    \begin{align*}
    \KK[M]_{XY} &\cong \faktor{\KK[X^{\pm 1}, Y^{\pm 1}, Z]}{\langle Z^2\rangle}\\
    \KK[M]_{XY}^*&\cong\left\{\alpha X^mY^n\left(1+hZ\right)\midd m, n\in\ZZ, h\in\KK[M]_{XY}\right\}
    \end{align*}
    \begin{align*}
    \KK[M]_{XZ} &\cong \faktor{\KK[X^{\pm 1}, Y, Z^{\pm 1}]}{\langle Y\rangle}\cong\KK[X^{\pm1}, Z^{\pm 1}]\\
    \KK[M]_{XZ}^*&\cong\left\{\beta X^mY^n\midd m, n\in\ZZ\right\}
    \end{align*}
    \begin{align*}
    \KK[M]_{YZ} &\cong \faktor{\KK[X, Y^{\pm 1}, Z^{\pm 1}]}{\langle X\rangle}\cong\KK[Y^{\pm1}, Z^{\pm 1}]\\
    \KK[M]_{XY}^*&\cong\left\{\gamma Y^mZ^n\midd m, n\in\ZZ\right\}
    \end{align*}
    where $a, d, f, \alpha, \beta, \gamma$ are in $\KK^*$, $b_i, c\in\KK[X^{\pm1}]$, $e_i\in\KK[Y^{\pm1}]$, $g\in\KK[Z^{\pm1}]$. The \v{C}ech complex looks like
    \begin{equation*}
    \begin{tikzcd}[baseline=(current  bounding  box.center), row sep=2ex, column sep = 2em, cramped, 
    /tikz/column 1/.append style={anchor=base east},
    /tikz/column 2/.append style={anchor=base west}]
    \KK[M]^*_X\oplus\KK[M]^*_Y\oplus\KK[M]^*_Z\rar["\partial^0"] &  \KK[M]^*_{XY}\oplus\KK[M]^*_{XZ}\oplus\KK[M]^*_{YZ}\rar["\partial^1"] & 0,
    \end{tikzcd}
    \end{equation*}
    where
    \begin{align*}
    \partial^0&\left(aX^r\left(1+\sum_{i\geq 1}b_iY^iZ+cYZ^2\right), dY^s\left(1+\sum_{i\geq 1}e_iX^iZ\right), fZ^t\left(1+gXY\right)\right)\\
    &=\left(\dfrac{dY^s\left(1+\sum_{i\geq 1}e_iX^iZ\right)}{aX^r\left(1+\sum_{i\geq 1}b_iY^iZ+cYZ^2\right)}, \dfrac{fZ^t\left(1+gXY\right)}{aX^r\left(1+\sum_{i\geq 1}b_iY^iZ+cYZ^2\right)},\right.\\
    &\hspace{4em} \left.\dfrac{fZ^t\left(1+gXY\right)}{dY^s\left(1+\sum_{i\geq 1}e_iX^iZ\right)}\right)
    \end{align*}
    We can notice that we can separate $\partial^0$ into different parts in each component, that are the one relative to $\KK^*$, the one relative to the combinatorial units $X^r, Y^s, Z^t$ and finally the one corresponding to the nilpotents. We can explicitly compute the inverses (all computations are done in the appropriate localization, with the appropriate quotients)
    \begin{align*}
    \left(aX^r\left(1+\sum_{i\geq 1}b_iY^iZ+cYZ^2\right)\right)^{-1}&=\dfrac{1}{a}X^{-r}\left(1-\sum_{i\geq 1}b_iY^iZ-cYZ^2\right)\\
    \left(dY^s\left(1+\sum_{i\geq 1}e_iX^iZ\right)\right)^{-1}&=\dfrac{1}{d}Y^{-s}\left(1-\sum_{i\geq 1}e_iX^iZ\right)
    \end{align*}
    so
    \begin{align*}
    \partial^0&\left(aX^r\left(1+\sum_{i\geq 1}b_iY^iZ+cYZ^2\right), dY^s\left(1+\sum_{i\geq 1}e_iX^iZ\right), fZ^t\left(1+gXY\right)\right)\\
    &=\left(\dfrac{d}{a}X^{-r}Y^s\left(1+\sum_{i\geq 1}e_iX^iZ\right)\left(1-\sum_{i\geq 1}b_iY^iZ-cYZ^2\right),\right.\\
    &\hspace{2em} \dfrac{f}{a}X^{-r}Z^t\left(1+gXY\right)\left(1-\sum_{i\geq 1}b_iY^iZ-cYZ^2\right),\\
    &\hspace{4em} \left.\dfrac{f}{d}Y^{-s}Z^t\left(1+gXY\right)\left(1-\sum_{i\geq 1}e_iX^iZ\right)\right)\\
    &=\left(\dfrac{d}{a}X^{-r}Y^s\left(1-\sum_{i\geq 1}b_iY^iZ+\sum_{i\geq 1}e_iX^iZ \right), \dfrac{f}{a}X^{-r}Z^t, \dfrac{f}{d}Y^{-s}Z^t\right).
    \end{align*}
    
    An element of the kernel of $\partial^0$ is given by $a=d=f\in\KK^*$, $b_i=e_i=0$ for all $i$, $r=s=t=0$ and $c$ and $g$ disappeared from the image, so they are free in the kernel
    \[
    \H^0(\O^*_{\KK[M]})\cong\KK^*\oplus\KK[X^{\pm1}]\oplus\KK[Z^{\pm1}].
    \]
    
    The kernel of $\partial^1$ is the whole group in the complex, and we should quotient out the image of $\partial^0$ from it, to obtain the local Picard group.
    
    To understand the image of $\partial^0$, we can look again separately at the components. By looking at $a, d, f$ we can easily see that the quotient of $(\KK^*)^3$ modulo their relations in the image is a $\KK^*$, because $\im(\partial^0\restriction_{\{\KK^*\}})=\{(\alpha, \beta, \gamma)\in(\KK^*)^3\mid \beta=\alpha\gamma\}$, as we already computed in Example~\ref{example:cohomology-xyz=0}.
    For the combinatorial units, we computed in Subsection~\ref{subsection:3-vertices} that $\H^1(i_*\O^*_M)\cong\H^1(\O^*_M)\cong\ZZ^3$. We are left with the last part, the units that come from the nilpotents.
    The local Picard group of this ring will then be
    \[
    \Pic^{\loc}(\KK[M])\cong\H^1(\Spec^\bullet\KK[M], \O^*_{\KK[M]})\cong\KK^*\oplus\ZZ^3\oplus G
    \]
    where, thanks to the fact that $\nil(\KK[M]_{XY})^2=0$, $G$ is the appropriate quotient of $\nil(\KK[M]_{XY})\cong\langle Z\rangle_{\KK[M]_{XY}}$, viewed as an abelian group with the usual sum, modulo the subgroup generated by all the polynomials of the form $-\sum_{i\geq 1}b_iY^iZ+\sum_{i\geq 1}e_iX^iZ$ with $b_i\in\KK[X^{\pm1}]$ and $e_i\in\KK[Y^{\pm1}]$.
    
    The description of this group is more complicated than before, because $\N^2\neq0$, so $1+\N\ncong \N$ as sheaves of abelian groups.
\end{example}

These examples clearly show how, even when we are able to use the powerful \v{C}ech cohomology on a very nice covering, the computation of cohomology groups might be extremely hard in the non reduced case.\\
However, it is now easy to prove the following results about the non vanishing of the local Picard group of the general monomial case.
We use the same hypothesis and notation of Theorem~\ref{theorem:split-non-reduced-monomial}.

\begin{corollary}\label{corollary:non-vanishing}
    If 
    \begin{itemize}
        \item $\Pic^{\loc}(R_{\red})\neq 0$ or
        \item $\Pic^{\loc}(M)\neq 0$ or
        \item $\H^1(\triangle, \KK^*)\neq0$
    \end{itemize}
    then $\Pic^{\loc}(R)\neq 0$.
\end{corollary}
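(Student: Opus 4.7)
The plan is to prove this corollary as an immediate packaging of the splitting established in Theorem~\ref{theorem:split-non-reduced-monomial}. Specializing that formula to degree $j=1$, together with the identification $\Pic^{\loc}(R) = \H^1(\Spec^\bullet R, \O^*_R)$ from Proposition~\ref{proposition:cohomology-vector-bundles}, gives the direct-sum decomposition
\[
\Pic^{\loc}(R) \cong \H^1(\triangle, \KK^*) \oplus \Pic^{\loc}(M) \oplus \vH^1(\{D(X_i)\}, 1+\N),
\]
where I have used that $\widetilde{\H}^{0}(\lk_\triangle(v), \ZZ)$ aggregated over $v \in V$ is precisely $\Pic^{\loc}(M)$ by Theorem~\ref{theorem:cohomology-simplicial-complex}.

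From this single formula, each of the three listed hypotheses forces $\Pic^{\loc}(R) \neq 0$ by picking out a non-zero direct summand. If $\H^1(\triangle, \KK^*) \neq 0$, the first summand is non-trivial; if $\Pic^{\loc}(M) \neq 0$, the second is. For the hypothesis on $\Pic^{\loc}(R_{\red})$, I would invoke the reduced-case formula of Theorem~\ref{theorem:cohomology-stanley-reisner} (or equivalently Remark~\ref{remark:rewrite-non-reduced}), which yields
\[
\Pic^{\loc}(R_{\red}) \cong \H^1(\triangle, \KK^*) \oplus \Pic^{\loc}(M);
\]
hence $\Pic^{\loc}(R_{\red}) \neq 0$ means at least one of these two subgroups is non-zero, reducing the argument to the previous two cases.

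There is no real obstacle: the corollary is a direct reading-off of non-trivial summands from an already-established splitting. The only points that need a brief mention are (i) verifying that the setup of Theorem~\ref{theorem:split-non-reduced-monomial} applies, i.e.\ that $R = \KK[M]$ for a monomial ideal $I$ with $\sqrt{I} = I_\triangle$, and (ii) keeping the $\H^1$-to-$\Pic^{\loc}$ identifications consistent throughout. No claim is made about $\vH^1(\{D(X_i)\}, 1+\N)$, which is the genuinely hard piece and is deliberately left aside by this corollary.
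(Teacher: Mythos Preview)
Your proposal is correct and follows essentially the same approach as the paper: both read off the non-vanishing from the direct-sum decomposition $\Pic^{\loc}(R) \cong \H^1(\triangle, \KK^*) \oplus \Pic^{\loc}(M) \oplus \vH^1(\{D(X_i)\}, 1+\N)$, with $\Pic^{\loc}(R_{\red})$ identified as the first two summands. The paper cites Remark~\ref{remark:rewrite-non-reduced} directly, while you go via Theorem~\ref{theorem:split-non-reduced-monomial} and Theorem~\ref{theorem:cohomology-stanley-reisner}, but the content is identical.
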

\begin{proof}
    Thanks to Remark~\ref{remark:rewrite-non-reduced} above, we know that
    \begin{align*}
    \Pic^{\loc}(R)&=\H^1(X, \O^*_R)\\
    &=\H^1(X_{\red}, \O^*_{X_{\red}})\oplus \H^1(X, 1+\N)\\
    &=\Pic^{\loc}(R_{\red})\oplus\H^1(X, 1+\N)\\
    &=\H^1(\triangle, \KK^*)\oplus\H^1(\Spec^\bullet M, \O^*_M)\oplus \vH^1(\{D(X_i)\}, 1+\N)\\
    &= \H^1(\triangle, \KK^*) \oplus\Pic^{\loc}(M)\oplus \vH^1(\{D(X_i)\}, 1+\N).\qedhere
    \end{align*}
\end{proof}

\newpage
\section{Further developments}

Although we successfully proved some interesting results, there are still many open questions that can lead to further developments of the techniques and theories here presented. We list here some of them, that are not too far from this work and that can probably be achieved in the near future.

\begin{itemize}
    \item The techniques developed and used when talking about cohomology of the sheaf of units in the monomial case might be easily adapted to the case of toric face rings and to more general mixed situations arising from other torsion-free and cancellative binoids.
    \item On the other hand, completely new techniques might be necessary for addressing the cases in which the binoid is not torsion-free or, even worse, non-cancellative.
    \item Since the binoids that we considered are $\NN$-graded, the results here presented for the punctured spectrum of $\KK[M]$ can give some insights at some combinatorial properties and description of invariants of $\mathrm{Proj}\, \KK[M]$.
    \item The combinatorial topology proved to be useful to answer questions related to the algebraic invariants, but it might be also useful to address topological questions; for example about topological vector bundles and the fundamental group in the case $\KK=\RR$ or $\CC$. To the extent of our knowledge, these objects are not yet defined for binoids, and being able to define them by looking at the combinatorial topology might be a fruitful way to address the problem of studying them combinatorially.
    \item It might be interesting to have an interpretation of the well-known exponential sequence for a binoid, that again might arise through the study of the combinatorial topology and a suitable field.
\end{itemize}

    %\glsaddallunused
    \printglossary[title={List of Symbols}]
    \addcontentsline{toc}{chapter}{List of Symbols}

    %\printglossary

    \bibliographystyle{alpha}
    \bibliography{bibliography}

\newcommand{\etalchar}[1]{$^{#1}$}
\begin{thebibliography}{CHWW15}

\bibitem[And81]{anderson1981seminormal}
David~F. Anderson.
\newblock Seminormal graded rings.
\newblock {\em J. Pure Appl. Algebra}, 21(1):1--7, 1981.

\bibitem[BG09]{bruns2009polytopes}
Winfried Bruns and Joseph Gubeladze.
\newblock {\em Polytopes, rings, and {$K$}-theory}.
\newblock Springer Monographs in Mathematics. Springer, Dordrecht, 2009.

\bibitem[BH93]{BrunsHerzog}
Winfried Bruns and J{\"u}rgen Herzog.
\newblock {\em Cohen-{M}acaulay rings}, volume~39 of {\em Cambridge Studies in
  Advanced Mathematics}.
\newblock Cambridge University Press, Cambridge, 1993.

\bibitem[Big97]{biggs1997algebraic}
Norman~L. Biggs.
\newblock Algebraic potential theory on graphs.
\newblock {\em Bull. London Math. Soc.}, 29(6):641--682, 1997.

\bibitem[Big99]{biggs1999chip}
Norman~L. Biggs.
\newblock Chip-firing and the critical group of a graph.
\newblock {\em J. Algebraic Combin.}, 9(1):25--45, 1999.

\bibitem[Bj{\"o}95]{bjorner1995topological}
Anders Bj{\"o}rner.
\newblock Topological methods.
\newblock {\em Handbook of combinatorics, {V}ol.\ 1,\ 2}, pages 1819--1872,
  1995.

\bibitem[Bor56]{borel1956Groupes}
Armand Borel.
\newblock Groupes lin\'eaires alg\'ebriques.
\newblock {\em Ann. of Math. (2)}, 64:20--82, 1956.

\bibitem[B{\"o}t15]{Boettger}
Simone B{\"o}ttger.
\newblock {\em Monoids with absorbing elements and their associated algebras}.
\newblock Universität Osnabrück, Osnabrück, 2015.

\bibitem[CHWW15]{cortinas2015toric}
Guillermo Corti{\~n}as, Christian Haesemeyer, Mark~E. Walker, and Charles~A.
  Weibel.
\newblock Toric varieties, monoid schemes and cdh descent.
\newblock {\em J. Reine Angew. Math.}, 698:1--54, 2015.

\bibitem[CLO05]{cox2006using}
David~A. Cox, John~B. Little, and Donal O'Shea.
\newblock {\em Using algebraic geometry}, volume 185 of {\em Graduate Texts in
  Mathematics}.
\newblock Springer, New York, second edition, 2005.

\bibitem[CLS11]{cox2011toric}
David~A. Cox, John~B. Little, and Henry~K. Schenck.
\newblock {\em Toric varieties}, volume 124 of {\em Graduate Studies in
  Mathematics}.
\newblock American Mathematical Society, Providence, RI, 2011.

\bibitem[DFM93]{demeyer1992cohomological}
Frank~R. DeMeyer, Tim~J. Ford, and Rick Miranda.
\newblock The cohomological {B}rauer group of a toric variety.
\newblock {\em J. Algebraic Geom.}, 2(1):137--154, 1993.

\bibitem[EH00]{eisenbud2006geometry}
David Eisenbud and Joe Harris.
\newblock {\em The geometry of schemes}, volume 197 of {\em Graduate Texts in
  Mathematics}.
\newblock Springer-Verlag, New York, 2000.

\bibitem[Eis95]{eisenbud2013commutative}
David Eisenbud.
\newblock {\em Commutative algebra, with a view toward algebraic geometry},
  volume 150 of {\em Graduate Texts in Mathematics}.
\newblock Springer-Verlag, New York, 1995.

\bibitem[Flo15]{flores2015homological}
Jaret Flores.
\newblock Homological algebra for commutative monoids.
\newblock 2015.
\newblock Thesis (Ph.D.)--Rutgers The State University of New Jersey - New
  Brunswick.

\bibitem[FW14]{flores2014picard}
Jaret Flores and Charles~A. Weibel.
\newblock Picard groups and class groups of monoid schemes.
\newblock {\em J. Algebra}, 415:247--263, 2014.

\bibitem[Gro57]{grothendieck1957surquelques}
Alexander Grothendieck.
\newblock Sur quelques points d'alg\`ebre homologique.
\newblock {\em T\^ohoku Math. J. (2)}, 9:119--221, 1957.

\bibitem[Har77]{hartshorne1977algebraic}
Robin Hartshorne.
\newblock {\em Algebraic geometry}.
\newblock Springer-Verlag, New York-Heidelberg, 1977.
\newblock Graduate Texts in Mathematics, No. 52.

\bibitem[ILL{\etalchar{+}}07]{iyengar2007twenty}
Srikanth~B. Iyengar, Graham~J. Leuschke, Anton Leykin, Claudia Miller, Ezra
  Miller, Anurag~K. Singh, and Uli Walther.
\newblock {\em Twenty-four hours of local cohomology}, volume~87 of {\em
  Graduate Studies in Mathematics}.
\newblock American Mathematical Society, Providence, RI, 2007.

\bibitem[Kle05]{kleiman2005picard}
Steven~L. Kleiman.
\newblock The {P}icard scheme.
\newblock In {\em Fundamental algebraic geometry}, volume 123 of {\em Math.
  Surveys Monogr.}, pages 235--321. Amer. Math. Soc., Providence, RI, 2005.

\bibitem[Lor12]{lorscheid2012geometry}
Oliver Lorscheid.
\newblock The geometry of blueprints: {P}art {I}: {A}lgebraic background and
  scheme theory.
\newblock {\em Adv. Math.}, 229(3):1804--1846, 2012.

\bibitem[LPL11]{pena2009mapping}
Javier L{\'o}pez~Pe{\~n}a and Oliver Lorscheid.
\newblock Mapping {$\Bbb F_1$}-land: an overview of geometries over the field
  with one element.
\newblock {\em Noncommutative geometry, arithmetic, and related topics}, pages
  241--265, 2011.

\bibitem[MS05]{miller2005combinatorial}
Ezra Miller and Bernd Sturmfels.
\newblock {\em Combinatorial commutative algebra}, volume 227 of {\em Graduate
  Texts in Mathematics}.
\newblock Springer-Verlag, New York, 2005.

\bibitem[Mum61]{mumford1961topology}
David Mumford.
\newblock The topology of normal singularities of an algebraic surface and a
  criterion for simplicity.
\newblock {\em Inst. Hautes \'Etudes Sci. Publ. Math.}, 9:5--22, 1961.

\bibitem[Mur69]{murthy1969vector}
M.~Pavaman Murthy.
\newblock Vector bundles over affine surfaces birationally equivalent to a
  ruled surface.
\newblock {\em Ann. of Math. (2)}, 89:242--253, 1969.

\bibitem[Per08]{perrin2007algebraic}
Daniel Perrin.
\newblock {\em Algebraic geometry}.
\newblock Universitext. Springer-Verlag London, Ltd., London; EDP Sciences, Les
  Ulis, 2008.
\newblock An introduction, Translated from the 1995 French original by Catriona
  Maclean.

\bibitem[Pir15]{PirashviliCohomology}
Ilia Pirashvili.
\newblock On cohomology and vector bundles over monoid schemes, 2015.

\bibitem[PS10]{patil2010introduction}
Dilip~P. Patil and Uwe Storch.
\newblock {\em Introduction to algebraic geometry and commutative algebra}.
\newblock IISc Lecture Notes Series. IISc Press, Bangalore; World Scientific
  Publishing Co. Pte. Ltd., Hackensack, NJ, 2010.

\bibitem[Swa80]{swan1980seminormality}
Richard~G. Swan.
\newblock On seminormality.
\newblock {\em J. Algebra}, 67(1):210--229, 1980.

\bibitem[Tra70]{traverso1970seminormality}
Carlo Traverso.
\newblock Seminormality and {P}icard group.
\newblock {\em Ann. Scuola Norm. Sup. Pisa (3)}, 24:585--595, 1970.

\bibitem[Wei91]{WeibelPic}
Charles~A. Weibel.
\newblock Pic is a contracted functor.
\newblock {\em Invent. Math.}, 103(2):351--377, 1991.

\end{thebibliography}
        
\end{document}